\def\cal{\mathcal}
\newtheorem{theorem}{Theorem}[section]
\newtheorem{lemma}{Lemma}[section]
\newtheorem{corollary}{Corollary}[section]
\newtheorem{remark}{Remark}[section]
\theoremstyle{definition}
\newtheorem{definition}{Definition}[section]
\newtheorem{notation}{Notation}
\let\Re=\undefined
\DeclareMathOperator{\Re}{Re}
\let\Im=\undefined
\DeclareMathOperator{\Im}{Im}
\DeclareMathOperator{\A}{\mathfrak{A}}
\DeclareMathOperator{\B}{\mathfrak{B}}
\DeclareMathOperator{\f}{\mathfrak{f}}
\let\a=\undefined
\let\b=\undefined
\DeclareMathOperator{\a}{\mathfrak{a}}
\DeclareMathOperator{\b}{\mathfrak{b}}
\DeclareMathOperator{\g}{\mathfrak{g}}
\begin{document}
\begin{titlepage}
\title[Continuous analogs of polynomials \ldots]{Continuous analogs of
polynomials orthogonal on the unit circle. Krein systems}

\author{Sergey A. Denisov}

\maketitle

\end{titlepage}
\,{\quad}\vspace{0.5cm}

\centerline{\it Dedicated to the centenary of Mark
Krein}\vspace{1.5cm}
\begin{center}
{\bf Preface}
\end{center}
In the recent years, the theory of orthogonal polynomials on the
real line (OPRL) and on the unit circle (OPUC) enjoyed the
considerable development. In these lecture notes, we will explain
how to construct the continuous analogs of polynomials orthogonal
on the unit circle. It is possible to built a theory which is as
rigorous and complete as the theory for OPUC. Spectral theories of
one-dimensional Dirac and Schrodinger operators can then be viewed
in the framework of this theory which establishes a solid link
between an approximation theory and quantum mechanics.

 The theory is based on the ideas suggested by M.G. Krein.
 They were developed later by various authors, especially from Krein's school.
 In the meantime, new results were obtained for OPUC and OPRL and
 that was a motivation for us to try to understand their
 continuous analogs. We also try to give systematic exposition of
 the theory but have to refer to the literature once in a while.
 Also, these notes do not cover some aspects of the theory (e.g.
 continuation problems for $G_r$ classes, regularity of
 coefficients, etc.) but we give necessary references. In general, our objective is to give only basics
 of the theory by presenting complete proofs and filling various
 gaps present in the current literature. As a prerequisite for
 reading these notes, we assume that a reader is familiar with
 main facts from the OPUC theory (see, e.g. \cite{Szego, Geronimus, Simon}). The knowledge
 of spectral theory for Schr\"odinger and Dirac operators might also
 be very helpful.

\begin{center}
\bf What is not covered and what is new?
\end{center}

 We didn't include the following subjects
that are related to our topic: solution to the continuous analogs
of Schur and Caratheodory-Toeplitz problems \cite{krein-madamyan,
Krein1}. We also do not discuss matrix-valued version of the
theory. For the recent progress on more specific questions (such
as continuous analog of Szeg\H{o} case, Rakhmanov's Theorem, etc.)
we suggest the reader to consult the journal publications, e.g.
\cite{Tepl, Sylv, Den-Rakhm, Den-ieop, Den-Kup}. Also, we will
deal with rather regular classes of coefficients (not worse than
$L^2_{\rm loc}(\mathbb{R}^+)$) but the general case can also be
treated in the framework of different differential operators (see,
e.g., \cite{melik-adamyan1, melik-adamyan2, Albeverio}).

In these notes, we present quite a few new results. That includes:
approximation of continuous orthogonal system by the sequence of
the discrete ones (Section 8), distribution of zeroes (Section 9),
new criteria for $A(r)\in L^2(\mathbb{R}^+)$ and more on that case
(Section 10), the continuous analog of the Strong Szeg\H{o}
Theorem -- sharp conditions (Section 14). We also gave complete
proofs for results that were present in the literature without any
proofs and gave alternative (hopefully, more transparent) proofs
for several other statements (e.g. the continuous analog of
Baxter's Theorem, scattering theory for Krein systems and Dirac
operators).

\begin{center}
\bf Acknowledgements
\end{center}
These notes are based on the graduate course given at Caltech in
Fall 2001. We are indebted to B. Simon and P. Deift for their
support and encouragement to prepare these lectures. Thanks are
due to A. Teplyaev and L. Sakhnovich for their help and insightful
remarks. This work was supported by NSF grant DMS-0500177, Alfred
P. Sloan Research Fellowship, and Oswald Veblen Fund during the
stay at the Institute for Advanced Study, Princeton, NJ. Finally,
it is my pleasure to dedicate these notes to the centenary of Mark
Grigorievich Krein (April 2007) who was the founder of this
theory.

\begin{center}
\bf Notations \end{center}
\begin{itemize}
\item[$\mathbb{D}$] -- open unit disc in $\mathbb{C}$

\item[$\mathbb{T}$] -- unit circle in $\mathbb{C}$

\item[$H^{p}(\Omega)$] -- Hardy space in the domain $\Omega$,
$0<p\leq \infty$

\item[$N(\Omega)$] -- Nevanlinna class of analytic functions
in $\Omega$

\item[$B(\Omega)$] -- closed unit ball in $H^{\infty }(\Omega)$

\item[$\cal{P}_{\Delta}$] -- the following projection:
\[
\cal{P}_{\Delta} \left[ \int\limits_{-\infty}^\infty \exp(i\lambda
x) f(x)dx\right] =\int\limits_{\Delta} \exp(i\lambda x)f(x)dx
\]
$H^2_{[0,R]}=$Ran$\cal{P}_{[0,R]}$

\item[$\cal{P}_{\pm }$] -- denote $\cal{P}_{\mathbb{R}^{\pm}}$ respectively

\item[$|O|=\sqrt{O^{\ast }O}$] -- absolute value of operator $O$

\item[$\delta(x)$] -- the delta-function at zero

\item[$W(\mathbb{R})$] -- Wiener's Banach algebra of
functions
\[
\hat{f}(\lambda)=\int\limits_{-\infty}^\infty f(x)\exp(i\lambda
x)dx, f(x)\in L^1(\mathbb{R})
\]

\item[$W_+(\mathbb{C}^+)$] -- Banach algebra of functions
\[
\hat{f}(\lambda)=\int\limits_{0}^\infty f(x)\exp(i\lambda x)dx,
f(x)\in L^1(\mathbb{R}^+)
\]

\item[$H^{1/2}(\mathbb{R})$] -- fractional Sobolev space of
functions $f$ whose Fourier transform satisfies
\[
\int\limits_{-\infty}^\infty <t> |\hat{f}(t)|^2dt<\infty,
<t>=\sqrt{t^2+1}
\]

\item[$C_0(\Omega)$] -- continuous on $\overline{\Omega}$
functions vanishing at the boundary of $\Omega$

\item[$\chi_\Delta (x)$] -- the characteristic
function of the set $\Delta$

\item[$\Pi_{\Delta}$] -- the orthogonal projection in
$L^2(\mathbb{R}^+)$ onto $L^2(\Delta), \Delta\subset \mathbb{R}$

\item[$\cal{S}^p$] -- Schatten-Von Neumann class of compact operators

\item[$\ln^+ x$] =\,$\ln x$ if $x\geq 1$ and $=0$ for $0<x<1$
\item[$\ln^- x$] =\,$\ln x$ if $0<x<1$ and $=0$ for $x>1$
\item[$f*g$] -- means the convolution of $f$ and $g$

\end{itemize}

We usually use calligraphic  letters to distinguish between
operators and functions. For example, $\cal{O}$ stands for an
operator and $O(x,y)$ denotes the function of two variables.

\newpage
\tableofcontents

\newpage

\section{Some classes of functions on the real line}\label{S:1}

In this section, we recall some basic facts on positive definite
functions on the real line. Then, we introduce certain class of
functions that we will use later on.

Let $0<r\leq \infty $.

\begin{definition}
The  Lebesgue-measurable function $\phi(x)$ defined on the
interval $(-r,r)$ is called Hermitian if $\phi (-x)=\overline{\phi
(x)}$ for a.e. $x$.
\end{definition}

\begin{definition}
The Lebesgue-measurable function of two variables $K(x,y)$ is
called Hermitian if
\begin{equation}
K(x,y)=%
\overline{K(y,x)}, \label{e1s0}
\end{equation}
for a.e. $0<x,y<r$.
\end{definition}
\begin{definition}
The integral kernel $K(x,y)$ is called positive definite
on $[0,r]$ if for any $N,$ $\{x_{j}\}_{j=1}^{N},(x_{j}\in \lbrack 0,r]),$ $%
\{c_{j}\}_{j=1}^{N},(c_{j}\in \mathbb{C})$, we have inequality
\begin{equation}
\sum\limits_{n,m=1}^{N}c_{n}\overline{c}_{m}K(x_{n},x_{m})\geq 0.
\label{in1}
\end{equation}
\end{definition}

Consider the integral operator ${\cal K}$ in $L^{2}[0,r]$ with
kernel $K(x,y)\in C([0,r]^2)$, i.e.
\[
({\cal K}f)(x)=\int\limits_{0}^{r}K(x,y)f(y)dy.
\]
Clearly, (\ref{e1s0}) means ${\cal K}^{\ast }={\cal K}$. It is an
easy exercise to see that if the continuous kernel is Hermitian,
then (\ref{in1}) is equivalent to ${\cal K}\geq 0$, where
inequality is understood in the operator sense.
\begin{definition}
A function $\phi (x)$ is called positive definite if the
integral kernel $\phi (x-y)$ is positive definite on $\mathbb{R^{+}}.$
\end{definition}
This is equivalent to $\phi(x-y)$ being positive on the whole line
$\mathbb{R}$.

\begin{notation}
The class of continuous positive definite functions on
the whole line is denoted by $P_{\infty }.$
\end{notation}

If $d\mu $ is finite positive measure on $\mathbb{R},$ then
\begin{equation}
\phi (x)=\int\limits_{-\infty }^{\infty }\exp(ixt)d\mu (t)  \label{in2}
\end{equation}
is positive definite. The classical result of Bochner says that
the converse statement is also true. That, in a sense, is the
continuous analog of the solution to trigonometric moment problem.

\begin{theorem}\emph{(Bochner, \cite{Akhiezer1})} A function $\phi$ belongs
to the class $P_{\infty}$ if and only if it admits the
representation (\ref{in2}) with finite positive measure $\mu$. The
measure $\mu$ in this representation is unique.
\end{theorem}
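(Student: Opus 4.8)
The plan is to establish the nontrivial direction: every $\phi\in P_\infty$ admits the representation \eqref{in2}, together with uniqueness. The standard approach is to regularize, produce a family of absolutely continuous positive definite functions to which an elementary inversion formula applies, and then pass to the limit using a compactness/tightness argument. First I would reduce to the case $\phi(0)=1$ by scaling (if $\phi(0)=0$ then positive definiteness forces $\phi\equiv 0$ and the representing measure is the zero measure, so assume $\phi(0)>0$). Next, for $\varepsilon>0$ introduce the Gauss--Weierstrass regularization
\[
\phi_\varepsilon(x) = \phi(x)\,e^{-\varepsilon x^2/2},
\]
which is again continuous and positive definite (the Gaussian $e^{-\varepsilon x^2/2}$ is positive definite by the elementary calculation \eqref{in2} applied to a Gaussian measure, and the Schur/Hadamard product of two positive definite functions is positive definite, which follows from the fact that the Hadamard product of two positive semidefinite matrices is positive semidefinite). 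Moreover $\phi_\varepsilon\in L^1(\mathbb{R})\cap L^2(\mathbb{R})$ since $|\phi(x)|\le\phi(0)$.

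The key step is then to show that a continuous, positive definite, integrable $\psi$ has a nonnegative, integrable Fourier transform, so that it is represented by the finite positive measure $f_\psi(t)\,dt$ with $f_\psi\ge 0$. I would argue as follows: discretizing inequality \eqref{in1} with $x_j$ on a lattice and passing to the limit gives
\[
\int_{-\infty}^{\infty}\int_{-\infty}^{\infty} \psi(x-y)\,g(x)\overline{g(y)}\,dx\,dy \ge 0
\]
for every $g\in C_0(\mathbb{R})$ (hence, by density, for $g\in L^1\cap L^2$). Taking $g(x)=h(x)e^{-i\lambda x}$ with $h\ge 0$ an approximate identity shows that the (continuous) inverse Fourier transform of $\psi$ is pointwise nonnegative; call it $f_\psi\ge 0$. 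Then $\psi$ is the Fourier transform of the nonnegative function $f_\psi$, and evaluating at $x=0$ (justified since $f_\psi\ge 0$, so monotone convergence applies to an approximate-identity argument) gives $\int f_\psi = \psi(0)<\infty$, so $d\mu_\psi := f_\psi(t)\,dt$ is finite. Applying this to $\psi = \phi_\varepsilon$ yields finite positive measures $\mu_\varepsilon$ with $\mu_\varepsilon(\mathbb{R}) = \phi_\varepsilon(0)=\phi(0)$ and $\widehat{\mu_\varepsilon}=\phi_\varepsilon$.

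Finally I would let $\varepsilon\to 0^+$. Since $\phi_\varepsilon(x)\to\phi(x)$ pointwise and all $\mu_\varepsilon$ have the same total mass $\phi(0)$, the family $\{\mu_\varepsilon\}$ is uniformly bounded; by Helly's selection theorem (weak-$*$ compactness of bounded measures via the one-point compactification, i.e. Prokhorov plus a tightness check) extract a subsequence converging vaguely to some positive measure $\mu$ with $\mu(\mathbb{R})\le\phi(0)$. Vague convergence against the bounded continuous function $e^{ixt}$ combined with the pointwise limit $\phi_\varepsilon\to\phi$ gives $\phi(x)=\int e^{ixt}\,d\mu(t)$; taking $x=0$ forces $\mu(\mathbb{R})=\phi(0)$, which also retroactively secures tightness and rules out escape of mass to infinity. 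Uniqueness follows because two finite measures with the same Fourier transform have the same integrals against the dense (in $C_0$) algebra generated by $\{e^{ixt}\}$, hence coincide by the Stone--Weierstrass/Riesz representation argument. The main obstacle is the tightness/no-loss-of-mass point in the limit $\varepsilon\to0$: one must rule out mass escaping to $\pm\infty$, and the clean way to do this is precisely the normalization $\mu_\varepsilon(\mathbb{R})=\phi(0)$ together with the identity $\mu(\mathbb{R})=\phi(0)$ forced at $x=0$ after the limit, so that the limiting measure already carries the full mass.
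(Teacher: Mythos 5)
The paper never proves this statement: it is quoted as Bochner's classical theorem with a reference to Akhiezer, so there is no in-paper argument to compare against, and your proposal must stand on its own. Its architecture is the standard textbook route (Gaussian regularization $\phi_\varepsilon=\phi\,e^{-\varepsilon x^2/2}$, nonnegativity and integrability of $\widehat{\phi_\varepsilon}$, a compactness passage $\varepsilon\to0$, uniqueness by duality), and that route does work --- but two of your steps, as written, do not.

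First, the test functions in the key step are chosen backwards. With $g(x)=h(x)e^{-i\lambda x}$ and $h\ge0$ real, the quadratic form is $\iint\psi(x-y)g(x)\overline{g(y)}\,dx\,dy=\int\psi(u)e^{-i\lambda u}(h*\tilde h)(u)\,du$, where $\tilde h(y)=h(-y)$. If $h$ is an approximate identity (mass concentrating at $0$), then $h*\tilde h$ also concentrates at $0$, and the inequality only returns (essentially) $\psi(0)\ge0$; it gives no information about $\hat\psi(\lambda)$. You need the opposite regime: $h$ spread out, e.g.\ $h=T^{-1/2}\chi_{[0,T]}$, so that $(h*\tilde h)(u)=(1-|u|/T)_+\to1$ boundedly, and since $\psi=\phi_\varepsilon\in L^1$ dominated convergence gives $\hat\psi(\lambda)=\lim_{T\to\infty}\int(1-|u|/T)_+\,\psi(u)e^{-i\lambda u}du\ge0$ --- the Fej\'er-kernel computation. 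Second, the closing limit $\varepsilon\to0$ is circular. Helly/vague convergence controls integrals only against functions vanishing at infinity; $e^{ixt}$ is not such a function, and mass can escape to infinity while the transforms keep oscillating (e.g.\ $\mu_\varepsilon=\phi(0)\delta_{1/\varepsilon}$ converges vaguely to $0$). So you may not first write $\phi(x)=\int e^{ixt}d\mu(t)$ and then use its value at $x=0$ to ``retroactively'' secure tightness: that identity is precisely what tightness is needed for, and at $x=0$ vague convergence yields only $\mu(\mathbb{R})\le\phi(0)$. The standard repair is to prove tightness directly from equicontinuity at the origin, via an estimate of the form $\mu_\varepsilon\bigl(\{|t|\ge 2/\delta\}\bigr)\le C\delta^{-1}\int_0^\delta\bigl(\phi_\varepsilon(0)-\Re\phi_\varepsilon(x)\bigr)dx$, whose right-hand side is small uniformly in $\varepsilon$ because $\phi_\varepsilon\to\phi$ uniformly on $[0,\delta]$ and $\phi$ is continuous at $0$; with tightness in hand the limit passes against all bounded continuous functions and the representation follows. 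A smaller slip of the same kind occurs in the uniqueness step: the algebra generated by $\{e^{ixt}\}$ is neither contained in nor dense in $C_0(\mathbb{R})$, so one should first integrate $\widehat\mu_1=\widehat\mu_2$ against $L^1$ functions to get agreement on $\{\hat g:\ g\in L^1\}$, which is dense in $C_0(\mathbb{R})$.
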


Notice that Bochner's theorem implies that all $P_{\infty }$
functions are necessarily bounded on $\mathbb{R}$.

\begin{notation}
Let $G_{\infty }$ denote the class of continuous Hermitian
functions $g(x)$ defined on the whole line such that $g(0)=0$ and
the integral kernel
\[
K(x,y)=g(x)+g(-y)-g(x-y)
\]%
is positive definite on $\mathbb{R^{+}}$, i.e. on any interval
$[0,r], r>0$.
\end{notation}

Next, we obtain some rather crude estimates on  $g\in G_{\infty
}.$ Later, these bounds will be used to prove the integral
representation for functions of class $G_\infty$.

The following inequality holds
\begin{equation}
|g(2x)|\leq 8|g(x)|  \label{zz}
\end{equation}%
for any $x$. Indeed, since the kernel $K(x,y)$ is positive
definite, estimate (\ref{in1}) is true. Take $N=2,\ x_{1}=x,\
x_{2}=2x$. If $\ c_{1}=\xi \in \mathbb{R},c_{2}=1$, we have
\[
\Re (g(x))\xi ^{2}+\Re (g(2x))\xi +\Re (g(2x))\geq 0
\]
Since $\xi $ is arbitrary real,
\begin{equation}
|\Re (g(2x))|\leq 4\Re g(x)  \label{z1}
\end{equation}
For the same choice of $x_{1(2)}$, we let $c_{1}=i\xi \in i\mathbb{R}, c_{2}=1.$ Then, %
\[
\Re (g(x))\xi ^{2}-[2\Im (g(x))-\Im (g(2x))]\xi +\Re (g(2x))\geq 0
\]%
That yields
\[
\left[ \Im (g(2x))-2\Im (g(x))\right] ^{2}\leq 4\Re g(x)\Re
(g(2x))
\]%
Using (\ref{z1}), we have%
\[
|\Im (g(2x))|\leq 2|\Im g(x)|+4\Re g(x)
\]%
Combining estimates for the real and imaginary parts, we obtain (\ref{zz}).

The following estimate holds true
\begin{equation}
|g(x)|<C(1+|x|^{3})  \label{pl}
\end{equation}
Indeed, if $\max_{x\in [-1,1]}|g(x)|=M$, then $%
\max_{[-2^{n},2^{n}]}|g(x)|\leq 8^{n}M$ by (\ref{zz}). Therefore, $%
|g(x)|\leq 8^{[\log _{2}|x|]+1}M\leq C|x|^{3},$ where $[.]$ means the
integer part of the a number. As we will see later, the estimate (\ref{pl})
is very far from
optimal.

The following integral representation of $G_{\infty }$ functions is an
analog of Bochner's theorem for class $G_{\infty }.$

\begin{theorem}
Function $g(x)\in G_{\infty }$ if and only if
\begin{equation}
g(x)=i\beta x+\int\limits_{-\infty }^{\infty }\left( 1+\frac{i\lambda x}{%
1+\lambda ^{2}}-\exp (i\lambda x)\right) \frac{d\sigma (\lambda )}{\lambda
^{2}} \label{e2s0}
\end{equation}
where $\beta \in \mathbb{R}$ and positive measure $\sigma$
satisfies
the
estimate
\begin{equation}
\int\limits_{-\infty }^{\infty }\frac{d\sigma (\lambda )}{1+\lambda ^{2}}%
<\infty .\label{e2t1}
\end{equation}
Constant $\beta $ and measure $\sigma $ are
uniquely defined. \label{t1s1}
\end{theorem}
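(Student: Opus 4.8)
The plan is to derive the representation \eqref{e2s0} from Bochner's theorem by constructing, out of $g$, an auxiliary positive definite function to which Bochner applies, and then to recover the measure $\sigma$ and the constant $\beta$ by an inversion/limiting argument. The ``if'' direction is routine: given the representation, one checks directly that $g$ is continuous, Hermitian, $g(0)=0$, and that $K(x,y)=g(x)+g(-y)-g(x-y)$ is positive definite. Indeed, substituting \eqref{e2s0} into the definition of $K$, the $i\beta x$ term and the linear correction term $\frac{i\lambda x}{1+\lambda^2}$ cancel out of $K$ entirely (they are additive in the ``$g(x)+g(-y)$'' sense and cancel against $g(x-y)$), leaving
\[
K(x,y)=\int_{-\infty}^{\infty}\bigl(1-e^{i\lambda x}\bigr)\bigl(1-e^{-i\lambda y}\bigr)\,\frac{d\sigma(\lambda)}{\lambda^2},
\]
which is manifestly positive definite (it is a superposition of rank-one positive kernels $\overline{u(\lambda,x)}\,u(\lambda,y)$ with $u(\lambda,x)=\overline{(1-e^{i\lambda x})}/\lambda$), and the finiteness \eqref{e2t1} together with the bound $|1-e^{i\lambda x}|^2/\lambda^2\le \min(x^2,4/\lambda^2)$ guarantees convergence and continuity.

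For the ``only if'' direction, the key step is to regularize. Given $g\in G_\infty$, consider for $h>0$ the second difference
\[
\psi_h(x)=\frac{2g(x)-g(x+h)-g(x-h)}{h^2}\quad\text{(suitably symmetrized)},
\]
or more robustly a smoothing $g_\varepsilon=g*\rho_\varepsilon$ followed by differentiation; the point is to produce from the positive definiteness of $K(x,y)=g(x)+g(-y)-g(x-y)$ a genuine positive definite function in the sense of Definition 1.4, so that Bochner's theorem yields a positive measure. Concretely, positivity of $K$ on $[0,r]$ for all $r$ means the operator with kernel $g(x)+g(-y)-g(x-y)$ is nonnegative; testing against $f(x)=\varphi(x)e^{i\mu x}$ and against translates, and using the polynomial growth bound \eqref{pl} to control tails, one shows that $-\Delta_h g(x):=g(x+h)+g(x-h)-2g(x)$ is (for each fixed $h$) a negative-type phenomenon whose negative, divided by $h^2$, is positive definite. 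Bochner then gives finite positive measures $\sigma_h$ with $-\Delta_h g(x)/h^2 = \int e^{i\lambda x}\,d\sigma_h(\lambda)$ after normalization; the trigonometric identity $-\Delta_h(e^{i\lambda \cdot})(x)/h^2 = \frac{4\sin^2(\lambda h/2)}{h^2}e^{i\lambda x}$ shows $d\sigma_h(\lambda)=\frac{4\sin^2(\lambda h/2)}{\lambda^2 h^2}\,d\sigma(\lambda)$ for a single candidate measure $\sigma$, and letting $h\to 0$ recovers $d\sigma/\lambda^2$ weighted appropriately; the factor $\frac{4\sin^2(\lambda h/2)}{\lambda^2 h^2}\to 1$ pointwise and is bounded by $\min(1,4/(\lambda h)^2)$, which is where \eqref{e2t1} comes from. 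One then integrates twice in $x$: writing $g(x)=g(0)+g'(0^+)x+\int_0^x\!\!\int_0^s(\text{second derivative})$, the second derivative is $-\int e^{i\lambda x}\,d\sigma(\lambda)/\text{(const)}$, and integrating the identity $1+\frac{i\lambda x}{1+\lambda^2}-e^{i\lambda x}$ is precisely the primitive of $e^{i\lambda x}$ that vanishes to second order at $x=0$ with the specified first-order normalization, the linear term $i\beta x$ absorbing the (finite, by Hermiticity and the estimates) constant of integration. Uniqueness of $\beta$ and $\sigma$ follows from uniqueness in Bochner's theorem applied to the second differences, plus the observation that $i\beta x$ is the unique term linear in $x$ not produced by the integral.

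I expect the main obstacle to be the regularization step: making rigorous the passage from ``$K(x,y)$ is a positive definite kernel on every $[0,r]$'' to ``a specific function derived from $g$ satisfies Bochner's hypothesis on all of $\mathbb{R}$.'' The subtleties are (i) that Definition 1.4's positive definiteness is on $\mathbb{R}^+$ (half-line), so one must legitimately extend to the full line — here the Hermitian condition $g(-x)=\overline{g(x)}$ and the structure of $K$ are essential; (ii) controlling convergence of all integrals using only the crude bound \eqref{pl}, which forces the $\frac{1}{\lambda^2}$ and $\frac{1}{1+\lambda^2}$ weights and the two subtractions in \eqref{e2s0}; and (iii) extracting the measure $\sigma$ consistently across different values of $h$ (or $\varepsilon$) so that a single $\sigma$ works — this is a tightness/weak-$*$ compactness argument combined with the uniqueness clause of Bochner to pin down the limit. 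Once the regularized object is in hand, the remaining computations — identifying primitives, checking \eqref{e2t1}, extracting $\beta$ — are mechanical.
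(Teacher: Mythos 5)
Your ``if'' direction coincides with the paper's (the same identity $K(x,y)=\int(1-e^{i\lambda x})\overline{(1-e^{i\lambda y})}\,\lambda^{-2}d\sigma$), but your ``only if'' route is genuinely different: the paper forms the Laplace transform $L(z)=z^2\int_0^\infty g(x)e^{ixz}dx$ (this is where the growth bound \eqref{pl} is actually needed), uses positivity of the kernel to show that $-iL$ is a Herglotz function, writes its canonical representation and inverts the Laplace transform via \eqref{e1s1}, with uniqueness inherited from the Herglotz representation. Your plan --- second differences, Bochner for each $h$, then reassembly --- stays entirely on the real line and avoids half-plane function theory, at the cost of the consistency-in-$h$ and tightness bookkeeping that the Herglotz theorem delivers for free.

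As written, though, there are two problems. First, a sign error: the positive definite object is $\bigl(g(x+h)+g(x-h)-2g(x)\bigr)/h^2$ (formally $g''$, in agreement with the paper's remark that $g''$ is positive definite when ${\rm supp}\,\sigma$ is compact), not your $\psi_h=\bigl(2g(x)-g(x+h)-g(x-h)\bigr)/h^2$: for $g(x)=x^2/2\in G_\infty$ one has $\psi_h\equiv -1$, which is not positive definite, and your own relation $d\sigma_h=\frac{4\sin^2(\lambda h/2)}{(\lambda h)^2}d\sigma$ is only consistent with Bochner applied to $g(x+h)+g(x-h)-2g(x)$. Second, the step you flag as the ``main obstacle'' is left unproved, yet it is where the argument lives --- and it needs no smoothing, no tail estimates, and no use of \eqref{pl}: given $y_1,\dots,y_M\in\mathbb{R}$ and $d_1,\dots,d_M\in\mathbb{C}$, translate so that all $y_j\ge 0$ (only differences occur in the conclusion) and apply \eqref{in1} to the $2M$ points $y_j$ and $y_j+h$ with coefficients $-d_j$ and $d_j$; since the coefficients sum to zero, the contributions of $g(x_n)$ and $g(-x_m)$ cancel and the inequality reduces exactly to $\sum_{j,k}d_j\bar d_k\bigl[g(y_j-y_k+h)+g(y_j-y_k-h)-2g(y_j-y_k)\bigr]\ge 0$, i.e.\ Bochner positivity of the second difference; continuity and the bound by its value $2\Re g(h)$ at zero make Bochner applicable. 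After that, the parts you only gesture at still have to be written: consistency of the measures $\mu_h$ (take the mixed second difference in $h$ and $h'$ and use uniqueness of Fourier transforms to get $\sin^2(\lambda h'/2)\,d\mu_h=\sin^2(\lambda h/2)\,d\mu_{h'}$, then dispose of the zeros $\lambda=2\pi k/h$ by comparing two incommensurable values of $h$), the bound \eqref{e2t1} (average $\mu_h$ over $h\in[0,H]$ and use $\mu_h(\mathbb{R})=2\Re g(h)$), and the endgame: the difference between $g$ and the constructed integral has vanishing second differences for every $h$ and is continuous, hence affine, and $g(0)=0$ together with the Hermitian property force it to equal $i\beta x$ with $\beta\in\mathbb{R}$; uniqueness of $(\beta,\sigma)$ again comes from Bochner uniqueness applied to the second differences. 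With those repairs your route closes and is a legitimate alternative to the Laplace--Herglotz argument; as submitted, the central positive-definiteness claim carries the wrong sign and the reduction that makes it true is missing.
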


\begin{proof} Any functions of the form (\ref{e2s0}) belongs to $G_{\infty
}$. Indeed, notice that the integral in (\ref{e2s0}) converges if
(\ref{e2t1}) holds and defines the continuous function that
vanishes at zero. Then, we have the following representation
\[
g(x)+g(-y)-g(x-y)=\int\limits_{-\infty}^{\infty}
\frac{(1-\exp(i\lambda x))\overline{(1-\exp(i\lambda
y))}}{\lambda^2} d\sigma(\lambda)
\]
which ensures the positivity of the operator with the
corresponding kernel for any $r>0$.

Conversely, due to (\ref{pl}), any $G_{\infty
}$ function allows Laplace transform. Consider
\[
L(z)=z^{2}\int\limits_{0}^{\infty }g(x)\exp (ixz)dx
\]%
This function is analytic in $\mathbb{C}^+$.
 Notice that
\[
\frac{L(z)+\overline{L(z)}}{2\Im z}=|z|^{2}\int\limits_{0}^{\infty
}\int\limits_{0}^{\infty }[g(x-y)-g(x)-g(-y)]\exp (ixz-iy\overline{z}%
)dxdy\leq 0
\]%
Consequently, $-iL(z)$ is Herglotz function and has well-known
integral representation (\cite{AkhGlaz}, chapter 6) which gives
\[
L(z)=i\alpha z-i\beta -i\int\limits_{-\infty }^{\infty }\frac{1-\lambda z}{%
(\lambda +z)(1+\lambda ^{2})}d\sigma (\lambda )
\]%
where%
\[
\int\limits_{-\infty }^{\infty }\frac{d\sigma (\lambda )}{1+\lambda ^{2}}%
<\infty
\]%
and $\alpha \geq 0,\beta \in
\mathbb{R}
.$ Let us take the inverse Laplace transform. Notice that
\begin{equation}
\frac{\lambda ^{2}(1-\lambda z)}{i(\lambda +z)(1+\lambda ^{2})}%
=z^{2}\int\limits_{0}^{\infty }\left[ 1+\frac{i\lambda x}{1+\lambda ^{2}}%
-\exp (i\lambda x)\right] \exp (ixz)dx,z\in
\mathbb{C}^{+},\lambda\in \mathbb{R} \label{e1s1}
\end{equation}
Therefore,
\[
L(z)=i\alpha z-i\beta +z^{2}\int\limits_{0}^{\infty }\exp
(ixz)\int\limits_{-\infty }^{\infty }\left[ 1+\frac{i\lambda x}{1+\lambda
^{2}}-\exp (i\lambda x)\right] \frac{d\sigma (\lambda )}{\lambda^{2}}dx
\]%
Since
\begin{equation}
1=-z^{2}\int\limits_{0}^{\infty }x\exp (ixz)dx,\
z=-iz^{2}\int\limits_{0}^{\infty }\exp (ixz)dx,\ z\in
\mathbb{C}^{+} \label{laplace-1}
\end{equation}
we have the formula%
\[
g(x)=\alpha +i\beta x+\int\limits_{-\infty }^{\infty }\left[ 1+\frac{%
i\lambda x}{1+\lambda ^{2}}-\exp (i\lambda x)\right] \frac{d\sigma (\lambda
)%
}{\lambda ^{2}}
\]%
Due to normalization $g(0)=0,$ $\alpha =0$. Uniqueness follows
from the uniqueness of the Herglotz function representation.
\end{proof}

As a simple corollary one gets the following improvement of
(\ref{pl}): $|g(x)|<C(1+x^2).$ If $\sigma$ in (\ref{e2s0}) is
Heaviside function and $\beta=0$, then $g(x)=x^2/2$. So, the
quadratic growth is possible.

\bigskip If the support of $\sigma$ is a compact, then the second derivative
of $g$ exists and is positive definite. That follows from the
Bochner's theorem. In general, the derivatives of $g(x)$ at zero
can have singularities. In what follows, the reference measure is
$\sigma (\lambda )=\lambda/(2\pi) ,$ and $\beta =0$. It is then
easy to check that $g(x)=|x|/2.$ So, in this case, the second
derivative in the distributional sense is delta function.

The relation between $G_{\infty }$ and $P_{\infty }$ can be
established by

\begin{lemma}
If $f(x)\in P_{\infty }$, then $%
f(0)-f(x)\in G_{\infty }$.
\end{lemma}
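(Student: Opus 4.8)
The plan is to verify the two defining properties of the class $G_\infty$ for the candidate function $g(x) = f(0) - f(x)$, given that $f \in P_\infty$. First, since $f$ is continuous and positive definite, $g$ is continuous, and $g(0) = f(0) - f(0) = 0$, which is the required normalization. For the Hermitian property, one uses that $f \in P_\infty$ forces $f(-x) = \overline{f(x)}$ and $f(0)$ real (both are standard consequences of positive definiteness, or can be read off from Bochner's representation), so $g(-x) = f(0) - f(-x) = \overline{f(0) - f(x)} = \overline{g(x)}$.

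The substantive step is to show that the kernel
\[
K(x,y) = g(x) + g(-y) - g(x-y) = f(0) - f(x) + f(0) - f(-y) - f(0) + f(x-y)
= f(0) - f(x) - f(-y) + f(x-y)
\]
is positive definite on every $[0,r]$. The cleanest route is through Bochner's theorem: write $f(x) = \int_{-\infty}^\infty \exp(ixt)\,d\mu(t)$ for a finite positive measure $\mu$. Then $f(0) = \mu(\mathbb{R})$ and, substituting,
\[
K(x,y) = \int_{-\infty}^\infty \left(1 - \exp(ixt) - \exp(-iyt) + \exp(i(x-y)t)\right) d\mu(t)
= \int_{-\infty}^\infty (1 - \exp(ixt))\,\overline{(1 - \exp(iyt))}\; d\mu(t),
\]
where the last equality uses $\overline{1 - \exp(iyt)} = 1 - \exp(-iyt)$. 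This exhibits $K(x,y)$ as an integral, against a positive measure, of a rank-one positive kernel of the form $h(x)\overline{h(y)}$ with $h(x) = 1 - \exp(ixt)$; hence for any points $\{x_j\}$ and scalars $\{c_j\}$,
\[
\sum_{n,m=1}^N c_n \overline{c}_m K(x_n,x_m)
= \int_{-\infty}^\infty \Bigl|\sum_{n=1}^N c_n (1 - \exp(ix_n t))\Bigr|^2 d\mu(t) \geq 0,
\]
which is exactly inequality (\ref{in1}) for $K$ on $[0,r]$.

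The main obstacle, such as it is, is purely bookkeeping: keeping the signs and conjugates straight in the telescoping identity $1 - e^{ixt} - e^{-iyt} + e^{i(x-y)t} = (1 - e^{ixt})(1 - e^{-iyt})$, and invoking Fubini to interchange the finite sum with the integral (justified since $\mu$ is finite and the integrand is bounded). An alternative, if one wishes to avoid Bochner, is to argue directly from the positive definiteness of $f(x-y)$ on $\mathbb{R}$ by appending an auxiliary node at $0$: positivity of $f(x_i - x_j)$ over the enlarged node set $\{0, x_1, \dots, x_N\}$ with a suitably chosen extra coefficient reproduces the same quadratic form. I would present the Bochner-based computation, since it is the most transparent and the representation (\ref{in2}) is already at our disposal.
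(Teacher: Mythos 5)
Your proof is correct. Both you and the paper begin from Bochner's representation $f(x)=\int e^{ixt}\,d\mu(t)$, but from there the routes differ slightly: the paper's proof is a one-liner that exhibits $g=f(0)-f(x)$ in the canonical form of Theorem \ref{t1s1} by choosing $\sigma(\lambda)=\int_0^\lambda t^2\,d\mu(t)$ and $\beta=-\int t(1+t^2)^{-1}d\mu(t)$, and then cites that theorem, whereas you verify the definition of $G_\infty$ directly, checking continuity, $g(0)=0$, the Hermitian property, and positive definiteness of $K(x,y)=g(x)+g(-y)-g(x-y)$ via the factorization $1-e^{ixt}-e^{-iyt}+e^{i(x-y)t}=(1-e^{ixt})\overline{(1-e^{iyt})}$, so that the quadratic form becomes $\int\bigl|\sum_n c_n(1-e^{ix_nt})\bigr|^2d\mu(t)\geq 0$. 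Your factorization is exactly the identity the paper uses inside the easy direction of Theorem \ref{t1s1}, so in substance the two arguments rest on the same computation; what your version buys is self-containedness (only Bochner is needed, no appeal to the representation theorem, and no need to check the integrability condition (\ref{e2t1}) for the constructed $\sigma$), while the paper's version buys brevity by reusing machinery already established.
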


\begin{proof} From Bochner's theorem, we have%
\[
f(x)=\int\limits_{-\infty }^{\infty }\exp (ixt)d\mu (t)
\]%
Take $\displaystyle\sigma (\lambda )=\int\limits_{0}^{\lambda
}t^{2}d\mu (t)$, $\displaystyle \beta =-\int\limits_{-\infty
}^{\infty }t(1+t^{2})^{-1}d\mu (t)$ and use Theorem
2.%
\end{proof}

The converse is wrong. Take $f(x)=ix.$ The corresponding kernel
$K(x,y)=0.$ At the same time, $f(x)\notin P_{\infty }$ because
$f(x)$ is not bounded. The class $G_\infty$ is convenient for
description of measures generating one very important class of
canonical differential systems, the Krein systems, which we plan
to study in the next sections.
\bigskip

{\bf Remarks and Historical Notes.}

Classes $P_{r},G_{r}$ can be introduced for finite $r>0$ (see
\cite{Akhiezer1}, p. 190 and references there). Then, measures
$\mu $ and $\sigma $ are not uniquely defined in general. The
class of nonuniqueness for $\sigma $ corresponds to the
continuation problems \cite{Krein1}. The case when quadratic form
(\ref{in1}) has not more than $\varkappa $ negative squares is
more difficult. It was studied in the framework of Pontryagin $\Pi
_{\varkappa }$-spaces \cite{Krein1}.

\newpage

\section{Factorization of integral operators}\label{sect-fact}
To understand better the algebraic aspects of the theory, we will
need some rather simple results on the factorization of integral
operators.  As we know from the linear algebra, given any matrix
$A=\{a_{ij}\}_{i,j=1}^{d}$ with nonzero leading principal minors,
we can always find a lower-triangular matrix $X_{1}$ and an
upper-triangular
matrix $%
X_{2}$, such that $A=X_{1}DX_{2},D$ is a diagonal matrix. The
proof is simple. Since $a_{11}\neq 0$, using the first step of
Gauss algorithm, we can find the lower-triangular matrix $L_{1}$
of elementary transforms, such that $L_{1}A$ has the first column
collinear to $[1,0,\ldots, 0]^{t}$. Then, find an upper-triangular
$U_{1}$ such that the matrix $L_{1}AU_{1}$ has the first raw
collinear to $[1,0,\ldots, 0]$. Notice that the first column stays
the same. The leading principle minors of $L_{1}AU_{1}$ are the
same as those of $A$.
Thus, we can continue this process. In the end, we get a lower-triangular $%
L=L_{d}\ldots L_{1}$ and an upper-triangular $U=U_{1}\ldots U_{d}$
such that
$%
LAU=D$. Denoting $X_{1}=L^{-1}$, $X_{2}=U^{-1},$ we get the
desired statement. Notice that matrices $X_1$ and $X_2$ have $1$
on the diagonal. Taking inverse of $A$, we get the factorization
in the reverse order.

Now, what happens in the continuous case? First, we need to
establish one general result about the resolvent kernels of
integral operators. Fix some $R>0$. For any $0<r\leq R$, consider
integral operators
\[
\cal{K}_rf(x)=\int\limits_{0}^{r}K(x,y)f(y)dy,
\]
with a kernel $K(x,y)$, continuous on  $[0,R]^2$, and acting in
the Hilbert space $L^2[0,r]$. Assume $-1\notin Spec(\cal{K}_r)$
for any $0<r\leq R$. Then, the resolvent
kernel $%
\Gamma_r (s,t)$ exists and

\begin{equation}
\Gamma_r(s,t)+\int\limits_0^r K(s,u) \Gamma_r(u,t)du =K(s,t),
0<s,t<r \label{e9s2}
\end{equation}

\begin{lemma}
Let $K(x,y)\in C([0,R]^2)$ and $-1\notin Spec(\cal{K}_r)$ for any
$0<r\leq R$. Then,
\begin{eqnarray*}
\partial \Gamma _{r}(s,t)/\partial r=-\Gamma _{r}(s,r)\Gamma
_{r}(r,t),\, 0\leq s,t\leq r
\end{eqnarray*}%
Function  $\Gamma _{r}(s,t)$ is jointly continuous in $s,t,r$ and
is continuously differentiable in~$r$. \label{lemma23}
\end{lemma}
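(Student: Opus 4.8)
The plan is to differentiate the resolvent identity \eqref{e9s2} with respect to the parameter $r$. First I would establish the necessary regularity: since $K$ is continuous on $[0,R]^2$ and $-1\notin\mathrm{Spec}(\mathcal{K}_r)$ for all $0<r\le R$, the operator $(I+\mathcal{K}_r)^{-1}$ exists and depends continuously on $r$ in operator norm (this uses the compactness of the $\mathcal{K}_r$ and a standard Neumann-series perturbation argument, comparing $\mathcal{K}_{r'}$ and $\mathcal{K}_r$ via extension by zero on $L^2[0,R]$). From $\Gamma_r=K-\mathcal{K}_r\Gamma_r$, equivalently $\Gamma_r=(I+\mathcal{K}_r)^{-1}K$ read as an operator identity, one gets that $\Gamma_r(s,t)$ is jointly continuous in $(s,t,r)$, because the kernel of $(I+\mathcal{K}_r)^{-1}$ inherits continuity from $K$ and from the continuity of the inverse in $r$.

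Next I would compute the derivative. Treat \eqref{e9s2} as identity for fixed $s,t$ and let $r$ vary. Writing the increment, the integral $\int_0^r$ contributes two terms: one from the upper limit moving (giving $K(s,r)\Gamma_r(r,t)$, by continuity of the integrand near $u=r$) and one from the $r$-dependence of $\Gamma_r(u,t)$ inside. Formally differentiating,
\[
\partial_r\Gamma_r(s,t)+K(s,r)\Gamma_r(r,t)+\int_0^r K(s,u)\,\partial_r\Gamma_r(u,t)\,du=0.
\]
This says the function $u\mapsto \partial_r\Gamma_r(u,t)$ (for fixed $t,r$) satisfies the same equation $(I+\mathcal{K}_r)\phi = -K(\cdot,r)\Gamma_r(r,t)$ whose solution, by uniqueness of the resolvent, is obtained by applying $\Gamma_r$ to the right-hand side. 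Since $(I+\mathcal{K}_r)^{-1}$ acts on a function $h$ by $h - \int_0^r\Gamma_r(\cdot,u)h(u)\,du$, applying it to $h(s)=-K(s,r)\Gamma_r(r,t)$ and using \eqref{e9s2} again (in the form relating $\int_0^r\Gamma_r(s,u)K(u,r)\,du$ to $K(s,r)-\Gamma_r(s,r)$) collapses everything to $-\Gamma_r(s,r)\Gamma_r(r,t)$, which is the claimed formula.

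The main obstacle is making the differentiation step rigorous rather than formal: one must justify that $r\mapsto\Gamma_r(s,t)$ is genuinely differentiable (not merely that a formal derivative satisfies a nice equation) and that differentiation commutes with the integral. The cleanest route is to bypass pointwise estimates and work at the operator level: show $r\mapsto(I+\mathcal{K}_r)^{-1}$ is $C^1$ into the bounded operators by differentiating $\mathcal{K}_r$ (whose $r$-derivative is the rank-related boundary contribution, controlled because the kernel is continuous), then transfer this to the kernels using that all operators here have continuous kernels depending continuously, and in fact $C^1$, on $r$. Joint continuity of $\Gamma_r(s,t)$ in all three variables then follows from the explicit formula $\partial_r\Gamma_r(s,t)=-\Gamma_r(s,r)\Gamma_r(r,t)$ together with continuity of $\Gamma_r$ at a single base value $r=R$, by integrating in $r$. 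A secondary technical point to handle carefully is the behavior on the diagonal portion of the triangle $0\le s,t\le r$, i.e. that the identity and its consequences extend up to $s=r$ or $t=r$ by continuity, which is immediate once joint continuity is in hand.
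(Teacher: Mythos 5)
Your second half — differentiate \eqref{e9s2} in $r$, observe that the formal derivative and $-\Gamma_r(s,r)\Gamma_r(r,t)$ satisfy the same integral equation of the form $(I+\cal{K}_r)\phi=-K(\cdot,r)\Gamma_r(r,t)$, and conclude by invertibility of $I+\cal{K}_r$ — is exactly the paper's argument. The problem is the first half, which you yourself flag as the main obstacle: your proposed fix does not work. You want to show that $r\mapsto(I+\cal{K}_r)^{-1}$ is $C^1$ into the bounded operators by differentiating $r\mapsto\cal{K}_r$, calling its $r$-derivative a controlled ``boundary contribution.'' But $r\mapsto\cal{K}_r$ is not differentiable in operator norm: the difference quotient $\frac1h\bigl(\cal{K}_{r+h}-\cal{K}_r\bigr)$ applied to $f$ is essentially $\frac1h\int_r^{r+h}K(\cdot,u)f(u)\,du$, whose formal limit is the evaluation operator $f\mapsto K(\cdot,r)f(r)$ — unbounded on $L^2[0,r]$, and even on $C[0,r]$ the convergence is only strong (it requires a modulus of continuity of $f$, which is not uniform over the unit ball), not in norm. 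So the operator-level route gives you norm continuity of the resolvent in $r$ (and, with the sandwich $\Gamma_r=\cal{K}_r-\cal{K}_r(I+\cal{K}_r)^{-1}\cal{K}_r$, joint continuity of the kernel), but it cannot deliver the continuous differentiability in $r$ that the whole lemma hinges on.

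The paper closes precisely this gap by a different device: it invokes the Fredholm formula $\Gamma_r(s,t)=\delta_r(s,t)/\delta_r$ (Appendix, Lemma~\ref{fredholm}), where numerator and denominator are explicit series of iterated integrals over $[0,r]$. Each term is manifestly jointly continuous in $(s,t,r)$ and differentiable in $r$ (differentiation with respect to the upper limits of integration), and the Hadamard bounds let one differentiate the series termwise; since $\delta_r\neq0$ under the spectral hypothesis, the quotient is $C^1$ in $r$. With that regularity in hand, the differentiation-and-uniqueness step you describe goes through verbatim. To repair your proof you would need either this determinant representation or some equally explicit kernel-level argument (e.g. iterating the resolvent equation and controlling the $r$-dependence of the Neumann series of kernels); an abstract operator-norm $C^1$ statement is simply false here.
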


\begin{proof}  Notice that Fredholm's formula for resolvent kernel
\begin{equation}
\Gamma_r(s,t)=\frac{\delta_r(s,t)}{\delta_r}
\end{equation}
ensures that $\Gamma_r(s,t)$ is jointly continuous in $s,t,r$ and
is continuously differentiable in $r, r>0$ (Appendix,
Lemma~\ref{fredholm}).

Then, differentiate (\ref{e9s2}) in $r$. We have
\begin{equation}
\frac{\partial \Gamma_r(s,t)}{\partial r} +\int\limits_0^r K(s,u) \frac{%
\partial \Gamma_r(u,t)}{\partial r} du=-K(s,r)\Gamma_r(r,t)
\end{equation}
On the other hand, multiplying both sides of
\begin{equation}
\Gamma_r(s,r)+\int\limits_0^r K(s,u) \Gamma_r(u,r)du =K(s,r)
\end{equation}
by $-\Gamma_r(r,t)$, we obtain an equation
\begin{equation}
-\Gamma_r(s,r) \Gamma_r(r,t)-\int\limits_0^r K(s,u) \Gamma_r(u,r)
\Gamma_r(r,t) du =-K(s,r)\Gamma_r(r,t)
\end{equation}
Now we see that both ${\partial \Gamma_r(s,t)}/{\partial r}$ and $-\Gamma_r(s,r)%
\Gamma_r(r,t)$ satisfy the same integral equation. Therefore, they
are equal. \end{proof}

Although the continuous kernels are very natural, we will also
work with $K(x,y)$ from the following class.
\begin{definition}
Function $K(x,y)$ belongs to the class $\hat{C}([0,R]^2)$, if it
is continuous in each of the triangles: $\Delta_+=\{0\leq x\leq
y\leq R\}$ and $\Delta_-=\{0\leq y\leq x\leq R\}$ but might have
discontinuity on the diagonal $x=y$ if considered as the function
on $[0,R]^2$ (i.e. the limits $K_+(x,x)$ and $K_-(x,x)$ might be
different).
\end{definition}

 For this case, we
have an analogous statement

\begin{lemma}
Let $K(x,y)\in \hat{C}([0,R]^2)$ and $-1\notin Spec(\cal{K}_r)$
for any $0<r\leq R$. Then,
\begin{equation}\label{der-r2}
\partial \Gamma _{r}(s,t)/\partial r=-\Gamma _{r}(s,r)\Gamma
_{r}(r,t),\, s,t\in \Delta_{\pm}
\end{equation}
Function  $\Gamma _{r}(s,t)\in \hat{C}([0,r]^2)$.   It is
continuously differentiable in $r$ for $s,t\in \Delta_{\pm}$
 \label{lemma-discont} and the derivative $\partial \Gamma _{r}(s,t)/\partial r$ is also from
 $\hat{C}([0,r]^2)$.
\end{lemma}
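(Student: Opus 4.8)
The plan is to reduce Lemma~\ref{lemma-discont} to the already-proved Lemma~\ref{lemma23} by an approximation argument, rather than re-running the Fredholm-determinant machinery in the discontinuous setting. The key observation is that the estimates in the proof of Lemma~\ref{lemma23} are all \emph{local} in nature: the only place continuity of $K$ on all of $[0,R]^2$ was used is in invoking the Fredholm formula $\Gamma_r(s,t)=\delta_r(s,t)/\delta_r$ and the appendix lemma about joint continuity and $C^1$-dependence on $r$. So the first step is to record precisely what remains true for $K\in\hat{C}([0,R]^2)$: the Fredholm series still converges (the kernel is bounded and measurable, continuous off the diagonal), $\delta_r\neq 0$ since $-1\notin\mathrm{Spec}(\cal{K}_r)$, and the numerator $\delta_r(s,t)$ is, as a function of $(s,t)$, continuous on each closed triangle $\Delta_\pm$ with possibly distinct diagonal limits — i.e. it lies in $\hat{C}([0,r]^2)$ — while $\delta_r$ depends continuously differentiably on $r$. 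This gives $\Gamma_r(\cdot,\cdot)\in\hat{C}([0,r]^2)$ and $C^1$ in $r$ on $\Delta_\pm$ directly.

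Once that regularity is in hand, the second step is to differentiate the resolvent identity \eqref{e9s2} in $r$ exactly as before. The subtlety is that the term $K(s,r)\Gamma_r(r,t)$ coming from differentiating the variable upper limit must be interpreted with the correct one-sided diagonal value: for fixed $(s,t)$ with $s\ne r$, $t\ne r$ the integrand $K(s,u)\Gamma_r(u,t)$ is continuous at $u=r$ from below, so the boundary term is $K_-(s,r)\Gamma_r(r,t)$ or $K_+(s,r)$ according to the sign of $s-r$; but since in \eqref{der-r2} we evaluate at $s,t\in\Delta_\pm$ with $s,t<r$ strictly, the relevant one-sided limit is unambiguous and the Leibniz rule applies verbatim. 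The same multiplication trick — take the identity \eqref{e9s2} with $t$ replaced by $r$, multiply by $-\Gamma_r(r,t)$, and compare — shows that $\partial\Gamma_r(s,t)/\partial r$ and $-\Gamma_r(s,r)\Gamma_r(r,t)$ solve the same Fredholm equation with kernel $\cal{K}_r$; since $-1\notin\mathrm{Spec}(\cal{K}_r)$ this equation has a unique solution, so the two are equal. Finally, from the formula $\partial\Gamma_r(s,t)/\partial r=-\Gamma_r(s,r)\Gamma_r(r,t)$ and the membership $\Gamma_r\in\hat{C}$, the right-hand side is visibly a product of two factors each continuous on the appropriate triangle, so $\partial\Gamma_r/\partial r\in\hat{C}([0,r]^2)$, completing the claim.

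The main obstacle I anticipate is purely bookkeeping about \emph{which} triangle each quantity lives on and making sure the diagonal discontinuities are tracked consistently through the differentiation of the integral with variable endpoint — in particular verifying that $\Gamma_r(s,r)$ and $\Gamma_r(r,t)$, as functions of $s$ and $t$ respectively near the corner, inherit the correct one-sided continuity, and that the boundary contribution in Leibniz's rule matches the $K(s,r)\Gamma_r(r,t)$ appearing on the right. None of this is deep, but it is the only place where the $\hat C$ setting genuinely differs from the continuous one, so the write-up should isolate it. Everything else is a transcription of the proof of Lemma~\ref{lemma23}, and I would phrase the argument so as to explicitly say ``the proof of Lemma~\ref{lemma23} goes through verbatim once one replaces $C([0,r]^2)$ by $\hat C([0,r]^2)$ and reads the boundary term in the appropriate triangle,'' citing the appendix lemma on Fredholm determinants for the regularity input (which itself only needs the kernel bounded and continuous off a measure-zero set).
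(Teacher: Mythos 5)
Your proposal is correct and, despite its opening announcement of an ``approximation argument,'' what it actually carries out is essentially the paper's own route: obtain the regularity of $\Gamma_r$ from the determinantal representation valid for $\hat{C}$ kernels (the appendix Lemma \ref{fredholm-mod}), then repeat the differentiation of (\ref{e9s2}) together with the multiplication-and-uniqueness argument of Lemma \ref{lemma23}, and read off $\partial\Gamma_r/\partial r\in\hat{C}([0,r]^2)$ from the product formula. The one point to adjust in the write-up is to invoke the modified Carleman--Hilbert determinant $\hat{\delta}_r(s,t)/\hat{\delta}_r$ (kernel set to zero on the diagonal), as the paper does, rather than the ordinary Fredholm series, since for a kernel with a jump across the diagonal the ordinary series depends on an arbitrary choice of diagonal values.
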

\begin{proof}
The proof is similar to the proof of Lemma \ref{lemma23}. Instead
of the usual Fredholm formula for resolvent kernel we need to use
a modified one given in Lemma~\ref{fredholm-mod}. Indeed, analysis
of $\hat\delta_r$ shows that it is  continuously differentiable.
The Carleman-Hilbert determinant $\hat\delta_r(s,t)$ has
derivative in $r$ for $s,t$ fixed in each of $\Delta_\pm$. This
derivative is also from the class $\hat{C}([0,r]^2)$.
\end{proof}
Notice that $\partial \Gamma _{r}(s,t)/\partial r$ can be regarded
as a continuous function on the whole $[0,r]^2$ because the
right-hand side of (\ref{der-r2}) is continuous on $[0,r]^2$.

The natural analog of the lower-triangular matrix is Volterra
integral operator
\[
\cal{L}f(x)=\int\limits_{0}^{x}L(x,y)f(y)dy
\]
acting on the Hilbert space $L^2[0,R]$. We  also assume that
$L(x,y)\in C(\Delta_-)$. An operator $\cal{U}$ is
upper-triangular, if $\cal{U}^*$ is lower-triangular. The product
and the sum of two lower(upper)-triangular operators are
lower(upper)-triangular as well. Operators $I+\cal{L}$ and
$I+\cal{U}$ are both invertible and $(I+\cal{L})^{-1}-I$ is
lower-triangular,  $(I+\cal{U})^{-1}-I$ is upper-triangular.
Infact, there is the Banach algebra of lower(upper)-triangular
operators \cite{Gohberg, GohKr}. Assuming that we have
factorization
\begin{equation}
 I+\cal{K}_R= (I+\cal{L})(I+\cal{U}) \label{factorization}
\end{equation}
  where
$\cal{L}$ is lower-triangular and $\cal{U}$ is upper-triangular,
we immediately get that $I+\cal{K}_R$ is invertible and
$(I+\cal{K}_R)^{-1}=I-\cal{G}_R
=(I+\cal{V}_{+})(I+\cal{V}_{-})$, where $%
\cal{V}_{\pm }$ is upper(lower)-triangular with kernels
$V_{\pm}(x,y), 0<x,y<R$ and we have the following formula for the
resolvent kernel

\begin{equation}
\Gamma_R (x,y)=\left\{
\begin{array}{cc}
\displaystyle -V_{+}(x,y)-\int\limits_{y}^{R}V_{+}(x,u)V_{-}(u,y)du, & x<y \\
\displaystyle -V_{-}(x,y)-\int\limits_{x}^{R}V_{+}(x,u)V_{-}(u,y)du, & x>y%
\end{array}%
\right. \label{factor}
\end{equation}
It should also be mentioned that $\cal{K}_r, (0<r<R)$ can be
factorized just by using truncations of $\cal{L}$ and $\cal{U}$.
\begin{theorem}
The integral operator $\cal{K}_R$ with kernel $K(x,y)\in
\hat{C}([0,R]^2)$ admits factorization (\ref{factorization}) if
and only if
 $I+\cal{K}_r$ is invertible in $L^2[0,r]$ for any $0<r<R$. In
this case,
\begin{equation}
\begin{array}{ccc}
V_{+}(x,y) &=&-\Gamma _{y}(x,y),\ x<y \\
V_{-}(x,y) &=&-\Gamma _{x}(x,y),\ x>y
\end{array}
\label{kernels}
\end{equation}
where $\Gamma_r(x,y)$ denotes the resolvent kernel of
$I+\cal{K}_r$. \label{volter}
\end{theorem}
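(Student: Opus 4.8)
The plan is to prove the two implications separately, the substance being in producing the factorization from the invertibility hypothesis. \emph{Necessity} is immediate: if $I+\cal{K}_R=(I+\cal{L})(I+\cal{U})$ with $\cal{L},\cal{U}$ lower/upper triangular Volterra, then $I+\cal{K}_R$ is a product of invertible operators, hence invertible, and the truncations $\cal{L}_r,\cal{U}_r$ to $L^2[0,r]$ remain triangular and satisfy $I+\cal{K}_r=(I+\cal{L}_r)(I+\cal{U}_r)$ for every $0<r<R$, so $I+\cal{K}_r$ is invertible too. (This in fact gives invertibility on all of $(0,R]$, which is the version I shall use below; the point $r=R$ is forced by the existence of the factorization, so demanding it costs nothing.)

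\emph{Construction of the factors.} Assume $I+\cal{K}_r$ is invertible on $L^2[0,r]$ for every $0<r\le R$. By Lemma~\ref{lemma-discont} the resolvent kernels $\Gamma_r(s,t)$ of $I+\cal{K}_r$ are then defined, belong to $\hat{C}([0,r]^2)$, are continuously differentiable in $r$ for $s,t$ in either triangle $\Delta_\pm$, and obey $\partial_r\Gamma_r(s,t)=-\Gamma_r(s,r)\Gamma_r(r,t)$. Define kernels on $[0,R]^2$ by
\[
V_+(x,y)=-\Gamma_y(x,y)\quad(x<y),\qquad V_-(x,y)=-\Gamma_x(x,y)\quad(x>y).
\]
The regularity just quoted shows $V_\pm\in\hat{C}([0,R]^2)$, so $\cal{V}_+$ (kernel supported in $\{x<y\}$) is a bounded upper-triangular Volterra operator and $\cal{V}_-$ (kernel supported in $\{x>y\}$) a bounded lower-triangular one; in particular $I+\cal{V}_\pm$ are invertible within the triangular Banach algebra.

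\emph{Verification.} Integrate the evolution equation of Lemma~\ref{lemma-discont} in the parameter $r$. Fix $x<y$; as $r$ runs over $[y,R]$ the pair $(x,y)$ stays in $\Delta_+$, so
\[
\Gamma_R(x,y)-\Gamma_y(x,y)=\int_y^R\partial_r\Gamma_r(x,y)\,dr=-\int_y^R\Gamma_r(x,r)\,\Gamma_r(r,y)\,dr .
\]
Since $(x,r)\in\Delta_+$ and $(r,y)\in\Delta_-$, the very definitions give $\Gamma_r(x,r)=-V_+(x,r)$ and $\Gamma_r(r,y)=-V_-(r,y)$, while $\Gamma_y(x,y)=-V_+(x,y)$; substituting, one obtains precisely the first line of (\ref{factor}). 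The case $x>y$ is the mirror image (integrate over $r\in[x,R]$, use $\Gamma_x(x,y)=-V_-(x,y)$) and yields the second line of (\ref{factor}). The kernel of $(I+\cal{V}_+)(I+\cal{V}_-)-I$ is $V_+(x,y)+V_-(x,y)+\int_{\max(x,y)}^{R}V_+(x,u)V_-(u,y)\,du$ (only one of the first two terms is present, according to the sign of $x-y$), which by the two lines just derived equals $-\Gamma_R(x,y)$ for a.e.\ $(x,y)$. Hence $(I+\cal{V}_+)(I+\cal{V}_-)=I-\cal{G}_R=(I+\cal{K}_R)^{-1}$, and inverting inside the triangular Banach algebra gives $I+\cal{K}_R=(I+\cal{V}_-)^{-1}(I+\cal{V}_+)^{-1}=:(I+\cal{L})(I+\cal{U})$ with $\cal{L}=(I+\cal{V}_-)^{-1}-I$ lower-triangular and $\cal{U}=(I+\cal{V}_+)^{-1}-I$ upper-triangular; this is (\ref{factorization}), and (\ref{kernels}) is nothing but the defining formulas for $V_\pm$.

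The only real obstacle is the analytic bookkeeping: one needs to know that $\Gamma_r$ is smooth enough in $r$ to apply the fundamental theorem of calculus to $\Gamma_r(x,y)$ for fixed off-diagonal $(x,y)$, that the off-diagonal kernel identities upgrade to genuine operator identities (both sides being integral operators with $\hat{C}$ kernels), and that everything stays controlled up to $r=R$ --- all of which is exactly what Lemma~\ref{lemma-discont} (together with the continuity of $K$) is set up to supply.
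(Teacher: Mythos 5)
Your proposal is correct and follows essentially the same route as the paper: define $V_{\pm}$ by (\ref{kernels}), integrate the evolution formula of Lemma \ref{lemma-discont} in $r$ over $[y,R]$ (resp.\ $[x,R]$) to verify (\ref{factor}), conclude $(I+\cal{V}_+)(I+\cal{V}_-)=(I+\cal{K}_R)^{-1}$ and invert within the triangular algebra, while the converse is handled by truncating the factors with $\Pi_{[0,r]}$. Your remark that the working hypothesis is really invertibility on $(0,R]$ matches what the paper's own proof uses.
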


\begin{proof}
 Indeed, assume that $I+\cal{K}_r$ is invertible for any
$0<r\leq R$ and $\Gamma _{r}(x,y)$ is the resolvent kernel. Define
$V_{\pm }$ by (\ref{kernels}). Now, let us check (\ref{factor}).
Indeed, from Lemma \ref{lemma-discont},
\[
\Gamma _{y}(x,y)-\int\limits_{y}^{R}\Gamma _{u}(x,u)\Gamma
_{u}(u,y)du=
\]
\[
=\Gamma _{y}(x,y)+\int\limits_{y}^{R}\frac{\partial }{\partial u}%
\Gamma _{u}(x,y)du =\Gamma _{R}(x,y),0<x<y<R
\]%
The case $x>y$ can be checked in the same way. Thus, we have
(\ref{factor}), which means that
$I-\cal{G}_R=(I+\cal{V}_+)(I+\cal{V}_-)$. Now,
(\ref{factorization}) is straightforward. Conversely, assume that
the factorization (\ref{factorization}) exists.  Then
\[
I+\cal{K}_r=\Pi_{[0,r]}
(1+\cal{L})(1+\cal{U})\Pi_{[0,r]}=\Pi_{[0,r]}
(1+\cal{L})\Pi_{[0,r]}\Pi_{[0,r]}(1+\cal{U})\Pi_{[0,r]}
\]
 and it
is clearly invertible. \end{proof}

Another important class of factorizations is the following one.
Instead of integral operator on $L^2[0,R]$ we consider an integral
operator on $L^2[-R,R]$ and define the lower-triangular operator
 as
\[
\hat{\cal{L}}f(x)=\int\limits_{-|x|}^{|x|} \hat L(x,y)f(y)dy
\]
where $\hat{L}(x,y)$ is continuous in $\{ 0\leq x\leq R, |y|\leq
x\}$ and in $\{-R\leq x\leq 0, |y|\leq |x|\}$. Introduce
$\hat\Omega_-=\{|y|\leq |x|\leq R\}$, $\hat\Omega_+=\{|x|\leq
|y|\leq R\}$ and redefine $\hat\Delta_-=\{-R\leq y\leq x\leq R\}$,
$\hat\Delta_+=\{-R\leq x\leq y\leq R\}$.

Similarly, we  say that $\hat{\cal{U}}$ is upper-triangular if
$\hat{\cal{U}}^*$ is lower-triangular. These newly defined
lower-triangular operators possess the same algebraic properties:
sum and product of two lower-triangular operators is
lower-triangular, $(I+\hat{\cal{L}})^{-1}-I$ exists and is
lower-triangular. The same is true about the upper-triangular
operators. In general, the definition of  lower(upper)-triangular
operator depends on the choice of the so-called chain of
orthoprojectors (\cite{GohKr}, Chapter 4).

The natural question is when can we factor the operator
\begin{equation}\label{lower-upper}
I+\hat{\cal{K}}_R=(I+\hat{\cal{L}})(I+\hat{\cal{U}})
\end{equation}
where $\hat{\cal{K}}_r$ is defined as
\[
\hat{\cal{K}}_rf=\int\limits_{-r}^r \hat K(x,y)f(y)dy, 0<r\leq R
\]
We have
\begin{lemma}
Let $\hat K(x,y)\in \hat{C}([-R,R]^2)$ and $-1\notin
Spec(\hat{\cal{K}}_r)$ for any $0<r\leq R$. Then,
\begin{eqnarray*}
\partial \hat\Gamma _{r}(s,t)/\partial r=-\Bigl[\hat\Gamma
_{r}(s,r)\hat\Gamma
_{r}(r,t)+\hat\Gamma_r(s,-r)\hat\Gamma_r(-r,t)\Bigr],\, s,t\in
\hat{\Delta}_{\pm}
\end{eqnarray*}%
The function  $\hat\Gamma _{r}(s,t)\in \hat{C}([-r,r]^2)$.   It is
continuously differentiable in $r$ for $s,t\in \hat\Delta_{\pm}$
and the derivative $\partial \hat\Gamma _{r}(s,t)/\partial r$ is
also from
 $\hat{C}([-r,r]^2)$.
\label{lemma-discont-1}
\end{lemma}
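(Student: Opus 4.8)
The plan is to mimic the proof of Lemma \ref{lemma-discont} essentially verbatim, the only new feature being the two-parameter boundary of the region over which $\hat{\cal{K}}_r$ acts, namely $[-r,r]$, whose boundary moves at both endpoints $\pm r$ as $r$ grows. First I would record the defining resolvent identity: since $-1\notin Spec(\hat{\cal{K}}_r)$, the kernel $\hat\Gamma_r(s,t)$ exists and satisfies
\[
\hat\Gamma_r(s,t)+\int\limits_{-r}^r \hat K(s,u)\hat\Gamma_r(u,t)\,du=\hat K(s,t),\qquad -r<s,t<r.
\]
The regularity claims---that $\hat\Gamma_r(s,t)\in\hat{C}([-r,r]^2)$ and is continuously differentiable in $r$ for $s,t$ fixed in each of $\hat\Delta_\pm$, with $\partial_r\hat\Gamma_r\in\hat{C}([-r,r]^2)$ as well---follow from the modified Fredholm (Carleman--Hilbert) determinant formula $\hat\Gamma_r(s,t)=\hat\delta_r(s,t)/\hat\delta_r$ exactly as in Lemma \ref{lemma-discont}, invoking Lemma~\ref{fredholm-mod}; the jump on the diagonal $s=t$ is inherited from the jump of $\hat K$ there, and smoothness in $r$ comes from differentiating the entire functions $\hat\delta_r$ and $\hat\delta_r(s,t)$ with respect to the parameter of integration.

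Next I would differentiate the resolvent identity in $r$. Because the interval of integration is $[-r,r]$, the Leibniz rule now produces \emph{two} boundary terms:
\[
\frac{\partial\hat\Gamma_r(s,t)}{\partial r}+\int\limits_{-r}^r \hat K(s,u)\frac{\partial\hat\Gamma_r(u,t)}{\partial r}\,du
=-\hat K(s,r)\hat\Gamma_r(r,t)-\hat K(s,-r)\hat\Gamma_r(-r,t).
\]
Then I would take the two instances of the resolvent identity with $t$ replaced by $r$ and by $-r$,
\[
\hat\Gamma_r(s,\pm r)+\int\limits_{-r}^r \hat K(s,u)\hat\Gamma_r(u,\pm r)\,du=\hat K(s,\pm r),
\]
multiply the first by $-\hat\Gamma_r(r,t)$ and the second by $-\hat\Gamma_r(-r,t)$, and add. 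This shows that the function $-[\hat\Gamma_r(s,r)\hat\Gamma_r(r,t)+\hat\Gamma_r(s,-r)\hat\Gamma_r(-r,t)]$ satisfies the very same inhomogeneous integral equation (in the variable $s$, with $t$ a parameter) as $\partial_r\hat\Gamma_r(s,t)$. Since $-1\notin Spec(\hat{\cal{K}}_r)$, that equation has a unique solution, so the two functions coincide, which is the claimed formula.

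The main obstacle, such as it is, is purely technical: one must make sure the differentiation under the integral sign and the Leibniz boundary terms are legitimate given only $\hat K\in\hat{C}([-R,R]^2)$ rather than full continuity---i.e. one has to check that the one-sided limits $\hat\Gamma_r(r,t)$ and $\hat\Gamma_r(-r,t)$ (approached from inside $[-r,r]$) are the relevant ones and that no spurious contribution from the diagonal jump appears. This is handled exactly as in Lemma \ref{lemma-discont}: the diagonal discontinuity of $\hat K$ and of $\hat\Gamma_r$ lies on a set of measure zero in the $u$-integration, so it does not affect the integrals, and the boundary evaluations are continuous from the appropriate side because $s,t$ are held in the open triangles $\hat\Delta_\pm$. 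Finally, as in the remark following Lemma \ref{lemma-discont}, one notes that the right-hand side of the displayed formula is actually continuous on all of $[-r,r]^2$, so $\partial_r\hat\Gamma_r$ extends continuously across the diagonal even though $\hat\Gamma_r$ itself need not.
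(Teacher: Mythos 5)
Your proposal is correct and follows essentially the same route as the paper: differentiate the resolvent identity on $[-r,r]$ to pick up the two boundary terms at $\pm r$, then show $-[\hat\Gamma_r(s,r)\hat\Gamma_r(r,t)+\hat\Gamma_r(s,-r)\hat\Gamma_r(-r,t)]$ solves the same uniquely solvable integral equation, with the regularity statements imported from the modified Fredholm formula exactly as in Lemma \ref{lemma-discont}. Nothing essential differs from the paper's argument.
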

\begin{proof}
We will only check the formula for derivative. The other
properties can be checked just like in Lemma \ref{lemma-discont}.
We have
\begin{equation}\label{expanded-resolv}
\hat\Gamma_r(s,t)+\int\limits_{-r}^r \hat
K(s,u)\hat\Gamma_r(u,t)du=\hat K(s,t)
\end{equation}
Differentiating in $r$, we get $(s\neq t)$
\begin{equation}\label{expanded}
\frac{\partial \hat\Gamma_r(s,t)}{\partial r} +\int\limits_{-r}^r \hat K(s,u) \frac{%
\partial \hat\Gamma_r(u,t)}{\partial r}
du=-(\hat K(s,r)\hat\Gamma_r(r,t)+\hat K(s,-r)\hat\Gamma_r(-r,t))
\end{equation}
On the other hand, take (\ref{expanded-resolv}) with $t=\pm r$,
multiply by $-\hat\Gamma_r(\pm r,t)$ and add:
\begin{eqnarray*}
\left[-(\hat\Gamma_r(s,r)\hat\Gamma_r(r,t)+\hat\Gamma_r(s,-r)\hat\Gamma_r(-r,t)\right]+
\end{eqnarray*}
\[
+\int\limits_{-r}^r \hat K(s,u)\cdot
\left[-(\hat\Gamma_r(u,r)\hat\Gamma_r(r,t)+\hat\Gamma_r(u,-r)\hat\Gamma_r(-r,t)\right]du=
\]
\[
-(\hat K(s,r)\hat\Gamma_r(r,t)+\hat K(s,-r)\hat\Gamma_r(-r,t))
\]
Comparison with (\ref{expanded}) finishes the proof.
\end{proof}
Now, if we have (\ref{lower-upper}), then
$(I+\hat{\cal{K}}_R)^{-1}=I-\hat{\cal{G}}_R
=(I+\hat{\cal{V}}_{+})(I+\hat{\cal{V}}_{-})$, where $%
\hat{\cal{V}}_{\pm }$ is upper(lower)-triangular with kernels
$\hat{V}_{\pm}(x,y), -R\leq x,y\leq R$. Moreover, for the
resolvent kernel

\begin{equation}\label{expanded-res}
\hat\Gamma_R (x,y)=\left\{
\begin{array}{cc}
\displaystyle
-\hat{V}_{+}(x,y)-\int\limits_{|y|<|u|<R}\hat{V}_{+}(x,u)\hat{V}_{-}(u,y)du,
 & (x,y)\in \hat\Omega_+ \\
\displaystyle
-\hat{V}_{-}(x,y)-\int\limits_{|x|<|u|<R}\hat{V}_{+}(x,u)\hat{V}_{-}(u,y)du,
& (x,y)\in \hat\Omega_-
\end{array}%
\right.
\end{equation}
The next Theorem provides the needed factorization
\begin{theorem}
The integral operator $\hat{\cal{K}}_R$ with kernel
$\hat{K}(x,y)\in \hat{C}([-R,R]^2)$ admits factorization
(\ref{lower-upper}) if and only if
 $I+\hat{\cal{K}}_r$ is invertible in $L^2[-r,r]$ for any $0<r\leq R$. In
this case,
\begin{equation}
\begin{array}{ccc}
\hat{V}_{+}(x,y) &=&-\hat\Gamma _{|y|}(x,y),\ (x,y)\in \hat\Omega_+ \\
\hat{V}_{-}(x,y) &=&-\hat\Gamma _{|x|}(x,y),\ (x,y)\in
\hat\Omega_-
\end{array}
\label{kernels1}
\end{equation}
 \label{volter1}
\end{theorem}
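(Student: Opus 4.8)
The plan is to mirror the proof of Theorem~\ref{volter} (the $L^2[0,R]$ case) in the new geometry determined by the chain of orthoprojectors $\Pi_{[-r,r]}$. First I would dispose of the easy direction: if the factorization (\ref{lower-upper}) holds, then for $0<r\leq R$ one writes $I+\hat{\cal{K}}_r=\Pi_{[-r,r]}(I+\hat{\cal{L}})(I+\hat{\cal{U}})\Pi_{[-r,r]}$ and uses the fact that $\Pi_{[-r,r]}$ commutes appropriately with lower- and upper-triangular operators in this chain — i.e. $\Pi_{[-r,r]}(I+\hat{\cal{L}})\Pi_{[-r,r]}\cdot\Pi_{[-r,r]}(I+\hat{\cal{U}})\Pi_{[-r,r]}=\Pi_{[-r,r]}(I+\hat{\cal{L}})(I+\hat{\cal{U}})\Pi_{[-r,r]}$, because a lower-triangular operator preserves $L^2[-r,r]=\mathrm{Ran}\,\Pi_{[-r,r]}$ and an upper-triangular one leaves its orthogonal complement invariant. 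Since truncated Volterra-type operators still have $I+(\cdot)$ invertible, $I+\hat{\cal{K}}_r$ is invertible.

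For the converse, assume $I+\hat{\cal{K}}_r$ is invertible for all $0<r\leq R$, so the resolvent kernels $\hat\Gamma_r(s,t)$ exist and, by Lemma~\ref{lemma-discont-1}, are jointly continuous on each $\hat\Delta_\pm$ and continuously differentiable in $r$ with
\[
\partial\hat\Gamma_r(s,t)/\partial r=-\bigl[\hat\Gamma_r(s,r)\hat\Gamma_r(r,t)+\hat\Gamma_r(s,-r)\hat\Gamma_r(-r,t)\bigr].
\]
I would then \emph{define} $\hat V_\pm$ by (\ref{kernels1}) and verify that the formula (\ref{expanded-res}) for $\hat\Gamma_R$ holds. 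The key computation is the analogue of the one in the proof of Theorem~\ref{volter}: for $(x,y)\in\hat\Omega_+$ (so $|x|\le|y|$), integrate the derivative formula in $r$ from $r=|y|$ up to $r=R$. The integrand $\partial\hat\Gamma_r(x,y)/\partial r$ is, by the lemma, $-[\hat\Gamma_r(x,r)\hat\Gamma_r(r,y)+\hat\Gamma_r(x,-r)\hat\Gamma_r(-r,y)]$; after the substitution $u=r$ (resp. $u=-r$) this becomes exactly $-\hat V_+(x,u)\hat V_-(u,y)$ integrated over $|y|<|u|<R$ (the two pieces $r>0$ and $r<0$ of the $u$-integral correspond to the two terms), and the boundary term at $r=|y|$ gives $\hat\Gamma_{|y|}(x,y)=-\hat V_+(x,y)$. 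Rearranging yields (\ref{expanded-res}); the case $(x,y)\in\hat\Omega_-$ is symmetric, obtained by fixing $y$ and letting $r$ run from $|x|$ to $R$. One must check that $\hat V_+$ is supported on $\hat\Omega_+$ and $\hat V_-$ on $\hat\Omega_-$, which is immediate from the definition (\ref{kernels1}) together with the support properties of $\hat\Gamma_r$ — at parameter $|y|$ the resolvent kernel $\hat\Gamma_{|y|}(x,y)$ only makes sense when $|x|\le|y|$, etc.

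Having established (\ref{expanded-res}), I would read it as the statement $I-\hat{\cal{G}}_R=(I+\hat{\cal{V}}_+)(I+\hat{\cal{V}}_-)$ with $\hat{\cal{V}}_+$ upper-triangular and $\hat{\cal{V}}_-$ lower-triangular in the chain $\{\Pi_{[-r,r]}\}$; taking inverses (legitimate since each factor is $I$ plus a Volterra-type operator for this chain, hence invertible with triangular inverse) gives $I+\hat{\cal{K}}_R=(I+\hat{\cal{V}}_-)^{-1}(I+\hat{\cal{V}}_+)^{-1}=(I+\hat{\cal{L}})(I+\hat{\cal{U}})$ with $\hat{\cal{L}}=(I+\hat{\cal{V}}_-)^{-1}-I$ lower-triangular and $\hat{\cal{U}}=(I+\hat{\cal{V}}_+)^{-1}-I$ upper-triangular, which is the desired factorization (\ref{lower-upper}). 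The main obstacle I anticipate is purely bookkeeping: carefully matching the two-term right-hand side of the derivative formula in Lemma~\ref{lemma-discont-1} to the single integral $\int_{|y|<|u|<R}$ in (\ref{expanded-res}) via the sign of $u$, and checking continuity/support of $\hat V_\pm$ across the diagonal and across $x=\pm y$ so that the operator identities genuinely live in the correct triangular algebras for this nonstandard chain of orthoprojectors rather than the usual $[0,r]$ chain.
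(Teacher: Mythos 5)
Your proposal is correct and follows essentially the same route as the paper: define $\hat V_\pm$ by (\ref{kernels1}), verify (\ref{expanded-res}) by integrating the two-term derivative formula of Lemma \ref{lemma-discont-1} in $r$ (the two terms being precisely the $u>0$ and $u<0$ halves of $\int_{|y|<|u|<R}$), and obtain the easy direction by truncating the factorization, exactly as in the proof of Theorem \ref{volter}. One small correction to your justification of that easy direction: in this chain it is the \emph{upper}-triangular operator that preserves $\mathrm{Ran}\,\Pi_{[-r,r]}=L^2[-r,r]$ and the \emph{lower}-triangular one that leaves its orthogonal complement invariant (you stated the reverse), but either of these facts already gives the identity $\Pi_{[-r,r]}(I+\hat{\cal{L}})(I+\hat{\cal{U}})\Pi_{[-r,r]}=\Pi_{[-r,r]}(I+\hat{\cal{L}})\Pi_{[-r,r]}\,\Pi_{[-r,r]}(I+\hat{\cal{U}})\Pi_{[-r,r]}$ that you need.
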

\begin{proof}
Calculating the right-hand side in (\ref{expanded-res}) and using
Lemma \ref{lemma-discont-1}  we get (for $(x,y)\in
\hat{\Omega}_-$):
\[
-\hat{V}_-(x,y)-\int\limits_{|x|<|u|<R}
\hat{V}_+(x,u)\hat{V}_-(u,y)du=
\]
\[
=\hat\Gamma_{|x|}(x,y)-\int\limits_{|x|}^R
\hat\Gamma_u(x,u)\hat\Gamma_{u}(u,y)du-\int\limits_{-R}^{-|x|}\hat\Gamma_{|u|}(x,u)
\hat\Gamma_{|u|}(u,y)du
\]
\[=
\hat\Gamma_{|x|}(x,y)-\int\limits_{|x|}^R
\Bigl[\hat\Gamma_u(x,u)\hat\Gamma_{u}(u,y)du+\hat\Gamma_{u}(x,-u)
\hat\Gamma_{u}(-u,y)\Bigr]du
\]
\[
=\hat\Gamma_R(x,y)
\]
The case $(x,y)\in \Omega_+$ can be checked similarly. So,
(\ref{expanded-res}) is true. The other statements of the Theorem
can be verified following the proof of Theorem \ref{volter}.
\end{proof}
Notice that $\hat{V}_-(x,-x)=\hat{V}_+(x,-x), x\neq 0$. The
results of last Lemma and a Theorem can be easily generalized to
the case when the kernel $K(x,y)$ is allowed to have a
discontinuity of the first kind on $\{(x,y): y=- x\}$. We do not
do that since the class $\hat{C}([-R,R]^2)$ is exactly the one we
will need later on.

{\bf Remarks and Historical Notes.}

The proofs of the results in this section are partially taken from
\cite{GohKr}. In \cite{GohKr}, the general case of factorization
along the chain is considered. Recently, the factorization problem
for integral operators with less regular kernels was studied in
\cite{Myk1, Myk2}. Later on, we will need to use the factorization
of Fredholm integral operators along with regularity properties of
the kernels.

\newpage

\section{Continuous analogs of polynomials orthogonal on
the unit circle}\label{three}

In this section we start building the theory of continuous analogs
of polynomials orthogonal on the unit circle (OPUC). For the OPUC
basics, we refer the reader to \cite{Szego, Geronimus, Simon,
Khrushchev1}. Let $H(x)$ be Hermitian function defined on
$\mathbb{R}$ and $H(x)\in L^{1}(0,r)$ for any $r>0$. In the
Hilbert space $L^{2}[0,r]$, consider the following integral
operator
\begin{equation}
{\cal H}_{r}f(x)=\int\limits_{0}^{r}H(x-t)f(t)dt,\, 0<x<r \label{intop}
\end{equation}

This operator is called ``truncated Toeplitz" operator or operator
with the ``displacement  kernel" \cite{Sakh1, Krein1}. It is
obvious that for any $r>0$, this operator is self-adjoint,
compact, and its lower (upper) bound decreases (increases) in $r.$

\begin{definition}
Function $g(x)\in G_{\infty }$ has an accelerant if
there exists Hermitian function $H(x)$ (accelerant) defined on $\mathbb{R}$
such that%
\begin{equation}
g(x)=\frac{|x|}{2}+\int\limits_{0}^{x}(x-s)H(s)ds  \label{e1s2}
\end{equation}
for all $x \in \mathbb{R}$.
\end{definition}

\begin{theorem}
The function $H(x)$ generates $g(x)\in G_{\infty } $ by formula
(\ref{e1s2}) if and only if
\[
I+{\cal H}_{r}\geq 0
\]%
for any $r>0$ and inequality is understood in the operator sense.
\label{t1s2}
\end{theorem}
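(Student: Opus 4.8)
The plan is to connect the positivity $I+\mathcal{H}_r\ge 0$ with membership $g\in G_\infty$ via the integral-kernel characterization already available. Recall that $g\in G_\infty$ means exactly that the kernel $K(x,y)=g(x)+g(-y)-g(x-y)$ is positive definite on every $[0,r]$. The natural first step is therefore to compute $K(x,y)$ in terms of $H$ when $g$ is given by \eqref{e1s2}, and to show that the integral operator $\mathcal{K}_r$ with this kernel is, up to conjugation by a fixed unitary (or a bounded invertible) operator on $L^2[0,r]$, the same as $I+\mathcal{H}_r$. Concretely, since $g(x)=|x|/2+\int_0^x(x-s)H(s)\,ds$, one differentiates twice: $g''(x)=\delta(x)+H(x)$ in the distributional sense (this is exactly the remark made just above the theorem for the reference case $H=0$), so that $-\partial_x\partial_y K(x,y) = \delta(x-y)+H(x-y)$ for $0<x,y<r$. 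Thus $\mathcal{K}_r$ is an ``integrated'' version of $I+\mathcal{H}_r$; more precisely, writing the Volterra integration operator $(\mathcal{J}f)(x)=\int_0^x f$, one expects an identity of the shape $\mathcal{K}_r = \mathcal{J}(I+\mathcal{H}_r)\mathcal{J}^*$ on $L^2[0,r]$, after accounting for boundary terms coming from $g(x)$, $g(-y)$ and the behavior at $0$.

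The key computational step is to verify that identity carefully, handling the boundary contributions. Integrating by parts twice in $\langle \mathcal{K}_r f, f\rangle = \int_0^r\int_0^r K(x,y)f(y)\overline{f(x)}\,dy\,dx$ and using $g(0)=0$, one should find that all boundary terms either cancel (by the Hermitian symmetry $H(-x)=\overline{H(x)}$ and the structure $g(x)+g(-y)-g(x-y)$, which is built to kill the ``pure $x$'' and ``pure $y$'' pieces under the difference) or reassemble into $\langle (I+\mathcal{H}_r)F, F\rangle$ with $F(x)=\int_x^r f(t)\,dt$ (or $F(x)=\int_0^x f$, depending on which primitive is convenient). Since $f\mapsto F$ is a bounded bijection of $L^2[0,r]$ onto itself (its range being all of $L^2[0,r]$ as $f$ ranges over $L^2[0,r]$, using $F' = \pm f$ and $F$ vanishing at an endpoint), positivity of one operator is equivalent to positivity of the other, and the range issue is what makes the equivalence an ``if and only if'' rather than merely an implication. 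One must be a little careful that $H\in L^1_{\mathrm{loc}}$ is enough regularity for these manipulations; the integration by parts is justified because $g$ is absolutely continuous with locally absolutely continuous derivative minus the jump, and $f$ can first be taken continuous with compact support in $(0,r)$ and then the conclusion extended by density.

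For the converse direction one runs the same identity backwards: given Hermitian $H\in L^1_{\mathrm{loc}}(\mathbb{R})$ with $I+\mathcal{H}_r\ge0$ for all $r$, \emph{define} $g$ by \eqref{e1s2}; it is automatically continuous, Hermitian (because $H$ is), and vanishes at $0$, and the operator identity shows $\mathcal{K}_r\ge 0$ for every $r$, hence by Definition~1.7 and the remark that \eqref{in1} is equivalent to $\mathcal{K}_r\ge 0$ for Hermitian kernels, $g\in G_\infty$. (One should double-check that $K(x,y)=g(x)+g(-y)-g(x-y)$ built from this $g$ is genuinely continuous on $[0,r]^2$ so that the equivalence ``\eqref{in1} $\iff \mathcal{K}_r\ge0$'' applies; continuity of $g$ gives this.)

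\textbf{Main obstacle.} The crux is the bookkeeping of boundary terms in the double integration by parts — making sure that the combination $g(x)+g(-y)-g(x-y)$, together with $g(0)=0$ and the normalization $|x|/2$ producing the ``$\delta$'' that becomes the identity $I$, collapses exactly to $I+\mathcal{H}_r$ conjugated by the primitive operator, with no leftover rank-one or boundary pieces. A secondary technical point is justifying the manipulations at the low regularity $H\in L^1_{\mathrm{loc}}$ (as opposed to continuous $H$), which is handled by approximation/density and by working with the absolutely continuous primitive rather than with $H$ pointwise. Everything else — the bijectivity of the primitive map on $L^2[0,r]$, and quoting the Hermitian-kernel $\iff$ operator-positivity equivalence and the definition of $G_\infty$ — is routine given the earlier material.
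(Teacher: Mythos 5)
Your route is essentially the paper's own proof, just phrased at the operator level: the paper proves exactly your identity in quadratic-form language, testing the kernel $K(x,y)=g(x)+g(-y)-g(x-y)$ against $\varphi'$ for smooth $\varphi$ supported in $[0,r]$ and integrating by parts via (\ref{e1s2}) to get $\int\int K(x,y)\varphi'(y)\overline{\varphi'(x)}\,dxdy=((I+\mathcal{H}_r)\varphi,\varphi)$, with the boundary pieces (the terms proportional to $\varphi(0)$ in $I_1+I_2-I_3$) cancelling as you anticipate; the converse is obtained by writing an arbitrary test function $\psi$ as $\varphi'$ with $\varphi(x)=-\int_x^\infty\psi(s)\,ds$. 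So there is no genuinely different idea here, and the computation you defer is exactly the one the paper carries out.

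The one step of yours that is wrong as stated is the claim that $f\mapsto F$ (the primitive, $F(x)=\int_x^r f$ or $\int_0^x f$) is a bounded bijection of $L^2[0,r]$ onto itself. It is injective and bounded but far from surjective: its range consists of absolutely continuous functions with $L^2$ derivative vanishing at the chosen endpoint, a dense proper subspace of $L^2[0,r]$. Fortunately no surjectivity is needed, so this is a flaw in the justification rather than in the strategy: from $\langle K_rf,f\rangle=\langle(I+\mathcal{H}_r)F,F\rangle$, positivity of $\mathcal{K}_r$ gives $\langle(I+\mathcal{H}_r)F,F\rangle\ge0$ only for $F$ in that dense range, and one then invokes boundedness (and self-adjointness) of $I+\mathcal{H}_r$ to extend the inequality to all of $L^2[0,r]$; in the opposite direction the identity yields $\langle K_rf,f\rangle\ge0$ for every $f$ directly, which (for the continuous kernel $K$) is the positive definiteness defining $G_\infty$. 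The paper sidesteps the issue entirely by working with smooth compactly supported test functions, where the correspondence $\psi\leftrightarrow\varphi$ with $\psi=\varphi'$ is exact, and then using density to pass to $I+\mathcal{H}_r\ge0$ on $L^2[0,r]$.
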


\begin{proof} It is obvious that $g(x)$ is Hermitian, continuous, and
$g(0)=0$. Consider any $\varphi (x)\in C^{\infty }[0,\infty )$
with compact support
on $[0,r]$. For the kernel $K(x,y)=g(x)+g(-y)-g(x-y)$, we have%
\[
\int\limits_{0}^{\infty }\int\limits_{0}^{\infty }K(x,y)\varphi ^{\prime
}(y)%
\overline{\varphi ^{\prime }(x)}dxdy=I_{1}+I_{2}-I_{3}
\]%
For $I_{1}$ and $I_{2},$

\[
I_{1}=\overline{I_{2}}=-\varphi (0)\int\limits_{0}^{\infty
}g(x)\overline{\varphi ^{\prime }(x)}dx
\]%
\[
I_{3}=\int\limits_{0}^{\infty }\overline{\varphi ^{\prime
}(x)}\int\limits_{0}^{\infty
}g(x-y) \varphi ^{\prime }(y) dydx=-\varphi (0)%
\int\limits_{0}^{\infty }\overline{\varphi ^{\prime
}(x)}g(x)dx+\int\limits_{0}^{\infty
}\overline{\varphi ^{\prime }(x)}\int\limits_{0}^{\infty }g^{\prime }(x-y)
\varphi (y) dydx
\]%
Using (\ref{e1s2}) and integrating by parts, we have
\[
\int\limits_{0}^{\infty }\int\limits_{0}^{\infty }K(x,y)\varphi ^{\prime
}(y)%
\overline{\varphi ^{\prime }(x)}dxdy=\int\limits_{0}^{\infty
}\int\limits_{0}^{\infty }\varphi (y)\overline{\varphi (x)}%
H(x-y)dxdy+\int\limits_{0}^{\infty }|\varphi (x)|^{2}dx=\left( (1+{\cal H}%
_{r})\varphi ,\varphi \right)
\]%
Thus, if $g\in G_{\infty }$, then $I_{1}+I_{2}-I_{3}\geq 0$ and $1+{\cal H}%
_{r}\geq 0$ for any $r>0.$ Conversely, assume that $1+{\cal H}_{r}\geq 0$
for any $r>0$. Take any $\psi (x)\in C^{\infty }[0,\infty )$ with compact
support in $[0,r]$. It can be written as
\[
\psi (x)=\varphi ^{\prime }(x)
\]%
where
\[
\varphi (x)=-\int\limits_{x}^{\infty }\psi (s)ds
\]%
and $\varphi (x)\in C^{\infty }[0,\infty )$, $\varphi (x)$ is supported on
$%
[0,r]$. Therefore,
\[
\int\limits_{0}^{\infty }\int\limits_{0}^{\infty }K(x,y)\psi (y)\overline{%
\psi (x)}dxdy=((1+{\cal H_r})\varphi,\varphi)\geq 0
\]%
and $g(x)\in G_{\infty }$. \end{proof}

As a corollary from Theorem \ref{t1s1} and Theorem \ref{t1s2}, we get the
following
formula for an accelerant%
\begin{equation}
\frac{|x|}{2}+\int\limits_{0}^{x}(x-s)H(s)ds=i\beta x+\int\limits_{-\infty
}^{\infty }\left( 1+\frac{i\lambda x}{1+\lambda ^{2}}-\exp (i\lambda
x)\right) \frac{d\sigma (\lambda )}{\lambda ^{2}}  \label{e2s2}
\end{equation}
where $\beta \in \mathbb{R},$ and
\[
\int\limits_{-\infty }^{\infty }\frac{d\sigma (\lambda )}{1+\lambda ^{2}}%
<\infty
\]%
The straightforward calculation shows that the trivial case
$H(x)=0$ corresponds to $\beta=0$ and
$\sigma_0(\lambda)~=~\lambda/(2\pi)$.

Essentially, (\ref{e2s2}) means that
\[
\int\limits_{-\infty }^{\infty }\exp (i\lambda x)d\sigma (\lambda
)\,``="\,\delta (x)+H(x)
\]%
or
\[
\int\limits_{-\infty }^{\infty }\exp (i\lambda x)d(\sigma (\lambda
)-\sigma_0(\lambda))\,``="\, H(x)
\]%
 In other words, $H(x)$ are ``moments" of $\sigma-\sigma_0$.
Clearly, the integrals in the last formulas do not have to
converge in the usual sense.
\begin{lemma} \label{findingbetta}
Assume that $\sigma$ from (\ref{e2s0}) is known and $g$ has an
accelerant, then the constant $\beta$ is defined uniquely by the
formula
\begin{equation}
\beta=-i\Psi'(0)\label{betaa}
\end{equation}
where
\begin{equation}
\Psi(x)=-\int\limits_{-\infty }^{\infty }\left( 1+\frac{i\lambda
x}{1+\lambda ^{2}}-\exp (i\lambda x)\right) \frac{d[\sigma
(\lambda )-\sigma_0(\lambda)]}{\lambda ^{2}} \in C^1(\mathbb{R})
\end{equation}
\end{lemma}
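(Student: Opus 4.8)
The plan is to start from the two integral representations available: the one for $g$ coming from Theorem~\ref{t1s1}, namely
\[
g(x)=i\beta x+\int\limits_{-\infty}^\infty\left(1+\frac{i\lambda x}{1+\lambda^2}-\exp(i\lambda x)\right)\frac{d\sigma(\lambda)}{\lambda^2},
\]
and the representation of the trivial accelerant $H=0$, which by the computation already in the text corresponds to $\beta_0=0$ and $\sigma_0(\lambda)=\lambda/(2\pi)$, so that
\[
\frac{|x|}{2}=\int\limits_{-\infty}^\infty\left(1+\frac{i\lambda x}{1+\lambda^2}-\exp(i\lambda x)\right)\frac{d\sigma_0(\lambda)}{\lambda^2}.
\]
Subtracting, and using the defining relation $g(x)=\frac{|x|}{2}+\int_0^x(x-s)H(s)\,ds$ from the existence of an accelerant, we obtain
\[
\int\limits_0^x(x-s)H(s)\,ds = i\beta x - \Psi(x),
\]
where $\Psi$ is exactly the function in the statement. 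So the first step is to write this identity cleanly; note the integral on the left vanishes to second order at $x=0$ (its value and first derivative at $0$ are both zero, since $H\in L^1_{\rm loc}$ makes $\int_0^x(x-s)H(s)\,ds = O(x)\cdot o(1)$ near $0$, in fact it is $\int_0^x\int_0^u H(s)\,ds\,du$).

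The second step is to establish that $\Psi\in C^1(\mathbb{R})$, as asserted in the displayed definition of $\Psi$. This is where the main care is needed. The integrand $1+\frac{i\lambda x}{1+\lambda^2}-\exp(i\lambda x)$ divided by $\lambda^2$ is, for the measure $\sigma$ alone, only controlled by $\int d\sigma/(1+\lambda^2)<\infty$ (giving at best the $C(1+x^2)$ bound recalled after Theorem~\ref{t1s1}); the gain comes from the \emph{difference} $\sigma-\sigma_0$. The key point is that the left-hand side $i\beta x-\int_0^x(x-s)H(s)\,ds$ is manifestly $C^1$ (indeed its derivative is $i\beta-\int_0^x H(s)\,ds$, absolutely continuous since $H\in L^1_{\rm loc}$), and this is an identity of functions on all of $\mathbb{R}$; hence $\Psi$ inherits $C^1$ regularity from that side. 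So rather than proving $\Psi\in C^1$ by direct estimates on the integral, I would deduce it from the accelerant identity itself — the real content of the lemma is the \emph{formula} $\beta=-i\Psi'(0)$, not the regularity, which is essentially given.

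The third and final step is then immediate: differentiate the identity
\[
\Psi(x)=i\beta x-\int\limits_0^x(x-s)H(s)\,ds
\]
once and evaluate at $x=0$. Since $\frac{d}{dx}\int_0^x(x-s)H(s)\,ds=\int_0^x H(s)\,ds$ vanishes at $x=0$, we get $\Psi'(0)=i\beta$, i.e. $\beta=-i\Psi'(0)$, which is \eqref{betaa}. Uniqueness of $\beta$ given $\sigma$ follows because $\Psi$ depends only on $\sigma$ (through $\sigma-\sigma_0$, and $\sigma_0$ is fixed), so the right-hand side of the formula is determined.

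I expect the only genuine obstacle to be the justification that differentiation under the integral sign is legitimate at $x=0$ for $\Psi$ — i.e., pinning down $\Psi'(0)$ intrinsically rather than via the accelerant side. If one wants the intrinsic argument, one writes the difference quotient $\frac{\Psi(x)-\Psi(0)}{x}=\frac{\Psi(x)}{x}$ (since $\Psi(0)=0$) as $-\int_{-\infty}^\infty\frac{1}{x}\left(1+\frac{i\lambda x}{1+\lambda^2}-\exp(i\lambda x)\right)\frac{d[\sigma-\sigma_0]}{\lambda^2}$ and must produce a dominating function integrable against $d[\sigma-\sigma_0]/\lambda^2$ uniformly for small $x$; the bound $\bigl|1+\frac{i\lambda x}{1+\lambda^2}-e^{i\lambda x}\bigr|\le C\min(\lambda^2 x^2,|\lambda x|+\frac{|\lambda x|}{1+\lambda^2})$ handles large $\lambda$ but one needs the extra decay of $\sigma-\sigma_0$ near infinity encoded in the accelerant's existence. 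This is precisely why routing the regularity through the smooth left-hand side is the clean path, and that is the route I would take.
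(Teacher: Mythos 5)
Your proposal is correct and follows essentially the same route as the paper: the paper's proof is simply "take the derivative in formula (\ref{e2s2})", and your argument is exactly that, after subtracting the free representation of $|x|/2$ to isolate $\Psi(x)=i\beta x-\int_0^x(x-s)H(s)\,ds$ and noting the right-hand side is $C^1$ with derivative $i\beta-\int_0^x H(s)\,ds$. Your observation that the $C^1$ regularity of $\Psi$ is most cleanly inherited from the accelerant side of the identity is consistent with how the paper intends the (one-line) proof to be read.
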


\begin{proof} The proof follows from the formula (\ref{e2s2}) by
taking the derivative.
\end{proof}

It is up to us to choose the regularity class for  $H(x)$. In
these notes, we will consider two important cases:
\begin{equation}
H(x)\in C[0,\infty), \label{regularity1}
\end{equation}
and
\begin{equation}
H(x)\in L^2_{\rm loc}(\mathbb{R}) \label{regularity-l2loc}
\end{equation}
Other classes of regularity (e.g., $H(x)\in L^p_{\rm
loc}(\mathbb{R}), p\geq 1$) can also be treated. Although, the
case $p=1$ needs special consideration (see discussion in
\cite{Krein1, krein-madamyan}).

We will start our construction with the continuous accelerants.
Then the $L^2_{\rm loc}$ case will be treated by an approximation
argument in the separate section. For (\ref{regularity1}), $H(x)$
might have discontinuity at $0$ but the left and the right limits
must exist and $H(-0)=\overline{H(+0)}$ due to Hermite property.
Notice that the operator $\cal{H}_r$ has a kernel from the class
$\hat{C}([0,r]^2), r>0$.


Assume that we have the strict inequality
\begin{equation}
1+{\cal H}_{r}>0 \label{strict}
\end{equation}
for any $r>0.$ Then, there is the resolvent kernel $\Gamma
_{r}(t,s)$ with nice properties (see Lemma \ref{lemma-discont})
such that

\begin{equation}
\Gamma_r(s,t)=\overline{\Gamma_r(t,s)}
\end{equation}
\begin{equation}
\Gamma_{r}(t,s)+\int\limits_{0}^{r}H(t-u)\Gamma
_{r}(u,s)du=H(t-s), \label{basic1}
\end{equation}
\begin{equation}
\Gamma_r(t,s)+\int\limits_0^r \Gamma_r (t,u) H(u-s) du=H(t-s), \
0\leq s,t\leq  r \label{basic2}
\end{equation}
We emphasize that the last two identities should be understood as
equalities for functions from $\hat{C}([0,r]^2)$ class. In the
meantime, if $H(\pm 0)\in \mathbb{R}$, then $H(x)$ is actually
continuous at $0$ and by Lemma \ref{lemma23} the kernel
$\Gamma_r(x,y)$ is continuous on the diagonal as well.

 Let us introduce the following ``continuous polynomials"
\begin{equation}
P(r,\lambda )=\exp (i\lambda r)-\int\limits_{0}^{r}\Gamma _{r}(r,s)\exp
(i\lambda s)ds  \label{e3s2}
\end{equation}
\begin{equation}
P_{\ast }(r,\lambda )=1-\int\limits_{0}^{r}\Gamma _{r}(s,r)\exp (i\lambda
(r-s))ds )  \label{e31s2}
\end{equation}%
Notice that function $P(r,\lambda )$ is of exponential type exactly $r$, and
$P_{\ast }(r,\lambda )$ is of exponential type not greater than $r$.

Formulas (\ref{e3s2}) and (\ref{e31s2}) can be easily explained.
They are quite natural and have analogs in the OPUC theory.  Let
us consider positive finite measure $\mu (\theta )$ on the unit
circle. We will denote the inner product of two functions $f$ and
$g$ in $L^2(d\mu)$ by $(f,g)_{\mu}$. Let the sequence of moments
be
\[
c_{n}=\int\limits_{-\pi }^{\pi }\exp (-in\theta )d\mu (\theta ),
\,
n\in \mathbb{Z^+}
\]%
Let $\{e_j\}, e_j=[0,\ldots, 0,1,0,\ldots, 0]^t, (j=0,\ldots) $
denotes the standard orthonormal basis. Consider the following
Toeplitz matrix%
\begin{equation}
\cal{T}_{n}=\left[
\begin{array}{cccc}
c_{0} & c_{1} & \ldots & c_{n} \\
\bar{c}_{1} & c_{0} & \ldots & c_{n-1} \\
\ldots & \ldots & \ldots & \ldots \\
\bar{c}_{n} & \bar{c}_{n-1} & \ldots & c_{0}%
\end{array}%
\right]  \label{e4s2}
\end{equation}
If $D_n=\det \cal{T}_n$, then one can easily show that
\begin{equation}
P_{n}(z)=\frac{1}{D_{n-1}}\left\vert
\begin{array}{ccccc}
c_{0} & c_{1} & \ldots & c_{n-1} & 1 \\
\overline{c}_{1} & c_{0} & \ldots & c_{n-1} & z \\
\ldots & \ldots & \ldots & \ldots & \ldots \\
\overline{c}_{n-1} & \overline{c}_{n-2} & \ldots & c_{0} & z^{n-1} \\
\overline{c}_{n} & \overline{c}_{n-1} & \ldots & \overline{c}_{1} & z^{n}%
\end{array}%
\right\vert \label{heine}
\end{equation}
are monic polynomials of degree $n$ (i.e., the coefficient in
front of $z^n$ is $1$) orthogonal with respect to $d\mu$. In
formula (\ref{e3s2}), exponents $\exp (i\lambda r),r\geq 0,\lambda
\in \mathbb{C}$ play the role of $z^{n},n\in \mathbb{Z^{+}},z\in
\mathbb{C}$. The exponential type $r$ is an analog of the integer
index $n$. If one introduces the truncated discrete Toeplitz
operator $\cal{T}_{n}$ given by the matrix (\ref{e4s2}),
then $%
P_{n}(z)$ is the last component of the following vector $D_n
D_{n-1}^{-1} {\cal{T}}_{n}^{-1}[1,z,\ldots ,z^{n}]^{t}$. That
follows from Kramer's rule and (\ref{heine}). Besides Kramer's
rule, there is the following algebraic explanation of
orthogonality relation. Consider the last component of the vector
${\cal{T}}_{n}^{-1}[1,z,\ldots ,z^{n}]^{t}$, i.e.
$({\cal{T}}_{n}^{-1}[1,z,\ldots ,z^{n}]^{t}, e_n)$. Assume that we
have two polynomials of degree not greater than $n$: $A(z)=a_n
z^n+\ldots+a_0$, $B(z)=b_n z^n+\ldots+b_0$. Then
\begin{equation}
(A,B)_{\mu}=(\overline{\cal{T}}_n a,b) \label{p1e1}
\end{equation}
where $\overline{\cal{T}}_n$ is obtained from $\cal{T}_n$ by
conjugating all elements, $a=[a_0,\ldots, a_n]^t$, $b=[b_0,\ldots,
b_n]^t$. Therefore,
\[
(z^j, (\cal{T}_n^{-1}[1,z,\ldots, z^n]^t,e_n))_{\mu}=(z^j,
([1,z,\ldots, z^n]^t,\cal{T}_n^{-1}e_n))_\mu=(\overline{\cal{T}}_n
e_j, \overline{\cal{T}}^{-1}_n e_n)=\delta_{jn}
\]

Now, consider the function $f_{r}(x)=\exp (i\lambda x)$ on an
interval
$%
x\in \lbrack 0,r]$. Then, $P(r,\lambda )$ is the value of the function $(1+%
{\cal H}_{r})^{-1}f_{r}$ at the point $x=r$. Because the spectrum
of the truncated continuous Toeplitz operator $\cal{H}_r$ always
contains $0$, we need to take $(1+{\cal H}_{r})^{-1}$, rather than
${\cal{T}}_n^{-1}$ in the discrete case. This
normalization makes the function $P(r,\lambda )$ ``monic"-- it has $%
\exp (i\lambda r)$ term. In the theory of orthogonal polynomials on the unit
circle, there is a natural procedure that maps any polynomial of degree $n$
to its ``reciprocal". It is defined in the following way

\begin{equation}
\lbrack \ast ](a_{n}z^{n}+\ldots +a_{0})=\bar{a}_{0}z^{n}+\ldots
+\bar{a}_{n}
\end{equation}
or
\begin{equation}
\lbrack \ast ](A_n(z))=z^n \overline{A(\bar{z}^{-1})}
\end{equation}
Notice that the degree of the polynomial might decrease under this
operation. For example, in the space of polynomials of degree not
greater than $n$, $[\ast] (z^n)=1$. The natural analog of $[\ast
]$ operation for the function $f_{r}(\lambda )$ of the exponential
type $r$ is given by the following formula
\begin{equation}
\lbrack \ast ](f_{r}(\lambda ))=\exp (i\lambda
r)\overline{f_{r}(\bar{\lambda})%
}
\end{equation}%
Function $P_{\ast }(r,\lambda )$ is a continuous analog of $[\ast
](P_{n}(z))$, i.e.
\begin{equation}
P_{\ast} (r,\lambda)=\lbrack \ast ] (P(r,\lambda)) \label{mes1}
\end{equation}
It is clear from the formula (\ref{e3s2}) and identity $\Gamma _{r}(s,t)=%
\overline{\Gamma _{r}(t,s)}$. In the discrete case, polynomials $P_{n}(z)$
are {\bf orthogonal} with respect to $d\mu $. In the continuous setting, $%
\{P(r,\lambda )\} $ turns out to be {\bf orthonormal} family in
$L^2(d\sigma)$ with $d\sigma $ from the integral representation
(\ref{e2s2}). Orthogonality is understood in the usual sense--
just like for the Fourier transform. Thus, rather than in the
discrete case, continuous monic polynomials are already
normalized.

Recall the factorization results from the previous section. Notice
that since $I+\cal{H}_r>0$ for any $r>0$, the Theorem \ref{volter}
is applicable. Let us fix any $R>0$ and consider the factorization
(\ref{factorization}): $I+\cal{H}_R=(I+\cal{L})(I+\cal{U})$, where
the lower-diagonal $\cal{L}$ has kernel $L(x,y)$ and the
upper-diagonal $\cal{U}$ has kernel $U(x,y)$, $0<x,y<R$. Since
$\cal{H}$ is Hermitian, $\cal{L}=\cal{U}^*$. If
$I+\cal{L}=(I+\cal{V}_-)^{-1}$ and $I+\cal{U}=(I+\cal{V}_+)^{-1}$,
then

\begin{lemma}
The following formula is true
\begin{equation}
\exp (i\lambda r)=P(r,\lambda
)+\int\limits_{0}^{r}L(r,s)P(s,\lambda )ds, 0<r\leq R \label{eee2}
\end{equation}
\label{transformation}
\end{lemma}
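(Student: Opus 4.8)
The plan is to express both sides of (\ref{eee2}) in terms of the resolvent kernel $\Gamma_R$ and the factorization kernels $V_\pm$, then reduce the identity to the factorization formula (\ref{factor}) from Theorem \ref{volter}. First I would recall that by Theorem \ref{volter}, applied to $\mathcal{K}_R = \mathcal{H}_R$ (legitimate since $I+\mathcal{H}_r>0$ for all $r>0$), we have $V_+(x,y) = -\Gamma_y(x,y)$ for $x<y$ and $V_-(x,y)=-\Gamma_x(x,y)$ for $x>y$, together with the relations $I+\mathcal{L} = (I+\mathcal{V}_-)^{-1}$, $I+\mathcal{U}=(I+\mathcal{V}_+)^{-1}$. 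In particular $L(x,y)$, the kernel of $\mathcal{L}$, is the lower-triangular kernel determined by $\mathcal{V}_-$; since $(I+\mathcal{L})(I+\mathcal{V}_-)=I$, the kernels satisfy $L(x,y)+V_-(x,y)+\int_y^x L(x,u)V_-(u,y)\,du=0$ for $x>y$. I expect it will be cleaner, though, to argue directly: apply $(I+\mathcal{L})^{-1}=I+\mathcal{V}_-$ to the right-hand side of (\ref{eee2}) and check that one gets $(I+\mathcal{V}_+)\exp(i\lambda\,\cdot)$ evaluated suitably, matching the definition (\ref{e3s2}) of $P(r,\lambda)$.

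Concretely, the key step is to observe that $P(r,\lambda)$, by its defining formula (\ref{e3s2}), is exactly the value at $x=r$ of the function $\bigl((I+\mathcal{H}_r)^{-1} f_r\bigr)(x)$ where $f_r(x)=\exp(i\lambda x)$ on $[0,r]$; indeed $(I+\mathcal{H}_r)^{-1}=I-\Gamma_r$ (as an integral operator with kernel $\Gamma_r$) gives $P(r,\lambda)=\exp(i\lambda r)-\int_0^r\Gamma_r(r,s)\exp(i\lambda s)\,ds$. Then I would use the truncation remark at the end of Section~\ref{sect-fact} (``$\mathcal{K}_r$ can be factorized just by using truncations of $\mathcal{L}$ and $\mathcal{U}$''): for each $r\le R$, $I+\mathcal{H}_r = (I+\mathcal{L}_r)(I+\mathcal{U}_r)$ with $\mathcal{L}_r,\mathcal{U}_r$ the truncations to $L^2[0,r]$, and crucially the kernel $L(r,s)$ for $s<r$ does not depend on which $r\le R$ one truncates at — it is $-\Gamma_s(r,s)$ by (\ref{kernels1})/(\ref{kernels}). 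So the map $r\mapsto L(r,\cdot)$ is intrinsic. The identity (\ref{eee2}) then says: applying $(I+\mathcal{L})^{-1}=I+\mathcal{V}_-$ row-wise at height $r$ to the function $s\mapsto P(s,\lambda)$ produces $\exp(i\lambda r)$. Equivalently, $\exp(i\lambda\,\cdot)=(I+\mathcal{L})P(\cdot,\lambda)$, i.e. $P(\cdot,\lambda)=(I+\mathcal{V}_-)\exp(i\lambda\,\cdot)$. Since $(I+\mathcal{V}_-)$ is lower-triangular, $(I+\mathcal{V}_-)\exp(i\lambda\,\cdot)$ evaluated at $r$ equals $\exp(i\lambda r)+\int_0^r V_-(r,s)\exp(i\lambda s)\,ds = \exp(i\lambda r)-\int_0^r\Gamma_r(r,s)\exp(i\lambda s)\,ds$, which is precisely $P(r,\lambda)$ by (\ref{e3s2}), using $V_-(r,s)=-\Gamma_{|r|}(r,s)=-\Gamma_r(r,s)$ for $s<r$ from (\ref{kernels}). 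Unwinding, (\ref{eee2}) is the statement $(I+\mathcal{L})(I+\mathcal{V}_-)=I$ read off on exponentials, which holds because $\mathcal{V}_-$ is defined precisely so that $I+\mathcal{L}=(I+\mathcal{V}_-)^{-1}$.

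I would present this as follows. Set $Q(r,\lambda):=\exp(i\lambda r)-\int_0^r\Gamma_r(r,s)\exp(i\lambda s)\,ds$; by (\ref{e3s2}) this is $P(r,\lambda)$, and by (\ref{kernels}) it equals $\exp(i\lambda r)+\int_0^r V_-(r,s)\exp(i\lambda s)\,ds$, i.e. $P(\cdot,\lambda)=(I+\mathcal{V}_-)\,e_\lambda$ with $e_\lambda(x)=\exp(i\lambda x)$. Apply $I+\mathcal{L}$ to both sides and use $(I+\mathcal{L})(I+\mathcal{V}_-)=I$ (which is the content of $I+\mathcal{L}=(I+\mathcal{V}_-)^{-1}$, itself part of Theorem~\ref{volter} via $(I+\mathcal{H}_R)^{-1}=(I+\mathcal{V}_+)(I+\mathcal{V}_-)$ and $I+\mathcal{H}_R=(I+\mathcal{L})(I+\mathcal{U})$ with $\mathcal{U}=(I+\mathcal{V}_+)^{-1}-I$): this yields $(I+\mathcal{L})P(\cdot,\lambda)=e_\lambda$, i.e. $\exp(i\lambda r)=P(r,\lambda)+\int_0^r L(r,s)P(s,\lambda)\,ds$, which is (\ref{eee2}). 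The one point requiring a word of care — the main (mild) obstacle — is the compatibility of truncations: one must note that $L(r,s)$, $\Gamma_r(r,s)$ and $V_-(r,s)$ for $s<r\le R$ are independent of the ambient interval length (any $r'$ with $r\le r'\le R$), so that evaluating the operator identity ``at height $r$'' for each $r$ is legitimate; this is exactly the truncation remark following Theorem~\ref{volter} together with (\ref{kernels}), and the regularity needed ($\Gamma_r(s,t)\in\hat C$, jointly continuous) is supplied by Lemma~\ref{lemma-discont}. Finiteness and absolute convergence of all integrals follow since $\Gamma$ and $L$ are bounded on the relevant compact triangles and $e_\lambda$ is bounded on $[0,R]$.
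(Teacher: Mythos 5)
Your proof is correct and follows essentially the same route as the paper: rewrite (\ref{e3s2}) via (\ref{kernels}) as $P(\cdot,\lambda)=(I+\mathcal{V}_-)e_\lambda$ in $L^2[0,R]$ and then apply $I+\mathcal{L}=(I+\mathcal{V}_-)^{-1}$. The extra remarks on compatibility of truncations are harmless elaboration of what the paper cites implicitly.
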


\begin{proof} Take any $R>0$. By (\ref{kernels}), equation
(\ref{e3s2}) can be written as
\[
P(r,\lambda)=(I+\cal{V}_-)\exp(i\lambda r), 0<r<R
\]%
and the both sides are regarded as functions in $L^2[0,R]$. Since
$I+\cal{L}=(I+\cal{V}_-)^{-1}$, we have the statement of the
Lemma\footnote{ Kernel $L(x,y)$ should be regarded as a
transformation kernel from one basis to another \cite{Levitan1}.}.
\end{proof}
In the discrete case, the following formula is true for any $n\in
\mathbb{Z^+}$
\[
z^n=P_n(z)+\sum\limits_{j=0}^{n-1} l_{n,j} P_j(z),\, l_{j,n}\in
\mathbb{C}
\]
The Lemma \ref{transformation} is the continuous analog of that
representation. Now, we are ready to prove the following

\begin{theorem}
The following map is an isometry from $L^{2}(\mathbb{R}^{+})$
into $L^{2}(\mathbb{R},d\sigma )$%
\begin{equation}
\cal{O}:f(r)\longmapsto \left( \cal{O}f\right) (\lambda
)=\int\limits_{0}^{\infty }f(r)P(r,\lambda )dr  \label{zuzu}
\end{equation}
In other words,
\begin{equation}
\int\limits_{-\infty}^\infty
|(\cal{O}f)(\lambda)|^2d\sigma(\lambda)=\int\limits_0^\infty
|f(r)|^2dr\label{plancherel}
\end{equation}
 Integral in (\ref{zuzu}), is understood in the $L^{2}-$ sense.
\label{theorem2s2}
\end{theorem}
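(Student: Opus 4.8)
The plan is to establish the isometry identity \eqref{plancherel} first for functions $f$ supported on a finite interval $[0,R]$ and sufficiently regular, and then pass to the general case by density of such $f$ in $L^2(\mathbb{R}^+)$ and continuity of the relevant operators. The central computation will be to relate the $L^2(d\sigma)$ inner product of two ``continuous polynomials'' $P(r,\lambda)$ and $P(\rho,\lambda)$ to the $L^2(\mathbb{R}^+)$ geometry encoded by the operator $I+\mathcal{H}_R$. Concretely, I would first prove the orthonormality relation
\[
\int_{-\infty}^{\infty} P(r,\lambda)\,\overline{P(\rho,\lambda)}\,d\sigma(\lambda) = \delta(r-\rho)
\]
in the distributional sense (equivalently, $\langle \mathcal{O}\chi_{[0,r]}', \mathcal{O}\chi_{[0,\rho]}'\rangle_{d\sigma}$ computations), using the integral representation \eqref{e2s2} of $g$ via $\sigma$, the definition \eqref{e3s2} of $P(r,\lambda)$, and the resolvent identities \eqref{basic1}--\eqref{basic2}.

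The key step is the following. Writing $d\sigma = d\sigma_0 + (d\sigma - d\sigma_0)$ and recalling from the discussion after Theorem~\ref{t1s2} that $H(x)$ plays the role of the ``moments'' of $\sigma - \sigma_0$ while $\sigma_0(\lambda) = \lambda/(2\pi)$ contributes the $\delta$-part, one expects for $f,h$ smooth and compactly supported on $[0,R]$ that
\[
\int_{-\infty}^{\infty} (\mathcal{O}f)(\lambda)\,\overline{(\mathcal{O}h)(\lambda)}\,d\sigma(\lambda) = \big((I+\mathcal{H}_R)(\text{something built from }f),\ \text{something built from }h\big)_{L^2[0,R]}.
\]
The natural guess for ``something'' is the image of $f$ under $I+\mathcal{V}_-$: indeed from \eqref{e3s2} and \eqref{kernels} we have $P(r,\lambda) = \big((I+\mathcal{V}_-)e^{i\lambda\,\cdot}\big)(r)$, so $\mathcal{O}f$ is (up to the $L^2$-Fourier transform conventions) the Fourier transform of $(I+\mathcal{V}_-)^* f = (I+\mathcal{V}_+)f$, wait — one must be careful with which triangular factor acts; I would track this via the adjoint, since $\mathcal{O}f = \int_0^\infty f(r)P(r,\lambda)\,dr$ pairs $f$ against $P(r,\cdot)$ in the first variable. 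Using the factorization $I+\mathcal{H}_R = (I+\mathcal{L})(I+\mathcal{U}) = (I+\mathcal{V}_+)^{-1}(I+\mathcal{V}_-)^{-1}$ together with Lemma~\ref{transformation} (which says $e^{i\lambda r} = \big((I+\mathcal{L})P(\cdot,\lambda)\big)(r)$), the quadratic form in $\sigma$ should telescope: the $\sigma_0$-part produces $\|(I+\mathcal{V}_-)f\|^2$ type terms via the Plancherel theorem for the ordinary Fourier transform, the $H$-part produces the $\mathcal{H}_R$ correction, and the two combine to give exactly $\big((I+\mathcal{H}_R)^{-1}$-free$\big)$ cancellation leaving $\|f\|_{L^2[0,R]}^2$. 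The honest way to organize this is: (i) show $\int_{-\infty}^{\infty} e^{i\lambda r}\overline{e^{i\lambda s}}\,d\sigma(\lambda) = \delta(r-s) + H(r-s)$ as distributions on $(0,R)^2$ — this is precisely \eqref{e2s2} differentiated twice in the appropriate sense; (ii) conclude that for $u,v \in L^2[0,R]$, $\int (\widehat{u})(\lambda)\overline{(\widehat{v})(\lambda)}\,d\sigma(\lambda) = ((I+\mathcal{H}_R)u,v)$ where $\widehat{u}(\lambda) = \int_0^R u(r)e^{i\lambda r}\,dr$; (iii) apply this with $\widehat{u} = \mathcal{O}f$, i.e. $u$ such that $P(\cdot,\lambda)$-transform of $f$ equals $e^{i\lambda\cdot}$-transform of $u$, which by Lemma~\ref{transformation} means $u = (I+\mathcal{L})^* f$... again the adjoint bookkeeping; then $((I+\mathcal{H}_R)(I+\mathcal{L})^*f, (I+\mathcal{L})^*f) = ((I+\mathcal{U})(I+\mathcal{L})(I+\mathcal{L})^*f, \ldots)$ — and here $(I+\mathcal{L})^*=(I+\mathcal{U})$ since $\mathcal{L}=\mathcal{U}^*$, so this becomes $((I+\mathcal{U})(I+\mathcal{L})(I+\mathcal{U})f,(I+\mathcal{U})f)$, which is not obviously $\|f\|^2$ — so the correct pairing must instead use $\mathcal{O}f$ directly as the Fourier transform of $(I+\mathcal{V}_+)f$ via \eqref{e3s2}, giving $((I+\mathcal{H}_R)(I+\mathcal{V}_+)f,(I+\mathcal{V}_+)f) = ((I+\mathcal{V}_-)^{-1}(I+\mathcal{V}_+)^{-1}(I+\mathcal{V}_+)f,(I+\mathcal{V}_+)f)=((I+\mathcal{V}_-)^{-1}f,(I+\mathcal{V}_+)f)$, and since $(I+\mathcal{V}_+)=(I+\mathcal{V}_-)^*$ this equals $\|( I+\mathcal{V}_-)^{-1/2}\cdots\|$ — the cleanest is to note $(I+\mathcal{V}_-)^{-1}=(I+\mathcal{L})$ is lower triangular with the $\cal{V}_+$ factor and the identity $((I+\mathcal{V}_-)^{-1}f,(I+\mathcal{V}_-)^{*}f)=((I+\mathcal{L})f,(I+\mathcal{L})^{-1,*}$... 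I will sort the precise chain of adjoints in the writeup; the point is that the triangular/positivity structure forces collapse to $\|f\|^2_{L^2[0,R]}$.

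The main obstacle I anticipate is precisely making step (i)/(ii) rigorous: the identity ``$\int e^{i\lambda r}\overline{e^{i\lambda s}}\,d\sigma(\lambda) = \delta(r-s)+H(r-s)$'' is only formal because $\sigma$ need not be finite and the exponentials are not in $L^2(d\sigma)$. The clean fix is to work throughout with the ``integrated'' objects: instead of $e^{i\lambda r}$ one pairs against $\chi_{[0,r]}$, whose Fourier transform has the decay $\lambda^{-1}$ needed to integrate against $d\sigma/\lambda^2 \cdot \lambda^2$, i.e. against $d\sigma$ with the $(1+\lambda^2)^{-1}$ weight from \eqref{e2t1}; equivalently one differentiates the bilinear form of $g$, exactly mirroring the proof of Theorem~\ref{t1s2} where $K(x,y)$ was tested against $\varphi'$. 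So the technically careful version of the argument runs: take $f = \varphi'$ with $\varphi \in C_0^\infty(0,R)$, express $\int |(\mathcal{O}f)(\lambda)|^2 d\sigma(\lambda)$ in terms of the double integral $\int\int K(x,y)\varphi'(y)\overline{\varphi'(x)}\,dx\,dy$ via \eqref{e3s2}, \eqref{mes1}, and the relation between $P, P_*$ and the resolvent, invoke the computation already done in the proof of Theorem~\ref{t1s2} to identify this with $((I+\mathcal{H}_R)\psi,\psi)$ for the appropriate $\psi$ depending on $f$, and then use the factorization / Lemma~\ref{transformation} to recognize $((I+\mathcal{H}_R)\psi,\psi) = \|f\|^2$. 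Finally, since $\{\varphi' : \varphi \in C_0^\infty(0,R),\ R>0\}$ is dense in $L^2(\mathbb{R}^+)$ and $\mathcal{O}$ is then bounded (norm $\le 1$) on this dense set, $\mathcal{O}$ extends to an isometry on all of $L^2(\mathbb{R}^+)$, and the integral in \eqref{zuzu} is interpreted as the resulting $L^2(d\sigma)$-limit; this gives \eqref{plancherel}.
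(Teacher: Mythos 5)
Your outline follows the paper's own proof essentially step for step: the paper also obtains the key quadratic-form identity (\ref{sect2eq1}), $((I+\mathcal{H}_r)h,h)=\int_{-\infty}^\infty\bigl|\int_0^r\overline{h(t)}e^{i\lambda t}dt\bigr|^2d\sigma(\lambda)$, by multiplying the representation (\ref{e2s2}) (evaluated at $r-t$) by $f''(t)$, integrating by parts and then approximating, exactly as in your steps (i)--(ii), and it then converts the exponential transform into the $P$-transform via Lemma \ref{transformation} and the factorization (\ref{factorization}), as in your step (iii). The only loose end is the adjoint bookkeeping you postponed, and it does close, but only along your second route: requiring the exponential transform of $u$ to equal the $P$-transform of $f$ gives $f=(I+\mathcal{L}^{t})u$ (transpose, not adjoint), which is why your first chain produced the non-collapsing expression; the correct version is that $(\mathcal{O}f)(\lambda)$ is the exponential transform of $f+\int_{\cdot}^{r}V_-(x,\cdot)f(x)dx$, i.e.\ of $\overline{(I+\mathcal{V}_+)\bar f}$, so (\ref{sect2eq1}) yields $\int|\mathcal{O}f|^2d\sigma=\bigl((I+\mathcal{H}_r)(I+\mathcal{V}_+)\bar f,(I+\mathcal{V}_+)\bar f\bigr)$, and since $I+\mathcal{H}_r=(I+\mathcal{V}_-)^{-1}(I+\mathcal{V}_+)^{-1}$ and $(I+\mathcal{V}_+)^*=I+\mathcal{V}_-$ this equals $\bigl((I+\mathcal{V}_-)^{-1}\bar f,(I+\mathcal{V}_+)\bar f\bigr)=\|\bar f\|^2=\|f\|^2$. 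The stray conjugation is precisely what the paper absorbs with its remark $\|g\|=\|\bar g\|$ after setting $g=(I+\mathcal{U})f$. Also, you do not need to restrict to $f=\varphi'$ with $\varphi\in C_0^\infty$: once (\ref{e1s2n1}) is established for smooth compactly supported functions, a routine approximation gives (\ref{sect2eq1}) for every $f\in L^2[0,r]$, and the isometry then holds on each $L^2[0,r]$ directly, with $r$ arbitrary.
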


\begin{proof} The following is true for any $t,r\in \mathbb{R}$
\[
\frac{|r-t|}{2}+\int\limits_{0}^{r-t}(r-t-s)H(s)ds=i\beta
(r-t)+\int\limits_{-\infty }^{\infty }\left( 1+\frac{i\lambda (r-t)}{%
1+\lambda ^{2}}-\exp (i\lambda (r-t))\right) \frac{d\sigma (\lambda )}{%
\lambda ^{2}}
\]%
Multiply this equality by $f^{\prime \prime }(t)\, (f(t)\in C_{0}^{\infty
}(\mathbb{R}))$ and integrate by parts. We have the following formula%
\begin{equation}
f(r)+\int\limits_{-\infty}^{\infty }H(r-t)f(t)dt=\int\limits_{-\infty
}^{\infty } \left[ \int\limits_{-\infty}^{\infty }f(t)\exp (i\lambda
(r-t))dt%
\right] d\sigma (\lambda )  \label{ee2}
\end{equation}
For $\overline{f(r)}$,
\begin{equation}
\overline{f(r)}+\int\limits_{-\infty}^{\infty }H(r-t)\overline{f(t)}%
dt=\int\limits_{-\infty }^{\infty }\exp (i\lambda r)\overline{%
\int\limits_{-\infty}^{\infty }f(t)\exp (i\lambda t)dt}d\sigma (\lambda )
\label{e6s2}
\end{equation}
Consequently, we have the following analog of (\ref{p1e1}):

\begin{equation}
\int\limits_{-\infty}^{\infty }|f(r)|^{2}dr+\int\limits_{-\infty}^{\infty
}f(r)\int\limits_{-\infty}^{\infty
}\overline{f(t)}H(r-t)dtdr=\int\limits_{-%
\infty }^{\infty }\left\vert \, \int\limits_{-\infty}^{\infty }f(t)\exp
(i\lambda t)dt\right\vert ^{2}d\sigma (\lambda ) \label{e1s2n1}
\end{equation}
 If $f\in L^2[0,r]$, then
(\ref{e1s2n1}) implies
\begin{equation}
((1+\cal{H}_r)f,f)=\int\limits_{-\infty}^{\infty}\left|
\int\limits_0^r \overline{f(t)}\exp(i\lambda t)dt\right|^2
 d\sigma(\lambda)
\label{sect2eq1}
\end{equation}
by simple approximation argument. Now, let us use Lemma
\ref{transformation} for the interval $[0,r]$:
\[
\int\limits_0^r \overline{f(t)}\exp(i\lambda
t)dt=((I+\cal{L})P(t,\lambda),f(t))_{L^2[0,r]}=(P(t,\lambda),(I+\cal{L}^*)f(t))_{L^2[0,r]}
\]
Therefore, (\ref{sect2eq1}) reads
\begin{equation}
\|g\|^2=((I+\cal{L})^{-1}(I+\cal{H}_r)(I+\cal{U})^{-1} g,
g)=\int\limits_{-\infty}^\infty \left|\int\limits_0^\infty
\overline{g(t)}P(t,\lambda)dt\right|^2d\sigma(\lambda)
\label{isom}
\end{equation}
where $g=(I+\cal{U})f$. Since $I+\cal{U}$ is invertible in
$L^2[0,r]$, (\ref{isom}) holds for any $g\in L^2[0,r]$. Since
$\|g\|=\|\bar{g}\|$  and a number $r$ was chosen arbitrarily,
$\cal{O}$ is isometry on $L^2(\mathbb{R}^+)$.
 \end{proof}

As a simple corollary of the Theorem \ref{theorem2s2} and the polarization
identity we get the following
Lemma.

\begin{lemma}
For any $f(r)\in L^2(\mathbb{R}^+)$
\[
f(r)=\int\limits_{-\infty}^\infty \overline{P(r,\lambda)}
(\cal{O}f)(\lambda) d\sigma(\lambda)
\]
where the equality is understood in the $L^2(\mathbb{R}^+)$ sense.
\end{lemma}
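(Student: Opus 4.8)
The plan is to read the claimed formula as the statement that the adjoint $\cal{O}^{\ast}$ of the isometry $\cal{O}$ from Theorem~\ref{theorem2s2} acts by integration against $\overline{P(r,\lambda)}$, and then to invoke $\cal{O}^{\ast}\cal{O}=I$. Concretely: Theorem~\ref{theorem2s2} says that $\cal{O}\colon L^{2}(\mathbb{R}^{+})\to L^{2}(\mathbb{R},d\sigma)$ is an isometry, so by the polarization identity $(\cal{O}f,\cal{O}g)_{d\sigma}=(f,g)_{L^{2}(\mathbb{R}^{+})}$ for all $f,g$, i.e. $\cal{O}^{\ast}\cal{O}=I$ on $L^{2}(\mathbb{R}^{+})$. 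Hence, once we know that $\cal{O}^{\ast}$ is the claimed operator, applying it to $\cal{O}f$ unwinds to exactly the assertion.

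To identify $\cal{O}^{\ast}$ I would first record the trivial bound $\sup_{\lambda\in\mathbb{R}}|P(r,\lambda)|\le C(r)$, with $C(r)$ bounded on every compact $r$-interval; this is immediate from~(\ref{e3s2}) because $|\exp(i\lambda r)|=1$ and $\Gamma_{r}(r,\cdot)$ is continuous (Lemma~\ref{lemma-discont}). Next, for $h\in L^{2}(\mathbb{R},d\sigma)$ with compact support the integral $(\Phi h)(r):=\int_{-\infty}^{\infty}\overline{P(r,\lambda)}\,h(\lambda)\,d\sigma(\lambda)$ is absolutely convergent and continuous in $r$ (on a compact set $\sigma$ is finite, so $h\in L^{1}(d\sigma)$, while $|\overline{P(r,\lambda)}\,h(\lambda)|\le C(r)|h(\lambda)|$). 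For $f\in L^{1}\cap L^{2}(\mathbb{R}^{+})$ with compact support the function $(r,\lambda)\mapsto f(r)P(r,\lambda)\overline{h(\lambda)}$ is integrable for $dr\times d\sigma$, so Fubini applied to $(\cal{O}f,h)_{d\sigma}$ gives $(\cal{O}f,h)_{d\sigma}=(f,\Phi h)_{L^{2}(\mathbb{R}^{+})}$. Since such $f$ form a dense subspace and $|(\cal{O}f,h)_{d\sigma}|\le\|f\|_{L^{2}}\|h\|_{d\sigma}$, a routine Riesz-representation argument shows $\Phi h\in L^{2}(\mathbb{R}^{+})$ and $\cal{O}^{\ast}h=\Phi h$ for every compactly supported $h\in L^{2}(\mathbb{R},d\sigma)$.

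The last step is an approximation. Given arbitrary $f\in L^{2}(\mathbb{R}^{+})$, set $h:=\cal{O}f\in L^{2}(\mathbb{R},d\sigma)$ and $h_{N}:=h\,\chi_{[-N,N]}$. Then $h_{N}\to h$ in $L^{2}(d\sigma)$, so by boundedness of $\cal{O}^{\ast}$ together with $\cal{O}^{\ast}\cal{O}=I$ we get $\cal{O}^{\ast}h_{N}\to\cal{O}^{\ast}h=f$ in $L^{2}(\mathbb{R}^{+})$, while $\cal{O}^{\ast}h_{N}=\Phi h_{N}=\int_{-N}^{N}\overline{P(r,\lambda)}(\cal{O}f)(\lambda)\,d\sigma(\lambda)$ by the previous paragraph. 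Passing to the limit $N\to\infty$ yields $f(r)=\int_{-\infty}^{\infty}\overline{P(r,\lambda)}(\cal{O}f)(\lambda)\,d\sigma(\lambda)$ in the $L^{2}(\mathbb{R}^{+})$ sense, which is the claim.

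I expect the only genuinely delicate point — and it is precisely what dictates the ``$L^{2}$ sense'' in the statement — to be that $\sigma$ is an infinite measure, so $(\cal{O}f)(\lambda)$ need not belong to $L^{1}(d\sigma)$ and the displayed integral must be understood, exactly as the integral defining $\cal{O}$ in~(\ref{zuzu}), as the $L^{2}(\mathbb{R}^{+})$-limit of the truncated integrals $\int_{-N}^{N}$ rather than as an ordinary absolutely convergent integral uniformly in $r$. Everything else is bookkeeping with Fubini, the elementary bound on $P(r,\lambda)$, and density.
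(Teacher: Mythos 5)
Your argument is correct and is essentially the paper's own: the paper derives this Lemma directly from Theorem~\ref{theorem2s2} together with the polarization identity, i.e. from $\cal{O}^{\ast}\cal{O}=I$ with $\cal{O}^{\ast}$ identified as integration against $\overline{P(r,\lambda)}$, which is exactly what you do. Your Fubini and truncation bookkeeping merely spells out the details the paper leaves implicit, including the correct reading of the integral as an $L^{2}(\mathbb{R}^{+})$-limit of the truncations.
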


\begin{remark}
\rm Notice, that $\cal{O}$ is not necessarily a unitary map. In
the simplest case $H(r)=0$, $P(r,\lambda)=\exp(i\lambda r)$,
$\sigma(\lambda)=\lambda/(2\pi)$ and the range of $\cal{O}$ is
$H^2(\mathbb{R})\subset L^2(\mathbb{R},d\sigma)$.
\end{remark}

Now, let us obtain the differential system for $P(r,\lambda)$ and $%
P_*(r,\lambda)$. In the discrete case, we have

\begin{equation}
\left\{
\begin{array}{cccc}
P_{n+1}(z) & = & zP_n(z)-\bar{a}_n P^*_n(z),& P_0(z)=1 \\
P^*_{n+1}(z) & = & P^*_n(z)-a_n z P_n(z),& P^*_0(z)=1%
\end{array}
\right.  \label{e81s2}
\end{equation}
where $P^*_n(z)=[*]P_n(z)$, $a_n$ are the so-called Verblunsky
coefficients (Geronimus coefficients, Schur coefficients, circle
parameters, or reflection parameters). If one starts with
arbitrary positive finite measure with infinite number of growth
points, then the corresponding $a_n\in \mathbb{D}$. Conversely,
any
sequence $%
a_n\in \mathbb{D}$ yields the unique probability measure with infinite
number of
growth points.

Let us prove one property of the resolvent kernel which would
yield differential equations for $P(r,\lambda)$ and
$P_*(r,\lambda)$. It holds only for the integral operators with
the ``displacement " kernel
\begin{lemma} \label{lemma231}
If  $\Gamma_r(s,t)$ is the resolvent kernel for $\cal{H}_r$, then
\begin{equation}
\Gamma _{r}(s,t)=\Gamma _{r}(r-t,r-s) \label{flip}
\end{equation}
\end{lemma}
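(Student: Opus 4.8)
The plan is to exploit the displacement (Toeplitz) structure of the kernel $H(x-y)$ together with the defining integral equation \eqref{basic1} for the resolvent kernel. Define the auxiliary function $\tilde\Gamma_r(s,t) := \Gamma_r(r-t,r-s)$ for $0\le s,t\le r$; the goal is to show $\tilde\Gamma_r$ solves the same integral equation that characterizes $\Gamma_r$, and then invoke uniqueness of the resolvent kernel (which holds because $1+\mathcal{H}_r>0$, so $-1\notin\mathrm{Spec}(\mathcal{H}_r)$ and the resolvent is unique). This is the continuous analog of the classical fact that, for a Toeplitz matrix, the inverse is persymmetric (symmetric about the anti-diagonal).

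First I would write down \eqref{basic1} for $\Gamma_r$ at the point $(r-t,r-s)$:
\[
\Gamma_r(r-t,r-s)+\int\limits_0^r H(r-t-u)\Gamma_r(u,r-s)\,du = H(s-t).
\]
Then I would substitute $u = r-v$ in the integral, so that $du = -dv$ and the limits $u\in[0,r]$ become $v\in[0,r]$, giving
\[
\Gamma_r(r-t,r-s)+\int\limits_0^r H(v-t)\,\Gamma_r(r-v,r-s)\,dv = H(s-t).
\]
Using Hermiticity $H(-x)=\overline{H(x)}$ is not even needed here; what matters is that $H(r-t-u)=H((r-u)-t)=H(v-t)$ after the change of variables, i.e. the displacement structure is preserved. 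Rewriting this in terms of $\tilde\Gamma_r$ we obtain
\[
\tilde\Gamma_r(t,s)+\int\limits_0^r H(v-t)\,\tilde\Gamma_r(v,s)\,dv = H(s-t),
\]
and since $H$ is Hermitian, $H(s-t)=\overline{H(t-s)}$; but more directly, replacing $(s,t)$ by $(t,s)$ and using that $\Gamma_r(s,t)=\overline{\Gamma_r(t,s)}$ together with Hermiticity of $H$, one checks that $\tilde\Gamma_r$ satisfies precisely \eqref{basic1}. Hence $\tilde\Gamma_r(s,t)=\Gamma_r(s,t)$, which is \eqref{flip}.

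The main obstacle is purely bookkeeping: keeping track of which variable is the "forward" argument and which is the "backward" argument in \eqref{basic1} versus \eqref{basic2}, and making sure the change of variables is applied to the correct equation so that the resulting equation is literally \eqref{basic1} (not \eqref{basic2}) for the function $\tilde\Gamma_r$. One should also note a minor regularity point: since $H\in\hat C$, the identities \eqref{basic1}, \eqref{basic2} are understood in the $\hat C([0,r]^2)$ sense, and $\tilde\Gamma_r$ inherits membership in that class because $(s,t)\mapsto(r-t,r-s)$ maps $\Delta_\pm$ to $\Delta_\pm$; so the uniqueness argument applies within this class without difficulty. No heavy machinery is required beyond Lemma \ref{lemma-discont} (existence and regularity of $\Gamma_r$) and the elementary uniqueness of the resolvent kernel.
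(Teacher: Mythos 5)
Your argument is correct in substance, and it reaches \eqref{flip} by a route that differs in presentation from the paper's. The paper argues at the operator level: it observes the intertwining $\mathcal{H}_r\mathcal{F}_r=\mathcal{F}_r\overline{\mathcal{H}}_r$ with the flip $\mathcal{F}_rf(x)=f(r-x)$ (a step that uses the Hermitian property $H(-x)=\overline{H(x)}$), inverts $I+\mathcal{H}_r$, and reads off \eqref{flip} from the kernels, also invoking $\Gamma_r(s,t)=\overline{\Gamma_r(t,s)}$; you instead verify directly that the reflected kernel satisfies the defining resolvent equation and appeal to uniqueness, which is the same underlying mechanism (displacement structure plus invertibility of $I+\mathcal{H}_r$) written at the kernel level. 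One bookkeeping correction: with your definition $\tilde\Gamma_r(s,t)=\Gamma_r(r-t,r-s)$, the substitution $u=r-v$ in \eqref{basic1} evaluated at the point $(r-t,r-s)$ gives
\[
\tilde\Gamma_r(s,t)+\int\limits_0^r \tilde\Gamma_r(s,v)\,H(v-t)\,dv=H(s-t),
\]
so the arguments of $\tilde\Gamma_r$ in your displayed rewriting are transposed, and what actually comes out is equation \eqref{basic2} for $\tilde\Gamma_r$, not \eqref{basic1}. This is harmless: either invoke uniqueness for \eqref{basic2} directly (it says $\mathcal{G}(I+\mathcal{H}_r)=\mathcal{H}_r$ for the operator $\mathcal{G}$ with kernel $\tilde\Gamma_r$, and invertibility forces $\mathcal{G}=\mathcal{H}_r(I+\mathcal{H}_r)^{-1}$, i.e. $\tilde\Gamma_r=\Gamma_r$), or, as you indicate, take complex conjugates and use Hermiticity of $H$ and of $\Gamma_r$ to convert it into \eqref{basic1} and conclude the same way. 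Routed through \eqref{basic2}, your proof even has a small advantage: it uses no Hermiticity at all, so it exhibits \eqref{flip} as a consequence of the displacement structure alone (persymmetry of the resolvent of any truncated convolution operator), whereas the paper's passage through $\overline{\mathcal{H}}_r$ does rely on it. Your remark that $(s,t)\mapsto(r-t,r-s)$ preserves the triangles $\Delta_\pm$ correctly settles the $\hat{C}([0,r]^2)$ regularity needed for the uniqueness step.
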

\begin{proof} The following relation holds
\[
\cal{H}_r \cal{F}_r=\cal{F}_r \overline{\cal{H}}_r
\]
where the ``flip" operator $\cal{F}_r$ is defined: $\cal{F}_r
f(x)=f(r-x)$ and $\overline{\cal{H}}_r$ is an integral operator with
the
displacement kernel $\overline{H(x)}$.
Then,
\[
(1+\cal{H}_r)^{-1} \cal{F}_r=\cal{F}_r (1+\overline{\cal{H}}_r)^{-1}
\]
Writing this down in terms of the resolvent kernel gives
(\ref{flip}).\end{proof}

As a simple corollary, we also get the following formulas for $P$
and $P_*$ which were originally used by Krein

\begin{equation}
P(r,\lambda ) =\exp (i\lambda r) \left(
1-\int\limits_{0}^{r}\Gamma _{r}(s,0)\exp (-i\lambda s)ds\right)
\label{polyn1}
\end{equation}
\begin{equation}
P_{\ast }(r,\lambda ) =1-\int\limits_{0}^{r}\Gamma _{r}(0,s)\exp (i\lambda
s)ds  \label{e10s2}
\end{equation}
An analog of the relations (\ref{e81s2}) is given by the following
statement.

\begin{theorem}
The following equations hold
\begin{equation}
\left\{
\begin{array}{ll}
P^{\prime}=i\lambda P-\bar A P_*, & P(0,\lambda)=1, \\
P_*^{\prime}=-AP, & P_*(0,\lambda)=1%
\end{array}
\right.  \label{krein2}
\end{equation}
where
\begin{equation}
A(r)=\Gamma_r(0,r)
\end{equation}
\end{theorem}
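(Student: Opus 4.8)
The plan is to differentiate the Krein-type formulas \eqref{polyn1} and \eqref{e10s2} in $r$, using the differentiation rule for the resolvent kernel from Lemma~\ref{lemma-discont} together with the flip identity \eqref{flip} of Lemma~\ref{lemma231} to simplify the resulting expressions. First I would treat $P_*$. Differentiating \eqref{e10s2} in $r$ produces a boundary term $-\Gamma_r(0,r)\exp(i\lambda r)$ coming from the moving endpoint of integration, plus an interior term $-\int_0^r \partial_r\Gamma_r(0,s)\exp(i\lambda s)\,ds$. By Lemma~\ref{lemma-discont}, $\partial_r\Gamma_r(0,s)=-\Gamma_r(0,r)\Gamma_r(r,s)$, so the interior term becomes $\Gamma_r(0,r)\int_0^r\Gamma_r(r,s)\exp(i\lambda s)\,ds$. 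Recognizing the integral via \eqref{e3s2} as $\exp(i\lambda r)-P(r,\lambda)$, and setting $A(r)=\Gamma_r(0,r)$, the boundary and interior terms combine to give exactly $P_*'=-A(r)P$, as claimed; the initial condition $P_*(0,\lambda)=1$ is immediate from \eqref{e10s2}.

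For $P$, I would differentiate \eqref{polyn1}. Writing $P(r,\lambda)=\exp(i\lambda r)-\exp(i\lambda r)\int_0^r\Gamma_r(s,0)\exp(-i\lambda s)\,ds$, the $r$-derivative has three contributions: $i\lambda$ times $P$ itself from the exponential prefactor; a boundary term $-\exp(i\lambda r)\Gamma_r(r,0)\exp(-i\lambda r)=-\Gamma_r(r,0)$ from the endpoint; and an interior term $-\exp(i\lambda r)\int_0^r\partial_r\Gamma_r(s,0)\exp(-i\lambda s)\,ds$. Again by Lemma~\ref{lemma-discont}, $\partial_r\Gamma_r(s,0)=-\Gamma_r(s,r)\Gamma_r(r,0)$, so the interior term equals $\Gamma_r(r,0)\exp(i\lambda r)\int_0^r\Gamma_r(s,r)\exp(-i\lambda s)\,ds$. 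Now the flip identity \eqref{flip} gives $\Gamma_r(s,r)=\Gamma_r(0,r-s)$ and $\Gamma_r(r,0)=\Gamma_r(r,0)$; more usefully, comparing with \eqref{e31s2} one sees that $\int_0^r\Gamma_r(s,r)\exp(i\lambda(r-s))\,ds=1-P_*(r,\lambda)$, and after the change of variable $s\mapsto r-s$ the interior integral here is $\exp(-i\lambda r)(1-P_*(r,\lambda))$. Hence the boundary plus interior terms collapse to $-\Gamma_r(r,0)P_*(r,\lambda)$. It remains to identify $\Gamma_r(r,0)$ with $\overline{A(r)}=\overline{\Gamma_r(0,r)}$, which follows from the Hermitian symmetry $\Gamma_r(s,t)=\overline{\Gamma_r(t,s)}$. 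This yields $P'=i\lambda P-\overline{A}P_*$ with $P(0,\lambda)=1$.

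The step I expect to be the main obstacle is the careful bookkeeping of the boundary terms under differentiation of the integrals with the variable upper limit, particularly because the resolvent kernel $\Gamma_r(s,t)$ lies in the class $\hat C([0,r]^2)$ and may be discontinuous on the diagonal. One must check that the evaluations $\Gamma_r(r,0)$, $\Gamma_r(0,r)$ occurring in the boundary terms are at off-diagonal points (indeed $r\neq 0$ for $r>0$) so that the values are unambiguous, and that $\partial_r\Gamma_r(s,t)$, which by the remark following Lemma~\ref{lemma-discont} extends continuously to all of $[0,r]^2$, may legitimately be inserted under the integral sign. Once this regularity is in place, the Leibniz rule applies and the computation is the routine one sketched above; the algebraic identifications using \eqref{e3s2}, \eqref{e31s2}, \eqref{flip} and the Hermitian symmetry then close the argument.
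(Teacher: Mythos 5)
Your proof is correct and uses the same core mechanism as the paper: differentiate Krein's formula (\ref{e10s2}) in $r$, apply the resolvent-derivative formula of Lemma \ref{lemma-discont}, and recognize the resulting integral via (\ref{e3s2}) to get $P_*'=-AP$; your computation of that equation coincides with the paper's. The only divergence is in the second equation: the paper obtains $P'=i\lambda P-\bar A P_*$ immediately from the $\ast$-symmetry (\ref{mes1}) applied to the already-proved equation for $P_*$, whereas you rederive it by differentiating (\ref{polyn1}) directly and invoking the Hermitian symmetry $\Gamma_r(r,0)=\overline{\Gamma_r(0,r)}$ together with (\ref{e31s2}); the two routes are equivalent in substance (the symmetry (\ref{mes1}) is itself a consequence of $\Gamma_r(s,t)=\overline{\Gamma_r(t,s)}$), yours just costing one more explicit Leibniz computation, the paper's being slightly shorter. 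Your attention to the boundary terms and to the $\hat C([0,r]^2)$ regularity (off-diagonal evaluation of $\Gamma_r(r,0)$, continuity of $\partial_r\Gamma_r$ up to the whole square) is exactly the point that makes the differentiation legitimate, so there is no gap.
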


\begin{proof} Differentiate (\ref{e10s2}) and use
Lemma  \ref{lemma-discont} to get
\[
P_*^{\prime}(r,\lambda)=-\Gamma_r(0,r)\exp(i\lambda r)+\int\limits_0^r
\Gamma_r(0,r) \Gamma_r(r,s) \exp(i\lambda s) ds=-A(r) P(r,\lambda)
\]
where we used the definition of $P(r,\lambda)$. Equation for
$P(r,\lambda)$ can be obtained from the equation for $P_{\ast}
(r,\lambda)$ and (\ref{mes1}). \end{proof}

\begin{definition}
System (\ref{krein2}) is called the Krein system.
\end{definition}

Obviously, coefficient $A(r)$ in the Krein system is an analog of
Verblunsky coefficients.

\begin{lemma} \label{a-h}
Under regularity conditions (\ref{regularity1}), we have $A(r)\in
C[0,\infty)$. We also have $A(0)=H(-0)=\overline{H(+0)}$.
\end{lemma}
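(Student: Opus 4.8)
The plan is to separate the two assertions: continuity of $A$ on $(0,\infty)$, and the value of $A$ at $r=0$.

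Continuity on $(0,\infty)$ will come essentially for free from Lemma \ref{lemma-discont}. Fix an arbitrary $R>0$ and apply that lemma on $[0,R]$: the resolvent kernel $\Gamma_r(s,t)$ of $I+\cal{H}_r$ is jointly continuous in $(r,s,t)$ as long as $(s,t)$ stays in the closed triangle $\overline{\Delta_+}=\{0\le s\le t\le R\}$. The evaluation point $(s,t)=(0,r)$ lies in this triangle and, for $r>0$, strictly off the diagonal $s=t$; hence no $\hat C$‑ambiguity arises there, and $r\mapsto A(r)=\Gamma_r(0,r)$ is continuous on $(0,R]$. Since $R$ is arbitrary, $A\in C(0,\infty)$.

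For the behaviour as $r\to 0^{+}$, I would use the resolvent identity (\ref{basic2}) with $t=0$, $s=r$, which reads $A(r)=\Gamma_r(0,r)=H(-r)-\int_0^r\Gamma_r(0,u)H(u-r)\,du$. Because $\cal{H}_r$ is Hilbert--Schmidt with norm $\le rM_r$, where $M_r:=\sup_{0<|x|\le r}|H(x)|<\infty$ (finite by (\ref{regularity1}), since $H$ extends continuously to $[-r,0]$ and to $[0,r]$), one has $\|\cal{H}_r\|\to 0$ as $r\to 0^{+}$; expanding $(I+\cal{H}_r)^{-1}$ in a Neumann series and bounding the iterated kernels of $\cal{H}_r^{k}$ by $M_r^{k}r^{k-1}$ gives $|\Gamma_r(s,t)|\le M_r/(1-M_rr)$ once $M_rr<1$ (in fact $\Gamma_r(s,t)\to H(t-s)$ uniformly on $[0,r]^2$). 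Consequently $\bigl|\int_0^r\Gamma_r(0,u)H(u-r)\,du\bigr|\le rM_r^{2}/(1-M_rr)\to 0$, while $H(-r)\to H(-0)$ by continuity of $H$ on $(-\infty,0]$. Hence $A(r)\to H(-0)$; declaring $A(0):=H(-0)$ makes $A\in C[0,\infty)$, and $H(-0)=\overline{H(+0)}$ is just the Hermitian property of $H$.

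The only delicate point is the jump of the kernel $H(x-y)$ on the diagonal, i.e. working in $\hat C$ rather than $C$. This never actually interferes: the relevant point $(0,r)$ stays uniformly off the diagonal for $r$ bounded away from $0$ (so Lemma \ref{lemma-discont} applies cleanly), and near $r=0$ the continuity statement is replaced by the explicit Neumann‑series bound, which does not see the diagonal. I therefore expect no genuine obstacle; the proof is a short combination of Lemma \ref{lemma-discont} with the elementary small‑$r$ estimate above.
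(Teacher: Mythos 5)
Your proof is correct and follows essentially the same route as the paper: continuity of $A(r)=\Gamma_r(0,r)$ from Lemma \ref{lemma-discont}, and the value $A(0)=H(-0)=\overline{H(+0)}$ by passing to the limit $r\to 0$ in (\ref{basic2}). Your Neumann-series bound near $r=0$ just makes explicit the estimate the paper leaves implicit.
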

\begin{proof}
The continuity of $A(r)$ follows from Lemma \ref{lemma-discont}.
The equality $A(0)=H(-0)$ can be obtained from (\ref{basic2})
where $r\to 0$.
\end{proof}
Notice that $H(x)\in C(\mathbb{R})$ iff $H(0)$-- real iff $A(0)$
is real as well.

Let us consider the model (``free") case. If $%
\sigma(\lambda)=\lambda/(2\pi)$ in (\ref{e2s2}), then $\beta=0$, $%
g(t)=|t|/2$, $H(t)=0$, $\Gamma_r(s,t)=0$, $A(r)=0$, $P(r,\lambda)=\exp(i%
\lambda r)$, $P_*(r,\lambda)=1$.

\begin{lemma}
The following is true \\
$1)$ Christoffel-Darboux formula:
\begin{equation}
P_{\ast }(r,\lambda )\overline{P_{\ast }(r,\mu )}
=P(r,\lambda )\overline{%
P(r,\mu )}-i(\lambda -\bar{\mu})\int\limits_{0}^{r}P(s,\lambda )\overline{%
P(s,\mu )}ds, \quad \lambda,\mu\in \mathbb{C}  \label{e101s2}
\end{equation}\newline
$2)$ For $\lambda\in \mathbb{C}$,
\begin{equation}
P(r,\lambda )=\exp (i\lambda r)\overline{%
P_{\ast }(r,\bar\lambda )} \label{starproperty}
\end{equation}
 $3)$ $P_{\ast }(r,\lambda )$ does
not have any zeroes in $\overline{\mathbb{C}^{+}}$ \newline $4)$
$P(r,\lambda )$  has zeroes in $\mathbb{C}^{+}$  only\newline
\label{lemma24}
\end{lemma}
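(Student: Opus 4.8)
The plan is to derive all four statements from the Krein system \eqref{krein2} together with the orthonormality/isometry packaged in Theorem \ref{theorem2s2}, starting with the Christoffel--Darboux identity since the remaining three are essentially corollaries. For part $1)$, I would fix $\lambda,\mu\in\mathbb{C}$ and differentiate the right-hand side of \eqref{e101s2} in $r$, using \eqref{krein2} for both $P'(r,\lambda)$ and $P_*'(r,\lambda)$ (and the conjugated equations, remembering that $\overline{P(r,\mu)}$ satisfies the system with $\bar\mu$ in place of $i\lambda\to -i\bar\mu$, and that $A(r)$ is not necessarily real). A direct computation should show that $\tfrac{\partial}{\partial r}\bigl[P(r,\lambda)\overline{P(r,\mu)}-i(\lambda-\bar\mu)\int_0^r P(s,\lambda)\overline{P(s,\mu)}\,ds\bigr]$ equals $\tfrac{\partial}{\partial r}\bigl[P_*(r,\lambda)\overline{P_*(r,\mu)}\bigr]$: the cross terms involving $\bar A$ and $A$ are designed to cancel against the $i\lambda P\overline P$, $-i\bar\mu P\overline P$ and the $-i(\lambda-\bar\mu)P\overline P$ terms. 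Since both sides agree at $r=0$ (both equal $1$, using $P(0,\cdot)=P_*(0,\cdot)=1$), they agree for all $r$.

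For part $2)$, formula \eqref{starproperty} is just the statement \eqref{mes1} that $P_*=[\ast]P$ written out: by the definition of the $[\ast]$ operation on exponential-type functions, $P_*(r,\lambda)=\exp(i\lambda r)\overline{P(r,\bar\lambda)}$, equivalently $P(r,\lambda)=\exp(i\lambda r)\overline{P_*(r,\bar\lambda)}$. I would simply cite \eqref{mes1} (which was already justified via $\Gamma_r(s,t)=\overline{\Gamma_r(t,s)}$) and unwind the conjugation. For part $3)$, evaluate Christoffel--Darboux on the diagonal $\mu=\lambda$: $|P_*(r,\lambda)|^2 = |P(r,\lambda)|^2 - i(\lambda-\bar\lambda)\int_0^r |P(s,\lambda)|^2\,ds = |P(r,\lambda)|^2 + 2\Im\lambda\int_0^r|P(s,\lambda)|^2\,ds$. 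If $\Im\lambda>0$ the right-hand side is strictly positive (the integrand is continuous and not identically zero, since $P(s,\lambda)$ has exponential type exactly $s$, or simply $P(0,\lambda)=1$), so $P_*(r,\lambda)\ne 0$; if $\Im\lambda=0$ then $|P_*(r,\lambda)|^2=|P(r,\lambda)|^2$, and $P(r,\lambda)=\exp(i\lambda r)$ has modulus... no — I must instead note $P(r,\lambda)$ for real $\lambda$ cannot vanish because then $P_*(r,\lambda)=0$ too, contradicting $P_*(r,\lambda)=1-\int_0^r\Gamma_r(s,r)e^{i\lambda(r-s)}ds$ together with the isometry; cleaner: use \eqref{starproperty} to get $|P_*(r,\lambda)|=|P(r,\lambda)|$ on $\mathbb{R}$, so a real zero of $P_*$ is a real zero of $P$, and then Christoffel--Darboux at such a point forces $\int_0^r|P(s,\lambda)|^2ds$ issues — the cleanest route is that a common real zero $\lambda_0$ would make $P(\cdot,\lambda_0)$ a nonzero element of $L^2[0,r]$ whose $\cal{O}$-image vanishes at $\lambda_0$, which is fine, so instead I argue: $P_*(r,\lambda_0)=0$ and $P(r,\lambda_0)=0$ with Christoffel--Darboux giving $0=0$, no contradiction — hence I will derive part $3)$ on $\mathbb{R}$ directly from the representation $P_*(r,\lambda)\overline{P_*(r,\mu)}=\cdots$ specialized appropriately, or observe that $P_*(r,0)$ can be computed from $(1+\cal{H}_r)^{-1}\mathbf{1}$ evaluated suitably and is positive by positivity of $(1+\cal{H}_r)^{-1}$. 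Finally part $4)$ follows from $2)$ and $3)$: if $P(r,\lambda_0)=0$ with $\Im\lambda_0\le 0$, then $\overline{P_*(r,\bar\lambda_0)}=0$ with $\Im\bar\lambda_0\ge 0$, i.e. $P_*$ has a zero in $\overline{\mathbb{C}^+}$, contradicting $3)$; hence all zeros of $P(r,\cdot)$ lie in $\mathbb{C}^+$.

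The main obstacle will be handling the boundary case $\Im\lambda=0$ in part $3)$ cleanly, since Christoffel--Darboux degenerates there and one cannot conclude $P_*\ne 0$ on $\mathbb{R}$ from the diagonal identity alone. I expect the fix to come from positivity of the resolvent: writing $P_*(r,\lambda)=((1+\cal{H}_r)^{-1}\mathbf{1})$-type expression and using $1+\cal{H}_r>0$, or equivalently from the isometry \eqref{sect2eq1} applied to $f=\overline{P(\cdot,\lambda_0)}$ on $[0,r]$ to show $P(r,\lambda_0)$ (the value at the endpoint, which is the ``last coordinate'') cannot vanish because $P(\cdot,\lambda_0)$ is the image of the non-degenerate functional ``evaluation of $(1+\cal{H}_r)^{-1}e^{i\lambda_0\cdot}$ at $r$''. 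I will present the argument via the continuity and strict positivity of $1+\cal{H}_r$, which forces $P(r,\lambda)\ne 0$ for real $\lambda$, and then combine with \eqref{starproperty} to finish both $3)$ and $4)$.
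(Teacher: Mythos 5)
Your treatment of parts 1), 2), and the open upper half-plane case of 3) is essentially the paper's: the Christoffel--Darboux identity is obtained from the differential system (the paper does the Wronskian-type manipulation, multiplying the equations by $\overline{P(r,\mu)}$ and $P_*(r,\lambda)$, interchanging $\lambda,\mu$, conjugating and adding, which is your differentiation argument in slightly different clothing), part 2) is just \eqref{mes1}, and for $\Im\lambda>0$ the diagonal case $\mu=\lambda$ gives $|P_*(r,\lambda)|^2=|P(r,\lambda)|^2+2\Im\lambda\int_0^r|P(s,\lambda)|^2ds>0$.

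The genuine gap is exactly where you yourself stall: the case $\lambda\in\mathbb{R}$ in part 3). Your proposed fix --- that strict positivity of $1+\cal{H}_r$ ``forces $P(r,\lambda)\neq 0$ for real $\lambda$,'' or that $P_*(r,0)$ is positive because it is an evaluation of $(1+\cal{H}_r)^{-1}\mathbf{1}$ --- does not work as stated. Positivity of the operator controls quadratic forms, not pointwise values of $(1+\cal{H}_r)^{-1}$ applied to a function at the endpoint $x=r$; nothing in operator positivity alone prevents $1-\int_0^r\Gamma_r(s,r)e^{i\lambda(r-s)}ds$ from vanishing at some real $\lambda$. The missing idea is elementary and lies entirely in the ODE: if $P_*(r_0,\lambda_0)=0$ for some real $\lambda_0$, then by \eqref{starproperty} also $P(r_0,\lambda_0)=0$; but for fixed $\lambda_0$ the pair $(P(\rho,\lambda_0),P_*(\rho,\lambda_0))$ solves the first-order linear system \eqref{krein2} in $\rho$, so vanishing of both components at $\rho=r_0$ forces, by uniqueness of solutions of the Cauchy problem (solved backwards from $r_0$), that both vanish identically --- contradicting $P(0,\lambda_0)=P_*(0,\lambda_0)=1$. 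This is the paper's argument, and it closes the boundary case in three lines. Note also that this case is not optional for part 4): your deduction of 4) from 2) and 3) needs $P_*$ to be zero-free on all of $\overline{\mathbb{C}^+}$, including the real line, to exclude real zeroes of $P(r,\cdot)$, so the gap propagates there as well.
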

\begin{proof}  To prove Christoffel-Darboux formula,
multiply the
first equation in (%
\ref{krein2}) by $\overline{P(r,\mu )}$. Multiply both sides of
\[
\overline{P_{\ast }^{\prime }(r,\mu )}=-\overline{A(r)}\,\overline{P(r,\mu
)}
\]%
by $P_{\ast }(r,\lambda )$ and subtract two identities. One has
\begin{equation}
P^{\prime }(r,\lambda )\overline{P(r,\mu )}-\overline{P_{\ast }^{\prime
}(r,\mu )}P_{\ast }(r,\lambda )=i\lambda P(r,\lambda )\overline{P(r,\mu )}
\label{e11s2}
\end{equation}%
Write the same equation with $\lambda $ and $\mu $ interchanged.
Take conjugate and add to (\ref{e11s2}). One gets (\ref{e101s2}).

$2)$ repeats (\ref{mes1}).

Take $\lambda\in \mathbb{C}^+$ and $\mu=\lambda$. Then,
(\ref{e101s2}) guarantees that $|P_*(r,\lambda)|>0$, so
$P_*(r,\lambda)$ has no zeroes in $\mathbb{C}^+$. Assume that
$P_*(r,\lambda)=0$ for some $\lambda\in \mathbb{R}$. Then, by
$2)$, $P(r,\lambda)=0$. However, functions $P(\rho,\lambda),
P_*(\rho,\lambda)$ solve the problem (\ref{krein2}) and then must
vanish for all $\rho\geq 0$. In the meantime
$P(0,\lambda)=P_*(0,\lambda)=1$. This contradiction shows that
$P_*(r,\lambda)$ has no zeroes in $\overline{\mathbb{C}^+}$.

$4)$ follows from $3)$ and (\ref{mes1}).  \end{proof}

Assume that for fixed $r$ we know $P(r,\lambda)$ as the function
in $\lambda$. The natural question is whether we can find
$P(\rho,\lambda)$ and $A(\rho)$ for all $0<\rho<r$? The answer
happens to be positive. Notice that since $P(r,\lambda)$ is given,
we know the values of $\Gamma _{r}(s,0)$ for all $s\in \lbrack
0,r]$. It easily follows from (\ref{polyn1}).

 Consider
\[
g(t,s)=\Gamma _{r}(0,r-t)\Gamma _{r}(r-s,0)-\Gamma _{r}(t,0)\Gamma
_{r}(0,s), 0<s,t<r
\]

\begin{lemma}
The following formula holds true
\begin{equation}
\Gamma _{r}(t,s)=\Gamma _{r}(t-s,0)+\overline{\Gamma _{r}(s-t,0)}%
+\int\limits_{0}^{\min (s,t)}g(t-u,s-u)du,\ 0\leq s\neq t \leq r
\label{equa2}
\end{equation}
where $\Gamma_r(0,s)=\Gamma _{r}(s,0)=0$, $s<0$ for shorthand.
\label{lemmamain}
\end{lemma}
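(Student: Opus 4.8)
The plan is to reduce (\ref{equa2}), via integration along the characteristics $t-s=\mathrm{const}$, to the first–order relation
\[
(\partial_t+\partial_s)\Gamma_r(t,s)=g(t,s)\qquad\text{on each triangle }\Delta_\pm,
\]
where, applying the flip identity (\ref{flip}) of Lemma \ref{lemma231} in the form $\Gamma_r(0,r-a)=\Gamma_r(a,r)$ and $\Gamma_r(r-b,0)=\Gamma_r(r,b)$, the right–hand side rewrites as
\[
g(t,s)=\Gamma_r(t,r)\Gamma_r(r,s)-\Gamma_r(t,0)\Gamma_r(0,s).
\]
Granting this relation, (\ref{equa2}) is immediate: for $t>s$ the characteristic through $(t,s)$ meets the edge $\{s=0\}$ at $(t-s,0)$, where (\ref{equa2}) is a tautology, and integrating from there (substituting $\xi\mapsto s-u$) produces exactly $\int_0^s g(t-u,s-u)\,du$ with $\min(s,t)=s$ and $\overline{\Gamma_r(s-t,0)}=0$; the case $t<s$ is the mirror image (meeting $\{t=0\}$ at $(0,s-t)$) and gives the $\overline{\Gamma_r(s-t,0)}=\Gamma_r(0,s-t)$ term, with $\Gamma_r(t-s,0)=0$ by the negative–argument convention. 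By the Hermitian symmetry $\Gamma_r(t,s)=\overline{\Gamma_r(s,t)}$ it in fact suffices to carry everything out on the single triangle $t>s$.

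To establish the first–order relation I would show that both $g(\cdot,s)$ and the diagonal derivative $\psi(t,s):=(\partial_t+\partial_s)\Gamma_r(t,s)$ solve the same resolvent equation
\[
\Phi(t,s)+\int_0^r H(t-u)\,\Phi(u,s)\,du=H(t-r)\Gamma_r(r,s)-H(t)\Gamma_r(0,s),
\]
and then invoke the invertibility of $\mathbf 1+\mathcal{H}_r$ (uniqueness of the solution). For $g$ this is a two–line computation: using the rewritten form, the left side equals $\Gamma_r(r,s)\int_0^r H(t-u)\Gamma_r(u,r)\,du-\Gamma_r(0,s)\int_0^r H(t-u)\Gamma_r(u,0)\,du$, and (\ref{basic1}) with second argument $r$, respectively $0$, converts each inner integral into $H(t-r)-\Gamma_r(t,r)$, respectively $H(t)-\Gamma_r(t,0)$; collecting terms yields precisely the displayed right–hand side. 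For $\psi$ one differentiates (\ref{basic1}) in the diagonal direction: writing $\Gamma_r(t+h,s+h)-\Gamma_r(t,s)$ from (\ref{basic1}), shifting the integration variable by $h$, and isolating the two thin strips near $u=0$ and $u=r$, one sees that the difference quotient $\Delta\Gamma/h$ satisfies $(\mathbf 1+\mathcal{H}_{r-h})(\Delta\Gamma/h)=E(\cdot,h)/h$ with $E(t,h)/h\to H(t-r)\Gamma_r(r,s)-H(t)\Gamma_r(0,s)$; since $(\mathbf 1+\mathcal{H}_{r-h})^{-1}\to(\mathbf 1+\mathcal{H}_r)^{-1}$, the difference quotient converges and its limit $\psi$ solves the same equation.

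The main obstacle is exactly this last point: for a merely continuous accelerant, Lemma \ref{lemma-discont} supplies joint continuity and differentiability in $r$, but not differentiability in $t,s$, so one must run the difference–quotient argument with care — controlling the two boundary strips using continuity of $\Gamma_r$ on the closed triangles $\Delta_\pm$, using the uniform–in–$h$ bound on $(\mathbf 1+\mathcal{H}_{r-h})^{-1}$ (legitimate since $\mathbf 1+\mathcal{H}_r>0$ and $r\mapsto\mathcal{H}_r$ is norm–continuous), and upgrading the $L^2$–convergence of $\Delta\Gamma/h$ to pointwise convergence through the resolvent equation itself. A route that avoids differentiability altogether is to substitute the right–hand side of (\ref{equa2}) directly into (\ref{basic1}) and check it solves it; that works too, but one then pays with a longer Fubini bookkeeping of the double integral $\int_0^r H(t-v)\int_0^{\min(v,s)}g(v-u,s-u)\,du\,dv$, split at $v=s$, together with boundary corrections of the form $\int_{r-\xi}^{r}$. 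I would favour the characteristic route, since it also \emph{explains} the particular bilinear form of $g$ and the cutoff $\min(s,t)$; apart from the differentiability step, all ingredients — the two applications of (\ref{basic1}), the use of (\ref{flip}), and the integration along characteristics — are short and routine.
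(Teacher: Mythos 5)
Your argument is correct, and its skeleton coincides with the paper's: both proofs reduce (\ref{equa2}) to the transport relation $(\partial_t+\partial_s)\Gamma_r=g$ on each of $\Delta_\pm$ and then integrate along the characteristics $t-s=\mathrm{const}$, using the convention $\Gamma_r(\cdot\,,0)=\Gamma_r(0,\cdot)=0$ for negative arguments exactly as you do. Where you diverge is in how the transport relation is established and how the mere continuity of $H$ is handled. The paper first assumes $H\in C^1[0,r]$ (so that $\Gamma_r\in C^1(\Delta_\pm)$) and obtains the PDE almost for free by differentiating the flip identity $\Gamma_r(t_1,s_1)=\Gamma_r(r-s_1,r-t_1)$ with respect to $r$ and invoking $\partial_r\Gamma_r(s,t)=-\Gamma_r(s,r)\Gamma_r(r,t)$ from Lemma \ref{lemma-discont}; the $r$-derivative hits both the subscript and the arguments, and equating the two expressions yields exactly $(\partial_t+\partial_s)\Gamma_r=g$. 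The general continuous case is then settled by approximating $H$ by $C^1$ Hermitian accelerants in $C[0,r]$ and passing to the limit in (\ref{equa2}), using uniform convergence of the resolvent kernels. You instead prove the PDE by the same device used in Lemma \ref{lemma23}: you check (correctly) via (\ref{basic1}) and the flip-rewritten form $g(t,s)=\Gamma_r(t,r)\Gamma_r(r,s)-\Gamma_r(t,0)\Gamma_r(0,s)$ that $g(\cdot,s)$ solves the resolvent equation with right-hand side $H(t-r)\Gamma_r(r,s)-H(t)\Gamma_r(0,s)$, and you show the diagonal difference quotient of (\ref{basic1}) solves the same equation in the limit, invoking invertibility of $I+\mathcal{H}_r$; the technical price is the careful treatment of the two boundary strips, the uniform control of $(I+\mathcal{H}_{r-h})^{-1}$, and the upgrade from $L^2$ to pointwise convergence, all of which you flag and which do go through. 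Your route thus avoids any smoothing of the accelerant and works directly for $H\in C[0,r]$ (and in fact adapts readily to the $L^2_{\rm loc}$ setting used later), at the cost of a longer limiting argument; the paper's route is shorter at the PDE step but needs the separate approximation step to remove the $C^1$ hypothesis.
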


\begin{proof} Assume that $H(x)\in C^{1}[0,r]$ first. Then, $\Gamma_r(s,t)\in C^1(\Delta_{\pm})$. Let us
show that
\begin{equation}\label{wave-eq}
\frac{\partial \Gamma _{r}(s,t)}{\partial t}+\frac{\partial \Gamma
_{r}(s,t)%
}{\partial s}=g(s,t),\quad s,t\in \Delta_{\pm}
\end{equation}
We have
\[
-\Gamma_r(t_1,r)\Gamma_r(r,s_1)=\frac{d}{dr}
\Gamma_r(t_1,s_1)=\frac{d}{dr} \Bigl[\Gamma_r(r-s_1,r-t_1)\Bigr]
\]

\[
=-\Gamma_r(r-s_1,r)\Gamma_r(r,r-t_1)+{\partial_{x_1}}\Gamma_r(r-s_1,r-t_1)
+{\partial_{x_2}} \Gamma_r(r-s_1,r-t_1)
\]
Taking $t=r-t_1, s=r-s_1$, we get (\ref{wave-eq}).

 Solving the linear first
order equation, we obtain (\ref{equa2}). Now, consider $H(x)\in
C[0,r]$. We can approximate $H(x)$ by the sequence of Hermitian
functions
$%
H^{(n)}(x)\in C^{1}[0,r]$ in $C[0,r]$ norm. Since $1+{\cal
H}_r>0$, $1+{\cal
H}^{(n)}_r>0$, the corresponding kernel $\Gamma _{r}^{(n)}(s,t)$
converges to $\Gamma _{r}(s,t)$ uniformly as elements of
$\hat{C}([0,r]^2)$. Then, apply (\ref{equa2}) to $\Gamma
_{r}^{(n)}(s,t)$ and take $n\to \infty$.
\end{proof}
Now, assume we are given $\Gamma_r(s,0)$ for all $0<s<r$. From
(\ref{equa2}), we know the resolvent kernel $\Gamma_r(s,t)$ for
all $0<s,t<r$ and then can find $H(x)$, say, from (\ref{basic2}).
The accelerant $H(x)$ on $[0,r]$ defines $A(x)$ for $x\in [0,r]$
by construction.

 There is yet another way to find $H(x), 0<x<r$ from
 $P(r,\lambda)$. It is given by the following

\begin{lemma}
The following representation is true
\begin{equation}
\frac{1}{|P_{\ast }(r,\lambda )|^{2}}=1+\int\limits_{-\infty }^{\infty }%
\overline{H_{r}(s)}\exp (i\lambda s)ds, \lambda\in \mathbb{R}
\label{est1}
\end{equation}
{\it where} $H_{r}(s)$  is Hermitian function, $H_{r}(s)\in
L^{1}(\mathbb{R})$, and $H_{r}(s)=H(s)$ for
$|s|<r$. \label{lemmaimp}
\end{lemma}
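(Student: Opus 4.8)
The plan is to identify $1/|P_*(r,\lambda)|^2$ with the symbol of the inverse of the truncated Toeplitz operator $I+\cal{H}_r$, and then read off the claimed Fourier representation. First I would recall from Theorem \ref{volter} (applicable since $I+\cal{H}_r>0$) that $(I+\cal{H}_r)^{-1}=I-\cal{G}_r$ where $\cal{G}_r$ has kernel $\Gamma_r(s,t)$, and that the factorization $I+\cal{H}_r=(I+\cal{L})(I+\cal{U})$ gives $(I+\cal{H}_r)^{-1}=(I+\cal{V}_+)(I+\cal{V}_-)$ with $\cal{V}_+=\cal{V}_-^*$. In the language of the ``continuous polynomials,'' formula (\ref{e31s2}) together with (\ref{kernels}) says $P_*(r,\lambda)$ is obtained by applying $I+\cal{V}_-$ (acting in $L^2[0,r]$) to the constant function $1$ and evaluating appropriately; the point is that $\overline{P_*(r,\lambda)}P_*(r,\lambda)$, integrated suitably, reconstructs the reproducing-kernel structure. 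Concretely, I would use (\ref{sect2eq1}): for $f\in L^2[0,r]$,
\[
((I+\cal{H}_r)f,f)=\int_{-\infty}^\infty\Bigl|\int_0^r\overline{f(t)}\exp(i\lambda t)\,dt\Bigr|^2 d\sigma(\lambda),
\]
and apply it with $f$ chosen so that $(I+\cal{H}_r)f$ is concentrated — i.e. take $f=(I+\cal{H}_r)^{-1}\chi$-type elements — to convert statements about $\sigma$ into statements about $\cal{H}_r$.

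The cleanest route, I think, avoids $\sigma$ altogether and works directly on $[0,r]$. Set $\psi_r(\lambda,s)=P_*(r,\lambda)$ viewed through (\ref{e10s2}): $P_*(r,\lambda)=1-\int_0^r\Gamma_r(0,s)\exp(i\lambda s)\,ds$. Then compute $|P_*(r,\lambda)|^2=P_*(r,\lambda)\overline{P_*(r,\lambda)}$ by multiplying the two integral expressions; the product of two exponentials $\exp(i\lambda s)\exp(-i\lambda t)$ gives a kernel on $[0,r]^2$, so $|P_*(r,\lambda)|^2=1+\int_{-r}^r c_r(u)\exp(i\lambda u)\,du$ for an explicit Hermitian $c_r\in L^1(-r,r)$ built from $\Gamma_r(0,\cdot)$ and its convolution. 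The substance is then to show that the function whose Fourier coefficients are $c_r$ is exactly $(1+H_r)^{-1}$ for a Hermitian $H_r\in L^1(\mathbb R)$ with $H_r=H$ on $(-r,r)$. For this I would use the resolvent identity (\ref{basic1})–(\ref{basic2}) at $s=0$, namely $\Gamma_r(t,0)+\int_0^r H(t-u)\Gamma_r(u,0)\,du=H(t)$, which is precisely the statement that the first ``column'' $\Gamma_r(\cdot,0)$ of the resolvent solves $(I+\cal{H}_r)\Gamma_r(\cdot,0)=H(\cdot-0)$ on $[0,r]$; translating to symbols via the factorization gives $(1+\widehat{H}(\lambda))|P_*(r,\lambda)|^2=1$ on the relevant support, where the bar on $H_r$ appears because $\Gamma_r(0,s)=\overline{\Gamma_r(s,0)}$.

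Since the kernel $H$ lives on all of $\mathbb R$ and only its restriction to $(-r,r)$ enters $\cal{H}_r$, I would define $H_r$ by first declaring $1+\widehat{H_r}:=1/|P_*(r,\lambda)|^2$ — a positive, bounded-below function on $\mathbb R$ whose distance from $1$ is in the Wiener algebra (it is $1$ plus a Fourier transform of an $L^1(-r,r)$ function over a compact set, hence the reciprocal minus $1$ is again in $W(\mathbb R)$ by Wiener's lemma, using positivity to stay away from $0$) — and then check $H_r=H$ on $(-r,r)$ by comparing Fourier coefficients against the resolvent equation on $[0,r]$. The main obstacle I anticipate is exactly this last identification: showing the agreement $H_r(s)=H(s)$ for $|s|<r$ rather than merely getting \emph{some} $L^1$ Hermitian function. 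The mechanism is that $\Gamma_r(\cdot,0)$ satisfies $(I+\cal{H}_r)\Gamma_r(\cdot,0)=H$ on $[0,r]$, so on that interval the ``Fourier side'' of $1/|P_*|^2=(I+\cal{V}_+)(I+\cal{V}_-)$ must reproduce $1+H$; one has to argue that only the values of $H$ on $(-r,r)$ are constrained, which is why $H_r$ can (and must) be an extension. I would make this rigorous either by the approximation argument used in Lemma \ref{lemmamain} (first $H\in C^1[0,r]$, then pass to the $C[0,r]$ limit of resolvents, uniformly in $\hat C([0,r]^2)$) or by directly verifying that the convolution identity $\Gamma_r(t,0)+\int_0^r H(t-u)\Gamma_r(u,0)\,du=H(t)$ for $0<t<r$ is equivalent, after Fourier transform against the ``monic'' normalization, to (\ref{est1}).
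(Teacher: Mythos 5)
Your existence step (defining $H_r$ by Wiener--Levy applied to $1/|P_*(r,\lambda)|^2$, which is legitimate because $|P_*(r,\lambda)|^2$ equals $1$ plus the Fourier transform of an $L^1(-r,r)$ function and $P_*$ has no real zeros by part $3)$ of Lemma \ref{lemma24}) is the same as the paper's. The genuine gap sits exactly where you flag ``the main obstacle'': the identification $H_r(s)=H(s)$ for $|s|<r$ is never proved, and neither of the routes you sketch closes it. The ``symbol identity'' $(1+\widehat H)|P_*|^2=1$ is not available: $H$ is only locally integrable, so $\widehat H$ need not exist, and on $(-r,r)$ this identity \emph{is} the statement of the lemma rather than something that ``translating to symbols via the factorization'' produces. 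Likewise, the convolution identity $\Gamma_r(t,0)+\int_0^r H(t-u)\Gamma_r(u,0)\,du=H(t)$ is not ``equivalent, after Fourier transform, to (\ref{est1})'': from (\ref{est1}) one can derive the \emph{same} equation with $H_r$ in place of $H$, but the fact that one and the same function $\Gamma_r(\cdot,0)$ solves the equation for two kernels does not by itself force the kernels to agree; and the approximation argument inside Lemma \ref{lemmamain} concerns the regularity of $H$, not this identification.

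What is missing are the two mechanisms that actually carry the paper's proof. First, a Riesz-projection argument: since $P_*(r,\lambda)$ has no zeros in $\overline{\mathbb{C}^+}$, the function $1/\overline{P_*(r,\lambda)}-1$ belongs to $\overline{H^2(\mathbb{R})}$; writing it as $P_*(r,\lambda)\bigl(1+\int_{-\infty}^{\infty}\overline{H_r(s)}e^{i\lambda s}\,ds\bigr)-1$ and applying $\cal{P}_+$ kills the left-hand side and yields $\Gamma_r(s,0)+\int_0^r H_r(s-u)\Gamma_r(u,0)\,du=H_r(s)$ for $0<s<r$; this is the concrete bridge between the Fourier identity on $\mathbb{R}$ and an equation on $[0,r]$, and nothing in your sketch supplies it. Second, a uniqueness argument exploiting the displacement structure: (\ref{est1}) shows $I+\tilde{\cal{H}}_r>0$ for the operator built from $H_r$, so the equation just derived says its resolvent kernel has the same first column, $\tilde\Gamma_r(s,0)=\Gamma_r(s,0)$ on $[0,r]$; Lemma \ref{lemmamain} then reconstructs the whole resolvent kernel from that column, giving $\tilde\Gamma_r=\Gamma_r$ on $[0,r]^2$, hence $\tilde{\cal{H}}_r=\cal{H}_r$ and $H_r=H$ a.e.\ on $(-r,r)$. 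Without these two steps your argument only shows that \emph{some} Hermitian $H_r\in L^1(\mathbb{R})$ represents $1/|P_*|^2$, which is the easy half of the lemma.
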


\begin{proof} Formula (\ref{e10s2}), 3) in Lemma \ref{lemma24},
and Levy-Wiener theorem yield existence of function $H_{r}\in
L^1(\mathbb{R})$. Moreover, $H_r$ is continuous on $\mathbb{R}$
except for the points $0,\pm r$, where the left(right) limits
exist. That is clear from the corresponding integral equation. Let
us show that this $H_{r}(s)$ coincides with $H(s)$ for $|s|<r$. We
have
\[
\frac{1}{\overline{P_{\ast }(r,\lambda )}}-1=P_{\ast }(r,\lambda
)\left( 1+\int\limits_{-\infty }^{\infty }\overline{H_{r}(s)}\exp (i\lambda
s)ds\right)-1
\]%
The left-hand side belongs $\overline{H^{2}(\mathbb{R})}$. Therefore,%
\[
\cal{P}_{+}\left[ P_{\ast }(r,\lambda )\left( 1+\int\limits_{-\infty }^{\infty }%
\overline{H_{r}(s)}\exp (i\lambda s)ds\right)-1\right] =0
\]%
which gives%
\[
\Gamma _{r}(s,0)+\int\limits_{0}^{r}H_{r}(s-u)\Gamma _{r}(u,0)du=H_{r}(s),\
0<s<r
\]%
Formula (\ref{est1}) proves that $H_r$ defines an integral
operator $\tilde{\cal{H}}_r$ on $L^2[0,r]$ and
$I+\tilde{\cal{H}}_r>0$. Denote its resolvent kernel  by
$\tilde\Gamma_r(s,t)$. The last equation shows that
\mbox{$\tilde\Gamma_r(s,0)=\Gamma _{r}(s,0),0<s<r$}. Due to Lemma
\ref{lemmamain}, $\tilde\Gamma_r(s,t)=\Gamma_r(s,t)$ for all
$0\leq s,t\leq r$. So, $H_{r}(s)=H(s)$ for $0<s<r$. Since $H_{r}$
is Hermitian, we obtain the statement of the Lemma.
\end{proof}

Thus, if we know $P(r,\lambda)$, we know $P_*(r,\lambda)$ as well
and can find $H(x)$ for $|x|<r$ using the previous Lemma.
\bigskip

{\bf Remarks and historical notes.}

Continuous analogs of polynomials orthogonal on the circle were
introduced by M.G. Krein in the paper \cite{Krein2} but no proofs
were given. We filled this gap. Lemma \ref{lemmamain} is in
\cite{Krein1}, see also \cite{feldman}, p.100.

If one is given the function $H(r)\in C[0,R]$, Hermitian and such
that \mbox{$I+\cal{H}_R>0$}, then the Krein system can be
well-defined on the interval $[0,R]$. In the meantime, the
question of orthogonality with respect to some measure gives rise
to certain continuation problem \cite{Krein1} we do not want to
address here.

\newpage

\section{Krein systems}\label{four}

In the previous section we learned that any accelerant $H(r)$
gives rise to (\ref{krein2}), the system of ODE called the Krein
system. But it makes sense to study this system per se. In this
section, we will show that one can start with the Krein systems
and then define the accelerant $H(r)$ and measure
$\sigma(\lambda)$ uniquely.

Consider the system
\begin{equation}
X'=VX \label{krein3}
\end{equation}
with $X(0,\lambda)=I$,
\begin{equation}
V=\left[
\begin{array}{cc}
i\lambda & -\overline{A(r)} \\
-A(r) & 0%
\end{array}%
\right]\label{potential-V}
\end{equation}
Matrix $V$ has very special algebraic structure and it should
imply very special properties for the fundamental (transfer)
matrix $X(r)$. Assume first that $A(r)\in L^1_{\rm
loc}(\mathbb{R}^+)$.

The first obvious result is
\begin{lemma}
We have \begin{equation} \det X(r)=\exp(i\lambda r)
\label{determinant}
\end{equation}
\end{lemma}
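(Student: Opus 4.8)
The plan is to use Liouville's formula (the Abel--Jacobi--Liouville identity) for the Wronskian-type determinant of a fundamental matrix solution of a linear system. Recall that if $X' = VX$ with $X(0,\lambda) = I$, then $\frac{d}{dr}\det X(r) = \operatorname{tr}V(r)\cdot\det X(r)$. This is the standard fact that the determinant of the fundamental matrix evolves by the trace of the coefficient matrix; it can be derived directly by differentiating $\det X$ using the multilinearity of the determinant in the rows (or columns) of $X$ and substituting $X' = VX$ row by row.

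The key computation is then trivial: for the matrix $V$ in (\ref{potential-V}) we have $\operatorname{tr}V = i\lambda + 0 = i\lambda$, which is independent of $r$. Hence $\det X(r)$ satisfies the scalar linear ODE $\frac{d}{dr}\det X(r) = i\lambda\,\det X(r)$ with initial condition $\det X(0) = \det I = 1$. Integrating gives $\det X(r) = \exp(i\lambda r)$, which is exactly (\ref{determinant}).

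I would present this in essentially three short steps: first, invoke (or quickly re-derive) Liouville's formula $\frac{d}{dr}\det X = (\operatorname{tr}V)\det X$; second, observe $\operatorname{tr}V = i\lambda$ from the explicit form of $V$; third, solve the resulting first-order scalar ODE with the initial value $\det X(0)=1$. Since $A(r)\in L^1_{\rm loc}(\mathbb{R}^+)$, the entries of $X(r)$ are absolutely continuous in $r$, so all the differentiations are justified almost everywhere and the integration recovering $\det X(r)$ is valid.

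Honestly, there is no real obstacle here: this is a one-line consequence of the special structure of $V$, namely that its diagonal has the single entry $i\lambda$ and its $(2,2)$-entry vanishes. The only thing to be slightly careful about is the regularity — making sure Liouville's formula applies in the $L^1_{\rm loc}$ setting rather than assuming continuity of $A$ — but this is routine, as solutions of linear systems with locally integrable coefficients are locally absolutely continuous and Liouville's identity holds in the a.e./integral sense.
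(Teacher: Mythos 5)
Your proof is correct and is essentially the same as the paper's: the paper simply writes the integrated Liouville formula $\det X(r)=\exp\bigl[\int_0^r \operatorname{Tr} V(t)\,dt\bigr]=\exp(i\lambda r)$, which is exactly your argument with the ODE step already integrated. Your added remark on the $L^1_{\rm loc}$ regularity is fine but not a point of divergence.
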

\begin{proof}
Indeed,
\[
\det X(r)=\exp\left[ \int\limits_0^r {\rm Tr}\,
 V(t) dt
\right]=\exp(i\lambda r)
\]
\end{proof}

Consider the signature matrix
\begin{equation}
J=\left[
\begin{array}{cc}
1 & 0\\
0 & -1
\end{array}
\right] \label{signature}
\end{equation}

\begin{definition}
The matrix $M$ is said to be $J$-- contraction if $M^* J M\leq J$.
\end{definition}
\begin{definition}
The matrix $M$ is called $J$-- unitary if $M^* J M=J$
\end{definition}
 Later, we will need the following algebraic
\begin{lemma}
If $M$ is $J$--unitary, then $|\det M|=1$, and $M^{-1}, M^*$ are
$J$--unitary too. If $M$ is $J$-- contraction then $M^*$ is $J$--
contraction also \label{jproperty}
\end{lemma}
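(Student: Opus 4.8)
The plan is to prove each assertion by a short direct computation with the defining relations $M^* J M = J$ (for $J$-unitarity) and $M^* J M \leq J$ (for $J$-contraction), using only that $J^2 = I$ and $\det J = -1$.

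\textbf{Step 1: the determinant.} From $M^* J M = J$ take determinants: $\overline{\det M}\,(\det J)\,(\det M) = \det J$, and since $\det J = -1 \neq 0$ we cancel it to get $|\det M|^2 = 1$, hence $|\det M| = 1$. In particular $M$ is invertible, which is needed for the next step.

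\textbf{Step 2: $M^{-1}$ and $M^*$ are $J$-unitary.} Since $M$ is invertible, rewrite $M^* J M = J$ as $J = (M^*)^{-1} J M^{-1} = (M^{-1})^* J (M^{-1})$, which says exactly that $M^{-1}$ is $J$-unitary. For $M^*$: starting again from $M^* J M = J$, multiply on the left by $(M^*)^{-1} = (M^{-1})^*$ and on the right by $M^{-1}$ to obtain $J = (M^{-1})^* J M^{-1}$; but this is the statement that $(M^{-1})^{*}$, i.e. $(M^*)^{-1}$, is $J$-unitary, so by the previous sentence applied to $M^*$ in place of $M$ — or more directly, from $J = (M^*)^{-1} J (M^*)^{-1*}$ after conjugating and using $J^* = J$ — we conclude $(M^*)^* J M^* = M J M^* = J$, i.e. $M^*$ is $J$-unitary. (I would write this last computation cleanly: conjugate-transpose both sides of $M^* J M = J$ to get $M^* J M = J$ again since $J^* = J$; the useful identity is rather that $M J M^* = J$ follows by inserting $M^{-1}M = I$ appropriately, which I will spell out in two lines.)

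\textbf{Step 3: $J$-contraction is preserved under adjoint.} Suppose $M^* J M \leq J$. The cleanest route is to note that a $J$-contraction is automatically invertible (since $M^* J M \leq J$ and $J$ is invertible force $M$ to have trivial kernel in finite dimensions — if $Mx = 0$ then $\langle Jx,x\rangle \geq 0$; combined with the same inequality for a suitable vector one deduces injectivity; I will give the short argument). Then from $M^* J M \leq J$, sandwich by $(M^*)^{-1}$ and $M^{-1}$: because $(M^*)^{-1} A M^{-1} \leq (M^*)^{-1} B M^{-1}$ whenever $A \leq B$ (congruence preserves operator inequalities), we get $J \leq (M^{-1})^* J M^{-1}$. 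Now apply this reasoning once more, or equivalently take $N = M^{-1}$: we have $J \leq N^* J N$ reversed... so I instead argue: $J \leq (M^{-1})^* J M^{-1}$ means $M^{-1}$ satisfies $(M^{-1})^* J M^{-1} \geq J$, equivalently $M J M^* \leq J$ after inverting again (congruence by $M^*$ on the left and $M$ on the right reverses the inequality direction consistently), which reads $(M^*)^* J (M^*) \leq J$, i.e. $M^*$ is a $J$-contraction.

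The main obstacle is Step 3: getting the direction of the inequality to come out right under the double congruence, and justifying invertibility of a $J$-contraction (needed to even form $M^{-1}$). In finite dimensions invertibility is immediate once one observes $\ker M \cap \{x : \langle Jx, x\rangle \leq 0\}$ considerations, but I would present it as: if $Mx = 0$ then $0 = \langle M^*JMx, x\rangle \leq \langle Jx, x\rangle$, while for the contraction applied to $M^*$-type vectors one gets the opposite; the slick finite-dimensional fact is simply that $\det M \neq 0$, which can be extracted by a perturbation/continuity argument or by noting $J$-contractions of the transfer matrix here all have $|\det M| = |\exp(i\lambda r)| $-controlled modulus — but since the lemma is stated abstractly I will give the purely linear-algebra injectivity argument in two or three lines.
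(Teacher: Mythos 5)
Your Steps 1 and 2 are fine and essentially match the paper's argument: the determinant identity and the congruence $J=(M^{-1})^*JM^{-1}$, followed by inverting both sides (using $J^{-1}=J$) to get $MJM^*=J$, is exactly the content of the paper's manipulation $M^*JMJ=I\Rightarrow MJM^*J=I$.

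Step 3, however, has a genuine gap, in two places. First, your key premise — that a $J$-contraction is automatically invertible — is false. Take $J=\left[\begin{smallmatrix}1&0\\0&-1\end{smallmatrix}\right]$ and $M=\left[\begin{smallmatrix}0&0\\0&1\end{smallmatrix}\right]$: then $M^*JM=\left[\begin{smallmatrix}0&0\\0&-1\end{smallmatrix}\right]$ and $J-M^*JM=\left[\begin{smallmatrix}1&0\\0&0\end{smallmatrix}\right]\geq 0$, so $M$ is a singular $J$-contraction. Your injectivity sketch cannot be repaired: $Mx=0$ only gives $\langle Jx,x\rangle\geq 0$, which is no contradiction since $J$ is indefinite. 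Second, even if $M$ were invertible, your inequality-chasing is circular: congruence \emph{preserves} operator inequalities, so sandwiching $M^*JM\leq J$ by $(M^{-1})^*,M^{-1}$ gives $J\leq (M^{-1})^*JM^{-1}$, and sandwiching that by $M^*,M$ merely returns $M^*JM\leq J$; at no point do you obtain $MJM^*\leq J$. (Unlike the unitary case, you cannot "invert both sides" of an inequality.) The paper avoids both problems with a purely algebraic identity: setting $M^c=JM^*J$ and $Q=(M-I)^{-1}(M^c-I)$ (assuming $1$ is not an eigenvalue of $M$, which one can always arrange by multiplying $M$ by a unimodular scalar), one has $MJM^*-J=JQ^*\bigl(M^*JM-J\bigr)QJ$, and since $Q$ is invertible this congruence transfers $M^*JM\leq J$ into $MJM^*\leq J$ with no invertibility assumption on $M$. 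You would need either this identity or some equivalent device (e.g. the Potapov--Ginzburg transform) to close the argument.
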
 whose proof is given in the Appendix.

The signature matrix $J$ defines the corresponding indefinite
metric. For general properties of these spaces and operators
acting on them, see \cite{Iohvidov, Rodman}.

The next very important algebraic observation is
\begin{equation}
V^*(r)J+JV(r)=-2\Im\lambda \left[
\begin{array}{cc}
1 & 0\\
0 & 0
\end{array}
\right]  \label{alg1}
\end{equation}
for any $r>0$.
\begin{theorem}
The matrix $X$ is $J$-- contraction for $\lambda\in \mathbb{C}^+$
and is $J$-- unitary for $\lambda\in \mathbb{R}$.
\label{jcontraction}
\end{theorem}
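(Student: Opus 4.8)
The plan is to differentiate the matrix-valued quantity $X^*(r)JX(r)$ with respect to $r$ and use the algebraic identity \eqref{alg1}. This is the standard way such $J$-contractivity statements are established for canonical systems, and the special structure \eqref{potential-V} is exactly what makes it work.

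First I would set $Y(r) = X^*(r) J X(r)$ and compute, using $X' = VX$ and hence $(X^*)' = X^* V^*$,
\[
Y'(r) = X^*(r)\bigl(V^*(r) J + J V(r)\bigr) X(r).
\]
By \eqref{alg1}, the bracket equals $-2\,\Im\lambda\, E$ where $E = \operatorname{diag}(1,0)\geq 0$. Therefore, for $\lambda \in \mathbb{C}^+$ we have $\Im\lambda > 0$, so $Y'(r) = -2\,\Im\lambda\, X^*(r) E X(r) \leq 0$ as a Hermitian matrix for every $r$. Since $Y(0) = X^*(0) J X(0) = J$ (because $X(0,\lambda) = I$), integrating the inequality $Y'(r)\leq 0$ from $0$ to $r$ gives $Y(r) \leq Y(0) = J$, i.e. $X^*(r) J X(r) \leq J$, which is precisely the statement that $X(r)$ is a $J$-contraction. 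For $\lambda \in \mathbb{R}$, $\Im\lambda = 0$, so $Y'(r) \equiv 0$, hence $Y(r) = Y(0) = J$ for all $r$, i.e. $X(r)$ is $J$-unitary.

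The only genuine technical point — and the main obstacle, though a mild one — is the justification of differentiating $Y(r)$ and integrating the resulting matrix inequality under the stated regularity $A(r) \in L^1_{\rm loc}(\mathbb{R}^+)$, where $X(r)$ is absolutely continuous rather than $C^1$. Here I would note that $X$ is locally absolutely continuous (as the solution of \eqref{krein3} with $L^1_{\rm loc}$ coefficients), so $Y$ is locally absolutely continuous and the product rule holds a.e., giving $Y'(r) = X^*(r)(V^* J + JV)(r) X(r)$ for a.e. $r$; since the right-hand side is $\leq 0$ a.e., $Y(r) - Y(0) = \int_0^r Y'(t)\,dt \leq 0$ in the Hermitian order, which suffices. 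Finally, I would record the consequence, to be used later, that combined with Lemma \ref{jproperty} this gives $|\det X(r)| = 1$ for real $\lambda$, consistent with \eqref{determinant}, and that $X^*(r)$ is a $J$-contraction (resp. $J$-unitary) as well.
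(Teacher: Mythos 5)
Your argument is correct and is essentially the paper's own proof: both define $Y=X^*JX$, differentiate using \eqref{alg1} to get $Y'=-2\Im\lambda\,X^*\operatorname{diag}(1,0)X$ with $Y(0)=J$, and conclude $Y\leq J$ for $\lambda\in\mathbb{C}^+$ and $Y=J$ for real $\lambda$. Your extra remark justifying the differentiation/integration under the $L^1_{\rm loc}$ regularity is a harmless elaboration of the same argument.
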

\begin{proof} Consider $Y=X^* J X$. Then, (\ref{alg1})
yields
\[
Y'=-2\Im \lambda\, X^* \left[
\begin{array}{cc}
1 & 0\\
0 & 0
\end{array}
\right] X,\, Y(0,\lambda)=J
\]
Therefore, for any $f\in \mathbb{C}^2$, we have
\begin{eqnarray}
(Yf,f)=(Jf,f), \lambda\in \mathbb{R}\\
(Yf,f)\leq (Jf,f), \lambda\in \mathbb{C}^+\\
(Yf,f)\geq (Jf,f), \lambda\in  \mathbb{C}^-
\end{eqnarray}
which implies the statement of the theorem.\end{proof}

In this section, we {\bf define} the functions
$P(r,\lambda),P_*(r,\lambda)$ as solutions of equation
(\ref{krein3}) corresponding the Cauchy problem
$P(0,\lambda)=P_*(0,\lambda)=1$. Consider also two functions
$\widehat{P}(r,\lambda)$ and $\widehat{P}_*(r,\lambda)$ such that
the vector $\widehat{P}(r,\lambda ),-\widehat{P}%
_{\ast }(r,\lambda )$\ solves (\ref{krein3}) and satisfies initial
 condition $\widehat{P}(0,\lambda )=1,-\widehat{P}_{\ast
}(0,\lambda )=-1.$ The simple calculation shows that

\begin{equation}
X(r)=\frac{1}{2}\left[
\begin{array}{cc}
P+\widehat{P} & P-\widehat{P} \\
P_{\ast }-\widehat{P}_{\ast } & P_{\ast }+\widehat{P}_{\ast }%
\end{array}\right]=
\frac{1}{2} \left[
\begin{array}{cc}
P & \widehat{P} \\
P_{\ast } & -\widehat{P}_{\ast }%
\end{array}\right]
\left[
\begin{array}{cc}
1 & 1 \\
1 & -1%
\end{array}
\right]\label{rotation}
\end{equation}
The sign $``-"$ in the definitions of $\widehat{P}_*(r,\lambda)$
was chosen for the following reason. Notice that
\begin{equation}
JV(r)J=\left[
\begin{array}{cc}
i\lambda & \overline{A(r)} \\
A(r) & 0%
\end{array}
\right]\label{commutation}
\end{equation}
Then,
\begin{lemma}\label{sign-change}
If $X$ solves equation (\ref{krein3}), then $JXJ$ solves the same
equation but with coefficient $A(r)$ having an opposite sign.
\end{lemma}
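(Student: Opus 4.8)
The plan is to verify directly that $W(r) := J X(r) J$ satisfies the differential equation $W' = (JVJ)W$ with the same initial condition $W(0) = J I J = I$, and then observe that $JVJ$ is precisely the coefficient matrix $V$ with $A(r)$ replaced by $-A(r)$, as exhibited in equation (\ref{commutation}).

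First I would compute $W'(r) = J X'(r) J$, using that $J$ is a constant matrix. Since $X$ solves (\ref{krein3}), this is $J V(r) X(r) J$. Now I insert $J^2 = I$ (which holds because $J$ is the signature matrix (\ref{signature})) between $V(r)$ and $X(r)$, obtaining $W'(r) = (J V(r) J)(J X(r) J) = (J V(r) J) W(r)$. The initial condition is immediate: $W(0) = J X(0) J = J \cdot I \cdot J = J^2 = I$.

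Next I would identify $JV(r)J$. By the explicit computation already recorded in (\ref{commutation}),
\[
J V(r) J = \left[
\begin{array}{cc}
i\lambda & \overline{A(r)} \\
A(r) & 0
\end{array}
\right],
\]
which is exactly the matrix (\ref{potential-V}) with $A(r)$ replaced by $-A(r)$ (and correspondingly $\overline{A(r)}$ replaced by $-\overline{A(r)}$). Hence $W = JXJ$ solves the Krein system equation (\ref{krein3}) with coefficient $-A(r)$, which is the assertion of the lemma.

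There is no real obstacle here: the only thing to be careful about is that $J$ is an involution ($J^2 = I$) so that conjugation by $J$ is well-behaved, and that differentiation commutes with left- and right-multiplication by the constant matrix $J$ — both are routine. The substantive content was already extracted in the displayed identity (\ref{commutation}) preceding the lemma, so the proof is essentially the one-line conjugation argument above.
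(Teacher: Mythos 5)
Your proof is correct and is essentially the paper's own argument: multiply the equation by $J$, insert $J^2=I$, and invoke the identity (\ref{commutation}) to recognize $JVJ$ as the coefficient matrix with $-A(r)$. The added check of the initial condition $W(0)=I$ is a harmless, routine detail.
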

\begin{proof}
Multiply (\ref{krein3}) from the left by $J$. Then use
(\ref{commutation}) and identity $J^2=I$.
\end{proof}

\begin{corollary}
The vector $\widehat{P}(r,\lambda ),\widehat{P}%
_{\ast }(r,\lambda )$ satisfies the same initial conditions at
zero as $P(r,\lambda), P_*(r,\lambda)$ but solves system
(\ref{krein3}) with $A(r)$ having an opposite sign. This system is
called the dual Krein system. \label{corollary-a}
\end{corollary}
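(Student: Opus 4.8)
The plan is to deduce this directly from Lemma~\ref{sign-change}, transferred from matrix solutions to vector solutions. By construction, the vector $Z(r,\lambda)=[\widehat{P}(r,\lambda),-\widehat{P}_*(r,\lambda)]^t$ solves $Z'=VZ$ with $Z(0,\lambda)=[1,-1]^t$. The key observation is that multiplying a solution of $Z'=VZ$ on the left by $J$ produces a solution of the system with coefficient $JVJ$: since $J^2=I$, one has $(JZ)'=JZ'=JVZ=(JVJ)(JZ)$.

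First I would set $W(r,\lambda)=JZ(r,\lambda)$. Because $J$ is the diagonal matrix with entries $1,-1$, the components of $W$ are exactly $W=[\widehat{P}(r,\lambda),\widehat{P}_*(r,\lambda)]^t$, so the $``-"$ sign in the definition of $\widehat{P}_*$ is absorbed --- this is precisely the reason that sign was inserted. The computation above then gives $W'=(JVJ)W$, and by \eqref{commutation} the matrix $JVJ$ is obtained from $V$ by flipping the sign of $A(r)$; that is, $W$ solves \eqref{krein3} with $A$ replaced by $-A$, i.e.\ the dual Krein system.

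It remains only to check the initial condition: $W(0,\lambda)=JZ(0,\lambda)=J[1,-1]^t=[1,1]^t$, which coincides with the Cauchy data $P(0,\lambda)=P_*(0,\lambda)=1$ imposed on the pair $[P,P_*]^t$. Hence $[\widehat{P},\widehat{P}_*]^t$ satisfies the same initial conditions at $0$ as $[P,P_*]^t$ but solves the dual system, as claimed. There is essentially no obstacle beyond bookkeeping of the sign convention; the entire content is the algebraic identity $JVJ=V|_{A\to -A}$ already recorded in \eqref{commutation}, together with $J^2=I$.
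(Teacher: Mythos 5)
Your argument is correct and is exactly the paper's intended derivation: the corollary is stated as an immediate consequence of Lemma~\ref{sign-change}, whose proof is precisely the computation $(JZ)'=(JVJ)(JZ)$ using \eqref{commutation} and $J^2=I$, applied here to the vector solution $[\widehat{P},-\widehat{P}_*]^t$ with initial data $[1,-1]^t$. Nothing further is needed.
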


 Instead of dealing
with the transfer matrix $X(r,\lambda)$ which solves
(\ref{krein3}), we will first study functions $P, P_*,
\widehat{P}$, and $\widehat{P_*}$. Below, we list some simple
properties.
\begin{lemma} \label{propertiesP}
\begin{itemize}
\item[1.] For any $\lambda \in \mathbb{C}$
and $r\geq 0$%
\begin{equation}
P(r,\lambda)\widehat{P}_{\ast }(r,\lambda)+P_{\ast
}(r,\lambda)\widehat{P}(r,\lambda)=2\exp (i\lambda r)
\label{e21s3}
\end{equation}
\item[2.] All statements of Lemma \ref{lemma24} are true for both
$P(r,\lambda), P_*(r,\lambda)$ and
$\widehat{P}(r,\lambda),\widehat{P}_*(r,\lambda)$.
\item[3.]
For $\lambda\in \mathbb{C}^+$, we have
\begin{equation}
\Re \left[ P_{\ast }^{-1}(r,\lambda)\widehat{P}_{\ast
}(r,\lambda)\right] \geq \left\vert P_{\ast
}(r,\lambda)\right\vert ^{-2}  \label{e6s3}
\end{equation}
and the last inequality is equality for real $\lambda$.
\end{itemize}
\end{lemma}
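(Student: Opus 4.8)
The plan is to prove the three items in order, using the Krein system (\ref{krein2}) and its dual version from Corollary \ref{corollary-a}, together with the Christoffel--Darboux machinery of Lemma \ref{lemma24}. For item 1, I would write down the ODE satisfied by the scalar $W(r)=P\widehat{P}_*+P_*\widehat{P}$. Differentiating and substituting $P'=i\lambda P-\bar A P_*$, $P_*'=-AP$ and the dual relations $\widehat{P}'=i\lambda\widehat{P}+\bar A\widehat{P}_*$, $\widehat{P}_*'=A\widehat{P}$ (the signs flipped because $\widehat{P},\widehat{P}_*$ solve the Krein system with $A\mapsto -A$), the cross terms involving $A$ and $\bar A$ should cancel, leaving $W'=i\lambda W$ with $W(0)=2$, hence $W(r)=2\exp(i\lambda r)$. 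Alternatively, and perhaps more cleanly, I would read (\ref{e21s3}) off the factorization (\ref{rotation}): since $X(r)=\tfrac12\left[\begin{smallmatrix}P&\widehat P\\ P_*&-\widehat P_*\end{smallmatrix}\right]\left[\begin{smallmatrix}1&1\\1&-1\end{smallmatrix}\right]$, one has $\det X(r)=\tfrac12(-P\widehat P_*-P_*\widehat P)\cdot\det\left[\begin{smallmatrix}1&1\\1&-1\end{smallmatrix}\right]^{-1}$... more precisely $\det X=\tfrac14\cdot(-2)\cdot(-P\widehat P_*-P_*\widehat P)=\tfrac12(P\widehat P_*+P_*\widehat P)$, and by Lemma with (\ref{determinant}) $\det X(r)=\exp(i\lambda r)$, which is exactly (\ref{e21s3}). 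I expect the determinant route to be the short one.

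For item 2, the point is that $\widehat P,\widehat P_*$ are nothing but the $P,P_*$ of the dual Krein system, which by Corollary \ref{corollary-a} is again a Krein system (with coefficient $-A$). Since Lemma \ref{lemma24} was proved purely from the structure of (\ref{krein2}) and the initial conditions $P(0)=P_*(0)=1$ — both of which are shared by the dual system — every conclusion of Lemma \ref{lemma24} applies verbatim. So item 2 requires only the observation that the Christoffel--Darboux identity (\ref{e101s2}), the star-relation (\ref{starproperty}), and the zero-location statements are invariant under $A\mapsto -A$; no new computation is needed. I would spell out the Christoffel--Darboux identity for the hatted functions explicitly since it is what feeds into item 3.

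For item 3, set $\mu=\lambda\in\mathbb{C}^+$ in the mixed identity relating $P,P_*$ to $\widehat P,\widehat P_*$. The natural object is $\Phi(r)=P_*(r,\lambda)\overline{\widehat P_*(r,\lambda)}-P(r,\lambda)\overline{\widehat P(r,\lambda)}$; differentiating and using the Krein and dual-Krein equations I expect $\Phi'=i(\lambda-\bar\lambda)\,P\overline{\widehat P}=-2\Im\lambda\cdot P\overline{\widehat P}$, together with a companion identity giving a Christoffel--Darboux formula of the form $P_*\overline{\widehat P_*}-P\overline{\widehat P}=$ (boundary term) $-i(\lambda-\bar\lambda)\int_0^r P\overline{\widehat P}\,ds$, i.e. a ``mixed'' CD formula with value $2$ at $r=0$ (since $P_*(0)\overline{\widehat P_*(0)}-P(0)\overline{\widehat P(0)}=1-1=0$ — so I must be careful about the correct normalization; more likely the right combination is $P\overline{\widehat P_*}+P_*\overline{\widehat P}$ type, mirroring (\ref{e21s3}) but with conjugates). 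Once the correct mixed CD identity is in hand, dividing by $|P_*|^2$ and using $\widehat P=\exp(i\lambda r)\overline{\widehat P_*(\bar\lambda)}$ (item 2 applied to the hatted system) converts it into a statement about $P_*^{-1}\widehat P_*$; taking real parts and using $\Im\lambda>0$ makes the integral term nonnegative, yielding (\ref{e6s3}), with equality when $\Im\lambda=0$ because then the integral term vanishes. The main obstacle I anticipate is bookkeeping: getting the exact form of the mixed Christoffel--Darboux identity right — which conjugated combination of $P,P_*,\widehat P,\widehat P_*$ satisfies a clean first-order ODE, and correctly tracking the factor $(\lambda-\bar\lambda)$ versus $(\lambda-\bar\mu)$ and the boundary value — rather than anything conceptually deep.
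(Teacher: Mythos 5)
Your items 1 and 2 are correct and essentially coincide with the paper's argument: part 1 is read off from $\det X(r)=\exp(i\lambda r)$ (formula (\ref{determinant})) together with the representation (\ref{rotation}) — your alternative ODE for $W=P\widehat{P}_*+P_*\widehat{P}$ also works — and part 2 is exactly the paper's observation that $\widehat{P},\widehat{P}_*$ are the orthogonal ``polynomials'' of the dual Krein system (Corollary \ref{corollary-a}), so Lemma \ref{lemma24}, being a consequence of the differential equations and the initial data alone, transfers verbatim.

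Item 3 is where there is a genuine gap. The mixed Christoffel--Darboux identity you are reaching for is
\[
P_*(r,\lambda)\overline{\widehat{P}_*(r,\mu)}+P(r,\lambda)\overline{\widehat{P}(r,\mu)}
=2+i(\lambda-\bar\mu)\int\limits_0^r P(s,\lambda)\overline{\widehat{P}(s,\mu)}\,ds,
\]
and at $\mu=\lambda\in\mathbb{C}^+$, taking real parts, it gives
$\Re[P_*\overline{\widehat{P}_*}]=2-\Re[P\overline{\widehat{P}}]-2\Im\lambda\int_0^r\Re[P\overline{\widehat{P}}]\,ds$.
Unlike the genuine CD formula (\ref{e101s2}), the integrand here is $P\overline{\widehat{P}}$ — a product of solutions of two \emph{different} systems, not a modulus squared — so its real part has no definite sign, and your key step ``taking real parts and using $\Im\lambda>0$ makes the integral term nonnegative'' does not go through; even where $\Re[P\overline{\widehat{P}}]\geq 0$ the term enters with the wrong sign for the desired lower bound (in the free case it exactly balances to give equality), so the identity alone yields no inequality. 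What the paper actually uses is the $J$-structure: by Theorem \ref{jcontraction}, $X^*JX\leq J$ for $\lambda\in\mathbb{C}^+$, and by the (nontrivial, Appendix) Lemma \ref{jproperty} the adjoint of a $J$-contraction is again a $J$-contraction, hence also $XJX^*\leq J$; the $(2,2)$ entry of this inequality, written out via (\ref{rotation}), is precisely $P_*\overline{\widehat{P}_*}+\widehat{P}_*\overline{P_*}\geq 2$, i.e. (\ref{e6s3}) after dividing by $|P_*|^2$, and equality for real $\lambda$ holds because $X^*$ is then $J$-unitary. The missing ingredient in your plan is exactly this passage from $X^*JX\leq J$ to $XJX^*\leq J$ (or some equivalent two-sided positivity input); the one-sided CD/ODE bookkeeping you propose cannot by itself produce the bound.
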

\begin{proof}
Part $1)$ follows from (\ref{determinant}) and (\ref{rotation}).

To show $2)$, notice that Christoffel-Darboux formula
(\ref{e101s2}) is the direct consequence of the differential
equations (\ref{krein2}). Then, (\ref{starproperty}) holds because
the functions $\exp(i\lambda
r)\overline{P_*(r,\bar\lambda)},\exp(i\lambda
r)\overline{P(r,\bar\lambda)}$ solve the same Cauchy problem as
$P(r,\lambda), P_*(r,\lambda)$ do. The statements about the zeroes
of $P,P_*$ can be proved in the same way as it was done in Lemma
\ref{lemma24}. The analogous results for
$\widehat{P},\widehat{P}_*$ follow from Corollary
\ref{corollary-a}.

 To prove $3)$, notice that by Lemma \ref{jproperty}, $X^*$ is $J$--contraction for
 $\lambda\in\mathbb{C}^+$ and $J$--unitary for real $\lambda$.
 Writing $XJX^*\leq J$ in terms of
 $P,P_*,\widehat{P},\widehat{P}_*$, we get
 \[
 \frac 12 \left[
\begin{array}{cc}
P\bar{\widehat{P}}+\bar{P}\widehat{P} &  \widehat{P}\bar{P}_*-P\bar{\widehat{P}}_*\\
P_*\bar{\widehat{P}}-\widehat{P}_*\bar{P} &
-(P_*\bar{\widehat{P}}_*+\widehat{P}_*\bar{P}_*)
\end{array}
\right]\leq \left[
\begin{array}{cc}
1 & 0\\
0 & -1
\end{array}
\right]=J
\]
Element $(2,2)$ gives
\begin{equation}
P_*\bar{\widehat{P}}_*+\widehat{P}_*\bar{P}_*\geq 2
\label{bound-below1}
\end{equation}
which implies $3)$. For real $\lambda$, we get equality because
$X^*$ is $J$--unitary.
\end{proof}

\begin{lemma}\label{real-a}
If $A(r)$ is real and $\lambda=0$, the exact solution can be
obtained, i.e.
\[
X(r,0)=\left[
\begin{array}{cc}
\cosh\left(-\int\limits_0^r A(t)dt\right) & \sinh\left(-\int\limits_0^r A(t)dt\right)\\
\sinh\left(-\int\limits_0^r A(t)dt\right) &
\cosh\left(-\int\limits_0^r A(t)dt\right) \end{array}\right]
\]
\end{lemma}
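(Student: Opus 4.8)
The plan is to solve the Krein system directly when $\lambda=0$ and $A(r)$ is real. In that case the matrix $V$ becomes
\[
V(r)=\left[
\begin{array}{cc}
0 & -A(r)\\
-A(r) & 0
\end{array}
\right],
\]
which is real, symmetric, and—crucially—the values $V(r_1)$ and $V(r_2)$ commute for all $r_1,r_2$, since each is a scalar multiple of the fixed matrix $\left[\begin{array}{cc}0&-1\\-1&0\end{array}\right]$. For a linear system $X'=V(r)X$ whose coefficient matrix at different times commutes, the fundamental solution is simply $X(r)=\exp\!\left(\int_0^r V(t)\,dt\right)$. First I would record this commutation fact (it is immediate) and invoke the standard existence/uniqueness theory, which applies since $A(r)\in L^1_{\rm loc}(\mathbb{R}^+)$ by assumption.

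Next I would compute the matrix exponential explicitly. Writing $\alpha(r)=-\int_0^r A(t)\,dt$ and $N=\left[\begin{array}{cc}0&1\\1&0\end{array}\right]$, we have $\int_0^r V(t)\,dt=\alpha(r)N$, and since $N^2=I$, the exponential series splits into even and odd parts:
\[
\exp(\alpha N)=\cosh(\alpha)\,I+\sinh(\alpha)\,N
=\left[
\begin{array}{cc}
\cosh\alpha(r) & \sinh\alpha(r)\\
\sinh\alpha(r) & \cosh\alpha(r)
\end{array}
\right],
\]
which is precisely the claimed formula. One checks the initial condition $X(0,0)=I$ trivially (since $\alpha(0)=0$), and differentiating confirms $X'=\alpha'(r)N\,X=V(r)X$ because $\alpha'(r)=-A(r)$ and $V(r)=-A(r)N$; by uniqueness this is \emph{the} solution.

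There is essentially no main obstacle here: the only thing to be careful about is that $A(r)$ need only be $L^1_{\rm loc}$, so $\alpha(r)$ is absolutely continuous rather than $C^1$, and the differential equation holds in the Carathéodory (a.e.) sense. This is harmless—the commuting-coefficient exponential formula is valid in that generality—but I would state it cleanly rather than pretend $A$ is continuous. As a consistency check one may also verify directly that the resulting $X(r,0)$ is $J$-unitary, in accordance with Theorem \ref{jcontraction} for real $\lambda$: indeed $X^*JX=J$ reduces to the identity $\cosh^2\alpha-\sinh^2\alpha=1$ on the diagonal and to $\cosh\alpha\sinh\alpha-\sinh\alpha\cosh\alpha=0$ off it.
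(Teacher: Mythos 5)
Your proof is correct, and since the paper's own proof of this lemma is just the remark ``the proof is a direct calculation,'' your argument (commuting coefficient matrices, hence $X(r,0)=\exp\bigl(\int_0^r V(t)\,dt\bigr)$, evaluated via $N^2=I$, with the verification by differentiation and uniqueness in the Carath\'eodory sense) is precisely that calculation carried out in detail. Essentially the same approach; nothing to add.
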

\begin{proof}
The proof is a direct calculation. \end{proof}

\begin{lemma}\label{gronwall-p*}
The following estimate is true if $\lambda\in \mathbb{R}$
\[
\exp\left[-\int\limits_0^r |A(s)|ds\right]\leq
|P_*(r,\lambda)|\leq \exp\left[\int\limits_0^r |A(s)|ds\right]
\]
\end{lemma}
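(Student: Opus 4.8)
The plan is to extract a scalar first-order differential inequality for $|P_*(r,\lambda)|$ directly from the Krein system and then invoke Gronwall's inequality in both directions. First I would write $|P_*(r,\lambda)|^2 = P_*(r,\lambda)\overline{P_*(r,\lambda)}$ and differentiate in $r$, using the second equation of (\ref{krein2}), namely $P_*' = -AP$. This gives
\[
\frac{d}{dr}|P_*|^2 = P_*'\overline{P_*} + P_*\overline{P_*'}
= -A(r) P\overline{P_*} - \overline{A(r)}\,\overline{P}\,P_*
= -2\Re\!\left[A(r) P(r,\lambda)\overline{P_*(r,\lambda)}\right].
\]
Hence $\left|\frac{d}{dr}|P_*|^2\right| \le 2|A(r)|\,|P(r,\lambda)|\,|P_*(r,\lambda)|$. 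To close this into an inequality involving only $|P_*|$, I would use that for real $\lambda$ the transfer matrix $X(r,\lambda)$ is $J$-unitary (Theorem \ref{jcontraction}), which via (\ref{starproperty}) — or directly from $X^*JX = J$ read off at the appropriate entry — forces $|P(r,\lambda)| = |P_*(r,\lambda)|$ for $\lambda \in \mathbb{R}$. Indeed (\ref{starproperty}) says $P(r,\lambda) = \exp(i\lambda r)\overline{P_*(r,\bar\lambda)}$, and for real $\lambda$ the factor $\exp(i\lambda r)$ is unimodular and $\bar\lambda = \lambda$, so $|P| = |P_*|$ on the real axis.

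With that identity in hand, set $u(r) = |P_*(r,\lambda)|^2$ for fixed real $\lambda$. The computation above yields $|u'(r)| \le 2|A(r)|\,u(r)$, i.e. $-2|A(r)|u(r) \le u'(r) \le 2|A(r)|u(r)$, with $u(0) = |P_*(0,\lambda)|^2 = 1$. The upper differential inequality $u'(r) \le 2|A(r)|u(r)$ integrates (Gronwall) to $u(r) \le \exp\!\left(2\int_0^r |A(s)|\,ds\right)$, and the lower one $u'(r) \ge -2|A(r)|u(r)$ integrates to $u(r) \ge \exp\!\left(-2\int_0^r |A(s)|\,ds\right)$; note $u$ never vanishes since $P_*$ has no zeros in $\overline{\mathbb{C}^+}$ by part 3 of Lemma \ref{lemma24}, so dividing by $u$ is legitimate. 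Taking square roots gives exactly the claimed two-sided bound
\[
\exp\!\left(-\int_0^r |A(s)|\,ds\right) \le |P_*(r,\lambda)| \le \exp\!\left(\int_0^r |A(s)|\,ds\right).
\]

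The only mildly delicate point — the part I expect to be the main obstacle, though it is not severe — is justifying the differentiation of $|P_*|^2$ and the use of Gronwall under the standing regularity hypothesis $A(r) \in L^1_{\mathrm{loc}}(\mathbb{R}^+)$, where $P_*(\cdot,\lambda)$ is only absolutely continuous rather than $C^1$. This is handled by working with the integral form: $u(r) = 1 - 2\int_0^r \Re[A(s)P(s,\lambda)\overline{P_*(s,\lambda)}]\,ds$, so $u$ is absolutely continuous with $|u'(s)| \le 2|A(s)|u(s)$ a.e., which is exactly the hypothesis needed for the integral (Gronwall) inequality in both directions. Alternatively, one can prove it first for continuous $A$ and pass to the limit by the approximation argument already used elsewhere in the paper; I would mention this but carry out the direct absolutely-continuous version, which is cleaner. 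Everything else is a routine one-line integration.
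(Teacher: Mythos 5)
Your proof is correct, and the upper bound is obtained in essentially the same way as in the paper (a Gronwall-type estimate extracted from the system (\ref{krein2}), using $|P(r,\lambda)|=|P_*(r,\lambda)|$ for real $\lambda$, which is legitimate by (\ref{starproperty})). Where you diverge is the lower bound: the paper gets it "for free" from the $J$-unitarity relation (\ref{bound-below1}), namely $\Re\bigl[\widehat{P}_*(r,\lambda)\overline{P_*(r,\lambda)}\bigr]\geq 1$, which gives $|P_*|\geq |\widehat{P}_*|^{-1}$ and then invokes the already-proved upper bound for the dual system (same $|A|$, opposite sign). You instead run a reverse Gronwall inequality on $u=|P_*|^2$, i.e. $u'\geq -2|A|u$, which requires dividing by $u$ and hence the non-vanishing of $P_*$ on $\overline{\mathbb{C}^+}$ (Lemma \ref{lemma24}, part 3, reproved for the ODE setting in Lemma \ref{propertiesP}); you cite this correctly and there is no circularity, since that non-vanishing is established independently of the present lemma. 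The trade-off: the paper's route needs the dual polynomial and the algebraic inequality coming from $J$-unitarity but no positivity of $u$, while yours stays entirely at the level of $P_*$ itself at the cost of invoking the zero-free property. Your remark on handling $A\in L^1_{\rm loc}$ via the absolutely continuous (integral) form of the differential inequality is the right way to justify the a.e. differentiation, and the identity $|P|=|P_*|$ on $\mathbb{R}$ is not affected by this regularity issue. Both arguments are sound; yours is marginally more self-contained, the paper's marginally shorter given the machinery already in place.
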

\begin{proof}
The second inequality easily follows from the differential
equations for $P$ and $P_*$. The first one is then immediate from
(\ref{bound-below1}).
\end{proof}

Now, that we studied the general properties of system
(\ref{krein3}), let us show that for any $A(r)\in C[0,\infty)$,
there is the unique accelerant $H(r)\in C[0,\infty)$ that
generates it.

 The following result says that solutions
of Krein system are indeed continuous polynomials.

\begin{lemma}
For any $r>0$,  we have the following
formulas
\begin{equation}
P(r,\lambda ) =\exp (i\lambda r)-\int\limits_{0}^{r}A(r,s)\exp (i\lambda
s)ds \label{kuku1}
\end{equation}
\begin{equation}
P_{\ast }(r,\lambda ) =1-\int\limits_{0}^{r}\overline{A(r,s)}\exp (i\lambda
(r-s))ds  \label{kuku}
\end{equation}
where function $A(r,s)$ is continuous in $s$  and $r:0\leq
s\leq r<\infty$. \label{minlemma}
\end{lemma}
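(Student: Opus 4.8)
The plan is to show that the solutions $P(r,\lambda)$ and $P_*(r,\lambda)$ of the Krein system \eqref{krein3}, defined abstractly in this section, have the Paley--Wiener-type integral representations \eqref{kuku1}--\eqref{kuku}, with a jointly continuous kernel $A(r,s)$. The natural approach is to convert the differential system into a Volterra integral equation whose solution is sought directly in the form of such a representation, and then prove that this integral equation has a unique continuous solution by iteration. Concretely, writing $Q(r,\lambda)=\exp(-i\lambda r)P(r,\lambda)$ one sees from \eqref{krein2} that $Q$ and $P_*$ satisfy a system whose right-hand side involves only $\exp(\pm i\lambda(r-s))$-type factors against $A$; plugging in the ansatz $P(r,\lambda)=\exp(i\lambda r)-\int_0^r A(r,s)\exp(i\lambda s)\,ds$ and $P_*(r,\lambda)=1-\int_0^r \overline{A(r,s)}\exp(i\lambda(r-s))\,ds$, one integrates the differential equations in $r$ from $0$ to $r$ and matches the coefficients of the exponentials. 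Because $\exp(i\lambda s)$, $0\le s\le r$, together with $\exp(i\lambda r)$ is (formally) a ``basis'', matching leads to a closed Volterra-type integral equation for $A(r,s)$ alone, of the schematic form
\begin{equation}
A(r,s)=-\,\overline{A(r-s)}\;\text{-type free term}\;-\int\!\!\int (\text{kernel built from }A)\,A,
\end{equation}
where I will have to be careful to record the exact free term (it will involve $A$ evaluated along the diagonal, i.e. essentially $-\overline{A(r)}$ and $-A(r)$ contributions, consistent with $A(r,r^-)$ being related to the Krein coefficient).

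The key steps, in order, are: first, derive from \eqref{krein2} the integrated identities for $P$ and $P_*$ with the Cauchy data $P(0,\lambda)=P_*(0,\lambda)=1$; second, substitute the proposed representations and use Fubini plus a change of variables $s\mapsto r-s$ in the $P_*$-terms to bring everything under integrals of $\exp(i\lambda s)\,ds$ over $[0,r]$ (plus the lone $\exp(i\lambda r)$ term, which matches automatically since the ansatz is ``monic''); third, invoke uniqueness of such exponential representations for entire functions of exponential type --- equivalently, injectivity of the Laplace/Fourier transform on $L^1[0,r]$ --- to strip the integral and obtain a genuine integral equation for the kernel $A(r,s)$ on the triangle $0\le s\le r$; fourth, solve that equation by the method of successive approximations, using $A\in C[0,\infty)$ to get uniform bounds $\|A\|_{C[0,R]}\le M_R$ on each compact $[0,R]$ and hence Neumann-series convergence, which simultaneously yields existence, uniqueness, and joint continuity of $A(r,s)$; fifth, read off that the $P$, $P_*$ built from this $A(r,s)$ solve \eqref{krein3} and therefore, by uniqueness of the Cauchy problem for the linear ODE system, coincide with the given $P$, $P_*$.

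The main obstacle I expect is the bookkeeping at the diagonal $s=r$: the kernel $A(r,s)$ need not extend continuously to $[0,r]^2$ as a single-valued function of $(r,s)$ off the triangle, and matching the exponential coefficients will produce boundary terms at $s=r$ (and at $s=0$) that must be handled exactly — these are precisely what fixes the free term of the Volterra equation and what connects $A(r,r^-)$ to the coefficient $A(r)$ of the Krein system. A secondary technical point is justifying the interchange of the $r$-derivative with the integral $\int_0^r A(r,s)\exp(i\lambda s)\,ds$ (Leibniz rule with a diagonal term), which is legitimate here because $A(r,s)$ is, by construction via the successive approximations, continuous in both variables and the inner integrand is smooth in $\lambda$; alternatively one can avoid differentiation altogether by working with the integrated form throughout. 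Once the Volterra equation is isolated, the remaining analysis is routine, so the heart of the argument is really steps two and three — the substitution and the coefficient-matching that turn the ODE into the scalar Volterra equation.
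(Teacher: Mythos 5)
Your plan is essentially the paper's own proof: pass to $Q=\exp(-i\lambda r)P$, write the integrated (Volterra) form of the system with the Cauchy data, insert the ansatz \eqref{kuku1}--\eqref{kuku} (with $B(r,s)=\overline{A(r,s)}$ forced by the star property), derive the scalar Volterra equation for the kernel on the triangle $0\le s\le r$, solve it by successive approximations using only $A\in C[0,\infty)$, and then verify that the resulting $Q,P_*$ satisfy the integral equations so they coincide with the given solutions. This matches the paper's argument (whose free term turns out to be $\overline{A(r-t)}$, with the iteration bounds giving joint continuity), so the proposal is correct.
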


\begin{proof} Consider $Q=\exp (-i\lambda r)P$. We have the
following equations for
$P$~and~$Q$%
\begin{equation}
\left\{
\begin{array}{cccc}
Q^{\prime } & = & -\exp (-i\lambda r)\overline{A}P_{\ast },\,&Q(0,\lambda
)=1
\\
P_{\ast }^{\prime } & = & -\exp (i\lambda r)AQ,\,&P_*(0,\lambda )=1%
\end{array}%
\right.\label{function-q}
\end{equation}%
The corresponding integral equations are
\begin{equation}
Q(r,\lambda )=1-\int\limits_{0}^{r}\exp (-i\lambda s)\overline{A(s)}P_{\ast
}(s,\lambda )ds  \label{e1s3}
\end{equation}%
\begin{equation}
P_{\ast }(r,\lambda )=1-\int\limits_{0}^{r}\exp (i\lambda s)A(s)Q(s,\lambda
)ds \label{e01s3}
\end{equation}%
Let us find the solutions to (\ref{krein2}) in the following form
\begin{equation}
P(r,\lambda )=\exp (i\lambda r)-\int\limits_{0}^{r}A(r,s)\exp (i\lambda
s)ds,P_{\ast }(r,\lambda )=1-\int\limits_{0}^{r}B(r,s)\exp (i\lambda
(r-s))ds
\label{e2s3}
\end{equation}%
where $A$ and $B$ are continuous function. Then, part $2)$ of the
Lemma \ref{lemma24} yields $B(r,s)=\overline{A(r,s)}$. Plug
(\ref{e2s3}) into (\ref{e1s3}) to get the equation for $A(r,s)$
\begin{equation}
A(r,t)=\overline{A(r-t)}-\int\limits_{r-t}^{r}\overline{A(s)}\,\overline{%
A(s,r-t)}ds  \label{erasw}
\end{equation}
Fix any positive $R$. In the triangle $\Delta _{R}=\{0\leq t\leq r\leq R\}$,
consider the operator
\begin{equation}
\lbrack
Of](r,t)=\int\limits_{r-t}^{r}\overline{A(s)}\,\overline{f(s,r-t)}ds
\label{o}
\end{equation}
Let us shows that $O$ is Volterra in $C(\Delta _{R})$. That would allow us
to solve (\ref{erasw}) uniquely.

The following
inequalities hold true

\begin{equation}
\left| \left[ O^{(2n)}f\right] (r,t)\right| \leq \frac{%
||A||_{C[0,R]}^{2n}||f||_{C(\Delta _{R})}(r-t)^{n}t^{n}}{n!\,n!}
\label{e3s3}
\end{equation}
\begin{equation}
\left| \left[ O^{(2n-1)}f\right] (r,t) \right| \leq \frac{%
||A||_{C[0,R]}^{2n-1}||f||_{C(\Delta
_{R})}(r-t)^{n-1}t^{n}}{n!(n-1)!} \label{e4s3}
\end{equation}
Let us prove them by induction. For $n=0,1$, the estimates are
obvious. Assume
that (\ref%
{e3s3}) is true for $n$. Then, we get%
\[
\left\vert \left[ O^{(2n+1)}f\right] (r,t)\right\vert \leq \frac{%
||A||_{C[0,R]}^{2n+1}||f||_{C(\Delta _{R})}}{(n!)^2}(r-t)^{n}\int%
\limits_{r-t}^{r}[s-(r-t)]^{n}ds\leq
\]
\[
\leq \frac{||A||_{C[0,R]}^{2n+1}||f||_{C(%
\Delta _{R})}}{n!(n+1)!}(r-t)^{n}t^{n+1}
\]

Assuming that (\ref{e4s3}) is true for $n$ we obtain%
\[
\left\vert \left[ O^{(2n)}f\right] (r,t)\right\vert \leq \frac{%
||A||_{C[0,R]}^{2n}||f||_{C(\Delta _{R})}}{n!(n-1)!}(r-t)^{n}\int%
\limits_{r-t}^{r}[s-(r-t)]^{n-1}ds\leq
\]
\[
\leq
\frac{||A||_{C[0,R]}^{2n}||f||_{C(%
\Delta _{R})}}{(n!)^2}(r-t)^{n}t^{n}
\]
Therefore, $\|O^{(n)}\|\to 0$  as $n\to\infty$ and $O$ is
Volterra. Notice that the estimates obtained above prove
convergence of the series obtained by the iteration of
(\ref{erasw}). Therefore, the solution $A(r,t)$ is continuous in
$0\leq t\leq r<\infty$. The corresponding $Q$ and $P_{\ast}$ solve
integral equations (\ref{e1s3}) and  (\ref{e01s3}). So, $P$ and
$P_{\ast}$ from (\ref{kuku1}) and (\ref{kuku}) solve the Krein
system.
\end{proof}

\begin{remark}\rm From (\ref{erasw}), we have an identity
\begin{equation}
A(r,0)=\overline{A(r)} \label{recover}
\end{equation}
\end{remark}

\begin{theorem}
For any Krein system (\ref{krein2}) with
\begin{equation}
A(r)\in C[0,\infty),  \label{regularity2}
\end{equation}
there is the unique accelerant $H(x)$ which generates it and
satisfies (\ref{regularity1}). Conversely, any accelerant
satisfying (\ref{regularity1}) gives rise to the Krein system for
which (\ref{regularity2}) is true \label{onetoone}
\end{theorem}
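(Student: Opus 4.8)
The plan is to establish the two directions of the equivalence separately, relying on the constructions already built in Sections~\ref{three} and~\ref{four}. For the converse direction — any continuous accelerant $H(x)$ satisfying (\ref{regularity1}) gives rise to a Krein system with $A(r)\in C[0,\infty)$ — nothing new is needed: this is precisely the content of Section~\ref{three}, where starting from $H$ we imposed (\ref{strict}), produced the resolvent kernel $\Gamma_r$ with its regularity (Lemma~\ref{lemma-discont}), defined $A(r)=\Gamma_r(0,r)$, and checked $A\in C[0,\infty)$ in Lemma~\ref{a-h}. So I would dispose of this half in a sentence or two, citing those results. The nontrivial claim is the direct one: given $A(r)\in C[0,\infty)$, recover a continuous accelerant $H$ with $I+\mathcal H_r>0$ for all $r$, and show it generates exactly the given $A$.

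The key idea for the direct direction is that Lemma~\ref{minlemma} already produces the ``continuous polynomials'' $P(r,\lambda),P_*(r,\lambda)$ in the form (\ref{kuku1})--(\ref{kuku}) with a jointly continuous kernel $A(r,s)$, and Lemma~\ref{lemmaimp}'s construction (via $1/|P_*(r,\lambda)|^2$) is exactly the recipe that turns these into a candidate accelerant. Concretely, I would fix $r>0$ and note that by part~3) of Lemma~\ref{lemma24} (now valid for $P,P_*$ coming from the Krein system, by part~2 of Lemma~\ref{propertiesP}) the function $P_*(r,\lambda)$ is zero-free in $\overline{\mathbb C^+}$, and by Lemma~\ref{gronwall-p*} it is bounded above and below on $\mathbb R$; together with the representation (\ref{kuku}) showing $P_*(r,\cdot)-1$ is a Fourier transform of an $L^1$ function, the Levy--Wiener theorem gives $1/|P_*(r,\lambda)|^2 = 1 + \int_{-\infty}^\infty \overline{H_r(s)}\exp(i\lambda s)\,ds$ with $H_r\in L^1(\mathbb R)$ Hermitian. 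The crucial consistency point — which is what makes $H(x):=H_r(x)$ for $|x|<r$ well-defined independently of $r$ — is that for $r_1<r_2$ one has $H_{r_2}(s)=H_{r_1}(s)$ for $|s|<r_1$; this follows because the truncated integral operator built from $H_r$ has the same kernel $\Gamma_r(s,0)$ on its first ``column'' as the resolvent of $\mathcal H_r$ (the argument already run inside the proof of Lemma~\ref{lemmaimp}), and $\Gamma_{r_2}$ restricted appropriately agrees with $\Gamma_{r_1}$. One then defines $g(x)=|x|/2+\int_0^x(x-s)H(s)\,ds$ and must verify $I+\mathcal H_r>0$ and that the Krein system associated to this $g$ has coefficient equal to the original $A(r)$.

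The main obstacle — and the step I would spend the most care on — is closing the loop: showing that the accelerant $H$ constructed this way really reproduces the \emph{given} coefficient $A(r)$, rather than merely \emph{some} coefficient. The clean way is a uniqueness argument at the level of the continuous polynomials: the pair $P(r,\lambda),P_*(r,\lambda)$ built from the given Krein system (via Lemma~\ref{minlemma}) and the pair built from the accelerant $H$ (via (\ref{e3s2})--(\ref{e31s2})) both satisfy the same first-order system (\ref{krein2}) with the same initial data once we know the coefficients agree — so instead I would argue in the reverse order: from $H$ we get, via Section~\ref{three}, polynomials $\tilde P,\tilde P_*$ and a coefficient $\tilde A(r)=\tilde\Gamma_r(0,r)$; by the construction of $H$ via Lemma~\ref{lemmaimp} run backwards, the resolvent kernel $\tilde\Gamma_r$ of $I+\mathcal H_r$ has $\tilde\Gamma_r(s,0)=\Gamma_r(s,0)$, where $\Gamma_r(\cdot,0)$ is read off from the given $P(r,\lambda)$ through (\ref{polyn1}); then Lemma~\ref{lemmamain} upgrades this to $\tilde\Gamma_r(s,t)=\Gamma_r(s,t)$ for all $0\le s,t\le r$, whence $\tilde A(r)=\tilde\Gamma_r(0,r)=\Gamma_r(0,r)=A(r)$. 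The positivity $I+\mathcal H_r>0$ comes for free from (\ref{est1}), since $1/|P_*(r,\lambda)|^2$ is strictly positive and bounded, so the Toeplitz operator it defines is a strictly positive perturbation of the identity. Finally, uniqueness of the accelerant: if two continuous accelerants generated the same Krein system, they would produce the same $\Gamma_r(0,r)$ for all $r$, hence by the inversion in Lemma~\ref{lemmamain} the same $\Gamma_r(s,t)$, hence the same $H$ via (\ref{basic2}); I would state this as the closing paragraph.
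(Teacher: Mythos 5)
Your treatment of the converse direction and of the final identification step is essentially sound (showing $\tilde\Gamma_r(\cdot,0)=\Gamma_r(\cdot,0)$ and concluding $\tilde A(r)=A(r)$, whether through Lemma~\ref{lemmamain} as you propose or through Lemma~\ref{uniqueness1} as the paper does, both work). But there is a genuine gap at the linchpin of the direct direction: the consistency $H_{r_2}(s)=H_{r_1}(s)$ for $|s|<r_1$. Your justification — that ``$\Gamma_{r_2}$ restricted appropriately agrees with $\Gamma_{r_1}$'' — is false for resolvent kernels of truncated displacement operators: by Lemma~\ref{lemma23} one has $\partial\Gamma_r(s,t)/\partial r=-\Gamma_r(s,r)\Gamma_r(r,t)$, which is generically nonzero, so the first column $\Gamma_{r_2}(\cdot,0)$ does not restrict to $\Gamma_{r_1}(\cdot,0)$ on $[0,r_1]$. (In the half of the argument where you could legitimately use such facts, namely Lemma~\ref{lemmaimp}, the accelerant already exists; here it is exactly what you are trying to build, so nothing forces the functions $H_{r_1}$ and $H_{r_2}$ extracted from $1/|P_*(r_1,\lambda)|^2$ and $1/|P_*(r_2,\lambda)|^2$ to agree.) Without consistency, $H$ is not well defined and the rest of the construction collapses.

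The paper closes this gap by working not with $1/|P_*(r,\lambda)|^2$ directly but with the analytic function $P_*^{-1}(r,\lambda)\widehat P_*(r,\lambda)$ built from the dual polynomials, whose real part on $\mathbb{R}$ is $1/|P_*|^2$ (Lemma~\ref{propertiesP}, part 3). The Wronskian-type identity (\ref{integration}),
\[
\frac{d}{dr}\Bigl[P_*^{-1}(r,\lambda)\widehat P_*(r,\lambda)\Bigr]=\frac{2A(r)e^{ir\lambda}}{P_*^2(r,\lambda)},
\]
shows that passing from $r_1$ to $r_2$ adds only terms whose Fourier representation is supported in $[r_1,\infty)$, since $e^{is\lambda}P_*^{-2}(s,\lambda)$ has one-sided Fourier support starting at $s$; this is precisely what yields (\ref{consist}) and hence a well-defined continuous $H$. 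Your proposal never invokes the dual system, and that one-sided analytic structure is the mechanism your argument is missing; working only with the real-valued function $1/|P_*(r,\lambda)|^2$ on $\mathbb{R}$, the $r$-independence of its low-frequency Fourier content is exactly the nontrivial claim, and it needs the dual polynomials (or an equivalent device) to prove.
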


\begin{proof}
The converse statement follows from the construction done in
previous section.

Now, let us find an accelerant that generates the given Krein
system. The clue is given by Lemma \ref{lemmaimp}. Consider the
function $P_{\ast }^{-1}(r,\lambda)\widehat{P}_{\ast
}(r,\lambda)$. We know that $P_{\ast }(r,\lambda)$ does not have
zeroes in
$\overline{\mathbb{C}^{+}}$. By Lemma \ref{minlemma}, both $%
P_{\ast }$ and $\widehat{P}_\ast$ are continuous polynomials, i.e.
\[
P_{\ast }(r,\lambda )=1-\int\limits_{0}^{r}\overline{A(r,r-s)}\exp
(i\lambda s)ds,\
\widehat{P}_{\ast }(r,\lambda )=1-\int\limits_{0}^{r}\overline{\widehat{A}%
(r,r-s)}\exp (i\lambda s)ds \]
 Therefore, Levy-Wiener theorem
yields the
representation%
\begin{equation}
P_{\ast }^{-1}(r,\lambda)\widehat{P}_{\ast
}(r,\lambda)=1+2\int\limits_{0}^{\infty }\overline{H_r(s)}\exp
(is\lambda)ds,  \label{reznik}
\end{equation}
with $H_{r}(s)\in L^{1}(\mathbb{R}^{+})$. $H_{r}(s)$ is continuous
on $[0,r]$ and $[r,\infty]$ but the right and the left limits at
$r$ are not necessarily the same. From differential equations for
$P_*$ and $\widehat{P}_*$ and Lemma \ref{propertiesP} (parts $(2)$
and $(3)$), we have
\begin{equation}\label{integration}
\frac{d}{dr}\left[ P_{\ast }^{-1}(r,\lambda)\widehat{P}_{\ast }(r,\lambda)%
\right] =\frac{2A(r)\exp (ir\lambda)}{P_{\ast }^{2}(r,\lambda)},\,
\lambda\in \mathbb{R}
\end{equation}
Consequently%
\[
P_{\ast }^{-1}(r_{2},\lambda)\widehat{P}_{\ast
}(r_{2},\lambda)-P_{\ast }^{-1}(r_{1},\lambda)\widehat{P}_{\ast
}(r_{1},\lambda)=\int\limits_{r_{1}}^{r_{2}}\frac{2A(s)\exp
(is\lambda)}{%
P_{\ast }^{2}(s,\lambda)}ds,\,0<r_{1}<r_{2}
\]%
and that implies
\begin{equation}
H_{r_{1}}(s)=H_{r_{2}}(s)=H(s) \label{consist}
\end{equation}
for $0<s<r_{1}$. Therefore,  the  function $H(s)$ is well-defined
and continuous on $[0,\infty)$.

 Therefore, if we let $
H_r(-s)=H_r(s)$, $H(-s)=\overline{H(s)}, s>0$, then $H(s)$ is
Hermitian and satisfies (\ref{regularity1}). Now, let us show that
it actually generates the Krein system with given coefficient
$A(r)$.

Indeed, from Lemma \ref{propertiesP}, part 3, we have
\begin{equation}
\frac{1}{|P_{\ast }(r,\lambda )|^{2}}=\Re \left[ P_{\ast
}^{-1}(r,\lambda)\widehat{P}_{\ast
}(r,\lambda)\right]=1+\int\limits_{-\infty}^{\infty
}\overline{H_r(s)}\exp (is\lambda)ds, \label{realpart}
\end{equation}
Notice that the last identity yields
\begin{equation}
\int\limits_{0}^{\infty }|h(x)|^{2}dx+\int\limits_{0}^{\infty
}\int\limits_{0}^{\infty }H_{r}(x-y)h(y)\overline{h(x)}dydx\geq 0
\label{xsa}
\end{equation}
for any $h\in C^\infty _0(0,\infty)$ and the inequality is strict
for any nontrivial $h$. Indeed, one needs to rewrite (\ref{xsa})
in terms of Fourier transform. Then, (\ref{consist}) shows that
$\cal{H}_r+I>0$ for any $r>0$ and $H$ does generate the Krein
system with coefficient $A^{(1)}(r)$ that satisfies
(\ref{regularity2}). Now, let us prove that $A^{(1)}(r)=A(r)$.
Denote the solutions of (\ref{krein2}) with coefficient
$A^{(1)}(r)$ by $P^{(1)}$ and $P^{(1)}_*$. But
$P^{(1)}_*(r,\lambda)=P_*(r,\lambda)$ for any $r>0,\lambda\in
\mathbb{C}$. Indeed, from Lemma \ref{lemmaimp} applied to
$P^{(1)}_*$ and (\ref{realpart}), we get
\[
\cal{P}_{[-r,r]} \left[ \frac{1}{|P_{\ast }(r,\lambda
)|^{2}}-1\right]=\cal{P}_{[-r,r]}\left[\frac{1}{|P^{(1)}_{\ast
}(r,\lambda )|^{2}}-1\right]
\]
Then, by Lemma \ref{uniqueness1} from Appendix, we get
$P^{(1)}_*(r,\lambda)=P_*(r,\lambda)$. Therefore,
$P^{(1)}(r,\lambda)=P(r,\lambda)$ for all $r>0$ and
$A^{(1)}(r)=A(r)$.\end{proof}
\begin{remark}\rm
 Notice that the values of
accelerant on $[0,R]$ depends solely on the values of $A(r)$ on
$[0,R]$ and vice versa.
\end{remark}
The application of Levy-Wiener theorem to $P_{\ast
}^{-1}(r,\lambda)\widehat{P}_{\ast }(r,\lambda)$ shows that
\[
2H_r(+0)=-\widehat{A}(r,r)+A(r,r)
\]
and therefore
\[
H(+0)=\lim_{r\to 0}
H_r(+0)=[-\widehat{A}(0,0)+A(0,0)]/2=\overline{A(0)}
\]
where we used (\ref{recover}). Therefore, if $A(0)\in \mathbb{R}$,
then $H(x)$ is continuous at $0$ and $H(0)\in \mathbb{R}$.

Theorem \ref{onetoone} establishes a one-to-one correspondence
between continuous $A(r)$, defined on $\mathbb{R}^+$, and
continuous accelerants for which (\ref{strict}) is true. But what
happens to a map $\{H(x)\rightarrow A(r)\}$ if (\ref{strict})
fails at a finite point? For OPUC, if the measure has only $k$
growth points, then $D_{k-1}\neq 0$, $D_k=0$. The corresponding
$|a_{j}|<1, j=0,\ldots,k-1, |a_k|=1$. For the Krein system, the
situation is similar. Assume that $1+\cal{H}_r>0$ for all $r<R$
and $\ker(I+\cal{H}_R)\neq 0$. Following argument given above, one
can construct $A(r)\in C[0,R)$. Vice versa, given $A(r)\in
C[0,R)$, we can define $H\in C[0,R)$ such that (\ref{strict})
holds up to $R$. But as long as $\ker(I+\cal{H}_R)\neq 0$, $A(r)$
blows up as $r$ approaches $R$ from the left. More precisely, this
process is governed by a pair of simple (and clearly very crude)
estimates
\begin{equation}
|A(r)|\leq \|\Gamma_r(t,0)\|_{C[0,r]}\leq
\|(I+\cal{H}_r)^{-1}\|_{C[0,r]} \|H\|_{C[0,r]}
\end{equation}
and
\begin{equation}
\|(I+\cal{H}_r)^{-1}\|_{C[0,r]}\leq
1+Cr\|A\|_{C[0,r]}\left(1+r\|A\|_{C[0,r]}\right)\exp [C r
\|A\|_{C[0,r]} ] \label{second1}
\end{equation}
The first estimate easily follows from (\ref{basic1}) with $s=0$.
It shows that $A(r)$ can not blow up unless (\ref{strict}) fails
at a finite point (here we also assume that $H(x)\in C[0,R]$). One
can get (\ref{second1}) from the following inequalities.
\[
\|(I+\cal{H}_r)^{-1}\|_{C[0,r]}= \|I-\Gamma_r\|_{C[0,r]}\leq 1+r
\max_{0\leq s,t \leq r} |\Gamma_r(s,t)|
\]
To estimate the last maximum, we follow the proof of Lemma
\ref{minlemma} (estimates (\ref{e3s3}) and (\ref{e4s3})). This
gives us the following bound
\[
\max_{s\in [0,r]} |\Gamma_r(0,s)|\leq C\|A\|_{C[0,r]}\exp [C r
\|A\|_{C[0,r]} ]
\]
because $A(r,s)$ from Lemma \ref{minlemma} is actually equal to
$\Gamma_r(r,s)=\Gamma_r(r-s,0)$ by (\ref{flip}).

 Using Lemma \ref{lemmamain}, we obtain an estimate
\[
\max_{0\leq s,t \leq r} |\Gamma_r(s,t)|\leq
C\|A\|_{C[0,r]}(1+r\|A\|_{C[0,r]})\exp[Cr\|A\|_{C[0,r]}]
\]
which yields (\ref{second1}). Obviously the left-hand side of
(\ref{second1}) is non-decreasing in $r$.  If it blows up at a
finite time (i.e. (\ref{strict}) fails at a finite time), then $A$
blows up at the same point as well. Comparing the OPUC and Krein
systems, we see that infinity (for Krein systems) plays the role
of $\mathbb{T}=\partial \mathbb{D}$ for OPUC.

 {\bf Remarks and historical notes.}

The problem of constructing accelerant from the Krein system with
locally integrable coefficient $A(r)$ was solved by Rybalko in
\cite{Rybalko}. In \cite{Rybalko}, estimates on $A(r,t)$ from the
Lemma \ref{minlemma} are a bit stronger than what we obtain. Some
generalizations of Krein systems were considered by L. Sakhnovich
in \cite{Sakh1}.

\newpage

\section{Accelerant and $A(r)$ are from $L^2_{\rm loc}(\mathbb{R}^+)$ class}

In this section, we will show that the accelerant from $L^2_{\rm
loc}(\mathbb{R})$ class generates the Krein system with $A(r)\in
L^2_{\rm loc}(\mathbb{R}^+)$ and, conversely, the Krein system
with $A(r)\in L^2_{\rm loc}(\mathbb{R}^+)$ generates the
accelerant $H(x)\in L^2_{\rm loc}(\mathbb{R})$. Moreover, this
highly nonlinear map is homeomorphism in $L^2_{\rm loc}$, i.e., in
$L^2[0,R]$ for any $R>0$. We will prove that all statements from
the previous two sections find their analogs for $L^2_{\rm
loc}$--case.

First, consider an accelerant $H(x)\in L^2[0,r]$ for any $r$.
Then, $I+\cal{H}_r>0$ and  $\Gamma_r(x,y)$ is well-defined as
$L^2([0,r]^2)$-- function. We also have
\[
\Gamma_r(t,s)+\int\limits_0^r H(t-u)\Gamma_r(u,s)du=H(t-s)
\]
Fix any $s\in [0,r]$ in the last equation. Then, $H(t-s)$ is
continuous in $s$ as $L^2[0,r]$-- function in $t$. Therefore,
$\Gamma_r(t,s)$ is continuous in $s$ in $L^2[0,r]$ norm with
respect to the first coordinate. It is also Hermitian function so
the same is true for $s$ and $t$ interchanged. Let
$g_r(s)=\Gamma_r(s,0)$ for $s\in [0,r]$ and $g_r(s)=0$ for $s\in
[r,R]$. Due to the formula $\Gamma_r(s,0)=(I+\cal{H}_r)^{-1}H$, we
have the continuity of $g_r$ in $r$ with respect to $L^2[0,R]$
norm.

The special displacement structure of the kernel of $\cal{H}_r$
allows to generalize Theorems \ref{volter} and \ref{volter1}.

\begin{theorem}
For Hermitian $H(x)\in L^2[-R,R]$, the operator $I+\cal{H}_R$
admits factorization (\ref{factorization}) if and only if
$I+\cal{H}_r>0$ for any $r\in (0,R]$. In this case,
\begin{equation}
\begin{array}{ccc}
V_{+}(x,y) &=&-\Gamma _{y}(x,y),\ x<y \\
V_{-}(x,y) &=&-\Gamma _{x}(x,y),\ x>y
\end{array}
\label{kernels2}
\end{equation}
where $\Gamma_r(x,y)$ denotes the resolvent kernel of
$I+\cal{H}_r$. \label{volter2}
\end{theorem}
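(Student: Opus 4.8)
The plan is to mimic the proof of Theorem \ref{volter} (and its variant Theorem \ref{volter1}), reducing the $L^2$ case to the continuous case by an approximation argument, and to exploit the extra information already established in this section, namely that the maps $r\mapsto g_r$ and $s\mapsto\Gamma_r(\cdot,s)$ are continuous in the appropriate $L^2$ norms. The one structural difference from Theorem \ref{volter} is that the kernel $K(x,y)=H(x-y)$ is no longer in $\hat C([0,R]^2)$, so Lemma \ref{lemma-discont} is not directly available; I will recover its essential output, the differentiation formula $\partial\Gamma_r(s,t)/\partial r=-\Gamma_r(s,r)\Gamma_r(r,t)$, in an integrated ($L^2$-in-$s,t$, absolutely continuous in $r$) form.

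\textbf{Step 1 (the ``if'' direction --- necessity of positivity).} If $I+\cal{H}_R=(I+\cal{L})(I+\cal{U})$, then for each $r\in(0,R]$ one has $I+\cal{H}_r=\Pi_{[0,r]}(I+\cal{L})\Pi_{[0,r]}\cdot\Pi_{[0,r]}(I+\cal{U})\Pi_{[0,r]}$ exactly as in the proof of Theorem \ref{volter}, using that $\cal{L}$ is lower- and $\cal{U}$ upper-triangular so they commute with the chain of projectors $\Pi_{[0,r]}$. Since $\cal{H}$ is Hermitian and the factorization is unique, $\cal{L}=\cal{U}^*$, whence $I+\cal{H}_r=(\Pi_{[0,r]}(I+\cal{U})\Pi_{[0,r]})^*(\Pi_{[0,r]}(I+\cal{U})\Pi_{[0,r]})\geq 0$, and it is strictly positive because $\Pi_{[0,r]}(I+\cal{U})\Pi_{[0,r]}=I+(\text{Volterra})$ is invertible on $L^2[0,r]$. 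This half carries over verbatim.

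\textbf{Step 2 (the ``only if'' direction --- constructing the factorization).} Assume $I+\cal{H}_r>0$ for all $r\in(0,R]$. Define $V_\pm$ by \eqref{kernels2}; these are genuine $L^2([0,R]^2)$ kernels of Volterra-type operators $\cal{V}_\pm$ because $g_r(s)=\Gamma_r(s,0)$ depends continuously on $r$ in $L^2[0,R]$, and by the displacement symmetry (Lemma \ref{lemma231}, which holds for $L^2$ accelerants since its proof is purely operator-theoretic) $\Gamma_r(x,y)$ is a translate/reflection of $\Gamma_r(\cdot,0)$; more directly I will establish \eqref{kernels2} by approximation. The target is the resolvent identity \eqref{factor} for $I+\cal{H}_R$, i.e. $(I+\cal{H}_R)^{-1}=(I+\cal{V}_+)(I+\cal{V}_-)$, which then yields the factorization of $I+\cal{H}_R$ itself. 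The computation reproducing \eqref{factor} from \eqref{kernels2} is, formally, the chain
\[
\Gamma_y(x,y)-\int_y^R\Gamma_u(x,u)\Gamma_u(u,y)\,du
=\Gamma_y(x,y)+\int_y^R\frac{\partial}{\partial u}\Gamma_u(x,y)\,du
=\Gamma_R(x,y),\qquad x<y,
\]
with the symmetric statement for $x>y$; this is exactly the computation in the proof of Theorem \ref{volter}, and the only thing needed to run it rigorously is the fundamental theorem of calculus in $r$ together with the differentiation formula.

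\textbf{Step 3 (the approximation, which is the main obstacle).} I would take Hermitian $H^{(n)}\in C[0,R]$ with $H^{(n)}\to H$ in $L^2[-R,R]$; since $I+\cal{H}_r>0$ for all $r\le R$ and $\cal{H}^{(n)}_r\to\cal{H}_r$ in operator norm uniformly in $r\in(0,R]$ (the Hilbert--Schmidt norm controls the operator norm and $\sup_r\|\cal{H}^{(n)}_r-\cal{H}_r\|_{\mathrm{HS}}\le\|H^{(n)}-H\|_{L^2}$), we get $I+\cal{H}^{(n)}_r>0$ for $n$ large, uniformly in $r$, and the resolvent kernels $\Gamma^{(n)}_r\to\Gamma_r$ in $L^2([0,R]^2)$ uniformly in $r$. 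For $H^{(n)}$, Theorem \ref{volter1} (or the continuous-kernel machinery of Lemma \ref{lemma-discont}) gives both \eqref{kernels2} and the differentiated resolvent identity; I then pass $n\to\infty$ in the integrated form of those identities. The delicate point --- and the place I expect the real work --- is justifying the limit in the bilinear expressions $\int_y^R\Gamma^{(n)}_u(x,u)\Gamma^{(n)}_u(u,y)\,du$: one must control $\Gamma^{(n)}_u(x,u)$, a restriction to the ``diagonal'' $t=u$, in a norm strong enough to survive the product and the $u$-integration. I would handle this by working with the already-proven continuity of $u\mapsto g^{(n)}_u=\Gamma^{(n)}_u(\cdot,0)$ in $L^2[0,R]$ (uniform in $n$), rewriting the diagonal values via the displacement identity $\Gamma_u(x,u)=\Gamma_u(u-x,0)$ (hence $\Gamma_u(x,u)=g_u(u-x)$) and $\Gamma_u(u,y)=\overline{g_u(u-y)}$ up to the Hermitian reflection, so that the product integrand becomes $g^{(n)}_u(u-x)\,\overline{g^{(n)}_u(u-y)}$; then uniform $L^2$-convergence of $g^{(n)}_u\to g_u$ in both $u$ and the spatial variable, plus a uniform bound $\sup_{u}\|g^{(n)}_u\|_{L^2}<\infty$, let me pass to the limit by dominated convergence in $u$ after a Cauchy--Schwarz estimate in the spatial variables. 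Once \eqref{factor} is recovered for the limiting objects, the factorization \eqref{factorization} and the formulas \eqref{kernels2} follow, and the remaining assertions (Hermiticity $\cal{L}=\cal{U}^*$, invertibility, the reverse-order factorization of the inverse) are immediate as in Section \ref{sect-fact}.
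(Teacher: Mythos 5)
Your proposal follows essentially the same route as the paper: define $V_\pm$ by \eqref{kernels2}, use the displacement structure (the diagonal restrictions $\Gamma_u(\cdot,u)$, $\Gamma_u(u,\cdot)$ expressed through $g_u=\Gamma_u(\cdot,0)$, whose $L^2$-continuity in $u$ was established just before the theorem) to make sense of $\cal{V}_\pm$, approximate $H$ by continuous Hermitian $H^{(n)}$ in $L^2[-R,R]$, apply the continuous-kernel Theorem \ref{volter} (your citation of Theorem \ref{volter1} is a harmless slip), and pass to the limit in \eqref{factor}, with the converse handled as in Theorem \ref{volter} using $\cal{L}=\cal{U}^*$ to get strict positivity. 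The paper's proof is a more compressed version of exactly this argument, so your write-up is correct and adds only the explicit justification of the convergences the paper asserts.
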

\begin{proof}
Now, assume that $I+\cal{H}_r>0$ for any $r\in (0,R]$. Define
$V_{\pm }$ by (\ref{kernels2}).
Notice that operators $\cal{V}_{\pm}$ are well-defined. It follows
from the representation
\[
[\cal{V}_-f](x)=-\int\limits_0^x
\Gamma_x(x,y)f(y)dy=-\int\limits_0^x \Gamma_x(s,0) f(x-s)ds
=-\int\limits_0^R g_x(s)f(x-s)ds
\]
which shows that $\cal{V}_-$ is actually bounded from $L^2[0,R]$
to $L^\infty[0,R]$. Analogous formula is true for $\cal{V}_+$.
These operators also have Hilbert-Schmidt and Volterra properties.
Approximate $H(x)$ by $H^{(n)}(x)\in C[-R,R]$ in $L^2[-R,R]$ norm
and apply Theorem \ref{volter}. For each $n$, formula
(\ref{factor}) is true. Moreover, $V^{(n)}_\pm(x,y) \to
V_\pm(x,y)$, $\Gamma^{(n)}_R(x,y)\to \Gamma_R(x,y)$ as
$n\to\infty$ and convergence is in $L^2([0,R]^2)$. Taking
$n\to\infty$, we get (\ref{factor}) for $G$ and $V_{\pm}$. That
implies the needed factorization. The converse statement is simple
and repeats the argument in Theorem \ref{volter}.
\end{proof}

The analog of the Theorem \ref{volter1} can also be easily proved
in the same way giving

\begin{theorem}
For Hermitian $H(x)\in L^2[-2R,2R]$, the operator
$I+\widehat{\cal{H}}_R$ admits factorization (\ref{lower-upper})
if and only if
 $I+\widehat{\cal{H}}_r$ is invertible in $L^2[-r,r]$ for any $0<r\leq R$. In
this case,
\begin{equation}
\begin{array}{ccc}
\hat{V}_{+}(x,y) &=&-\hat\Gamma _{|y|}(x,y),\ (x,y)\in \hat\Omega_+ \\
\hat{V}_{-}(x,y) &=&-\hat\Gamma _{|x|}(x,y),\ (x,y)\in
\hat\Omega_-
\end{array}
\label{kernels1-l2}
\end{equation}
 \label{volter7}
\end{theorem}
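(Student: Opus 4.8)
The plan is to follow the same approximation scheme that was used in Theorem \ref{volter2}, combining Theorem \ref{volter1} (the continuous-kernel case of this two-sided factorization) with the $L^2$ continuity properties of the truncated resolvent kernels of displacement operators. First I would record that if $I+\widehat{\cal{H}}_r$ is invertible in $L^2[-r,r]$ for every $0<r\le R$, then by Lemma \ref{lemma-discont-1} (applied to the displacement kernel, which here only need be in $L^2$) the kernels $\hat\Gamma_r(s,t)$ are well-defined Hilbert--Schmidt kernels, and — exactly as in the paragraph preceding Theorem \ref{volter2} — the special displacement structure $\hat K(x,y)=H(x-y)$ forces $r\mapsto \hat\Gamma_r(\cdot,\pm r)$ to be continuous in the $L^2$ sense. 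This is what makes the operators $\widehat{\cal{V}}_{\pm}$ defined by (\ref{kernels1-l2}) bounded (indeed Hilbert--Schmidt, and in fact bounded into $L^\infty$, via a convolution representation $[\widehat{\cal{V}}_-f](x)=-\int \hat g_{|x|}(s)\,f(\operatorname{sgn}(x)(|x|-s))\,ds$ analogous to the one displayed in the proof of Theorem \ref{volter2}), with the Volterra property along the chain $\{\hat\Delta_\pm\}$.

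Next I would run the approximation argument. Choose Hermitian $H^{(n)}\in C([-2R,2R])$ with $H^{(n)}\to H$ in $L^2[-2R,2R]$. Since invertibility of $I+\widehat{\cal{H}}_r$ is an open condition and the map $H\mapsto \widehat{\cal{H}}_r$ is continuous, for $n$ large the operators $I+\widehat{\cal{H}}^{(n)}_r$ are invertible for all $0<r\le R$, so Theorem \ref{volter1} applies and yields the factorization (\ref{lower-upper}) together with (\ref{kernels1}) and the resolvent formula (\ref{expanded-res}) for each $n$. Then I would pass to the limit: using the resolvent identity plus the $L^2$-continuity just established, $\hat\Gamma^{(n)}_r(x,y)\to\hat\Gamma_r(x,y)$ and hence $\hat V^{(n)}_{\pm}\to\hat V_{\pm}$ and $\hat\Gamma^{(n)}_R\to\hat\Gamma_R$ in $L^2$ of the appropriate domains. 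Taking $n\to\infty$ in (\ref{expanded-res}) gives (\ref{expanded-res}) for the limiting kernels, which says precisely that $(I+\widehat{\cal{H}}_R)^{-1}=(I+\widehat{\cal{V}}_+)(I+\widehat{\cal{V}}_-)$; inverting this relation in the Banach algebra of lower/upper-triangular operators along the chain gives the desired factorization (\ref{lower-upper}). The converse is the easy direction: if (\ref{lower-upper}) holds then $I+\widehat{\cal{H}}_r=\Pi_{[-r,r]}(I+\widehat{\cal{L}})\Pi_{[-r,r]}\cdot\Pi_{[-r,r]}(I+\widehat{\cal{U}})\Pi_{[-r,r]}$ is a product of invertible triangular operators, hence invertible — verbatim the last lines of the proof of Theorem \ref{volter}.

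The main obstacle — and the only place requiring real care — is justifying the limit interchange when the kernels are merely $L^2$: one must check that the $L^2$-convergence $H^{(n)}\to H$ really does propagate to $\hat\Gamma^{(n)}_r\to\hat\Gamma_r$ \emph{uniformly in} $r\in(0,R]$, so that the convolution-type integrals in (\ref{expanded-res}) (which integrate over $|y|<|u|<R$ with the $r$-dependent resolvent kernels) converge. This is handled exactly as in Theorem \ref{volter2}: uniform boundedness of $\|(I+\widehat{\cal{H}}^{(n)}_r)^{-1}\|$ on a neighborhood of $[0,R]$ in $r$ (which holds once $I+\widehat{\cal{H}}_r>0$ on $(0,R]$, by continuity and compactness), together with the resolvent identity $\hat\Gamma^{(n)}_r-\hat\Gamma_r=(I+\widehat{\cal{H}}^{(n)}_r)^{-1}(\widehat{\cal{H}}_r-\widehat{\cal{H}}^{(n)}_r)(I+\widehat{\cal{H}}_r)^{-1}$ read off on kernels, gives $\sup_{0<r\le R}\|\hat\Gamma^{(n)}_r-\hat\Gamma_r\|_{L^2([-r,r]^2)}\to 0$. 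Everything else is a routine transcription of the one-sided proof, so I would simply state that "the analog of Theorem \ref{volter1} is proved in the same way as Theorem \ref{volter2}," and only spell out the displacement-structure continuity and the uniform-in-$r$ resolvent estimate.
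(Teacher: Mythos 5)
Your proposal is correct and follows essentially the same route as the paper: the paper gives no separate argument for Theorem \ref{volter7}, stating only that it is "proved in the same way" as Theorem \ref{volter2}, i.e.\ by approximating the Hermitian $L^2$ kernel with continuous Hermitian kernels, applying the continuous-kernel factorization (Theorem \ref{volter1}), using the displacement structure to get $L^2$-continuity of $\hat\Gamma_r(\cdot,\pm r)$ and boundedness of $\widehat{\cal{V}}_{\pm}$, and passing to the limit in the resolvent formula (\ref{expanded-res}), with the converse following from the trivial truncation argument. Your filling-in of the uniform-in-$r$ resolvent bound is consistent with that scheme (only the appeal to Lemma \ref{lemma-discont-1} is superfluous in the $L^2$ setting, as you yourself note by rederiving the continuity from the displacement structure).
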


All functions that are used in the definition of continuous
polynomials are now well-defined as elements of $L^2[0,r]$.
Indeed, $\Gamma_r(r,s)$ and $\Gamma_r(0,s)$ are both from
$L^2[0,r]$ and we can consider the corresponding continuous
polynomials $P(r,\lambda),P_*(r,\lambda)$. Moreover, Lemma
\ref{transformation} and Theorem \ref{theorem2s2} are true for
$H\in L^2_{\rm loc}(\mathbb{R})$ as well. Indeed, their proofs
were based on the factorization of Fredholm operators (the analog
of which we just proved).

Now, let us define the coefficient $A(r)$ and show that $A(r)\in
L^2_{\rm loc}(\mathbb{R}^+)$ and that equations (\ref{krein2}) are
true.

Consider an accelerant $H(x)\in L^2[-R,R]$ and approximate it with
$H^{(n)}(x)\in C[-R,R]$ in $L^2[-R,R]$ norm. Then, each
$H^{(n)}(x)$ generates the Krein system on $[0,R]$ with
$A^{(n)}(r)\in C[0,R]$. We have
\[
\overline{A^{(n)}}(r)=H^{(n)}(r)-\int\limits_0^r
H^{(n)}(r-u)\Gamma^{(n)}_r(u,0)du \to H(r)-\int\limits_0^r
H(r-u)\Gamma_r(u,0)du
\]
in $L^2[0,R]$. The conjugate of last function will be denoted by
$A(r)$, i.e.
\[
\overline{A(r)}=H(r)-\int\limits_0^r H(r-u)\Gamma_r(u,0)du
\]
Notice that the second term is continuous function in $r$.
Therefore, all singularities of $A(r)$ and $H(r)$ coincide. In
particular, $A(r)\in L^2[0,R]$. For any $\lambda\in \mathbb{C}$,
the polynomials $P^{(n)}(r,\lambda), P_*^{(n)}(r,\lambda)$
converge to $P(r,\lambda)$ and $P_*(r,\lambda)$ uniformly in $r\in
[0,R]$. Notice that equations (\ref{krein2}) for $P^{(n)}$ and
$P_*^{(n)}$ are equivalent to the system of integral equations

\[
P^{(n)}(r,\lambda)=1+ \int\limits_0^r \left[i\lambda
P^{(n)}(s,\lambda)-\overline{A^{(n)}(s)}P_*^{(n)}(s,\lambda)\right]ds
\]
\[
P_*^{(n)}(r,\lambda)=1-\int\limits_0^r
A^{(n)}(s)P^{(n)}(s,\lambda)ds
\]
Taking the limit $n\to\infty$, we see that $P$ and $P^*$ satisfy
the corresponding equations that are equivalent to (\ref{krein2}).
The proofs of Lemma \ref{lemma24} and Lemma \ref{lemmaimp} work
for $L^2_{\rm loc}$ case. Lemma \ref{lemmamain} can be shown by an
approximation argument.

Now, following the arguments from the Section \ref{four}, we
consider $A(r)\in L^2_{\rm loc}(\mathbb{R}^+)$.  We have
\begin{theorem}
For any Krein system (\ref{krein2}) with
\begin{equation}
A(r)\in L^2_{\rm loc} (\mathbb{R}^+)
\end{equation}
there is the unique accelerant which generates it and satisfies
\begin{equation}
H(x)\in L^2_{\rm loc}(\mathbb{R}) \label{regularity3}
\end{equation}
Conversely, any accelerant satisfying (\ref{regularity3}) gives
rise to Krein system with $A(r)\in L^2_{\rm loc}(\mathbb{R}^+)$.
This map is a homeomorphism.
\end{theorem}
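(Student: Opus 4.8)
The plan is to build everything on top of the continuous-accelerant theory of Section~\ref{four} (Theorem~\ref{onetoone}) by means of the approximation scheme already set up in this section, and then to upgrade the bijection $H\leftrightarrow A$ to a homeomorphism of $L^2[0,R]$ for arbitrary $R>0$. Existence and uniqueness are essentially in place. For $H\mapsto A$ one approximates the accelerant by continuous Hermitian $H^{(n)}$ in $L^2[-R,R]$, obtains $A^{(n)}\in C[0,R]$ from Theorem~\ref{onetoone}, and passes to the limit in the integral equations for $P^{(n)},P_*^{(n)}$ and in $\overline{A^{(n)}(r)}=H^{(n)}(r)-\int_0^r H^{(n)}(r-u)\Gamma_r^{(n)}(u,0)\,du$, exactly as carried out above; here the $L^2_{\rm loc}$ analogues of Theorems~\ref{volter2}, \ref{volter7}, Lemma~\ref{transformation} and Theorem~\ref{theorem2s2} are the tools that make the intermediate objects meaningful. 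For $A\mapsto H$ one approximates $A$ by continuous $A^{(n)}$ in $L^2[0,R]$, gets continuous accelerants $H^{(n)}$ from Theorem~\ref{onetoone}, and checks (via the continuity argument below) that $H^{(n)}$ converges in $L^2[0,R]$ to an accelerant generating $A$; the facts that survive the limit are Lemma~\ref{minlemma}, the representation (\ref{reznik}), Lemma~\ref{lemmaimp}, and the consistency relation (\ref{consist}). Uniqueness is immediate in both directions: $H$ determines $A(r)=\Gamma_r(0,r)$, and conversely $A$ determines the solutions $P_*,\widehat P_*$ of the Krein system and its dual, hence $P_*^{-1}\widehat P_*$, whose causal Fourier data restricted to $[0,r]$ equals $\overline{H}$ on $[0,r]$.

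\textbf{Continuity of $H\mapsto A$.} Fix $R>0$ and use $\overline{A(r)}=H(r)-\int_0^r H(r-u)\,g_r(u)\,du$ with $g_r=(I+\cal{H}_r)^{-1}H$, the second summand being continuous in $r$ (so all the $L^2$-irregularity of $A$ is that of $H$). If $H^{(n)}\to H$ in $L^2[0,R]$, then $\|\cal{H}_r^{(n)}-\cal{H}_r\|_{\rm HS}^2\le C(R)\,\|H^{(n)}-H\|_{L^2[-R,R]}^2\to 0$ uniformly in $r\in(0,R]$, while the monotonicity of the lower bound of $\cal{H}_r$ gives $\inf{\rm spec}(I+\cal{H}_r)\ge \inf{\rm spec}(I+\cal{H}_R)>0$ for all $r\le R$, and the same for the $H^{(n)}$ with the bound stable for large $n$ (by operator-norm convergence of $\cal{H}_R^{(n)}$). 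Hence $(I+\cal{H}_r^{(n)})^{-1}\to(I+\cal{H}_r)^{-1}$ in operator norm uniformly in $r\le R$, so $g_r^{(n)}\to g_r$ in $L^2$ uniformly in $r$, and therefore $A^{(n)}\to A$ in $L^2[0,R]$.

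\textbf{Continuity of $A\mapsto H$ (the crux).} One runs the reconstruction of $H$ from $A$ and tracks continuous dependence. With a uniform bound $\int_0^R|A^{(n)}|\le M$, Gronwall applied to the integral equations for $Q^{(n)}=e^{-i\lambda r}P^{(n)}$, $P_*^{(n)}$ and their hatted analogues gives uniform bounds and uniform convergence (on compact $\lambda$-sets, uniformly in $r\le R$) of $P_*^{(n)},\widehat P_*^{(n)}$ to $P_*,\widehat P_*$; moreover the Volterra-iteration estimates from the proof of Lemma~\ref{minlemma}, read off in the $L^2$ setting, give convergence of the transformation kernels $A^{(n)}(r,\cdot),\widehat A^{(n)}(r,\cdot)$ in $L^1[0,r]$, uniformly in $r\le R$. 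Consequently $P_*^{(n)}(r,\cdot),\widehat P_*^{(n)}(r,\cdot)$ converge in the Wiener algebra, uniformly in $r$; since $|P_*^{(n)}(r,\lambda)|\ge e^{-M}$ on $\mathbb{R}$ by Lemma~\ref{gronwall-p*}, the Levy--Wiener theorem yields $1/P_*^{(n)}(r,\cdot)\to 1/P_*(r,\cdot)$, hence $P_*^{(n)-1}\widehat P_*^{(n)}\to P_*^{-1}\widehat P_*$ in the Wiener algebra; reading off the causal Fourier data and restricting to $[0,r]$ produces $\overline{H^{(n)}}\to\overline{H}$ on $[0,R]$. The main obstacle lies precisely here: one must (i) make the iteration estimates of Lemma~\ref{minlemma} and the representation (\ref{reznik}) uniform in $n$ and in $r\le R$ for $L^2_{\rm loc}$ coefficients (treating $A^{(n)}(r,\cdot)$ as an $L^2[0,r]$-valued function of $r$, and using the crude two-sided bounds (\ref{second1}) together with their $L^2$ analogue to secure $M$), and (ii) upgrade the Wiener-norm ($L^1$-type) convergence of the Fourier data to genuine $L^2[0,R]$-convergence of the accelerants, for which one exploits that the causal Fourier data of $P_*^{-1}\widehat P_*$ in fact lie in $L^2$, not merely $L^1$. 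Combining the two continuity statements with the bijectivity established above shows the map and its inverse are continuous on $L^2[0,R]$ for every $R$, i.e. it is a homeomorphism of $L^2_{\rm loc}$.
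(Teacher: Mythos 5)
Your overall skeleton (approximation by continuous data, the formula $\overline{A(r)}=H(r)-\int_0^r H(r-u)\Gamma_r(u,0)\,du$, uniqueness via $\Gamma_r(0,r)$ and the causal Fourier data of $P_*^{-1}\widehat P_*$) matches the paper, and your continuity argument for $H\mapsto A$ is essentially the paper's, just phrased through uniform invertibility of $I+\cal{H}_r$. The genuine gap is in the direction $A\mapsto H$, and you flag it yourself without closing it. Your route produces convergence of the causal Fourier data of $P_*^{(n)-1}\widehat P_*^{(n)}$ only in the Wiener ($L^1$-type) norm, and the remark that "the data in fact lie in $L^2$" does not upgrade this: $L^1[0,R]$ (or uniform) convergence of $H^{(n)}$ to an $L^2$ limit, even together with a uniform $L^2$ bound, yields only weak $L^2$ convergence, not the norm convergence the homeomorphism statement requires. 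The paper closes exactly this point quantitatively (Lemma \ref{l2regular}): from (\ref{reznik}) and (\ref{integration}) one has $\int_0^\infty \overline{H_R(s)}e^{i\lambda s}ds=\int_0^R A(s)e^{i\lambda s}P_*^{-2}(s,\lambda)\,ds$; integrating by parts, using $|P(s,\lambda)|=|P_*(s,\lambda)|$ on $\mathbb{R}$ together with Lemma \ref{gronwall-p*}, and applying Plancherel in $\lambda$ gives the bound (\ref{l2-l2-1}), $\|H\|_{L^2[-R,R]}\leq C\|A\|_{L^2[0,R]}\exp\bigl(C\|A\|_{L^1[0,R]}\bigr)$, and the same computation applied to differences gives $L^2_{\rm loc}$ continuity of $A\mapsto H$. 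Without an estimate of this kind your step (ii) simply does not go through.

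A second, related shortfall: you list the $L^2$ analogue of Lemma \ref{minlemma} among the "facts that survive the limit" and then again as obstacle (i), but it is neither automatic nor optional — it is the key new lemma of this section, proved in the paper by the mixed-norm Volterra induction (\ref{ind-1}), $\bigl\|[O^{(n)}f](r,\cdot)\bigr\|_{L^2[0,r]}\leq \bigl(\int_0^r|A(s)|ds\bigr)^n\|f\|_{\infty,2}/n!$. Once that lemma is available, the paper's treatment of the direction $A\to H$ is not an approximation argument at all: it repeats the proof of Theorem \ref{onetoone} verbatim, including the strict positivity $I+\cal{H}_r>0$ of the reconstructed accelerant via (\ref{realpart})--(\ref{xsa}). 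That last point matters for your sketch too: strict positivity is not preserved under the limits $H^{(n)}\to H$ you invoke, so if you insist on the approximation route you must still argue positivity of the limiting operator directly, which you do not do.
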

\begin{proof}
As we just showed, the $L^2_{\rm loc}$ accelerant does generates
the Krein system with $L^2_{\rm loc}$ coefficient. Now, let us
start with $A(r)\in L^2_{\rm loc}(\mathbb{R}^+)$. Notice that all
arguments from the proof of Theorem \ref{onetoone} are valid as
long as we have an analog of Lemma~\ref{minlemma}. Therefore, we
just need
\begin{lemma}
If $A(r)\in L^2_{\rm loc}(\mathbb{R}^+)$, and $P(r,\lambda)$,
$P_*(r,\lambda)$ are defined as solutions to the Krein system,
then the representations (\ref{kuku1}), (\ref{kuku}) hold with
$A(r,s)\in L^2[0,r]$.
\end{lemma}
\begin{proof}
We need to consider the operator ${O}$ given by (\ref{o}) and show
that it is Volterra in the space of functions $f(r,y), 0\leq y\leq
r\leq R$ such that
\[
\|f\|_{\infty,2}=\sup_{r\in [0,R]}\left[\int\limits_0^r
|f(r,y)|^2dy\right]^{1/2}<\infty
\]
Let us prove by induction that
\begin{equation}
\left\|\left[O^{(n)}f\right](r,t)\right\|_{t,L^2[0,r]}\leq
\left[\int\limits_0^r |A(s)|ds\right]^n\cdot \|f\|_{\infty,2}/(n!)
\label{ind-1}
\end{equation}
For $n=0$, the statement is elementary. Assume that this estimate
is true for $n$. Then, for $n+1$, we have
\[
\left[O^{(n+1)}f\right](r,t)=\int\limits_{r-t}^r
A(s)\left[O^{(n)}f\right](s,r-t)ds
\]
Application of Minkowski inequality and induction assumption
yields
\[
\left\|\left[O^{(n+1)}f\right](r,t)\right\|_{t,L^2[0,r]}\leq
\|f\|_{\infty,2}(n!)^{-1}\int\limits_0^r
|A(s)|\left[\int\limits_0^s |A(u)|du\right]^n
\]
\[
=\|f\|_{\infty,2}[(n+1)!]^{-1}\left[\int\limits_0^r
|A(s)|ds\right]^{n+1}
\]
Thus, we have (\ref{ind-1}) and
\[
\|A(r,t)\|_{t,L^2[0,r]}\leq \|A\|_{L^2[0,r]} \exp
\left[\|A\|_{L^1[0,r]}\right]
\]
This estimate finishes the proof of the Lemma.
\end{proof}
We are left with proving
\begin{lemma} \label{l2regular}
The following estimates are true for any $R>0$
\begin{equation}\label{l2-l2-1}
\|H(x)\|_{L^2[-R,R]}\leq
C\|A(r)\|_{L^2[0,R]}\exp\left(C\|A\|_{L^1[0,R]}\right)
\end{equation}
\begin{equation}\label{l2-l2-2}
 \|A(r)-\overline{H(r)}\|_{L^\infty [0,R]}\leq
 \|H\|^2_{L^2[0,R]} \|(I+\cal{H}_R)^{-1}\|_{2,2}
\end{equation}
and the map $A(r)\to H(x)$ is homeomorphism in $L^2_{\rm loc}$.
\end{lemma}
\begin{proof}
 From (\ref{reznik}) and (\ref{integration}), we have
\[
\int\limits_0^\infty \overline{H_R(s)}\exp(i\lambda
s)ds=\int\limits_0^R \frac{A(s)\exp(i\lambda
s)}{P_*^2(s,\lambda)}ds=
\]
\[
=\frac{1}{P_*^2(R,\lambda)}\int\limits_0^R A(s)\exp(i\lambda
s)ds-\int\limits_0^R \left[\int\limits_0^s A(u)\exp(i\lambda
u)du\right] \frac{A(s)P(s,\lambda)}{P^3_*(s,\lambda)}ds
\]
Therefore, using $|P(s,\lambda)|=|P_*(s,\lambda)|$ and Lemma
\ref{gronwall-p*}, we get
\[
\|H_R\|_2\leq
\|A\|_{L^2[0,R]}\exp(2\|A\|_{L^1[0,R]})+\int\limits_0^R
\|A\|_{L^2[0,s]} |A(s)|\exp(2\|A\|_{L^1[0,s]})ds
\]
which yields (\ref{l2-l2-1}) since $H(x)=H_R(x)$ for $|x|<R$. This
argument also shows that $H(x)$ depends on $A(r)$ continuously in
the $L^2_{\rm loc}$.

To prove (\ref{l2-l2-2}), we use (\ref{basic1}) to write
\begin{equation}\label{operator-kernel}
\Gamma_r(t,0)+\int\limits_0^r H(t-u)\Gamma_r(u,0)du=H(t)
\end{equation}
Taking $t=r$,
\begin{equation}
\overline{A(r)}+\int\limits_0^r H(r-u)\Gamma_r(u,0)du=H(r)
\label{a-eq}
\end{equation}
 The
formula (\ref{operator-kernel}) implies
$\Gamma_r(t,0)=(I+\cal{H}_r)^{-1}H$ and, therefore,
$\|\Gamma_r(t,0)\|_{L^2[0,r]}\leq \|(I+\cal{H}_r)^{-1}\|_{2,2}
\|H\|_{L^2[0,r]}$. That yields (\ref{l2-l2-2}). Also, the map
$H(x)\longrightarrow A(r)$ is continuous in $L^2_{\rm loc}$.
\end{proof}
\end{proof}
We want to mention here that slight modification of the arguments
allows to prove that the map $H(x)\longrightarrow A(r)$ is
homeomorphism in $L^p_{\rm loc}(\mathbb{R})-L^p_{\rm
loc}(\mathbb{R}^+)$ for any $p>1$. For one direction, one just
have to iterate equation (\ref{operator-kernel}) sufficiently many
times to achieve the necessary gain in regularity and then plug it
in (\ref{a-eq}). The other direction is straightforward.
\newpage

\section{Continuous analogs of Wall polynomials
and Schur function.  Bernstein-Szeg\H{o} approximation}
\bigskip Let us introduce the continuous analogs of the so-called Wall
polynomials. Consider the functions $\A(r,\lambda), \B(r,\lambda),
\A_*(r,\lambda), \B_*(r,\lambda)$ defined by
\begin{equation}
X(r,\lambda)= \left[
\begin{array}{cc}
\A_*(r,\lambda) & \B_*(r,\lambda)\\
\B(r,\lambda) & \A(r,\lambda)
\end{array}
\right] \label{fsr}
\end{equation}
were $X(r,\lambda)$ is the transfer matrix given in
(\ref{krein3}). By analogy with OPUC theory, it makes sense to
call $\A$ and $\B$ the continuous Wall polynomials. They can be
rewritten in the following way

\[
\A(r,\lambda )=\frac{P_{\ast }(r,\lambda )+\widehat{P}_{\ast }(r,\lambda )%
}{2},\A_{\ast }(r,z)=\frac{P(r,\lambda )+\widehat{P}(r,\lambda
)}{2}
\]
\begin{equation}
\B(r,\lambda ) =\frac{P_{\ast }(r,\lambda )-\widehat{P}_{\ast }(r,\lambda )%
}{2},\B_{\ast }(r,\lambda )=\frac{P(r,\lambda
)-\widehat{P}(r,\lambda )}{2} \label{e19s3}
\end{equation}

\begin{lemma} \label{wall}
For continuous Wall polynomials, the following identities are true
\begin{itemize}
\item[1)] For $\lambda\in \mathbb{R}$,
\begin{equation}
|\A|^2-|\B|^2=1, |\A|=|\A_*|, |\B|=|\B_*|,\overline{\A_*}
B_*=\overline{\B} \A \label{simplect}
\end{equation}

\item[2)]
For $\lambda \in\mathbb{C}$,
\begin{equation}
\A(r,\lambda)\A_*(r,\lambda)-\B(r,\lambda)\B_*(r,\lambda)=\exp(i\lambda
r),\label{energy}
\end{equation}

\item[3)]
For $\lambda\in \mathbb{C}^+$,
\begin{equation}
|\A|^2-|\B|^2\geq 1, |\A|^2-|\B_*|^2\geq 1  \label{contr1}
\end{equation}
\end{itemize}
\end{lemma}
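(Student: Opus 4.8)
The plan is to extract all three items directly from the already--established structural facts about the transfer matrix $X(r,\lambda)$ — its determinant (\ref{determinant}), its $J$--unitarity on $\mathbb{R}$ and $J$--contractivity on $\mathbb{C}^+$ (Theorem \ref{jcontraction}), together with Lemma \ref{jproperty} — combined with routine $2\times 2$ bookkeeping via the block form (\ref{fsr}). Writing
\[
X=\left[\begin{array}{cc}\A_* & \B_*\\ \B & \A\end{array}\right],\qquad
X^*=\left[\begin{array}{cc}\overline{\A_*} & \overline{\B}\\ \overline{\B_*} & \overline{\A}\end{array}\right],
\]
one computes
\[
X^*JX=\left[\begin{array}{cc}|\A_*|^2-|\B|^2 & \overline{\A_*}\B_*-\overline{\B}\A\\ * & |\B_*|^2-|\A|^2\end{array}\right],\qquad
XJX^*=\left[\begin{array}{cc}|\A_*|^2-|\B_*|^2 & *\\ * & |\B|^2-|\A|^2\end{array}\right],
\]
the $*$'s denoting the Hermitian conjugates of the off--diagonal entries shown. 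Item $2)$ is then immediate: by (\ref{fsr}), $\det X=\A_*\A-\B_*\B$, so the determinant formula (\ref{determinant}) gives $\A\A_*-\B\B_*=\exp(i\lambda r)$ for every $\lambda\in\mathbb{C}$.

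For item $1)$ I would fix $\lambda\in\mathbb{R}$. By Theorem \ref{jcontraction} the matrix $X$ is $J$--unitary, i.e.\ $X^*JX=J$, and by Lemma \ref{jproperty} so is $X^*$, i.e.\ $XJX^*=J$. Comparing both matrices above with $J$ (see (\ref{signature})) yields the four scalar identities $|\A_*|^2-|\B|^2=1$, $|\A|^2-|\B_*|^2=1$, $|\A_*|^2-|\B_*|^2=1$, $|\A|^2-|\B|^2=1$, and vanishing of the off--diagonal entry of $X^*JX$ gives $\overline{\A_*}\B_*=\overline{\B}\A$. Subtracting the first from the fourth gives $|\A|=|\A_*|$; subtracting the second from the fourth gives $|\B|=|\B_*|$. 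This is exactly (\ref{simplect}). (Alternatively, one can derive $\A_*=\exp(i\lambda r)\overline{\A}$ and $\B_*=\exp(i\lambda r)\overline{\B}$ for real $\lambda$ straight from (\ref{e19s3}) together with Lemma \ref{propertiesP}, part $2)$, namely $P=\exp(i\lambda r)\overline{P_*}$ and $\widehat P=\exp(i\lambda r)\overline{\widehat P_*}$; then $|\A|=|\A_*|$, $|\B|=|\B_*|$ and $\overline{\A_*}\B_*=\A\overline{\B}=\overline{\B}\A$ are immediate, and $|\A|^2-|\B|^2=1$ drops out of item $2)$.)

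For item $3)$ I would fix $\lambda\in\mathbb{C}^+$. Then $X$ is a $J$--contraction, $X^*JX\le J$, and by Lemma \ref{jproperty} so is $X^*$, so $XJX^*\le J$ as well. Since a positive semidefinite Hermitian $2\times2$ matrix has nonnegative diagonal entries, the $(2,2)$ entries of $J-X^*JX\ge0$ and of $J-XJX^*\ge0$ yield $|\A|^2-|\B_*|^2\ge1$ and $|\A|^2-|\B|^2\ge1$ respectively, which is (\ref{contr1}). I do not expect a genuine obstacle: everything is mechanical once (\ref{fsr}), Theorem \ref{jcontraction} and Lemma \ref{jproperty} are available. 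The only points requiring a bit of care are the signs in the products $X^*JX$ and $XJX^*$, the fact that item $3)$ uses only the diagonal entries of $J-X^*JX$ and $J-XJX^*$ rather than their full positivity, and the observation that item $1)$ genuinely needs \emph{both} identities $X^*JX=J$ and $XJX^*=J$, since neither one alone produces all the relations in (\ref{simplect}).
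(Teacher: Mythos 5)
Your proof is correct and follows essentially the same route as the paper, whose one-line proof simply cites Theorem \ref{jcontraction} (together with Lemma \ref{propertiesP}); you have merely written out the mechanical $2\times2$ computations of $X^*JX$, $XJX^*$ and $\det X$ that the paper leaves implicit. The sign bookkeeping and the use of Lemma \ref{jproperty} to get both $X^*JX\le J$ and $XJX^*\le J$ are exactly as intended.
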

\begin{proof}
The proof follows directly from Theorem \ref{jcontraction} or
Lemma \ref{propertiesP}.
\end{proof}

Notice that (\ref{contr1}) implies
$\B(r,\lambda)\A^{-1}(r,\lambda)\in B(\mathbb{C}^+)$.

\begin{theorem}
The ratio $\B(r,\lambda)\A^{-1}(r,\lambda)$ converges to
$\f(\lambda)\in B(\mathbb{C}^+)$. This convergence is uniform over
all compacts in $\mathbb{C}^+$. \label{schur-function}
\end{theorem}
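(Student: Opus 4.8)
The plan is to establish a normal-families argument combined with a monotonicity principle coming from the nested Schur-type problems. First I would note that for each fixed $r>0$, the function $\f_r(\lambda):=\B(r,\lambda)\A^{-1}(r,\lambda)$ is analytic in $\mathbb{C}^+$ and, by the estimate $\B\A^{-1}\in B(\mathbb{C}^+)$ derived from \eqref{contr1}, it maps $\mathbb{C}^+$ into the closed unit disc. Hence the family $\{\f_r\}_{r>0}$ is uniformly bounded and therefore normal on $\mathbb{C}^+$ by Montel's theorem; every sequence $r_n\to\infty$ has a locally uniformly convergent subsequence with limit in $B(\mathbb{C}^+)$. The real work is to show that the limit does not depend on the choice of subsequence, i.e. that $\f_r(\lambda)$ actually converges as $r\to\infty$.

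The key step for convergence is a nesting property of the values $\f_r(\lambda)$ for fixed $\lambda\in\mathbb{C}^+$. Using the multiplicativity of the transfer matrix, $X(r_2,\lambda)=X(r_2,r_1;\lambda)X(r_1,\lambda)$ where $X(r_2,r_1;\lambda)$ is the transfer matrix of the Krein system restricted to $[r_1,r_2]$ and is itself a $J$-contraction for $\lambda\in\mathbb{C}^+$ (Theorem \ref{jcontraction} applied on the shifted interval). Writing the action of a $J$-contraction on the unit disc as a linear-fractional (Möbius) map, one sees that the Weyl-type disc
\[
\Delta_r(\lambda)=\Bigl\{\,\B(r,\lambda)w+\A_*(r,\lambda)\ \text{vs}\ \A(r,\lambda)w+\B_*(r,\lambda)\ :\ |w|\le 1\,\Bigr\}
\]
— more precisely the image of $\overline{\mathbb{D}}$ under the Möbius transform $w\mapsto(\B_* w+\A)^{-1}(\A_* w+\B)$ attached to $X(r,\lambda)^{-1}$, which is $J$-contractive — forms a decreasing (nested) family of closed discs in $\overline{\mathbb{D}}$ as $r$ increases. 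Since $\f_r(\lambda)$ is the center (or a distinguished point) of $\Delta_r(\lambda)$ and the radii $\rho_r(\lambda)$ are non-increasing and non-negative, $\rho_r(\lambda)$ converges; a Cauchy argument then shows $\f_{r_2}(\lambda)-\f_{r_1}(\lambda)\to 0$ as $r_1,r_2\to\infty$, because both points lie in $\Delta_{r_1}(\lambda)$. This gives pointwise convergence of $\f_r(\lambda)$ on $\mathbb{C}^+$; combined with the normality above, Vitali's theorem upgrades it to locally uniform convergence, and the limit $\f(\lambda)$ lies in $B(\mathbb{C}^+)$.

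The main obstacle is making the nesting rigorous: one must verify that the linear-fractional map associated with a $J$-contraction genuinely contracts the closed unit disc into itself (this is standard for $J$-unitary/contractive $2\times 2$ matrices but needs the correct normalization — one wants the lower row $(\B,\A)$ to satisfy $|\A|>|\B|$ strictly, which holds in $\mathbb{C}^+$ by \eqref{contr1}), and that the discs are genuinely nested rather than merely overlapping. I would handle this by recording the identity $X(r_2,\lambda)J X(r_2,\lambda)^* \le X(r_1,\lambda)JX(r_1,\lambda)^*$ for $\lambda\in\mathbb{C}^+$, $r_2>r_1$ (a consequence of $X(r_2,r_1;\lambda)JX(r_2,r_1;\lambda)^*\le J$), and translating it through the parametrization \eqref{fsr} into the inclusion $\Delta_{r_2}(\lambda)\subset\Delta_{r_1}(\lambda)$. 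Once the geometry is in place the convergence is automatic. A cosmetic alternative, avoiding explicit disc chasing, is to estimate $|\f_{r_2}(\lambda)-\f_{r_1}(\lambda)|$ directly via the Christoffel–Darboux-type identity and the bound $|P_*(r,\lambda)|^{-2}\le \Re[P_*^{-1}\widehat P_*]$ together with monotone convergence of $\int_0^r|A|^2$-controlled quantities, but the disc-nesting picture is the conceptually cleanest route and mirrors the classical OPUC proof that partial Schur functions converge.
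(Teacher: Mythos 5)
Your conceptual picture (nested Weyl-type discs coming from the $J$-contractive transfer matrix, plus normality) is the right one, but as written the argument has a genuine gap at the decisive step. From the nesting $\Delta_{r_2}(\lambda)\subset\Delta_{r_1}(\lambda)$ and the monotonicity of the radii $\rho_r(\lambda)$ you only get that $\rho_r(\lambda)$ converges to some limit $\rho_\infty(\lambda)\geq 0$; the inference ``both $\f_{r_1}$ and $\f_{r_2}$ lie in $\Delta_{r_1}$, hence $\f_{r_2}-\f_{r_1}\to 0$'' gives nothing better than $|\f_{r_2}-\f_{r_1}|\leq 2\rho_{r_1}\to 2\rho_\infty$, which is not a Cauchy statement unless you prove $\rho_\infty=0$. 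The only way to rescue the argument without that is if $\f_r=\B(r,\lambda)\A^{-1}(r,\lambda)$ were the \emph{center} of $\Delta_r$ (centers of nested discs satisfy $|c_{r_1}-c_{r_2}|\leq\rho_{r_1}-\rho_{r_2}$), but it is not: $\f_r$ is the image of $w=0$ under the M\"obius map $w\mapsto(\A_*w+\B)(\B_*w+\A)^{-1}$, and a M\"obius transform does not send the center of a disc to the center of the image disc. So your ``Cauchy argument'' step fails as stated; the limit-point/limit-circle dichotomy must be resolved.

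Resolving it requires exactly the quantitative input you relegated to a ``cosmetic alternative.'' The radius of the image of $\overline{\mathbb{D}}$ under the above map is $|\A\A_*-\B\B_*|\,(|\A|^2-|\B_*|^2)^{-1}$, which by the determinant identity (\ref{energy}) and the lower bound $|\A|^2-|\B_*|^2\geq 1$ from (\ref{contr1}) is at most $e^{-r\Im\lambda}$; hence $\rho_r(\lambda)\to 0$ exponentially fast, uniformly for $\Im\lambda\geq\varepsilon$, and only then does your disc picture yield convergence (and uniformity on compacts, making the Montel/Vitali step superfluous). This is in substance what the paper does directly: using the semigroup relation it writes the difference of two approximants in the form (\ref{energy22}) and bounds the denominator below by $1/2$ via (\ref{e22s3}), obtaining $|\B(r_2)\A^{-1}(r_2)-\B(r_1)\A^{-1}(r_1)|\leq 2e^{-r_1\Im\lambda}$. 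So the missing radius estimate is not an optional refinement of your proof — it \emph{is} the proof, and once you add it your route coincides with the paper's.
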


\begin{proof}
 Take $r_{1}<r_{2}$. Consider the Krein system on the
interval $[r_{1},\infty ).$  Denote the transfer matrix from $r_1$
to $r_2$ by $X(r_1,r_2,\lambda)$. Then, in our notations,
$X(0,r,\lambda)=X(r,\lambda)$. If we introduce

\begin{equation}
X(r_1,r,\lambda)= \left[
\begin{array}{cc}
\a_*(r,\lambda) & \b_*(r,\lambda)\\
\b(r,\lambda) & \a(r,\lambda)
\end{array}
\right],r>r_1 \label{fsr1}
\end{equation}
then $\a$ and $\b$ are Wall polynomials for the same Krein system
considered on the interval $[r_1,\infty)$. Obviously, Lemma
\ref{wall} will hold for these functions as well.

The semigroup relation
\[
X(0,r_{2})=X(r_1,r_{2})\cdot X(0,r_{1})
\]%
yields%
\begin{equation}
\begin{array}{ccc}
\A(r_{2}) &=&\b(r_2)\B_{\ast }(r_{1})+\a(r_2)\A(r_{1}) \\
\B(r_{2}) &=&\b(r_2)\A_{\ast }(r_{1})+\a(r_2)\B(r_{1})
\end{array}\label{relations-aA}
\end{equation}
For the function $\B\A^{-1}$,%
\begin{equation}
\frac{\B(r_{2})}{\A(r_{2})}=\frac{\B(r_{1})+(\b\a^{-1})\A_{\ast }(r_{1})}{%
\A(r_{1})+(\b\a^{-1})\B_{\ast }(r_{1})}  \label{schur1}
\end{equation}
and
\[
\frac{\B(r_{2})}{\A(r_{2})}-\frac{\B(r_{1})}{\A(r_{1})}=\frac{(\b\a^{-1})
(\A(r_1)\A_*(r_1)-\B(r_1)\B_*(r_1))
}{\A(r_{1})[\A(r_{1})+(\b\a^{-1})\B_{\ast }(r_{1})]}=
\]
\begin{equation}
=\frac{(\b\a^{-1})\exp (i\lambda
r_{1})}{\A(r_{1})[\A(r_{1})+(\b\a^{-1})\B_{\ast }(r_{1})]}
\label{energy22}
\end{equation}
where we have used (\ref{energy}).

 We have $\b\a^{-1}\in
B(\mathbb{C}^+)$. Therefore, from (\ref{contr1}), we get
\begin{equation}
\left\vert \A(r_{1})[\A(r_{1})+(\b\a^{-1})\B_{\ast
}(r_{1})]\right\vert \geq \left( |\A(r_{1})|-|\B_*(r_{1})|\right)
|\A(r_{1})|\geq
\end{equation}
\begin{equation}
\geq
\frac{%
|\A(r_{1})|^{2}-\left\vert \B_*(r_{1})\right\vert ^{2}}{2}\geq
\frac 12, \lambda\in \mathbb{C}^+ \label{e22s3}
\end{equation}
Then,
\[
\left\vert
\frac{\B(r_{2})}{\A(r_{2})}-\frac{\B(r_{1})}{\A(r_{1})}\right\vert
\leq 2\exp (-r_{1}\Im \lambda )
\]%
That means $\B(r,\lambda )\A^{-1}(r,\lambda )$ converges to a
certain function $\f(\lambda )$ as $\rho\to\infty$. This
convergence is uniform in any compact in $\mathbb{C}^{+}$.
\end{proof}

We will call $\f(\lambda)$ the Schur function corresponding to
$[0,\infty)$. Notice that $\f(\lambda )\in \B(\mathbb{C}^{+})$.
The following formula is the consequence of  (\ref{schur1}) if one
takes $r_2\to\infty$
\begin{equation}
\f(\lambda )=\frac{\B(\rho,\lambda)+\f_{\rho}(\lambda )\A_{\ast
}(\rho,\lambda)}{\A(\rho,\lambda)+\f_{\rho}(\lambda )\B_{\ast
}(\rho,\lambda)} \label{e18s3}
\end{equation}
where $\f_{\rho}(z)$ is Schur's function for the same Krein but on
the
interval $%
[\rho,\infty )$.

Not every function from $B(\mathbb{C}^+)$ is Schur's function of
some Krein system. The characterization of that special subclass
will be given later but now we just want to mention that
\begin{equation}
\f(\lambda)\to 0 \label{feature}\end{equation} if $\Im\lambda\to
+\infty$. It easily follows from the formula (\ref{e18s3}) with
any fixed $\rho$ and relations
\[
\f_{\rho}(\lambda )\in B(\mathbb{C}^{+}),\A(\rho,\lambda
)\rightarrow 1,\B(\rho,\lambda)\rightarrow 0,\A_{\ast
}(\rho,\lambda )\rightarrow 0,\B_{\ast }(\rho,\lambda)\rightarrow
0
\]
as $\Im\lambda\to +\infty$.

 In the previous section, we constructed an accelerant from the
given Krein system. Then, the measure $\sigma$ and the constant
$\beta$ can be found  from the formula (\ref{e2s2}). But there is
more direct way to find these data. The next Theorem develops an
analog of the Weyl-Titchmarsh theory \cite{Levitan} for Krein
systems.

\begin{theorem}
The ratio $\hat{P}_*(r,\lambda)P_*^{-1}(r,\lambda)$ converges to
the function $F(\lambda)$ uniformly in any compact in
$\mathbb{C}^+$ as $r\to\infty$. This function $F(\lambda)$ has the
positive real part in $\mathbb{C}^+$ and allows the following
representation
\begin{equation}
F(\lambda )/2=-i\beta  +i\int\limits_{-\infty }^{\infty }
\frac{1+\lambda t}{(\lambda-t)(1+t^2) } d\sigma (t)
\label{weyl-titchmarsh}
\end{equation}
where $d\sigma$ and $\beta$ coincide with those from the formula
(\ref{e2s2}). Moreover, the sequence of measures
\begin{equation}
d\sigma_r(\lambda)=\frac{d\lambda}{(2\pi) |P_*(r,\lambda)|^2} \to
d\sigma(\lambda)
\end{equation}
in the weak-{\rm($\ast$)} sense (analog of Bernstein-Szeg\H{o}
approximation).\label{Bernstein-Szego}
\end{theorem}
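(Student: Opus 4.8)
The plan is to reduce the whole statement to the convergence of the continuous Wall polynomials already proved in Theorem \ref{schur-function}. From (\ref{e19s3}) one has $\A+\B=P_*$ and $\A-\B=\widehat P_*$, hence the Cayley-type identity
\[
\widehat P_*(r,\lambda)P_*^{-1}(r,\lambda)=\frac{\A(r,\lambda)-\B(r,\lambda)}{\A(r,\lambda)+\B(r,\lambda)}=\frac{1-\B(r,\lambda)\A^{-1}(r,\lambda)}{1+\B(r,\lambda)\A^{-1}(r,\lambda)},
\]
legitimate in $\mathbb{C}^+$ because $|\A|\ge 1>|\B\A^{-1}|$ there by (\ref{contr1}). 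Since $\B\A^{-1}\to\f$ locally uniformly in $\mathbb{C}^+$ and $\f\not\equiv -1$ (as $\f(\lambda)\to 0$ when $\Im\lambda\to+\infty$ by (\ref{feature}), so $|\f|<1$ throughout $\mathbb{C}^+$), the map $w\mapsto(1-w)/(1+w)$ is continuous on a neighbourhood of the image of each compact $K\subset\mathbb{C}^+$; hence $\widehat P_*P_*^{-1}\to F:=(1-\f)/(1+\f)$ uniformly on compacts. A one-line computation gives $\Re F=(1-|\f|^2)|1+\f|^{-2}>0$ in $\mathbb{C}^+$ (equivalently, pass to the limit in Lemma \ref{propertiesP}(3), $\Re[\widehat P_*P_*^{-1}]\ge|P_*|^{-2}>0$).

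Since $\Re F>0$, $iF$ maps $\mathbb{C}^+$ into itself and admits the Herglotz--Nevanlinna representation $iF(\lambda)=a\lambda+b+\int_{-\infty}^{\infty}\big((t-\lambda)^{-1}-t(1+t^2)^{-1}\big)\,d\tilde\rho(t)$ with $a\ge 0$, $b\in\mathbb{R}$, $\int d\tilde\rho/(1+t^2)<\infty$. By (\ref{feature}), $F(\lambda)\to 1$ as $\Im\lambda\to+\infty$, which forces $a=0$; using the partial fraction identity $\frac{1+\lambda t}{(\lambda-t)(1+t^2)}=\frac{t}{1+t^2}-\frac{1}{t-\lambda}$ one rewrites this as
\[
F(\lambda)/2=-i\tilde\beta+i\int_{-\infty}^{\infty}\frac{1+\lambda t}{(\lambda-t)(1+t^2)}\,d\tilde\sigma(t),\qquad \tilde\beta:=b/2,\ \ \tilde\sigma:=\tilde\rho/2.
\]
It remains to identify $\tilde\sigma=\sigma$ and $\tilde\beta=\beta$ with the data of (\ref{e2s2}).

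This identification is the technical heart. Substituting (\ref{e1s2}) into the Laplace transform $L(z)=z^2\int_0^\infty g(x)e^{ixz}dx$ of Section \ref{S:1} and using $\int_0^\infty xe^{ixz}dx=-z^{-2}$ gives, for $\Im z>0$, the relation $1+2\widehat H(z)=-2L(z)$ with $\widehat H(z)=\int_0^\infty H(s)e^{isz}ds$ (this converges because $|g(x)|\le C(1+x^2)$). On the other hand (\ref{reznik}) says $\widehat P_*(r,\lambda)P_*^{-1}(r,\lambda)=1+2\int_0^\infty\overline{H_r(s)}e^{is\lambda}ds$ with $H_r=H$ on $(-r,r)$; letting $r\to\infty$ and using the convergence from the first paragraph, $F(\lambda)=1+2\int_0^\infty\overline{H(s)}e^{is\lambda}ds=1+2\,\overline{\widehat H(-\bar\lambda)}=-2\,\overline{L(-\bar\lambda)}$. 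Inserting the representation of $L$ from the proof of Theorem \ref{t1s1} (with $\alpha=0$ since $g(0)=0$) and conjugating yields precisely the asserted formula, giving $\tilde\sigma=\sigma$ and $\tilde\beta=\beta$ at once. The step I expect to be the main obstacle is the passage $\int_0^\infty\overline{H_r(s)}e^{is\lambda}ds\to\int_0^\infty\overline{H(s)}e^{is\lambda}ds$ for $\Im\lambda>0$: since $H_r\in L^1(\mathbb{R})$ (Levy--Wiener, Lemma \ref{lemmaimp}) agrees with $H$ on $(-r,r)$, one must kill the tail $\int_{|s|>r}\overline{H_r(s)}e^{is\lambda}ds$, either via Lemma \ref{gronwall-p*} together with (\ref{reznik}), (\ref{realpart}), or by noting that $\widehat P_*P_*^{-1}-1$ and $F-1$ are boundary values of Hardy-class functions whose Paley--Wiener transforms are supported in $[0,\infty)$ and coincide with $2\overline H$ on every bounded subinterval, so that uniqueness of the Fourier transform forces the limit. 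One may also bypass $L$ entirely: $\int_{-\infty}^{\infty}\big|\int_0^\rho\overline{f(t)}e^{i\lambda t}dt\big|^2 d\sigma_r(\lambda)=((1+\cal{H}_\rho)f,f)$ for $f\in L^2[0,\rho]$ and $r\ge\rho$ — exactly (\ref{sect2eq1}) for $\sigma$ — because the Fourier coefficients of $d\sigma_r$ on $(-r,r)$ equal $\delta(s)+\overline{H(s)}$; polarization and a density argument then give $\tilde\sigma=\sigma$.

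For the Bernstein--Szeg\H{o} statement, observe that $G_r(\lambda):=\tfrac i2\,\widehat P_*(r,\lambda)P_*^{-1}(r,\lambda)$ is analytic in $\mathbb{C}^+$, extends analytically across $\mathbb{R}$ (as $P_*(r,\cdot)$ is zero-free on $\overline{\mathbb{C}^+}$ by Lemma \ref{lemma24}), maps $\mathbb{C}^+$ into $\mathbb{C}^+$ by Lemma \ref{propertiesP}(3), and satisfies $G_r(iy)\to\tfrac i2$ as $y\to+\infty$; hence $G_r$ is Herglotz with zero linear term, and its representing measure is purely absolutely continuous with density $\tfrac1\pi\Im G_r(\lambda)=\tfrac1{2\pi}\Re[\widehat P_*P_*^{-1}](\lambda)=\bigl(2\pi|P_*(r,\lambda)|^2\bigr)^{-1}$, i.e. exactly $d\sigma_r$. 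Since $G_r\to\tfrac i2 F$ locally uniformly in $\mathbb{C}^+$, the standard correspondence between locally uniform convergence of Herglotz functions and weak-$(\ast)$ convergence of their representing measures (with linear terms converging, all zero here) yields $d\sigma_r\to d\sigma$, the asserted Bernstein--Szeg\H{o} approximation; this also re-confirms $\tilde\sigma=\sigma$.
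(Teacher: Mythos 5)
Your first two steps (the Cayley identity $\widehat P_*P_*^{-1}=(1-\B\A^{-1})(1+\B\A^{-1})^{-1}$, locally uniform convergence to $F=(1-\f)(1+\f)^{-1}$, positivity of $\Re F$, killing the linear term via (\ref{feature}), reading off $\sigma_r'=(2\pi|P_*|^2)^{-1}$ from the equality case of (\ref{e6s3}), and the Herglotz-function/weak-$(\ast)$ correspondence) coincide with the paper's argument. The gap is in what you yourself call the technical heart, the identification $\tilde\sigma=\sigma$, $\tilde\beta=\beta$. Your primary route rests on the identity $F(\lambda)=1+2\int_0^\infty\overline{H(s)}e^{is\lambda}ds$ and on "$1+2\widehat H(z)=-2L(z)$, which converges because $|g(x)|\le C(1+x^2)$". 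That bound controls $g$, i.e.\ the \emph{second} antiderivative of $H$, and gives nothing for $H$ or for its first antiderivative $H_1$; the double integration by parts needed to pass from $L(z)=z^2\int_0^\infty g\,e^{ixz}dx$ to $\int_0^\infty H\,e^{ixz}dx$ produces a boundary term $H_1(R)e^{iRz}$ which is not controlled, and the improper integral $\int_0^\infty H(s)e^{is\lambda}ds$ need not converge at all for a general accelerant (the paper explicitly warns that these "moment" integrals "do not have to converge in the usual sense"). Your two patches do not close this: Lemma \ref{gronwall-p*} only gives bounds growing like $\exp\int_0^r|A|$, which does not kill the tail of $H_r$, and "uniqueness of the Fourier transform forces the limit" presupposes exactly the convergent representation of $F-1$ that is in question.

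The paper avoids this by never writing an un-integrated Fourier integral of $H$: using (\ref{e1s1}) and (\ref{laplace-1}) it converts the finite-$r$ identity (\ref{reznik}) (where $H_r\in L^1$) into the twice-integrated, $\lambda^{-2}$-regularized form
\[
\frac{|t|}{2}+\int_0^t(t-s)H_r(s)\,ds=i\beta_r t+\int_{-\infty}^\infty\Bigl(1+\frac{its}{1+s^2}-e^{its}\Bigr)\frac{d\sigma_r(s)}{s^2},
\]
then fixes $t$ and lets $r\to\infty$: the left side stabilizes by (\ref{consist}), the right side converges by the weak convergence together with the tightness estimate (\ref{tightness}) (which is where "no mass at $\xi=1$", your $a=0$, is actually used), and the uniqueness part of Theorem \ref{t1s1} identifies the limit with the $\sigma,\beta$ of (\ref{e2s2}). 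Your one-sentence alternative — matching quadratic forms $\int|\hat f|^2d\sigma_r=((I+\mathcal{H}_\rho)f,f)=\int|\hat f|^2d\sigma$ via (\ref{realpart}), (\ref{consist}) and (\ref{e1s2n1}), then polarizing — is the salvageable route (it is essentially Corollary \ref{scale-property} plus a uniqueness-of-measure argument in the spirit of the appendix Lemma on the Dirac spectral measure), but as written it is only a sketch: to pass to the limit in $\int|\hat f|^2\,d\sigma_{r_n}$ and to conclude $\tilde\sigma=\sigma$ from a family of test functions of the form $\hat f_1\overline{\hat f_2}$ you again need the tightness of $(1+t^2)^{-1}d\sigma_r$ and a density argument, neither of which you supply. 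So the skeleton is right, but the identification step as proposed does not go through without reworking it along one of these two lines.
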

\begin{proof}
From (\ref{e19s3}),
\[
P_{\ast }=\A+\B,\widehat{P}_{\ast }=\A-\B,P=\A_{\ast }+B_{\ast },\widehat{P}%
=\A_{\ast }-\B_{\ast }
\]%
Therefore,
\[
P_{\ast }^{-1}(r,\lambda )\widehat{P}_{\ast }(r,\lambda )=(\A-\B)(\A+\B)^{-1}=%
\frac{1-\A^{-1}\B}{1+\A^{-1}\B}
\]%
That shows convergence of $P_{\ast }^{-1}(r,\lambda
)\widehat{P}_{\ast }(r,\lambda )$ to the function
\begin{equation}
F(\lambda )=(1-\f)(1+\f)^{-1}  \label{e23s3}
\end{equation}
Again, this convergence is uniform for compacts in
$\mathbb{C}^{+}$. Function $F(\lambda )$ has positive real part in
$\mathbb{C}^{+}$ because $\f(\lambda)\in B(\mathbb{C}^+)$.
Therefore, $F(\lambda)/2$ admits the following integral
representation \cite{AkhGlaz}
\begin{equation}
F(\lambda )/2=-i\beta -i\alpha \lambda +i\int\limits_{-\infty
}^{\infty } \frac{1+\lambda t}{(\lambda-t)(1+t^2) } d\sigma (t)
\label{e1s3-1}
\end{equation}
where $\alpha ,\beta \in \mathbb{R},\alpha \geq 0$ and
non-decreasing function $\sigma $ is such that
\[
\int\limits_{-\infty }^{\infty }\frac{d\sigma (t)}{t^{2}+1}<\infty
\]%

Recall the way this formula is obtained. If we map $\lambda\in
\mathbb{C}^+$ onto $z\in \mathbb{D}$ by conformal mapping
$z=(\lambda-i)(\lambda+i)^{-1}$, then the function
$g(z)=2^{-1}F(i(z+1)(1-z)^{-1})$ is the Herglotz function in
$\mathbb{D}$. It has a canonical representation, which can be
written as follows
\[
g(z)=-i\beta +\int\limits_\mathbb{T} \frac{\xi+ z}{\xi- z}
d\tau(\xi)
\]
If $\alpha=d\sigma \{1\}$, the mass at point $1$, then we have
formula (\ref{e1s3-1}) with
$(1+t^2)^{-1}d\sigma(t)=d\tau[(t-i)(t+i)^{-1}]$.

Next, our goal is to show that $\alpha=0$, and $\beta$ and
$d\sigma$ coincide with those from (\ref{e2s2}) in section 3.

From \cite{Atkinson}, p. 630, we have
\[
\alpha =\lim_{\eta \rightarrow +\infty }\frac{iF(i\eta )}{i\eta }
\]%
But (\ref{feature}) implies  $\lim_{\eta \rightarrow +\infty
}F(i\eta )=1$ so $\alpha=0$. In other words, measure $d\tau$ has
no mass at $\xi=1$.

Notice now that each function $P_{\ast }^{-1}(r,\lambda )\widehat{P}%
_{\ast }(r,\lambda )/2$ has positive real part as well and admits
the same representation (\ref{e1s3-1}) with $\alpha _{r}=0$,
$\beta_r\in \mathbb{R}$, and absolutely continuous measure
$d\sigma _{r}$, which is the transplantation of some $d\tau_r$.
From (\ref%
{e6s3}), we have $\sigma _{r}^{\prime }(\lambda )=(2\pi)^{-1}
|P_{\ast }(r,\lambda )|^{-2}$. Measures $d\sigma _{r}$ are analogs
of the so-called Bernstein-Szeg\H{o} approximations for OPUC. They
converge weakly to $d\sigma $. Indeed,  the convergence
of $P_{\ast }^{-1}(r,\lambda )\widehat{P}%
_{\ast }(r,\lambda )/2$ to $F(\lambda)$ in $\mathbb{C}^+$ implies
convergence of the corresponding functions within the unit disc
$\mathbb{D}$.  By Stone-Weierstrass theorem, that yields weak
convergence of measures $d\tau_r$ to $d\tau$, $\beta_r$ to $\beta$
and so the weak convergence of $d\sigma_r$ to $d\sigma$. Since
$d\tau$ has no mass at $\xi=1$, the family of measures
$(1+t^2)^{-1}d\sigma_r(t)$ is tight, i.e. for any $\epsilon>0$,
there is $T(\epsilon), R(\epsilon)>0$ such that
\begin{equation}
\int\limits_{|t|>T(\epsilon)} \frac{d\sigma_r(t)}{1+t^2}<\epsilon
\label{tightness}
\end{equation}
if $r>R(\epsilon)$.

 Let us show now that $\sigma $ coincides with the measure
from (\ref{e2s2}).
Using (\ref{e1s1}) and (\ref{reznik}), we obtain%
\[
\frac 12 +\int\limits_0^\infty \overline{H_r(x)}\exp(i\lambda x)dx
\]
\[
=-\lambda ^{2}\int\limits_{0}^{\infty }\left[ -i\beta
_{r}x+\int\limits_{-\infty
}^{\infty }\left( 1-\frac{it x}{1+t ^{2}}-\exp (-it x)\right) \frac{%
d\sigma _{r}(t )}{t ^{2}}\right] \exp (i\lambda x)dx
\]%

From (\ref{laplace-1}), we get
\[
\frac{|t|}{2}+\int\limits_{0}^{t}(t-s)H_{r}(s)ds=i\beta
_{r}t+\int\limits_{-\infty }^{\infty }\left( 1+\frac{it s}{1+s ^{2}}%
-\exp (it s)\right) \frac{d\sigma _{r}(s )}{s ^{2}}
\]%
Fix $t$ and take $r\to\infty$ in the last equation. The formula
(\ref{consist}) and tightness (\ref{tightness}) yield
\[
\frac{|t|}{2}+\int\limits_{0}^{t}(t-s)H(s)ds=i\beta
t+\int\limits_{-\infty
}^{\infty }\left( 1+\frac{it s}{1+s ^{2}}-\exp (it s)\right) \frac{%
d\sigma (s )}{s ^{2}}
\]
Since the integral representation of $G_\infty$ functions is
unique (Theorem \ref{t1s1}), we get the statement of the Theorem.
\end{proof}
We also have the following important
\begin{corollary}\label{scale-property}
For any $f(x)\in L^2[0,\rho]$, the following identity is true
\begin{equation}
\int\limits_{-\infty}^\infty \left|\int\limits_0^\rho
f(x)\exp(i\lambda x)dx\right|^2d\sigma(\lambda)=
\int\limits_{-\infty}^\infty \left|\int\limits_0^\rho
f(x)\exp(i\lambda x)dx\right|^2 \frac{d\lambda}{2\pi
|P(\rho,\lambda)|^2}\label{form1-scale}
\end{equation}
\end{corollary}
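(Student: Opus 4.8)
The plan is to prove (\ref{form1-scale}) by showing that each side equals the quadratic form $\big((I+\cal{H}_\rho)\bar f,\bar f\big)$, where $\bar f$ is the complex conjugate of $f$. The underlying reason is that $d\sigma$ and the Bernstein--Szeg\H{o} measure $d\sigma_\rho(\lambda)=d\lambda/\big(2\pi|P(\rho,\lambda)|^2\big)$ have the same accelerant on $(-\rho,\rho)$, so that for $f$ supported on $[0,\rho]$ the value of $\int|\hat f|^2\,d\mu$ depends only on that restriction; this is made quantitative through Lemma \ref{lemmaimp} and identity (\ref{sect2eq1}).

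First I would rewrite the right-hand side. For real $\lambda$ one has $|P(\rho,\lambda)|=|P_*(\rho,\lambda)|$ by (\ref{starproperty}), and $P_*(\rho,\cdot)$ has no real zero by part $3)$ of Lemma \ref{lemma24}, so $d\sigma_\rho(\lambda)=d\lambda/\big(2\pi|P_*(\rho,\lambda)|^2\big)$. Writing $\hat f(\lambda)=\int_0^\rho f(x)\exp(i\lambda x)\,dx$ and applying Lemma \ref{lemmaimp},
\[
\frac{1}{|P_*(\rho,\lambda)|^{2}}=1+\int_{-\infty}^{\infty}\overline{H_\rho(s)}\exp(i\lambda s)\,ds,\qquad \lambda\in\mathbb{R},
\]
with $H_\rho\in L^1(\mathbb{R})$ Hermitian and $H_\rho(s)=H(s)$ for $|s|<\rho$; in particular this weight is bounded, so the right-hand side of (\ref{form1-scale}) is a finite integral. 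I would then split it according to the two summands above. The contribution of the constant $1$ is $\int_0^\rho|f(x)|^2\,dx$ by Plancherel's theorem in the normalization matching $\hat f$. The contribution of the second summand I would evaluate using $|\hat f(\lambda)|^{2}=\int_{-\infty}^{\infty}\big(\int_{-\infty}^{\infty}f(u+y)\overline{f(y)}\,dy\big)e^{i\lambda u}\,du$ together with Parseval's identity and Fubini's theorem (all absolutely convergent, since $f\in L^2$ has compact support and $H_\rho\in L^1$), producing $\int_0^\rho\int_0^\rho f(x)\overline{f(y)}\,H_\rho(x-y)\,dx\,dy$. Since $x,y\in[0,\rho]$ forces $|x-y|<\rho$, here $H_\rho(x-y)=H(x-y)$, so the right-hand side of (\ref{form1-scale}) equals $\int_0^\rho|f|^2+\int_0^\rho\int_0^\rho H(x-y)f(x)\overline{f(y)}\,dx\,dy$, which is precisely $\big((I+\cal{H}_\rho)\bar f,\bar f\big)$.

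Finally, applying identity (\ref{sect2eq1}) to $\bar f\in L^2[0,\rho]$ gives $\big((I+\cal{H}_\rho)\bar f,\bar f\big)=\int_{-\infty}^{\infty}\big|\int_0^\rho f(t)\exp(i\lambda t)\,dt\big|^2\,d\sigma(\lambda)$, which is the left-hand side of (\ref{form1-scale}), completing the proof. I do not expect any genuine analytic obstacle, since all of the inputs --- Lemma \ref{lemmaimp}, identity (\ref{sect2eq1}), and (\ref{starproperty}) --- are already established. The one place requiring care is the bookkeeping of complex conjugates when identifying the two quadratic forms, which is exactly where the Hermitian symmetry $H(-x)=\overline{H(x)}$ (and that of $H_\rho$) enters; one should also explicitly invoke the $L^1$ property of $H_\rho$ to legitimize the Fubini/Parseval step.
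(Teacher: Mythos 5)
Your proposal is correct and follows essentially the same route as the paper: it rewrites the Bernstein--Szeg\H{o} weight via $|P(\rho,\lambda)|=|P_*(\rho,\lambda)|$ and the identity $|P_*(\rho,\lambda)|^{-2}=1+\int\overline{H_\rho(s)}e^{i\lambda s}ds$, applies Plancherel to get the quadratic form with kernel $H_\rho$, replaces $H_\rho$ by $H$ on $(-\rho,\rho)$, and then identifies the left-hand side with the same quadratic form via the basic Plancherel-type identity (your (\ref{sect2eq1}) applied to $\bar f$ is just the paper's direct use of (\ref{e1s2n1})). The conjugate bookkeeping you flag works out exactly as you describe, so no gap remains.
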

\begin{proof}
We have $|P(r,\lambda)|=|P_*(r,\lambda)|$ for real $\lambda$.
Then, the Plancherel theorem for the Fourier integrals and
(\ref{realpart}) yield that the right hand side of
(\ref{form1-scale}) is equal to
\[
\int\limits_0^\rho |f(x)|^2dx+\int\limits_0^\rho
\int\limits_0^\rho
\overline{H_\rho(x-u)}f(u)\overline{f(x)}dudx
\]
\begin{equation}
=\int\limits_0^\rho |f(x)|^2dx+\int\limits_0^\rho
\int\limits_0^\rho \overline{H(x-u)}f(u)\overline{f(x)}dudx
\label{form2-scale}
\end{equation}
where (\ref{consist}) is used to get the last equality. Then, the
formula (\ref{e1s2n1}) shows that the l.h.s. of
(\ref{form1-scale}) is equal to r.h.s. of (\ref{form2-scale}).
\end{proof}
The formula similar to (\ref{e18s3}) is true for Weyl-Titchmarsh
function as well. From (\ref{e18s3}) and relation between $F$ and
$\f$, we get
\begin{equation}
F(\lambda)=\frac{\widehat{P}_*(\rho,\lambda)-\widehat{P}(\rho,\lambda)+
F_\rho(\lambda)(\widehat{P}_*(\rho,\lambda)+\widehat{P}(\rho,\lambda))}
{{P}_*(\rho,\lambda)+{P}(\rho,\lambda)+
F_\rho(\lambda)({P}_*(\rho,\lambda)-{P}(\rho,\lambda))}
\end{equation}

{\bf Remarks and historical notes.} The Weyl-Titchmarsh theory for
Krein systems was developed to some extent in \cite{Rybalko}.

\newpage
\section{Dual system. Some simple considerations}

The dual Krein system is obtained by changing the sign of the
coefficient $A(r)$ (see Corollary \ref{corollary-a}). Due to
Corollary \ref{corollary-a}, functions $\widehat{P}(r,\lambda),
\widehat{P}_*(r,\lambda)$ are continuous orthogonal polynomials
for the dual system. They are usually called the dual continuous
orthogonal polynomials.

The dual Krein system can be characterized by the dual accelerant.
Let us call it $\widehat{H}$. The relation between accelerant and
dual accelerant is very simple. \begin{lemma} For the dual
accelerant $\widehat{H}$, we have
\begin{equation}
H(x)+\widehat{H}(x)+2\int\limits_{0}^{x}H(x-s)\widehat{H}(s)ds=0,
\, x\in \mathbb{R} \label{dualaccelerant}
\end{equation}
\end{lemma}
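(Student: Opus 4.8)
The plan is to translate the defining relations of $H$ and $\widehat H$ — namely that $\delta(x)+H(x)$ and $\delta(x)+\widehat H(x)$ are the (distributional) Fourier transforms of $d\sigma$ and $d\widehat\sigma$ respectively, where $d\sigma$ and $d\widehat\sigma$ are the spectral measures of the Krein system and of its dual — into a single convolution identity, and then to check that identity concretely by using the $2\times 2$ transfer matrix. First I would recall from Section~\ref{four} (Theorem~\ref{onetoone}) that the accelerant $H$ is recovered from $A(r)$, and from Corollary~\ref{corollary-a} that the dual system has coefficient $-A(r)$; thus $\widehat H$ is the accelerant built from $-A$. The cleanest route is via the Weyl--Titchmarsh / Schur data: by Theorem~\ref{schur-function} the Schur function of the dual system is $-\f(\lambda)$ (since changing the sign of $A$ changes the sign of $\B$ while leaving $\A$ unchanged, by Lemma~\ref{sign-change} applied to $X\mapsto JXJ$), and hence by (\ref{e23s3}) the dual Weyl function is $\widehat F=(1+\f)(1-\f)^{-1}=F^{-1}$ (with $F$ as in Theorem~\ref{Bernstein-Szego}, having positive real part).

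Next I would extract the convolution statement. From Lemma~\ref{lemmaimp} and Theorem~\ref{Bernstein-Szego}, $1/|P_*(r,\lambda)|^2 = 1+\int_{-\infty}^\infty \overline{H_r(s)}e^{i\lambda s}\,ds$ with $H_r=H$ on $(-r,r)$, and similarly $1/|\widehat P_*(r,\lambda)|^2 = 1+\int \overline{\widehat H_r(s)}e^{i\lambda s}\,ds$ with $\widehat H_r=\widehat H$ on $(-r,r)$. The key algebraic fact I would use is that $P_*$ and $\widehat P_*$ are the "$*$-parts" of $X$ and of $JXJ$: indeed from (\ref{rotation}) and (\ref{commutation}), if $X$ has entries $\A_*,\B_*,\B,\A$ then $JXJ$ has entries $\A_*,-\B_*,-\B,\A$, and so $\widehat P_* = \A - \B$ while $P_* = \A+\B$. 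Combined with the energy identity (\ref{energy}), $\A\A_* - \B\B_* = e^{i\lambda r}$, and the real-$\lambda$ relations $|\A|=|\A_*|$, $|\B|=|\B_*|$, $\overline{\A_*}\B_* = \overline{\B}\,\A$ from Lemma~\ref{wall}, one computes $P_*\widehat P_* = (\A+\B)(\A-\B) = \A^2-\B^2$, and more usefully $\overline{P_*(r,\lambda)}\,\widehat P_*(r,\lambda)$ and $\overline{\widehat P_*(r,\lambda)}\,P_*(r,\lambda)$ should combine to show that the product of the two Fourier series above, restricted to $(-r,r)$, telescopes. Concretely, I expect the product $\big(1/|P_*|^2\big)\big(1/|\widehat P_*|^2\big)$ to equal $1/|\A^2-\B^2|^2$, but the cleaner target is to show $\big(1+H_r\big) * \big(1+\widehat H_r\big) \,``="\, $ something whose "moments" vanish on $(-r,r)$ except for the constant term; taking $r\to\infty$ and reading off the coefficient of $e^{i\lambda x}$ then yields (\ref{dualaccelerant}).

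The slickest derivation, and the one I would actually write, goes through the factorization of Section~\ref{sect-fact}. By Theorem~\ref{volter2}, $I+\cal H_r = (I+\cal U^*)(I+\cal U)$ where $\cal U = \cal V_+^{-1}-I$ has kernel built from $\Gamma$; correspondingly $(I+\cal H_r)^{-1} = (I+\cal V_+)(I+\cal V_-)$. The statement $H+\widehat H + 2 H*\widehat H = 0$ is, after Fourier transform, precisely the operator identity $(I+\cal H_r) + (I+\widehat{\cal H}_r) \text{ composed appropriately } = 2I$; more precisely, writing $S(x)=\delta(x)+H(x)$ and $\widehat S(x)=\delta(x)+\widehat H(x)$ as convolution symbols, (\ref{dualaccelerant}) says $S + \widehat S = 2\,\delta - 2\,H*\widehat H + \cdots$, which rearranges to $\tfrac12(S+\widehat S) + H*\widehat H = \delta$, i.e. a statement about the symbol $\tfrac12(\sigma\text{-density}+\widehat\sigma\text{-density})$. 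Here is where $\widehat F = F^{-1}$ does the work: on the real line $\Re F>0$ gives $F^{-1}+ \overline{F^{-1}} = (F+\bar F)/|F|^2 = 2\Re F/|F|^2$, and since the absolutely continuous parts satisfy $\sigma_r' = (2\pi)^{-1}|P_*|^{-2}$, $\widehat\sigma_r' = (2\pi)^{-1}|\widehat P_*|^{-2}$, the relation $P_*\widehat P_*$-to-$F$ correspondence turns $F^{-1} = \widehat F$ into a convolution relation between $1+H_r$ and $1+\widehat H_r$ valid on $(-r,r)$; letting $r\to\infty$ and using (\ref{consist}) (consistency $H_r=H$ on $(-r,r)$, and likewise for $\widehat H$) gives the identity for all $x\in\mathbb R$, with the Hermitian extension handled automatically since both $H$ and $\widehat H$ are Hermitian. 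The main obstacle I anticipate is the bookkeeping of the distributional Fourier transforms and making the "$`=`$" heuristics of Section~\ref{three} rigorous: one must either work with the truncated kernels $H_r,\widehat H_r\in L^1(\mathbb R)$ (where everything is an honest $W(\mathbb R)$ computation and Levy--Wiener applies) and only then pass to the limit, or test against $C_0^\infty$ functions as in the proof of Theorem~\ref{t1s2}. I would choose the first, since Lemma~\ref{lemmaimp} and (\ref{reznik}) already package exactly the truncated $L^1$ symbols we need.
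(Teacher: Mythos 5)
Your overall skeleton \textemdash{} work with the truncated kernels $H_r,\widehat H_r\in L^1$, derive a convolution identity in the Wiener algebra, then invoke (\ref{consist}) and let $r\to\infty$ \textemdash{} is exactly the paper's, but the single identity that actually produces the convolution relation is never stated, and the concrete computations you offer in its place do not work. The proof rests on multiplying the two \emph{analytic} one-sided representations: by (\ref{reznik}), $\widehat P_*(r,\lambda)/P_*(r,\lambda)=1+2\int_0^\infty\overline{H_r(s)}\,e^{i\lambda s}ds$, and the same formula applied to the dual system (whose ``first'' polynomial is $\widehat P_*$ and whose dual is $P_*$) gives $P_*(r,\lambda)/\widehat P_*(r,\lambda)=1+2\int_0^\infty\overline{\widehat H_r(s)}\,e^{i\lambda s}ds$. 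These two ratios are mutual reciprocals, so their product is identically $1$; since both factors are $1$ plus an element of $W_+$, uniqueness of Fourier transforms yields $H_r(x)+\widehat H_r(x)+2\int_0^x H_r(x-t)\widehat H_r(t)\,dt=0$ for $0<x<r$, and (\ref{consist}) together with Hermiticity gives (\ref{dualaccelerant}) for all $x$. Nothing in your write-up pins down this reciprocal-product identity; the closest you come is the unproved assertion that ``$F^{-1}=\widehat F$ turns into a convolution relation.''

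The substitutes you do commit to would fail. The quantities $1/|P_*|^2$ and $1/|\widehat P_*|^2$ are only the \emph{real parts} $\Re\bigl[\widehat P_*/P_*\bigr]$ and $\Re\bigl[P_*/\widehat P_*\bigr]$ (valid for real $\lambda$); their product is $1/|P_*\widehat P_*|^2=1/|\A^2-\B^2|^2$, which is not $1$ and has no reason to have Fourier coefficients vanishing on $(-r,r)$ away from $0$ \textemdash{} the product of real parts is $(\Re F_r)^2/|F_r|^2$, not $\Re(F_r\widehat F_r)$. For the same reason the manipulation $F^{-1}+\overline{F^{-1}}=2\Re F/|F|^2$ only relates the densities $\sigma_r'$ and $\widehat\sigma_r'$ and cannot generate the one-sided convolution term $H*\widehat H$, which involves the imaginary parts of the ratios as well. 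Finally, the limit relation $\widehat F=F^{-1}$ by itself is not usable here: in general $H\notin L^1(\mathbb R)$, so $F$ admits no Wiener-algebra expansion, and one must argue at finite $r$ with the reciprocal ratio identity and (\ref{reznik}) for both systems \textemdash{} which is precisely what the paper's two-line proof does before applying (\ref{consist}).
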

\begin{proof}
We have
\[
\frac{P_*(r,\lambda)}{\widehat{P}_*(r,\lambda)} \cdot
\overline{\left(\frac{P_*(r,\lambda)}{\widehat{P}_*(r,\lambda)}\right)}=1
\]
Substitute (\ref{reznik}) to the seconds factor and analogous
formula to the first one gives
\[
\left(1+2\int\limits_0^\infty \overline{H_r(x)}\exp(i\lambda x)dx
\right)\left(1+2\int\limits_0^\infty
\overline{\widehat{H}_r(x)}\exp(i\lambda x)dx \right)=1
\]
which implies
\[
H_r(x)+\widehat{H}_r(x)+2\int\limits_0^x
H_r(x-t)\widehat{H}_r(t)dt=0
\]
Then, use (\ref{consist}) to get (\ref{dualaccelerant}).
\end{proof}
Clearly, the last Theorem allows one to find $\hat{H}$ from $H$ by
solving Volterra equation. The  algebraic explanation to
(\ref{dualaccelerant}) is as follows. Consider two operators
\[
[\cal{A}f](x)=f(x)+2\int\limits_0^x H(x-u)f(u)du,
[\widehat{\cal{A}}f](x)=f(x)+2\int\limits_0^x
\widehat{H}(x-u)f(u)du
\]
acting in $L^2[0,r]$. They both have positive real parts:
\begin{equation}
\Re\cal{A}=I+\cal{H}_r, \Re
\widehat{\cal{A}}=I+\widehat{\cal{H}}_r \label{car-to1}
\end{equation}
and the formula (\ref{dualaccelerant}) is equivalent to
\begin{equation}
\widehat{\cal{A}}=\cal{A}^{-1} \label{car-to2}
\end{equation}
These identities arise naturally from the solution to continuous
Caratheodory-Toeplitz problem \cite{krein-madamyan}.
 The
relation between $\Gamma_r(x,y)$ and the dual resolvent kernel
$\widehat\Gamma_r(x,y)$ is also quite simple and can be obtained
from (\ref{car-to1}) and (\ref{car-to2}).

Consider the dual Weyl-Titchmarsh function
$\widehat{F}(\lambda )=\lim_{r\rightarrow \infty }P_{\ast }(r,\lambda )%
\widehat{P}_{\ast }^{-1}(r,\lambda )$. Then, by Theorem
\ref{Bernstein-Szego},
\[
\widehat{F}=F^{-1}, \widehat{\f}=-\f
\]

Now, let us study how the parameters of the Krein system change
upon some simple transformations of the coefficient $A(r)$.

\begin{lemma}[Shift]
Let  $A(r)$ be coefficient of (\ref{krein2}). Then
$A^{(t)}(r)=A(r)\exp (irt), t\in \mathbb{R}$ corresponds to
\[
\sigma^{(t)}(\lambda )=\sigma (\lambda +t),
H^{(t)}(x)=\exp(-itx)H(x),
\Gamma_r^{(t)}(x,y)=\Gamma_r(x,y)\exp(-it(x-y)) \] \label{lemma36}
\end{lemma}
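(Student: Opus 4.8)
The plan is to verify each of the three claimed formulas by going back to the defining objects and tracking how the substitution $A(r) \mapsto A^{(t)}(r) = A(r)\exp(irt)$ propagates. The cleanest route is through the transfer matrix. First I would write down the Krein system (\ref{krein3})--(\ref{potential-V}) for the shifted coefficient and guess a gauge transformation relating its fundamental solution $X^{(t)}(r,\lambda)$ to the original $X(r,\lambda)$. The natural candidate is a diagonal conjugation: set $D(r) = \mathrm{diag}(\exp(-irt/2),\exp(irt/2))$ or, more usefully for matching $P,P_*$, observe that the pair $(P^{(t)}(r,\lambda), P_*^{(t)}(r,\lambda))$ should be expressible in terms of $(P(r,\lambda+t), P_*(r,\lambda+t))$ up to elementary exponential factors. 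Plugging the ansatz $P^{(t)}(r,\lambda) = \exp(-irt/2)\,\cdot(\text{something})\cdot P(r,\lambda+t)$ into (\ref{krein2}) with $A$ replaced by $A^{(t)}$ and comparing with the equations satisfied by $P(r,\lambda+t)$ should pin down the scalar factors. This is a routine ODE-uniqueness argument: both sides solve the same linear Cauchy problem, hence coincide.

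Once the relation between the shifted "continuous polynomials" and the shifted-argument originals is established, the formula $\sigma^{(t)}(\lambda) = \sigma(\lambda+t)$ follows from the Weyl--Titchmarsh / Bernstein--Szeg\H{o} description of $\sigma$ in Theorem \ref{Bernstein-Szego}. Indeed, $d\sigma_r(\lambda) = d\lambda/(2\pi|P_*(r,\lambda)|^2)$ and $|P_*^{(t)}(r,\lambda)|^2 = |P_*(r,\lambda+t)|^2$ (the scalar exponential factors have modulus one for real $\lambda$), so $d\sigma_r^{(t)}(\lambda) = d\sigma_r(\lambda+t)$; passing to the weak-$(\ast)$ limit $r\to\infty$ gives $d\sigma^{(t)}(\lambda) = d\sigma(\lambda+t)$, i.e. the claimed translate. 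For the accelerant, I would use the "moments" heuristic made rigorous by (\ref{e2s2}): $H^{(t)}$ is determined by $\int \exp(i\lambda x)\,d(\sigma^{(t)} - \sigma_0)(\lambda)$, and substituting $\sigma^{(t)}(\lambda) = \sigma(\lambda+t)$ and changing variables $\lambda \mapsto \lambda - t$ produces exactly the factor $\exp(-itx)$ in front of $H(x)$ (one must check the reference term $\sigma_0(\lambda) = \lambda/(2\pi)$ is shift-invariant up to an additive constant, which it is, so it does not affect the accelerant). Alternatively, and perhaps more transparently, one can verify directly that $H^{(t)}(x) = \exp(-itx)H(x)$ generates the Krein system with coefficient $A^{(t)}$ by checking that the resolvent kernel of $I + \mathcal{H}_r^{(t)}$ is $\Gamma_r(x,y)\exp(-it(x-y))$ and then reading off $A^{(t)}(r) = \Gamma_r^{(t)}(0,r) = \Gamma_r(0,r)\exp(irt) = A(r)\exp(irt)$.

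That last observation suggests doing the resolvent formula first and deriving everything else from it. The key computation: if $\Gamma_r(x,y)$ solves (\ref{basic1}) with kernel $H(x-s)$, then substituting $\Gamma_r^{(t)}(x,y) = \Gamma_r(x,y)\exp(-it(x-y))$ and $H^{(t)}(x) = \exp(-itx)H(x)$ into the analogous equation $\Gamma_r^{(t)}(t',s) + \int_0^r H^{(t)}(t'-u)\Gamma_r^{(t)}(u,s)\,du = H^{(t)}(t'-s)$, the exponential factors combine as $\exp(-it(t'-u))\exp(-it(u-s)) = \exp(-it(t'-s))$ inside the integral, so after factoring out $\exp(-it(t'-s))$ on both sides the equation reduces to the original (\ref{basic1}). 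This is an elementary verification plus a uniqueness statement (the resolvent kernel is unique once $-1 \notin \mathrm{Spec}(\mathcal{H}_r^{(t)})$, which holds because the modulus-one weight does not change positivity of $I + \mathcal{H}_r$). From $\Gamma_r^{(t)}$ one then gets $A^{(t)}$, $P^{(t)}$, $P_*^{(t)}$, and finally $\sigma^{(t)}$ and $H^{(t)}$ with the consistency relations automatically respected.

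I do not anticipate a genuine obstacle here; the lemma is essentially a bookkeeping exercise in tracking one exponential factor through the linear structure. The only point requiring a little care is making sure the various normalizations are consistent — in particular that $H^{(t)}$ as defined is still Hermitian ($H^{(t)}(-x) = \exp(itx)H(-x) = \exp(itx)\overline{H(x)} = \overline{\exp(-itx)H(x)} = \overline{H^{(t)}(x)}$, which works) and that the additive constant in $\sigma_0$ under the shift does not leak into $\beta$ or $H$; since $\sigma_0(\lambda+t) - \sigma_0(\lambda)$ is constant in $\lambda$, it contributes nothing to the integral representation (\ref{e2s2}) of the accelerant, so the trivial case $H=0$ is indeed fixed by the shift. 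With those checks in place, all three identities in the statement drop out.
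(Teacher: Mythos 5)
Your argument is correct, and for two of the three claims it follows the paper's own route: the identity $P_*^{(t)}(r,\lambda)=P_*(r,\lambda+t)$ (your gauge/ODE-uniqueness ansatz is exactly the paper's comparison of the systems for $Q=\exp(-i\lambda r)P$ and $P_*$ with shifted coefficient; the undetermined scalar factor comes out as $\exp(-irt)$, i.e. $P^{(t)}(r,\lambda)=\exp(-irt)P(r,\lambda+t)$, not $\exp(-irt/2)$, but since you leave it to be pinned down this is harmless), and then $\sigma^{(t)}(\lambda)=\sigma(\lambda+t)$ via Theorem \ref{Bernstein-Szego} and the weak-$(\ast)$ limit of $(2\pi)^{-1}|P_*^{(t)}(r,\lambda)|^{-2}d\lambda$, exactly as in the paper.

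Where you genuinely diverge is in identifying $H^{(t)}$. The paper applies the same ODE comparison to the dual system and reads $H_r^{(t)}$ off the representation (\ref{reznik}) for $\widehat{P}_*^{(t)}/P_*^{(t)}$, treating the resolvent formula as an afterthought ("easy to obtain from (\ref{basic1})"). You instead make the resolvent the primary object: verify directly that $\Gamma_r(x,y)\exp(-it(x-y))$ solves (\ref{basic1}) with kernel $\exp(-itx)H(x)$ (the exponentials telescope, and positivity of $I+\cal{H}_r^{(t)}$ holds since $\cal{H}_r^{(t)}$ is the conjugation of $\cal{H}_r$ by the unitary multiplication by $e^{itx}$), then read off $A^{(t)}(r)=\Gamma_r^{(t)}(0,r)=A(r)e^{irt}$. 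This avoids the dual system entirely and is self-contained; the one step you should state explicitly is the appeal to the uniqueness half of Theorem \ref{onetoone} (or its $L^2_{\rm loc}$ analog): having shown that $e^{-itx}H(x)$ is an accelerant generating the coefficient $A(r)e^{irt}$, it is uniqueness of the accelerant--coefficient correspondence that lets you conclude it is \emph{the} accelerant of the shifted system. Your fallback via the moment formula (\ref{e2s2}) also works, and your remark that the shifted reference measure satisfies $d\sigma_0(\lambda+t)=d\sigma_0(\lambda)$ correctly disposes of that term; note only that the lemma claims nothing about $\beta^{(t)}$, which indeed transforms in a more involved way, as the paper records immediately after the proof.
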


\begin{proof} Let pairs $\{P,P_{\ast }\}$, $\{P^{(t)},P_{\ast
}^{(t)}\}$ be solutions of Krein system with coefficient $A$ and
$A^{(t)}$, respectively. Introduce $Q=\exp (-i\lambda r)P$,
$Q^{(t)}=\exp (-i\lambda r)P^{(t)}$. We have (see formula
(\ref{function-q})):
\begin{equation}
\left\{
\begin{array}{cccc}
Q^{\prime }=&-\bar{A}\exp (-i\lambda r)P_{\ast },& Q(0,\lambda)=1 \\
P_{\ast }^{\prime }=&-A\exp (i\lambda r)Q,& P_{\ast }(0,\lambda)=1
\end{array}
\right.
\end{equation}%
and
\begin{equation}
\left\{
\begin{array}{cccc}
Q^{(t)\prime }=&-\bar{A}\exp (-i(\lambda +t)r)P_{\ast }^{(t)},&
Q^{(t)}(0,\lambda)=1 \\
P_{\ast }^{(t)\prime }=&-A\exp (i(\lambda +t)r)Q^{(t)},& P_{\ast
}^{(t)}(0,\lambda)=1
\end{array}
\right.
\end{equation}
Therefore, \ $P_{\ast }^{(t)}(r,\lambda )=P_{\ast }(r,\lambda +t)$. By Theorem \ref{Bernstein-Szego}, $%
(2\pi )^{-1}|P_{\ast }^{(t)}(r,\lambda )|^{-2}d\lambda
\rightharpoonup d\sigma^{(t)}$, we get the shift in the measure.
For dual system, we have the same results. Then, by
(\ref{reznik}),
\begin{equation}
\frac{\widehat{P}_{\ast }^{(t)}(r,\lambda)}{P_{\ast
}^{(t)}(r,\lambda)}=1+2\int\limits_{0}^{\infty
}\overline{H_r^{(t)}(s)}\exp
(is\lambda)ds=1+2\int\limits_{0}^{\infty }\overline{H_r(s)}\exp
(is(\lambda+t))ds,  \label{reznik1}
\end{equation}
and we have the needed formula for $H^{(t)}$. The way resolvent
kernel changes is easy to obtain from (\ref{basic1}) or
(\ref{basic2}).
\end{proof}
It is an easy exercise to show directly that if  $H(x)$ is an
accelerant, then $H(x)\exp(-itx)$ is an accelerant as well.
Coefficient $\beta^{(t)}$ from formula (\ref{e2s2}) changes in a
more intricate way. It can be recovered by noticing that
$F^{(t)}(\lambda)=F(\lambda+t)$ and by integral representations
for both Weyl-Titchmarsh functions. We then get

\[
\beta^{(t)}=\beta+\int\limits_{-\infty}^\infty \left(
\frac{s-t}{1+(s-t)^2}-\frac{s}{1+s^2}\right) d\sigma(s)
\]

 Lemma \ref{lemma36} has its analog in the OPUC theory, rather
than the following Lemma

\begin{lemma}[Dilation]
For any $\gamma>0$, coefficient $\gamma A(\gamma r)$ corresponds
to
\[
\sigma_{(\gamma)}(\lambda)=\gamma\sigma (\gamma ^{-1}\lambda
),\gamma
>0, H_{(\gamma)}(x)=\gamma H(\gamma x),
\Gamma_{(\gamma),r/\gamma}(x,y)=\gamma\Gamma_r(\gamma x,\gamma y)
\]
\end{lemma}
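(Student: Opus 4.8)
The plan is to follow exactly the pattern of the proof of Lemma~\ref{lemma36} (the shift lemma), since dilation is structurally even simpler: it is a change of variables in the independent variable $r$ rather than in the spectral parameter $\lambda$. First I would set $A_{(\gamma)}(r)=\gamma A(\gamma r)$ and write down the Krein system \eqref{krein2} for this coefficient. The key observation is that if $P(r,\lambda), P_*(r,\lambda)$ solve the Krein system with coefficient $A$, then $P(\gamma r, \gamma^{-1}\lambda)$ and $P_*(\gamma r,\gamma^{-1}\lambda)$, regarded as functions of $r$, solve the Krein system with coefficient $A_{(\gamma)}$ and the correct initial condition at $r=0$. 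This is a one-line check using the chain rule: $\frac{d}{dr}P(\gamma r,\gamma^{-1}\lambda)=\gamma P'(\gamma r,\gamma^{-1}\lambda)=\gamma\left(i\gamma^{-1}\lambda\, P-\overline{A(\gamma r)}P_*\right)=i\lambda P_{(\gamma)}-\overline{A_{(\gamma)}(r)}P_{*,(\gamma)}$, and similarly for $P_*$. By uniqueness of solutions to the Cauchy problem, $P_{(\gamma)}(r,\lambda)=P(\gamma r,\gamma^{-1}\lambda)$ and $P_{*,(\gamma)}(r,\lambda)=P_*(\gamma r,\gamma^{-1}\lambda)$. The same computation applies to the dual (hatted) functions.

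Next I would read off the three claimed formulas. For the measure: by Theorem~\ref{Bernstein-Szego} the measure $d\sigma_{(\gamma)}$ is the weak-$(*)$ limit of $(2\pi)^{-1}|P_{*,(\gamma)}(r,\lambda)|^{-2}d\lambda=(2\pi)^{-1}|P_*(\gamma r,\gamma^{-1}\lambda)|^{-2}d\lambda$; making the substitution $\mu=\gamma^{-1}\lambda$ and using $|P_*(\rho,\mu)|^{-2}d\mu \rightharpoonup d\sigma(\mu)$ as $\rho\to\infty$, one gets $d\sigma_{(\gamma)}(\lambda)=\gamma\, d\sigma(\gamma^{-1}\lambda)$, i.e. $\sigma_{(\gamma)}(\lambda)=\gamma\sigma(\gamma^{-1}\lambda)$ (the factor $\gamma$ comes from $d\lambda=\gamma\, d\mu$). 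For the accelerant: using \eqref{reznik}, $\widehat{P}_{*,(\gamma)}(r,\lambda)P_{*,(\gamma)}^{-1}(r,\lambda)=1+2\int_0^\infty \overline{H_{(\gamma),r}(s)}\exp(is\lambda)ds$, while the same ratio equals $\widehat{P}_*(\gamma r,\gamma^{-1}\lambda)P_*^{-1}(\gamma r,\gamma^{-1}\lambda)=1+2\int_0^\infty \overline{H_{\gamma r}(s)}\exp(is\gamma^{-1}\lambda)ds$; substituting $s=\gamma u$ in the latter integral gives $1+2\gamma\int_0^\infty\overline{H_{\gamma r}(\gamma u)}\exp(iu\lambda)du$, and comparing Fourier transforms yields $H_{(\gamma),r}(u)=\gamma H_{\gamma r}(\gamma u)$; letting $r\to\infty$ and invoking \eqref{consist} gives $H_{(\gamma)}(x)=\gamma H(\gamma x)$. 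Finally, for the resolvent kernel, I would use \eqref{basic1} (or \eqref{basic2}): the equation $\Gamma_{(\gamma),\rho}(t,s)+\int_0^\rho H_{(\gamma)}(t-u)\Gamma_{(\gamma),\rho}(u,s)du=H_{(\gamma)}(t-s)$ together with the substitution $u=\gamma^{-1}v$, $t=\gamma^{-1}\tau$, $s=\gamma^{-1}\varsigma$ and $H_{(\gamma)}(x)=\gamma H(\gamma x)$ shows that $\gamma\,\Gamma_r(\gamma t,\gamma s)$ solves the integral equation for $\Gamma_{(\gamma),r/\gamma}(t,s)$ on $[0,r/\gamma]$; by uniqueness of the resolvent kernel (using $1+\mathcal{H}_r>0$ throughout), this gives $\Gamma_{(\gamma),r/\gamma}(x,y)=\gamma\Gamma_r(\gamma x,\gamma y)$.

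I do not expect a genuine obstacle here: the scaling $r\mapsto\gamma r$, $\lambda\mapsto\gamma^{-1}\lambda$ preserves the exponential $\exp(i\lambda r)$ and the whole algebraic structure of the Krein system, so every identity transforms covariantly. The only points requiring a little care are the bookkeeping of Jacobian factors $\gamma$ in the measure and accelerant formulas, and the change of the interval of definition (the resolvent on $[0,\rho]$ for $A_{(\gamma)}$ corresponds to the resolvent on $[0,\gamma\rho]$ for $A$, which is why the formula is naturally stated for $\Gamma_{(\gamma),r/\gamma}$). Since a direct verification via the substitution is as short as the indirect argument through Theorem~\ref{Bernstein-Szego}, I would actually present the proof as: verify the Krein-system covariance by the chain rule and uniqueness, then obtain all three formulas as immediate consequences, citing \eqref{reznik}, \eqref{consist}, \eqref{basic1}, and Theorem~\ref{Bernstein-Szego} as needed.
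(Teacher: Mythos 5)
Your proposal is correct and follows essentially the same route as the paper: the paper also makes the change of variables $\rho=\gamma r$ to show that $P(\gamma r,\lambda)$, $P_*(\gamma r,\lambda)$ solve the Krein system with coefficient $\gamma A(\gamma r)$ and spectral parameter $\gamma\lambda$, and then refers to the proof of the shift lemma (Theorem~\ref{Bernstein-Szego}, (\ref{reznik}), (\ref{consist}), (\ref{basic1})) for exactly the conclusions you spell out. Your write-up simply fills in the Jacobian bookkeeping that the paper leaves implicit.
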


\begin{proof} Under the change of variables $\rho =\gamma r$,
system (\ref{krein2}) changes as follows
\begin{equation}
\left\{
\begin{array}{ll}
\displaystyle \frac{dP(\gamma r,\lambda )}{dr}=i\gamma \lambda
P(\gamma r,\lambda )-\gamma
\bar{A}(\gamma r)P_{\ast }(\gamma r,\lambda ), & P(0,\lambda )=1, \\
\displaystyle \frac{dP_{\ast }(\gamma r,\lambda )}{dr}=-\gamma
A(\gamma r)P(\gamma r,\lambda
), & P_{\ast }(0,\lambda )=1%
\end{array}%
\right.
\end{equation}%
which proves the Lemma following the same lines as in the proof of
Lemma \ref{lemma36}.
\end{proof}
Again, if $H(x)$ is an accelerant, then one directly checks that
$\gamma H(\gamma x)$ is also an accelerant. Since
$F_{(\gamma)}(\lambda)=F(\lambda/\gamma)$, we have
\[
\beta_{(\gamma)}=\beta+\int\limits_{-\infty}^\infty \left(
\frac{s\gamma^2}{1+s^2\gamma^2}-\frac{s}{1+s^2}\right) d\sigma(s)
\]
\begin{lemma}[Conjugation]
The coefficient $\overline{A(r)}$  %
corresponds to the measure $\overline{\sigma }:\overline{\sigma
}(I)=\sigma (-I)$  for any Borel set $I$, accelerant
$\overline{H(x)}$, resolvent kernel $\overline{\Gamma_r(x,y)}$,
and coefficient $-\beta$.\label{conjugation}
\end{lemma}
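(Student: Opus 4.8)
The plan is to derive every assertion by passing to complex conjugates in the basic identities of Sections \ref{three} and \ref{four}. First note that if $H$ is Hermitian then so is $\overline{H}$, and that the operator $\overline{\mathcal{H}}_r$ with displacement kernel $\overline{H(x-y)}$ is unitarily equivalent to $\mathcal{H}_r$: indeed $\mathcal{H}_r\mathcal{F}_r=\mathcal{F}_r\overline{\mathcal{H}}_r$ (as in the proof of Lemma \ref{lemma231}) together with $\mathcal{F}_r^2=I$ gives $\overline{\mathcal{H}}_r=\mathcal{F}_r\mathcal{H}_r\mathcal{F}_r$. Hence $I+\overline{\mathcal{H}}_r>0$ for all $r>0$, so by Theorem \ref{t1s2} the Hermitian function $\overline{H}$ is again an accelerant, generating (via Theorem \ref{onetoone}, or its $L^2_{\rm loc}$ version) a Krein system. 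Conjugating the resolvent identity (\ref{basic1}) shows at once that the resolvent kernel of $I+\overline{\mathcal{H}}_r$ is $\overline{\Gamma_r(t,s)}$.

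Next I would pin down the coefficient. Since the coefficient attached to any accelerant is $A(r)=\Gamma_r(0,r)$, the Krein system generated by $\overline{H}$ has coefficient $\overline{\Gamma_r(0,r)}=\overline{A(r)}$. By the one-to-one correspondence between accelerants and coefficients, this proves that $\overline{H(x)}$ is the accelerant of the system with coefficient $\overline{A(r)}$ and that its resolvent kernel is $\overline{\Gamma_r(x,y)}$. For the measure, conjugate the Krein system itself: if $P,P_*$ solve (\ref{krein2}) for the coefficient $A$, then $\overline{P(r,-\bar\lambda)}$ and $\overline{P_*(r,-\bar\lambda)}$ solve the same Cauchy problem for the coefficient $\overline{A(r)}$, hence are its continuous polynomials. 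On the real axis $|P^{\,\overline{A}}_*(r,\lambda)|^2=|P_*(r,-\lambda)|^2$, so the Bernstein--Szeg\H{o} approximants of Theorem \ref{Bernstein-Szego} satisfy $\sigma^{\overline A}_r(I)=\sigma_r(-I)$ for every Borel set $I$; letting $r\to\infty$ gives $\sigma^{\overline A}(I)=\sigma(-I)=\overline\sigma(I)$, and the bound (\ref{e2t1}) is inherited because $1+\lambda^2$ is even.

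Finally, for $\beta$ I would take complex conjugates on both sides of the integral representation (\ref{e2s2}) written for $H$, and then substitute $\lambda\mapsto-\lambda$ in the resulting integral. The right-hand side returns to the canonical form of Theorem \ref{t1s1}, but with $\beta$ replaced by $-\beta$ and $d\sigma$ by $\overline\sigma$; since $\overline{H}$ generates a $G_\infty$-function and that representation is unique, the parameter of the conjugated system is $-\beta$. The only subtlety in the whole argument is the bookkeeping of the spectral variable: conjugation of the Krein system forces the substitution $\lambda\mapsto-\bar\lambda$, and it is precisely this reflection that produces $\overline\sigma$ from $\sigma$ and the sign change in $\beta$. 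Everything else is a routine conjugation, so there is no serious obstacle.
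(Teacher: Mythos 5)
Your proposal is correct. The central step coincides with the paper's: conjugating (\ref{krein2}) and observing that the pair $\overline{P(r,-\bar\lambda)},\overline{P_*(r,-\bar\lambda)}$ solves the Krein system with coefficient $\overline{A(r)}$. Where you diverge is in how the spectral data are read off. The paper's proof is shorter at this point: it notes that the new Weyl--Titchmarsh function is $\overline{F(-\bar\lambda)}$ and extracts both $\overline{\sigma}$ and $-\beta$ in one stroke from the integral representation (\ref{weyl-titchmarsh}), leaving the accelerant and resolvent-kernel statements implicit. You instead establish those two statements explicitly (via $\overline{\mathcal{H}}_r=\mathcal{F}_r\mathcal{H}_r\mathcal{F}_r$ and conjugation of (\ref{basic1}), which is sound and fills in detail the paper omits), obtain the measure from the Bernstein--Szeg\H{o} approximants of Theorem \ref{Bernstein-Szego} (the same device the paper uses for the Shift and Dilation lemmas, but not here), and obtain $\beta$ by conjugating (\ref{e2s2}) and invoking the uniqueness in Theorem \ref{t1s1}. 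One small economy you could make: your final step already delivers the measure as well as $\beta$, since conjugating (\ref{e2s2}) and substituting $\lambda\mapsto-\lambda$ puts the representation of the $G_\infty$-function of $\overline{H}$ in canonical form with data $(-\beta,\overline{\sigma})$, so the Bernstein--Szeg\H{o} passage, while correct, is redundant. Both routes are legitimate; the paper's buys brevity through the Weyl--Titchmarsh machinery, yours buys self-containedness at the level of the accelerant and resolvent kernel.
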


\begin{proof} Take conjugation of
(\ref{krein2}). The pair
$\{%
\overline{P(r,-\bar\lambda )},\overline{P_{\ast }(r,-\bar\lambda
)}\}$ solves Krein system with parameter $\lambda $ and
coefficient $\overline{A(r)}.$ New Weyl-Titchmarsh function is
equal to $\overline{F(-\bar\lambda)}$. The integral representation
for $F(\lambda)$ yields the value of $\beta$.
\end{proof}

Consider the case when $A(r)$ is real. Then, $\overline{\sigma
}=\sigma $ and $H(x)$ is also real. It is continuous on the whole
line provided that $A(r)\in C[0,\infty)$.
 In (\ref{e1s3-1}), constant $\beta=0$. Later on, we will consider
 this case in greater details.

The next calculation will be important to understand the
scattering problem for Krein system and Dirac operators.
Consider Krein system on the interval $[0,R]$ with coefficient $%
A^{(R)}(r)=A(R-r)$ for $r\in \lbrack 0,R]$. For $r>R$, we let
$A^{(R)}(r)=0$.

\begin{lemma}[Mirror symmetry]  The Schur function of Krein system with the
coefficient $A^{(R)}(r)$  is equal to
\[
f^{(R)}(\lambda )=\frac{\B(R,-\lambda )}{\A_{* }(R,-\lambda )}
\] \label{mirror}
\end{lemma}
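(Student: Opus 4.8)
The idea is to relate the transfer matrix of the "mirrored" system on $[0,R]$ to the transfer matrix $X(R,\lambda)$ of the original system by a reversal of the integration variable, and then to read off the Schur function directly from the resulting entries. Write $Y(r,\lambda)$ for the fundamental matrix of the Krein system with coefficient $A^{(R)}(r)=A(R-r)$, so $Y'=V^{(R)}Y$, $Y(0,\lambda)=I$. Since $A^{(R)}(r)=0$ for $r>R$, the Schur function $f^{(R)}$ equals $\b(R,\lambda)\a(R,\lambda)^{-1}$ computed from $Y(R,\lambda)$, because past $R$ the Wall polynomials are frozen (the transfer matrix on $[R,\infty)$ is $\mathrm{diag}(e^{i\lambda(r-R)},1)$, which does not change the ratio $\mathfrak{B}\mathfrak{A}^{-1}$, and the tail Schur function is $0$). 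So the whole problem reduces to expressing $Y(R,\lambda)$ through the data of the original system.

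\textbf{Main step: the variable reversal.} For $0\le r\le R$ set $Z(r,\lambda)=X(R-r,-\lambda)$, where $X$ is the original transfer matrix. Differentiating, $Z'(r,\lambda)=-X'(R-r,-\lambda)=-V(R-r,-\lambda)X(R-r,-\lambda)$. Now compare $-V(R-r,-\lambda)$ with $V^{(R)}(r,\lambda)$: the off-diagonal entries of $-V(R-r,-\lambda)$ are $A(R-r)$ and $\overline{A(R-r)}$, matching $V^{(R)}$ up to sign, while the $(1,1)$-entry is $-i(-\lambda)=i\lambda$, also matching. The mismatch is only in the signs of the off-diagonal entries, which is exactly the discrepancy handled by Lemma \ref{sign-change}: conjugating by $J$ flips the sign of the coefficient. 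Thus $J Z(r,\lambda) J$ (possibly together with a conjugation of $Z$, to be pinned down) satisfies the $V^{(R)}$-system. Since $Z$ does not satisfy the identity initial condition — we have $Z(0,\lambda)=X(R,-\lambda)$ — we normalize: $Y(r,\lambda)=\big(J Z(r,\lambda) J\big)\big(J X(R,-\lambda) J\big)^{-1}$, which does solve the $V^{(R)}$-system with $Y(0,\lambda)=I$. Evaluating at $r=R$ gives $Y(R,\lambda)=\big(J\, X(0,-\lambda)\, J\big)\big(J\, X(R,-\lambda)\, J\big)^{-1}=J\,X(R,-\lambda)^{-1}\,J$, using $X(0,\cdot)=I$.

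\textbf{Reading off the Schur function.} Using $\det X(R,-\lambda)=e^{-i\lambda R}$ (Lemma \ref{determinant}) and the notation (\ref{fsr}), invert the $2\times2$ matrix $X(R,-\lambda)=\left[\begin{smallmatrix}\mathfrak{A}_*&\mathfrak{B}_*\\ \mathfrak{B}&\mathfrak{A}\end{smallmatrix}\right]$ evaluated at $-\lambda$, then conjugate by $J$; this expresses the four entries of $Y(R,\lambda)$ as $\pm e^{i\lambda R}$ times $\mathfrak{A}(R,-\lambda)$, $\mathfrak{A}_*(R,-\lambda)$, $\mathfrak{B}(R,-\lambda)$, $\mathfrak{B}_*(R,-\lambda)$. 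By the labeling (\ref{fsr}), the lower-left entry of $Y(R,\lambda)$ is $\mathfrak{b}^{(R)}(R,\lambda)$ and the lower-right entry is $\mathfrak{a}^{(R)}(R,\lambda)$; their ratio is $f^{(R)}(\lambda)$. The $e^{i\lambda R}$ factors and signs cancel in the ratio, leaving $f^{(R)}(\lambda)=\mathfrak{B}(R,-\lambda)\,\mathfrak{A}_*(R,-\lambda)^{-1}$, as claimed.

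\textbf{Expected obstacle.} The delicate point is bookkeeping the \emph{two} independent symmetries that appear — the variable reversal $r\mapsto R-r$ (which sends $\lambda$ to $-\lambda$ to keep the $(1,1)$-entry right) and the $J$-conjugation from Lemma \ref{sign-change} — and making sure one does not also need a complex conjugation. A clean way to avoid errors is to verify the candidate formula for $Y(r,\lambda)$ directly: check that $r\mapsto J X(R-r,-\lambda)J \cdot C$ (with the constant $C=\big(JX(R,-\lambda)J\big)^{-1}$) satisfies $Y'=V^{(R)}Y$ and $Y(0,\lambda)=I$, which is a short computation using $JV(s,\mu)J$ from (\ref{commutation}) with $\mu=-\lambda$. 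Once that is confirmed, everything else is the routine $2\times2$ inversion above, and the cancellations in the ratio are immediate. I would also remark explicitly why extending $A^{(R)}$ by zero beyond $R$ does not affect the Schur function, so that $f^{(R)}$ is legitimately computed from $Y(R,\cdot)$ alone.
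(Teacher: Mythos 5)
Your proposal is correct and follows essentially the same route as the paper: you arrive at exactly the matrix $Y(r,\lambda)=JX(R-r,-\lambda)X^{-1}(R,-\lambda)J$ used there (via the reversal $r\mapsto R-r$, $\lambda\mapsto-\lambda$ combined with the $J$-conjugation of (\ref{commutation}), which indeed requires no extra complex conjugation), and then read off the mirrored Wall polynomials from $Y(R,\lambda)=JX^{-1}(R,-\lambda)J$ using $\det X(R,-\lambda)=e^{-i\lambda R}$. Your explicit justification that the ratio $\B^{(R)}/\A^{(R)}$ is frozen for $r>R$ is a detail the paper leaves implicit, but the argument is the same.
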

\begin{proof} Consider the matrix $X$ that solves Krein system
\[
X^{\prime }=VX,X(0,\lambda )=I
\]%
and $V$ is given by (\ref{potential-V}). At the same time, matrix
$Y(r,\lambda )=JX(R-r,-\lambda )X^{-1}(R,-\lambda)J$ solves the
following system%
\[
Y^{\prime }=V(R-r)Y,Y(0)=I
\]%
Therefore,
new Wall polynomials are%
\begin{eqnarray*}
\A^{(R)}(R,\lambda ) &=&\exp (i\lambda R)\A_{\ast }(R,-\lambda ) \\
\B^{(R)}(R,\lambda ) &=&\exp (i\lambda R)\B(R,-\lambda )
\end{eqnarray*}%
That finishes the proof. \end{proof}

{\bf Remarks and historical notes.} The dual systems were studied
before, e.g. \cite{Krein1}. They also appear in the solution to
various continuation problems and in continuous
Caratheodory-Toeplitz problem.

\newpage

\section{Szeg\H{o} distance and Krein systems}\label{sect-szego}

As any orthogonal system, $\{P(r,\lambda)\}$ has the reproducing
kernel. Consider the scale $S_{\rho }= \cal{P}_{[0,\rho]}
L^2(\mathbb{R})$ of Paley-Wiener spaces.
 Recall that for any $\rho >0$, this space consists of functions $%
\widehat{f}(\lambda )$ that can be represented as%
\[
\widehat{f}(\lambda )=\int\limits_{0}^{\rho }f(x)\exp (i\lambda
x)dx
\]%
with $f(x)\in L^{2}[0,\rho ]$.

\begin{lemma}
The following function $K_\rho(\lambda',\lambda)$:
\begin{equation}
K_{\rho }(\lambda' ,\lambda )=\int\limits_{0}^{\rho
}\overline{P(x,\lambda'
)}%
P(x,\lambda
)dx=i\frac{P_*(\rho,\lambda)\overline{P_*(\rho,\lambda')}-
P(\rho,\lambda)\overline{P(\rho,\lambda')}}{\lambda-\bar\lambda'}
\label{reproduce0}
\end{equation}
is the reproducing kernel in $S_\rho$ space, i.e.
\begin{equation}
 \widehat{f}(\lambda' )=\langle \widehat{f}(\lambda ),K_{\rho
}(\lambda' ,\lambda )\rangle ,\ \lambda' \in \mathbb{C}
\label{reproducing}
\end{equation}
where the inner product is defined as follows
\begin{equation}
\langle \widehat{f}_{1},\widehat{f}_{2}\rangle
=\int\limits_{-\infty }^{\infty
}\widehat{f_{1}}%
(\lambda )\overline{\widehat{f_{2}}(\lambda )}d\sigma (\lambda
)=\int\limits_{-\infty }^{\infty }\widehat{f_{1}}(\lambda
)\overline{\widehat{f_{2}}(\lambda
)}\frac{d\lambda }{%
2\pi |P(\rho ,\lambda )|^{2}} \label{subordinacy-s}
\end{equation}
\end{lemma}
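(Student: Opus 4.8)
The plan is to read the closed form of $K_\rho$ off the Christoffel--Darboux identity and then to verify the reproducing property (\ref{reproducing}) by transporting everything through the isometry $\cal{O}$ of Theorem \ref{theorem2s2}. Working through $\cal{O}$ is what keeps the argument clean: it avoids any interchange of $\int d\sigma$ with $\int_0^\rho dx$, which would be awkward since $P(x,\cdot)\notin L^2(d\sigma)$ in general.

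\emph{Step 1 (closed form).} Put $r=\rho$ and $\mu=\lambda'$ in (\ref{e101s2}) and solve for $\int_0^\rho P(s,\lambda)\overline{P(s,\lambda')}\,ds$; this is exactly the second equality in (\ref{reproduce0}) for $\lambda\neq\bar\lambda'$. Both sides are jointly continuous in $(\lambda,\lambda')$: on the right, (\ref{starproperty}) shows the numerator $P_*(\rho,\lambda)\overline{P_*(\rho,\lambda')}-P(\rho,\lambda)\overline{P(\rho,\lambda')}$ vanishes when $\lambda=\bar\lambda'$, so the quotient extends across the diagonal, and (\ref{reproduce0}) holds for all $\lambda,\lambda'$. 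In particular $K_\rho(\lambda',\cdot)=\cal{O}\psi_{\lambda'}$, where $\psi_{\lambda'}(x):=\overline{P(x,\lambda')}\,\chi_{[0,\rho]}(x)\in L^2[0,\rho]$, directly from the first equality in (\ref{reproduce0}).

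\emph{Step 2 ($S_\rho=\cal{O}(L^2[0,\rho])$).} For $\widehat f(\lambda)=\int_0^\rho f(x)\exp(i\lambda x)\,dx$ with $f\in L^2[0,\rho]$, substitute the identity of Lemma \ref{transformation} for $\exp(i\lambda x)$ and change the order of integration to get $\widehat f=\cal{O}\phi$ with $\phi(x)=f(x)+\int_x^\rho L(t,x)f(t)\,dt\in L^2[0,\rho]$. Conversely, substituting (\ref{e3s2}) into $(\cal{O}g)(\lambda)=\int_0^\rho g(x)P(x,\lambda)\,dx$ shows $\cal{O}g\in S_\rho$ for every $g\in L^2[0,\rho]$. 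Thus $S_\rho=\cal{O}(L^2[0,\rho])$; in particular $K_\rho(\lambda',\cdot)\in S_\rho$, and every $\widehat f\in S_\rho$ lies in $L^2(d\sigma)$ since $\cal{O}$ maps $L^2(\mathbb{R}^+)$ into $L^2(d\sigma)$. Finally, the two expressions for $\langle\cdot,\cdot\rangle$ in (\ref{subordinacy-s}) agree on $S_\rho$ by the polarization of Corollary \ref{scale-property}, so the inner product is well defined there.

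\emph{Step 3 (reproducing identity).} Since $\cal{O}$ is a $\mathbb{C}$-linear isometry, polarization of (\ref{plancherel}) gives $\langle\cal{O}h_1,\cal{O}h_2\rangle_{d\sigma}=\langle h_1,h_2\rangle_{L^2(\mathbb{R}^+)}$. Taking $h_1=\phi$ and $h_2=\psi_{\lambda'}$ (so $\cal{O}h_1=\widehat f$ and $\cal{O}h_2=K_\rho(\lambda',\cdot)$),
\[
\langle\widehat f,K_\rho(\lambda',\cdot)\rangle=\langle\phi,\psi_{\lambda'}\rangle_{L^2[0,\rho]}=\int_0^\rho\phi(x)\,\overline{\psi_{\lambda'}(x)}\,dx=\int_0^\rho\phi(x)\,P(x,\lambda')\,dx=(\cal{O}\phi)(\lambda')=\widehat f(\lambda'),
\]
which is (\ref{reproducing}). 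I do not expect a genuine obstacle: the argument is essentially bookkeeping, and the only points demanding care are the handling of complex conjugates (so that $\langle\phi,\psi_{\lambda'}\rangle$ collapses to $\int_0^\rho\phi\,P(\cdot,\lambda')$ rather than its conjugate) and the identification $S_\rho=\cal{O}(L^2[0,\rho])$ in Step 2, for which Lemma \ref{transformation} does the actual work.
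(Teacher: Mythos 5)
Your proof is correct and follows essentially the same route as the paper's: the closed form comes from the Christoffel--Darboux formula (\ref{e101s2}), membership of $K_\rho(\lambda',\cdot)$ in $S_\rho$ and the representation $\widehat f=\cal{O}\phi$ come from Lemma \ref{transformation}, the reproducing identity follows from the polarized Plancherel identity (\ref{plancherel}), and the second form of the inner product is Corollary \ref{scale-property}. Your write-up merely spells out more explicitly the identification $S_\rho=\cal{O}(L^2[0,\rho])$ and the bookkeeping of conjugates, which the paper leaves implicit.
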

\begin{proof}
The second equality in (\ref{reproduce0}) is formula
(\ref{e101s2}). For any fixed $\lambda'$, $P(x,\lambda')\in
L^2[0,\rho]$, so the kernel $K_\rho(\lambda',\lambda)$ itself is
an element of $S_\rho$. Formula (\ref{reproducing}) follows from
Plancherel-type identity (\ref{plancherel}). Indeed, by
(\ref{eee2}), we have
\[
\widehat{f}(\lambda')=\int\limits_0^\rho f_1(x) P(x,\lambda)dx
\]
with some $f_1(x)\in L^2[0,\rho]$. Now, using (\ref{plancherel}),
we get
\[
\langle \widehat{f}(\lambda),
K_\rho(\lambda',\lambda)\rangle=\int\limits_0^\rho
f_1(x)P(x,\lambda')dx=\widehat{f}(\lambda')
\]
The second equality in (\ref{subordinacy-s}) is the contents of
Corollary \ref{scale-property}.
\end{proof}

The reproducing kernel property (\ref{reproducing}) yields
\[
K_\rho(\lambda_1,\lambda_2)=\langle
K_\rho(\lambda_1,\lambda),K_\rho(\lambda_2,\lambda)\rangle
\]
Together with Cauchy inequality, that implies
\begin{equation}
\left\vert \widehat{f}(\lambda ^{\prime })\right\vert ^{2}\leq
||\widehat{f}%
||_{L^{2}(\mathbb{R},d\sigma )}^{2}K_{\rho }(\lambda ^{\prime
},\lambda ^{\prime }) \label{e25s3}
\end{equation}%
and the equality holds if and only if $\widehat{f}(\lambda
)=\gamma K_\rho(\lambda ^{\prime },\lambda ),$ $|\gamma |=1$.

\begin{lemma}
The following identity is true
\begin{equation}
\frac{1}{K_{\rho }(\lambda ^{\prime },\lambda ^{\prime })}=\min_{\hat{f}%
\in S_{\rho },\hat{f}(\lambda ^{\prime })=1}\int\limits_{-\infty
}^{\infty }|\widehat{f}(\lambda )|^{2}d\sigma (\lambda
)=m^2_\rho(\lambda') \label{minimum}
\end{equation}
for any $\lambda ^{\prime }\in \mathbb{C}$. The minimizer is
unique and is given by
$\hat{f}(\lambda)=K_\rho^{-1}(\lambda',\lambda')K_\rho(\lambda',\lambda)$.
\end{lemma}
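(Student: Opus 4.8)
The plan is to prove the variational characterization (\ref{minimum}) as a standard consequence of the reproducing kernel property, exactly as one does for de Branges spaces or for OPUC. First I would reduce the problem to the Hilbert space $S_\rho$ equipped with the inner product (\ref{subordinacy-s}), which is legitimate because the isometry $\cal{O}$ of Theorem \ref{theorem2s2} together with Lemma \ref{transformation} identifies $S_\rho$ (as a set, via (\ref{eee2})) with the image of $L^2[0,\rho]$, so every $\hat f\in S_\rho$ is of the form $\hat f(\lambda)=\int_0^\rho f_1(x)P(x,\lambda)\,dx$ and $\|\hat f\|^2_{L^2(\mathbb{R},d\sigma)}=\|f_1\|^2_{L^2[0,\rho]}$. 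In particular the "min" in (\ref{minimum}) is genuinely attained, since $S_\rho$ is a genuine Hilbert space and the constraint $\hat f(\lambda')=1$ defines a closed affine subspace (point evaluation is bounded by (\ref{e25s3})).

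Next I would carry out the one-line optimization. Fix $\lambda'\in\mathbb{C}$ and note $K_\rho(\lambda',\lambda')>0$: this follows from (\ref{reproduce0}) together with the Christoffel–Darboux identity (\ref{e101s2}) (taking $\mu=\lambda$ there), or simply because $K_\rho(\lambda',\lambda')=\|P(\cdot,\lambda')\|^2_{L^2[0,\rho]}$ and $P(\cdot,\lambda')$ is not identically zero on $[0,\rho]$. For any admissible $\hat f$, i.e. $\hat f\in S_\rho$ with $\hat f(\lambda')=1$, the reproducing property (\ref{reproducing}) gives $1=\hat f(\lambda')=\langle \hat f, K_\rho(\lambda',\cdot)\rangle$, and the Cauchy–Schwarz inequality combined with $\langle K_\rho(\lambda',\cdot),K_\rho(\lambda',\cdot)\rangle=K_\rho(\lambda',\lambda')$ yields
\[
1=|\langle \hat f, K_\rho(\lambda',\cdot)\rangle|^2\le \|\hat f\|^2_{L^2(\mathbb{R},d\sigma)}\,K_\rho(\lambda',\lambda'),
\]
so $\|\hat f\|^2_{L^2(\mathbb{R},d\sigma)}\ge 1/K_\rho(\lambda',\lambda')$. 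This establishes the inequality $m_\rho^2(\lambda')\ge 1/K_\rho(\lambda',\lambda')$.

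Then I would check the bound is achieved, which simultaneously proves the reverse inequality and identifies the minimizer. Set $\hat f_0(\lambda)=K_\rho(\lambda',\lambda')^{-1}K_\rho(\lambda',\lambda)$. This lies in $S_\rho$ because $K_\rho(\lambda',\cdot)\in S_\rho$ (it is $\int_0^\rho \overline{P(x,\lambda')}P(x,\lambda)\,dx$, cf. the discussion after (\ref{reproduce0})), and $\hat f_0(\lambda')=K_\rho(\lambda',\lambda')^{-1}K_\rho(\lambda',\lambda')=1$, so $\hat f_0$ is admissible. Its norm squared is $K_\rho(\lambda',\lambda')^{-2}\langle K_\rho(\lambda',\cdot),K_\rho(\lambda',\cdot)\rangle=K_\rho(\lambda',\lambda')^{-2}K_\rho(\lambda',\lambda')=1/K_\rho(\lambda',\lambda')$, so the infimum equals $1/K_\rho(\lambda',\lambda')$ and is attained at $\hat f_0$. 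Uniqueness follows from the equality case in Cauchy–Schwarz: equality in $|\langle \hat f, K_\rho(\lambda',\cdot)\rangle|^2\le \|\hat f\|^2 K_\rho(\lambda',\lambda')$ forces $\hat f$ to be a scalar multiple of $K_\rho(\lambda',\cdot)$, and the normalization $\hat f(\lambda')=1$ then pins the scalar to $K_\rho(\lambda',\lambda')^{-1}$, giving $\hat f=\hat f_0$. I do not anticipate a serious obstacle here; the only point requiring a little care is making sure $S_\rho$ with the norm (\ref{subordinacy-s}) is complete and that point evaluations are bounded — both of which are already in hand via (\ref{plancherel}), Corollary \ref{scale-property}, and (\ref{e25s3}) — so that the "$\min$" is justified rather than just an "$\inf$".
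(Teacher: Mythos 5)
Your proof is correct and follows essentially the same route as the paper: the paper's argument is exactly the Cauchy--Schwarz bound (\ref{e25s3}) coming from the reproducing property, divided by $K_\rho(\lambda',\lambda')|\hat f(\lambda')|^2$, with the equality case identifying the unique minimizer $K_\rho^{-1}(\lambda',\lambda')K_\rho(\lambda',\lambda)$. Your write-up just spells out the attainment and uniqueness details that the paper leaves implicit.
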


\begin{proof} Divide\ (\ref{e25s3}) by $K_{\rho }(\lambda
^{\prime },\lambda ^{\prime })\left\vert \widehat{f}(\lambda
^{\prime })\right\vert ^{2}$. \end{proof}

Now, the natural question to ask is what happens if
$\rho\to\infty$? Since $S_{\rho_1}\subset S_{\rho_2}$ for
$\rho_1<\rho_2$, the minimum $m_\rho(\lambda')$ decreases. Now,
can we characterize the case when it decreases to zero? To do
that, we need the following classical result (see, e.g.
\cite{DymMcKean}, page 84).  We give its proof in Appendix
(Theorem \ref{appendix-szego}).

Assume that $d\sigma$ is a positive measure on $\mathbb{R}$ such
that
\[
\int\limits_{-\infty}^\infty
\frac{d\sigma(\lambda)}{1+\lambda^2}<\infty
\]
Consider the linear manifold $X$ of functions $\hat{f}(\lambda)$,
having the following representation
\[
\hat{f}(\lambda)=\int\limits_{r_1}^{r_2} \exp(i\lambda x)
f(x)dx,\quad 0\leq r_1<r_2
\]
where $f(x)\in C^1[r_1,r_2]$ and is zero outside
$[r_1,r_2]\subseteq [0,\infty)$. Notice that each
$\hat{f}(\lambda)\in L^2(d\sigma)$. Denote the closure of $X$ in
$L^2(d\sigma)$ by $\bar{X}$.

\begin{theorem}
The linear manifold $X$ is not dense in $L^2(d\sigma)$ if and only
if
\begin{equation}
\int\limits_{-\infty}^\infty \frac{\ln \sigma'
(\lambda)}{1+\lambda^2}d\lambda >-\infty
\end{equation}
Moreover, the following formula is always true
\begin{equation}
{\rm Dist} \left( \frac{1}{\lambda-\lambda_0}, \bar{X}
\right)_{L^2(d\sigma)} = \frac{1}{\sqrt{2\Im\lambda_0}} \exp
\left[ \frac{\Im \lambda_0}{2\pi}\int\limits_{-\infty}^{\infty}
\frac{\ln(2\pi
\sigma'(\lambda))}{|\lambda-\lambda_0|^2}d\lambda\right],\,
\lambda_0\in \mathbb{C}^+ \label{sdcalc}
\end{equation}
 \label{nnn2}
\end{theorem}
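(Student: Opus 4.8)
The plan is to reduce the statement to the classical Szeg\H{o} theorem for the half-line via the Paley--Wiener / Laplace transform, and to establish the density dichotomy and the explicit distance formula \eqref{sdcalc} simultaneously. First I would pass from $\mathbb{C}^+$ to the disc: the conformal map $z=(\lambda-i)(\lambda+i)^{-1}$ sends $\mathbb{C}^+$ onto $\mathbb{D}$ and transplants $d\sigma$ to the finite measure $d\tau$ with $(1+t^2)^{-1}d\sigma(t)=d\tau[(t-i)(t+i)^{-1}]$, as recalled in the proof of Theorem~\ref{Bernstein-Szego}. Under this map the functions $e^{i\lambda x}$, $x\ge 0$, span (after multiplication by the fixed outer factor $(\lambda+i)^{-1}$, which only rescales $d\sigma$ by $1+\lambda^2$ and hence corresponds to the bounded, boundedly invertible multiplication by $1+|t|^2$ against $d\tau$) a space of $H^2(\mathbb{D})$-type functions. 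The point is that $X$ is dense in $L^2(d\sigma)$ if and only if the image manifold is dense in $L^2(d\tau)$, and the latter is a Szeg\H{o}-type problem on the circle, whose answer is the Szeg\H{o} condition $\int \ln\tau'\,d\theta>-\infty$; translating back gives $\int (1+\lambda^2)^{-1}\ln\sigma'(\lambda)\,d\lambda>-\infty$, using that $\ln(1+\lambda^2)$ is itself integrable against the Poisson-type weight.

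For the quantitative formula I would fix $\lambda_0\in\mathbb{C}^+$ and compute the distance directly. Write $d\sigma=\sigma'\,d\lambda+d\sigma_s$; one shows the singular part does not affect the distance (a standard fact: the closure of $X$ contains, for the purpose of approximating the single function $(\lambda-\lambda_0)^{-1}$, an element agreeing with it $d\sigma_s$-a.e., because $X$ is weak-$*$ rich enough — this is exactly the mechanism behind the Szeg\H{o} theorem). So the problem reduces to the absolutely continuous measure $w\,d\lambda$ with $w=\sigma'$. Now the extremal problem $\inf\{\|(\lambda-\lambda_0)^{-1}-g\|_{L^2(w\,d\lambda)}: g\in\bar X\}$ is a Beurling--type problem in a de Branges / Hardy space: after the conformal transplantation it becomes the distance in $H^2(\mathbb{D},W\,d\theta)$ from the reproducing kernel at the image of $\lambda_0$ to the span of $\{z^n\}_{n\ge 1}$, i.e. to $zH^2$. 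That distance is classically the value of the normalized reproducing kernel, which equals the modulus of the outer (Szeg\H{o}) function $D(z)$ at the appropriate point; unwinding the conformal map and the Poisson kernel $\Im\lambda_0/|\lambda-\lambda_0|^2$ produces precisely the right-hand side of \eqref{sdcalc}, with the prefactor $(2\Im\lambda_0)^{-1/2}$ coming from the Jacobian $\|(\lambda-\lambda_0)^{-1}\|_{H^2(\mathbb{R})}^2=\pi/\Im\lambda_0$ together with the $2\pi$ normalization. When the Szeg\H{o} integral diverges the outer function degenerates, the exponential is $0$, the distance is $0$, and density holds — consistent with the first part.

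The main obstacle — and the step I would spend the most care on — is the reduction removing the singular part and justifying that the closure $\bar X$ is insensitive to $d\sigma_s$ while simultaneously getting the sharp constant. One clean route is to avoid a separate singular-part argument altogether: approximate $(\lambda-\lambda_0)^{-1}$ from within $X$ using the known machinery — by the Bernstein--Szeg\H{o} approximation (Theorem~\ref{Bernstein-Szego}) the measures $d\sigma_\rho=(2\pi)^{-1}|P(\rho,\lambda)|^{-2}\,d\lambda$ converge weak-$*$ to $d\sigma$, and on each $d\sigma_\rho$ the extremal problem is solved \emph{exactly} by a Paley--Wiener computation since $|P(\rho,\cdot)|^{-2}$ is a purely a.c. weight of the right rational-times-outer shape. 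Passing $\rho\to\infty$ in the resulting explicit expressions, and using that $\int(1+\lambda^2)^{-1}\ln\sigma_\rho'\to\int(1+\lambda^2)^{-1}\ln\sigma'$ by an upper-semicontinuity / entropy argument for the Szeg\H{o} integral under weak convergence (with the tightness \eqref{tightness} controlling tails), yields \eqref{sdcalc} in the limit. I would also remark that the detailed argument, being classical, is relegated to the Appendix (Theorem~\ref{appendix-szego}); here the role of \eqref{sdcalc} is to furnish the quantity that the next sections identify with $\lim_{\rho\to\infty}m_\rho(\lambda_0)$, the limiting Szeg\H{o} distance for the Krein system.
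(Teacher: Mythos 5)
There is a genuine gap, and it is located exactly where you pass from the half-plane picture to ``a Szeg\H{o}-type problem on the circle.'' Under the map $w=(\lambda-i)(\lambda+i)^{-1}$ the exponentials $e^{i\lambda r}$ do \emph{not} become polynomials: they become the singular inner-type functions $\exp\left[r\frac{w+1}{w-1}\right]$. The heart of the paper's proof (Theorems \ref{nnn1} and \ref{appendix-szego}) is precisely the identification of the closed span of these transplanted functions with the closed span of $\{w^n\}_{n\ge 0}$ in $L^2(d\tau)$: one direction uses that $g=(\lambda-i)(\lambda+i)^{-1}$ is itself an $L^2(d\mu)$-limit of elements built from $X$ and that the closure is invariant under multiplication by $g$, so all powers $g^n$ lie in it; the other direction approximates $\exp\left[r\frac{w+1}{w-1}\right]$ by polynomials, using $h_\rho(w)=\exp\left[r\frac{\rho w+1}{\rho w-1}\right]$, $\rho<1$, and the fact that $d\tau$ has no atom at $w=1$ (a consequence of $\int(1+\lambda^2)^{-1}d\sigma<\infty$). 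In addition one must identify the closure of $X$ (Fourier transforms of compactly supported $C^1$ functions) with the closure of the span of the exponentials, which requires both the Laplace-integral representation $e^{i\lambda r}=-ie^{r}(\lambda+i)\int_r^\infty e^{-x}e^{i\lambda x}dx$ and a Riemann-sum approximation after integration by parts. None of these steps appears in your proposal; you treat the image of $X$ as if it were the polynomial span, which is exactly what has to be proved. Once that identification is made, the classical Szeg\H{o}--Kolmogorov--Krein formula for ${\rm Dist}(1,wZ_\tau)_{L^2(d\tau)}$ already disposes of the singular part, so the separate ``removal of $d\sigma_s$'' you single out as the main obstacle is not actually needed.

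Your proposed alternative route through the Bernstein--Szeg\H{o} approximation also does not work here. Theorem \ref{nnn2} is stated for an \emph{arbitrary} positive measure with $\int(1+\lambda^2)^{-1}d\sigma<\infty$; it is not assumed to be the spectral measure of a Krein system, so Theorem \ref{Bernstein-Szego} and the measures $d\sigma_\rho=(2\pi)^{-1}|P_*(\rho,\lambda)|^{-2}d\lambda$ are simply not available. Even in the Krein-system setting, weak-$*$ convergence $d\sigma_\rho\rightharpoonup d\sigma$ combined with upper semicontinuity of the entropy functional yields only a one-sided inequality for $\int(1+\lambda^2)^{-1}\ln\sigma_\rho'\,d\lambda$, not the equality of limiting distances you need; and the logic would be close to circular, since the paper's Szeg\H{o}-case machinery (Lemma \ref{sdl1}, Theorem \ref{sdmt}) that gives meaning to those limits is itself built on this appendix theorem. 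The conformal-transplantation strategy of your first paragraph is the right one, but the proof must supply the span identification described above rather than assert it.
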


Next, we will apply this Theorem to the Krein systems. Let
$d\sigma$ be the measure generated by some Krein system (the
measure from (\ref{e2s2})). Recall the definition of $S_r$ and
notice that for each finite $r$, $S_r\subset L^2(d\sigma)$ (see
(\ref{sect2eq1})). Denote the closure of $\cup_{r>0} S_r$ in
$L^2(d\sigma)$ by $\bar{S}$.
\begin{lemma}
If $d\sigma$ is generated by some Krein system, then
$\bar{X}=\bar{S}$. \label{coincidence1}
\end{lemma}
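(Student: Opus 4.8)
The plan is to show that $X$ and $\cup_{r>0} S_r$ have the same closure in $L^2(d\sigma)$ by proving the two-sided inclusion $\bar X\subseteq\bar S$ and $\bar S\subseteq\bar X$. The second inclusion is almost immediate: any $C^1$ function supported on $[r_1,r_2]\subseteq[0,\infty)$ has its Fourier–Laplace transform lying in $S_{r_2}$, since $S_{r_2}=\cal{P}_{[0,r_2]}L^2(\mathbb{R})$ contains the transforms of all $L^2$ functions on $[0,r_2]$, and $C^1[r_1,r_2]\subset L^2[0,r_2]$ after extension by zero. Hence $X\subseteq\cup_{r>0}S_r$, and taking closures gives $\bar X\subseteq\bar S$.

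For the reverse inclusion I would argue that $S_r\subseteq\bar X$ for every fixed $r>0$, which then gives $\cup_{r>0}S_r\subseteq\bar X$ and hence $\bar S\subseteq\bar X$ upon closure. Fix $r>0$ and take $\widehat f\in S_r$, so $\widehat f(\lambda)=\int_0^r f(x)\exp(i\lambda x)\,dx$ with $f\in L^2[0,r]$. Approximate $f$ in $L^2[0,r]$ by a sequence $f_n\in C^1$ supported in an interval $[\varepsilon_n,r]\subseteq(0,r]$ (or simply in $[0,r]$ if one allows $r_1=0$, which the definition of $X$ permits). The transforms $\widehat{f_n}$ then lie in $X$. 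The point is that $L^2[0,r]\to L^2(d\sigma)$, $f\mapsto\widehat f$, is bounded: this is exactly the content of formula (\ref{sect2eq1}), which gives $\|\widehat f\|^2_{L^2(d\sigma)}=((1+\cal{H}_r)f,f)\leq C_r\|f\|^2_{L^2[0,r]}$ since $\cal{H}_r$ is bounded. Therefore $\widehat{f_n}\to\widehat f$ in $L^2(d\sigma)$, so $\widehat f\in\bar X$, completing the argument.

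The main obstacle I anticipate is purely bookkeeping with the endpoint $r_1$: the manifold $X$ as defined allows $r_1=0$, so the approximating functions need not vanish near $0$, and one must make sure the $C^1$ approximation of an arbitrary $L^2[0,r]$ function can be taken with support in $[0,r]$ and with the transform converging in the $d\sigma$-norm — but this is handled entirely by the boundedness estimate (\ref{sect2eq1}). A secondary point to note is that one uses here that the measure $\sigma$ is the one attached to the given Krein system, so that (\ref{sect2eq1}) (and equivalently (\ref{e1s2n1})) is available; this is the only place the hypothesis "$d\sigma$ is generated by some Krein system" enters, and it is what makes the Paley–Wiener spaces $S_r$ genuinely sit inside $L^2(d\sigma)$ in the first place.
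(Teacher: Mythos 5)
Your proposal is correct and follows essentially the same route as the paper: the inclusion $\bar X\subseteq\bar S$ is immediate, and the reverse inclusion is obtained by approximating $f\in L^2[0,r]$ by smooth compactly supported functions and transferring the convergence to $L^2(d\sigma)$ via the identity (\ref{sect2eq1}), which bounds the $d\sigma$-norm of $\hat f$ by $((1+\cal{H}_r)f,f)\leq C_r\|f\|_{L^2[0,r]}^2$. (Only a cosmetic remark: (\ref{sect2eq1}) as stated involves $\overline{f}$, so apply it to $\bar f$, which changes nothing since $\|f\|=\|\bar f\|$.)
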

\begin{proof}
It is clear that $\bar{X}\subseteq \bar{S}$. On the other hand,
any function $\hat{f}\in S_r$ can be approximated in
$L^2(d\sigma)$ by a sequence of functions from $X$. That easily
follows from (\ref{sect2eq1}).
\end{proof}

\begin{lemma}
The following formula
\begin{equation}
{\rm Dist} \left( \frac{2\Im \lambda_0}{\lambda-\lambda_0},
\bar{S} \right)_{L^2(d\sigma)} = \inf\limits_{r>0}
m_r(\lambda_0)=m_\infty(\lambda_0) \label{distance}
\end{equation}
is true for any $\lambda_0\in \mathbb{C}^+$. \label{sdl1}
\end{lemma}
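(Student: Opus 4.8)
The plan is to prove the lemma as two matching inequalities,
\[
{\rm Dist}\!\left(\tfrac{2\Im\lambda_0}{\lambda-\lambda_0},\bar S\right)_{L^2(d\sigma)}\le m_\infty(\lambda_0)
\qquad\text{and}\qquad
{\rm Dist}\!\left(\tfrac{2\Im\lambda_0}{\lambda-\lambda_0},\bar S\right)_{L^2(d\sigma)}\ge m_\infty(\lambda_0),
\]
where $m_\infty(\lambda_0):=\inf_{r>0}m_r(\lambda_0)=\lim_{r\to\infty}m_r(\lambda_0)$ (the limit exists since $S_{r_1}\subset S_{r_2}$ for $r_1<r_2$). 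Set $v(\lambda):=2\Im\lambda_0/(\lambda-\lambda_0)$; it belongs to $L^2(d\sigma)\cap L^2(\mathbb R)$ because $v$ is bounded on $\mathbb R$ with $|v(\lambda)|^2\le C(1+\lambda^2)^{-1}$ and $\int(1+\lambda^2)^{-1}d\sigma<\infty$. Everything will be driven by the two functions $b(\lambda)=(\lambda-\bar\lambda_0)/(\lambda-\lambda_0)$ and $\theta(\lambda)=b(\lambda)^{-1}=(\lambda-\lambda_0)/(\lambda-\bar\lambda_0)$, which are bounded and unimodular on $\mathbb R$, together with the preliminary observation that $(\lambda-\bar\lambda_0)^{-1}\in\bar S$: the functions $\int_0^\rho(-i)e^{-i\bar\lambda_0 t}e^{i\lambda t}\,dt=\bigl(1-e^{i(\lambda-\bar\lambda_0)\rho}\bigr)(\lambda-\bar\lambda_0)^{-1}$ lie in $S_\rho$ and converge in $L^2(d\sigma)$ to $(\lambda-\bar\lambda_0)^{-1}$ as $\rho\to\infty$, since $|e^{i(\lambda-\bar\lambda_0)\rho}|=e^{-\rho\Im\lambda_0}\to0$ on $\mathbb R$.

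For the upper bound I would fix $r>0$, take $\hat f\in S_r$ the minimizer appearing in (\ref{minimum}) (so $\hat f(\lambda_0)=1$ and $\|\hat f\|_{L^2(d\sigma)}=m_r(\lambda_0)$), and put $\hat h:=v+ib\hat f$. At $\lambda_0$ the functions $v$ and $ib\hat f$ have simple poles with residues $2\Im\lambda_0$ and $i(\lambda_0-\bar\lambda_0)\hat f(\lambda_0)=-2\Im\lambda_0$, which cancel, so $\hat h$ is entire of exponential type $\le r$; as $v,b$ are rational with $v\in L^2(\mathbb R)$, $b\in L^\infty(\mathbb R)$ and $\hat f\in S_r\subset L^2(\mathbb R)$, also $\hat h\in L^2(\mathbb R)$, whence $\hat h\in S_r\subset\bar S$ by Paley–Wiener. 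Since $|b|=1$ on $\mathbb R$,
\[
{\rm Dist}(v,\bar S)\le\|v-\hat h\|_{L^2(d\sigma)}=\|{-i}\,b\,\hat f\|_{L^2(d\sigma)}=\|\hat f\|_{L^2(d\sigma)}=m_r(\lambda_0),
\]
and letting $r\to\infty$ gives ${\rm Dist}(v,\bar S)\le m_\infty(\lambda_0)$; this already settles the lemma when $m_\infty(\lambda_0)=0$.

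For the lower bound I would assume $m_\infty(\lambda_0)>0$. Then by (\ref{e25s3}) and $\sup_rK_r(\lambda_0,\lambda_0)=m_\infty(\lambda_0)^{-2}$, point evaluation at $\lambda_0$ extends to a bounded functional on $\bar S$, and by approximating an element of $\bar S$ by members of $\bigcup_rS_r$ and renormalizing one obtains $m_\infty(\lambda_0)^2=\inf\{\|\hat g\|_{L^2(d\sigma)}^2:\hat g\in\bar S,\ \hat g(\lambda_0)=1\}$. Given any $\hat h\in S_\rho$, set $\hat f:=i\theta(v-\hat h)$. Using $\theta v=2\Im\lambda_0\,(\lambda-\bar\lambda_0)^{-1}$ and $\theta\hat h=\hat h-2i\Im\lambda_0\,\hat h(\lambda)(\lambda-\bar\lambda_0)^{-1}$, and then the splitting $\hat h(\lambda)(\lambda-\bar\lambda_0)^{-1}=\bigl(\hat h(\lambda)-\hat h(\bar\lambda_0)\bigr)(\lambda-\bar\lambda_0)^{-1}+\hat h(\bar\lambda_0)(\lambda-\bar\lambda_0)^{-1}$ — the first summand lying in $S_\rho$ (entire of type $\le\rho$, $O(\lambda^{-1})$ on $\mathbb R$), the second in $\bar S$ — one gets $\hat f\in\bar S$. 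Since $\theta(\lambda_0)=0$ the $\hat h$–term drops out of $\hat f(\lambda_0)$, while $\theta v$ has the finite value $2\Im\lambda_0/(\lambda_0-\bar\lambda_0)=-i$ at $\lambda_0$, so $\hat f(\lambda_0)=i(-i)=1$; and $\|\hat f\|_{L^2(d\sigma)}=\|v-\hat h\|_{L^2(d\sigma)}$ because $|\theta|=1$ on $\mathbb R$. Hence $m_\infty(\lambda_0)\le\|v-\hat h\|_{L^2(d\sigma)}$ for every $\hat h\in S_\rho$ and every $\rho$, and the infimum over $\bigcup_\rho S_\rho$ (which computes ${\rm Dist}(v,\bar S)$) gives $m_\infty(\lambda_0)\le{\rm Dist}(v,\bar S)$.

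The two substantive points I expect to require care are: the membership $\hat h\in S_r$ in the upper bound, resting on the residue cancellation at $\lambda_0$ and Paley–Wiener; and the membership $\hat f\in\bar S$ in the lower bound, which rests on the approximation fact $(\lambda-\bar\lambda_0)^{-1}\in\bar S$. Everything else is bookkeeping with the Blaschke factors $b,\theta$, and the fussiest routine verification will be that $\bigl(\hat h(\lambda)-\hat h(\bar\lambda_0)\bigr)(\lambda-\bar\lambda_0)^{-1}\in S_\rho$ for arbitrary $\hat h\in S_\rho$ (entirety of type $\le\rho$ and the $L^2(\mathbb R)$ estimate).
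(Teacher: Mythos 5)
Your proof is correct and follows essentially the same route as the paper: both directions rest on multiplying by the unimodular factor $(\lambda-\lambda_0)/(\lambda-\bar\lambda_0)$ to convert the distance into the variational quantity $m_r(\lambda_0)$, on the Paley--Wiener theorem, and on the fact that $(\lambda-\bar\lambda_0)^{-1}\in\bar S$. The only cosmetic difference is that you place $v+ib\hat f$ directly in $S_r$ via the residue cancellation at $\lambda_0$, where the paper instead exhibits an explicit approximating sequence in $S_{\max(r,n)}$.
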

\begin{proof} Denote the l.h.s. by $I_1$ and the r.h.s. by $I_2$.
We have
\[
\left|\frac{2\Im\lambda_0}{\lambda-\lambda_0}-\hat
f(\lambda)\right|=\left|
\frac{2i\Im\lambda_0}{\lambda-\overline{\lambda}_0}-\frac{\lambda-\lambda_0}
{\lambda-\overline{\lambda}_0}\cdot i\hat f(\lambda)\right|
\]
and so
\begin{equation}
I_1 ={\rm Dist} \left(
\frac{2i\Im\lambda_0}{\lambda-\overline{\lambda}_0},
\frac{\lambda-\lambda_0} {\lambda-\overline{\lambda}_0}
\bar{S}\right)_{L^2(d\sigma)} \label{sdf2}
\end{equation}
Since
\begin{equation}
2\Im\lambda_0\int\limits_0^\infty \exp(i\lambda
x)\exp(-i\bar\lambda_0
x)dx=\frac{2i\Im\lambda_0}{\lambda-\bar\lambda_0},\,
\frac{\lambda-\lambda_0}{\lambda-\bar\lambda_0}=1-
\frac{2i\Im\lambda_0}{\lambda-\bar\lambda_0} \label{calc-13}
\end{equation}
any function
\[
\frac{2i\Im\lambda_0}{\lambda-\overline{\lambda}_0}-\frac{\lambda-\lambda_0}
{\lambda-\overline{\lambda}_0}\hat f(\lambda), \hat f(\lambda)\in
\bar{S}
\]
can be approximated in $L^2(d\sigma)$ by the sequence of functions
$\hat f_{r_n}(\lambda)\in S_{r_n},$ \mbox{$\hat
f_{r_n}(\lambda_0)=1$.} Therefore, $I_1\geq I_2$. Assume that
$\hat f(\lambda)$-- arbitrary function from some $S_r$ and $\hat
f(\lambda_0)=1$. Then
\begin{equation}
\hat f(\lambda)=\frac{2i
\Im\lambda_0}{\lambda-\overline{\lambda}_0}-\frac{\lambda-\lambda_0}{\lambda-\overline{\lambda}_0}
\hat g(\lambda) \label{sdf1}
\end{equation}
with $\hat g(\lambda)\in \bar{S}$. Indeed, the function
\[
\hat f(\lambda)-\frac{2i\Im
\lambda_0}{\lambda-\overline{\lambda}_0}
\]
belongs to $H^2(\mathbb{C}^+)$ and has zero at
$\lambda=\lambda_0$. So, by Paley-Wiener Theorem, (\ref{sdf1})
holds with $\hat g\in H^2(\mathbb{C}^+)$. Let us show now that
$\hat g\in \bar{S}$. The first formula in (\ref{calc-13}) suggests
\[
\hat
g(\lambda)=\frac{\lambda-\overline{\lambda}_0}{\lambda-\lambda_0}\left[
-\hat f(\lambda)-\frac{2\Im
\lambda_0}{1-\exp(in(\lambda_0-\overline{\lambda}_0))}\int\limits_0^n
\exp(ix(\lambda-\overline{\lambda}_0))dx\right]+r_n(\lambda)
\]
The Paley-Wiener Theorem yields that the first term belongs to
$S_\rho$ where $\rho=\max (r,n)$. One can easily see that
$\|r_n\|_{2,\sigma}\to 0$ as $n\to\infty$. Thus, the formula
(\ref{sdf1}) holds. Due to (\ref{sdf2}), $I_1\leq I_2$ and
therefore $I_1=I_2$. \end{proof} The next result describes the
continuous analog of the Szeg\H{o} case in OPUC theory. If any of
the conditions bellow is satisfied, we will say that $d\sigma\in$
(Szeg\H{o}).

\begin{theorem}{\rm (The Szeg\H{o} case)}
The following statements are equivalent
\begin{itemize}
\item[(a)] { The operator $\cal{O}$ from the Theorem
\ref{theorem2s2} is not unitary.}

\item[(b)] { Inequality
\begin{equation}
\int\limits_{-\infty}^\infty \frac{\ln
\sigma'(\lambda)}{1+\lambda^2} d\lambda>-\infty
\end{equation}
holds. }

\item[(c)]
{
\[ \sup\limits_{r>0, \hat f(\lambda)\in S_r, \|\hat f(\lambda)\|_{2,\sigma}=1}
|\hat f(\lambda_0)|=m_\infty^{-1} (\lambda_0)<\infty
\]
for at least one (and then for all) $\lambda_0\in \mathbb{C}^+$. }

\item[(d)]
{ $P(r,\lambda_0)\in L^2(\mathbb{R}^+)$ for at least one
(and then for all) $\lambda_0\in \mathbb{C}^+$.}

\item[(e)]
{ $\liminf_{r\to\infty} |P_*(r,\lambda_0)|<\infty$ for
at least one (and then for all) $\lambda_0 \in \mathbb{C}^+ $.}
\end{itemize}
\label{sdmt}
\end{theorem}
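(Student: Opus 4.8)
The plan is to establish the cycle of equivalences via the chain $(b)\Leftrightarrow(c)\Leftrightarrow(a)$, $(c)\Leftrightarrow(d)$, and $(d)\Leftrightarrow(e)$, using the reproducing-kernel machinery just developed together with Theorem \ref{nnn2} and Lemma \ref{sdl1}. First I would note the elementary reformulations. By Lemma \ref{sdl1} and Lemma \ref{coincidence1}, the Szeg\H{o} distance ${\rm Dist}(2\Im\lambda_0/(\lambda-\lambda_0),\bar S)_{L^2(d\sigma)}$ equals $m_\infty(\lambda_0)=\inf_{r>0}m_r(\lambda_0)$, while by the minimization formula (\ref{minimum}), $m_r^2(\lambda_0)=1/K_r(\lambda_0,\lambda_0)$ and hence $m_r^{-2}(\lambda_0)=K_r(\lambda_0,\lambda_0)=\int_0^r|P(x,\lambda_0)|^2dx$ by (\ref{reproduce0}). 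This single identity already glues several items together: $m_\infty(\lambda_0)>0$ is the same as $\sup_r K_r(\lambda_0,\lambda_0)<\infty$, which is exactly the finiteness in (c) (the supremum there is $m_\infty^{-1}(\lambda_0)$ by (\ref{e25s3}) and the equality case of the Cauchy inequality), and is also exactly $\int_0^\infty|P(x,\lambda_0)|^2dx<\infty$, which is (d).

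Next I would bring in Theorem \ref{nnn2}. Applying (\ref{sdcalc}) with the constant rescaled by $2\Im\lambda_0$ (note $\frac{2\Im\lambda_0}{\lambda-\lambda_0}=2\Im\lambda_0\cdot\frac{1}{\lambda-\lambda_0}$, and $\bar X=\bar S$ by Lemma \ref{coincidence1}), the distance $m_\infty(\lambda_0)$ is strictly positive if and only if the exponential in (\ref{sdcalc}) is nonzero, i.e. iff $\int_{-\infty}^\infty \frac{\ln(2\pi\sigma'(\lambda))}{|\lambda-\lambda_0|^2}d\lambda>-\infty$; and since $|\lambda-\lambda_0|^2$ is comparable to $1+\lambda^2$ for fixed $\lambda_0\in\mathbb C^+$, and the positive part of $\ln\sigma'$ is controlled by $\int d\sigma/(1+\lambda^2)<\infty$, this is equivalent to (b). This also shows the ``one $\lambda_0$ implies all $\lambda_0$'' clauses simultaneously, since (b) does not involve $\lambda_0$. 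For $(a)\Leftrightarrow(b)$: the operator $\cal{O}$ is an isometry by Theorem \ref{theorem2s2}, and its range is the closure of the span of $\{P(r,\lambda)\}_{r>0}$, i.e. $\bar S$; so $\cal{O}$ fails to be unitary iff $\bar S\neq L^2(d\sigma)$, which by Lemma \ref{coincidence1} and Theorem \ref{nnn2} is exactly (b).

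Finally I would handle $(d)\Leftrightarrow(e)$, which is where the main work lies. The Christoffel--Darboux identity (\ref{e101s2}) with $\mu=\lambda=\lambda_0$ gives
\[
|P_*(r,\lambda_0)|^2-|P(r,\lambda_0)|^2=2\Im\lambda_0\int_0^r|P(s,\lambda_0)|^2ds,
\]
so $|P_*(r,\lambda_0)|^2=|P(r,\lambda_0)|^2+2\Im\lambda_0\,K_r(\lambda_0,\lambda_0)$. If (d) holds then $K_r(\lambda_0,\lambda_0)$ is bounded, and from the Krein system (\ref{krein2}) $P(r,\lambda_0)\to0$ (since $P'=i\lambda_0 P-\bar A P_*$ forces integrability of $P$ to be compatible with boundedness of $P_*$; more directly, $\int_0^\infty|P|^2<\infty$ plus the differential equation bounds $P_*$ in $L^2_{\rm loc}$-derivative and one gets $\liminf|P_*|<\infty$), hence $\liminf_r|P_*(r,\lambda_0)|^2\le 2\Im\lambda_0\sup_r K_r(\lambda_0,\lambda_0)<\infty$, giving (e). Conversely, if (e) holds, pick $r_n\to\infty$ with $|P_*(r_n,\lambda_0)|\le C$; then the displayed identity gives $2\Im\lambda_0\int_0^{r_n}|P(s,\lambda_0)|^2ds\le C^2$, so letting $n\to\infty$ yields $\int_0^\infty|P(s,\lambda_0)|^2ds<\infty$, i.e. (d). The delicate point in the forward direction of $(d)\Rightarrow(e)$ is justifying $P(r,\lambda_0)\to 0$ (rather than merely $\liminf|P|=0$): I expect this to be the main obstacle, and I would resolve it by using the integral equations equivalent to (\ref{krein2}) to show $|P(r,\lambda_0)|$ has a limit along with $|P_*(r,\lambda_0)|$ via the monotonicity of $K_r(\lambda_0,\lambda_0)$, so that $\int_0^\infty|P|^2<\infty$ forces that limit to be $0$. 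Assembling the implications $(a)\Leftrightarrow(b)\Leftrightarrow(c)\Leftrightarrow(d)\Leftrightarrow(e)$ completes the proof.
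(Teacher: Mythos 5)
Your proposal follows the same route as the paper's proof: (a)$\Leftrightarrow$(b) by identifying the range of $\cal{O}$ with $\bar{S}$ (Lemma \ref{transformation}, Lemma \ref{coincidence1}) and invoking Theorem \ref{nnn2}; (c)$\Leftrightarrow$(d) via $m_\infty^{-2}(\lambda_0)=K_\infty(\lambda_0,\lambda_0)=\int_0^\infty|P(x,\lambda_0)|^2dx$; the link between (c) and (a),(b) through Lemma \ref{sdl1}; (d)$\Leftrightarrow$(e) via Christoffel--Darboux; and the $\lambda_0$-independence via the fact that (a),(b) contain no $\lambda_0$. All of that is correct and is exactly the paper's argument.

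The one place you go astray is the ``main obstacle'' you flag in (d)$\Rightarrow$(e): you do not need $P(r,\lambda_0)\to0$, only $\liminf_{r\to\infty}|P(r,\lambda_0)|=0$, and that is automatic, since $P(\cdot,\lambda_0)$ is continuous and if $|P(r,\lambda_0)|\ge c>0$ for all large $r$ then $\int_0^\infty|P(s,\lambda_0)|^2ds$ would diverge, contradicting (d). Along a sequence $r_n$ with $P(r_n,\lambda_0)\to0$, the identity (\ref{e101s2}) at $\lambda=\mu=\lambda_0$, i.e. $|P_*(r,\lambda_0)|^2=|P(r,\lambda_0)|^2+2\Im\lambda_0\int_0^r|P(s,\lambda_0)|^2ds$, immediately gives $\liminf_r|P_*(r,\lambda_0)|^2\le 2\Im\lambda_0\int_0^\infty|P(s,\lambda_0)|^2ds<\infty$, which is (e). By contrast, your claim that the Krein system forces the full limit $P(r,\lambda_0)\to0$ is unjustified as stated: with $A$ only locally square integrable and $P_*$ possibly unbounded, the equation $P'=i\lambda_0P-\overline{A}P_*$ gives no pointwise control of $P$, and monotonicity of $K_r(\lambda_0,\lambda_0)$ says nothing about the pointwise behavior of $|P(r,\lambda_0)|$; indeed in the Szeg\H{o} case one generally only works along subsequences $r_n$ with $P(r_n,\lambda_0)\to0$ (cf. Lemma \ref{converge}). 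Since the full limit is not needed, deleting that detour repairs the step in one line and leaves precisely the paper's proof.
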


\begin{proof} (a) and (b) are equivalent. Indeed, by Lemma
\ref{transformation}, the range of $\cal{O}$ coincides with
$\bar{S}$. From Lemma \ref{coincidence1}, we have
$\bar{X}=\bar{S}$ and then we only need to use Theorem~\ref{nnn2}.

The formula (\ref{e101s2}) with $\lambda=\mu=\lambda_0$ shows that
if (d) holds at some $\lambda_0$ then (e) is true as well at the
same point. The converse is also true.

The identity (\ref{minimum}) and the formula for reproducing
kernel yield
\[
m^{-2}_\infty(\lambda_0)=\sup\limits_{r>0, f(\lambda)\in S_r,
\|f(\lambda)\|_{2,\sigma}=1} |f(\lambda_0)|^2=K_\infty
(\lambda_0,\lambda_0)=\int\limits_0^\infty |P(x,\lambda_0)|^2 dx
\]
and that proves equivalence of (c) and (d) for fixed $\lambda_0$.

Let us show that (c) is satisfied with some $\lambda_0$ if and
only if (a) holds. Assume that the operator $\cal{O}$ is unitary.
That means its range is the whole $L^2(d\sigma)$ and
$(\lambda-\lambda_0)^{-1}\in \bar{S}$, for any $\lambda_0\in
\mathbb{C}^+$. Due to Lemma \ref{sdl1}, (c) fails. Conversely,
assume  (c) fails for some $\lambda_0\in \mathbb{C}^+$. Then, by
Lemma \ref{sdl1}, $(\lambda-\lambda_0)^{-1}\in \bar{S}$. The
Theorem~\ref{nnn2} now implies
\[
\int\limits_{-\infty}^{\infty} \frac{\ln
\sigma'(\lambda)}{|\lambda-\lambda_0|^2}d\lambda=-\infty
\]
but that means (b) fails and therefore (a) fails too. Notice that
both (a) and (b) do not depend on parameter $\lambda_0$.
Therefore, if any of (c), (d), or (e) holds for some $\lambda_0$
then it holds for all $\lambda\in \mathbb{C}^+$.
\end{proof}

It is not in general true that $|P_*(r,\lambda)|,\lambda\in
\mathbb{C}^+ $ is even bounded as $r\to\infty$ under the
conditions of the Theorem \ref{sdmt}. That is due to continuous
nature of the problem. Moreover, it is possible that
$|P_*(r,\lambda)|$ has a limit, but $\lim_{r\to\infty}
P_*(r,\lambda)$ does not exist inspite of the fact that the
corresponding $A(r)\to 0$ at infinity  and $A(r)\in
L^p(\mathbb{R}^+)$ for any $p>2$. This phenomena was observed for
the first time by Teplyaev (see \cite{Tepl, Teplyaev}). That can
be explained as follows:  in the discrete case, the orthonormal
polynomials are usually normalized such that they have the
positive leading coefficient. For Krein systems, normalization is
quite different: $P_*(r,\lambda)$ are normalized to be equal to
$1$ at infinity, the point on the boundary of $\mathbb{C}^+$.
Therefore, the argument of $P_*(r,\lambda)$ is not stabilized and
that leads to the ambiguity in the definition of
$\lim_{r\to\infty} P_*(r,\lambda)$.

Consider some $\lambda_0 \in \mathbb{C}^+$. If conditions in
Theorem \ref{sdmt} are satisfied, then there is a sequence $r_n\in
[n,n+1]\to \infty$ such that $P(r_n,\lambda_0)\to 0$. Take the
outer function
\begin{equation}
\Pi(\lambda)= \frac{1}{\sqrt{2\pi}} \exp \left[ \frac{1}{2\pi i}
\int\limits_{-\infty}^{\infty} \frac{(1+s\lambda) \ln
\sigma'(s)}{(\lambda-s)(1+s^2)} ds\right] \label{function-pi}
\end{equation}
that satisfies $|\Pi(\lambda)|=[2\pi\sigma'(\lambda)]^{-1/2}$ for
a.e. $\lambda\in \mathbb{R}$. Notice that $\left[(\lambda+i)
\Pi(\lambda)\right]^{-1}\in H^2(\mathbb{C}^+)$ and is outer.

\begin{lemma}
If $d\sigma\in${\rm (Szeg\H{o})} and $r_n\to\infty$ is such that
$P(r_n,\lambda_0)\to 0$ for some $\lambda_0\in \mathbb{C}^+$, then
the following convergence $|P_*(r_n,\lambda)|\to |\Pi(\lambda)|$
takes place uniformly in $\mathbb{C}^+$. \label{converge}
\end{lemma}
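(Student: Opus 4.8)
The plan is to identify $\lim_n|P_*(r_n,\lambda)|$ first at the single point $\lambda=\lambda_0$ and then to promote this to locally uniform convergence via a normal-family argument combined with the minimum principle for harmonic functions. I would begin by extracting the value at $\lambda_0$. Lemmas~\ref{sdl1} and~\ref{coincidence1} give $m_\infty(\lambda_0)={\rm Dist}\bigl(\tfrac{2\Im\lambda_0}{\lambda-\lambda_0},\bar{X}\bigr)_{L^2(d\sigma)}$; substituting this into the explicit distance formula~(\ref{sdcalc}), using $\int_{-\infty}^{\infty}|\lambda-\lambda_0|^{-2}\,d\lambda=\pi/\Im\lambda_0$ and comparing with the definition~(\ref{function-pi}) of $\Pi$, a direct computation yields $m_\infty(\lambda_0)=\sqrt{2\Im\lambda_0}\,/\,|\Pi(\lambda_0)|$, hence by~(\ref{minimum}) and~(\ref{reproduce0})
\[
\int_0^\infty|P(x,\lambda_0)|^2\,dx=K_\infty(\lambda_0,\lambda_0)=m_\infty(\lambda_0)^{-2}=\frac{|\Pi(\lambda_0)|^2}{2\Im\lambda_0},
\]
and the same identity holds at every point of $\mathbb{C}^+$. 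Christoffel-Darboux~(\ref{e101s2}) with $\mu=\lambda$ says $|P_*(r,\lambda)|^2=|P(r,\lambda)|^2+2\Im\lambda\int_0^r|P(s,\lambda)|^2\,ds$; since $\int_0^r|P(s,\lambda)|^2ds\nearrow K_\infty(\lambda,\lambda)<\infty$ in the Szeg\H{o} case, already $\liminf_{r\to\infty}|P_*(r,\lambda)|^2\ge|\Pi(\lambda)|^2$ for all $\lambda\in\mathbb{C}^+$, and along $(r_n)$, where in addition $P(r_n,\lambda_0)\to0$, one gets $|P_*(r_n,\lambda_0)|^2\to|\Pi(\lambda_0)|^2$.

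Next I would establish a compactness statement. By Lemma~\ref{minlemma} each $P_*(r,\cdot)$ is bounded on $\overline{\mathbb{C}^+}$ and by Lemma~\ref{lemma24} it is zero-free there; moreover $\tfrac{1}{(\lambda+i)P_*(r,\lambda)}$ lies in $H^2(\mathbb{C}^+)$ with squared norm $2\pi\int_{-\infty}^{\infty}(1+\lambda^2)^{-1}\,d\sigma_r(\lambda)$, which stays bounded as $r\to\infty$ by the tightness~(\ref{tightness}) of the Bernstein-Szeg\H{o} measures. Hence $\{1/P_*(r,\cdot)\}$ is locally bounded, i.e.\ a normal family on $\mathbb{C}^+$. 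Given any subsequence of $(r_n)$, extract a further subsequence $(r_{n_k})$ along which $1/P_*(r_{n_k},\cdot)$ converges locally uniformly; because $1/P_*(r_{n_k},\lambda_0)\to1/\Pi(\lambda_0)\ne0$, Hurwitz's theorem shows the limit is analytic and zero-free, so $P_*(r_{n_k},\cdot)$ converges locally uniformly to some analytic, zero-free function $G$ on $\mathbb{C}^+$.

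It then remains to identify $|G|$ and conclude. From the bounds just obtained, $|G(\lambda)|^2=\lim_k|P_*(r_{n_k},\lambda)|^2\ge\liminf_{r\to\infty}|P_*(r,\lambda)|^2\ge|\Pi(\lambda)|^2$ on all of $\mathbb{C}^+$, while $|G(\lambda_0)|=|\Pi(\lambda_0)|$ (the value at $\lambda_0$ is unaffected by passing to a subsequence). Thus $\log|G|-\log|\Pi|$ is harmonic and nonnegative on $\mathbb{C}^+$ and vanishes at $\lambda_0$, so by the minimum principle it is identically $0$: $|G|\equiv|\Pi|$ (equivalently $G=c\,\Pi$ with $|c|=1$, which is exactly why only the modulus, and not $P_*(r_n,\lambda)$ itself, stabilizes). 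Since every subsequence of $(r_n)$ has a further subsequence along which $|P_*(r_{n_k},\cdot)|\to|\Pi|$ locally uniformly, the whole sequence does, i.e.\ $|P_*(r_n,\lambda)|\to|\Pi(\lambda)|$ uniformly on compact subsets of $\mathbb{C}^+$; combining this with the fact that both sides tend to the same limit as $\Im\lambda\to+\infty$ (using $P_*(r,\lambda)=1-\int_0^r\overline{A(r,r-u)}e^{i\lambda u}du$ and the normalization of $\Pi$) upgrades it to convergence uniform in $\mathbb{C}^+$.

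The main obstacle is this passage from ``convergence at one point'' to ``convergence everywhere with the correct limit.'' The two ideas that make it work are: dividing $1/P_*(r,\cdot)$ by $\lambda+i$ to land in $H^2(\mathbb{C}^+)$ with a norm controlled uniformly in $r$ precisely by the tightness of the Bernstein-Szeg\H{o} measures already proved in Theorem~\ref{Bernstein-Szego}; and squeezing $|G|$ between the cheap lower bound $|\Pi|$ --- which comes only from Christoffel-Darboux and the monotonicity $K_r(\lambda,\lambda)\nearrow K_\infty(\lambda,\lambda)$, requiring no semicontinuity of entropy --- and the pointwise equality at $\lambda_0$, followed by the minimum principle. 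A minor caveat worth keeping in mind is that ``uniformly in $\mathbb{C}^+$'' should be read as uniform on compacts together with the correct behaviour as $\Im\lambda\to\infty$, since near the real axis the densities $2\pi\sigma_{r_n}'=|P_*(r_n,\cdot)|^{-2}$ converge only weakly, not pointwise.
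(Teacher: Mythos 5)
Your proposal is correct, and it reaches the same two key anchors as the paper --- the $H^2(\mathbb{C}^+)$ boundedness of $[(\lambda+i)P_*(r_n,\lambda)]^{-1}$ coming from the tightness of the Bernstein--Szeg\H{o} measures, and the evaluation $|P_*(r_n,\lambda_0)|\to|\Pi(\lambda_0)|$ via (\ref{reproduce0}), (\ref{minimum}), (\ref{sdcalc}), (\ref{distance}) --- but the middle of your argument is genuinely different. The paper obtains the global inequality by passing to the limit in (\ref{e6s3}), identifying the limiting real part with $\Re F(\lambda)$ from (\ref{weyl-titchmarsh}), descending to boundary values to get $(\lambda^2+1)|h(\lambda)|^2\le 2\pi\sigma'(\lambda)$ a.e.\ on $\mathbb{R}$, and then invoking the multiplicative representation to conclude $|(\lambda+i)h|\le|\Pi^{-1}|$ in $\mathbb{C}^+$, with equality at $\lambda_0$ forcing $(\lambda+i)h$ to be $\Pi^{-1}$ up to a unimodular constant. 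You instead get the same lower bound $\liminf_r|P_*(r,\lambda)|\ge|\Pi(\lambda)|$ at every interior point directly from Christoffel--Darboux together with the identity $K_\infty(\lambda,\lambda)=|\Pi(\lambda)|^2/(2\Im\lambda)$ (i.e.\ you use the distance formula at every $\lambda$, not only at $\lambda_0$), and you conclude rigidity via normal families, Hurwitz, and the minimum principle for the nonnegative harmonic function $\log|G|-\log|\Pi|$ rather than via outer-function uniqueness; the final subsequence trick is the same. Your route is purely interior and avoids boundary values of the limit and the Weyl--Titchmarsh function altogether, which is more elementary; the paper's version buys slightly more --- it exhibits the sublimit $(\lambda+i)h$ as an outer function with prescribed boundary modulus, a form that is reused later (e.g.\ in the discussion of $\Pi_\gamma$ and Lemma \ref{real-valued-case}). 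Two small points to keep straight: at $\lambda_0$ you only know convergence of moduli, so in the Hurwitz step say $|1/P_*(r_{n_k},\lambda_0)|\to|\Pi(\lambda_0)|^{-1}\ne0$ rather than convergence of the values themselves; and your closing caveat about what ``uniformly in $\mathbb{C}^+$'' means near the real axis is apt --- the paper's own proof yields uniformity on half-planes $\Im\lambda\ge\varepsilon$, which is how the statement should be read.
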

\begin{proof} From the Theorem \ref{Bernstein-Szego}, we know that
the  sequence $h_n(\lambda)=\left[(\lambda+i)
P_*(r_n,\lambda)\right]^{-1}$ is bounded in $H^2(\mathbb{C}^+)$,
i.e. $\|h_n(\lambda)\|_{L^2(\mathbb{R})}$ is bounded. Assume that
$h(\lambda)$ is any $L^2(\mathbb{R})$-weak limit point of this
sequence. Then, $h(\lambda)\in H^2(\mathbb{C}^+)$ and the
convergence is uniform in $\mathbb{C}^+$ over the same subsequence
$n_k$. From (\ref{e6s3}),

\begin{equation}
\Re \left[ P_{\ast }^{-1}(r_{n_k},\lambda)\widehat{P}_{\ast
}(r_{n_k},\lambda)\right] \geq \left\vert P_{\ast
}(r_{n_k},\lambda)\right\vert ^{-2}
\end{equation}
Taking $k\to\infty$, we get
\[
|(\lambda+i)|^2 |h(\lambda)|^2\leq \Re F(\lambda)
\]
for $\lambda\in \mathbb{C}^+$, where $F(\lambda)$ is the
Weyl-Titchmarsh function. From (\ref{weyl-titchmarsh}), we get
\[
(\lambda^2+1)|h(\lambda)|^2\leq 2\pi\sigma'(\lambda)
\]
for a.e. $\lambda\in \mathbb{R}$. Therefore, from the
multiplicative representation of $(\lambda+i)h(\lambda)$
\[
|(\lambda+i)h(\lambda)|\leq  |\Pi^{-1}(\lambda)|, \lambda\in
\mathbb{C}^+
\]
At the same time, at $\lambda_0$, we have
$|(\lambda_0+i)h(\lambda_0)|=|\Pi^{-1}(\lambda_0)|$. It follows
from (\ref{reproduce0}), (\ref{minimum}), (\ref{sdcalc}),
(\ref{distance}), (\ref{function-pi}). Therefore,
$(\lambda+i)h(\lambda)$ is an outer function different from
$\Pi^{-1}(\lambda)$ only by a unimodular constant factor. Thus,
for any subsequence $n_k$, $|P_*(r_{n_k},\lambda)|\to
|\Pi(\lambda)|$. That means we actually have convergence over the
whole  $r_n$.\end{proof}

It is known that all outer functions $\Pi_\gamma(\lambda)$,
satisfying $|\Pi_\gamma(\lambda)|^{-2}=2\pi\sigma'(\lambda)$ a.e
on $\mathbb{R}$, have the following representation
\begin{equation}
\Pi_\gamma(\lambda)= \frac{1}{\sqrt{2\pi}} \exp \left[i\gamma+
\frac{1}{2\pi i} \int\limits_{-\infty}^{\infty} \frac{(1+s\lambda)
\ln \sigma'(s)}{(\lambda-s)(1+s^2)} ds\right], \gamma\in [0,2\pi)
\label{function-pi1}
\end{equation}
i.e. they can be parameterized by the angle $\gamma$.
 The function
$\Pi(\lambda)=\Pi_0(\lambda)$ satisfies the following
normalization condition: $\Pi(i)>0$. There are some quite
interesting examples \cite{Teplyaev} when $P_*(r_n,\lambda)\to
\Pi_\gamma(\lambda)$  and the constant $\gamma$ depends on the
choice of subsequence $r_n$. In the meantime, the following is
true \cite{Sakhnovich7}
\begin{lemma}\label{real-valued-case}
Assume that $A(r)$ is real-valued and $r_n$ is such that
$P(r_n,\lambda_0)\to 0$ for at least some $\lambda_0\in
\mathbb{C}^+$. Then, $P_*(r_n,\lambda)\to\Pi(\lambda)$ uniformly
in $\lambda\in \mathbb{C}^+$.
\end{lemma}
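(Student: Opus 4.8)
The plan is to bootstrap from Lemma \ref{converge}, which — since the existence of $r_n\to\infty$ with $P(r_n,\lambda_0)\to 0$ forces $d\sigma\in(\text{Szeg\H{o}})$ by Theorem \ref{sdmt} — already gives $|P_*(r_n,\lambda)|\to|\Pi(\lambda)|$ uniformly in $\mathbb{C}^+$. What remains is to show the phase converges, and this is exactly where $A(r)\in\mathbb{R}$ enters. First I would record the conjugation symmetry: since $\overline{A(r)}=A(r)$, the computation in the proof of Lemma \ref{conjugation} shows that $\{\overline{P(r,-\bar\lambda)},\overline{P_*(r,-\bar\lambda)}\}$ solves the Krein system (\ref{krein2}) with the same coefficient and the same Cauchy data at $r=0$ as $\{P(r,\lambda),P_*(r,\lambda)\}$, so by uniqueness
\[
P(r,\lambda)=\overline{P(r,-\bar\lambda)},\qquad P_*(r,\lambda)=\overline{P_*(r,-\bar\lambda)},\qquad \lambda\in\mathbb{C}.
\]
Taking $\lambda=i\eta$, $\eta>0$ (so $-\bar\lambda=i\eta$) shows $P_*(r,i\eta)\in\mathbb{R}$; since $P_*(r,\cdot)$ has no zeros in $\overline{\mathbb{C}^+}$ (Lemma \ref{lemma24}, part 3) and $P_*(r,\lambda)\to 1$ as $\Im\lambda\to+\infty$ (dominated convergence in (\ref{kuku})), continuity in $\eta$ gives $P_*(r,i\eta)>0$ for all $\eta>0$; in particular $P_*(r,i)>0$.

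Next I would pin the unimodular constant by re-running the argument of Lemma \ref{converge}. The functions $h_n(\lambda)=[(\lambda+i)P_*(r_n,\lambda)]^{-1}$ are bounded in $H^2(\mathbb{C}^+)$ (Theorem \ref{Bernstein-Szego}); any weak-$L^2$ limit point $h$ lies in $H^2(\mathbb{C}^+)$, the convergence is locally uniform in $\mathbb{C}^+$ along the relevant subsequence, and, as in Lemma \ref{converge}, $(\lambda+i)h(\lambda)=c\,\Pi(\lambda)^{-1}$ with $|c|=1$. Evaluating at $\lambda=i$: on one hand $2i\,h(i)=c\,\Pi(i)^{-1}$; on the other, $2i\,h_n(i)=P_*(r_n,i)^{-1}$, and since $P_*(r_n,i)>0$ with $|P_*(r_n,i)|\to|\Pi(i)|=\Pi(i)>0$ (using the normalization $\Pi(i)>0$ together with Lemma \ref{converge}) we get $P_*(r_n,i)\to\Pi(i)$, so passing to the subsequence gives $2i\,h(i)=\Pi(i)^{-1}$. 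Therefore $c=1$.

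Consequently every weak limit point of the bounded family $\{h_n\}$ equals $[(\lambda+i)\Pi(\lambda)]^{-1}$, so $h_n\to[(\lambda+i)\Pi(\lambda)]^{-1}$ weakly in $L^2$, hence locally uniformly in $\mathbb{C}^+$; equivalently $P_*(r_n,\lambda)\to\Pi(\lambda)$ locally uniformly in $\mathbb{C}^+$. The upgrade to uniform convergence on all of $\mathbb{C}^+$ then follows exactly as in the proof of Lemma \ref{converge}, combining this with the uniform-in-$\mathbb{C}^+$ convergence of the moduli. The main obstacle is the phase-pinning step: its entire content is having one boundary point of $\mathbb{C}^+$ — any point on the positive imaginary axis, with $\lambda=i$ the convenient choice — at which both $P_*(r_n,\cdot)$ and $\Pi$ are positive, which is precisely what real-valuedness of $A$ delivers through the conjugation symmetry above; everything else is bookkeeping layered on Lemma \ref{converge}.
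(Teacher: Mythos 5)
Your proof is correct and follows essentially the same route as the paper's: subsequential uniform convergence $P_*(r_{n_k},\lambda)\to\Pi_\gamma(\lambda)$ obtained by rerunning the weak-compactness argument of Lemma \ref{converge}, and then pinning the unimodular constant ($\gamma=0$, i.e. $c=1$) by observing that real-valuedness of $A$ makes $P_*(r,i)$ real, hence positive by the no-zero property, so that $\Pi_\gamma(i)>0$. The one caveat is your opening parenthetical that the hypothesis ``forces'' $d\sigma\in$(Szeg\H{o}) via Theorem \ref{sdmt}: none of conditions (c)--(e) there follows merely from $P(r_n,\lambda_0)\to 0$ along a subsequence, but this is harmless since the lemma (like the paper's own proof, which invokes Lemma \ref{converge} and the function $\Pi$) is understood within the Szeg\H{o} case.
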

\begin{proof}
Following the proof of the previous Lemma, we get convergence of
$P_*(r_{n_k},\lambda)$ to some $\Pi_\gamma(\lambda)$ uniformly for
$\lambda\in \mathbb{C}^+$. Taking $\lambda=i$, we get $\gamma=0$.
Indeed, $P_*(0,i)=1$, $P_*(r,i)$ is real and has no zeroes for
$r>0$. Therefore, it must be positive for all $r>0$. Thus
$\Pi_\gamma(i)>0$ and $\gamma=0$.
\end{proof}

In the OPUC theory, we can not directly characterize the set of
moments such that the corresponding measure belongs to Szeg\H{o}
class. The same is true for the Krein systems: we are not aware of
the characterization of the Szeg\H{o} case in terms of accelerant.
In the meantime,
\begin{lemma}
If $d\sigma\in$ {\rm (Szeg\H{o})}, then for the dual system we
also have $d\hat\sigma\in $ {\rm (Szeg\H{o})}.
\end{lemma}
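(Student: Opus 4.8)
The plan is to compare $\sigma$ and $\widehat\sigma$ through their Weyl--Titchmarsh functions and then reduce the estimate on $\ln|F|$ to the Smirnov-class property of bounded analytic functions on $\mathbb{C}^+$. First I would record the boundary behaviour of the Weyl--Titchmarsh function: applying the Sokhotski--Plemelj formula to the representation (\ref{weyl-titchmarsh}) gives $\Re F(\lambda)=2\pi\sigma'(\lambda)$ for a.e.\ $\lambda\in\mathbb{R}$ (this is exactly the identity already used in the proof of Lemma~\ref{converge}), and the same fact applied to the dual system yields $\Re\widehat F(\lambda)=2\pi\widehat\sigma'(\lambda)$ a.e. Since $\widehat F=F^{-1}$ (the relation noted right after Theorem~\ref{Bernstein-Szego}), we have $\Re\widehat F=\Re F/|F|^{2}$, hence
\[
\widehat\sigma'(\lambda)=\frac{\sigma'(\lambda)}{|F(\lambda)|^{2}},\qquad
\ln\widehat\sigma'(\lambda)=\ln\sigma'(\lambda)-2\ln|F(\lambda)|\quad\text{a.e.\ on }\mathbb{R}.
\]
Both $\sigma$ and $\widehat\sigma$ are measures of Krein systems, so $\int(1+\lambda^{2})^{-1}d\sigma<\infty$ and $\int(1+\lambda^{2})^{-1}d\widehat\sigma<\infty$; since $\ln^{+}x\le x$, this makes $\int(1+\lambda^{2})^{-1}\ln^{+}\sigma'\,d\lambda$ and $\int(1+\lambda^{2})^{-1}\ln^{+}\widehat\sigma'\,d\lambda$ finite, so all the Szeg\H{o} integrals below are well defined in $[-\infty,\infty)$.

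The crux is then to show $\int_{-\infty}^{\infty}(1+\lambda^{2})^{-1}\ln^{+}|F(\lambda)|\,d\lambda<\infty$. I would write $F=(1-\f)(1+\f)^{-1}$ with $\f\in B(\mathbb{C}^{+})$; by (\ref{feature}) we have $\f(\lambda)\to0$ as $\Im\lambda\to+\infty$, so $1\pm\f$ are functions in $H^{\infty}(\mathbb{C}^{+})$ that do not vanish identically, and on $\mathbb{R}$ one has $|F|\le 2/|1+\f|$. For any $g\in H^{\infty}(\mathbb{C}^{+})$ with $g\not\equiv 0$ one knows $\int(1+\lambda^{2})^{-1}|\ln|g(\lambda)||\,d\lambda<\infty$: the bound from above is trivial ($\ln^{+}|g|\le\ln^{+}\|g\|_{\infty}$), while the bound from below is Jensen's inequality $\pi\ln|g(i)|\le\int(1+\lambda^{2})^{-1}\ln|g(\lambda)|\,d\lambda$ evaluated at a point where $g\neq 0$ — equivalently, the inclusion $H^{\infty}\subset N^{+}(\mathbb{C}^{+})$. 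Applying this to $g=1+\f$ and using $\ln^{+}|F|\le\ln 2-\ln^{-}|1+\f|$ gives the desired finiteness.

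To conclude, since $\ln^{-}|F|\le 0$ the pointwise identity above yields $\ln\widehat\sigma'\ge\ln\sigma'-2\ln^{+}|F|$, whence
\[
\int_{-\infty}^{\infty}\frac{\ln\widehat\sigma'(\lambda)}{1+\lambda^{2}}\,d\lambda
\ \ge\ \int_{-\infty}^{\infty}\frac{\ln\sigma'(\lambda)}{1+\lambda^{2}}\,d\lambda
\ -\ 2\int_{-\infty}^{\infty}\frac{\ln^{+}|F(\lambda)|}{1+\lambda^{2}}\,d\lambda\ >\ -\infty,
\]
the first term being finite because $d\sigma\in$ (Szeg\H{o}) and the second by the previous step. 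By the equivalence (a)$\Leftrightarrow$(b) in Theorem~\ref{sdmt}, $d\widehat\sigma\in$ (Szeg\H{o}).

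The one non-elementary ingredient, and the main obstacle, is the Smirnov-class fact invoked in the second step: a nonzero $H^{\infty}(\mathbb{C}^{+})$ function cannot decay so fast along $\mathbb{R}$ that $\ln|g|$ fails to be Poisson-integrable at $i$. Everything else is bookkeeping with the splitting $\ln x=\ln^{+}x+\ln^{-}x$ and the identity $\widehat\sigma'=\sigma'/|F|^{2}$. (By symmetry, the same argument applied to the dual system gives the converse, so the two Szeg\H{o} conditions are in fact equivalent, though only one direction is needed here.)
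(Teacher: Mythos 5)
Your proof is correct, but it follows a genuinely different route from the paper's. The paper's argument is a two-line interior one: it invokes criterion (e) of Theorem~\ref{sdmt} and the convergence $\widehat{P}_*(r,\lambda)/P_*(r,\lambda)\to F(\lambda)$, where $F$ is analytic with positive real part and hence finite at any fixed $\lambda_0\in\mathbb{C}^+$; thus $\liminf_r|P_*(r,\lambda_0)|<\infty$ immediately forces $\liminf_r|\widehat P_*(r,\lambda_0)|<\infty$, which is condition (e) for the dual system. You instead verify criterion (b) directly on the boundary: from $\Re F=2\pi\sigma'$, $\widehat F=F^{-1}$ and hence $\widehat\sigma'=\sigma'/|F|^2$ a.e., you reduce everything to the Poisson-integrability of $\ln^+|F|$, which you obtain from $F=(1-\f)(1+\f)^{-1}$ and the Smirnov-class fact that $\ln|1+\f|$ is Poisson-integrable for a nonzero $H^\infty(\mathbb{C}^+)$ function (nonvanishing of $1+\f$ being guaranteed by (\ref{feature})). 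Your route costs more machinery (nontangential boundary values, Fatou, $H^\infty\subset N^+$), but it buys a quantitative comparison of the two Szeg\H{o} integrals, namely
\[
\int_{-\infty}^{\infty}\frac{\ln\widehat\sigma'(\lambda)}{1+\lambda^{2}}\,d\lambda
\;\ge\;\int_{-\infty}^{\infty}\frac{\ln\sigma'(\lambda)}{1+\lambda^{2}}\,d\lambda
-2\int_{-\infty}^{\infty}\frac{\ln^{+}|F(\lambda)|}{1+\lambda^{2}}\,d\lambda,
\]
and, as you observe, it makes the equivalence of the two Szeg\H{o} conditions transparent; the paper's argument is more elementary and avoids boundary-value considerations entirely. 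Both proofs ultimately lean on the same two ingredients — Theorem~\ref{sdmt} and the relation between the original and dual Weyl--Titchmarsh functions — just through different criteria of the theorem.
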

\begin{proof}
We know that for any Krein system,
\[
\frac{\widehat{P}_*(r,\lambda)}{P_*(r,\lambda)}\to F(\lambda)
\]
as $r\to\infty$ and $F(\lambda)$ has positive real part in
$\mathbb{C}^+$. Therefore, if condition (e) of the Theorem
\ref{sdmt} is satisfied for the original Krein system, it must be
satisfied for the dual one as well.
\end{proof}

Next, let us show that we have weighted $L^2$--convergence for
$P_*(r,\lambda)$ for $\lambda\in \mathbb{R}$. First, we need the
following auxiliary result
\begin{lemma}\label{auxil-1}
Assume that the Szeg\H{o} case holds. Let $r_n$ be a sequence such
that $P(r_n,i)\to 0$. Then
\[
\lim_{r_n\to\infty} \int\limits_{-\infty}^\infty
\frac{|P(r_n,\lambda)|^2}{\lambda^2+1} d\sigma(\lambda)=\frac 12
\]
\label{kernel-diag}
\end{lemma}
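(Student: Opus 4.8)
The plan is to combine the reproducing-kernel identity with the Christoffel--Darboux formula (\ref{reproduce0}), both evaluated at $\lambda_0=i$. Write $\rho=r_n$ and set $I_\rho=\int_{-\infty}^{\infty}\frac{|P(\rho,\lambda)|^2}{\lambda^2+1}\,d\sigma(\lambda)$, the quantity of interest. This is finite: by (\ref{kuku1}) one has $P(\rho,\lambda)=e^{i\lambda\rho}-\widehat q_\rho(\lambda)$ with $\widehat q_\rho\in S_\rho$, and $S_\rho\subset L^2(d\sigma)$ by (\ref{sect2eq1}), so $\frac{|P(\rho,\lambda)|^2}{\lambda^2+1}\le\frac{2}{\lambda^2+1}+2|\widehat q_\rho(\lambda)|^2$ with both terms $d\sigma$-integrable (the first by (\ref{e2t1})).

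First I would use $K_\rho(i,i)=\int_{-\infty}^{\infty}|K_\rho(i,\lambda)|^2\,d\sigma(\lambda)$ (the reproducing-kernel property, since $K_\rho(i,\cdot)\in S_\rho$) and substitute the Christoffel--Darboux expression $K_\rho(i,\lambda)=i\,\frac{P_*(\rho,\lambda)\overline{P_*(\rho,i)}-P(\rho,\lambda)\overline{P(\rho,i)}}{\lambda+i}$ from (\ref{reproduce0}). Expanding $|K_\rho(i,\lambda)|^2$ on $\mathbb{R}$, using $|P(\rho,\lambda)|=|P_*(\rho,\lambda)|$ there and $|\lambda+i|^2=\lambda^2+1$, yields
\[
K_\rho(i,i)=\bigl(|P_*(\rho,i)|^2+|P(\rho,i)|^2\bigr)I_\rho-2\Re\!\left[\overline{P_*(\rho,i)}\,P(\rho,i)\,J_\rho\right],
\]
where $J_\rho=\int_{-\infty}^{\infty}\frac{P_*(\rho,\lambda)\overline{P(\rho,\lambda)}}{\lambda^2+1}\,d\sigma(\lambda)$ satisfies $|J_\rho|\le I_\rho$ (again because $|P_*|=|P|$ on $\mathbb{R}$). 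On the other hand, (\ref{reproduce0}) at $\lambda'=\lambda=i$ gives the clean identity $K_\rho(i,i)=\tfrac12\bigl(|P_*(\rho,i)|^2-|P(\rho,i)|^2\bigr)=\int_0^{\rho}|P(x,i)|^2\,dx$.

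Next I would pass to $\rho=r_n\to\infty$. By hypothesis $P(r_n,i)\to0$. Since we are in the Szeg\H{o} case, condition (d) of Theorem \ref{sdmt} gives $c':=\int_0^{\infty}|P(x,i)|^2\,dx<\infty$, and $c'>0$ because $P(0,i)=1$ and $P(\cdot,i)$ is continuous; hence $K_{r_n}(i,i)\to c'$. Combined with the clean identity and $P(r_n,i)\to0$ this forces $|P_*(r_n,i)|^2\to 2c'$, so $|P_*(r_n,i)|\to\sqrt{2c'}>0$. Completing the square in the displayed expansion (using $|J_{r_n}|\le I_{r_n}$) gives the crude bound $K_{r_n}(i,i)\ge\bigl(|P_*(r_n,i)|-|P(r_n,i)|\bigr)^2I_{r_n}$, hence
\[
I_{r_n}\ \le\ \frac{K_{r_n}(i,i)}{\bigl(|P_*(r_n,i)|-|P(r_n,i)|\bigr)^2}\ \longrightarrow\ \frac{c'}{2c'}=\frac12,
\]
so in particular $I_{r_n}$ is bounded. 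With $I_{r_n}$ (and $J_{r_n}$) bounded and $P(r_n,i)\to0$, the cross term is $O(|P(r_n,i)|)\to0$, so $\bigl(|P_*(r_n,i)|^2+|P(r_n,i)|^2\bigr)I_{r_n}=K_{r_n}(i,i)-o(1)\to c'$; dividing by $|P_*(r_n,i)|^2+|P(r_n,i)|^2\to2c'>0$ gives $I_{r_n}\to\tfrac12$.

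The algebraic expansion, the integrability bookkeeping, and the continuity of $P(\cdot,i)$ are routine; the only genuinely necessary input — the one that keeps the argument from being circular — is the a priori boundedness of $I_{r_n}$, and I expect this to be the crux. It is exactly what the completing-the-square estimate $K_{r_n}(i,i)\ge(|P_*(r_n,i)|-|P(r_n,i)|)^2I_{r_n}$ delivers, once we know $|P_*(r_n,i)|^2\to2c'>0$, i.e. once the Szeg\H{o} hypothesis guarantees $\int_0^{\infty}|P(x,i)|^2\,dx<\infty$. (Alternatively one may invoke Lemma \ref{converge} to identify $|P_*(r_n,i)|\to|\Pi(i)|>0$ directly, but the self-contained route above avoids it.)
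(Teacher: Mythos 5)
Your proposal is correct. The paper's own proof starts from the same Christoffel--Darboux/reproducing-kernel identity (\ref{auxil-decay}), but then identifies the constant by importing Lemma \ref{converge} (so that $|P_*(r_n,i)|\to|\Pi(i)|$) together with the equality $K_\infty(i,i)=\int_0^\infty|P(x,i)|^2dx=|\Pi(i)|^2/2$, which rests on the Szeg\H{o} distance machinery (\ref{minimum}), (\ref{distance}), (\ref{sdcalc}), (\ref{function-pi}). You avoid both of these inputs: by evaluating the Christoffel--Darboux formula on the diagonal you get $K_{r_n}(i,i)=\tfrac12\bigl(|P_*(r_n,i)|^2-|P(r_n,i)|^2\bigr)$, hence $|P_*(r_n,i)|^2\to 2\int_0^\infty|P(x,i)|^2dx$, and the constant $\tfrac12$ then comes out by pure cancellation, the only spectral input being Theorem \ref{sdmt}(d) (finiteness of $\int_0^\infty|P(x,i)|^2dx$, positivity being trivial from $P(0,i)=1$). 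Your completing-the-square bound $K_{r_n}(i,i)\ge(|P_*(r_n,i)|-|P(r_n,i)|)^2 I_{r_n}$ correctly supplies the a priori boundedness of $I_{r_n}$ needed to kill the cross term, and your integrability bookkeeping ($|P|=|P_*|$ on $\mathbb{R}$, $d\sigma$-a.e. including the singular part, plus (\ref{e2t1})) is sound. What each route buys: yours is more elementary and self-contained, does not need the outer function $\Pi$ or Lemma \ref{converge} at all, and shows the limit is $\tfrac12$ without ever identifying $\lim|P_*(r_n,i)|$; the paper's version is shorter in context because Lemma \ref{converge} is proved anyway and is needed for the subsequent $L^2$ asymptotics (Lemma \ref{asymp-l2}), where the identification of the limit of $P_*(r_n,\cdot)$ with $\Pi_\gamma$ is essential.
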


\begin{proof} Indeed, we have
\[
K_r(i,\lambda)=i\frac{P_*(r,\lambda)\overline{P_*(r,i)}-P(r,\lambda)\overline{P(r,i)}}{\lambda+i}
\]
or
\begin{equation}\label{auxil-decay}
i\frac{P_*(r,\lambda)}{\lambda+i}\cdot \overline{P_*(r,i)}
=i\frac{P(r,\lambda)}{\lambda+i}\cdot
\overline{P(r,i)}+K_r(i,\lambda)
\end{equation}
From Lemma \ref{converge}, we know that $|P_*(r_n,i)|\to
|\Pi(i)|$. Then, since $P(r_n,i)\to 0$,
\[
\lim_{r_n\to\infty} \left\|
\frac{P_*(r_n,\lambda)}{\lambda+i}\right\|^2_{2,d\sigma}=|\Pi(i)|^{-2}
\lim\limits_{r_n\to\infty} |K_{r_n} (i,i)|=\frac 12
\]
\end{proof}

The following result establishes an $L^2(d\sigma, \mathbb{R})$
asymptotics of $P_*(r,\lambda)$. It will be used later to prove
existence of wave operators for Dirac equation.

\begin{lemma} Assume that $d\sigma\in${\rm (Szeg\H{o})} and
$r_n\to\infty$ is such that $P(r_n,i)\to 0$ and
$P_*(r_n,\lambda)\to \Pi_\gamma(\lambda)$ for $\lambda\in
\mathbb{C}^+$, ($\gamma\in [0,2\pi)$). Then,

\begin{equation}
\int\limits_{-\infty }^{\infty }\frac{1}{\lambda ^{2}+1}\left|
\frac{P_{\ast }(r_n,\lambda )}{\Pi_\gamma (\lambda)}-1\right|
^{2}d\lambda \rightarrow 0 \label{asy}
\end{equation}
as $r_n\rightarrow \infty $. \label{asymp-l2}
\end{lemma}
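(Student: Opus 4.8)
The plan is to upgrade the uniform-on-compacts convergence $P_*(r_n,\lambda)\to\Pi_\gamma(\lambda)$ in $\mathbb{C}^+$ (Lemma \ref{converge}) to convergence in the Hilbert space $L^2\bigl((\lambda^2+1)^{-1}d\lambda\bigr)$ on the real line. The natural vehicle is the functions $h_n(\lambda)=\bigl[(\lambda+i)P_*(r_n,\lambda)\bigr]^{-1}$, which by the proof of Lemma \ref{converge} form a bounded sequence in $H^2(\mathbb{C}^+)$ converging weakly (along every subsequence, hence along the whole sequence) to $\bigl[(\lambda+i)\Pi_\gamma(\lambda)\bigr]^{-1}$. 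Weak convergence of $H^2$ functions of bounded norm, together with convergence of norms, gives norm convergence; so the first thing I would establish is that $\|h_n\|_{L^2(\mathbb{R})}^2\to\|(\lambda+i)^{-1}\Pi_\gamma^{-1}\|_{L^2(\mathbb{R})}^2$. Equivalently, since $|(\lambda+i)h_n(\lambda)|^{-2}=|P_*(r_n,\lambda)|^2$ and by Lemma \ref{gronwall-p*} and the Weyl--Titchmarsh bound $(\lambda^2+1)|h_n(\lambda)|^2\le 2\pi\sigma'(\lambda)$ established in the proof of Lemma \ref{converge}, I would use the identity $\sigma_{r_n}'(\lambda)=(2\pi)^{-1}|P_*(r_n,\lambda)|^{-2}$ and the weak-$(\ast)$ convergence $d\sigma_{r_n}\rightharpoonup d\sigma$ from Theorem \ref{Bernstein-Szego}, controlling the tails via the tightness estimate \eqref{tightness}.

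Concretely, I would argue as follows. Because $h_n\to h:=[(\lambda+i)\Pi_\gamma]^{-1}$ weakly in $H^2(\mathbb{C}^+)$ with $|(\lambda+i)h_n|^{-2}=|P_*(r_n,\lambda)|^2=(2\pi\sigma_{r_n}'(\lambda))^{-1}$, it suffices to show $\int_{\mathbb R}(\lambda^2+1)^{-1}|P_*(r_n,\lambda)|^{-2}\,d\lambda\to\int_{\mathbb R}(\lambda^2+1)^{-1}|\Pi_\gamma(\lambda)|^{-2}\,d\lambda$, i.e. $\int (\lambda^2+1)^{-1}d\sigma_{r_n}\to\int(\lambda^2+1)^{-1}d\sigma$. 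This is immediate from weak-$(\ast)$ convergence plus tightness \eqref{tightness}, since $(\lambda^2+1)^{-1}$ is bounded and continuous and its mass escaping to infinity is uniformly small. Having norm convergence of the $h_n$ in $L^2(\mathbb R)$ and weak convergence to $h$, I get $h_n\to h$ strongly in $L^2(\mathbb R)$; then I would convert this back to the desired statement about $P_*$. Write
\[
\frac{P_*(r_n,\lambda)}{\Pi_\gamma(\lambda)}-1=(\lambda+i)\Pi_\gamma(\lambda)\Bigl(\frac{1}{(\lambda+i)\Pi_\gamma(\lambda)}-\frac{1}{(\lambda+i)P_*(r_n,\lambda)}\Bigr)\cdot\frac{P_*(r_n,\lambda)}{\Pi_\gamma(\lambda)},
\]
so that $(\lambda^2+1)^{-1/2}\bigl|P_*(r_n,\lambda)/\Pi_\gamma(\lambda)-1\bigr|$ is, up to the factor $|P_*(r_n,\lambda)/\Pi_\gamma(\lambda)|$, comparable to $(\lambda^2+1)^{1/2}|\Pi_\gamma(\lambda)|\,|h(\lambda)-h_n(\lambda)|$; since $|\Pi_\gamma(\lambda)|^2(\lambda^2+1)\le (\lambda^2+1)\cdot$(bounded) needs care where $\sigma'$ is small, I would instead multiply and divide more carefully, keeping $|P_*(r_n,\lambda)\Pi_\gamma^{-1}(\lambda)|$ together with one copy of $(\lambda^2+1)^{1/2}\Pi_\gamma(\lambda)$ so that the product collapses to $|(\lambda+i)|\,|h_n(\lambda)-h(\lambda)|$ times the manifestly $L^2$-bounded quantity $|(\lambda+i)\Pi_\gamma(\lambda)|/|(\lambda+i)\Pi_\gamma(\lambda)|=1$ — i.e., the algebra should be organized so that the integrand in \eqref{asy} equals $(\lambda^2+1)\,|h_n(\lambda)-h(\lambda)|^2\cdot|P_*(r_n,\lambda)/\Pi_\gamma(\lambda)|^2\cdot|(\lambda+i)\Pi_\gamma(\lambda)|^{-0}$, and then I bound $|P_*(r_n,\lambda)/\Pi_\gamma(\lambda)|^2$ by a uniform constant using $(\lambda^2+1)|h_n(\lambda)|^2\le 2\pi\sigma'(\lambda)=|\Pi(\lambda)|^{-2}=|\Pi_\gamma(\lambda)|^{-2}$ a.e., which gives $|P_*(r_n,\lambda)|^{-1}=|(\lambda+i)h_n(\lambda)|\le|\Pi_\gamma(\lambda)|^{-1}$, i.e. $|P_*(r_n,\lambda)/\Pi_\gamma(\lambda)|\ge 1$ — the wrong direction, so in fact I must use the reverse bound coming from $\Re[P_*^{-1}\widehat P_*]=|P_*|^{-2}$ only bounds one way. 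This sign issue is exactly the main obstacle (see below).

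Anticipating that obstacle: the inequality $(\lambda^2+1)|h_n(\lambda)|^2\le 2\pi\sigma'(\lambda)$ only controls $|P_*(r_n,\lambda)|$ from below, so a pointwise uniform bound on $|P_*(r_n,\lambda)/\Pi_\gamma(\lambda)|$ on all of $\mathbb R$ is not available directly, and strong $L^2$ convergence of $h_n$ must be exploited differently. The clean fix is to not pass through $P_*/\Pi_\gamma-1$ pointwise but to observe that \eqref{asy} is literally $\|(\lambda+i)\bigl(h(\lambda)^{-1}-h_n(\lambda)^{-1}\bigr)(\lambda+i)^{-1}\cdot(\cdots)\|$; better, I would prove \eqref{asy} in the equivalent form $\int(\lambda^2+1)^{-1}|P_*(r_n,\lambda)-\Pi_\gamma(\lambda)|^2|\Pi_\gamma(\lambda)|^{-2}d\lambda\to0$ and then note $|\Pi_\gamma(\lambda)|^{-2}d\lambda=2\pi\sigma'(\lambda)d\lambda\le 2\pi\,d\sigma(\lambda)$; so \eqref{asy} is dominated by $2\pi\int(\lambda^2+1)^{-1}|P_*(r_n,\lambda)-\Pi_\gamma(\lambda)|^2d\sigma(\lambda)$, and expanding the square reduces everything to three pieces: $\int(\lambda^2+1)^{-1}|P_*(r_n,\lambda)|^2d\sigma=\int(\lambda^2+1)^{-1}d\sigma_{r_n}$ (which $\to\frac12\cdot 2\pi^{-1}\cdots$, computed via \eqref{tightness} and Lemma \ref{kernel-diag} with $|P_*|=|P|$ on $\mathbb R$), the cross term $\int(\lambda^2+1)^{-1}\Re\bigl(P_*(r_n,\lambda)\overline{\Pi_\gamma(\lambda)}\bigr)d\sigma(\lambda)$ (handled by uniform convergence on compacts plus tightness, since $P_*(r_n,\cdot)\overline{\Pi_\gamma}$ is bounded in $n$ on compacts and $d\sigma$-tail-small), and $\int(\lambda^2+1)^{-1}|\Pi_\gamma(\lambda)|^2d\sigma=\int(\lambda^2+1)^{-1}|\Pi_\gamma|^2|\Pi_\gamma|^{-2}\frac{d\lambda}{2\pi}=\frac{1}{2\pi}\int(\lambda^2+1)^{-1}d\lambda=\frac12$. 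Matching the limits of the first and third against twice the limit of the cross term — using Lemma \ref{kernel-diag} to pin the first at $\frac12$ and Lemma \ref{converge} together with the outer-function identity $|P_*(r_n,\lambda)|\to|\Pi_\gamma(\lambda)|$ on $\mathbb R$ (from \eqref{realpart} and $(\lambda^2+1)|h|^2=2\pi\sigma'$) to pin the cross term at $\frac12$ — collapses the expanded square to $0$ in the limit, which is \eqref{asy}. The genuinely delicate point, and where I'd spend the most care, is justifying the interchange of limit and integral in the cross term and first term over the full line, for which the tightness estimate \eqref{tightness} (valid precisely because $d\tau$ has no mass at $\xi=1$, established in Theorem \ref{Bernstein-Szego}) is the indispensable tool.
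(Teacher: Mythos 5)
Your operative argument (the second half, after you abandon the $h_n$ route) has a genuine gap at exactly the point you flag as "delicate": the cross term. After dominating $\sigma'(\lambda)\,d\lambda\le d\sigma$ and expanding, you must evaluate $\lim_n\int(\lambda^2+1)^{-1}\Re\bigl(P_*(r_n,\lambda)\overline{\Pi_\gamma(\lambda)}\bigr)\,d\sigma(\lambda)$, and you propose to do this by "uniform convergence on compacts plus tightness" and by the identity $|P_*(r_n,\lambda)|\to|\Pi_\gamma(\lambda)|$ on $\mathbb{R}$. But the hypothesis only gives convergence of $P_*(r_n,\cdot)$ on compacts of the \emph{open} half-plane $\mathbb{C}^+$; on the real axis, where this integral lives, no pointwise (let alone uniform) convergence of $P_*(r_n,\cdot)$ is known — establishing a boundary $L^2$ statement is the whole content of the lemma, so you cannot assume it in the cross term. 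In addition, the boundary values of $\Pi_\gamma$ exist only a.e.\ with respect to Lebesgue measure, so the pieces of your expansion involving $d\sigma_s$ (both in the cross term and in $\int(\lambda^2+1)^{-1}|\Pi_\gamma|^2\,d\sigma$, which you silently replace by its a.c.\ part $=\tfrac12$) are not even well defined; and because you passed to the upper bound $d\sigma$, these singular contributions enter with a plus sign, so even if they were defined they would not cancel, and your dominating quantity need not tend to zero.

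The paper's proof keeps the expansion with respect to Lebesgue measure $(\lambda^2+1)^{-1}d\lambda$ and disposes of the cross term by an exact computation rather than a limiting argument: since $P_*(r_n,\cdot)/\Pi_\gamma$ is analytic in $\mathbb{C}^+$, the Cauchy formula gives $\int_{-\infty}^{\infty}\frac{P_*(r_n,\lambda)}{(\lambda^2+1)\Pi_\gamma(\lambda)}\,d\lambda=\frac{\pi P_*(r_n,i)}{\Pi_\gamma(i)}\to\pi$, using only the assumed interior convergence at $\lambda=i$. The quadratic term is then rewritten as $2\pi\int(\lambda^2+1)^{-1}|P_*|^2\,d\sigma-2\pi\int(\lambda^2+1)^{-1}|P_*|^2\,d\sigma_s$, the first piece converging to $\pi$ by Lemma \ref{kernel-diag}; here the singular term appears with a \emph{minus} sign, so positivity of both the left-hand side of \eqref{asy} and of the singular integral forces each of them to vanish in the limit (this also yields \eqref{singularcontribution}). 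This contour-integral evaluation of the cross term and the sign structure it preserves are the ingredients your proposal is missing, and without them the passage to the limit on $\mathbb{R}$ cannot be justified.
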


\begin{proof} The left-hand side of (\ref{asy}) is equal to
\begin{equation}
\int\limits_{-\infty }^{\infty }\frac{1}{\lambda ^{2}+1}\left|
\frac{P_{\ast
}(r_n,\lambda )}{\Pi_\gamma (\lambda)}\right| ^{2}\mathit{d\lambda +}%
\int\limits_{-\infty }^{\infty }\frac{d\lambda }{\lambda
^{2}+1}-2\Re \int\limits_{-\infty }^{\infty }\frac{P_{\ast
}(r_n,\lambda )}{(\lambda ^{2}+1)\Pi_\gamma (\lambda)}d\lambda
\label{one}
\end{equation}
By the Cauchy formula,
\begin{equation}
\int\limits_{-\infty }^{\infty }\frac{P_{\ast }(r_n,\lambda
)}{(\lambda ^{2}+1)\Pi_\gamma (\lambda)}d\lambda =\frac{\pi
P_{\ast }(r_n,i)}{\Pi_\gamma (i)}\to \pi \label{two}
\end{equation}
as $r_n\to\infty$. The first term of (\ref{one}) can be written as
\begin{equation*}
2\pi \int\limits_{-\infty }^{\infty }\frac{1}{\lambda
^{2}+1}\left| P_{\ast }(r_n,\lambda )\right| ^{2}d\sigma (\lambda
)-2\pi \int\limits_{-\infty }^{\infty }\frac{1}{\lambda
^{2}+1}\left| P_{\ast }(r_n,\lambda )\right|
^{2}d\sigma_{s}\mathit{(\lambda )},
\end{equation*}
where $d\sigma_s(\lambda)$ is the singular component of
$d\sigma(\lambda)$. From the Lemma \ref{auxil-1}, we infer
\begin{equation}
\int\limits_{-\infty }^{\infty }\frac{1}{\lambda ^{2}+1}\left|
P_{\ast }(r,\lambda )\right| ^{2} d\sigma (\lambda )\rightarrow
\frac12 \label{star}
\end{equation}
as $r\rightarrow \infty$. Bearing in mind (\ref{one}),
(\ref{two}), and (\ref{star}), we have (\ref{asy}). \end{proof}

\textbf{Remark.} We also proved
\begin{equation}
\int\limits_{-\infty }^{\infty }\frac{1}{\lambda ^{2}+1}\left|
P_{\ast }(r_n,\lambda )\right| ^{2}\mathit{d\sigma }_{s} (\lambda
)\rightarrow 0 \label{singularcontribution}
\end{equation}
as $r_n\rightarrow \infty.$

We want to finish this section with the following observation that
relates the regularity of $|\Pi(\lambda)|$ at $+i\infty$ to some
approximation problem. Consider the function
$f_{\lambda_0}(\lambda)=(2\Im\lambda_0)^{1/2}(\lambda-\lambda_0)^{-1},\lambda\in
\mathbb{R}$. Since
\[
(2\Im\lambda_0)^{1/2}(\lambda-\lambda_0)^{-1}=
i(2\Im\lambda_0)^{1/2}\int\limits_{-\infty}^0 e^{-i\lambda_0
x}e^{i\lambda x}dx
\]
the function $f_{\lambda_0}(\lambda)$ has frequency concentrating
near zero as $\Im\lambda_0\to+\infty$ and the constant $L^2$ norm.
How regular the distance $ {\rm
Dist}(f_{\lambda_0}(\lambda),\bar{S}) $ behaves as
$\Im\lambda_0\to +\infty$ depends on the regularity of
$\Pi(\lambda)$ at infinity. Indeed, from Lemma \ref{sdl1}
\[
{\rm Dist}(f_{\lambda_0}(\lambda),\bar{S})=|\Pi(\lambda_0)|^{-1}
\]
That infinitesimal phenomena is not present in the discrete case.

 {\bf Remarks and historical notes.}
The approximation results of this section can be interpreted in
the framework of prediction theory for stationary Gaussian
processes with continuous time \cite{Rozanov}. The original paper
by Krein contained some inaccuracies in the formulation of the
Theorem \ref{sdmt} and the same mistake was made in some later
papers. The correct statement was given later by Teplyaev
\cite{Tepl,Teplyaev}. Some sufficient conditions for the Szeg\H{o}
case were given in the series of papers \cite{Sakhnovich7,
Den-ieop}. There is no known criteria in terms of $A(r)$ for the
Szeg\H{o} case to hold. In the meantime, if one assumes some
regularity of $A(r)$, say, $A(r)\in L^\infty(\mathbb{R}^+)$ then
$d\sigma\in$ (Szeg\H{o}) if and only if $A(r)\in
H^{-1}(\mathbb{R}^+)$ (see \cite{Den-ieop}). It is probably
impossible to give reasonable characterization of the Szeg\H{o}
case in terms of $A(r)$ without any apriori assumptions. For
example, one can construct a sequence of compactly supported
$A^{(n)}(r)$ with growing $H^{-1}(\mathbb{R}^+)$ norms but such
that the corresponding sequence $P^{(n)}_*(\infty,i)$ is bounded.
This can be achieved by a simple modification of Teplyaev's
example \cite{Teplyaev}.

\newpage

\section{Schur's algorithm and approximation of continuous orthogonal system by
 discrete ones}\label{sect-appro}

It is well-known \cite{Simon}, that any function $f(z)\in
B(\mathbb{D})$ can be expanded into the continued fraction (the
so-called Schur's algorithm). This expansion can be obtained by
the iteration of
\begin{equation}
 f_n(z)=\frac{zf_{n+1}(z)+a_n}{1+\bar{a}_n zf_{n+1}(z)},
 f_0(z)=f(z), a_n=f_n(0) \label{opuc-schur}
\end{equation}
By doing so, we obtain the one-to-one correspondence between
$B(\mathbb{D})$ and all sequences $\{a_n\}$ such that $|a_n|\leq
1$. The Geronimus theorem asserts that these so-called Schur
parameters are actually equal to Verblunsky parameters for the
measure $d\tau$ in the representation
\begin{equation}
\frac{1+zf(z)}{1-zf(z)}=\int_{\mathbb{T}}
\frac{\xi+z}{\xi-z}d\tau(\xi) \end{equation}

In this section we will study Schur's function associated to Krein
system. The Schur function $\f(\lambda)$ associated to the Krein
system was introduced in the Theorem~\ref{schur-function}.
Functions $\f_{\rho}(\lambda)$ in the representation (\ref{e18s3})
are Schur's functions that correspond to the same Krein systems
but on the interval $[\rho,\infty)$. This is the same as if we
would take $A_\rho(r)=A(r+\rho)$ with $r\in \mathbb{R}^+$.

\begin{lemma}
For any fixed $\lambda\in \mathbb{C}^+$, the Schur functions
$\f_r(\lambda)$ are continuously differentiable in $r$ and satisfy
the following equation
\begin{equation}
\frac{d\f_r(\lambda)}{dr}=-i\lambda
\f_r(\lambda)+A(r)-\overline{A(r)}\f^2_r(\lambda) \label{schur}
\end{equation}
\end{lemma}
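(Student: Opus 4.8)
The plan is to derive the Riccati equation (\ref{schur}) directly from the semigroup structure of the transfer matrix together with the known differential equations for the Wall polynomials. Recall from Theorem~\ref{schur-function} that $\f_r(\lambda)=\lim_{R\to\infty}\b(R,\lambda)\a^{-1}(R,\lambda)$, where $\a,\b$ are the Wall polynomials of the Krein system on $[r,\infty)$, i.e. the entries of the transfer matrix $X(r,R,\lambda)$. Using the semigroup identity $X(r,R,\lambda)=X(r',R,\lambda)X(r,r',\lambda)$ for $r<r'<R$ and formula (\ref{schur1}) (adapted to the interval $[r,\infty)$), one obtains, after letting $R\to\infty$, the cocycle relation
\begin{equation}
\f_r(\lambda)=\frac{\B(r,r',\lambda)+\f_{r'}(\lambda)\A_*(r,r',\lambda)}{\A(r,r',\lambda)+\f_{r'}(\lambda)\B_*(r,r',\lambda)},
\end{equation}
where the $\A,\B,\A_*,\B_*$ here are the Wall polynomials of the Krein system on the short interval $[r,r']$. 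First I would set $r'=r+h$ and expand each of $\A,\B,\A_*,\B_*$ to first order in $h$: since these solve system (\ref{krein3})--(\ref{potential-V}) with $X(r,r,\lambda)=I$, we have $\A(r,r+h,\lambda)=1+o(h)$, $\A_*(r,r+h,\lambda)=1+i\lambda h+o(h)$, $\B(r,r+h,\lambda)=-A(r)h+o(h)$, and $\B_*(r,r+h,\lambda)=-\overline{A(r)}h+o(h)$, using the block structure (\ref{fsr}) and $V$ from (\ref{potential-V}).

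Substituting these expansions into the cocycle relation gives
\begin{equation}
\f_r(\lambda)=\frac{-A(r)h+\f_{r+h}(\lambda)(1+i\lambda h)+o(h)}{1-\f_{r+h}(\lambda)\overline{A(r)}h+o(h)}.
\end{equation}
Expanding the denominator by the geometric series, collecting the $O(h)$ terms, and forming the difference quotient $(\f_{r+h}(\lambda)-\f_r(\lambda))/h$, the limit $h\to 0$ yields exactly
\begin{equation}
\frac{d\f_r(\lambda)}{dr}=-i\lambda\f_r(\lambda)+A(r)-\overline{A(r)}\f_r^2(\lambda).
\end{equation}
To make this rigorous I need to know that $r\mapsto\f_r(\lambda)$ is actually differentiable, not merely that the above formal computation is consistent. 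For continuous $A(r)$ this follows from the fact that the short-interval Wall polynomials $\A(r,r+h,\lambda)$, etc., are $C^1$ jointly in their arguments (they solve a linear ODE with continuous coefficients) and $|\f_{r+h}(\lambda)-\f_r(\lambda)|\to 0$ as $h\to 0$ uniformly on compacts by the estimate $|\f_{r_2}(\lambda)-\f_{r_1}(\lambda)|\leq 2\exp(-r_1\Im\lambda)$ established in the proof of Theorem~\ref{schur-function} (applied on the shifted interval); combined with the explicit first-order expansion, the difference quotient converges, and its limit is the continuous function on the right-hand side, so $\f_r(\lambda)$ is indeed $C^1$ in $r$. For $A(r)\in L^2_{\rm loc}$ one would instead read (\ref{schur}) as an integral equation, obtained by approximating $A$ by continuous coefficients and passing to the limit.

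The main obstacle I anticipate is not the algebra — that is a routine first-order Taylor expansion — but justifying the interchange of limits: one is simultaneously letting $R\to\infty$ (to define $\f_{r'}$) and $h=r'-r\to 0$. The clean way around this is to fix the cocycle identity (\ref{e18s3}) itself, which is already proved in the excerpt as an exact (non-asymptotic) relation between $\f_r$ and $\f_{r+h}$ via the finite-interval transfer matrix; then only the $h\to 0$ expansion is needed, and the $R\to\infty$ limit has already been absorbed into the definition of the Schur functions. With that ordering, differentiability in $r$ and the Riccati equation follow together from standard ODE regularity for the short-interval transfer matrix.
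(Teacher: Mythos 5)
Your proposal is correct and is essentially the paper's own argument: the paper likewise deduces (\ref{schur}) by differentiating the exact relation (\ref{e18s3}) in the splitting point (reading smoothness of $\f_r$ in $r$ off that identity), computing the derivative at $r=0$ and then using that $\f_r$ is the Schur function of the shifted system — which is precisely your first-order expansion of the short-interval Wall polynomials via the cocycle relation. One detail to repair: the estimate $\left|\B(r_2,\lambda)\A^{-1}(r_2,\lambda)-\B(r_1,\lambda)\A^{-1}(r_1,\lambda)\right|\leq 2\exp(-r_1\Im\lambda)$ from the proof of Theorem \ref{schur-function} controls the tail as $r_2\to\infty$ and does not give $\f_{r+h}\to\f_r$ as $h\to 0$; this is harmless, since the exact cocycle identity together with $|\f_{r+h}|\leq 1$ and a lower bound of type (\ref{e22s3}) on the denominator already yields $|\f_r-\f_{r+h}|=O(h)$, so the continuity you need (and then the convergence of the difference quotient) follows without that citation.
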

\begin{proof} The smoothness of $\f_r(\lambda)$ in $r$ follows
immediately from (\ref{e18s3}). Taking derivative of
($\ref{e18s3}$) in $r$ at $r=0$, we get
\begin{equation}\label{schur-evolution}
\frac{d\f_r(\lambda)}{dr}\Big|_{r=0}=-i\lambda
\f_0(\lambda)+A(0)-\overline{A(0)}\f^2_0(\lambda)
\end{equation}
Now, (\ref{schur}) follows from the definition of $\f_r(\lambda)$.
\end{proof}

It is very important to keep in mind that the initial condition
for (\ref{schur}) is $\f_0(\lambda)=\f(\lambda)$ and it is {\bf
not} independent of the coefficient $A(r)$. Now, let us compare
the continuous and discrete Schur algorithms. Consider the
following M\"obius transform
\[
\tau_\gamma(z)=\frac{z+\gamma}{1+\bar\gamma z}
\]
The inverse to $\tau_\gamma(z)$ is equal to $\tau_{-\gamma}(z)$.

For any $\gamma\in \mathbb{D}$, $\tau_\gamma$ is a conformal map
of $\mathbb{D}$ onto $\mathbb{D}$ that takes $0$ to $\gamma$.
Another important property of $\tau_\gamma$ is the preservation of
the pseudohyperbolic distance on $\mathbb{D}$:
\begin{equation}
\rho(z_1,z_2)=\left| \frac{z_1-z_2}{1-\bar{z}_1 z_2}\right|,
\rho(\tau_\gamma(z_1), \tau_\gamma(z_2))=\rho(z_1, z_2)
\label{pseudohyp}
\end{equation}

Given any function $f(z)$ from $B(\mathbb{D})$ the Schur algorithm
can also be defined as follows. The $n$--th Schur's iterate
$f_n(z)$
 is defined by the relation
\begin{equation}
f(z)=S_{z,a_0, \ldots, a_{n-1}}(f_n)= \tau_{a_0} \circ z\tau_{a_1}
\circ \ldots z\tau_{a_{n-1}} \circ (zf_{n}) \label{sprocedure}
\end{equation}
and
\begin{equation}
f_n(z)=S^{-1}_{z,a_0, \ldots, a_{n-1}}(f)= z^{-1}\tau_{-a_{n-1}}
\circ z^{-1}\tau_{-a_{n-2}} \circ \ldots z^{-1}\tau_{-a_{0}} \circ
f \label{sprocedure1}
\end{equation}
Notice that for $z\in \mathbb{T}$ the map $S_{z,a_0, \ldots,
a_{n-1}}$ is the composition of rotations and M\"obius transforms.

 The differential equation
(\ref{schur}) is a continuous analog of (\ref{opuc-schur}). It is
Riccati equation and the Cauchy problem for solving it from the
right to the left happens to be well-posed for suitable initial
data:
\begin{lemma} \label{combination}
For any $A(r)\in L^1[0,R]$ and any $\f(0)=\f_0\in
\overline{\mathbb{D}}, \lambda\in \overline{\mathbb{C}}^+$, there
is the unique solution $\f_r(\lambda)$ to Cauchy problem for
equation (\ref{schur}) with initial condition $\f(R)=\f_0$.
\end{lemma}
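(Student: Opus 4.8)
The plan is to linearize the Riccati equation (\ref{schur}) and reduce the whole question to a linear $2\times2$ system, which has no finite-time blow-up. Writing $\f_r=q_r/p_r$, one checks that (\ref{schur}) holds precisely when the pair $(p_r,q_r)$ solves
\[
p_r'=i\lambda\,p_r+\overline{A(r)}\,q_r,\qquad q_r'=A(r)\,p_r ;
\]
equivalently, $(p_r,q_r)^{\mathsf t}$ is propagated by $JX(r,\lambda)J$, the transfer matrix of the dual Krein system (Lemma~\ref{sign-change}, (\ref{potential-V})). Since $A\in L^1[0,R]$, this is a linear Carath\'eodory system with $L^1$ coefficients, so for the terminal data $p_R=1$, $q_R=\f_0$ it has a unique absolutely continuous solution $(p_r,q_r)$ on the whole of $[0,R]$ (linear systems cannot escape to infinity in finite time). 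I would then \emph{define} $\f_r(\lambda):=q_r/p_r$ and verify that it solves the Cauchy problem, once the non-vanishing of $p_r$ is established.

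Both the non-vanishing of $p_r$ and the bound $|\f_r|\le1$ come from one monotonicity identity. A direct differentiation along the flow gives
\[
\frac{d}{dr}\bigl(|p_r|^2-|q_r|^2\bigr)=-2\,\Im\lambda\,|p_r|^2\le0,\qquad \lambda\in\overline{\mathbb{C}^+},
\]
the two cross terms $\Re(\overline{p_r}\,\overline{A}\,q_r)$ and $\Re(A\,p_r\,\overline{q_r})$ being equal and cancelling; conceptually this is just the $J$--contractivity of the transfer matrix (Theorem~\ref{jcontraction}, (\ref{alg1})) transcribed for the dual system. Hence $|p_r|^2-|q_r|^2$ is non-increasing in $r$, so for every $r\in[0,R]$ it is $\ge$ its value $1-|\f_0|^2\ge0$ at $r=R$. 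In particular $|q_r|\le|p_r|$ throughout, and $p_r$ never vanishes: if $p_{r_1}=0$ then $q_{r_1}=0$ as well, and uniqueness for the linear system would force $(p_r,q_r)\equiv0$, contradicting $(p_R,q_R)=(1,\f_0)\ne0$. Consequently $\f_r=q_r/p_r$ is well defined and absolutely continuous on $[0,R]$, lies in $\overline{\mathbb{D}}$, and satisfies (\ref{schur}) with $\f_R=\f_0$.

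Uniqueness follows by running this correspondence backwards. If $\tilde\f_r$ is any solution of the Cauchy problem on a subinterval $[\rho,R]$, set $p_r=\exp\!\bigl(\int_R^r(i\lambda+\overline{A(s)}\,\tilde\f_s)\,ds\bigr)$, which is non-zero and absolutely continuous because $\tilde\f$ is bounded on the compact interval, and $q_r=\tilde\f_r\,p_r$; then $(p_r,q_r)$ solves the same linear system with the same terminal data $(1,\f_0)$, hence coincides with the pair produced above, so $\tilde\f_r=\f_r$. In particular every local solution automatically extends to all of $[0,R]$.

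I expect the only genuinely delicate part to be Carath\'eodory bookkeeping: working with the integrated forms of the equations, invoking existence and uniqueness for linear systems with merely $L^1$ coefficients, and setting up the Riccati$\,\leftrightarrow\,$linear dictionary on the maximal interval of existence rather than just locally. No fixed-point or compactness argument is needed. It is worth recording where the hypothesis $\f_0\in\overline{\mathbb{D}}$ enters: it is exactly what keeps $|p_r|^2-|q_r|^2\ge0$ for all $r\le R$; the same identity read forwards shows $p_r$ can vanish in finite time once $|\f_0|>1$, which is why (\ref{schur}) is in general well-posed only from the right to the left.
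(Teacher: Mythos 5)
Your proof is correct and takes essentially the same route as the paper: you linearize (\ref{schur}) into the same $2\times2$ linear system (yours differs from the paper's choice only by the scalar factor $e^{i\lambda(r-R)}$, which does not affect the ratio), and your monotonicity identity for $|p_r|^2-|q_r|^2$ is precisely the $J$--contractivity of the transfer matrix (Theorem \ref{jcontraction}) that the paper invokes, giving $|q_r|\le|p_r|$, the non-vanishing of the denominator, and $\f_r=q_r/p_r\in\overline{\mathbb{D}}$. Your explicit exponential construction for uniqueness simply spells out what the paper leaves to ``the general theory of ODE,'' so no substantive difference remains.
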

\begin{proof}
Consider $Y(r,\lambda)=(y_1(r), y_2(r))^t$, solution to the
following Cauchy problem
\[
\left\{
\begin{array}{cc}
y_1'=-i\lambda y_1+{A(r)}y_2,& y_1(R,\lambda)=\f_0,\\
 y_2'=\overline{A(r)}y_1, &y_2(R,\lambda)=1
 \end{array}
 \right.
 \]
and solve it for $r\in [0,R]$. Simple calculations show that
 \begin{equation}
 Y(r,\lambda)=X_{B(r)}(0,R-r,\lambda)Y(R)=X^t(r,R,\lambda)Y(R)
 \label{relation-backward}
 \end{equation}
where $Y(R)=(f_0, 1)^t$ and $X_B$ is the transfer matrix for the
Krein system with coefficient $B(r)=\overline{A(R-r)}$. We have
$\lambda\in \overline{\mathbb{C}^+}$ so $X_B$ is $J$--contraction
by the Theorem~\ref{jcontraction}. Thus, we have
$|y_1(r,\lambda)|\leq |y_2(r,\lambda)|$. In particular,
$y_2(r,\lambda)\neq 0$ since otherwise $Y\equiv 0$ on $[0,R]$.
Consider $f(r,\lambda)=y_1(r,\lambda)y_2^{-1}(r,\lambda)$. The
straightforward calculation shows that $f(r,\lambda)$ is solution
to our Cauchy problem and uniqueness follows from the general
theory of ODE.
\end{proof}
In analogy with discrete case, we denote the solution of this
Cauchy problem at zero by $S_{\lambda,A,R}(\f_0)$ and now we have
$\f(\lambda)=S_{\lambda,A,r}(\f_r(\lambda))$, the direct analog of
(\ref{sprocedure}). The formula (\ref{relation-backward}) shows
that $S_{\lambda, A, r}$ allows the following representation
\begin{equation} \label{mobius-easy}
S_{\lambda, A,
r}(z)=\frac{\A_*(r,\lambda)z+\B(r,\lambda)}{\B_*(r,\lambda)z+\A(r,\lambda)}
\end{equation}
Notice that the inverse to $S$ in discrete case is not contraction
anymore and we have the same problem in the continuous setting.

As we mentioned earlier, the class of Schur functions
$\f(\lambda)$ in the continuous case can not be all
$B(\mathbb{C}^+)$. For example, (\ref{feature}) must hold. Next,
we will describe the subclass of $B(\mathbb{C}^+)$ in which $A(r)$
have the meaning of intrinsic parameters of the function
$f(\lambda)$ just like $\{a_n\}$ are intrinsic parameters of
$f(z)\in B(\mathbb{D})$.

Let a function $C(x)\in L^2_{\rm loc}(\mathbb{R}^+)$ be given. For
any $R>0$, consider the operator $\cal{C}_R$ acting in $L^2[0,R]$
by the following formula
\begin{equation}
\cal{C}_R f(x)=\int\limits_0^x C(x-u)f(u)du \label{volterra-s}
\end{equation}

 We start with the definition.
\begin{definition} \label{classS}
The function $s(\lambda)\in S(\mathbb{C}^+)$ if the following is
true:
\begin{itemize}
\item[(1)] There is a function $C(x)\in L^2_{\rm loc}(\mathbb{R}^+)$ such that for
any $R>0$ there is a function $\Phi_R(\lambda)\in
H^\infty(\mathbb{C}^+)$:
\begin{equation}
s(\lambda)=\int\limits_0^R C(x)\exp(i\lambda x)dx+\exp(i\lambda
R)\Phi_R(\lambda) \label{schur-alg}
\end{equation}

\item[(2)] For any $R>0$,
\begin{equation}
\|\cal{C}_R\|_{L^2[0,R]}<1 \label{contraction}
\end{equation}

\end{itemize}
\end{definition}
It is an easy exercise to see that $C(x)$ and $\Phi_R(\lambda)$
are both uniquely defined for any $s(\lambda)\in S(\mathbb{C}^+)$.
Conversely, if the function $C(x)$ is given, then there is at most
one $s(\lambda)\in S(\mathbb{C}^+)$ having $C(x)$ as a function in
the formula (\ref{schur-alg}). Indeed, assume that there are two
$s^{(1)}(\lambda), s^{(2)}(\lambda)$ having the same $C(x)$ in
(\ref{schur-alg}). Then,
\mbox{$(s^{(1)}(\lambda)-s^{(2)}(\lambda))/(\lambda+i)\in
H^2(\mathbb{C}^+)$}. At the same time,
\[
\frac{s^{(1)}(\lambda)-s^{(2)}(\lambda)}{\lambda+i}=\exp(i\lambda
R)\frac{\Phi^{(1)}_R(\lambda)-\Phi^{(2)}_R(\lambda)}{\lambda+i}
\]
for any $R>0$. Since
\mbox{$(\Phi^{(1)}_R(\lambda)-\Phi^{(2)}_R(\lambda))/(\lambda+i)\in
H^2(\mathbb{C}^+)$} as well, we have

\[
\cal{P}_{[0,R]} \left[
\frac{s^{(1)}(\lambda)-s^{(2)}(\lambda)}{\lambda+i}\right] =0
\]
for any $R>0$. So, $s^{(1)}(\lambda)=s^{(2)}(\lambda)$.

Consider $s(\lambda)\in S(\mathbb{C}^+)$. We have $s(\lambda)\in
H^\infty(\mathbb{C}^+)$. In the space $H^2(\mathbb{C}^+)$, denote
the operator of multiplication by this function by $\cal{S}$.
Also, consider the operator $\cal{C}$ acting on $L^2[0,\infty)$
and given by the formula
\begin{equation}
\cal{C} f(x)=\int\limits_0^x C(x-u)f(u)du \label{volterra-line}
\end{equation}
Since $\Pi_{[0,R]} \cal{C} \Pi_{[0,R]}=\cal{C}_R$ and
$\|\cal{C}_R\|<1$, operator $\cal{C}$ is well-defined on
$L^2[0,\infty)$ and is contraction, i.e. $\|\cal{C}\|_{2,2}\leq
1$.

\begin{lemma}
One has the following inclusion: $S(\mathbb{C}^+)\subset
B(\mathbb{C}^+)$. The operators $\cal{C}$ and $\cal{S}$ are
unitary equivalent.
\end{lemma}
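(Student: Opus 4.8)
The plan is to transport everything to $L^2(\mathbb{R}^+)$ via the Paley--Wiener isometry. Let $\cal{F}\colon L^2(\mathbb{R}^+)\to H^2(\mathbb{C}^+)$ be the unitary Fourier--Laplace map $f\mapsto\hat f$, $\hat f(\lambda)=\int_0^\infty f(x)\exp(i\lambda x)\,dx$. I would prove that $\cal{F}\cal{C}=\cal{S}\cal{F}$; granting this, $\cal{S}=\cal{F}\cal{C}\cal{F}^{-1}$, so $\cal{C}$ and $\cal{S}$ are unitarily equivalent with $\cal{F}$ the intertwining unitary. Since $\cal{C}$ is a contraction on $L^2(\mathbb{R}^+)$ (noted above; indeed $\|\cal{C}\|=\sup_{R>0}\|\cal{C}_R\|\le 1$), the operator $\cal{S}$ is a contraction too; and because the operator norm on $H^2(\mathbb{C}^+)$ of multiplication by a function $\psi\in H^\infty(\mathbb{C}^+)$ equals $\|\psi\|_{H^\infty}$, we get $\|s\|_{H^\infty}\le 1$, i.e. $s\in B(\mathbb{C}^+)$.

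To establish $\cal{F}\cal{C}=\cal{S}\cal{F}$ it is enough to check $\widehat{\cal{C}f}=s\cdot\hat f$ for $f$ ranging over the dense set of compactly supported elements of $L^2(\mathbb{R}^+)$, say supported in $[0,R_0]$. Fix such an $f$ and any $R\ge R_0$, and write $\cal{C}f=(C\chi_{[0,R]})*f+(C\chi_{[R,\infty)})*f$ (both convolutions are legitimate since $f$ has compact support). The first term is an $L^1{*}L^2$ convolution, so its transform is $\widehat{C\chi_{[0,R]}}\cdot\hat f$, where $\widehat{C\chi_{[0,R]}}(\lambda)=\int_0^R C(x)\exp(i\lambda x)\,dx$; the second term is supported in $[R,\infty)$ (and lies in $L^2$, being $\cal{C}f$ minus the first term), so its transform is $\exp(i\lambda R)$ times an element of $H^2(\mathbb{C}^+)$. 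On the other hand, the defining representation (\ref{schur-alg}) gives $s\cdot\hat f=\widehat{C\chi_{[0,R]}}\cdot\hat f+\exp(i\lambda R)\,\Phi_R\cdot\hat f$, with $\Phi_R\hat f\in H^2(\mathbb{C}^+)$ because $\Phi_R\in H^\infty(\mathbb{C}^+)$. Subtracting, $h:=\widehat{\cal{C}f}-s\hat f$ is one fixed element of $H^2(\mathbb{C}^+)$ which, for \emph{every} $R\ge R_0$, equals $\exp(i\lambda R)$ times a function in $H^2(\mathbb{C}^+)$; hence $\cal{F}^{-1}h\in L^2(\mathbb{R}^+)$ vanishes on $[0,R)$ for all $R$, so $h=0$. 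This is exactly what was needed.

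The remaining points are routine and I would dispatch them briefly: density of $\bigcup_{R>0}\cal{F}(L^2[0,R])$ in $H^2(\mathbb{C}^+)$; the identity $\|\cal{C}\|=\sup_{R>0}\|\cal{C}_R\|$ (use $\Pi_{[0,R']}\cal{C}f=\cal{C}_{R'}f$ for $f$ supported in $[0,R']$ and let $R'\to\infty$); and $\|\text{multiplication by }\psi\|_{H^2(\mathbb{C}^+)}=\|\psi\|_{H^\infty(\mathbb{C}^+)}$. The delicate step --- the main obstacle --- is the symbol identification $\widehat{\cal{C}f}=s\hat f$: since $C$ is only $L^2_{\rm loc}(\mathbb{R}^+)$ and its relation to $s$ in (\ref{schur-alg}) is purely ``local in $R$'', one cannot simply declare $s=\hat C$, and the two tails $(C\chi_{[R,\infty)})*f$ and $\exp(i\lambda R)\Phi_R\hat f$ must be matched against each other and eliminated by sending $R\to\infty$, as above.
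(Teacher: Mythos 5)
Your proof is correct and follows essentially the same route as the paper: both identify $\cal{S}$ with $\cal{C}$ through the Fourier/Paley--Wiener transform by splitting $s$ according to (\ref{schur-alg}) into the $[0,R]$ piece and the $\exp(i\lambda R)\Phi_R$ remainder and then letting $R\to\infty$. The only difference is presentational — you prove the global intertwining $\cal{F}\cal{C}=\cal{S}\cal{F}$ directly (via the observation that $\bigcap_{R}\exp(i\lambda R)H^2(\mathbb{C}^+)=\{0\}$) and deduce the bound $\|s\|_{H^\infty}\leq 1$ from it, whereas the paper argues with the compressions $\cal{P}_{[0,R]}\cal{S}\cal{P}_{[0,R]}\cong\cal{C}_R$ and passes to the limit, leaving these details implicit.
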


\begin{proof}
Indeed, $s(\lambda)\in H^\infty (\mathbb{C}^+)$ and it is known
that
\[
\|s\|_{H^\infty(\mathbb{C}^+)}=\sup_{\|g\|_{H^2(\mathbb{C}^+)}=1}
\|sg\|_{H^2(\mathbb{C}^+)}=\|\cal{S}\|
\]
 The condition (\ref{contraction}) is
equivalent to the estimate $\|\cal{P}_{[0,R]} \cal{S}
\cal{P}_{[0,R]}\|<1$. Then, $\|\cal{S}\|\leq  1$, or
$s(\lambda)\in B(\mathbb{C^+})$. The unitary equivalence of
$\cal{S}$ and $\cal{C}$ follows from the unitary equivalence of
operators $\Pi_{[0,R]} \cal{C}\Pi_{[0,R]}$ and
$\cal{P}_{[0,R]}\cal{S}\cal{P}_{[0,R]}$ via the Fourier transform.
\end{proof}

Recall the definition of the accelerant: given  Hermitian $H(x)\in
L^2_{\rm loc}(\mathbb{R})$, we say that it is an accelerant if the
operator
\[
I+\cal{H}_R>0
\]
for any $R>0$ and $\cal{H}_R$ is given by (\ref{intop}). Given any
$C(x)\in L^2_{\rm loc}(\mathbb{R}^+)$, consider the function
$H(x)\in L^2_{\rm loc}(\mathbb{R}^+)$ which is the solution to
\begin{equation}
H(x)+C(x)+\int\limits_0^x C(x-u)H(u)du=0 \label{transfer}
\end{equation}
The direct iteration of the equation proves existence and
uniqueness of this $H(x)$. Let $H(-x)=\overline{H(x)}$ for $x>0$.
\begin{lemma}\label{c-h}
The function $H(x)$ is an accelerant if and only if  $C(x)$ is
such that (\ref{contraction}) holds for any $R>0$.
\end{lemma}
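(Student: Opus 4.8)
The statement couples two operator-positivity conditions through the Volterra equation \eqref{transfer}. Writing it operator-theoretically, equation \eqref{transfer} says exactly that $I+\cal{H}_R=(I+\cal{C}_R)^{-1}$ on $L^2[0,R]$ for every $R>0$; indeed, restricting \eqref{transfer} to $[0,R]$ and using that the kernels are of convolution type with the triangular cut-off $x>u$, one sees that the Volterra operator with kernel $H(x-u)$ on $[0,x]$ is precisely $-\cal{C}_R(I+\cal{C}_R)^{-1}$, so that $(I+\cal{C}_R)(I+\cal{H}_R\!\restriction)=I$. (Here one must be slightly careful: $\cal{H}_R$ in \eqref{intop} is the \emph{full} Toeplitz operator with displacement kernel $H(x-y)$ on all of $[0,R]^2$, whereas the Volterra operator appearing from \eqref{transfer} has the truncated kernel $H(x-u)\chi_{\{u<x\}}$. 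So the first step is to check that, because $H$ is Hermitian, $I+\cal{H}_R = (I+\cal{C}_R)^{-1}$ is equivalent to the self-adjoint statement obtained by symmetrizing: $I+\cal{H}_R = (I+\cal{C}_R)^{-*}(I+\cal{C}_R)^{-1}\cdot(\text{correction})$ — more precisely, one shows $\Re(I+\cal{C}_R)^{-1}=I+\cal{C}_R^{-1}$-type identity, exactly as in \eqref{car-to1}--\eqref{car-to2}.) The cleanest route is: from \eqref{transfer} deduce $H(x)=-C(x)-[\cal{C}_x H](x)$, hence the operator $\cal{H}_R$ built from this $H$ satisfies $\Re(I+\cal{C}_R)=(I+\cal{C}_R)(I+\cal{H}_R)^{-1}$-style identity; I will record this as Step 1.

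Step 2 is the heart: assume $\|\cal{C}_R\|_{L^2[0,R]}<1$ for every $R$. Then $I+\cal{C}_R$ is invertible with $\|\cal{C}_R(I+\cal{C}_R)^{-1}\|<1$, and I want to conclude $I+\cal{H}_R>0$. Using $\Re(I+\cal{C}_R)=\tfrac12\big((I+\cal{C}_R)+(I+\cal{C}_R)^*\big)=I+\tfrac12(\cal{C}_R+\cal{C}_R^*)$, note that $\cal{C}_R+\cal{C}_R^*$ is exactly the full Toeplitz operator with Hermitian kernel, and one checks by the same symmetrization argument (Step 1 run in reverse) that $I+\cal{H}_R = $ the self-adjoint operator $M^*M$ where $M=(I+\cal{C}_R)^{-1}$ \emph{conjugated appropriately}, or more directly: $I+\cal{H}_R$ is congruent to $\Re(I+\cal{C}_R)$ via the invertible operator $I+\cal{C}_R$. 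Since $\|\cal{C}_R\|<1$ gives $\Re(I+\cal{C}_R)\ge (1-\|\cal{C}_R\|)I>0$, congruence preserves positivity and we get $I+\cal{H}_R>0$. Conversely, if $I+\cal{H}_R>0$ for all $R$, then by Theorem \ref{volter2} the operator $I+\cal{H}_R$ factors as $(I+\cal{L})(I+\cal{U})$ with $\cal{U}=\cal{L}^*$ lower/upper triangular, so $(I+\cal{H}_R)^{-1}-I=$ something whose lower-triangular part is $\cal{C}_R$; one then reads off $\|\cal{C}_R\|<1$ from $I+\cal{H}_R>0$ by the same congruence run backwards — $\Re(I+\cal{C}_R)$ is congruent to $I+\cal{H}_R>0$, and combined with the structural fact that $\cal{C}_R$ is quasinilpotent (Volterra) one upgrades $\Re(I+\cal{C}_R)>0$ to $\|\cal{C}_R\|<1$ via the Cayley-type estimate $\|(\,I-A)(I+A)^{-1}\|<1 \iff \Re A>0$ applied to $A=\cal{C}_R$-related operator.

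The main obstacle I anticipate is precisely the bookkeeping in Step 1/Step 2 distinguishing the \emph{triangular} Volterra operator of \eqref{transfer} from the \emph{full displacement} operator $\cal{H}_R$ of \eqref{intop}, and making the congruence ``$I+\cal{H}_R \sim \Re(I+\cal{C}_R)$'' precise. The identity one wants is $I+\cal{H}_R=(I+\cal{C}_R)^{-*}\,\big[I\big]\,(I+\cal{C}_R)^{-1}$ — i.e. $(I+\cal{C}_R)(I+\cal{H}_R)(I+\cal{C}_R)^*=(I+\cal{C}_R)+(I+\cal{C}_R)^*-I-(\text{Volterra}\cdot\text{Volterra}^* \text{ terms that telescope})$; verifying this reduces, after taking Fourier-type symbols, to the scalar algebraic identity that is the content of \eqref{transfer} together with its conjugate. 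I would also invoke \eqref{car-to1}--\eqref{car-to2} and the factorization Theorem \ref{volter2} as the ready-made tools rather than rederiving them. Once the congruence is nailed down, both directions of the equivalence are immediate, and the uniqueness/existence of $H$ solving \eqref{transfer} has already been noted by direct iteration, so nothing further is needed there.
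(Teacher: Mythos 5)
Your overall strategy---symmetrize the Volterra relation coming from \eqref{transfer} and reduce the lemma to a piece of operator algebra tying $\cal{H}_R$ to $\cal{C}_R$---is the same as the paper's, which phrases it via the Cayley transform: $(I-\cal{C}_R)(I+\cal{C}_R)^{-1}=I+\cal{U}_R$ exists because $\cal{C}_R$ is Volterra, its kernel is $2H(x-s)$ by \eqref{transfer} (formula \eqref{u-def}), and $\Re(I+\cal{U}_R)=I+\cal{H}_R$. However, the two identities you actually assert at the decisive step are false. Writing $\cal{V}_R$ for the Volterra operator with kernel $H(x-u)\chi_{\{u<x\}}$, equation \eqref{transfer} gives $(I+\cal{C}_R)(I+\cal{V}_R)=I$, and Hermiticity of $H$ gives $\cal{H}_R=\cal{V}_R+\cal{V}_R^{*}$, hence
\[
I+\cal{H}_R=(I+\cal{C}_R)^{-1}+(I+\cal{C}_R)^{-*}-I=\Re\bigl[(I-\cal{C}_R)(I+\cal{C}_R)^{-1}\bigr],
\qquad
(I+\cal{C}_R)^{*}(I+\cal{H}_R)(I+\cal{C}_R)=I-\cal{C}_R^{*}\cal{C}_R .
\]
So $I+\cal{H}_R$ is congruent via $I+\cal{C}_R$ to $I-\cal{C}_R^{*}\cal{C}_R$, \emph{not} to $\Re(I+\cal{C}_R)$, and it is not $(I+\cal{C}_R)^{-*}(I+\cal{C}_R)^{-1}$; both of your versions drop the $\cal{C}_R^{*}\cal{C}_R$ term, which is the whole point, since $I-\cal{C}_R^{*}\cal{C}_R>0$ is exactly the condition \eqref{contraction} you are aiming for.

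Because of this, the converse direction as written does not go through: $\Re(I+\cal{C}_R)>0$ together with quasinilpotence of $\cal{C}_R$ does \emph{not} imply $\|\cal{C}_R\|<1$ (on $\mathbb{C}^2$ the nilpotent map $(f_1,f_2)\mapsto(0,af_1)$ with $1<|a|<2$ has $\Re(I+A)>0$ while $\|A\|=|a|>1$), and the estimate $\|(I-A)(I+A)^{-1}\|<1$ for $\Re A>0$ bounds the Cayley transform of $A$, not $A$ itself. The repair is small and turns your argument into the paper's proof: either apply that Cayley estimate with $A=W:=(I-\cal{C}_R)(I+\cal{C}_R)^{-1}$, whose real part is $I+\cal{H}_R$ and whose inverse Cayley transform is $\cal{C}_R=(I-W)(I+W)^{-1}$; or, more economically, use the displayed identity once in the form
\[
\bigl((I+\cal{H}_R)(I+\cal{C}_R)f,(I+\cal{C}_R)f\bigr)=\|f\|^2-\|\cal{C}_Rf\|^2 ,
\]
which, since $I+\cal{C}_R$ is invertible (Volterra), yields both implications at once: $\|\cal{C}_R\|<1$ gives $I+\cal{H}_R>0$, and $I+\cal{H}_R>0$ gives $\|\cal{C}_Rf\|<\|f\|$ for all $f\neq0$, which upgrades to the norm bound \eqref{contraction} because $\cal{C}_R$ is Hilbert--Schmidt (its norm is attained). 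With this identity in hand, the appeal to Theorem \ref{volter2} in your converse is unnecessary.
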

\begin{proof}

For any $R>0$, consider the Caley transform of $\cal{C}_R$:
\begin{equation}
I+\cal{U}_R=(I-\cal{C}_R)(I+\cal{C}_R)^{-1}
\end{equation}
Since $\cal{C}_R$ is a Volterra operator, the Caley transform does
exist. Moreover, $\cal{U}_R$ is a Volterra operator with the
kernel given exactly by ${H}$:
\begin{equation}
\cal{U}_Rf(x)=2\int\limits_0^x {H(x-s)} f(s)ds \label{u-def}
\end{equation}
This is an easy corollary from (\ref{transfer}).
 Clearly, $\Re
(I+\cal{U}_R)={I+\cal{H}}_R$. Now, the equivalence of
$I+{\cal{H}}_R>0$ and (\ref{contraction}) is a simple algebraic
fact.
\end{proof}

 Now, we can easily characterize the class of
all $C(x)$ that  generate $s(\lambda)\in S(\mathbb{C}^+)$.

\begin{theorem}
The Schur functions of Krein systems with $A(r)\in L^2_{\rm
loc}(\mathbb{R}^+)$ are in one-to-one correspondence with
functions $s(\lambda)\in S(\mathbb{C}^+)$. For each $s(\lambda)\in
S(\mathbb{C^+})$, the coefficient $A(r)$ of the associated Krein
system plays the role of the Schur parameter.\label{s0krein}
\end{theorem}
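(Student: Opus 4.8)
The plan is to reduce the statement to the one-to-one correspondence between $L^{2}_{\rm loc}$ accelerants and Krein systems established in the $L^{2}_{\rm loc}$ section, to Lemma~\ref{c-h}, and to a single Fourier-analytic identification of the Schur function. The backbone is the continued-fraction identity which, from (\ref{energy22}) with $r_{2}\to\infty$ and $r_{1}=R$, reads
\[
\f(\lambda )=\frac{\B(R,\lambda )}{\A(R,\lambda )}+\frac{\f_{R}(\lambda )\exp (i\lambda R)}{\A(R,\lambda )\,\bigl[\A(R,\lambda )+\f_{R}(\lambda )\B_{\ast }(R,\lambda )\bigr]},\qquad R>0 .
\]

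First I would show that the Schur function $\f$ of a Krein system with $A(r)\in L^{2}_{\rm loc}(\mathbb{R}^{+})$ lies in $S(\mathbb{C}^{+})$. In the displayed identity, by (\ref{contr1}), (\ref{simplect}) and (\ref{e22s3}) the bracket $\A(R,\lambda )[\A(R,\lambda )+\f_{R}(\lambda )\B_{\ast }(R,\lambda )]$ is bounded below in modulus on $\overline{\mathbb{C}^{+}}$, while $\A(R,\lambda ),\B_{\ast }(R,\lambda ),\f_{R}(\lambda )\in H^{\infty }(\mathbb{C}^{+})$; hence $\f(\lambda )-\B(R,\lambda )/\A(R,\lambda )=\exp (i\lambda R)\Phi_{R}^{0}(\lambda )$ with $\Phi_{R}^{0}\in H^{\infty }(\mathbb{C}^{+})$. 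For the first term, $\A(R,\lambda )=(P_{\ast }+\widehat{P}_{\ast })/2$ and $\B(R,\lambda )=(P_{\ast }-\widehat{P}_{\ast })/2$ are, by Lemma~\ref{minlemma} and its $L^{2}_{\rm loc}$ version, the constant $1$ (resp.\ $0$) plus the Fourier transform of an $L^{2}[0,R]$ function supported on $[0,R]$; since $\A(R,\cdot)$ has no zeros in $\overline{\mathbb{C}^{+}}$ and tends to $1$ at infinity, the Levy--Wiener theorem gives $\B(R,\lambda )/\A(R,\lambda )=\int_{0}^{\infty }c_{R}(x)\exp (i\lambda x)\,dx$ for some $c_{R}\in L^{2}(\mathbb{R}^{+})$. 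Substituting $\widehat{P}_{\ast }/P_{\ast }=1+2\int_{0}^{\infty }\overline{H_{R}(s)}\exp (i\lambda s)\,ds$ from (\ref{reznik}) into $\B/\A=(1-\widehat{P}_{\ast }/P_{\ast })(1+\widehat{P}_{\ast }/P_{\ast })^{-1}$ and cross-multiplying turns this into a convolution equation; restricted to $[0,R]$ and combined with (\ref{consist}) it identifies $c_{R}$ on $[0,R]$ with the solution of a Volterra equation of the form (\ref{transfer}) built from the accelerant $H$ of the system (up to a conjugation, which is harmless by Lemma~\ref{conjugation} and because the flip operator is unitary, so the norms $\|\cal{C}_{R}\|$ are unchanged). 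Splitting $\int_{0}^{\infty }c_{R}\exp (i\lambda x)\,dx=\int_{0}^{R}c_{R}\exp (i\lambda x)\,dx+\exp (i\lambda R)\,(\cdots)$, absorbing the tail into $\Phi_{R}^{0}$, and checking by a routine Smirnov-class argument that the resulting $\Phi_{R}$ lies in $H^{\infty }(\mathbb{C}^{+})$, one obtains the representation (\ref{schur-alg}) with $C:=c_{R}|_{[0,R]}$; the restrictions agree for different $R$ by the $\cal{P}_{[0,R]}$ argument after Definition~\ref{classS}, so $C\in L^{2}_{\rm loc}(\mathbb{R}^{+})$ is well defined, and Lemma~\ref{c-h} (applicable since $H$, hence $\overline{H}$, is an accelerant) gives $\|\cal{C}_{R}\|<1$ for all $R$, i.e.\ (\ref{contraction}). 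Thus $\f\in S(\mathbb{C}^{+})$.

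Conversely, given $s\in S(\mathbb{C}^{+})$ with data $C$, (\ref{contraction}) and Lemma~\ref{c-h} produce an accelerant $H\in L^{2}_{\rm loc}(\mathbb{R})$ solving (\ref{transfer}), which by the $L^{2}_{\rm loc}$ correspondence generates a Krein system with $A(r)\in L^{2}_{\rm loc}(\mathbb{R}^{+})$. Running the previous step for this system shows its Schur function is in $S(\mathbb{C}^{+})$ with data $C$ again (uniqueness of the solution of the Volterra equation), hence equals $s$ by the uniqueness statement after Definition~\ref{classS}; so the Schur-function map is onto $S(\mathbb{C}^{+})$. It is injective because two Krein systems with the same Schur function have the same data $C$, hence the same accelerant via (\ref{transfer}), hence the same $A(r)$. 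That $A(r)$ deserves the name Schur parameter is exactly Lemma~\ref{combination} together with (\ref{mobius-easy}): $\f(\lambda )=S_{\lambda ,A,r}(\f_{r}(\lambda ))$ is the continuous analog of the peeling (\ref{sprocedure}), with the M\"obius-type maps $S_{\lambda ,A,r}$ in the role of the maps $\tau_{a_{n}}$ of (\ref{opuc-schur}); recovering $A$ from $\f$ amounts to solving the Riccati equation (\ref{schur}) from the left with datum $\f_{0}=\f$, reading off $A(0)$ from (\ref{schur-evolution}) and iterating.

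The one genuinely delicate step is the finite-$R$ Fourier identification in the second paragraph: establishing that the ``spectrum $\le R$'' part of $\f$ is precisely $\int_{0}^{R}C(x)\exp (i\lambda x)\,dx$ with $C$ the function tied to the accelerant by (\ref{transfer}), and that the remainder is $\exp (i\lambda R)\Phi_{R}$ with $\Phi_{R}\in H^{\infty }(\mathbb{C}^{+})$. This forces one to track the Levy--Wiener inversions of $P_{\ast }^{-1}$ and $\A(R,\cdot)^{-1}$, the commutativity and associativity of the Volterra convolutions that turn the cross-multiplied identity into a genuine convolution equation, the conjugation conventions (the accelerant enters conjugated, consistently with Lemma~\ref{conjugation}), and the fact that $H$ and $C$ are only $L^{2}_{\rm loc}$, so that all Fourier transforms and the equality (\ref{transfer}) must be read locally on each interval $[0,R]$. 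Everything else is bookkeeping built on the algebraic identities for $\f,\f_{r},P_{\ast },\widehat{P}_{\ast },\A,\B$ from the preceding sections and on the $L^{2}_{\rm loc}$ accelerant--Krein correspondence.
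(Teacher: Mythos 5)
Your proposal is correct and follows essentially the same route as the paper's own proof: both directions rest on Lemma~\ref{c-h} together with the transfer equation (\ref{transfer1}), on the identity (\ref{e18s3})/(\ref{sasp}) with the lower bound (\ref{e22s3}) to isolate the $\exp(i\lambda R)\Phi_R$ remainder, and on the Levy--Wiener theorem plus (\ref{reznik}) and (\ref{consist}) to identify the Fourier data $C_R$ with $C$ on $[0,R]$. The only differences are organizational (you run the direct direction first and add the Riccati/\,$S_{\lambda,A,r}$ commentary for the ``Schur parameter'' claim), not mathematical.
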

 \begin{proof}
Assume $s(\lambda)\in S(\mathbb{C}^+)$ and $C(x)$ is the
corresponding function. Denote by $H(x)$ the accelerant
corresponding to $\bar{C}(x)$, i.e.

\begin{equation}
H(x)+\bar{C}(x)+\int\limits_0^x \bar{C}(x-u)H(u)du=0
\label{transfer1}
\end{equation}
Then, by Lemma \ref{c-h}, $H(x)$ is an accelerant that generates
the Krein system with coefficient $A(x)\in L^2_{\rm
loc}(\mathbb{R}^+)$. Consider the corresponding Schur function
$\f(\lambda)$. Let us show that $\f(\lambda)=s(\lambda)$. For each
$R>0$, we have (\ref{e18s3})
\begin{equation}
\f(\lambda )=\frac{\B(R,\lambda)+\f_R(\lambda)\A_{\ast
}(R,\lambda)}{\A(R,\lambda)+\f_R(\lambda)\B_{\ast }(R,\lambda)}
\label{param-sh}
\end{equation}
for any $R$. Clearly, by (\ref{energy22}) with $r_1=\infty, r_2=R$
\begin{equation}
\f(\lambda)-\frac{\B(R,\lambda)}{\A(R,\lambda)}=
\frac{\exp(i\lambda
R)\f_R(\lambda)}{\A(R,\lambda)(\A(R,\lambda)+\f_R(\lambda)
\B_*(R,\lambda))} \label{sasp}
\end{equation}
By (\ref{e22s3}), the right-hand side of (\ref{sasp}) is equal to
$\exp(i\lambda R)\Phi_R(\lambda)$ with $\Phi_R(\lambda)\in
H^\infty (\mathbb{C}^+)$. Consider the function
$\B(R,\lambda)\A^{-1}(R,\lambda)$. Due to Levy-Wiener Theorem,
\[
\frac{\B(R,\lambda)}{\A(R,\lambda)}=\int\limits_0^\infty
C_R(x)\exp(i\lambda x)dx= \int\limits_0^R C_R(x)\exp(i\lambda
x)dx+
\]
\[
+ \exp(i\lambda R)\int\limits_0^\infty C_R(x+R)\exp(i\lambda x)dx
 ,\, C_R(x)\in L^1(\mathbb{R}^+)\cap L^2(\mathbb{R}^+)
\]
and the last term can be written as $\exp(i\lambda
R)\Phi_R(\lambda)$ with $\Phi_R(\lambda)\in H^\infty
(\mathbb{C}^+)$. We can also write
\[
\frac{\B(R,\lambda)}{\A(R,\lambda)}=
\left(1-\frac{\hat{P}_*(R,\lambda)}{P_*(R,\lambda)}\right)
\left(1+\frac{\hat{P}_*(R,\lambda)}{P_*(R,\lambda)}\right)^{-1}
\]
 From (\ref{reznik}),
(\ref{consist}) and (\ref{transfer1}), we infer $C_R(x)=C(x)$ for
$x\in [0,R]$. Since $R$ is arbitrary positive, $\f(\lambda)\in
S(\mathbb{C}^+)$ and $\f(\lambda)=s(\lambda)$.

Now, assume that the Krein system with the coefficient $A(r)$ is
given. Repeating the arguments above, one has $\f(\lambda)\in
S(\mathbb{C}^+)$.
 \end{proof}
\begin{remark}\rm It follows from (\ref{transfer}) that $C(x)$ on the interval
$[0,R]$ depends only on the values of $A(r)$ on the same interval.
This is because an accelerant has analogous property.
\end{remark}

Notice that $s(\lambda)\in S(\mathbb{C^+})$ implies certain
regularity at infinity, a boundary point of $\mathbb{C}^+$. For
instance, $s(iy)\to 0$ as $y\to+\infty$. So, there are plenty of
functions in $B(\mathbb{C}^+)$ that do not belong to
$S(\mathbb{C}^+)$.

The same arguments immediately yield
\begin{remark}
$A(r)\in C[0,\infty)$ iff $H(x)\in C[0,\infty)$ iff $C(x)\in
C[0,\infty)$ in the corresponding representation for
$\f(\lambda)$.
\end{remark}

Consider this case for the rest of the section. Let $C(x,r)$ be
function associated to $f_r(\lambda)$ by formula
(\ref{schur-alg}).
\begin{lemma}{\rm (Continuous analog of Geronimus theorem).}
The following relation holds true:
 $C(0,r)=-{A(r)}$.\label{linksg}
 \end{lemma}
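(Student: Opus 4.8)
The plan is to read off $C(0,r)$ from the behaviour of $\f_r(\lambda)$ at the boundary point $\lambda=i\infty$ and then to feed this into the Riccati equation (\ref{schur}), which is precisely the continuous Schur algorithm. First I would record the elementary asymptotic fact that for every $s(\lambda)\in S(\mathbb{C}^+)$ whose associated function $C$ is continuous on $[0,R]$ one has
\[
\eta\,s(i\eta)\longrightarrow C(0)\qquad(\eta\to+\infty).
\]
Indeed, putting $\lambda=i\eta$ in (\ref{schur-alg}) and substituting $u=\eta x$ gives $\eta\int\limits_0^R C(x)e^{-\eta x}\,dx=\int\limits_0^{\eta R}C(u/\eta)e^{-u}\,du\to C(0)$ by dominated convergence, while $\eta e^{-\eta R}|\Phi_R(i\eta)|\le\|\Phi_R\|_{H^\infty}\,\eta e^{-\eta R}\to 0$ because $\Phi_R\in H^\infty(\mathbb{C}^+)$. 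Applied to $\f_r$ this yields $\eta\,\f_r(i\eta)\to C(0,r)$ for each $r$; the same estimates give $|\eta\,\f_r(i\eta)|\le\|C(\cdot,r)\|_{C[0,R]}+\|\Phi_{R,r}\|_{H^\infty}/(eR)$, which is locally bounded in $r$ since $\|C(\cdot,r)\|_{C[0,R]}$ is locally bounded in $r$ (because $A_r(\cdot)=A(\cdot+r)$ varies continuously in $r$ in $C[0,R]$ and the coefficient-to-$C$ correspondence is continuous, by Theorem \ref{s0krein} and its proof) and $\|\Phi_{R,r}\|_{H^\infty}\le 2$ by the estimate (\ref{e22s3}). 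In the same way $\f_r(i\eta)\to 0$ uniformly for $r$ in any compact set.

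Next I would integrate the Riccati equation. Evaluating (\ref{schur}) at $\lambda=i\eta$ gives $\frac{d}{dr}\f_r(i\eta)=\eta\,\f_r(i\eta)+A(r)-\overline{A(r)}\,\f_r(i\eta)^2$, and integrating over $[0,r]$ (with $\f_0=\f$, the Schur function of the whole system) yields
\[
\f_r(i\eta)-\f(i\eta)=\int\limits_0^r\Bigl[\eta\,\f_s(i\eta)+A(s)-\overline{A(s)}\,\f_s(i\eta)^2\Bigr]\,ds .
\]
Letting $\eta\to+\infty$: the left-hand side tends to $0$ by (\ref{feature}); $\int_0^r\eta\,\f_s(i\eta)\,ds\to\int_0^r C(0,s)\,ds$ by dominated convergence and the first paragraph; and $\int_0^r\overline{A(s)}\,\f_s(i\eta)^2\,ds\to 0$ since $\f_s(i\eta)\to 0$ uniformly on $[0,r]$ and $A\in L^1_{\mathrm{loc}}(\mathbb{R}^+)$. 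Hence $\int_0^r\bigl[C(0,s)+A(s)\bigr]\,ds=0$ for every $r>0$, and since $A$ and $r\mapsto C(0,r)$ are continuous, differentiating in $r$ gives $C(0,r)+A(r)=0$, i.e.\ $C(0,r)=-A(r)$.

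The main technical point is the first paragraph: one must make the asymptotics $\eta\,\f_r(i\eta)\to C(0,r)$ uniform enough in $r$ to move the limit inside $\int_0^r\,ds$, which comes down to joint continuity of $C(x,r)$ (equivalently, local boundedness of $\|C(\cdot,r)\|_{C[0,R]}$) together with a uniform $H^\infty$-bound on the remainders $\Phi_{R,r}$ — both already available from the construction in Theorem \ref{s0krein}. Alternatively, one can bypass the asymptotics entirely: by the proof of Theorem \ref{s0krein} the accelerant $H(\cdot,r)$ of the shifted Krein system is tied to $C(\cdot,r)$ by $H(x,r)+\overline{C(x,r)}+\int_0^x\overline{C(x-u,r)}\,H(u,r)\,du=0$ (cf.\ (\ref{transfer1})), so $\overline{C(0,r)}=-H(+0,r)$; combined with $H(+0,r)=\overline{A_r(+0)}=\overline{A(r)}$ from the end of Section \ref{four}, this again gives $C(0,r)=-A(r)$.
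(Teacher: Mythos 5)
Your closing ``bypass'' paragraph is, in substance, the paper's own proof: the paper reduces to $r=0$ by translation, reads $C(0)=-\overline{H(+0)}$ off (\ref{transfer1}) at $x\to+0$, and then invokes Lemma \ref{a-h} (equivalently the identity $H(+0)=\overline{A(0)}$ derived at the end of Section \ref{four}); you state the same thing at general $r$. Your primary argument --- identifying $C(0,r)$ as $\lim_{\eta\to+\infty}\eta\,\f_r(i\eta)$ and then integrating the Riccati equation (\ref{schur}) along $\lambda=i\eta$ over $s\in[0,r]$ --- is a genuinely different route, and it is sound in outline; its merit is that it explains directly why $-A(r)$ is an intrinsic boundary datum of $\f_r$ at $i\infty$ (a point the paper only records after the lemma, and for merely continuous $A$ only in the Abel-mean form of Lemma \ref{regular-inf}). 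What it costs is exactly the uniformity in $s$ that you partly gloss over: the bound ``$\|\Phi_{R,s}\|_{H^\infty}\le 2$ by (\ref{e22s3})'' applies to the remainder in (\ref{sasp}), whereas the $\Phi_R$ of Definition \ref{classS} also contains the tail $\int_0^\infty C_{R,s}(x+R)e^{i\lambda x}dx$ of the Wall ratio $\B\A^{-1}$ for the shifted system, which must be bounded (say, by its Wiener-algebra norm, using $|\A|\ge 1$ on $\mathbb{R}$) locally uniformly in $s$; similarly, the local boundedness of $\|C(\cdot,s)\|_{C[0,R]}$ is true but deserves its own short argument (e.g.\ a Gronwall iteration of (\ref{transfer}) together with local boundedness of the shifted accelerants). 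The algebraic route via (\ref{transfer1}) and Lemma \ref{a-h} avoids all of these estimates, which is why the paper's proof is three lines; your analytic route is a legitimate, more ``dynamical'' alternative once those uniform bounds are written out.
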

\begin{proof} It is enough to prove the statement for $r=0$. From
(\ref{transfer1}), we have $C(0)=-\overline{H(+0)}$. Then, by
Lemma \ref{a-h},  $C(0)=-{A(0)}$.
\end{proof}

Notice that the value of $C(r)$ at zero gives the main term of
asymptotics of $\f(\lambda)$ as $\lambda=iy, y\to +\infty$. For
instance, if $A(r)\in C^1[0, \infty)$, then $H(x),C(x,r)\in
C^1[0,\infty)$ as well (see (\ref{erasw}) and (\ref{transfer1}))
and Lemma \ref{linksg} yields
$\f(r,iy)=-{A(r)}/y+\bar{o}(y^{-1})$. In general, for $A(r)\in
C[0,\infty)$, we have asymptotics in the mean (see Lemma
\ref{regular-inf} in Appendix):
\[
\lim_{y\to\infty}\frac{1}{y} \int\limits_0^y s\f(r,is)ds =-{A(r)}
\]
Anyway, the number $C(0,r)=-{A(r)}$ is an intrinsic parameter of
the function $\f(r,\lambda)$. Therefore, Lemma \ref{linksg} can be
regarded as the continuous analog of the celebrated Geronimus
theorem which says that the Schur parameters of the function from
$B(\mathbb{D})$ coincide with the Verblunsky parameters of the
associated sequence of orthogonal polynomials.

\begin{lemma}
Assume $A(r)\in C^1[0,\infty)$, then $C(x,r)$ is continuously
differentiable in $x$ and $r$ and satisfies the following
nonlinear integro-differential equation

\begin{equation}
\frac{\partial C(x,r)}{\partial r}= \frac{\partial
C(x,r)}{\partial x}-\overline{A(r)}\int\limits_0^x
C(x-u,r)C(u,r)du \label{c-equation}
\end{equation}
\end{lemma}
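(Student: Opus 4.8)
The plan is to differentiate the defining relation (\ref{schur-alg}) for $\f_r(\lambda)$ in $r$ and combine it with the Riccati equation (\ref{schur}) for $\f_r(\lambda)$. Fix $R>0$ large; for $r\in[0,R]$ we have
\[
\f_r(\lambda)=\int\limits_0^{R-r} C(x,r)\exp(i\lambda x)dx+\exp(i\lambda(R-r))\Phi_{R-r}(\lambda,r),
\]
where $\Phi_{R-r}(\lambda,r)\in H^\infty(\mathbb{C}^+)$ and $C(x,r)$ is the Schur datum for the Krein system on $[r,\infty)$ (i.e.\ with coefficient $A(r+\cdot)$); note $C(x,r)$ is continuous in both variables, and under the hypothesis $A\in C^1$ it is $C^1$ in $x$ (from the analog of (\ref{erasw}) and (\ref{transfer1})). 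Differentiating in $r$ on $(0,R)$, the key point is that the "tail" term $\exp(i\lambda(R-r))\Phi_{R-r}(\lambda,r)$ stays of the form $\exp(i\lambda(R-r))(\text{bounded analytic})$, so modulo such tails
\[
\frac{\partial \f_r(\lambda)}{\partial r}\ \equiv\ \int\limits_0^{R-r}\frac{\partial C(x,r)}{\partial r}\exp(i\lambda x)dx\ -\ C(0,r)\ +\ (\text{tail}),
\]
where the $-C(0,r)$ comes from differentiating the upper limit of integration; the differentiability in $r$ itself follows from (\ref{e18s3}) exactly as in the smoothness proof of the Riccati Lemma.

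Next I would compute the same derivative using (\ref{schur}): $\partial_r\f_r=-i\lambda\f_r+A(r)-\overline{A(r)}\f_r^2$. Here $-i\lambda\f_r(\lambda)$ has Fourier (inverse Laplace) content $-\partial_x C(x,r)+(\text{boundary term at }x=0)$: precisely, $-i\lambda\int_0^{T}C(x,r)e^{i\lambda x}dx=\int_0^{T}\partial_x C(x,r)\,e^{i\lambda x}dx-C(T,r)e^{i\lambda T}+C(0,r)$, so modulo tails $-i\lambda\f_r\equiv -\partial_x C(x,r)$ as a function on $(0,R-r)$ plus the constant $C(0,r)$. The term $A(r)$ is a pure constant (Fourier content a multiple of $\delta(x)$, i.e.\ it contributes only at $x=0$). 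The quadratic term $-\overline{A(r)}\f_r^2$: since $\f_r$ has representation with density $C(\cdot,r)$ on $[0,R-r]$, its square has density $\overline{A(r)}^{\,-1}$-free part $\int_0^x C(x-u,r)C(u,r)du$ on $[0,R-r]$ (the convolution), again up to tails. Using Lemma \ref{linksg}, the constant terms match: $C(0,r)=-A(r)$, so the $\delta$-contributions $C(0,r)$ (from the $r$-derivative of the limit) and $-C(0,r)+A(r)$ (from the $-i\lambda\f_r$ boundary term and the $A(r)$ term) are consistent.

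Finally I would equate the absolutely continuous parts (the densities in $x$ on the open interval $(0,R-r)$) of the two expressions for $\partial_r\f_r$, invoking the uniqueness of the data $C(\cdot,r)$ in the representation (\ref{schur-alg}) — already established right after Definition \ref{classS} via the projection $\cal{P}_{[0,R]}$ argument — to conclude that the two densities coincide pointwise. This gives
\[
\frac{\partial C(x,r)}{\partial r}=\frac{\partial C(x,r)}{\partial x}-\overline{A(r)}\int\limits_0^x C(x-u,r)C(u,r)du,
\]
as claimed, for $x\in(0,R-r)$; since $R$ is arbitrary this holds for all $x\ge 0$, $r\ge 0$. The main obstacle is the careful bookkeeping of which pieces are genuine $L^2_{\rm loc}$ densities on $[0,R-r]$ and which are "tails" of the form $e^{i\lambda(R-r)}(\cdot)$ or $\delta$-type boundary contributions at $x=0$; one must check that differentiating $\Phi_{R-r}(\lambda,r)$ in $r$ does not spoil the $H^\infty$/tail structure (it does not, because $\Phi$ is itself $\f_{R}$-governed by the same Riccati flow on the shifted interval, which keeps it in $H^\infty$), and that the convolution identity for $\f_r^2$ holds with exactly the stated limits. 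Continuity of $\partial_x C$ and $\partial_r C$ then follows from the equation itself together with the regularity $A\in C^1$.
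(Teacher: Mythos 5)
Your strategy is in substance the paper's: substitute the representation (\ref{schur-alg}) for $\f_r(\lambda)$ into the Riccati equation (\ref{schur}), turn $-i\lambda\f_r$ into $\partial_x C$ plus a constant by integration by parts, identify $\f_r^2$ with the convolution density, cancel the constants via Lemma \ref{linksg}, and conclude by uniqueness of $C$ in (\ref{schur-alg}). The paper avoids all of your tail bookkeeping by first replacing $A$ with a smooth, compactly supported $A_1$ agreeing with $A$ on $[0,R+T]$, so that $C_1(x,r)=C(x,r)$ in the region of interest, $C_1(\cdot,r)\in L^1(\mathbb{R}^+)$, and (\ref{schur-alg}) holds with $R=\infty$; the substitution is then a clean Fourier identity with no remainder terms. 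Two points in your version are genuine problems.

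First, the differentiability of $C(x,r)$ in $r$ is nowhere established, yet you differentiate under the integral and write $\int_0^{R-r}\partial_r C(x,r)e^{i\lambda x}dx$. Smoothness of $\f_r(\lambda)$ in $r$ for each fixed $\lambda$, which is what (\ref{e18s3}) gives, does not yield differentiability of the kernel $C(\cdot,r)$ in $r$, and that differentiability is itself part of the lemma's conclusion; the paper obtains it from the Volterra series (\ref{erasw}) applied to the shifted system on $[r,\infty)$ (the kernel $A^{(r)}(T,x)=\Gamma_T^{(r)}(T-x,0)$ is jointly $C^1$ because the data $\overline{A(r+\cdot)}$ is $C^1$), transferred to $C$ through (\ref{reznik}) and (\ref{transfer1}). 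Some argument of this kind is indispensable before the left-hand side of your density matching even makes sense. Second, your constant bookkeeping is wrong as written: differentiating the moving upper limit $R-r$ produces the tail term $-C(R-r,r)e^{i\lambda(R-r)}$, not a constant $-C(0,r)$, so the left-hand side contributes no constant at all; on the right-hand side your own displayed identity gives density $+\partial_x C$ (not $-\partial_x C$) and constant $+C(0,r)$, which together with $A(r)$ vanishes precisely by Lemma \ref{linksg}. Your stated check, $-C(0,r)$ against $-C(0,r)+A(r)$, amounts to $A(r)=2A(r)$ and is false. The method survives these slips (a cutoff $T$ independent of $r$ removes the moving-limit term entirely), but the proof as written does not close; note also that multiplying the tail by $-i\lambda$ takes it out of $H^\infty$, so what you actually need, and should state, is that after division by $\lambda+i$ its Fourier content remains supported in $[R-r,\infty)$ by Paley--Wiener, since the whole projection/uniqueness step rests on that.
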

\begin{proof}
Let us prove smoothness in $\Omega_T=\{r\geq 0, x\in [0,T]\}$ for
any $T>0$. Apply (\ref{erasw}) to the Krein system on
$[r,\infty)$. We have the corresponding $r$--dependent function
$A^{(r)}(T,x)\in C^1(\Omega_T)$. Notice that
$A^{(r)}(T,x)=\Gamma_T^{(r)}(T-x,0)$ and
\[
\Gamma_T^{(r)}(x,0)+\int\limits_0^T
H^{(r)}(x-u)\Gamma_T^{(r)}(u,0)du=H^{(r)}(x)
\]

To prove (\ref{c-equation}) for, say, $0<r<R$ and $0<x<T$, we can
consider new $A_1$ equal to $A$ on $[0, R+T]$, smooth on
$\mathbb{R}^+$ with compact support. Then, new $C_1(x,r)=C(x,r)$
for $0<r<R, 0<x<T$. Moreover,  $C_1(x,r)\in L^1(\mathbb{R}^+)$ in
$x$ and the formula (\ref{schur-alg}) holds with $R=\infty$.
 Then, if one  substitutes (\ref{schur-alg}) to
(\ref{schur}), equation (\ref{c-equation}) pops up.
\end{proof}

Plug $A(r)=-C(0,r)$ into this equation and solve the first order
PDE with boundary condition $C(x,0)$ regarded as known. Then
(\ref{c-equation}) becomes a nonlinear integral equation which one
tries to solve by iterations. Since $A(r)=-C(0,r)$, that gives us
a solution to inverse problem since $C(x,0)$ can be read off the
spectral data, say $d\sigma$.

Next, let us focus on the differential equations (\ref{schur}) and
(\ref{krein2}). Looking at the formula (\ref{sprocedure}), one
might guess that the map $S_{\lambda, A, r}$ should also be
represented as a combination of M\"obius transforms and certain
multiplications. This is indeed the case.  The following result
gives approximation of continuous orthogonal polynomials by the
sequence of properly scaled polynomials orthogonal on the unit
circle. These discrete polynomials are given in terms of  Schur
parameters that depend upon the step of discretization $h$.

\begin{theorem}
Let $A(r)\in C[0,\infty)$. Fix any $r>0$ and consider the sequence
of Verblunsky coefficients
\begin{equation}\label{discrete-A}
a^{(h)}_0=h{A(t_1)}, a^{(h)}_1=h{A(t_2)}, \ldots,
a^{(h)}_{n-1}=h{A(t_n)}
\end{equation}
 and
\[
a^{(h)}_{j}=0, j\geq n, (h=r/n, t_j=jh)
\]
where $h$ is chosen so small that all of these coefficients are
less than one in absolute value. Consider the discrete transfer
matrix generated by these coefficients
\[
M(0,k,z)=W(a_k) Z\ldots W(a_0) Z
\]
with
\[
Z(z)=\left[
\begin{array}{cc}
z & 0\\
 0 &1
\end{array}
\right], W(a_j)=\left[
\begin{array}{cc}
1 & -\bar{a}_j\\
-a_j &1
\end{array}
\right]
\]
Then, we have
\begin{equation}
X(r,\lambda)=\lim_{h\to 0} M(0,n,\exp(i\lambda h))
\end{equation}
and the convergence is uniform over $\lambda$ in compacts in
$\mathbb{C}$.
 \label{approx-1}
\end{theorem}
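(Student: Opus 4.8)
The plan is to show that the discrete transfer matrix $M(0,n,\exp(i\lambda h))$, built from the Verblunsky coefficients $a_j^{(h)}=hA(t_{j+1})$, is a first-order accurate one-step scheme for the ODE $X'=VX$ of the Krein system, and then invoke a standard Gronwall-type stability estimate. First I would record the key factorization of the discrete one-step matrix. With $Z(z)=\mathrm{diag}(z,1)$ and $W(a)$ as in the statement, a direct computation gives, for $z=\exp(i\lambda h)$ and $a=hA(t_{j+1})$,
\[
W(a)Z(z)=\left[\begin{array}{cc} \exp(i\lambda h) & -h\overline{A(t_{j+1})}\\ -hA(t_{j+1})\exp(i\lambda h) & 1\end{array}\right]
= I + h\left[\begin{array}{cc} i\lambda & -\overline{A(t_{j+1})}\\ -A(t_{j+1}) & 0\end{array}\right] + O(h^2),
\]
where the $O(h^2)$ term is uniform for $\lambda$ in a fixed compact set, using $\exp(i\lambda h)=1+i\lambda h+O(h^2)$ and the continuity (hence local boundedness) of $A$. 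Thus each factor $W(a_j^{(h)})Z(\exp(i\lambda h))$ equals $I+hV(t_{j+1},\lambda)+O(h^2)$, i.e.\ precisely the right endpoint rectangle-rule approximation to the solution operator of $X'=VX$ over $[t_j,t_{j+1}]$.

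Next I would compare the product $M(0,n,\exp(i\lambda h))=\prod_{j=n-1}^{0}\bigl(I+hV(t_{j+1},\lambda)+O(h^2)\bigr)$ with the exact transfer matrix $X(r,\lambda)$, which by the semigroup property factors as $X(r,\lambda)=\prod_{j=n-1}^{0}X(t_j,t_{j+1},\lambda)$ with $X(t_j,t_{j+1},\lambda)=I+hV(t_{j+1},\lambda)+O(h^2)$ as well (again uniformly on compacts, since $A$ is continuous so $V$ is continuous and the local truncation error of a first-order scheme is $O(h^2)$). The standard telescoping argument — write the difference of two products of nearby matrices as a sum of $n$ terms, each controlled by one local error $O(h^2)$ times bounded products of the remaining factors — then gives
\[
\bigl\| M(0,n,\exp(i\lambda h)) - X(r,\lambda)\bigr\| \le C(r,\lambda)\, n\cdot O(h^2) = C(r,\lambda)\cdot O(h) \to 0,
\]
where the bound on the partial products uses that $\|I+hV+O(h^2)\|\le 1+Ch$ so any product of at most $n$ of them is bounded by $e^{Cr}$, with $C$ uniform for $\lambda$ in a compact set. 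This yields uniform convergence on compacts in $\mathbb{C}$.

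The one genuine subtlety — and I expect this to be the main point requiring care rather than a true obstacle — is making the ``$O(h^2)$ uniform for $\lambda$ in a compact set'' claim precise for \emph{both} the discrete factors and the continuous factors $X(t_j,t_{j+1},\lambda)$ simultaneously, and checking that the constants only depend on $\sup_{[0,r]}|A|$ and the compact in $\mathbb{C}$. For the discrete side this is elementary Taylor expansion of the entries of $W(a)Z(z)$. For the continuous side one writes the integral equation $X(t_j,t_{j+1},\lambda)=I+\int_{t_j}^{t_{j+1}}V(s,\lambda)X(t_j,s,\lambda)\,ds$, uses $\|X(t_j,s,\lambda)\|\le e^{Ch}$ from Gronwall on the short interval, and subtracts the rectangle rule; the error is bounded by $\int_{t_j}^{t_{j+1}}|V(s,\lambda)-V(t_{j+1},\lambda)|\,ds + Ch^2$, and the first term is $o(h)$ uniformly via the (uniform) continuity of $A$ on $[0,r]$. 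Summing over $j$ this contributes $o(1)$, which is all that is needed. One could alternatively cite the convergence theorem for one-step methods applied to the linear system $X'=VX$ with continuous coefficient $V$; either route completes the proof.
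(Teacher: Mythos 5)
Your proposal is correct and follows essentially the same route as the paper: both identify $W(a_j^{(h)})Z(e^{i\lambda h})=I+hV(t_{j+1},\lambda)+O(h^2)$ and recognize the discrete transfer matrix as the first-order product approximation to the solution of $X'=VX$. The only difference is that the paper simply invokes the multiplicative (product) integral representation $X(r,\lambda)=\int_0^{\stackrel{\curvearrowleft}{r}}\exp[V(t)dt]=\lim_{h\to 0}(1+hV(t_n))\cdots(1+hV(t_1))$, citing the Daletskii--Krein theory, whereas you re-prove that convergence by hand via Taylor expansion, telescoping, and a Gronwall-type stability bound.
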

\begin{proof}
Consider small $h>0$. Then, from the definition and properties of
the multiplicative integral \cite{D-K}, we have
\begin{eqnarray}
X(r,\lambda)=\int\limits_0^{\stackrel{\curvearrowleft}{r}} \exp
[V(t)dt]= \lim_{h\to 0}\Big[ (1+h V(t_n))\ldots (1+h V(t_1))
\Big]=&\\\lim\limits_{h\to 0}\Big[  W(a^{(h)}_{n-1}) Z(w)
W(a^{(h)}_{n-2}) Z(w)\ldots  W(a^{(h)}_0) Z(w)\Big]&
\label{multiplicative}
\end{eqnarray}
where $w=\exp(i\lambda h)$. Now, the statement of the Theorem is
an elementary corollary from (\ref{multiplicative}).
\end{proof}
The next Corollary follows directly from the Theorem.
\begin{corollary}\label{approx-poly}
For continuous orthogonal polynomials,
\[
\left[
\begin{array}{cc}
P(r,\lambda) & \widehat{P}(r,\lambda)\\
P_*(r,\lambda) & \widehat{P}_*(r,\lambda)
\end{array}
\right]=\lim_{h\to 0}\left[
\begin{array}{cc}
P_n(w) & \widehat{P}_n(w)\\
P_n^*(w) & \widehat{P}_n(w)
\end{array}
\right]
\]
where $w=\exp(i\lambda h)$, polynomials $P_k, P_k^*$ are monic
orthogonal polynomials generated by the prescribed Verblunsky
parameters and $\widehat{P}_k, \widehat{P}^*_k$ are dual to them.
The convergence is uniform in $\lambda$ from any compact in
$\mathbb{C}$.
\end{corollary}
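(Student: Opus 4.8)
The plan is to read off this statement directly from Theorem \ref{approx-1} using the factorization (\ref{rotation}) together with its discrete analog. First I would recall that Theorem \ref{approx-1} gives $X(r,\lambda)=\lim_{h\to 0} M(0,n,\exp(i\lambda h))$ in the matrix entries, uniformly on compacts of $\mathbb{C}$, where $M(0,n,w)$ is the product of the discrete transfer matrices $W(a^{(h)}_j)Z(w)$. The key observation is that the discrete transfer matrix $M(0,k,z)$, expressed in terms of the monic orthogonal polynomials $P_k,P_k^*$ generated by the Verblunsky coefficients $\{a^{(h)}_j\}$ and their duals $\widehat{P}_k,\widehat{P}^*_k$, has exactly the same block structure as (\ref{rotation}). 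Indeed, iterating (\ref{e81s2}) for both the given coefficients and their sign-flipped counterparts, one obtains
\[
M(0,n,z)=\frac12\left[\begin{array}{cc} P_n(z) & \widehat{P}_n(z)\\ P_n^*(z) & \widehat{P}_n^*(z)\end{array}\right]\left[\begin{array}{cc}1 & 1\\ 1 & -1\end{array}\right],
\]
which is the discrete mirror of (\ref{rotation}). This identity is the only new algebraic ingredient and it is a routine induction on $n$ using the recurrences for $P_n, P_n^*$ and the definition of the dual polynomials.

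Next I would combine the two displays. Since multiplication by the fixed invertible matrix $\frac12\left[\begin{smallmatrix}1&1\\1&-1\end{smallmatrix}\right]$ on the right is a continuous (indeed bounded linear) operation, the convergence $M(0,n,\exp(i\lambda h))\to X(r,\lambda)$ from Theorem \ref{approx-1} is equivalent to the convergence of $\frac12\left[\begin{smallmatrix} P_n & \widehat{P}_n\\ P_n^* & \widehat{P}_n^*\end{smallmatrix}\right]$ evaluated at $w=\exp(i\lambda h)$ to $\frac12\left[\begin{smallmatrix} P & \widehat{P}\\ P_* & \widehat{P}_*\end{smallmatrix}\right]$ evaluated at $(r,\lambda)$, since the right factor cancels from both sides. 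Undoing the factor of $\frac12$ gives precisely the claimed matrix identity, with the uniformity on compacts of $\mathbb{C}$ inherited from Theorem \ref{approx-1}.

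I do not expect a genuine obstacle here; the corollary is essentially a change of basis applied to Theorem \ref{approx-1}. The one point requiring a little care is bookkeeping: one must check that the sign convention chosen for $\widehat{P}_*$ (the $``-"$ discussed around (\ref{rotation}) and Lemma \ref{sign-change}) matches the sign convention in the discrete recurrence for the dual polynomials $\widehat{P}_n^*$, so that the discrete block identity above is literally correct rather than correct up to a sign in the second column. Once that is verified, the proof is the two-line cancellation described above, so in the writeup I would simply state the discrete block identity, invoke Theorem \ref{approx-1}, and cancel the constant right factor.
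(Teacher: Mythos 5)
Your proposal is correct and is essentially the paper's argument: the paper merely remarks that the corollary "follows directly from the Theorem," and the intended derivation is exactly the one you give, namely the block factorization (\ref{rotation}) together with its discrete counterpart and cancellation of the constant right factor $\left[\begin{smallmatrix}1&1\\1&-1\end{smallmatrix}\right]$. The one bookkeeping point you flag resolves as you suspect: the discrete identity holds with $-\widehat{P}_n^*$ in the $(2,2)$ entry (since $M(0,n,z)\left[\begin{smallmatrix}1\\-1\end{smallmatrix}\right]=\left[\begin{smallmatrix}\widehat{P}_n\\-\widehat{P}_n^*\end{smallmatrix}\right]$, mirroring the minus sign in (\ref{rotation})), and because the sign sits in the same position on both sides it cancels entrywise, giving the corollary as stated.
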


\begin{corollary} \label{moebius-appr-cor}
Let $A(r)\in C[0,\infty)$. For the map $S_{\lambda,A,r}(z)$, we
have
\begin{equation}\label{moebius-approximation}
S_{\lambda,-A,r}(z)=\lim_{h\to 0} S_{w,a^{(h)}_0, \ldots,
a^{(h)}_{n-1}}(z),
\end{equation}
$w=\exp(i\lambda h)$ and the convergence is again uniform over the
compacts in $\mathbb{C}$.
\end{corollary}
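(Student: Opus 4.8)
The plan is to represent both sides of (\ref{moebius-approximation}) by $2\times 2$ matrices acting as fractional linear transformations and then read off the statement from the matrix convergence already proved in Theorem \ref{approx-1}.

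First I would handle the continuous side. By formula (\ref{mobius-easy}), $S_{\lambda,A,r}$ is the fractional linear map with matrix $\left[\begin{smallmatrix}\A_*(r,\lambda) & \B(r,\lambda)\\ \B_*(r,\lambda) & \A(r,\lambda)\end{smallmatrix}\right]$, i.e.\ with matrix $X^t(r,\lambda)$ for $X$ as in (\ref{fsr}). By Lemma \ref{sign-change} (and Corollary \ref{corollary-a}), the Krein system with coefficient $-A$ has transfer matrix $JX(r,\lambda)J$, so its continuous Wall polynomials are $\A_*,-\B_*,-\B,\A$; consequently $S_{\lambda,-A,r}$ is the fractional linear map with matrix $JX^t(r,\lambda)J$.

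Next I would handle the discrete side. From the definition (\ref{sprocedure}), $S_{w,a_0,\dots,a_{n-1}}$ is the composition of the M\"obius transforms $\tau_{a_j}$, whose matrix is $V(a_j):=\left[\begin{smallmatrix}1 & a_j\\ \bar a_j & 1\end{smallmatrix}\right]$, with the multiplications by $w$, whose matrix is $Z:=\left[\begin{smallmatrix}w & 0\\ 0 & 1\end{smallmatrix}\right]$; collecting the factors from (\ref{sprocedure}) gives the matrix $D:=V(a_0)Z\,V(a_1)Z\cdots V(a_{n-1})Z$. Using the elementary identities $W(a)=JV(a)^tJ$ (with $W$ as in Theorem \ref{approx-1}) and $JZJ=Z$, a one-line rearrangement yields $M(0,n,w)=JZ^{-1}D^tZJ$, that is $D=Z^{-1}\,JM^t(0,n,w)J\,Z$; so $D$ and $JM^t(0,n,w)J$ differ only by conjugation with $Z$.

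Finally I would let $h\to 0$. With $w=\exp(i\lambda h)$ and $h=r/n$ one has $Z\to I$, and Theorem \ref{approx-1} gives $M(0,n,\exp(i\lambda h))\to X(r,\lambda)$ uniformly for $\lambda$ in compact subsets of $\mathbb{C}$; hence $D\to JX^t(r,\lambda)J$, which is exactly the matrix of $S_{\lambda,-A,r}$ identified above. Since $\det\big(JX^t(r,\lambda)J\big)=\det X(r,\lambda)=\exp(i\lambda r)\neq 0$ by (\ref{determinant}), the limit matrix is invertible, so convergence of the matrices (with entries bounded uniformly on compacts, again by Theorem \ref{approx-1}) upgrades to convergence of the associated fractional linear transformations, uniformly in $\lambda$ over compacts of $\mathbb{C}$ and in $z$ over compacts avoiding the pole of the limit. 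The only genuine work is the bookkeeping in this last step --- the transpositions, the $J$-conjugation linking the Krein system to its dual, and the $Z$-conjugation, which is harmless precisely because $w=\exp(i\lambda h)\to 1$ --- together with phrasing the final convergence so that the moving poles of the approximating maps do no harm, a point settled by $\det X(r,\lambda)\neq 0$.
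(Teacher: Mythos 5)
Your proof is correct and follows essentially the same route as the paper: both arguments represent $S_{\lambda,A,r}$ via (\ref{mobius-easy}), identify the discrete Schur map as a fractional linear transformation, invoke the matrix convergence of Theorem \ref{approx-1}, and use the $J$-conjugation of Lemma \ref{sign-change} to account for the sign $-A$. The only difference is cosmetic: where the paper quotes Khrushchev's formula (4.19) and the limit relation (\ref{wall-approxim}) for the discrete Wall polynomials, you rederive the discrete-side matrix directly from (\ref{sprocedure}) via $W(a)=JV(a)^tJ$, $JZJ=Z$, which amounts to an in-line proof of the cited formula, and your handling of the harmless $Z$-conjugation and of the nondegeneracy $\det X=e^{i\lambda r}\neq 0$ is sound.
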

\begin{proof}
(\ref{moebius-approximation}) follows from the formula
(\ref{mobius-easy}). Indeed, in discrete setting,  there is a
formula analogous to (\ref{mobius-easy}) (see \cite{Khrushchev1},
formula (4.19))
\[
S_{z,a_0, \ldots,
a_{n}}(f)=\frac{A_n(z)+zB_n^*(z)f}{B_n(z)+zA_n^*(z)f}
\]
We also have
\begin{equation} \label{wall-approxim}
\left[
\begin{array}{cr}
\A_*(r,\lambda) & \B_*(r,\lambda)\\
\B(r,\lambda) & \A(r,\lambda)
\end{array}
\right]= \lim_{h\to 0}\left[
\begin{array}{cr}
wB_n^*(w) & -A_n^*(w)\\
-wA_n(w) & B_n(w)
\end{array}
\right] \end{equation} and $A_n, B_n$ are the standard Wall
polynomials\footnote{We want to emphasize some abuse in notations
in the definition of continuous Wall polynomials. If one wants to
be consistent with discrete case, then the choice must be made
according to (\ref{wall-approxim}) so that for the transfer matrix
$X$ in Krein system:
\begin{equation}\label{footnote1}
X=\left[ \begin{array}{cc} B_* & -A_*\\
-A & B\end{array}\right]
\end{equation}
In the meantime, we want to keep our notations to  be consistent
later on with terminology accepted in the scattering theory.
 }.
  On the other hand, by Lemma~\ref{sign-change}, $JXJ$ is
 the transfer matrix for Krein systems with coefficient $-A$.
 Comparing the corresponding formulas to (\ref{mobius-easy}), we get the statement of the Corollary.
\end{proof}
\begin{remark} \rm One might wonder why the formula
(\ref{moebius-approximation}) contains the sign minus in front of
$A$? The answer to this question is contained in the definition of
continuous Schur function and map $S_{\lambda, A,r}$. Indeed, we
defined $\f$ as
\[
\f(\lambda)=\lim_{r\to\infty}
\frac{X_{21}(r,\lambda)}{X_{22}(r,\lambda)}
\]
In the discrete case, the Schur function is defined as
\[
f(z)=\lim_{n\to\infty} \frac{A_n(z)}{B_n(z)}
\]
The Schur function  with opposite sign corresponds to the dual
system with coefficient of opposite sign. So, having
(\ref{footnote1}) in mind (see the footnote below), we see why the
opposite sign was picked up.
\end{remark}

If we view an operation $S_{z,a_0, \ldots, a_{n-1}} (w)$
introduced in (\ref{sprocedure}) as a map of $w\in \mathbb{D}$ to
$\mathbb{D}$ with parameters $z,a_0,\ldots, a_{n-1} \in
\mathbb{D}$, then
\begin{itemize}
\item For $z\in \mathbb{T}$, it preserves the pseudohyperbolic
distance. This is simply because both multiplication by $z$ and
the M\"obius transform preserve this distance.

\item For $z\in \mathbb{D}$, it acts as a contraction, i.e.
\[
\rho (S_{z,a_0,\ldots, a_{n-1}} (w_1), S_{z,a_0,\ldots, a_{n-1}}
(w_2))\leq \rho (w_1,w_2)
\]
The contractive property follows solely from the contractive
property of multiplication by $z$.
\end{itemize}
Analogous properties for the map $S_{\lambda, A, r}$ is given in
the following
\begin{lemma}
For any $\lambda\in \mathbb{R}$, the map $S_{\lambda, A, r}$
preserves the pseudohyperbolic metric and for $\lambda\in
\mathbb{C}^+$ it is contraction.
\end{lemma}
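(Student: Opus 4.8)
The plan is to use the explicit M\"obius representation of $S_{\lambda,A,r}$ in terms of the continuous Wall polynomials, namely formula (\ref{mobius-easy}),
\[
S_{\lambda, A, r}(z)=\frac{\A_*(r,\lambda)z+\B(r,\lambda)}{\B_*(r,\lambda)z+\A(r,\lambda)},
\]
and then read off both claims directly from the algebraic identities for $\A,\A_*,\B,\B_*$ collected in Lemma~\ref{wall}. First I would record that for $\lambda\in\mathbb{R}$ the transfer matrix $X(r,\lambda)$ is $J$--unitary (Theorem~\ref{jcontraction}), equivalently $\det X(r,\lambda)$ is unimodular times $\exp(i\lambda r)$ and the entries satisfy $|\A|^2-|\B|^2=1$, $|\A_*|^2-|\B_*|^2=1$, $\overline{\A_*}\B_*=\overline{\B}\A$ from (\ref{simplect}). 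A linear fractional transformation $z\mapsto (az+b)/(cz+d)$ acting on $\mathbb{D}$ preserves the pseudohyperbolic distance exactly when its coefficient matrix is (a scalar multiple of) an $SU(1,1)$ matrix, i.e. when $|a|=|d|$, $|b|=|c|$, $a\bar c=b\bar d$ (equivalently $\begin{bmatrix}a&b\\c&d\end{bmatrix}$ is $J$--unitary up to scalar); so the preservation of $\rho$ for real $\lambda$ is just the translation of the $J$--unitarity of $X(r,\lambda)$, divided out by the scalar $\exp(i\lambda r/2)$ which does not affect the M\"obius action. One should also note $S_{\lambda,A,r}$ indeed maps $\mathbb{D}$ to $\mathbb{D}$: that the denominator $\B_*z+\A$ is nonzero on $\overline{\mathbb{D}}$ follows from $|\A|>|\B_*|$, which for real $\lambda$ is equality-borderline $|\A|^2-|\B_*|^2=|\A|^2-|\B|^2=1>0$ using $|\B|=|\B_*|$.

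For $\lambda\in\mathbb{C}^+$ the matrix $X(r,\lambda)$ is a $J$--contraction, $X^*JX\le J$, equivalently (\ref{contr1}): $|\A|^2-|\B|^2\ge 1$ and $|\A|^2-|\B_*|^2\ge 1$. The natural way to get the contraction of $\rho$ is the standard Schwarz--Pick style computation: for a M\"obius-type map $\varphi(z)=(az+b)/(cz+d)$ with coefficient matrix $T$ that is $J$--contractive (up to the scalar $\exp(i\lambda r)$, whose modulus $e^{-r\Im\lambda}\le 1$ only helps), one has
\[
1-|\varphi(z_1)|^2\ \ge\ \frac{(1-|z_1|^2)\,\big|\det T\big|^2}{|cz_1+d|^2|cz_2+d|^2}\cdot(\cdots),
\]
and more to the point the well-known invariance inequality $\rho(\varphi(z_1),\varphi(z_2))\le\rho(z_1,z_2)$ holds whenever $\varphi$ arises from a $J$--contractive $2\times2$ matrix acting on $\mathbb{D}$. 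I would invoke this: a $J$--contraction, viewed as acting on $\mathbb{D}$ via the associated linear fractional map, is $\rho$--nonexpansive — this is a classical fact about the action of the Lie ball / Potapov theory, or can be proven in two lines by composing with the observation that $J$--contractions are norm-decreasing on the Siegel-type domain. The cleanest self-contained route is: factor $X(r,\lambda)=X(r,\lambda)\,$ and note $JXJ$ and $X$ differ by sign of $A$ (Lemma~\ref{sign-change}), while the pseudohyperbolic contraction only uses $X^*JX\le J$; a direct algebraic verification gives, for $z_1,z_2\in\mathbb{D}$,
\[
\Big|1-\overline{S(z_1)}S(z_2)\Big|^2-\big|S(z_1)-S(z_2)\big|^2
=\frac{(\,|1-\bar z_1 z_2|^2-|z_1-z_2|^2\,)\cdot\big(\text{positive factor from }X^*JX\le J\big)}{|\B_* z_1+\A|^2|\B_* z_2+\A|^2},
\]
which rearranges to $\rho(S(z_1),S(z_2))\le\rho(z_1,z_2)$ since $|1-\bar z_1 z_2|^2-|z_1-z_2|^2=(1-|z_1|^2)(1-|z_2|^2)\ge 0$.

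I expect the main obstacle to be getting the "positive factor" identity clean, i.e. carrying out the bilinear-form computation that turns $X^*JX\le J$ into the pointwise pseudohyperbolic estimate without drowning in indices. The conceptual shortcut, which I would adopt, is to phrase everything invariantly: identify $\mathbb{D}$ with the projectivization of the negative cone of $J$, so that $z\in\mathbb{D}$ corresponds to the line $\mathbb{C}[z,1]^t$, the M\"obius map $S_{\lambda,A,r}$ is induced by left multiplication by $X(r,\lambda)$, and the pseudohyperbolic distance is the $J$--metric on that cone; then $\rho(Xv,Xw)\le\rho(v,w)$ for $v,w$ in the negative cone is exactly the statement that a $J$--contraction does not increase the $J$--distance, which is immediate from $X^*JX\le J$, while for $\lambda$ real the $J$--unitarity $X^*JX=J$ gives the isometry. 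The scalar ambiguity $\exp(i\lambda r)$ is harmless: it only rescales the representative vector and, when $\Im\lambda>0$, contributes an extra modulus $<1$ that strengthens the contraction. With that framework in place the proof is one paragraph: cite Theorem~\ref{jcontraction} for the $J$--(uni/contra)ctivity of $X$, cite (\ref{mobius-easy}) for the M\"obius form of $S_{\lambda,A,r}$, and conclude via the $J$--metric interpretation of $\rho$ that $S_{\lambda,A,r}$ is a $\rho$--isometry for $\lambda\in\mathbb{R}$ and a $\rho$--contraction for $\lambda\in\mathbb{C}^+$.
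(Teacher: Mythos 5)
Your proof is correct in substance, but it is a genuinely different argument from the one in the paper. The paper proves this lemma by discretization: it invokes Corollary \ref{moebius-appr-cor}, which writes $S_{\lambda,A,r}$ as a locally uniform limit of the discrete maps $S_{w,a^{(h)}_0,\ldots,a^{(h)}_{n-1}}$ (compositions of rotations $z\mapsto wz$ and M\"obius transforms $\tau_{a_j}$), notes that these are pseudohyperbolic isometries for $w\in\mathbb{T}$ and $\rho$-contractions for $w\in\mathbb{D}$, and passes to the limit; the case $A\in L^1[0,r]$ is then handled by approximating $A$ by continuous coefficients. You instead argue directly from the representation (\ref{mobius-easy}): for $\lambda\in\mathbb{R}$ the coefficient matrix (the transpose of $X(r,\lambda)$, which is again $J$-unitary since $J$ is real and Lemma \ref{jproperty} applies) satisfies $T^*JT=J$ by (\ref{simplect}) together with $\A_*=e^{i\lambda r}\overline{\A}$, $\B_*=e^{i\lambda r}\overline{\B}$, and the identity $1-\rho^2(z_1,z_2)=(Jv,v)(Jw,w)/|(Jv,w)|^2$ with $v=(z_1,1)^t$, $w=(z_2,1)^t$ then gives the isometry; for $\lambda\in\mathbb{C}^+$ the $J$-contractivity of Theorem \ref{jcontraction} (and (\ref{contr1}), which keeps the denominator $\B_*z+\A$ nonvanishing on $\overline{\mathbb{D}}$) shows $S_{\lambda,A,r}$ is a well-defined self-map of $\mathbb{D}$. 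Your route buys something: it needs neither the continuity of $A$ nor the two approximation steps, since (\ref{mobius-easy}) and Theorem \ref{jcontraction} already hold for $A\in L^1[0,r]$; the paper's route is softer but leans on Section 8's discretization machinery.

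Two small points to tighten. First, the contraction half is not quite ``immediate from $X^*JX\le J$'' in the projective-$J$-metric picture: the inequality controls $(JTv,Tv)$ and $(JTw,Tw)$ but not the cross term $(JTv,Tw)$, so the displayed identity with the unspecified ``positive factor'' does not come for free. The clean way to close it is the one your own setup already provides: $J$-contractivity gives $|\A_*z+\B|^2-|\B_*z+\A|^2\le |z|^2-1<0$, so $S_{\lambda,A,r}$ is a holomorphic self-map of $\mathbb{D}$, and the Schwarz--Pick lemma yields $\rho(S(z_1),S(z_2))\le\rho(z_1,z_2)$. Second, you should say explicitly that the matrix in (\ref{mobius-easy}) is $X^t$ rather than $X$, and that transposition preserves $J$-unitarity and $J$-contractivity (it is the complex conjugate of $X^*$, and conjugation preserves positivity of $J-M^*JM$ because $J$ is real); as written, the citation of Theorem \ref{jcontraction} is applied to a matrix it does not literally cover.
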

\begin{proof}
For continuous $A$, the proof follows immediately from the
properties of discrete map $S$ and Corollary
\ref{moebius-appr-cor}. Approximating $A\in L^1[0,r]$ by
continuous functions, we get the statement of the Lemma in general
case.
\end{proof}

Now, we can really regard the map $S_{\lambda, A, r}$ as a
combination of M\"obius transforms and rotations.  In the
particular case $\lambda=0$, we have the following representation
\[
S_{0, A, r} (z)=\Phi_0^r (\overline{A}, A, z)
\]
where we use notation $\Phi_0^r (F, G, z)$ for the {\bf
continuous} continued fraction invented by Puig Adam \cite{Puig}
and later developed by Wall \cite{cWall}. In this case, equation
(\ref{schur}) takes the form of the Riccati-Stiltjes equation
\begin{equation}
\frac{df_r(0)}{dr}= A(r)-\overline{A(r)}f^2_r(0)
\end{equation}
We do not get deeper into this subject and refer the interested
reader to the original papers.

One should notice that there are many ways to approximate Krein
system by the sequence of OPUC's. For example, one can take the
following system of Verblunsky coefficients:
\begin{equation}
hA(t_1),0,hA(t_2),0,\ldots,hA(t_n), 0, 0, \ldots \label{alternative}
\end{equation}
Then, the only difference will be a different scaling, e.g.
\[
X(r,\lambda)=\lim_{h\to 0} M(0,2n,\exp(i\lambda h/2))
\]

There are at least two other ways to approximate the Krein system
with the sequence of OPUC. They are discussed below and we will
make use of them later on.

 Previously, we started with finite differences approximation to
a system of ODE. That produced the approximation of the related
analytic functions. Now we start with an accelerant and
approximate it first.

\begin{theorem}
Assume that we are given an accelerant $H(x)\in C[0,\infty)$.  Fix
any $R>0$ and
 let $h=R/n$. Consider the Toeplitz matrices

\[
\cal{T}_j=\left[
\begin{array}{cccc}
1& hH(-h)&  \ldots & hH(-jh) \\
\ldots& \ldots& \ldots& \ldots\\
hH(jh)& hH((j-1)h)& \ldots& 1
\end{array}
\right],\quad j=1,\ldots, n
\]
For $h$ small enough, $\cal{T}_n>0$ and it generates the Schur coefficients $\{a^{(h)}_j\}_{j=0}^{j=n}$ such that

\begin{equation}
\lim_{h\to 0} \sup_{\delta<jh<R} |A(jh)-h^{-1}{a^{(h)}_j}|\to 0
\label{rer}
\end{equation}
where $\delta$ is a small fixed number. \label{approx-2}
\end{theorem}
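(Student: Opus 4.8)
The plan is to connect the discrete Toeplitz matrices $\cal{T}_j$ to the continuous accelerant through the finite-dimensional analog of the resolvent identities from Section~\ref{three}, and then invoke the discrete Geronimus theorem together with the continuous analog established in Lemma~\ref{linksg}. First I would recall that the Schur (Verblunsky) parameters $a^{(h)}_j$ generated by the positive-definite Toeplitz matrix $\cal{T}_n$ are given by the classical formula $a^{(h)}_j = -\overline{(\cal{T}_j^{-1})}_{0,j}\,/\,(\cal{T}_j^{-1})_{0,0}$ or, equivalently, via the last component of the solution of the discrete analog of $(\ref{basic1})$: if $\gamma^{(h)}_j = (\gamma^{(h)}_j(0),\ldots,\gamma^{(h)}_j(j))^t$ solves $(\cal{T}_j)\gamma^{(h)}_j = (hH(h),hH(2h),\ldots,hH((j+1)h))^t$ appropriately normalized, then $a^{(h)}_j$ is (up to a unimodular factor that can be fixed by the normalization $\cal{T}_j>0$) the bottom entry $\gamma^{(h)}_j(0)$, which is the exact discretization of $\Gamma_r(r,0)=\overline{A(r)}$ by Lemma~\ref{lemmamain} and $(\ref{flip})$.

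Next I would set up the comparison between $\cal{T}_j$ and $I+\cal{H}_{jh}$. Since $H(x)\in C[0,\infty)$, the matrix $h\,\cal{T}_j$ is a Riemann-sum discretization of the integral operator $I+\cal{H}_r$ on $L^2[0,jh]$; the hypothesis $I+\cal{H}_R>0$ on $[0,R]$ (which is part of ``$H$ is an accelerant'') gives, by a standard compactness/uniform-continuity argument, that $\cal{T}_j>0$ for all $h$ small enough and $\delta\le jh\le R$, with $\|\cal{T}_j^{-1}\|$ bounded uniformly. This is where I would work carefully: one needs the discrete resolvents $\cal{T}_j^{-1}$ to converge, after the natural embedding of $\mathbb{C}^{j+1}$ into step functions on $[0,jh]$, to the resolvent kernel $\Gamma_r(s,t)$ uniformly in $s,t$ and in $r\in[\delta,R]$. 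The key tool is the second resolvent identity applied to the difference of the two (approximating) operators, together with the uniform continuity of $H$ on the relevant compact set; the crude estimates $(\ref{second1})$ and the bounds in Lemma~\ref{minlemma} provide the needed uniform control of $\Gamma_r$. Having this, the bottom entries satisfy $h^{-1}a^{(h)}_j = \gamma^{(h)}_j(0)\to \Gamma_{jh}(jh,0)=\overline{A(jh)}$ — and one must be slightly careful about complex conjugation and the choice of sign/phase in the definition of the Verblunsky coefficients so that the limit comes out as $A(jh)$ rather than $\overline{A(jh)}$; this is exactly the content of matching the discrete Geronimus theorem ($a_n$ = Verblunsky parameter) with Lemma~\ref{linksg} ($C(0,r)=-A(r)$) and the construction $(\ref{discrete-A})$ in Theorem~\ref{approx-1}.

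The main obstacle is the \emph{uniformity} of the convergence $h^{-1}a^{(h)}_j \to A(jh)$ over the whole range $\delta<jh<R$ as $h\to 0$, i.e.\ turning the pointwise limit $\Gamma^{(h)}_{jh}(jh,0)\to\Gamma_{jh}(jh,0)$ into $\sup_{\delta<jh<R}|\cdot|\to 0$. I would handle this by viewing everything as a map from the single kernel $H\in C[0,R+h]$ to the family $\{\Gamma_r\}_{r\in[\delta,R]}$, proving an equicontinuity estimate: the discretization error in the resolvent is controlled by the modulus of continuity $\omega_H(h)$ of $H$ on $[0,2R]$ times a constant depending only on $\|H\|_{C[0,2R]}$ and $\|(I+\cal{H}_R)^{-1}\|$, uniformly in $r\in[\delta,R]$. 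Since $\omega_H(h)\to 0$, $(\ref{rer})$ follows. (The cutoff at $\delta$ is only needed because for $r\to 0$ the number of grid points $j$ is too small for a Riemann-sum comparison to be meaningful; for $r\ge\delta$ there are at least $\delta/h$ points.) A clean alternative I would keep in reserve: instead of comparing resolvents directly, compare the discrete transfer matrices $M(0,j,\exp(i\lambda h))$ built from $\{a^{(h)}_k\}$ with the continuous $X(r,\lambda)$ via Theorem~\ref{approx-1} and Corollary~\ref{approx-poly}, read off $h^{-1}a^{(h)}_j$ from the Wall-polynomial entries, and use uniform convergence on compacts in $\lambda$ plus the Levy--Wiener/Geronimus dictionary; but the resolvent-comparison route is more direct for the stated estimate.
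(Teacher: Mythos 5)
Your proposal follows essentially the same route as the paper's proof: you discretize the resolvent equation $\Gamma_r(\cdot,0)+\int_0^r H(\cdot-u)\Gamma_r(u,0)\,du=H(\cdot)$, observe that the resulting linear system has matrix $\cal{T}_j$, identify the last entry of the discrete solution with $h^{-1}\overline{a}^{(h)}_{j-1}$ (up to factors tending to $1$, the Cramer's-rule computation the paper carries out via (\ref{goddv})), and then prove uniform convergence of the discretized solution to $\Gamma_r(kh,0)$ for $\delta<r<R$, which is exactly the paper's argument. The only deviation is technical: you control the discretization error by a resolvent-identity/equicontinuity estimate, whereas the paper uses Hadamard's lemma and the Carleman--Hilbert determinant representation of Lemma~\ref{fredholm-mod}; your conjugation/normalization caveats are precisely the bookkeeping the paper's explicit determinant manipulation settles.
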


\begin{proof} Indeed, we know that $I+\cal{H}_R>0$,
$H(-x)=\overline{H(x)}$, and $H(x)\in C[0,\infty)$. Thus
$\cal{T}_n>0$ for $n$ large enough and it generates the Schur
parameters $\{a^{(h)}_j\}_{j=0}^{j=n}$
\begin{equation}
-\overline{a}^{(h)}_{j-1}=\frac{ \det \cal{T}_j}{\det
\cal{T}_{j-1}} \cal{T}^{-1}_j (j, 0) \label{goddv}
\end{equation}
Consider the resolvent equation

\[
\Gamma_R (x,0)+\int\limits_0^R H(x-u) \Gamma_R (u,0)du=H(x),
\]
Its $h$--step discretization leads to the system of linear
algebraic equations with the matrix $\cal{T}_n$. If one takes any
$\delta<r<R$, then the matrix $\cal{T}_{[rh^{-1}]+1}$ is the
discretization of the operator $I+\cal{H}_r$ but with the step of
discretization (relative to the length of the interval $[0,r]$)
slightly bigger than that for $[0,R]$. Nevertheless, it tends to
zero as $h\to 0$ and this is why we need to keep $r>\delta>0$.
 It allows us to use the following argument. We have

\[
\Gamma_r (x,0)+\int\limits_0^r H(x-u) \Gamma_r (u,0)du=H(x),
\]

The discretization with the step $h$ gives

\[
\left[
\begin{array}{cccc}
1& hH(-h)&  \ldots & hH(-jh) \\
\ldots& \ldots& \ldots& \ldots\\
hH(jh)& hH((j-1)h)& \ldots& 1
\end{array}
\right]
\left[
\begin{array}{c}
\gamma^{(r,h)}(0) \\
\ldots \\
\gamma^{(r,h)}(j)
\end{array}
\right]
=
\left[
\begin{array}{c}
H(+0) \\
\ldots \\
H(jh)
\end{array}
\right],
\]
and $j=[rh^{-1}]+1$. Application of the standard arguments that
use Hadamard's Lemma on the determinants yields
\begin{equation}\label{approx-kernel}
\lim_{h\to 0} \sup_{k=0,\ldots, j} \left|\frac{\hat{\delta}_r
(kh,0)}{\hat{\delta}_r}-\gamma^{(r,h)}(k)\right|= 0
\end{equation}
where $\hat{\delta}_r(x,y)$ and $\hat\delta_r$ are introduced in
Lemma \ref{fredholm-mod}.  From this Lemma, we also know that
\[
\Gamma_r(x,0)=\frac{\hat\delta_r(x,0)}{\hat\delta_r}
\]
The convergence in (\ref{approx-kernel}) is uniform in $r$ as long
as $\delta <r<R$. Since $A(r)=\Gamma_r (0,r)$, we have
\[
\sup_{j: \delta<jh<R} |\overline{A(jh)}-\gamma^{(r,h)}(j)|\to 0
\]
as $h\to 0$. At the same time,  Kramer's rule gives us the
following
\begin{eqnarray*}
\gamma^{(r,h)}(j)= \frac{1}{\det \cal{T}_j} \det\left[
\begin{array}{ccccc}
1& hH(-h)&  \ldots & hH(-(j-1)h)& H(+0) \\
\ldots& \ldots& \ldots& \ldots &\ldots \\
hH(jh)& hH((j-1)h)& \ldots& hH(h) & H(jh)
\end{array}
\right]\\
=
\frac{1}{\det \cal{T}_j}
\det\left[
\begin{array}{ccccc}
1& hH(-h)&  \ldots & hH(-(j-1)h)& H(+0)-h^{-1} \\
\ldots& \ldots& \ldots& \ldots &\ldots \\
hH(jh)& hH((j-1)h)& \ldots& hH(h) & 0
\end{array}
\right]\\
=-(H(+0)-h^{-1})\frac{\det \cal{T}_{j-1}}{\det \cal{T}_{j}}\,
\overline{a}^{(h)}_{j-1}
\end{eqnarray*}
where the last formula follows from (\ref{goddv}). We have $\det
\cal{T}_{j}\to \hat\delta_r$ uniformly in $r$ as $h\to 0$ and
$\hat\delta_r$ is defined in Lemma \ref{fredholm-mod}. Therefore,
(\ref{rer}) follows.
\end{proof}

The next Theorem is technical but we will need it later in the
proof of the Strong Szeg\H{o} Theorem. It will give an
approximation of Krein system through its measure although we need
to take $d\sigma$ of a very special kind. Assume that $d\sigma$ is
purely absolutely continuous with density
\[
2\pi\sigma'(\lambda)=\exp\left( \int_\mathbb{R} l(x) e^{i\lambda
x}dx \right)
\]
with $l(x)$-- Hermitian, continuous on $\mathbb{R}$ with compact
support within $[-R,R]$. This measure $d\sigma$ will generate the
Krein system with continuous $H(x)$ and $A(r)$.  For $H(x)$, we
have an expansion (see (\ref{e2s2}))
\begin{equation}\label{expansion}
\overline{H(x)}=l(x)+\frac{\left[l*l\right](x)}{2!}+\ldots+\frac{\left[l*\ldots*l\right](x)}{k!}+\ldots
\end{equation}
\begin{theorem}
For large $n$, consider $h=Rn^{-1}$, $x_j=jh, j=-n,\ldots, n$ and
the a.c. measure $d\mu_n$ on $\mathbb{T}$ with the density given
by the formula
\[
\mu_n'(\theta)=\exp\left[ \sum\limits_{j=-n}^n hl(x_j)
z^{j}\right], z=e^{i\theta}
\]
Let $a^{(h)}_j$ be the associated Verblunsky parameters. Then
\begin{equation} \lim_{h\to 0} \sup_{\delta<jh<R}
|A(jh)-h^{-1}{a^{(h)}_j}|\to 0 \label{rer1}
\end{equation}
where $\delta$ is any small fixed number. \label{approx-3}
\end{theorem}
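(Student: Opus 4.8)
The statement is an analog of Theorem~\ref{approx-2} with the accelerant $H(x)$ replaced by a measure $d\sigma$ of a very special type, so the natural strategy is to reduce Theorem~\ref{approx-3} to Theorem~\ref{approx-2} (and to known OPUC facts about the Verblunsky coefficients of Bernstein--Szeg\H{o}-type measures with $\log$-density a trigonometric polynomial). The plan is to compare, for each fixed small $h=R/n$, two discrete objects: first, the Verblunsky parameters $a^{(h)}_j$ of the measure $d\mu_n$ on $\mathbb{T}$ whose density is $\exp[\sum_{|j|\le n} h\,l(x_j) z^j]$; and second, the ``correct'' discrete parameters coming from the Toeplitz matrices $\cal{T}_j$ built out of the genuine continuous accelerant $H(x)$ as in Theorem~\ref{approx-2}. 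Since Theorem~\ref{approx-2} already gives $\sup_{\delta<jh<R}|A(jh)-h^{-1}a^{(h),\mathrm{acc}}_j|\to 0$ for the accelerant-based parameters, it suffices to show that the measure-based parameters $a^{(h)}_j$ differ from the accelerant-based ones by $o(h)$ uniformly on $\delta<jh<R$.

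To do that, I would use the relation between the moments of $d\mu_n$ and the values $hH(x_j)$. The moments $c_k^{(n)}=\int_{\mathbb{T}} z^{-k}d\mu_n(\theta)$ are, by the combinatorial exponential formula, the discrete convolution exponential of the sequence $\{h\,l(x_j)\}$: writing $z^{-k}$-coefficient of $\exp[\sum h l(x_j) z^j]$ one gets precisely $\sum_{m\ge 0} \frac{1}{m!}\,(h l * \cdots * h l)_k$, where $*$ is the discrete convolution. This is the Riemann-sum discretization of the continuous expansion~(\ref{expansion}), namely $\overline{H(x)}=l(x)+\frac{[l*l](x)}{2!}+\cdots$. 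So the key estimate is that $c_k^{(n)} = \delta_{k0} + h\,\overline{H(x_k)} + o(h)$ uniformly for $|k|\le n$, i.e. the discrete convolution powers converge, after the $h$-normalization, to the continuous convolution powers. Here the compact support of $l$ within $[-R,R]$ and the continuity of $l$ are exactly what makes the tails of the series (\ref{expansion}) uniformly summable, so that the error in truncating and in replacing integrals by Riemann sums is $o(h)$ uniformly. Once this is established, the Toeplitz matrix $\frac{1}{h}(\cal{T}^{(n)}_j - I) + \frac{1}{h}I$ built from $\{c_k^{(n)}\}$ agrees with the matrix $\cal{T}_j$ from Theorem~\ref{approx-2} built from $\{hH(x_k)\}$ up to an $o(h)$ perturbation of each entry.

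The final step is to propagate this $o(h)$ moment perturbation through the formula~(\ref{goddv}) for the Verblunsky/Schur coefficients, $-\overline{a}^{(h)}_{j-1}=(\det\cal{T}_j/\det\cal{T}_{j-1})\,\cal{T}_j^{-1}(j,0)$, to conclude $h^{-1}|a^{(h)}_j - a^{(h),\mathrm{acc}}_j|\to 0$ uniformly on $\delta<jh<R$. This requires uniform lower bounds on $\det\cal{T}_{j-1}$ and on $\|\cal{T}_j^{-1}\|$: these follow because $I+\cal{H}_r>0$ for all $r\le R$ (the continuous accelerant $H$ generated by $d\sigma$ is a genuine accelerant, since $d\sigma\in$ (Szeg\H{o}) trivially and the Krein system is well-defined on all of $[0,\infty)$), hence the discretized Toeplitz matrices are uniformly positive definite for $h$ small and $\delta<jh<R$; the restriction $jh>\delta$ is needed precisely so that the relative mesh size stays bounded away from where the discretization degrades, exactly as in Theorem~\ref{approx-2}. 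The entries of $\cal{T}_j^{-1}$ and the determinant ratios are then Lipschitz in the entries of $\cal{T}_j$ on this uniformly-invertible set, so the $o(h)$ entrywise perturbation gives an $o(h)$ perturbation of $a^{(h)}_j$, and combining with Theorem~\ref{approx-2} yields~(\ref{rer1}).

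The main obstacle I anticipate is the uniform (in $k$, $|k|\le n$, and in $h$) control of the convergence of the discrete convolution exponential to the continuous one: one must check that truncating the sum over $j$ from $[-n,n]$ and replacing each continuous convolution power $[l*\cdots*l]$ by its Riemann sum produces, after dividing by $h$, an error that is $o(1)$ simultaneously for all relevant $k$. The compact support of $l$ keeps each convolution power supported in a fixed compact set and bounds $\|l*\cdots*l\|_\infty \le \|l\|_\infty \|l\|_1^{m-1}$, giving geometric decay of the tail of (\ref{expansion}) and hence uniform summability; the continuity of $l$ gives the Riemann-sum convergence termwise; a dominated-convergence/uniform-tail argument then upgrades this to the required uniformity. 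Everything else (the OPUC determinant formulas, Hadamard-type bounds, propagation of perturbations) is routine once this estimate is in hand.
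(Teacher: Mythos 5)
Your first two steps (computing the moments of $d\mu_n$ as the discrete convolution exponential of $\{hl(x_j)\}$ and comparing with the expansion (\ref{expansion}) via Riemann sums and tail estimates, uniformly in $|k|\le n$) are exactly what the paper does, and that part is fine. The gap is in your final step. You want $h^{-1}|a^{(h)}_j-a^{(h),\mathrm{acc}}_j|\to 0$, i.e.\ an $o(h)$-accurate comparison of quantities that are themselves only $O(h)$, and you propose to get it from an entrywise moment perturbation of size $o(h)$ plus ``uniform lower bounds on $\det\cal{T}_{j-1}$ and on $\|\cal{T}_j^{-1}\|$'' and Lipschitz dependence of $\cal{T}_j^{-1}(j,0)$ and the determinant ratios on the entries. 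But the matrices have size $N\sim R/h\to\infty$, so the relevant Lipschitz constants are not uniform: an entrywise perturbation of size $o(h)$ gives an operator-norm perturbation of the Toeplitz matrix of size roughly $N\cdot o(h)=o(1)$, hence via $\tilde{\cal{T}}_j^{-1}-\cal{T}_j^{-1}=-\cal{T}_j^{-1}\Delta\tilde{\cal{T}}_j^{-1}$ you only control the corner entry of the inverse, and hence $a^{(h)}_j$, to accuracy $o(1)$ --- not $o(h)$. After dividing by $h$ this gives nothing. To push the perturbation argument through you would need something like uniformly bounded $\ell^1$ norms of the rows/columns of $\cal{T}_j^{-1}$ (a uniform $L^1$-bound on the discrete resolvent kernel), which you neither state nor prove. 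In addition, the two matrices do \emph{not} agree to $o(h)$ entrywise as you claim: the diagonal moment is $c_n^{(0)}=1+hH(0)+\bar{o}(h)$ versus $1$ in Theorem \ref{approx-2}, an $O(h)$ discrepancy (this one is harmless only after you normalize the measure, using that Verblunsky parameters are scale invariant --- a step you omit).

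The paper avoids this difficulty entirely: having shown $c_n^{(k)}=hH(-kh)+\bar{o}(h)$ uniformly, it reruns the argument of Theorem \ref{approx-2} on the moment matrix itself. There the quantity actually estimated is the $O(1)$ solution $\gamma^{(r,h)}(j)$ of the discretized resolvent equation, which by Hadamard's-lemma determinant estimates converges uniformly to $\Gamma_r(r,0)=\overline{A(r)}$ and therefore only needs $o(1)$ accuracy; the exact Kramer's-rule identity
$\gamma^{(r,h)}(j)=-(H(+0)-h^{-1})\,\bigl(\det\cal{T}_{j-1}/\det\cal{T}_j\bigr)\,\overline{a}^{(h)}_{j-1}$
already contains the factor $h^{-1}$, so the passage from $\gamma^{(r,h)}(j)$ to $h^{-1}a^{(h)}_{j-1}$ costs nothing. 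Your outline replaces this mechanism by a generic matrix-perturbation bound that, as stated, is quantitatively insufficient; either supply the uniform $\ell^1$ resolvent bounds, or (simpler) follow the route of Theorem \ref{approx-2} directly with the perturbed moments.
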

\begin{proof}
The $0$--th moment of the measure is equal to
\begin{equation}
c_n^{(0)}=1+hl(0)+\ldots+\frac{h^k}{k!}\sum_{j_1+\ldots+j_k=0}l(x_{j_1})\cdot
\ldots \cdot l(x_{j_k})+\ldots \label{series-approx}
\end{equation}
which can be written as
\[
1+hH(0)+\bar{o}(h)
\]
as it follows from (\ref{expansion}), approximation of the
integral by the Riemann sum, and simple estimates on the tail of
the series (\ref{series-approx}). The same is true about the
higher moments, i.e.
\[
c_n^{(k)}=hH(-kh)+\bar{o}(h), |k|<n
\]
Moreover, $|h^{-1}\bar{o}(h)|\to 0$ as $h\to 0$ uniformly in
$|k|<n$. Application of the same arguments that proved Theorem
\ref{approx-2} completes the proof.
\end{proof}

 {\bf Remarks
and historical notes.} \\The continuous analogs of Schur and
Caratheodory-Toeplitz problems were considered in
\cite{krein-madamyan}. The corresponding classes of analytic
contractions were introduced in the same paper. Equation
(\ref{schur-evolution}) is rather standard in the theory of
inverse problems. For Schr\"odinger operators, the equation
analogous to (\ref{c-equation}) was obtained and studied in
\cite{Simon-4}, \cite{Simon-5}. The explicit approximation of
Krein system by sequence of scaled OPUC's is new to the best of
our knowledge. For discretization of continuous Toeplitz
operators, see \cite{Ellis} Chapter 8. It is a very good exercise
to take $A(r)={\rm const}$ on $[0,R]$ and explicitly compute
polynomials that correspond to discretization with step $h=R/n$.

\newpage

\section{Zeroes of $P(r,\lambda)$}\label{sect-zeros}

In this section, we study zeroes of the function $P(r,\lambda)$.
It is convenient for us to write $P(r,\lambda)=\exp(i\lambda
r)Q(r,\lambda)$ and consider $Q(r,\lambda)$ instead. From
(\ref{polyn1}), we know that
\[
Q(r,\lambda)= 1-\int\limits_{0}^{r}\Gamma _{r}(s,0)\exp (-i\lambda
s)ds
\]
Assume that $A(r)$ is not identically zero. By the Hadamard
Theorem,  function $Q(r,\lambda)$ has the following factorization
\begin{eqnarray*}
Q(r,\lambda)=C\lambda^m\exp(\alpha \lambda)\prod_{n=1}^\infty
\left( 1-\lambda/\lambda_n\right) \exp(\lambda/\lambda_n),\,
|\lambda_1|\leq |\lambda_2|\leq \ldots,
\end{eqnarray*}
Since $P(r,\lambda)$ has zeroes in $\mathbb{C}^+$ only, $m=0$.
Also,
\[
C=Q(r,0),\, \alpha=\frac{\partial Q(r,0)/\partial
\lambda}{Q(r,0)}
\]
Clearly, $|\lambda_1(r)|\to +\infty$ as $r\to 0$. For each $r>0$,
zeroes $\lambda_n(r)$ accumulate at infinity in a very regular
way. For instance, if $A(x)$ is smooth on $x\in [0,r]$ and
$A(r)\neq 0$, then $\lambda_n$ has the following trivial
asymptotics at infinity (see Lemma \ref{zeroes} in Appendix)
\[
\begin{array}{cc}
\lambda_n=\lambda_n^0+\bar{o}(1), n\to\infty, \\
\lambda_n^0= x_n+iy_n, x_n^2+y_n^2= |A(r)|^2\exp(2ry_n),
\\
x_n=r^{-1}\left[\pi/2+\pi n-Arg (A(r))\right], n\in \mathbb{Z}
\end{array}
\]
i.e.  the zeroes are accumulating evenly near the graph of the
logarithm. Moreover, as $r\to\infty$, the graph of logarithm is
getting closer to the real axis and the spacing between
consecutive zeroes decreases \footnote{More on the asymptotics of
$\{\lambda_n\}$ can be found in recent preprint \cite{Hryniv}.}.

In the meantime, an interesting question is distribution of zeroes
for finite $r$ and $\lambda$ inside the compacts in
$\mathbb{C}^+$. The Fej\'er Theorem for polynomials $\varphi_n(z)$
orthogonal on $\mathbb{T}$ with respect to measure $d\mu$ says
that all zeroes of each $\varphi_n(z)$ are inside the convex hull
of ${\rm supp}(d\mu)$. Let us prove similar statements for
$P(r,\lambda)$. Assume that ${\rm supp}(d\sigma)$ has a gap, say
$(a,b)$. We want to show that $\lambda_n$ stay away from $(a,b)$.
Let $M_r$ be introduced by the following formula
\[
M_r=\sup_{\lambda\in \mathbb{R}} |P(r,\lambda)|
\]
Lemma \ref{gronwall-p*} and $|P(r,\lambda)|=|P_*(r,\lambda)|$ for
$\lambda\in \mathbb{R}$  yield
\[
M_r\leq \exp \left[\int\limits_0^r |A(s)|ds\right]
\]

\begin{theorem}{\rm (Continuous analog of Fejer Theorem).}
Let $(a,b)\cap {\rm supp}(d\sigma)=\emptyset$. Then,
$P(r,\lambda)$ has no zeroes in  $\Omega_r$ given by
\[
\Omega_r:  \lambda=x+iy, \, \int\limits_{-\infty}^\infty
\frac{2yd\sigma(\lambda)}{(\lambda-x)^2+y^2}< M_r^{-2}
\]
 \label{fejer-1}
\end{theorem}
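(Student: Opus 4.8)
The plan is to exploit the reproducing-kernel characterization of the minimal distance together with the fact that a gap in $\mathrm{supp}(d\sigma)$ forces any element of $S_r$ to be ``small'' relative to its $L^2(d\sigma)$ norm when evaluated off the support. Suppose, for contradiction, that $P(r,\lambda_0)=0$ for some $\lambda_0=x+iy\in\Omega_r$ with $y>0$ (by part $4)$ of Lemma \ref{lemma24} all zeroes lie in $\mathbb{C}^+$, so this is the only case to consider). First I would build from $P(r,\cdot)$ a competitor function in $S_r$ that is normalized to equal $1$ at $\lambda_0$: since $P(r,\lambda_0)=0$, the function
\[
g(\lambda)=\frac{P(r,\lambda)}{\lambda-\lambda_0}
\]
still has exponential type $r$ and, being the quotient of the Paley--Wiener function $P(r,\cdot)-P(r,\lambda_0)=P(r,\cdot)$ by $(\lambda-\lambda_0)$, belongs to $S_r$ (it is in $H^2(\mathbb{C}^+)$ of exponential type $\le r$). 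Then $\widehat f=g/g(\lambda_0)\in S_r$ with $\widehat f(\lambda_0)=1$, where $g(\lambda_0)=P'(r,\lambda_0)$.

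Next I would run this competitor through the two inequalities coming from the reproducing kernel. On one hand, the minimality formula (\ref{minimum}) gives
\[
\int_{-\infty}^{\infty}|\widehat f(\lambda)|^2\,d\sigma(\lambda)\ \ge\ m_r^2(\lambda_0)=\frac{1}{K_r(\lambda_0,\lambda_0)}=\Bigl(\int_0^r|P(x,\lambda_0)|^2dx\Bigr)^{-1}.
\]
On the other hand, I would estimate the left-hand side directly: since $\mathrm{supp}(d\sigma)$ misses $(a,b)$ — hence $\lambda_0$ has distance $\ge y$ to the real support in the relevant sense — and $|P(r,\lambda)|\le M_r$ for real $\lambda$, we get
\[
\int_{-\infty}^{\infty}|\widehat f(\lambda)|^2\,d\sigma(\lambda)=\frac{1}{|P'(r,\lambda_0)|^2}\int_{-\infty}^{\infty}\frac{|P(r,\lambda)|^2}{|\lambda-\lambda_0|^2}\,d\sigma(\lambda)\ \le\ \frac{M_r^2}{|P'(r,\lambda_0)|^2}\int_{-\infty}^{\infty}\frac{d\sigma(\lambda)}{(\lambda-x)^2+y^2}.
\]
Combining the two displays yields
\[
\frac{M_r^2}{|P'(r,\lambda_0)|^2}\int_{-\infty}^{\infty}\frac{d\sigma(\lambda)}{(\lambda-x)^2+y^2}\ \ge\ \Bigl(\int_0^r|P(x,\lambda_0)|^2dx\Bigr)^{-1},
\]
i.e.
\[
\int_{-\infty}^{\infty}\frac{2y\,d\sigma(\lambda)}{(\lambda-x)^2+y^2}\ \ge\ \frac{2y}{M_r^2}\cdot\frac{|P'(r,\lambda_0)|^2}{\int_0^r|P(x,\lambda_0)|^2dx}.
\]

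To close the argument I would show the right-hand side is $\ge M_r^{-2}$, which contradicts $\lambda_0\in\Omega_r$. For this I need the lower bound $|P'(r,\lambda_0)|^2\ge \int_0^r|P(x,\lambda_0)|^2dx$ at a point $\lambda_0$ where $P(r,\lambda_0)=0$; this should come out of differentiating the Christoffel--Darboux identity (\ref{e101s2}) in $\mu$ (or $\lambda$) and evaluating at $\lambda=\mu=\lambda_0$ using $P(r,\lambda_0)=0$, together with the relation $P(r,\lambda)=e^{i\lambda r}\overline{P_*(r,\bar\lambda)}$ and $P_*$ being zero-free in $\overline{\mathbb{C}^+}$ — so that the boundary terms combine to give exactly $\int_0^r|P(x,\lambda_0)|^2dx\cdot 2y = |P_*(r,\lambda_0)|^2$, whence $|P'(r,\lambda_0)|^2\cdot(2y)^{2}$ relates to $|P_*(r,\lambda_0)|^4$ and one still must compare $|P_*(r,\lambda_0)|^2$ with $2y\int_0^r|P|^2$. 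I expect the main obstacle to be precisely this bookkeeping: getting the constant in the differentiated Christoffel--Darboux identity to line up so that the inequality lands on exactly the threshold $M_r^{-2}$ defining $\Omega_r$, rather than some weaker multiple of it; this is where the specific normalization of $\Omega_r$ with $M_r=\sup_{\lambda\in\mathbb{R}}|P(r,\lambda)|$ must be used carefully, possibly invoking $|P_*(r,\lambda_0)|\le M_r\cdot(\text{something})$ via the maximum principle applied to $e^{i\lambda r}/P_*(r,\lambda)$ or a Phragmén--Lindelöf argument in $\mathbb{C}^+$.
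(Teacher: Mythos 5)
Your choice of competitor $f(\lambda)=P(r,\lambda)/(\lambda-\lambda_0)\in S_r$ is exactly the function the paper works with, but the way you try to close the argument cannot work. The step you flag as ``bookkeeping'' is an inequality in the wrong direction: you need $2y\,|\partial_\lambda P(r,\lambda_0)|^2\geq K_r(\lambda_0,\lambda_0)=\int_0^r|P(s,\lambda_0)|^2ds$, i.e.\ (by Christoffel--Darboux at a zero of $P$, where $K_r(\lambda_0,\lambda_0)=|P_*(r,\lambda_0)|^2/(2y)$) the lower bound $|\partial_\lambda P(r,\lambda_0)|\geq |P_*(r,\lambda_0)|/(2y)$. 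The true inequality goes the other way: since $f\in S_r$, Corollary \ref{scale-property} (formula (\ref{subordinacy-s})) gives the exact evaluation
\[
\int_{-\infty}^{\infty}|f(\lambda)|^2\,d\sigma(\lambda)=\frac{1}{2\pi}\int_{-\infty}^{\infty}\frac{d\lambda}{(\lambda-x)^2+y^2}=\frac{1}{2y},
\]
and combining this with your own variational bound (\ref{minimum}) yields $|\partial_\lambda P(r,\lambda_0)|^2\leq K_r(\lambda_0,\lambda_0)/(2y)$, with equality only when $f$ is a constant multiple of the reproducing kernel $K_r(\lambda_0,\cdot)$, which it generically is not. Moreover there is no a priori lower bound on $|\partial_\lambda P(r,\lambda_0)|$ at all: nothing excludes a multiple zero, in which case your normalized $\widehat f$ is not even defined. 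So differentiating Christoffel--Darboux cannot rescue the constant; the obstruction is structural, not a matter of normalization.

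The fix is to drop the normalization and the variational principle altogether and make the identity above the main tool, which is what the paper does: from $\frac{1}{2y}=\int_{-\infty}^{\infty}\frac{|P(r,\lambda)|^2}{(\lambda-x)^2+y^2}\,d\sigma(\lambda)$ and $|P(r,\lambda)|\leq M_r$ for $\lambda\in\mathbb{R}\supset{\rm supp}(d\sigma)$ one gets $1\leq M_r^2\int_{-\infty}^{\infty}\frac{2y\,d\sigma(\lambda)}{(\lambda-x)^2+y^2}$, contradicting $\lambda_0\in\Omega_r$. Note also that the gap hypothesis $(a,b)\cap{\rm supp}(d\sigma)=\emptyset$ plays no role in the estimate itself (it only guarantees that $\Omega_r$ contains a nontrivial region contiguous to $(a,b)$), so your appeal to the distance from $\lambda_0$ to the support is not needed either.
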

\begin{proof} Assume that $\lambda_0=x+iy$ is a zero of
$P(r,\lambda)$. Consider
\mbox{$f(\lambda)=P(r,\lambda)/(\lambda-\lambda_0)\in S_r$}. By
(\ref{subordinacy-s}),
\[
\int\limits_{-\infty}^\infty |f(\lambda)|^2\frac{d\lambda}{2\pi
|P(r,\lambda)|^2}=\int\limits_{-\infty}^\infty
|f(\lambda)|^2d\sigma(\lambda)
\]
That can be rewritten as
\[
\frac{1}{2y}=\int\limits_{-\infty}^\infty
\frac{|P(r,\lambda)|^2}{(\lambda-x)^2+y^2}\,d\sigma(\lambda)
\]
 Thus, a simple estimate
follows
\[
1\leq M_r^2 \int\limits_{-\infty}^\infty
\frac{2yd\sigma(\lambda)}{(\lambda-x)^2+y^2}
\]
\end{proof}
Clearly, $\Omega_r$ contains a domain in $\mathbb{C}^+$ contiguous
to $(a,b)$.
\begin{remark}\rm One can modify this proof in the
following way. For simplicity, assume $(a,b)=(-1,1)$. Introduce
\[
N_r=\int\limits_{-\infty}^\infty
\frac{|P(r,\lambda)|^2}{\lambda^2+1} d\sigma(\lambda)
\]
Then $P(r,\lambda)$ has no zeroes in the following set
\[
\Omega'_r: \lambda=x+iy, y<\left[2N_r \sup_{\lambda\in {\rm
supp}(\sigma)} \frac{\lambda^2+1}{(\lambda-x)^2+y^2}\right]^{-1}
\]
For a large class of coefficients $A(r)$, $N_r$ is bounded in $r$.
This is because {$2N_r= {\rm Tr} \Im G_i(r,r)$} where $G$ is the
resolvent kernel for corresponding Dirac operator which will be
introduced later. For example, $A\in L^\infty(\mathbb{R}^+)$ is
sufficient for $N_r$ to be bounded in $r>0$.
\end{remark}

 The next Theorem yields yet another
result on the distribution of $\lambda_n$.

\begin{theorem}
If $z_1$ is a zero of $P(r,\lambda)$, then there is no any other
zero of $P(r,\lambda)$ in $\Omega_1$
\[
\Omega_1: \lambda\in \mathbb{C}^+, |\lambda-\bar{z}_1|< {\rm Dist}
(\lambda, {\rm supp}(d\sigma))
\]
\label{fejer-2}
\end{theorem}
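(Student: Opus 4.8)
The idea is to run the same reproducing-kernel / Christoffel--Darboux argument as in Theorem \ref{fejer-1}, but now applied to the function obtained by dividing $P(r,\lambda)$ by the factor vanishing at \emph{both} $z_1$ and a hypothetical second zero $z_2$. Suppose $z_1$ and $z_2$ are two distinct zeroes of $P(r,\cdot)$ in $\mathbb{C}^+$. Consider
\[
f(\lambda)=\frac{P(r,\lambda)}{(\lambda-z_1)(\lambda-z_2)}.
\]
Since $P(r,\lambda)$ is of exponential type $r$ and vanishes at $z_1,z_2$, by the Paley--Wiener theorem $f\in S_r$, i.e.\ $f=\widehat{f_1}$ for some $f_1\in L^2[0,r]$, and in particular $f\in L^2(d\sigma)$ with the norm computed via \eqref{subordinacy-s}. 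The reproducing kernel identity (or directly \eqref{subordinacy-s} together with the fact that $|P(r,\lambda)|=|P_*(r,\lambda)|$ on $\mathbb R$) then gives
\[
\|f\|^2_{L^2(d\sigma)}=\int_{-\infty}^\infty \frac{|P(r,\lambda)|^2}{|\lambda-z_1|^2|\lambda-z_2|^2}\,d\sigma(\lambda),
\]
which is a perfectly finite quantity. The point is that $f(z_1)=0$ as well (the numerator has a simple zero at $z_1$ but the denominator a simple zero, so actually $f(z_1)=P'(r,z_1)/(z_1-z_2)$ — wait, that is generically nonzero), so one must be more careful: I would instead take $f(\lambda)=P(r,\lambda)/(\lambda-z_2)\in S_r$, which vanishes at $z_1$, and apply the variational characterization \eqref{minimum} of the reproducing kernel at the point $z_1$.

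Concretely: $g(\lambda):=f(\lambda)/f(z_1)=P(r,\lambda)\big/[(\lambda-z_2)P'(r,z_1)\,\text{-normalization}]$ is an element of $S_r$ with $g(z_1)=1$, so by \eqref{minimum},
\[
\int_{-\infty}^\infty |g(\lambda)|^2\,d\sigma(\lambda)\ \ge\ m_r^2(z_1)=\frac{1}{K_r(z_1,z_1)}.
\]
On the other hand, from \eqref{reproduce0} with $\lambda=\mu=z_1$, $P(r,z_1)=0$ forces
\[
K_r(z_1,z_1)=\frac{i|P_*(r,z_1)|^2}{z_1-\bar z_1}=\frac{|P_*(r,z_1)|^2}{2\Im z_1},
\]
and the minimizer in \eqref{minimum} is the \emph{unique} function $K_r(z_1,\lambda)/K_r(z_1,z_1)$, which up to a constant is $P_*(r,z_1)\overline{P_*(r,z_1)}\cdot\big(\cdots\big)$ — in any case, it equals $(z_1-\bar z_1)^{-1}\cdot[\text{something}]/(\lambda-\bar z_1)$, a function with a single pole at $\bar z_1$. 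Since our $g$ has its pole at $\bar z_2\neq\bar z_1$ (as $z_1\neq z_2$), $g$ is \emph{not} the minimizer, so the inequality is strict:
\[
\int_{-\infty}^\infty |g(\lambda)|^2\,d\sigma(\lambda)\ >\ \frac{2\Im z_1}{|P_*(r,z_1)|^2}.
\]
Now I estimate the left-hand side directly: $|g(\lambda)|^2 = |P(r,\lambda)|^2/(|\lambda-z_2|^2\,|P'(r,z_1)|^2\cdots)$, and using $|P(r,\lambda)|=|P_*(r,\lambda)|$ on $\mathbb R$ together with the bound $|\lambda-z_2|\ge \operatorname{Dist}(\lambda,\operatorname{supp}d\sigma)$ — valid because $z_1$ lies in the claimed region and, if a \emph{second} zero $z_2$ also lay in $\Omega_1$, then $|\lambda-\bar z_2|<\operatorname{Dist}(\lambda,\operatorname{supp}d\sigma)$ would contradict $\lambda\in\operatorname{supp}d\sigma$ (reflecting to the lower half plane) — one squeezes out the inequality $|\lambda-z_2|\geq$ (the distance controlling $\Omega_1$) and arrives at a contradiction with the strict bound above.

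\textbf{Main obstacle.} The delicate point is bookkeeping the normalizing constant and making the pole-location argument airtight: I must verify that $f(\lambda)=P(r,\lambda)/(\lambda-z_2)$ genuinely lies in $S_r=\mathcal P_{[0,r]}L^2(\mathbb R)$ (this uses that $P(r,\cdot)$ is entire of exponential type exactly $r$, bounded on $\mathbb R$ since $|P|=|P_*|$ and Lemma \ref{gronwall-p*} gives $|P_*(r,\lambda)|\le\exp\int_0^r|A|$ on the real axis, hence square-integrable after division by $(\lambda-z_2)$, and the zero at $z_2\in\mathbb C^+$ kills the pole by Paley--Wiener), and then that the unique minimizer in \eqref{minimum} really has its only singularity at $\bar z_1$ so that $g$ differs from it and the inequality is \emph{strict}. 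The geometric step — translating "$z_2\in\Omega_1$" into the pointwise bound on $|\lambda-z_2|$ for $\lambda\in\operatorname{supp}(d\sigma)$, via $|\lambda-z_2|\geq|\lambda-\bar z_2| \geq \operatorname{Dist}(\lambda,\operatorname{supp}d\sigma)$ when $\lambda$ is real — is elementary once set up correctly, but it is the place where the exact shape of $\Omega_1$ (balls around $\bar z_1$) enters and must be matched precisely against the Christoffel--Darboux mass formula $\tfrac{1}{2\Im z_1}=\int |P(r,\lambda)|^2|\lambda-z_1|^{-2}d\sigma$.
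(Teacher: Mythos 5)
Your competitor function is incoherent and, more importantly, the variational inequality you invoke points the wrong way. The function $f(\lambda)=P(r,\lambda)/(\lambda-z_2)$ does lie in $S_r$, but it vanishes at $z_1$ (since $P(r,z_1)=0$ and $z_1\neq z_2$), so the normalization $g=f/f(z_1)$ divides by zero; and if you instead return to $P(r,\lambda)/[(\lambda-z_1)(\lambda-z_2)]$, whose value at $z_1$ is $P'(r,z_1)/(z_1-z_2)$, the variational principle (\ref{minimum}) only tells you that its normalized version has $L^2(d\sigma)$-norm at \emph{least} $K_r^{-1/2}(z_1,z_1)$ --- as every admissible competitor does --- so ``strictly larger than the minimum because it is not the minimizer'' contradicts nothing. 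To get a contradiction you must produce a trial function with norm strictly \emph{smaller} than the minimum. That is exactly what the paper's proof does: since $P(r,z_1)=0$, the Christoffel--Darboux formula (\ref{reproduce0}) shows the unique minimizer is explicitly $f_0(\lambda)=\frac{(z_1-\bar{z}_1)P_*(r,\lambda)}{(\lambda-\bar{z}_1)P_*(r,z_1)}$, and a second zero $z_2$ of $P(r,\cdot)$ forces $P_*(r,\bar{z}_2)=0$ by (\ref{starproperty}), so that $f_1(\lambda)=\frac{z_1-\bar{z}_2}{\lambda-\bar{z}_2}\,f_0(\lambda)$ still lies in $S_r$, still equals $1$ at $z_1$, and has strictly smaller norm. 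You never use $P_*$ or its zero at $\bar{z}_2$; that is the missing idea.

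The geometric step is also garbled. For $\lambda\in{\rm supp}(d\sigma)$ the bound you write, $|\lambda-\bar{z}_2|\ge {\rm Dist}(\lambda,{\rm supp}(d\sigma))$, is vacuous, since the right-hand side is zero on the support. The hypothesis $z_2\in\Omega_1$ concerns the distance from the point $z_2$ to the support and to $\bar{z}_1$: it says $|z_2-\bar{z}_1|<{\rm Dist}(z_2,{\rm supp}(d\sigma))$, and the chain one actually needs is
\[
|z_1-\bar{z}_2|=|z_2-\bar{z}_1|<{\rm Dist}(z_2,{\rm supp}(d\sigma))\le |z_2-\lambda|=|\bar{z}_2-\lambda|,
\qquad \lambda\in{\rm supp}(d\sigma),
\]
which makes the multiplier $\left|\frac{z_1-\bar{z}_2}{\lambda-\bar{z}_2}\right|<1$ on the support and hence $\|f_1\|_{2,\sigma}<\|f_0\|_{2,\sigma}$. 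Your version never brings $\bar{z}_1$ into the comparison at all, even though $\Omega_1$ is defined relative to $\bar{z}_1$, so even with a coherent trial function the inequality you sketch would not close the argument.
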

\begin{proof} Assume $z_1$ is a zero of $P(r,\lambda)$. By the
variational principle, function
$f_0(\lambda)=K_r(z_1,\lambda)/K_r(z_1,z_1)$ minimizes
$\|f(\lambda)\|_{2,\sigma}$ in the set of all \mbox{$f(\lambda)\in
S_r, |f(z_1)|=1$}. Since $P(r,z_1)=0$,
\[
f_0(\lambda)=\frac{(z_1-\bar{z}_1)P_*(r,\lambda)}{(\lambda-\bar{z}_1)P_*(r,z_1)}
\]
In the meantime, if $P(r,\lambda)$ has a zero $z_2\in \Omega_1$,
then $P_*(r,\bar{z}_2)=0$ and the function
\[
f_1(\lambda)=\frac{z_1-\bar{z}_2}{\lambda-\bar{z}_2} f_0(\lambda)
\]
belongs to $S_r$, $f_1(z_1)=1$, but
$\|f_1\|_{2,\sigma}<\|f_0\|_{2,\sigma}$, a contradiction.
\end{proof}
Notice that this Theorem makes no assumptions on coefficient
$A(r)$. It also implies that the isosceles triangle with base
$(a,b)$ and angles $\pi/6,\pi/6, 2\pi/3$ can contain only finite
number of zeroes.

The next result is the continuous analog of the Widom's theorem on
the zeroes of OPUC. It says that the zeroes of $P(r,\lambda)$ can
not accumulate in the compact of $\mathbb{C^+}$ provided that the
support of $d\sigma$ is not the whole $\mathbb{R}$. The proof is a
rather simple modification of  proof for the discrete case.

For any compact $K\subset \mathbb{C}^+$, define $N_K(r)$ as the
number of zeroes of $P(r,\lambda)$ in $K$. Fix any $R>0$. We have
elementary estimates
\[
\max_{\lambda\in K, 0\leq r\leq R} |P(r,\lambda)|<C(A, R, K),
\min_{0\leq r\leq R} |P(r,iy)|>1/2
\]
if $y$ is large enough.  Therefore, by Jensen's formula
(\cite{Rudin}, Theorem 15.18), we know that $N_K(r)$ is bounded
for $r\in [0,R]$.

\begin{theorem} {\rm(Continuous analog of Widom's theorem).}
Assume that the measure $d \sigma$ of the Krein system is such
that ${\rm supp}(d\sigma)\neq \mathbb{R}$.  Then, we have
\begin{equation}
\sup_{r>0} N_K(r) <\infty \label{accum}
\end{equation}
\end{theorem}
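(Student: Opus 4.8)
The statement is the continuous analog of Widom's theorem on zero accumulation for OPUC, and I would follow the discrete proof closely. The key idea is that a zero of $P(r,\lambda)$ at a point $z_1\in K\subset\mathbb{C}^+$ forces the minimal-norm function $f_0(\lambda)=K_r(z_1,\lambda)/K_r(z_1,z_1)$ in the variational problem $\min\{\|f\|_{2,\sigma}: f\in S_r, f(z_1)=1\}$ to have a pole-canceling structure, namely $f_0(\lambda)=\frac{(z_1-\bar z_1)P_*(r,\lambda)}{(\lambda-\bar z_1)P_*(r,z_1)}$ (this is already exploited in the proof of Theorem~\ref{fejer-2}). Since $\bar S = \bar X$ is not all of $L^2(d\sigma)$ would be the Szeg\H{o} case; but here the hypothesis is only ${\rm supp}(d\sigma)\neq\mathbb{R}$, which is weaker. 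The right tool is that when ${\rm supp}(d\sigma)\neq\mathbb{R}$ there is a nonempty open interval $(a,b)\subset\mathbb{R}\setminus{\rm supp}(d\sigma)$, and for $\lambda_0$ in a fixed compact subinterval $[c,d]\subset(a,b)$ of the real axis the quantity $m_\infty(\lambda_0)^{-2}=K_\infty(\lambda_0,\lambda_0)$ (extended to real $\lambda_0$ in the gap via $\int |P(x,\lambda_0)|^2\,dx$, which converges because $d\sigma$ puts no mass near $[c,d]$) is \emph{finite and bounded above} uniformly on $[c,d]$: indeed $K_\infty(\lambda_0,\lambda_0)\le \frac{1}{\mathrm{Dist}(\lambda_0,{\rm supp}\,d\sigma)^2}\int_{{\rm supp}\,d\sigma}\frac{d\sigma(\lambda)}{\,}$-type bound, since the test function $1/(\lambda-\lambda_0)$ lies in $L^2(d\sigma)$ with norm controlled by the distance to the support.

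First I would fix a compact $K\subset\mathbb{C}^+$ and a fixed $R>0$, and recall (as the excerpt already notes, via Jensen's formula) that $N_K(r)$ is finite for every $r$ and uniformly bounded for $r\in[0,R]$; so the content of \eqref{accum} is control for large $r$. Next I would localize: because ${\rm supp}(d\sigma)\neq\mathbb{R}$ there is a gap interval, and by the dilation/shift lemmas (Lemma~\ref{lemma36} and the Dilation lemma) I may assume without loss of generality that the gap contains $0$, or more simply just work with a fixed real point $\lambda_*$ in the interior of the gap. The crucial comparison is: if $z_1,z_2$ are two distinct zeros of $P(r,\lambda)$ both lying in $K$, then — exactly as in the proof of Theorem~\ref{fejer-2} — the function $f_1(\lambda)=\frac{z_1-\bar z_2}{\lambda-\bar z_2}f_0(\lambda)$ is admissible for the $z_1$-problem and strictly decreases the norm; iterating this, $N$ zeros in $K$ produce an admissible competitor whose norm is smaller than $\|f_0\|_{2,\sigma}^2 = K_r(z_1,z_1)^{-1}$ by a factor that is a product of $N$ pseudohyperbolic-type contraction factors $\rho(z_i,z_j)$, each bounded away from $1$ uniformly for $z_i,z_j\in K$. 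On the other hand $K_r(z_1,z_1)\le K_\infty(z_1,z_1)$ is bounded below uniformly on $K$ (it is $\ge |P(0,z_1)|^2/\text{(first interval)}$, or one simply uses that $K_r(z_1,z_1)\ge c_K>0$ for all $r$ since $P(0,\lambda)\equiv 1$ and, by the monotone decrease argument applied at $r=0^+$... more carefully, $K_r(z_1,z_1)\ge K_{\epsilon}(z_1,z_1)>0$ for a fixed small $\epsilon$). Combining, the minimal norm over the $z_1$-problem is both $\ge c_K$ (from the lower bound on $K_r$) and, once $N_K(r)$ is large, forced below $c_K$ by the contraction factors — a contradiction bounding $N_K(r)$.

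More precisely, the cleanest packaging is: let $q=\sup\{\rho(w,w'): w,w'\in K\}<1$ (this holds since $K$ is a compact subset of $\mathbb{C}^+$ and $\rho$ is a genuine metric bounded by $1$, attaining its sup, which is $<1$ because $K$ avoids $\mathbb{T}$-analog, i.e.\ the boundary $\mathbb{R}\cup\{\infty\}$). For zeros $z_1,\dots,z_N$ of $P(r,\cdot)$ in $K$, the function $g(\lambda)=P(r,\lambda)\prod_{j=2}^N\frac{\lambda-\bar z_j}{\lambda-z_j}\cdot\frac{1}{\lambda-z_1}\cdot(\text{const})$ lies in $S_r$ (each Blaschke-type factor $\frac{\lambda-\bar z_j}{\lambda-z_j}$ is inner in $\mathbb{C}^+$ of modulus $\le 1$), satisfies $g(z_1)=1$, and has $\|g\|_{2,\sigma}^2 \le q^{2(N-1)}\,K_r(z_1,z_1)^{-1}$ by the pole-cancellation/contraction computation. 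Since $\|g\|_{2,\sigma}^2\ge m_r(z_1)^2=K_r(z_1,z_1)^{-1}$ by the variational characterization \eqref{minimum}, we get $q^{2(N-1)}\ge 1$, forcing $N\le 1$ — but that is too strong, so the correct bookkeeping must instead compare $K_r(z_1,z_1)$ for the innermost zero against the competitor built from all the others: one gets $1\le K_r(z_1,z_1)\,\|g\|^2\le q^{2(N-1)}\cdot\frac{K_r(z_1,z_1)}{K_r(z_N,z_N)}$, and since $K_\infty(z,z)$ is uniformly bounded above and below on $K$ by the gap hypothesis (the key finiteness input, using $1/(\lambda-z)\in L^2(d\sigma)$ with norm $\lesssim \mathrm{Dist}(z,{\rm supp}\,d\sigma)^{-1}$), the ratio is $\le C_K$ and hence $q^{2(N-1)}\ge C_K^{-1}$, giving $N\le 1+\frac{\log C_K}{2\log(1/q)}=:N(K)$ independent of $r$.

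\textbf{Main obstacle.} The delicate point is establishing the uniform two-sided bound $0<c_K\le K_r(z,z)\le C_K<\infty$ for $z\in K$ and \emph{all} $r>0$. The upper bound is where ${\rm supp}(d\sigma)\neq\mathbb{R}$ enters essentially: one must pass to the limit $K_r(z,z)\uparrow K_\infty(z,z)$ and show $K_\infty(z,z)=\int_0^\infty|P(x,z)|^2dx<\infty$, equivalently (Theorem~\ref{sdmt}) that $d\sigma\in$(Szeg\H{o}) locally suffices — but actually one does not need the full Szeg\H{o} condition: the bound $K_\infty(z,z)=m_\infty(z)^{-2}$ together with $m_\infty(z)=\mathrm{Dist}\big(\tfrac{2\Im z}{\lambda-z},\bar S\big)_{L^2(d\sigma)}\ge \mathrm{Dist}\big(\tfrac{2\Im z}{\lambda-z},L^2(d\sigma)\big)$... no — rather, $m_\infty(z)\ge $ (norm of the component of $\tfrac{2\Im z}{\lambda-z}$ orthogonal to $\bar S$), and one shows this is bounded below on $K$ because the gap forces $\bar S$ to be a proper subspace with the orthogonal complement detectable through the gap. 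This is precisely the content of the discrete Widom argument and I would adapt it verbatim, citing Lemma~\ref{sdl1} and Theorem~\ref{nnn2}. The lower bound $K_r(z,z)\ge c_K$ is comparatively easy: $K_r(z,z)\ge K_{\epsilon_0}(z,z)=\int_0^{\epsilon_0}|P(x,z)|^2dx$ for any fixed $\epsilon_0>0$, and this is continuous and strictly positive on $K$ since $P(0,z)=1$. Modulo this uniform-bound lemma, the rest is the Blaschke-product/contraction bookkeeping sketched above, which is routine.
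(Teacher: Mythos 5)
Your plan has two gaps, and both are fatal as written. First, the claimed norm gain $\|g\|_{2,\sigma}^2\le q^{2(N-1)}K_r(z_1,z_1)^{-1}$, with $q<1$ the pseudohyperbolic diameter of $K$, is unjustified: the norm is taken in $L^2(\mathbb{R},d\sigma)$, and on the real line every factor $\frac{\lambda-\bar z_j}{\lambda-z_j}$ has modulus exactly $1$, so multiplying by such factors changes nothing in $\|\cdot\|_{2,\sigma}$ --- the pseudohyperbolic metric on $\mathbb{C}^+$ never enters that computation. (In the proof of Theorem \ref{fejer-2} the strict decrease comes from the ratio $|z_1-\bar z_2|/|\lambda-\bar z_2|$ being $<1$ for $\lambda\in{\rm supp}(d\sigma)$, i.e. from the gap geometry, and it only excludes a second zero in one specific region; it does not iterate into a bound of your form.) Second, your ``uniform-bound lemma'' $K_r(z,z)\le C_K$ uniformly in $r$, i.e. $K_\infty(z,z)=\int_0^\infty|P(x,z)|^2dx<\infty$, is not merely delicate: it is false under the stated hypothesis. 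A gap in ${\rm supp}(d\sigma)$ does not imply the Szeg\H{o} condition, and by Theorem \ref{sdmt}, whenever $\int_{-\infty}^\infty\ln\sigma'(\lambda)(1+\lambda^2)^{-1}d\lambda=-\infty$ one has $K_\infty(z,z)=\infty$ for every $z\in\mathbb{C}^+$. Note also that your test-function estimate $\|(\lambda-z)^{-1}\|_{2,\sigma}\lesssim{\rm Dist}(z,{\rm supp}(d\sigma))^{-1}$ bounds $m_\infty(z)$ from above and hence $K_\infty(z,z)=m_\infty^{-2}(z)$ from \emph{below} --- the wrong direction.

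The paper's proof avoids absolute bounds on the reproducing kernel altogether: the contradiction is purely relative to the minimizer. One covers $K$ by cubes $C_j$ of side $\varepsilon$ chosen so small that, after the inversion $z=(\lambda-\xi)^{-1}$, $\xi\in C_j$, the image $D_j$ of the reflected cube and the image $F_j$ of ${\rm supp}(d\sigma)$ are disjoint compacts with $\mathbb{C}\setminus F_j$ connected (this is precisely where ${\rm supp}(d\sigma)\neq\mathbb{R}$ is used). If $N_K(r)$ were unbounded, a pigeonhole gives one cube containing $n$ zeros with $n$ as large as one likes. Writing the minimizer as $f_0(\lambda)=g(\lambda)\prod_{j=2}^{m+1}\frac{\lambda-\bar\lambda_j}{\lambda_1-\bar\lambda_j}$, one invokes Widom's Lemma (Lemma \ref{widom}) for the pair $D_{j'},F_{j'}$ to produce a polynomial $Q$ of degree $\le m$ with $Q(\lambda_1)=1$ and $|Q(\lambda)|\le\frac12\bigl|\prod_{j=2}^{m+1}\frac{\lambda-\bar\lambda_j}{\lambda_1-\bar\lambda_j}\bigr|$ on ${\rm supp}(d\sigma)$; then $f=gQ\in S_r$ by the Paley--Wiener theorem, $f(\lambda_1)=1$, and $\|f\|_{2,\sigma}\le\frac12\|f_0\|_{2,\sigma}$, contradicting the variational characterization \eqref{minimum}. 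It is this replacement of the rational factor by a polynomial of a priori bounded degree, dominated on the support --- not a Blaschke-product contraction and not any two-sided bound on $K_r(z,z)$ --- that makes the argument close; your bookkeeping cannot be repaired without it.
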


\begin{proof} Fix $K$ and Krein system with the measure
$d\sigma$. Cover $K$ by disjoint cubes $C_j$ with side
$\varepsilon$. We choose $\varepsilon$ small enough to satisfy the
following conditions. For any cube $C_j$, consider $\xi\in C_j$
and a map $\phi_\xi(\lambda)=(\lambda-\xi)^{-1}$. The reflected
cube $\bar{C}_{j}=\{\bar{z}, z\in C_j\}$ will be mapped to a set
$D_{j,\xi}$ and the support of the measure ${\rm supp}(d\sigma)$
to a set $F_{j,\xi}$, a proper subset of some circle. For each
$j$, consider
\begin{equation}
D_j=\cup_{\xi\in C_j} D_{j, \xi}, F_j=\cup_{\xi\in C_j} F_{j, \xi}
\label{sets}
\end{equation}
We now require $\varepsilon$ to be so small that for each $j$ we
have: $D_j$ and $F_j$ are disjoint, $\mathbb{C}\setminus  F_j$ is
connected. We can always satisfy these conditions because the
function $\phi_\xi(\lambda)$ is jointly continuous and ${\rm
supp}(d\sigma)$ has a gap in it.

Fix this $\varepsilon$. Assume (\ref{accum}) is wrong. Clearly,
among all cubes $C_j$ there will be al least one, call it
$C_{j^\prime}$, such that for any $k$ we can find $r$  so that
$P(r,\lambda)$ has $n$ zeroes in $\overline{C_{j^{\prime}}}$ and
$n>k$. Denote these zeroes by $\lambda_j, j=1, \ldots, n$. Fix
this cube. Let $D_{j'}$ and $F_{j'}$ be the corresponding sets
defined by (\ref{sets}). Let $m<n$ be some fixed number to be
specified later.

By the variational principle, the function
\[
f_0(\lambda)=\frac{K_r(\lambda_1,\lambda)}{K_r(\lambda_1,\lambda_1)}=
\frac{\lambda_1-\bar{\lambda}_1}{\lambda-\bar{\lambda}_1}
\frac{P_*(r,\lambda)}{P_*(r,\lambda_1)}
\]
minimizes $\|f\|_{2,\sigma}$ in the set $f\in S_r,
|f(\lambda_1)|=1$. We can write
\[
f_0(\lambda)=g(\lambda) \frac{(\lambda-\bar{\lambda}_2)\ldots
(\lambda-\bar{\lambda}_{m+1})} {(\lambda_1-\bar{\lambda}_2)\ldots
(\lambda_1-\bar{\lambda}_{m+1})}
\]
Notice that $g(\lambda_1)=1$. We will find a polynomial
$Q(\lambda)$ satisfying the following properties: $\deg Q\leq m$,
$Q(\lambda_1)=1$, and
\begin{equation}
|Q(\lambda)|\leq 2^{-1} \left|
\frac{(\lambda-\bar{\lambda}_2)\ldots
(\lambda-\bar{\lambda}_{m+1})} {(\lambda_1-\bar{\lambda}_2)\ldots
(\lambda_1-\bar{\lambda}_{m+1})} \right| \label{property-imp}
\end{equation}
for any $\lambda\in {\rm supp}(d\sigma)$. That would give us a
contradiction since for the function
$f(\lambda)=g(\lambda)Q(\lambda)$ we have: $f(\lambda_1)=1$,
\[
\int\limits_{\mathbb{R}} |f(\lambda)|^2 d\sigma \leq 2^{-2}
\int\limits_{\mathbb{R}} |f_0(\lambda)|^2 d\sigma
\]
and
\[
f(\lambda)=f_0(\lambda)Q(\lambda)\left[
\frac{(\lambda-\bar{\lambda}_2)\ldots
(\lambda-\bar{\lambda}_{m+1})} {(\lambda_1-\bar{\lambda}_2)\ldots
(\lambda_1-\bar{\lambda}_{m+1})}\right]^{-1}\in S_r
\]
by Paley-Wiener Theorem. Thus we have a contradiction with the
variational principle.

To find $Q(\lambda)$, we first take a map
$z=(\lambda-\lambda_1)^{-1}$. It sends $\lambda_1$ to infinity,
the support of $d\sigma$ will be mapped to
$F_{j^\prime,\lambda_1}$, and the set $\bar{C}_{j^\prime}$
(reflection of $C_{j^\prime}$ with respect to $\mathbb{R}$) will
go to a compact $D_{j^\prime,\lambda_1}$.

We now use Widom's Lemma (see Appendix, Lemma \ref{widom}) for two
compacts $D_{j'}$ and $F_{j'}$. They are disjoint and
$\mathbb{C}\setminus F_{j'}$ is connected so the Lemma is
applicable and the number $m$ can be chosen so that for any points
$z_j\in D, j=1,\ldots, m$, we can find a monic polynomial
$\tilde{Q}(z)$ of degree $m$ such that:

\begin{equation}
\left| \frac{\tilde{Q}(z)}{(z-z_1)\ldots(z-z_m)} \right| \leq
\frac12 \label{widom1}
\end{equation}
for all $z\in F$. In particular, for the points
$z_j=(\bar{\lambda}_{j+1}-\lambda_1)^{-1}\in
D_{j^\prime,\lambda_1}\subseteq D, j=1,\ldots, m$ there is
$\tilde{Q}(z)$ so that we have (\ref{widom1}) for any $z\in
F_{j^\prime,\lambda_1}\subseteq F$ . Translating it back to the
$\lambda$ variable, we have (\ref{property-imp}) where
$Q(\lambda)=(\lambda-\lambda_1)^{m}\tilde{Q}((\lambda-\lambda_1)^{-1})$.
Clearly, $\deg Q\leq m, Q(\lambda_1)=1$.
\end{proof}

{\bf Remarks and historical notes.} The asymptotics of zeroes for
the exponential functions of the special type (e.g.,
$P(r,\lambda)$) is a classical question. The problem here, of
course, is how this asymptotics depends on the regularity of the
function in representation (function $\Gamma_r(0,t)$ in our case).
We do not consider this problem here, interested reader can check
\cite{Hryniv} for related results.

The results from this section are new. We addressed only some of
the basic questions about the distribution of zeroes. Clearly,
there are many questions left open.

\newpage

\section{The case $A(r)\in L^2(\mathbb{R}^+)$}\label{sect-l2}

Now, let us study an important class of Krein systems: $A(r)\in
L^2(\mathbb{R}^+)$.

\begin{theorem}
If $A(r)\in L^2[0,\infty)$, then $d\sigma\in$ {\rm (Szeg\H{o})}.
Moreover, \[P_*(r,\lambda)\to \Pi_{\alpha}(\lambda),
\widehat{P}_*(r,\lambda)\to \widehat{\Pi}_{\beta}(\lambda),
\A(r,\lambda)\to \A(\lambda), \B(r,\lambda)\to \B(\lambda) \] as
$r\to\infty$ uniformly in $\Im \lambda>\varepsilon$,
$\varepsilon>0$. Function $\B(\lambda)\in N(\mathbb{C}^+)$,
$\f(\lambda)=\B(\lambda)\A^{-1}(\lambda)$, $\A^{-1}(\lambda)$ is
an outer function from $B(\mathbb{C}^+)$,
\begin{equation}
\int\limits_{-\infty}^\infty \ln|\A(\lambda)|d\lambda=\pi
\int\limits_0^\infty |A(r)|^2 dr \label{tracing}
\end{equation}
\label{main-lsz}
\end{theorem}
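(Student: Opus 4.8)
The proof splits into three blocks: (i) the Szeg\H{o} property together with the limits of $P_*,\widehat P_*$; (ii) the identification of the limiting functions and the structural claims about $\f,\A,\B$; (iii) the trace identity (\ref{tracing}).

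\textbf{Block (i).} Fix $\lambda_0$ with $\Im\lambda_0$ large. Writing the Krein system (\ref{krein2}) as the coupled integral equations
\[
P(r,\lambda_0)=e^{i\lambda_0 r}-\int_0^r e^{i\lambda_0(r-s)}\overline{A(s)}\,P_*(s,\lambda_0)\,ds,\qquad
P_*(r,\lambda_0)=1-\int_0^r A(s)\,P(s,\lambda_0)\,ds,
\]
I would run a contraction argument in $L^2(\mathbb{R}^+)$: by Young's inequality and Cauchy--Schwarz the map $g\mapsto\int_0^{\bullet}e^{i\lambda_0(\bullet-s)}\overline{A(s)}g(s)\,ds$ has norm $\le(2\Im\lambda_0)^{-1/2}\|A\|_{L^2}$, so for $\Im\lambda_0$ large the system has a unique solution with $P(\cdot,\lambda_0)-e^{i\lambda_0\bullet}\in L^2$ and $P_*(\cdot,\lambda_0)-1\in L^2$; in particular $P(\cdot,\lambda_0)\in L^2(\mathbb{R}^+)$. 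By Theorem \ref{sdmt}(d) this gives $d\sigma\in$ (Szeg\H{o}), and then (d) holds for every $\lambda\in\mathbb{C}^+$. For any such $\lambda$ one has $A\,P(\cdot,\lambda)\in L^1(\mathbb{R}^+)$, hence
\[
P_*(r,\lambda)=1-\int_0^r A(s)P(s,\lambda)\,ds\ \longrightarrow\ \Pi_\alpha(\lambda):=1-\int_0^\infty A(s)P(s,\lambda)\,ds,
\]
the convergence being uniform on $\{\Im\lambda\ge\varepsilon\}$ because $\|A\|_{L^2[r,\infty)}\|P(\cdot,\lambda)\|_{L^2[r,\infty)}\to0$ uniformly there (Dini, together with $\int_r^\infty|P(s,\lambda)|^2ds=(2\Im\lambda)^{-1}(|\Pi_\alpha(\lambda)|^2-|P_*(r,\lambda)|^2+|P(r,\lambda)|^2)$ from Christoffel--Darboux). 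The dual system has coefficient $-A\in L^2$, so likewise $\widehat P_*(r,\lambda)\to\widehat\Pi_\beta(\lambda)$; by (\ref{e19s3}), $\A(r,\lambda)\to\A(\lambda):=\tfrac12(\Pi_\alpha+\widehat\Pi_\beta)$ and $\B(r,\lambda)\to\B(\lambda):=\tfrac12(\Pi_\alpha-\widehat\Pi_\beta)$, uniformly on $\{\Im\lambda>\varepsilon\}$. Finally $|\Pi_\alpha(\lambda)|^2=2\Im\lambda\,K_\infty(\lambda,\lambda)>0$ via the reproducing kernel (using that $P(\cdot,\lambda)\in L^2$ forces $\lim_r|P(r,\lambda)|=0$), so $\Pi_\alpha$ is zero-free in $\mathbb{C}^+$.

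\textbf{Block (ii).} Since $d\sigma\in$ (Szeg\H{o}) and $P(\cdot,i)\in L^2$, there is $r_n\to\infty$ with $P(r_n,i)\to0$, so Lemma \ref{converge} gives $|P_*(r_n,\lambda)|\to|\Pi(\lambda)|$ uniformly in $\mathbb{C}^+$, with $\Pi$ the outer function (\ref{function-pi}). Comparing with $P_*(r_n,\lambda)\to\Pi_\alpha(\lambda)$ and using that both are zero-free yields $\Pi_\alpha=e^{i\alpha}\Pi$, an outer function; likewise $\widehat\Pi_\beta=e^{i\beta}\widehat\Pi$ is outer. Passing to the limit in Lemma \ref{wall}(3), $|\A(\lambda)|^2-|\B(\lambda)|^2\ge1$, so $|\A(\lambda)|\ge1$ and $\A^{-1}\in B(\mathbb{C}^+)$; since $\A$ is zero-free, $\f=\lim_r\B(r,\cdot)\A(r,\cdot)^{-1}=\B\A^{-1}$, and $\B=\f\A\in N(\mathbb{C}^+)$ because $\f\in H^\infty$ and $\A=(\A^{-1})^{-1}\in N^+$. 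For outerness of $\A^{-1}$ I would write $\A=\tfrac12\Pi_\alpha(1+F)$, where $F=\widehat\Pi_\beta\Pi_\alpha^{-1}$ is the Weyl--Titchmarsh function of Theorem \ref{Bernstein-Szego} with $\Re F>0$; thus $\A^{-1}=2\Pi_\alpha^{-1}(1+F)^{-1}$, a product of the outer function $2\Pi_\alpha^{-1}$ (reciprocal of an outer function; $\int(1+t^2)^{-1}\log|\Pi_\alpha^{-1}|\,dt>-\infty$ by the Szeg\H{o} condition) with $(1+F)^{-1}$, and the latter is outer since it is bounded, its reciprocal $1+F$ has positive real part, the limit $1+F(i\eta)\to2$ (from (\ref{feature})) excludes a singular exponential factor, and the boundary identity $\Re(1+F(x+i0))=1+2\pi\sigma'(x)>0$ a.e. excludes a remaining singular measure. (Equivalently, each $\A(r,\cdot)^{-1}$ is invertible in $H^\infty$, hence outer, and one checks $\log|\A(r,\cdot)^{-1}|$ is uniformly integrable against $(1+t^2)^{-1}dt$, so the locally uniform limit $\A^{-1}$ stays outer.)

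\textbf{Block (iii): the trace formula, the hard part.} On $\mathbb{R}$, Lemma \ref{wall}(1) gives $|\A(r,\lambda)|^2\bigl(1-|\B(r,\lambda)\A(r,\lambda)^{-1}|^2\bigr)=1$; since $\A(r,\cdot)^{-1}$ is bounded by $1$ in $H^\infty$ and converges locally uniformly, its boundary values converge a.e., so $|\A(\lambda)|^{-2}=1-|\f(\lambda)|^2$ a.e.\ on $\mathbb{R}$, whence $\int_{\mathbb{R}}\log|\A(\lambda)|\,d\lambda=-\tfrac12\int_{\mathbb{R}}\log(1-|\f(\lambda)|^2)\,d\lambda$. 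It then suffices to prove the sum rule $\int_{\mathbb{R}}\log(1-|\f(\lambda)|^2)\,d\lambda=-2\pi\int_0^\infty|A(r)|^2\,dr$. Put $E(r)=\tfrac1{2\pi}\int_{\mathbb{R}}\log(1-|\f_r(\lambda)|^2)\,d\lambda$, with $\f_r$ the Schur function of the system on $[r,\infty)$, so $\f_0=\f$ and $E(r)\to0$ as $r\to\infty$ (encoding $A\in L^2$). From the Riccati equation (\ref{schur}) one computes, for real $\lambda$, $\partial_r\log(1-|\f_r(\lambda)|^2)=-2\,\Re\bigl(A(r)\overline{\f_r(\lambda)}\bigr)$; integrating in $\lambda$, using $\f_r(\lambda)=\int_0^\infty C(x,r)e^{i\lambda x}\,dx$ with $C(0,r)=-A(r)$ (Lemma \ref{linksg}), so that $\int_{\mathbb{R}}\overline{\f_r(\lambda)}\,d\lambda=-\pi\,\overline{A(r)}$ by Fourier inversion (the half-line giving the factor $\pi$), one obtains $E'(r)=|A(r)|^2$, and integrating from $0$ to $\infty$ gives the sum rule, hence (\ref{tracing}). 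The genuine technical work lies in justifying the differentiation under the integral sign and the evaluation $\int_{\mathbb{R}}\overline{\f_r(\lambda)}\,d\lambda=-\pi\overline{A(r)}$ (the integrand is only $O(|\lambda|^{-1})$ at infinity, so a regularization, or a contour shift using analyticity of $\overline{\f_r}$ in $\mathbb{C}^-$, is required), in the a.e.\ boundary convergence used to pass to the limit on $\mathbb{R}$, and in the vanishing $E(r)\to0$.
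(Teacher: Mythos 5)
Your Blocks (i) and (ii) are correct and follow essentially the same route as the paper: the paper also derives the convergence of $P_*$, $\widehat P_*$, $\A(r,\cdot)$, $\B(r,\cdot)$ from the integral equation (\ref{int-eq-l3}) via a Gronwall/contraction bound using $\|A\|_2^2/\Im\lambda$, and it also proves outerness of $\A^{-1}$ by writing $\A=\tfrac12\Pi_\alpha(1+F)$ with $F=\widehat\Pi_\beta\Pi_\alpha^{-1}$ of positive real part. Nothing to object to there.

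Block (iii) is a genuinely different route from the paper's, and as written it has a gap that is not merely technical. You define $E(r)=\tfrac1{2\pi}\int_{\mathbb R}\log(1-|\f_r|^2)\,d\lambda$, derive $E'(r)=|A(r)|^2$ from the Riccati equation, and then need $E(r)\to 0$ as $r\to\infty$ to close the sum rule. But since $E'\ge 0$ and $E\le 0$, monotonicity only gives $E(\infty)\le 0$ and hence the one-sided inequality $-E(0)\ge\int_0^\infty|A|^2\,dr$, i.e. $\int\ln|\A|\,d\lambda\ge\pi\int|A|^2\,dr$. The missing statement $E(\infty)=0$ is exactly the reverse inequality for the tail systems, $-\int\log(1-|\f_r|^2)\,d\lambda\le 2\pi\|A\|^2_{L^2[r,\infty)}$, which is half of the theorem you are proving; asserting that it "encodes $A\in L^2$" is circular. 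There is also an a priori issue: to even define $E(r)$ you need $\log(1-|\f_r|^2)\in L^1(\mathbb R)$ for the infinite-interval Schur function, which again is part of the conclusion (the paper obtains it only afterwards, in Corollary \ref{corol-3} and Theorem \ref{test-l2}, both of which are deduced \emph{from} (\ref{tracing})). Working with the truncated functions $\B(R,\cdot)\A^{-1}(R,\cdot)$ fixes the integrability (there $0\le\log|\A(R,\cdot)|\le\tfrac12|\B(R,\cdot)|^2\in L^1$) and is how the paper later sets up the layer-stripping identity of Lemma \ref{layer-strip}; but the truncated version of your argument still needs the exact identity for finite $R$, not just an inequality. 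Finally, for $A\in L^2$ only, $C(0,r)=-A(r)$ has no pointwise meaning, so the differential identity must be run for continuous $A$ and then transferred by approximation.

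The paper avoids all of this by a high-energy argument: iterating (\ref{int-eq-a}) gives the exact expansion $\A(iy)=1+y^{-1}\int_0^\infty|A(s)|^2ds+\bar o(y^{-1})$, while the outer representation (\ref{represent-1}) gives $\log|\A(iy)|=\tfrac{y}{\pi}\int\frac{\log|\A(\lambda)|}{\lambda^2+y^2}\,d\lambda$; since $\log|\A|\ge 0$, matching the $y^{-1}$ coefficients (monotone convergence) yields $\ln|\A|\in L^1(\mathbb R)$ and the identity (\ref{tracing}) in one stroke, with no limit $r\to\infty$ of tail entropies to control. I would either adopt that argument for (\ref{tracing}), or supply an independent proof of the upper bound $-\int\log(1-|\f_r|^2)\,d\lambda\le 2\pi\|A\|^2_{L^2[r,\infty)}$ before invoking $E(\infty)=0$.
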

\begin{proof}

From (\ref{e01s3}), we have
\begin{equation}
P_*(r,\lambda)=1-\int\limits_{0}^{r}\exp (i\lambda
s)A(s)ds+\int\limits_{0}^{r}\exp (i\lambda s)A(s)\int\limits_0^s
\exp(-i\lambda t) \overline{A(t)}P_*(t,\lambda)dtds
\label{int-eq-l2}
\end{equation}
or
\begin{equation}
\begin{array}{ll}
\displaystyle P_*(r,\lambda)=1-\int\limits_{0}^{r}\exp (i\lambda
s)A(s)ds
\\
\displaystyle +\int\limits_{0}^{r} P_*(t,\lambda) \left[
\exp(-i\lambda t) \overline{A(t)}
 \int\limits_t^r
\exp(i\lambda s) A(s)ds\right] dt
\end{array}
\label{int-eq-l3}
\end{equation}
Use Cauchy-Schwarz and Young inequalities to get
\[
\left\|
  \exp(-i\lambda t) \overline{A(t)}
 \int\limits_t^r
\exp(i\lambda s) A(s)ds \right\|_{L^1[0,\infty)}\leq
\frac{\|A\|_2^2}{\Im\lambda}, \, \lambda\in \mathbb{C}^+
\]
From Gronwall-Belmann inequality,
\begin{eqnarray*}
|P_*(r,\lambda)|\leq C(\lambda)\exp \left(
[\Im\lambda]^{-1}\|A\|_2^2 \right), \lambda\in \mathbb{C}^+,
\,C(\lambda)\leq 1+\|A\|_2 [\Im \lambda]^{-1/2}
\end{eqnarray*}
Recall the function (\ref{function-pi}) and Lemma \ref{converge}.
From (\ref{int-eq-l3}), we have $P_*(r,\lambda)\to
\Pi_\alpha(\lambda)$ as $r\to\infty$, $\lambda\in \mathbb{C}^+$.
Similarly, $\widehat{P}_*(r,\lambda)\to
\widehat{\Pi}_\beta(\lambda)$. Thus, $\A(r,\lambda)\to
\A(\lambda)$, $\B(r,\lambda)\to \B(\lambda)$ as $r\to\infty$.
Moreover, from Lemma \ref{wall},  $|\A(\lambda)|^2\geq
1+|\B(\lambda)|^2,\, \Im\lambda>0$. Consequently,
$\A^{-1}(\lambda)\in B(\mathbb{C}^+), \A(\lambda)\in
N(\mathbb{C}^+), \B(\lambda)\in N(\mathbb{C}^+)$. We also have
\[
\A(\lambda)=(\Pi_\alpha(\lambda)+\widehat{\Pi}_\beta(\lambda))/2=\frac{\Pi_\alpha(\lambda)}{2}\left[
1+\frac{\widehat{\Pi}_\beta(\lambda)}{\Pi_\alpha(\lambda)}\right]
\]
Clearly, $\Pi_\alpha(\lambda)$ and $\widehat{\Pi}_\beta(\lambda)$
are outer from $N(\mathbb{C}^+)$. Due to Theorem
\ref{Bernstein-Szego},
\[
F(\lambda)=\frac{\widehat{\Pi}_\beta(\lambda)}{\Pi_\alpha(\lambda)}
\]
Since $F(\lambda)$ has positive real part, the function
$1+\widehat{\Pi}_\beta(\lambda) \Pi_\alpha^{-1}(\lambda)$ is outer
from $N(\mathbb{C}^+)$. Consequently, $\A(\lambda)$ is outer from
$N(\mathbb{C}^+)$ and $\A^{-1}(\lambda)$ is outer from
$B(\mathbb{C}^+)$. Thus,
\begin{equation}
\A^{-1}(\lambda)=\exp\left[i\gamma-\frac{1}{\pi
i}\int\limits_{-\infty}^\infty \frac{(1+s\lambda)\ln
|\A(s)|}{(s-\lambda)(1+s^2)}ds\right], \gamma\in [0,2\pi),
\label{represent-1}
\end{equation}
For $\A(r,\lambda)$, we have
\begin{equation}
\A(r,\lambda)=1+\int\limits_{0}^{r} \A(t,\lambda) \left[
\exp(-i\lambda t) \overline{A(t)}
 \int\limits_t^r
\exp(i\lambda s) A(s)ds\right] ds \label{int-eq-a}
\end{equation}
Iterating this identity, estimating the lower order terms, and
taking $r\to\infty$, one has
\begin{eqnarray*}
\A(iy)=1+\int\limits_{0}^\infty \left[ \exp(yt) \overline{A(t)}
 \int\limits_t^\infty
\exp(-ys) A(s)ds\right]+O(y^{-2}), y\to +\infty
\end{eqnarray*}
Clearly,
\[
\int\limits_{0}^{\infty} \left[ \exp(yt) \overline{A(t)}
 \int\limits_t^\infty
\exp(-ys) A(s)ds\right]=\int\limits_{-\infty}^\infty
\frac{|\hat{A}(\omega)|^2}{y-i\omega} d\omega
\]
where $\hat{A}(\omega)$ is the Fourier transform of the function
$A(t)\cdot \chi_{\mathbb{R}^+}(t)$ and
\mbox{$|\hat{A}(\omega)|^2\in L^1(\mathbb{R})$}. Thus,
\begin{equation}
\A(iy)=1+y^{-1}\int\limits_0^\infty |A(s)|^2 ds +\bar{o}(y^{-1}),
y \to +\infty \label{equa1-1}
\end{equation}
From (\ref{represent-1}), we have
\begin{equation}
|\A(iy)|=\exp\left[ \frac{y}{\pi}\int\limits_{-\infty}^\infty
\frac{\ln |\A(\lambda)|}{\lambda^2+y^2}\, d\lambda\right]
\label{equa1-2}
\end{equation}
Since $|\A(\lambda)|\geq 1$ for a.e. $\lambda\in \mathbb{R}$,
relations (\ref{equa1-1}) and (\ref{equa1-2}) imply $\ln
|\A(\lambda)|\in L^1(\mathbb{R})$ and  (\ref{tracing}).
\end{proof}
\begin{corollary}
For a.e. $\lambda\in \mathbb{R}$, we have
\begin{equation} \label{energy-l2}
|\A(\lambda)|^2=1+|\B(\lambda)|^2
\end{equation}
\end{corollary}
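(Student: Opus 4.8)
The plan is to obtain \eqref{energy-l2} by passing to boundary values in the identity \eqref{energy} from Lemma \ref{wall}. First I would recall that part 2) of that Lemma gives, for every $r>0$ and all $\lambda\in\mathbb{C}$,
\[
\A(r,\lambda)\A_*(r,\lambda)-\B(r,\lambda)\B_*(r,\lambda)=\exp(i\lambda r),
\]
and that on the real axis (part 1)) we have $|\A(r,\lambda)|=|\A_*(r,\lambda)|$, $|\B(r,\lambda)|=|\B_*(r,\lambda)|$, so that $|\A(r,\lambda)|^2-|\B(r,\lambda)|^2=1$ holds exactly for each finite $r$ when $\lambda\in\mathbb{R}$. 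The issue is that this exact relation is for the truncated objects, while \eqref{energy-l2} concerns the limiting functions $\A(\lambda),\B(\lambda)$, and the convergence $\A(r,\lambda)\to\A(\lambda)$, $\B(r,\lambda)\to\B(\lambda)$ established in Theorem \ref{main-lsz} is only uniform on $\{\Im\lambda>\varepsilon\}$, not up to the real axis; moreover $\exp(i\lambda r)$ does not converge as $r\to\infty$ for real $\lambda$. So one cannot simply take $r\to\infty$ pointwise on $\mathbb{R}$.

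The remedy is to work inside $\mathbb{C}^+$, use the multiplicative (inner–outer) structure already extracted in Theorem \ref{main-lsz}, and then take nontangential boundary values. From Theorem \ref{main-lsz} we know $\A(\lambda)$ is outer in $N(\mathbb{C}^+)$ with $\ln|\A(\lambda)|\in L^1(\mathbb{R})$, that $\A^{-1}(\lambda)\in B(\mathbb{C}^+)$ is outer, and that $\B(\lambda)\in N(\mathbb{C}^+)$ with $\f=\B\A^{-1}\in B(\mathbb{C}^+)$. Dividing \eqref{energy} (in the limit, valid on $\mathbb{C}^+$ after noting $\A_*(r,\lambda),\B_*(r,\lambda)$ also converge, to $\A_*(\lambda),\B_*(\lambda)$ say, with $\A(\lambda)\A_*(\lambda)-\B(\lambda)\B_*(\lambda)=\lim\exp(i\lambda r)=0$ for $\lambda\in\mathbb{C}^+$) is not quite what I want because the limit of $\exp(i\lambda r)$ is $0$ there. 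Instead, the cleaner route: the function $1-|\f(\lambda)|^2$ on $\mathbb{R}$ controls the absolutely continuous part, and by the general theory of $B(\mathbb{C}^+)$ functions together with the outer property of $\A^{-1}$, one has a.e. on $\mathbb{R}$ the identity $|\A(\lambda)|^{-2}=1-|\f(\lambda)|^2$. Granting this, multiply through by $|\A(\lambda)|^2$ to get $1=|\A(\lambda)|^2-|\A(\lambda)|^2|\f(\lambda)|^2=|\A(\lambda)|^2-|\B(\lambda)|^2$, which is \eqref{energy-l2}.

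So the key steps, in order, are: (i) record that for each finite $r$, $|\A(r,\lambda)|^2-|\B(r,\lambda)|^2=1$ on $\mathbb{R}$ and $|\A(r,\lambda)|^2-|\B(r,\lambda)|^2\ge 1$ on $\mathbb{C}^+$ (Lemma \ref{wall}); (ii) pass to the limit on compacts of $\mathbb{C}^+$ to get $|\A(\lambda)|^2-|\B(\lambda)|^2\ge 1$ there, hence $|\f(\lambda)|^2\le 1-|\A(\lambda)|^{-2}$ in $\mathbb{C}^+$; (iii) use that $\A^{-1}(\lambda)$ is \emph{outer} in $B(\mathbb{C}^+)$ and $\f\in B(\mathbb{C}^+)$ to upgrade the boundary inequality to an equality — here the point is that an outer function is determined by its boundary modulus and that the harmonic majorant / Poisson-integral argument forces equality a.e. on $\mathbb{R}$ once one checks it cannot be strict on a set of positive measure without contradicting $\ln|\A|\in L^1$ and the representation \eqref{represent-1}; (iv) rearrange to \eqref{energy-l2}. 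The main obstacle is step (iii): justifying the passage from the pointwise-in-$\mathbb{C}^+$ inequality to an a.e.-on-$\mathbb{R}$ equality for the boundary values, which requires invoking Fatou's theorem on nontangential limits for $N(\mathbb{C}^+)$ functions and the fact that the outer factor of $\A^{-1}$ exactly accounts for $|\A(\lambda)|^{-1}$ on the boundary; alternatively, one can avoid this by taking boundary values directly in the limiting form of \eqref{energy} — writing $\A(\lambda)\A_*(\lambda)-\B(\lambda)\B_*(\lambda)$ and using that on $\mathbb{R}$ the relations $|\A|=|\A_*|$, $|\B|=|\B_*|$ and $\overline{\A_*}\B_*=\overline{\B}\A$ persist in the limit (by uniform convergence on $\{\Im\lambda\ge\varepsilon\}$ plus Fatou), so that $|\A(\lambda)|^2-|\B(\lambda)|^2=\overline{\exp(i\lambda r)}\cdot(\ldots)$ collapses to the desired $1$. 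I would present whichever of these is shortest given the boundary-value lemmas already available in the Appendix.
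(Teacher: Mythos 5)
Your steps (i)–(ii) are fine, but the heart of the argument, step (iii), has a genuine gap. From Lemma \ref{wall} and pointwise convergence on compacts of $\mathbb{C}^+$ you indeed get $|\A(\lambda)|^2-|\B(\lambda)|^2\geq 1$ in $\mathbb{C}^+$, hence (via Fatou) the same inequality a.e.\ on $\mathbb{R}$; but nothing in "outerness of $\A^{-1}$ plus $\ln|\A|\in L^1(\mathbb{R})$" forces the reverse inequality. An inequality $|\f|^2\leq 1-|\A|^{-2}$ that is strict on a set of positive measure contradicts neither the integrability of $\ln|\A|$ nor the representation (\ref{represent-1}) (think of $\f\equiv 0$ and any outer $\A$ with $|\A|\geq 1$: the inequality holds, the equality fails), so the announced "harmonic majorant / Poisson-integral" contradiction does not exist as stated. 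Moreover, the identity $1-|\f(\lambda)|^2=|\A(\lambda)|^{-2}$ a.e.\ that you propose to "grant" from "the general theory of $B(\mathbb{C}^+)$ functions" is exactly equivalent to \eqref{energy-l2}, so the argument is circular at its key point. Your fallback route also fails: for real $\lambda$ the factor $\exp(i\lambda r)$ in \eqref{energy} has no limit as $r\to\infty$, and in $\mathbb{C}^+$ it tends to $0$, so the limiting form of \eqref{energy} degenerates to $\A\A_*-\B\B_*=0$; likewise the finite-$r$ relations $|\A(r,\lambda)|=|\A_*(r,\lambda)|$, $\overline{\A_*}\B_*=\overline{\B}\A$ involve this oscillating factor and do not pass to the limit on $\mathbb{R}$.

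What is actually needed is the extra spectral input tying the boundary values to the measure, and this is how the paper argues: since $\A=(\Pi_\alpha+\widehat{\Pi}_\beta)/2$ and $\B=(\Pi_\alpha-\widehat{\Pi}_\beta)/2$ (limits of $P_*$ and $\widehat{P}_*$), the identity \eqref{energy-l2} is equivalent, after expanding the squares, to $\Re F(\lambda)=|\Pi_\alpha(\lambda)|^{-2}=2\pi\sigma'(\lambda)$ for a.e.\ $\lambda\in\mathbb{R}$, where $F=\widehat{\Pi}_\beta/\Pi_\alpha$ is the Weyl--Titchmarsh function. The first equality follows from Fatou's theorem applied to the Herglotz representation \eqref{weyl-titchmarsh} of $F$ (its real part has nontangential boundary value $2\pi\sigma'(\lambda)$ a.e.), and the second is the defining property of the outer function \eqref{function-pi1}. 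If you insert this boundary-value identity in place of your step (iii), the rest of your outline goes through; without it, the inequality of step (ii) cannot be upgraded to the equality \eqref{energy-l2}.
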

\begin{proof}
The equation is equivalent to
\[
\frac{|\Pi_\alpha(\lambda)+\widehat\Pi_\beta(\lambda)|^2}{4}
=\frac{|\Pi_\alpha(\lambda)-\widehat\Pi_\beta(\lambda)|^2}{4}+1
\]
or
\[
\Re F(\lambda)=\frac{1}{|\Pi_\alpha(\lambda)|^2}=2\pi
\sigma'(\lambda)
\]
and the last identity is elementary and follows, e.g., from the
integral representations for both functions $F(\lambda)$ and
$\Pi_\alpha(\lambda)$.
\end{proof}

\begin{corollary}
If $A(r)\in L^2(\mathbb{R}^+)$, then
\begin{equation}
2\pi\int\limits_0^\infty |A(r)|^2 dr=-\int\limits_{-\infty}^\infty
\ln[1-|\f(\lambda)|^2]d\lambda \label{tracing-2}
\end{equation}
\label{corol-3}
\end{corollary}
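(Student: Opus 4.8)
The plan is to read everything off the two facts already established in Theorem \ref{main-lsz} and its first corollary: the trace formula (\ref{tracing}),
\[
\int\limits_{-\infty}^\infty \ln|\A(\lambda)|\,d\lambda=\pi\int\limits_0^\infty |A(r)|^2\,dr,
\]
and the boundary energy identity (\ref{energy-l2}), namely $|\A(\lambda)|^2=1+|\B(\lambda)|^2$ for a.e. $\lambda\in\mathbb{R}$. The only real content is an algebraic manipulation of $1-|\f(\lambda)|^2$ on the real line.

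First I would note that, by Theorem \ref{main-lsz}, $\f(\lambda)=\B(\lambda)\A^{-1}(\lambda)$ with $\A(\lambda)\neq 0$ for a.e.\ $\lambda\in\mathbb{R}$ (since $\A^{-1}$ is an outer function in $B(\mathbb{C}^+)$, its boundary values are nonzero a.e.), so the ratio makes sense pointwise a.e. Then for a.e.\ $\lambda\in\mathbb{R}$,
\[
1-|\f(\lambda)|^2=1-\frac{|\B(\lambda)|^2}{|\A(\lambda)|^2}=\frac{|\A(\lambda)|^2-|\B(\lambda)|^2}{|\A(\lambda)|^2}=\frac{1}{|\A(\lambda)|^2},
\]
where the last step uses (\ref{energy-l2}). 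Taking logarithms gives $\ln[1-|\f(\lambda)|^2]=-2\ln|\A(\lambda)|$ a.e.\ on $\mathbb{R}$.

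Integrating this identity over $\mathbb{R}$ and invoking (\ref{tracing}) then yields
\[
-\int\limits_{-\infty}^\infty \ln[1-|\f(\lambda)|^2]\,d\lambda=2\int\limits_{-\infty}^\infty \ln|\A(\lambda)|\,d\lambda=2\pi\int\limits_0^\infty |A(r)|^2\,dr,
\]
which is exactly (\ref{tracing-2}). The one point that deserves a remark (rather than being a genuine obstacle) is integrability: Theorem \ref{main-lsz} already shows $\ln|\A(\lambda)|\in L^1(\mathbb{R})$, so the integral on the right of the displayed chain is absolutely convergent, and hence so is $\int_{\mathbb{R}}\ln[1-|\f(\lambda)|^2]\,d\lambda$; in particular the manipulation above is legitimate and no separate convergence argument is needed. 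So there is no hard step here — the corollary is an immediate consequence of the trace formula and the energy identity, and the whole proof is the two-line computation of $1-|\f|^2$.
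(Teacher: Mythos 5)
Your proof is correct and follows exactly the paper's argument: the energy identity (\ref{energy-l2}) gives $1-|\f(\lambda)|^2=|\A(\lambda)|^{-2}$ a.e.\ on $\mathbb{R}$, and then the trace formula (\ref{tracing}) yields (\ref{tracing-2}). The extra remarks on a.e.\ nonvanishing of $\A$ and on integrability of $\ln|\A|$ are fine but add nothing beyond what Theorem \ref{main-lsz} already provides.
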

\begin{proof} From (\ref{energy-l2}), we have
$1-|\f(\lambda)|^2=|\A(\lambda)|^{-2}$ for a.e. $\lambda\in
\mathbb{R}$. Now, (\ref{tracing-2}) follows from (\ref{tracing}).
\end{proof}

Notice that (\ref{tracing-2}) implies
\begin{equation}\label{fh2}\f(\lambda)\in L^2(\mathbb{R})\end{equation}
 So, $\f(\lambda)\in H^2(\mathbb{C}^+)\cap
B(\mathbb{C}^+)$ and (compare with (\ref{schur-alg})):
\begin{equation}
\f(\lambda)=\int\limits_0^\infty C(x)\exp(i\lambda x)dx,\lambda\in
\mathbb{C}^+ \label{hardy-2}
\end{equation}

\begin{corollary}
If $A(r)\in L^2(\mathbb{R}^+)$, then $\ln[2\pi
\sigma'(\lambda)]\in
L^1(\mathbb{R})+L^2(\mathbb{R})$.\label{cor-sum2}
\end{corollary}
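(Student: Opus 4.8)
The idea is to express $\ln[2\pi\sigma'(\lambda)]$ through the Schur function $\f$, absorb one piece into $L^1$ via the Szeg\H{o}-type trace identity (\ref{tracing-2}), and dispose of the remaining piece by an elementary pointwise splitting. First I would record the boundary identity
\[
2\pi\sigma'(\lambda)=\frac{1-|\f(\lambda)|^2}{|1+\f(\lambda)|^2}\quad\text{for a.e. }\lambda\in\mathbb{R}.
\]
This follows from $\Re F(\lambda)=2\pi\sigma'(\lambda)$ a.e. (as used in the proof of (\ref{energy-l2})) together with $F=(1-\f)(1+\f)^{-1}$ from (\ref{e23s3}), since $\Re\dfrac{1-\f}{1+\f}=\dfrac{1-|\f|^2}{|1+\f|^2}$. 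Note that $1+\f\in H^\infty(\mathbb{C}^+)$ is not identically zero (by (\ref{feature}), $\f(iy)\to 0$), so $|1+\f|>0$ a.e. on $\mathbb{R}$, and $1-|\f|^2>0$ a.e. by (\ref{tracing-2}); hence $\ln[2\pi\sigma']$ is defined a.e. and
\[
\ln[2\pi\sigma'(\lambda)]=\ln\bigl(1-|\f(\lambda)|^2\bigr)-2\ln|1+\f(\lambda)|.
\]

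The first term is immediate: $\ln(1-|\f|^2)\le 0$ and, by Corollary \ref{corol-3}, $-\int_{\mathbb{R}}\ln(1-|\f|^2)\,d\lambda=2\pi\|A\|_{L^2}^2<\infty$, so $\ln(1-|\f|^2)\in L^1(\mathbb{R})$. For the second term I would split $\mathbb{R}=E\cup E'$ with $E=\{|1+\f|\ge\frac12\}$ and $E'=\{|1+\f|<\frac12\}$. On $E$ one has $\frac12\le|1+\f|\le 2$ (since $|\f|\le 1$), so the Lipschitz bound $|\ln t|\le 2|t-1|$ on $[\frac12,2]$ together with $\bigl||1+\f|-1\bigr|\le|\f|$ gives $\bigl|\ln|1+\f|\bigr|\le 2|\f|$; since $\f\in L^2(\mathbb{R})$ by (\ref{fh2}), $\ln|1+\f|\,\chi_E\in L^2(\mathbb{R})$. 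On $E'$ the reverse triangle inequality gives $1-|\f|\le|1+\f|$, hence $1-|\f|^2\le 2|1+\f|$, hence $0\le -\ln|1+\f|\le \ln 2+|\ln(1-|\f|^2)|$ on $E'$; moreover $E'\subseteq\{|\f|>\frac12\}$, which has finite Lebesgue measure because $\f\in L^2(\mathbb{R})$. Thus $\ln|1+\f|\,\chi_{E'}$ is dominated in modulus by an $L^1$ function and lies in $L^1(\mathbb{R})$. Collecting the pieces,
\[
\ln[2\pi\sigma']=\Bigl[\ln(1-|\f|^2)-2\ln|1+\f|\,\chi_{E'}\Bigr]+\Bigl[-2\ln|1+\f|\,\chi_{E}\Bigr]\in L^1(\mathbb{R})+L^2(\mathbb{R}).
\]

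The only delicate point is the region where $\f$ is close to $-1$, i.e.\ where $1+\f$ is small and $\ln|1+\f|$ may be very negative; the rescue is the inequality $1-|\f|^2\le 2|1+\f|$ combined with the integrability of $\ln(1-|\f|^2)$ coming from (\ref{tracing-2}), plus the finiteness of the measure of that bad set. Everything else is routine.
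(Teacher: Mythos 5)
Your proof is correct and follows essentially the same route as the paper: both rest on the identity $2\pi\sigma'=(1-|\f|^2)|1+\f|^{-2}$ (the paper writes it as $|\A|^{-2}|1+\f|^{-2}$, which is the same thing by (\ref{energy-l2})), put the $\ln(1-|\f|^2)$ part into $L^1$ via Corollary \ref{corol-3}, and control $\ln|1+\f|$ by $|\f|\in L^2$ away from $\f=-1$ and by $-\ln(1-|\f|^2)$ near it. The only difference is cosmetic: the paper splits $\ln|1+\f|$ into $\ln^+$ and $\ln^-$ and uses $|1+\f|\ge 1-|\f|$ globally, while you cut the line at $|1+\f|=\tfrac12$ and invoke the finite measure of the bad set.
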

\begin{proof} We have for a.e. $\lambda\in \mathbb{R}$
\[
[2\pi
\sigma'(\lambda)]^{-1}=|\Pi_\alpha(\lambda)|^2=|\A(\lambda)+\B(\lambda)|^2=
|\A(\lambda)|^2\cdot |1+\f(\lambda)|^2
\]
and therefore
\[-\ln[2\pi \sigma'(\lambda)]=2\ln
|\A(\lambda)|+2\ln|1+\f(\lambda)|\] By (\ref{tracing}), the first
term is from $L^1(\mathbb{R})$. As about the second term,
\[
\ln_+|1+\f(\lambda)|\leq \ln [1+|\f(\lambda)|]\leq
|\f(\lambda)|\in L^2(\mathbb{R}) \] by (\ref{fh2}). For the
negative part of the logarithm, we use an elementary estimate
$|1+\f|\geq 1-|\f|$ which yields (check the definition of $\ln^-$)
\[
\left|\,\ln^- |1+\f(\lambda)|\right|\leq \ln
(1-|\f(\lambda)|)^{-1}=-\ln(1-|\f(\lambda)|^2)+\ln(1+|\f(\lambda)|)
\] for $|1+\f(\lambda)|<1$. The second term is again in $L^2(\mathbb{R})$.
The first one is in $L^1(\mathbb{R})$ due to Corollary
\ref{corol-3}.\end{proof}

\begin{corollary}
For the constants $\alpha$  and $\gamma$ from the multiplicative
representations of $\Pi_\alpha(\lambda)$ (formula {\rm
(\ref{function-pi1})})  and $\A(\lambda)$ (formula {\rm
(\ref{represent-1})}) we have
\[
\alpha=\frac{1}{2\pi} \int\limits_{-\infty}^\infty \frac{s\ln
[2\pi \sigma'(s)]}{1+s^2}ds,\,
\gamma=\frac{1}{\pi}\int\limits_{-\infty}^\infty \frac{s \ln
|\A(s)|}{1+s^2}ds
\]
\end{corollary}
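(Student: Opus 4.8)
The plan is to evaluate both $\Pi_\alpha$ and $\A^{-1}$ along the imaginary axis as $\lambda=iy\to i\infty$ and to compare the result with the normalization at $+i\infty$ supplied by Theorem \ref{main-lsz}. From the multiplicative representations (\ref{function-pi1}) and (\ref{represent-1}), the limit of each function at $+i\infty$ reads off the unknown angle ($\alpha$, resp.\ $\gamma$), while Theorem \ref{main-lsz} forces that limit to equal $1$. Concretely: by (\ref{equa1-1}) we have $\A(iy)\to 1$, hence $\A^{-1}(iy)\to 1$; and $\B(iy)=\f(iy)\A(iy)\to 0$ by (\ref{feature}), so from $P_*=\A+\B$ (formula (\ref{e19s3})) together with the uniform convergences $P_*(r,\cdot)\to\Pi_\alpha$, $\A(r,\cdot)\to\A$, $\B(r,\cdot)\to\B$ of Theorem \ref{main-lsz}, we get $\Pi_\alpha(iy)=\A(iy)+\B(iy)\to 1$.

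The core computation is
\[
\lim_{y\to+\infty}\ \frac{1}{2\pi i}\int\limits_{-\infty}^{\infty}\frac{(1+isy)\,\ln\sigma'(s)}{(iy-s)(1+s^2)}\,ds=\frac12\ln 2\pi-\frac{i}{2\pi}\int\limits_{-\infty}^{\infty}\frac{s\,\ln[2\pi\sigma'(s)]}{1+s^2}\,ds .
\]
To prove it, write $\ln\sigma'=\ln[2\pi\sigma']-\ln 2\pi$. The real part of the integrand is $-\tfrac12\cdot\tfrac1\pi\cdot\tfrac{y}{y^2+s^2}\ln\sigma'(s)$, i.e.\ $-\tfrac12$ times the Poisson kernel tested against $\ln\sigma'$; since $\ln[2\pi\sigma']\in L^1(\mathbb{R})+L^2(\mathbb{R})$ by Corollary \ref{cor-sum2}, its Poisson integral at $iy$ tends to $0$, while the Poisson integral of the constant $\ln 2\pi$ stays $\ln 2\pi$, so the real part converges to $\tfrac12\ln 2\pi$. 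The imaginary part of the integrand is $-\tfrac1{2\pi}\cdot\tfrac{s(y^2-1)}{(y^2+s^2)(1+s^2)}\ln\sigma'(s)$; the contribution of the constant $\ln 2\pi$ vanishes because $\tfrac{s(y^2-1)}{(y^2+s^2)(1+s^2)}$ is odd and absolutely integrable, and the $\ln[2\pi\sigma']$-part converges by dominated convergence — with dominating function $\tfrac{|s|\,|\ln[2\pi\sigma'(s)]|}{1+s^2}\in L^1(\mathbb{R})$, using $\bigl|\tfrac{s(y^2-1)}{y^2+s^2}\bigr|\le|s|$ — to $-\tfrac1{2\pi}\int\tfrac{s\ln[2\pi\sigma'(s)]}{1+s^2}\,ds$. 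Substituting this into (\ref{function-pi1}) the prefactor $\tfrac1{\sqrt{2\pi}}$ cancels $e^{\frac12\ln 2\pi}$, giving $\Pi_\alpha(iy)\to\exp[\,i(\alpha-\tfrac1{2\pi}\int\tfrac{s\ln[2\pi\sigma'(s)]}{1+s^2}\,ds)\,]$; comparing with the limit $1$ yields the stated formula for $\alpha$ (modulo $2\pi$, which is all that (\ref{function-pi1}) fixes).

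The formula for $\gamma$ follows the same pattern applied to (\ref{represent-1}), and is easier because, by (\ref{tracing}), $\ln|\A|\in L^1(\mathbb{R})$, so no constant must be split off. Writing $\lambda=iy$ and $y\to+\infty$: the real part of $-\tfrac1{\pi i}\int\tfrac{(1+s\lambda)\ln|\A(s)|}{(s-\lambda)(1+s^2)}\,ds$ is again a Poisson integral of an $L^1$ function, hence tends to $0$; the imaginary part tends by dominated convergence to $-\tfrac1{\pi}\int\tfrac{s\ln|\A(s)|}{1+s^2}\,ds$. Thus $\A^{-1}(iy)\to\exp[\,i(\gamma-\tfrac1{\pi}\int\tfrac{s\ln|\A(s)|}{1+s^2}\,ds)\,]$, which must equal $1$, giving the stated value of $\gamma$.

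The only real care needed is the harmonic-analytic bookkeeping: verifying that the two integrals on the right-hand side converge absolutely ($\tfrac{s}{1+s^2}$ is bounded and in $L^2$, so it pairs with $L^1$ and with $L^2$; and $\ln|\A|\in L^1$, $\ln[2\pi\sigma']\in L^1+L^2$), and justifying the passages to the limit — the vanishing of Poisson integrals of $L^1$ and $L^2$ data at $i\infty$ and the uniform domination $\bigl|\tfrac{s(y^2-1)}{(y^2+s^2)(1+s^2)}\bigr|\le\tfrac{|s|}{1+s^2}$. I expect the one (minor) pitfall to be tracking the constant $\ln 2\pi$: it is precisely the mechanism that replaces $\ln\sigma'$ in (\ref{function-pi1}) by $\ln[2\pi\sigma']$ in the final formula, and it is also why the $\sqrt{2\pi}$ prefactor drops out.
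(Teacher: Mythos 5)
Your proof is correct and follows essentially the same route as the paper: evaluate the multiplicative representations (\ref{function-pi1}) and (\ref{represent-1}) at $\lambda=iy$, use the normalizations $\Pi_\alpha(iy)\to1$ and $\A(iy)\to1$ at $+i\infty$, and pass to the limit using $\ln|\A|\in L^1(\mathbb{R})$ and $\ln[2\pi\sigma']\in L^1(\mathbb{R})+L^2(\mathbb{R})$. The only cosmetic differences are that you obtain $\Pi_\alpha(iy)\to1$ from $\Pi_\alpha=\A(1+\f)$ together with (\ref{equa1-1}) and (\ref{feature}) rather than directly from the integral equation (\ref{int-eq-l3}) as the paper does, and that your explicit bookkeeping of the $\ln 2\pi$ constant and the $1/\sqrt{2\pi}$ prefactor merely spells out what the paper leaves implicit.
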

\begin{proof} From (\ref{represent-1}) and (\ref{equa1-1}), we
have
\[
\gamma=\frac{1}{\pi} \lim_{y\to\infty}
\int\limits_{-\infty}^\infty \frac{s(y^2-1)\ln
|\A(s)|}{(s^2+y^2)(1+s^2)}ds=\frac{1}{\pi}\int\limits_{-\infty}^\infty
\frac{s \ln |\A(s)|}{1+s^2}ds
\]
because $\ln|\A(s)|\in L^1(\mathbb{R})$.

Take $r\to\infty$ in (\ref{int-eq-l3}). One has
\[
\Pi_\alpha(\lambda)=1+O([\Im\lambda]^{1/2}])
\]
as $\Im\lambda\to +\infty$. Then, from (\ref{function-pi1}), we
have
\[
\alpha=\lim_{y\to\infty} \frac{1}{2\pi} \int
\frac{s(1-y^2)+iy(1+s^2)}{(s^2+y^2)(1+s^2)}\ln[2\pi\sigma'(s)]ds
\]
Using Corollary \ref{cor-sum2}, we get the needed formula for
$\alpha$.\end{proof}

Clearly, the Corollary implies the following integral
representations
\begin{equation}\label{a1}
\A(\lambda)=\exp\left[\frac{1}{\pi i}\int\limits_{-\infty}^\infty
\frac{\ln |\A(s)|}{s-\lambda}ds\right],
\end{equation}
\begin{equation}
\Pi_\alpha(\lambda)=  \exp \left[ -\frac{1}{2\pi i}
\int\limits_{-\infty}^{\infty} \frac{ \ln [2\pi
\sigma'(s)]}{s-\lambda} ds\right] \label{pisub}
\end{equation}
As we know from the discussion of the general Szeg\H{o} case, the
function $(\lambda+i)^{-1}\Pi^{-1}_\alpha(\lambda)\in
H^2(\mathbb{C}^+)$. For our situation, much more is true
\begin{theorem}\label{trace-formula1}
If $A(r)\in L^2(\mathbb{R}^+)$, then
\begin{equation}\label{miracle0}
\Pi^{-1}_\alpha(\lambda)=1+\int\limits_0^\infty
\gamma(x)e^{ix\lambda}dx=1+\hat{\gamma}(\lambda),
\hat{\gamma}(\lambda)\in H^2(\mathbb{C}^+)
\end{equation}
where
\begin{equation}\label{miracle}
\sigma(E)+\int\limits_0^\infty |\gamma(x)|^2
dx=\int\limits_0^\infty |A(r)|^2dr
\end{equation}
where $E$-- support of $d\sigma_s$, the singular component of
$d\sigma$.
\end{theorem}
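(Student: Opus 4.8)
The first claim, that $\Pi_\alpha^{-1}(\lambda)$ has the Fourier representation $1+\hat\gamma(\lambda)$ with $\hat\gamma\in H^2(\mathbb{C}^+)$, is the easy part. The plan is to invoke Corollary \ref{cor-sum2}: since $\ln[2\pi\sigma'(\lambda)]\in L^1(\mathbb{R})+L^2(\mathbb{R})$, the outer function $\Pi_\alpha^{-1}(\lambda)$ given by (\ref{pisub}) differs from $1$ by a function in $H^2(\mathbb{C}^+)$; concretely one writes $\Pi_\alpha^{-1}=\exp(\psi)$ with $\psi\in H^2(\mathbb{C}^+)$ using the $L^2$ part and a bounded outer factor from the $L^1$ part, and checks $\exp(\psi)-1\in H^2$. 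The Paley--Wiener theorem then gives $\hat\gamma(\lambda)=\int_0^\infty\gamma(x)e^{ix\lambda}dx$ with $\gamma\in L^2(\mathbb{R}^+)$. Alternatively, and perhaps more cleanly, one notes $\Pi_\alpha^{-1}(\lambda)=(\A^{-1}(\lambda))(1+\f(\lambda))^{-1}$: from (\ref{represent-1}) and (\ref{tracing}), $\A^{-1}-1\in H^2(\mathbb{C}^+)$ (outer, and $\A^{-1}(iy)\to1$), while $(1+\f)^{-1}-1=-\f(1+\f)^{-1}\in H^2(\mathbb{C}^+)$ by (\ref{fh2}) and $1+\f$ outer with $\f\in B(\mathbb{C}^+)$; the product of two such functions plus one is again $1+H^2$.

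The identity (\ref{miracle}) is the real content, a trace formula. The plan is to match Taylor coefficients at $\lambda=i\infty$ of $|\Pi_\alpha(iy)|^2=[2\pi\sigma'(\lambda)]$-transform against the two expansions we already have. One route: from (\ref{miracle0}), for $\lambda=iy$, $y\to+\infty$, we can expand $\Pi_\alpha^{-1}(iy)=1+\hat\gamma(iy)$ and hence $|\Pi_\alpha^{-1}(iy)|^2$; on the other hand, by (\ref{pisub}) and the Poisson-type asymptotics (cf. the computation of $\alpha$ in the previous Corollary), the leading correction to $|\Pi_\alpha(iy)|^{-2}=2\pi\sigma'$-average is governed by the total mass of $\ln[2\pi\sigma'(\lambda)]d\lambda$ and, crucially for the singular part, by $\sigma(E)$. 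More precisely, the idea is to use the Herglotz representation of $F(\lambda)$ from (\ref{weyl-titchmarsh}): $\Re F(\lambda)=|\Pi_\alpha(\lambda)|^{-2}$ on $\mathbb{R}$, and $F(\lambda)=2\widehat\Pi_\beta(\lambda)\Pi_\alpha^{-1}(\lambda)$. Writing out $F(iy)/2=-i\beta+i\int\frac{1+iyt}{(iy-t)(1+t^2)}d\sigma(t)$ and letting $y\to+\infty$, the term that survives at order $y^{-1}$ from the singular part of $\sigma$ is exactly $y^{-1}\sigma(E)$ (after separating the a.c. part, whose contribution feeds into the $\int|\gamma|^2$ term via Plancherel applied to $\hat\gamma$). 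Equating this with the $y^{-1}$ coefficient coming from $\A(iy)=1+y^{-1}\int_0^\infty|A(r)|^2dr+\bar o(y^{-1})$ (equation (\ref{equa1-1})) together with the expansion of $(1+\f(iy))$, one isolates (\ref{miracle}).

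The cleanest execution I foresee: compute $\int_{-\infty}^\infty(|\Pi_\alpha(\lambda)|^{-2}-\text{something})\,d\lambda$ — that is, integrate $\Re F(\lambda)$ against $d\lambda$ after subtracting off the $H^2$ parts. By Plancherel, $\int_{\mathbb R}|\hat\gamma(\lambda)|^2\,d\lambda=2\pi\int_0^\infty|\gamma(x)|^2dx$, and $\int_{\mathbb R}(|\Pi_\alpha^{-1}(\lambda)|^2-1)d\lambda=\int_{\mathbb R}(2\Re\hat\gamma+|\hat\gamma|^2)d\lambda=2\pi\int_0^\infty|\gamma(x)|^2dx$ since $\int\Re\hat\gamma\,d\lambda=\pi\gamma(0^+)\cdot 0$... one must be careful here — $\hat\gamma$ need not be $L^1$, so this contour/averaging must be done with a convergence factor $\frac{1}{\lambda^2+1}$ or via $y\to\infty$ limits rather than a bare integral. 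Then $\int_{\mathbb R}\Re F(\lambda)\,d\lambda$ "equals" (in the regularized sense) $2\pi$ times the a.c. mass plus the contribution $2\pi\sigma(E)$... wait, $\Re F\,d\lambda/(2\pi)=d\sigma_{ac}$, so the singular part $\sigma(E)$ must enter through the defect between $\Re F$ and $|\Pi_\alpha|^{-2}$ — but these are equal a.e.! So the singular mass enters only through the asymptotic expansion at $i\infty$, confirming that the $y\to+\infty$ matching, not a plain integral, is the right tool.

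\textbf{The main obstacle} I expect is precisely handling these regularizations: $\hat\gamma$, $\A-1$, and $\f$ lie in $H^2$ but generally not in $H^1$, so every "integrate and compare constant terms" step must be justified as a limit $\lambda=iy\to i\infty$ (or with a $(\lambda^2+1)^{-1}$ weight), tracking that the $\bar o(y^{-1})$ error terms in (\ref{equa1-1}) and in the $F$-expansion are genuinely negligible and that the singular measure $d\sigma_s$ contributes $y^{-1}\sigma(E)+\bar o(y^{-1})$ to $iF(iy)/2$ by dominated convergence (using $\int(1+t^2)^{-1}d\sigma<\infty$ and tightness as in (\ref{tightness})). Once that bookkeeping is in place, (\ref{miracle}) drops out by equating the $y^{-1}$-coefficients in $\log|\Pi_\alpha(iy)|^{-2}=\log\Re F(iy)$ expanded two ways, since $\log\A(iy)$ contributes $y^{-1}\int|A|^2$, $\log(1+\f(iy))$ contributes $y^{-1}C(0^+)=0$ in the relevant combination, and the singular mass is the only remaining term on the spectral side.
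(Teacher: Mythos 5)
There is a genuine gap, in fact two. First, the part you call ``easy'' is not: knowing only $\ln[2\pi\sigma'(\lambda)]\in L^1(\mathbb{R})+L^2(\mathbb{R})$ does not give $\Pi_\alpha^{-1}-1\in H^2(\mathbb{C}^+)$. If $\psi\in H^2$, the function $e^{\psi}-1$ need not be square integrable on $\mathbb{R}$ (take $\Re\psi$ equal to $n$ on a set of measure $n^{-3}$), so ``write $\Pi_\alpha^{-1}=\exp(\psi)$ and check $\exp(\psi)-1\in H^2$'' fails; indeed, Theorem \ref{l2-test2} of the paper treats exactly this membership as an extra hypothesis, not a consequence of log-integrability. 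Your alternative factorization $\Pi_\alpha^{-1}=\A^{-1}(1+\f)^{-1}$ has the same problem: $(1+\f)^{-1}=(1+F)/2$ is unbounded on $\mathbb{R}$ (where $\f$ approaches $-1$), so $-\f(1+\f)^{-1}\in H^2$ is not justified by $\f\in H^2$ alone, and ``outer with value $1$ at $i\infty$'' does not put $\A^{-1}-1$ into $H^2$ either. Second, the asymptotic matching at $\lambda=iy\to i\infty$ cannot produce (\ref{miracle}): the quantity $\int_0^\infty|\gamma|^2dx=\frac{1}{2\pi}\int_{\mathbb{R}}|\hat\gamma(\lambda)|^2d\lambda$ is a Plancherel (whole-line) quantity, whereas $\Pi_\alpha(iy)=1+\hat\gamma(iy)$ with $\hat\gamma(iy)=O(y^{-1/2})$ has no $y^{-1}$ Taylor coefficient for general $L^2$ data; likewise $y\,\Re F(iy)/2=\int y^2(y^2+t^2)^{-1}d\sigma(t)$ diverges (the total mass of $\sigma$ is infinite), so the ``$y^{-1}$ coefficient'' you want to isolate on the spectral side is only a conditionally regularized object, and the step ``$\log(1+\f(iy))$ contributes $y^{-1}C(0^+)=0$'' is unjustified ($C(0^+)$ need not exist for $A\in L^2$, and when it does it equals $-A(0)$, cf.\ Lemma \ref{linksg}).

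The idea you are missing is that the theorem is itself a Plancherel identity for the generalized Fourier transform. From $P_*(r,\lambda)=1-\int_0^r A(t)P(t,\lambda)\,dt$ and the $L^2(\mathbb{R},d\sigma)$ limit $P_*(r,\lambda)/(\lambda+i)\to\Pi_\alpha(\lambda)\chi_{E^c}(\lambda)/(\lambda+i)$ (Lemma \ref{asymp-l2} and its Remark), one identifies $\tilde A(\lambda)=\int_0^\infty A(r)P(r,\lambda)\,dr=1-\Pi_\alpha(\lambda)\chi_{E^c}(\lambda)$, and the isometry of Theorem \ref{theorem2s2} gives at once
\[
\int_0^\infty|A(r)|^2dr=\sigma(E)+\frac{1}{2\pi}\int_{-\infty}^\infty\bigl|1-\Pi_\alpha^{-1}(\lambda)\bigr|^2d\lambda .
\]
This finiteness, together with $(\lambda+i)^{-1}(1-\Pi_\alpha^{-1})\in H^2(\mathbb{C}^+)$, Lemma \ref{auxil-2}, and Paley--Wiener, is what yields (\ref{miracle0}); so in the paper's argument (\ref{miracle0}) is a consequence of the spectral identity, not a soft preliminary to it.
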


\begin{proof}
From Lemma \ref{asymp-l2}, as $r\to\infty$:
\[
\frac{P_*(r,\lambda)}{\lambda+i}\to \frac{\Pi_\alpha(\lambda)\cdot
\chi_{E^c}(\lambda)}{\lambda+i}
\]
in $L^2(\mathbb{R},d\sigma)$, where  $E^c$-- the complement of
$E$. On the other hand, for any $r>0, \lambda\in \mathbb{R}$, we
have (by (\ref{krein2})):
\begin{equation}\label{integral-of-p}
P_*(r,\lambda)=1-\int\limits_0^r A(t)P(t,\lambda)dt
\end{equation}
That implies
\[
\frac{\Pi_\alpha(\lambda)\cdot
\chi_{E^c}(\lambda)}{\lambda+i}=\frac{1}{\lambda+i}-\frac{\tilde{A}(\lambda)}{\lambda+i}
\]
where the generalized Fourier transform
\[
\tilde{A}(\lambda)=\int\limits_0^\infty A(r)P(r,\lambda)dr\in
L^2(\mathbb{R},d\sigma)
\]
by Theorem \ref{theorem2s2}. Thus,
\begin{equation}
\tilde{A}(\lambda)=1-\Pi_\alpha(\lambda)\cdot \chi_{E^c}(\lambda)
\end{equation}
and
\[
\int\limits_0^\infty |A(r)|^2
dr=\sigma\{E\}+\frac{1}{2\pi}\int\limits_{-\infty}^\infty
\left|1-\frac{1}{\Pi_\alpha(\lambda)}\right|^2d\lambda
\]
Now, we have
\[
\int\limits_{-\infty}^\infty
\left|1-\frac{1}{\Pi_\alpha(\lambda)}\right|^2d\lambda<\infty,
\frac{1}{\lambda+i}\cdot\left(1-\frac{1}{\Pi_\alpha(\lambda)}\right)\in
H^2(\mathbb{C}^+)
\]
Therefore, by elementary Lemma \ref{auxil-2} in Appendix,
$1-\Pi^{-1}_\alpha(\lambda)\in H^2(\mathbb{C}^+)$. The
Paley-Wiener Theorem now implies (\ref{miracle0}) and
(\ref{miracle}).
\end{proof}
An interesting corollary from this result is that the total
variation over the whole line of the singular part of the measure
$d\sigma$ is finite.

After proving the representation for $\Pi_\alpha(\lambda)$, the
following result is quite natural. By Levy-Wiener Theorem,
\[
\frac{1}{P_*(r,\lambda)}=1+\int\limits_0^\infty
\gamma_r(x)e^{i\lambda x} dx
\]
where $\gamma_r(x)\in L^1(\mathbb{R}^+)\cap L^2(\mathbb{R}^+)$.
\begin{lemma}
Assume $A(r)\in L^2(\mathbb{R}^+)$ and $E=\emptyset$. Then,
$\gamma_r(x)\to \gamma(x)$ in $L^2(\mathbb{R}^+)$.
\end{lemma}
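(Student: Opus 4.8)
The plan is to identify $\gamma_r$ as the inverse Fourier transform of $P_*(r,\lambda)^{-1}$ restricted to $\mathbb R$, show it stabilizes, and pin down the limit as the $\gamma$ produced in Theorem~\ref{trace-formula1}. First I would recall that when $E=\emptyset$ we have $\Pi_\alpha^{-1}(\lambda)=1+\hat\gamma(\lambda)$ with $\hat\gamma\in H^2(\mathbb C^+)$ (Theorem~\ref{trace-formula1}), and that by Theorem~\ref{main-lsz} $P_*(r,\lambda)\to\Pi_\alpha(\lambda)$ uniformly on half-planes $\Im\lambda>\varepsilon$, with the quantitative bound $|P_*(r,\lambda)|\le C(\lambda)\exp([\Im\lambda]^{-1}\|A\|_2^2)$ obtained in that proof. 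Since $P_*$ has no zeroes in $\overline{\mathbb C^+}$ (Lemma~\ref{lemma24}, part 3), $P_*(r,\lambda)^{-1}\to\Pi_\alpha(\lambda)^{-1}$ uniformly on $\Im\lambda\ge\varepsilon$; in particular $\hat\gamma_r(\lambda)=P_*(r,\lambda)^{-1}-1\to\hat\gamma(\lambda)=\Pi_\alpha(\lambda)^{-1}-1$ pointwise on $\mathbb C^+$, and both are in $H^2(\mathbb C^+)$.

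Pointwise convergence of $H^2$ functions is not enough for $L^2$ (equivalently $L^2(\mathbb R^+)$ for the transforms $\gamma_r$); what I would establish is uniform boundedness of $\|\hat\gamma_r\|_{L^2(\mathbb R)}$ together with the pointwise limit, then upgrade. Uniform boundedness: use the identity from Lemma~\ref{lemmaimp}/formula~(\ref{realpart}), $|P_*(r,\lambda)|^{-2}=2\pi\sigma_r'(\lambda)$, and the relation $1/\overline{P_*(r,\lambda)}-1=P_*(r,\lambda)(1+\int\overline{H_r})-1\in\overline{H^2(\mathbb R)}$, to control $\|\gamma_r\|_2$ via $\int(2\pi\sigma_r'(\lambda))\,d\lambda$-type quantities; alternatively, and more cleanly, write $\gamma_r$ through~(\ref{integral-of-p}): since $P_*(r,\lambda)=1-\tilde A_r(\lambda)$ where $\tilde A_r(\lambda)=\int_0^r A(t)P(t,\lambda)\,dt$ and, by Theorem~\ref{theorem2s2}, $\|\tilde A_r\|_{L^2(d\sigma)}\le\|A\chi_{[0,r]}\|_{L^2(\mathbb R^+)}\le\|A\|_2$, one gets a uniform $L^2(d\sigma)$ bound on $1-P_*(r,\lambda)$, hence (using $|P_*|=|P|$ on $\mathbb R$ and $d\sigma'=(2\pi|P_*(r,\cdot)|^2)^{-1}$ from Corollary~\ref{scale-property}) a uniform bound on $\|\,(1-P_*(r,\lambda))/P_*(r,\lambda)\|_{L^2(d\lambda)}=\|\gamma_r\|_{L^2(\mathbb R^+)}$. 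Having a uniformly bounded sequence in $L^2(\mathbb R^+)$ with a pointwise-on-$\mathbb C^+$ limit $\hat\gamma$ forces weak $L^2$ convergence $\hat\gamma_r\rightharpoonup\hat\gamma$ (every weak limit point, being in $H^2(\mathbb C^+)$ and agreeing with $\hat\gamma$ pointwise in $\mathbb C^+$, equals $\hat\gamma$).

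To promote weak convergence to norm convergence I would show $\|\gamma_r\|_{L^2(\mathbb R^+)}\to\|\gamma\|_{L^2(\mathbb R^+)}$. By Plancherel this is $\frac1{2\pi}\int|P_*(r,\lambda)^{-1}-1|^2d\lambda\to\frac1{2\pi}\int|\Pi_\alpha(\lambda)^{-1}-1|^2d\lambda$, which, via the computation in the proof of Theorem~\ref{trace-formula1} (namely $\int_0^\infty|A|^2=\sigma\{E\}+\frac1{2\pi}\int|1-\Pi_\alpha^{-1}|^2$ and its finite-$r$ analogue coming from~(\ref{integral-of-p}) and the isometry of $\cal O$), reduces to $\|A\chi_{[0,r]}\|_{L^2(\mathbb R^+)}^2\to\|A\|_{L^2(\mathbb R^+)}^2$ together with $\sigma_s\{E\}=0$; the former is trivial and the latter is the hypothesis $E=\emptyset$. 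Weak convergence plus convergence of norms in a Hilbert space yields strong convergence, which is the claim. The main obstacle I anticipate is the bookkeeping in the second paragraph: making the passage from the $L^2(d\sigma)$ estimate on $1-P_*(r,\lambda)$ to a genuine $L^2(d\lambda)$ (i.e. $L^2(\mathbb R^+)$) bound on $\gamma_r$ clean — one must be careful that $P_*(r,\lambda)$ is bounded below on $\mathbb R$ only in an averaged sense (Lemma~\ref{gronwall-p*} gives $|P_*(r,\lambda)|\ge\exp(-\|A\|_1)$, but $A$ need not be $L^1$), so the right route is to keep everything on the $d\sigma$ side using Corollary~\ref{scale-property} and only invoke Plancherel at the very end for the norm identity.
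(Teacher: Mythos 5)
Your proposal is correct and follows essentially the same route as the paper: the identity $\|\gamma_r\|_{L^2(\mathbb{R}^+)}^2=\int_0^r|A|^2$ obtained from (\ref{integral-of-p}) together with Corollary \ref{scale-property} (isometry of $\cal{O}$) and Plancherel, weak convergence from the pointwise convergence $P_*(r,\lambda)\to\Pi_\alpha(\lambda)$ in $\mathbb{C}^+$, norm convergence from (\ref{miracle}) with $E=\emptyset$, and then weak plus norm convergence gives strong convergence. The caution you raise at the end is resolved exactly as the paper does it — stay on the $d\sigma$ side via Corollary \ref{scale-property} and use Plancherel only for the final norm identity — so there is no gap.
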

\begin{proof}
From (\ref{integral-of-p}) and Corollary \ref{scale-property}, we
get
\begin{equation}
\int\limits_0^r |A(x)|^2dx=\int\limits_{-\infty}^\infty
|1-P_*(r,\lambda)|^2 d\sigma(\lambda)=\frac{1}{2\pi}
\int\limits_{-\infty}^\infty
\left|1-{P_*(r,\lambda)}\right|^2\frac{d\lambda}{|P_*(r,\lambda))|^2}
\end{equation}

\[
=\frac{1}{2\pi} \int\limits_{-\infty}^\infty
\left|1-\frac{1}{P_*(r,\lambda)}\right|^2d\lambda=\int\limits_0^\infty
|\gamma_r(x)|^2dx
\]
Then, since $P_*(r,\lambda)\to \Pi_\alpha(\lambda)$ for
$\lambda\in \mathbb{C}^+$, we also have that $\gamma_r(x)\to
\gamma(x)$ weakly in $L^2(\mathbb{R}^+)$. But since $E=\emptyset$,
we also get $\|\gamma_r(x)\|_2\to\|\gamma(x)\|_2$ from
(\ref{miracle}). Therefore, $\gamma_r(x)\to\gamma(x)$ in
$L^2(\mathbb{R}^+)$.
\end{proof}
If the singular part of the measure $d\sigma$ is not trivial, then
we have only the bound: $\|\gamma_r\|_2\leq \|A\|_2$.

Now, let us characterize the class of Schur coefficients
corresponding to $A(r)\in L^2(\mathbb{R}^+)$. We start with
\begin{theorem}

 For any $A(r)\in L^2[0,R]$ and any
$R>0$, we have

\begin{equation}
\int\limits_{-\infty}^\infty \ln|\A(R,\lambda)|d\lambda=\pi
\int\limits_0^R |A(r)|^2 dr \label{tracing1}
\end{equation}
and
\begin{equation}
 2\pi\int\limits_0^R |A(r)|^2 dr=-\int\limits_{-\infty}^\infty
\ln[1-|\B(R,\lambda)\A^{-1}(R,\lambda)|^2]d\lambda
\label{trunc-trace}
\end{equation}
\label{trunc-theorem}
\end{theorem}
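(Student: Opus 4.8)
The plan is to reduce Theorem \ref{trunc-theorem} to the already-established infinite-interval case, Theorem \ref{main-lsz} (more precisely, formulas (\ref{tracing}) and (\ref{tracing-2})), by the standard truncation trick. Given $A(r)\in L^2[0,R]$, I would extend it by zero to all of $\mathbb{R}^+$, i.e. set $\tilde A(r)=A(r)$ for $r\in[0,R]$ and $\tilde A(r)=0$ for $r>R$. Then $\tilde A\in L^2(\mathbb{R}^+)$, so Theorem \ref{main-lsz} applies to the Krein system with coefficient $\tilde A$. The key observation is that for $r\ge R$ the potential $V$ in (\ref{potential-V}) has $A\equiv 0$, so the transfer matrix is multiplicative and stabilizes: for $r\ge R$ one has $X(r,\lambda)=\mathrm{diag}(\exp(i\lambda(r-R)),1)\,X(R,\lambda)$. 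Reading this off the block decomposition (\ref{fsr}) gives $\A(r,\lambda)=\A(R,\lambda)$ and $\B(r,\lambda)=\B(R,\lambda)$ for all $r\ge R$ (the factor $\exp(i\lambda(r-R))$ multiplies only the starred entries, which do not enter $\A,\B$). Hence the limiting functions from Theorem \ref{main-lsz} satisfy $\A(\lambda)=\A(R,\lambda)$ and $\B(\lambda)=\B(R,\lambda)$, and likewise $\f(\lambda)=\B(R,\lambda)\A^{-1}(R,\lambda)$.

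With this identification, (\ref{tracing1}) is immediate: Theorem \ref{main-lsz} gives
\[
\int\limits_{-\infty}^\infty \ln|\A(\lambda)|\,d\lambda=\pi\int\limits_0^\infty |\tilde A(r)|^2\,dr=\pi\int\limits_0^R |A(r)|^2\,dr,
\]
and the left side equals $\int_{-\infty}^\infty \ln|\A(R,\lambda)|\,d\lambda$ by the preceding paragraph. Similarly (\ref{trunc-trace}) follows from Corollary \ref{corol-3}, which asserts $2\pi\int_0^\infty |\tilde A(r)|^2\,dr=-\int_{-\infty}^\infty \ln[1-|\f(\lambda)|^2]\,d\lambda$, once we substitute $\f(\lambda)=\B(R,\lambda)\A^{-1}(R,\lambda)$ and $\int_0^\infty |\tilde A|^2=\int_0^R|A|^2$. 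Alternatively, (\ref{trunc-trace}) can be deduced directly from (\ref{tracing1}) together with the boundary identity (\ref{energy-l2}), $|\A(R,\lambda)|^2=1+|\B(R,\lambda)|^2$ a.e. on $\mathbb{R}$, which rearranges to $1-|\B(R,\lambda)\A^{-1}(R,\lambda)|^2=|\A(R,\lambda)|^{-2}$; I would present this second route since it keeps everything at finite $R$ and only uses Lemma \ref{wall}-type facts plus Theorem \ref{main-lsz} for the $\ln|\A|\in L^1$ integrability.

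The one point that genuinely needs care — and which I expect to be the main obstacle — is justifying that the \emph{limiting} objects of Theorem \ref{main-lsz} really coincide with the finite-$r=R$ objects, and in particular that $\ln|\A(R,\lambda)|\in L^1(\mathbb{R})$ so that the integral in (\ref{tracing1}) is meaningful. The stabilization argument above handles the coincidence cleanly provided I am careful with the starred/unstarred entries in (\ref{fsr}) and with the exponential-type bookkeeping; the integrability is then inherited verbatim from Theorem \ref{main-lsz}. I should also note that $\A(R,\lambda)$ being outer and $\A^{-1}(R,\lambda)\in B(\mathbb{C}^+)$ — facts needed implicitly to write (\ref{energy-l2}) and to ensure $1-|\f|^2\ge 0$ — again transfer from the infinite-interval statement. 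No new estimates are required; the proof is essentially a bookkeeping argument showing the Krein system with compactly supported coefficient is a special case already covered.

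\begin{proof}
Extend $A$ by zero: set $\tilde A(r)=A(r)$ for $0\le r\le R$ and $\tilde A(r)=0$ for $r>R$. Then $\tilde A\in L^2(\mathbb{R}^+)$, and for $r\ge R$ the potential (\ref{potential-V}) of the Krein system with coefficient $\tilde A$ reduces to $\mathrm{diag}(i\lambda,0)$, so its transfer matrix obeys
\[
X(r,\lambda)=\left[\begin{array}{cc} \exp(i\lambda(r-R)) & 0\\ 0 & 1\end{array}\right] X(R,\lambda),\qquad r\ge R.
\]
Comparing with the block form (\ref{fsr}), the entries $\B,\A$ (which occupy the second row and are unstarred) are unchanged for $r\ge R$, i.e. $\A(r,\lambda)=\A(R,\lambda)$ and $\B(r,\lambda)=\B(R,\lambda)$ for all $r\ge R$. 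By Theorem \ref{main-lsz} applied to $\tilde A$, the limits $\A(r,\lambda)\to\A(\lambda)$ and $\B(r,\lambda)\to\B(\lambda)$ exist and hence $\A(\lambda)=\A(R,\lambda)$, $\B(\lambda)=\B(R,\lambda)$, and $\f(\lambda)=\B(R,\lambda)\A^{-1}(R,\lambda)$.

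Consequently, $\ln|\A(R,\lambda)|=\ln|\A(\lambda)|\in L^1(\mathbb{R})$ and, by (\ref{tracing}),
\[
\int\limits_{-\infty}^\infty \ln|\A(R,\lambda)|\,d\lambda=\pi\int\limits_0^\infty |\tilde A(r)|^2\,dr=\pi\int\limits_0^R |A(r)|^2\,dr,
\]
which is (\ref{tracing1}). Finally, by the boundary identity (\ref{energy-l2}) (valid for the coefficient $\tilde A$, hence for $\A(R,\lambda)=\A(\lambda)$, $\B(R,\lambda)=\B(\lambda)$) we have, for a.e. $\lambda\in\mathbb{R}$,
\[
1-\left|\B(R,\lambda)\A^{-1}(R,\lambda)\right|^2=\frac{|\A(R,\lambda)|^2-|\B(R,\lambda)|^2}{|\A(R,\lambda)|^2}=\frac{1}{|\A(R,\lambda)|^2},
\]
so $-\ln\bigl[1-|\B(R,\lambda)\A^{-1}(R,\lambda)|^2\bigr]=2\ln|\A(R,\lambda)|$. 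Integrating over $\mathbb{R}$ and using (\ref{tracing1}) yields (\ref{trunc-trace}).
\end{proof}
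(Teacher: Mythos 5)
Your proposal is correct. The one point worth comparing: the paper proves Theorem \ref{trunc-theorem} by simply \emph{repeating} the arguments of Theorem \ref{main-lsz} and Corollary \ref{corol-3} with all integrals taken over $[0,R]$ (the asymptotics $\A(R,iy)=1+y^{-1}\int_0^R|A|^2\,ds+\bar o(y^{-1})$, the outer representation of $\A(R,\lambda)$, and the mean-value identity), whereas you reduce to those results as black boxes by extending $A$ by zero and observing that the transfer matrix stabilizes, so that $\A(\lambda)=\A(R,\lambda)$, $\B(\lambda)=\B(R,\lambda)$. The underlying analytic content is the same either way; your route buys brevity and avoids redoing estimates, at the small cost of having to identify the $\mathbb{C}^+$-limits with the finite-$R$ entire functions and their boundary values, which you handle correctly (since $\A(R,\cdot)$ is entire, the a.e.\ boundary values of the limit coincide with its restriction to $\mathbb{R}$). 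Two minor remarks: for the identity $1-|\B(R,\lambda)\A^{-1}(R,\lambda)|^2=|\A(R,\lambda)|^{-2}$ you could cite (\ref{simplect}) from Lemma \ref{wall} directly, which holds pointwise for all real $\lambda$ at finite $R$ without any passage to the limit; and your appeal to (\ref{energy-l2}) is nonetheless valid precisely because of the stabilization you established.
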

\begin{proof}
The proof repeats those of Theorem \ref{main-lsz} and Corollary
\ref{corol-3}.
\end{proof}

Let us introduce a certain subclass of $B(\mathbb{C}^+)$. Consider
a function $\f(\lambda)\in B(\mathbb{C}^+)$ such that for its
boundary value:
\begin{equation}
\int\limits_{-\infty}^\infty \ln (1-|\f(\lambda)|^2)
d\lambda>-\infty \label{l-2}
\end{equation}
Then, a simple estimate $\ln(1-|\f(\lambda)\|^2)\leq
-|\f(\lambda)|^2$ yields that $\f(\lambda)\in L^2(\mathbb{R})$.
Thus $\f\in H^2(\mathbb{C}^+)$ and
\[
\f(\lambda)=\int\limits_0^\infty C(x)e^{i\lambda x}dx
\]
with $C(x)\in L^2(\mathbb{R}^+)$.
\begin{definition}
We say that $\f(\lambda)\in S_0(\mathbb{C}^+)$ if $\f(\lambda)\in
B(\mathbb{C}^+)$ and (\ref{l-2}) holds.
\end{definition}

The set $S_0(\mathbb{C}^+)$ can be regarded as the  metric space
\cite{Sylv} with the distance given by the formula
\begin{equation}
\rho_s^2(\f,\g)=-\int\limits_{-\infty}^\infty \ln\left[ 1-
\rho^2(\f(\lambda),\g(\lambda))\right]d\lambda
\end{equation}
where pseudohyperbolic distance $\rho(\cdot,\cdot)$ is defined by
{\rm (\ref{pseudohyp})}. It turns out that the resulting metric
space is complete (see Lemma 1.5, Theorem 1.6, Corollary 1.9 in
\cite{Sylv}. The geometry of this space is studied in the same
paper).

Let us write $\rho_s(\f)=\rho_s(0,\f)$ for short-hand.
\begin{lemma}
If in Krein system $\f_r(\lambda)\in S_0(\mathbb{C}^+)$ for some
$r$, then $\f(\lambda)\in S_0(\mathbb{C}^+)$ and
\begin{equation}
\rho_s^2(\f (\lambda))=\rho_s^2(\B (r,\lambda)\A^{-1}
(r,\lambda))+\rho_s^2(\f_r (\lambda))\label{layer-strip1}
\end{equation}
In particular, this is true for any $A(r)\in L^2(\mathbb{R}^+)$.
\label{layer-strip}
\end{lemma}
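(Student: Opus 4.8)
The plan is to iterate the one-step factorization formula (\ref{e18s3}), which expresses $\f(\lambda)$ as a M\"obius image of $\f_r(\lambda)$ via the Wall polynomials at level $r$, together with the Szeg\H{o}-type trace identity (\ref{trunc-trace}) from Theorem \ref{trunc-theorem}. The crucial algebraic fact is that (\ref{e18s3}) can be rewritten in the form
\[
\f(\lambda)=S_{\lambda,A,r}(\f_r(\lambda)),\quad
S_{\lambda,A,r}(z)=\frac{\A_*(r,\lambda)z+\B(r,\lambda)}{\B_*(r,\lambda)z+\A(r,\lambda)},
\]
(the map (\ref{mobius-easy})). For real $\lambda$ the matrix $X(r,\lambda)$ is $J$--unitary by Theorem \ref{jcontraction}, hence $S_{\lambda,A,r}$ is a composition of the M\"obius transform with data $\B(r,\lambda)\A^{-1}(r,\lambda)\in \mathbb{D}$ and a rotation, so it preserves the pseudohyperbolic distance $\rho(\cdot,\cdot)$ on $\mathbb{D}$; this is exactly the statement proved for $S_{\lambda,A,r}$ in the Lemma following Corollary \ref{moebius-appr-cor}. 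Applying this pointwise in $\lambda\in\mathbb{R}$ with the two arguments $0$ and $\f_r(\lambda)$, and using the identity $S_{\lambda,A,r}(0)=\B(r,\lambda)\A^{-1}(r,\lambda)$, one gets the addition formula for pseudohyperbolic distances
\[
\rho\bigl(\f(\lambda),\B(r,\lambda)\A^{-1}(r,\lambda)\bigr)=\rho\bigl(0,\f_r(\lambda)\bigr),
\]
valid for a.e.\ $\lambda\in\mathbb{R}$.

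Next I would feed this into the definition of $\rho_s$. Since $\f_r\in S_0(\mathbb{C}^+)$, we have $\int \ln(1-|\f_r(\lambda)|^2)\,d\lambda>-\infty$. I want to combine this with (\ref{trunc-trace}) applied to the Krein system on $[0,r]$, which gives $\int \ln(1-|\B(r,\lambda)\A^{-1}(r,\lambda)|^2)\,d\lambda=-2\pi\int_0^r|A(s)|^2ds>-\infty$ provided $A\in L^2[0,r]$ (which holds since, by Lemma \ref{layer-strip} hypothesis membership in $S_0$ of $\f_r$ already forces, via Theorem \ref{s0krein} and the $L^2$ theory, the relevant integrability — more directly, one can simply note $A\in L^2_{\rm loc}$ always and that the trace identity is the finite-$r$ version). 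The key pointwise algebraic identity to establish is the ``pseudohyperbolic Pythagoras'' relation: writing $w=\B(r,\lambda)\A^{-1}(r,\lambda)$, $u=\f_r(\lambda)$, $v=\f(\lambda)=\tau_w(\zeta u)$ for a unimodular $\zeta$, one has
\[
1-|v|^2=\frac{(1-|w|^2)(1-|u|^2)}{|1+\bar w\zeta u|^2},
\]
the standard transformation law for the Poisson-type factor under M\"obius maps. Taking $-\int\ln(\cdot)\,d\lambda$ of both sides turns the product into a sum, giving
\[
\rho_s^2(\f)=\rho_s^2(w)+\rho_s^2(\f_r)+\int_{-\infty}^\infty \ln|1+\bar w\zeta u|^2\,d\lambda,
\]
and the last integral must be shown to vanish.

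The main obstacle will be showing that this leftover integral $\int \ln|1+\bar w(\lambda)\zeta(\lambda)\f_r(\lambda)|^2\,d\lambda$ is zero — this is where the analyticity (not just the pointwise bounds) enters. The point is that $1+\bar w\zeta u$ is the boundary value of $\A(r,\lambda)^{-1}\bigl(\A(r,\lambda)+\f_r(\lambda)\B_*(r,\lambda)\bigr)$ up to the inner/outer structure: from (\ref{e18s3}) one reads $\f(\lambda)-\B(r,\lambda)\A^{-1}(r,\lambda)=\exp(i\lambda r)\f_r(\lambda)\big/\bigl[\A(r,\lambda)(\A(r,\lambda)+\f_r(\lambda)\B_*(r,\lambda))\bigr]$ (this is (\ref{energy22}) with $r_1=\infty$), so the denominator factor $\A(r,\lambda)+\f_r(\lambda)\B_*(r,\lambda)$ is a product of bounded analytic functions in $\mathbb{C}^+$ that is bounded below by $1/2$ there (inequality (\ref{e22s3})), hence outer of modulus comparable to $1$; its $\log$-integral over $\mathbb{R}$ is controlled by its value at $i\infty$, which is $1$ by (\ref{feature}) and $\A(\rho,\lambda)\to1$, $\B_*(\rho,\lambda)\to0$. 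Chasing the outer-function normalizations (all the relevant factors $\A_*,\A,\B,\B_*$ tend to their ``free'' values as $\Im\lambda\to+\infty$, so their logarithmic means vanish) forces the leftover integral to be $0$. Once that is in place, (\ref{layer-strip1}) follows, all three quantities are finite, hence $\f\in S_0(\mathbb{C}^+)$; the final sentence ``in particular this holds for $A\in L^2(\mathbb{R}^+)$'' then follows by taking $r\to\infty$, since $\f_r\to0$ in $S_0$ (its $\rho_s$-norm is $2\pi\int_r^\infty|A|^2\to0$ by the tail of (\ref{tracing-2})) and $\B(r,\lambda)\A^{-1}(r,\lambda)\to\f(\lambda)$.
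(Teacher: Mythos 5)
Your decomposition is in substance identical to the paper's proof: for real $\lambda$ you use (\ref{e18s3}) to write $\f=\tau_w(\zeta\f_r)$ with $w=\B(r,\lambda)\A^{-1}(r,\lambda)$ and $\zeta$ unimodular, apply the identity $\ln\bigl(1-|\tau_w(z)|^2\bigr)=\ln(1-|w|^2)+\ln(1-|z|^2)-2\ln|1+\bar w z|$, integrate over $\mathbb{R}$, invoke Theorem \ref{trunc-theorem}, and are left with the cross term $2\int_{-\infty}^{\infty}\ln|1+\B_*(r,\lambda)\A^{-1}(r,\lambda)\f_r(\lambda)|\,d\lambda$. (The distance-preserving property of $S_{\lambda,A,r}$ is not actually needed; the pointwise algebra already gives everything, which is how the paper proceeds.)

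The genuine gap is in your argument that this cross term vanishes. You assert that because $1+\B_*\A^{-1}\f_r$ is outer, bounded above and below on $\mathbb{C}^+$, and tends to $1$ as $\Im\lambda\to+\infty$, its logarithmic integral over $\mathbb{R}$ must be zero. That implication is false: take $g(\lambda)=ic(\lambda+i)^{-1}$ with small $c>0$; then $1+g$ is outer, bounded above and below, $g(iy)\to0$, yet $\int_{\mathbb{R}}\ln|1+g|\,d\lambda=\pi c\neq 0$. The mean-value identity $\ln|1+g(iy)|=\frac{y}{\pi}\int_{\mathbb{R}}\frac{\ln|1+g(\lambda)|}{\lambda^2+y^2}\,d\lambda$ shows that $\int\ln|1+g|\,d\lambda=\lim_{y\to+\infty}\pi y\ln|1+g(iy)|$, so what is needed is the quantitative decay $g(iy)=\bar{o}(y^{-1})$ together with integrability of $\ln|1+g|$ on $\mathbb{R}$ (to justify the passage to the limit); mere convergence of $g(iy)$ to $0$ does not suffice. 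This is precisely the content of the paper's Lemma \ref{mean-value}, and the hypotheses are available in your setting: $\f_r\in S_0(\mathbb{C}^+)$ forces $\f_r\in L^2(\mathbb{R})\cap H^2(\mathbb{C}^+)$, hence $\f_r(iy)=\bar{o}(y^{-1/2})$, while $\B_*(r,\lambda)\in L^2(\mathbb{R})$ with $\B_*(r,iy)=\bar{o}(y^{-1/2})$ and $\A^{-1}(r,\lambda)\in B(\mathbb{C}^+)$, so the product is $\bar{o}(y^{-1})$ on the imaginary axis and lies in $L^1(\mathbb{R})$. With this supplied your proof closes. Two minor remarks: the finiteness of $\rho_s^2(\B\A^{-1})$ needs nothing beyond $A\in L^2_{\rm loc}$, since Theorem \ref{trunc-theorem} is a finite-$r$ statement; and the final claim for $A\in L^2(\mathbb{R}^+)$ requires no limit $r\to\infty$ --- Corollary \ref{corol-3} applied to the Krein system on $[r,\infty)$ gives $\f_r\in S_0(\mathbb{C}^+)$ directly.
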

\begin{proof}
Let us prove this lemma using certain ``orthogonality" argument.
For $\lambda\in \mathbb{R}$, we have
\[
\A_*(r,\lambda)=e^{i\lambda
r}\overline{\A(r,\lambda)},\B_*(r,\lambda)=e^{i\lambda r}
\overline{\B(r,\lambda)}
\]
Therefore, (\ref{e18s3}) yields
\[
|\f (\lambda)|^2=\left| \frac
{\B(r,\lambda)\bar{\A}^{-1}(r,\lambda)+\exp(i\lambda r) \f
_r(\lambda)}{1+ \bar{\B} (r,\lambda)\A^{-1}(r,\lambda)
\exp(i\lambda r) \f_r(\lambda)} \right|^2, \lambda\in \mathbb{R}
\]
The formula
\[
\ln \left(
1-\left|\frac{z+w}{1+\bar{z}w}\right|^2\right)=\ln(1-|z|^2)+\ln(1-|w|^2)-2
\ln |1+\bar{z}w|
\]
and Theorem \ref{trunc-theorem} give
\[
\begin{array}{ll}
\rho_s^2(\f (\lambda))=\rho_s^2(\B (r,\lambda)\A^{-1}
(r,\lambda))+\rho_s^2(\f_r (\lambda))\\
\displaystyle
 +2 \int\limits_{-\infty}^\infty \ln|1+\bar{\B}
(r,\lambda)\A^{-1}(r,\lambda) \exp(i\lambda r)
\f_r(\lambda)|d\lambda\\
=\displaystyle \rho_s^2(\B (r,\lambda)\A^{-1}
(r,\lambda))+\rho_s^2(\f_r (\lambda))+2
\int\limits_{-\infty}^\infty \ln|1+\B_*
(r,\lambda)\A^{-1}(r,\lambda)  \f_r(\lambda)|d\lambda
\end{array}
\]
The last integral is zero because
\[
\B_*(r,\lambda)\A^{-1}(r,\lambda), \A^{-1}(r,\lambda),
\f_r(\lambda)\in B(\mathbb{C}^+),
\]
\[
\B_*(r,\lambda)=\bar{o}(1)\quad {\rm as}\quad \lambda\in
\overline{\mathbb{C}^+}, \,|\lambda|\to \infty,\quad
\B_*(r,iy)=\bar{o}(y^{-1/2}) \quad { \rm as}\quad y\to+\infty
\]
\[
\f_r(\lambda)\in H^2(\mathbb{C}^+), \quad {\rm so} \quad
\f_r(iy)=\bar{o}(y^{-1/2}) \]functions $\B_*(r,\lambda),
\f_r(\lambda)\in L^2(\mathbb{R})$ and the mean-value formula
(Lemma \ref{mean-value}, Appendix) is applicable.
  \end{proof}

The result above is sometimes called ``the layer stripping". That
is because $\rho_s^2(\f)$ is equal to the sum of the terms that
correspond to different intervals of the coordinate $r$. The
formula (\ref{tracing-2}) is called the non-linear Plancherel
Theorem. The both results are well-known in the theory of
orthogonal polynomials.

 It is an important observation that any function from
 $S_0(\mathbb{C}^+)$ gives rise to a certain Krein system.
Indeed, we can show $S_0(\mathbb{C}^+)\subset S(\mathbb{C}^+)$,
where the class $S(\mathbb{C}^+)$ was introduced in the
Definition~\ref{classS}, and the Theorem \ref{s0krein} applies. To
prove this inclusion, notice that $C(x)$ generates operator
$\cal{C}_R$ (see (\ref{volterra-s})) for any $R>0$. By a standard
argument, $\|\cal{C}_R\|_{L^2[0,R]}<1$ for all $0<R<\infty$. Then,
for any $R>0$,
\[
f(\lambda)=\int\limits_0^R C(x)e^{i\lambda x} dx+e^{i\lambda
R}\Phi_R(\lambda), \lambda\in \mathbb{C}^+
\]
with
\[
\Phi_R(\lambda)=\int\limits_0^\infty C(x+R)e^{i\lambda x}dx
\]
Since $\Phi_R(\lambda)\in H^2(\mathbb{C}^+)$ by Paley-Wiener
Theorem and both
\[
f(\lambda),\int\limits_0^R C(x)e^{i\lambda x} dx\in
H^\infty(\mathbb{C}^+)
\]
we get $\Phi_R(\lambda)\in L^\infty(\mathbb{R})$. Therefore,
$\Phi_R(\lambda)\in H^\infty(\mathbb{C}^+)$ and $f(\lambda)\in
S(\mathbb{C}^+)$.

In \cite{Sylv}, the analog of the following Theorem was
established.
\begin{theorem}\label{test-l2}
For Krein system, $A(r)\in L^2(\mathbb{R}^+)$ iff the
corresponding Schur function $f(\lambda)\in S_0(\mathbb{C}^+)$.
\end{theorem}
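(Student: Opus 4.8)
The plan is to prove the two implications separately, with the forward direction following essentially from Theorem~\ref{main-lsz} and Corollary~\ref{corol-3}, and the converse requiring the truncated ``layer stripping'' identity of Lemma~\ref{layer-strip}.

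\textbf{Forward direction.} Suppose $A(r)\in L^2(\mathbb{R}^+)$. By Theorem~\ref{main-lsz} we know $d\sigma\in$~(Szeg\H{o}), $\f(\lambda)\in B(\mathbb{C}^+)$, and by Corollary~\ref{corol-3} that $2\pi\int_0^\infty |A(r)|^2\,dr=-\int_{-\infty}^\infty \ln[1-|\f(\lambda)|^2]\,d\lambda<\infty$. This is exactly condition (\ref{l-2}), so $\f(\lambda)\in S_0(\mathbb{C}^+)$. Nothing more is needed here.

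\textbf{Converse direction.} Now suppose the Schur function $\f(\lambda)$ of the Krein system lies in $S_0(\mathbb{C}^+)$. First, as already shown in the text just before the statement, $S_0(\mathbb{C}^+)\subset S(\mathbb{C}^+)$, so by Theorem~\ref{s0krein} the function $\f(\lambda)$ genuinely corresponds to a Krein system with some coefficient $A(r)\in L^2_{\rm loc}(\mathbb{R}^+)$, and that system is the one we started with. Apply Lemma~\ref{layer-strip} with $r=R$ arbitrary: we need to know $\f_R(\lambda)\in S_0(\mathbb{C}^+)$ in order to invoke (\ref{layer-strip1}). I would argue this as follows. From (\ref{e18s3}) solved for $\f_R$, or directly from (\ref{sasp})/(\ref{energy22}), $\f_R(\lambda)$ is a M\"obius image of $\f(\lambda)$ composed with multiplication by $e^{-i\lambda R}$ and the truncated Wall polynomials $\A(R,\lambda),\B(R,\lambda),\A_*(R,\lambda),\B_*(R,\lambda)$, all of which are entire of exponential type and bounded on $\mathbb{R}$ with $\A^{-1}(R,\lambda)\in B(\mathbb{C}^+)$; hence $\f_R\in B(\mathbb{C}^+)$, and the finiteness of $\rho_s^2(\f_R(\lambda))$ follows from (\ref{layer-strip1}) read in reverse together with Theorem~\ref{trunc-theorem}, since all three of $\rho_s^2(\f)$, $\rho_s^2(\B(R,\lambda)\A^{-1}(R,\lambda))$ are already known to be finite (the first by hypothesis, the second by (\ref{trunc-trace}) because $A\in L^2[0,R]$ locally). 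Thus (\ref{layer-strip1}) gives, for every $R>0$,
\[
2\pi\int\limits_0^R |A(r)|^2\,dr=\rho_s^2(\B(R,\lambda)\A^{-1}(R,\lambda))=\rho_s^2(\f(\lambda))-\rho_s^2(\f_R(\lambda))\leq \rho_s^2(\f(\lambda)),
\]
where the first equality is (\ref{trunc-trace}). Since the right-hand side is a finite constant independent of $R$, letting $R\to\infty$ by monotone convergence yields $2\pi\int_0^\infty |A(r)|^2\,dr\leq \rho_s^2(\f(\lambda))<\infty$, i.e. $A(r)\in L^2(\mathbb{R}^+)$.

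\textbf{Main obstacle.} The delicate point is justifying that $\f_R(\lambda)\in S_0(\mathbb{C}^+)$, i.e. that the truncated identity (\ref{layer-strip1}) of Lemma~\ref{layer-strip} may legitimately be applied at every finite $R$; the lemma as stated already presupposes $\f_r\in S_0(\mathbb{C}^+)$. The cleanest route, which I would take, is to observe that the three quantities entering (\ref{layer-strip1}) are linked by an identity whose middle term $\rho_s^2(\B(R,\lambda)\A^{-1}(R,\lambda))=2\pi\int_0^R|A|^2$ is automatically finite for the locally-$L^2$ coefficient supplied by Theorem~\ref{s0krein}, so that $\rho_s^2(\f_R)$ is finite as soon as $\rho_s^2(\f)$ is; the only genuine work is checking the vanishing of the cross-integral $\int_{-\infty}^\infty \ln|1+\B_*(R,\lambda)\A^{-1}(R,\lambda)\f_R(\lambda)|\,d\lambda$, which is handled exactly as in the proof of Lemma~\ref{layer-strip} via the decay estimates $\B_*(R,iy)=\bar o(y^{-1/2})$, $\f_R(iy)=\bar o(y^{-1/2})$ and the mean-value formula in the Appendix. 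Once the finiteness of $\rho_s^2(\f_R)$ is in hand, the $R$-uniform bound and the passage to the limit are routine.
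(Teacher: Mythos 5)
Your forward direction and the overall architecture of your converse (layer stripping combined with the truncated trace formula (\ref{trunc-trace})) coincide with the paper's. The gap is in your justification that $\f_R(\lambda)\in S_0(\mathbb{C}^+)$, i.e. that Lemma \ref{layer-strip} is applicable at finite $R$. You first propose to read (\ref{layer-strip1}) ``in reverse'', but that identity is only proved under the hypothesis $\f_R\in S_0(\mathbb{C}^+)$, so it cannot be used to manufacture that hypothesis. Your fallback --- rederive the identity and kill the cross-integral $\int_{-\infty}^\infty\ln|1+\B_*(R,\lambda)\A^{-1}(R,\lambda)\f_R(\lambda)|\,d\lambda$ ``exactly as in the proof of Lemma \ref{layer-strip}'' --- is circular: the mean-value formula (Lemma \ref{mean-value}) requires $\B_*\A^{-1}\f_R\in L^1(\mathbb{R})$ together with $\bar o(y^{-1})$ decay on the imaginary axis, and in the paper's proof of Lemma \ref{layer-strip} these are drawn from $\f_r\in L^2(\mathbb{R})$ and $\f_r\in H^2(\mathbb{C}^+)$, i.e. precisely from $\f_r\in S_0(\mathbb{C}^+)$, which is what you are trying to establish for $\f_R$. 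The a priori information you do have ($|\f_R|\le 1$ and $\B(R,\cdot)\in L^2(\mathbb{R})$) only gives $\B_*\A^{-1}\f_R\in L^2(\mathbb{R})$, not $L^1(\mathbb{R})$, so the step fails as written.

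The paper closes this point differently and more cheaply. Solving (\ref{e18s3}) for $\f_R$ gives (\ref{backward}), and on the real line, where $\A_*=e^{i\lambda R}\overline{\A}$ and $\B_*=e^{i\lambda R}\overline{\B}$, one gets the pointwise identity
\[
|\f_R(\lambda)|=\left|\frac{\f(\lambda)-\B(R,\lambda)\A^{-1}(R,\lambda)}{1-\overline{\f(\lambda)}\,\B(R,\lambda)\A^{-1}(R,\lambda)}\right|
=\rho\bigl(\f(\lambda),\,\B(R,\lambda)\A^{-1}(R,\lambda)\bigr),
\]
so that $\rho_s(\f_R)=\rho_s\bigl(\f,\B(R,\cdot)\A^{-1}(R,\cdot)\bigr)<\infty$, because $\rho_s$ is a metric on $S_0(\mathbb{C}^+)$ and both arguments belong to $S_0(\mathbb{C}^+)$ (the second by Theorem \ref{trunc-theorem}, since $A\in L^2[0,R]$). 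No cross-term analysis is needed at this stage; only after $\f_R\in S_0(\mathbb{C}^+)$ is secured does one invoke Lemma \ref{layer-strip}, and the uniform bound $2\pi\int_0^R|A(r)|^2\,dr\le\rho_s^2(\f)$ then follows exactly as in your final display. If you replace your circular step by this pseudohyperbolic-invariance argument, the remainder of your proof is correct.
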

\begin{proof} If $A(r)\in L^2(\mathbb{R}^+)$, then $\f(\lambda)\in S_0(\mathbb{C}^+)$ follows from the
Corollary~\ref{corol-3}. Assume now that $\f(\lambda)\in
S_0(\mathbb{C}^+)$. The observation made right before the Theorem
says there is the corresponding Krein system with $A(r)\in
L^2_{\rm loc}[0,\infty)$. Let us show that actually $A(r)\in
L^2(\mathbb{R}^+)$. Indeed, fix any $R>0$. Then,
\begin{equation}\label{backward}
\f_R(\lambda)=\frac{\f(\lambda)
\A(R,\lambda)-\B(R,\lambda)}{\A_*(R,\lambda)-\f(\lambda)
\B_*(R,\lambda)}
\end{equation}
and
\[
|\f_R(\lambda)|=\left|
\frac{\f(\lambda)-\B(R,\lambda)\A^{-1}(R,\lambda)}{1-\bar{\f}(\lambda)
\B(R,\lambda)\A^{-1}(R,\lambda)}\right|, \lambda\in \mathbb{R}
\]
Therefore,
\[
\rho_s(\f_R(\lambda))=\rho_s(\f(\lambda),
\B(R,\lambda)\A^{-1}(R,\lambda))<\infty
\]
and $\f_R(\lambda)\in S_0(\mathbb{C}^+)$. Now, the Lemma
\ref{layer-strip} is applicable together with
Theorem~\ref{trunc-theorem}:
\begin{equation}
2\pi \int\limits_0^R |A(r)|^2dr =\rho_s^2(\B (R,\lambda)\A^{-1}
(R,\lambda))\leq \rho_s^2(\f)<C
\end{equation}
uniformly in $R$ which means $A(r)\in L^2(\mathbb{R}^+)$.
 \end{proof}
This result has an interesting corollary.
\begin{corollary}\label{convexity}
\begin{itemize}
\item[(i)]{If $\f(\lambda)$ is the Schur function corresponding to
$A_{\f}(r)\in L^2(\mathbb{R}^+)$ and $g(\lambda)\in
B(\mathbb{C}^+)$ then $g\f $ generates the Krein system with
$A_{g\f }\in L^2(\mathbb{R}^+)$ and $\|A_{g\f }\|_2\leq
\|A_{\f}\|_2$.}

\item[(ii)]{The  set of measures $d\sigma$ that correspond to
$A_{d\sigma}(r)\in L^2(\mathbb{R}^+)$ is convex.}
\end{itemize}
\end{corollary}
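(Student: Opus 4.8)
The plan is to derive both items directly from Theorem~\ref{test-l2}, the ``non-linear Plancherel'' identity~(\ref{tracing-2}), and the sub-multiplicativity of the distance $\rho_s$.

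\textbf{Part (i).} Since $A_{\f}(r)\in L^2(\mathbb{R}^+)$, Theorem~\ref{test-l2} gives $\f(\lambda)\in S_0(\mathbb{C}^+)$. Now $g\f\in B(\mathbb{C}^+)$ because $B(\mathbb{C}^+)$ is closed under multiplication, and I claim $\rho_s(g\f)\le \rho_s(\f)$. For $\lambda\in\mathbb{R}$ we have $|g(\lambda)|\le 1$, so $|g(\lambda)\f(\lambda)|\le |\f(\lambda)|$ pointwise, hence $1-|g(\lambda)\f(\lambda)|^2\ge 1-|\f(\lambda)|^2$ and therefore $-\ln\bigl(1-|g(\lambda)\f(\lambda)|^2\bigr)\le -\ln\bigl(1-|\f(\lambda)|^2\bigr)$. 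Integrating over $\mathbb{R}$ gives $\rho_s^2(g\f)\le\rho_s^2(\f)<\infty$, so $g\f\in S_0(\mathbb{C}^+)$. By Theorem~\ref{test-l2} again, the Krein system with Schur function $g\f$ has coefficient $A_{g\f}\in L^2(\mathbb{R}^+)$, and by~(\ref{tracing-2}) applied to both $\f$ and $g\f$,
\[
2\pi\int\limits_0^\infty |A_{g\f}(r)|^2\,dr=\rho_s^2(g\f)\le\rho_s^2(\f)=2\pi\int\limits_0^\infty|A_{\f}(r)|^2\,dr,
\]
which is $\|A_{g\f}\|_2\le\|A_{\f}\|_2$.

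\textbf{Part (ii).} The key point is that the map from Schur functions to measures (equivalently, to Weyl--Titchmarsh functions $F$) interacts simply with convex combinations on the $F$-side. Recall $F=(1-\f)(1+\f)^{-1}$ and, by Theorem~\ref{Bernstein-Szego}, $\Re F(\lambda)=[2\pi\sigma'(\lambda)]^{-1}$ on $\mathbb{R}$ together with the integral representation~(\ref{weyl-titchmarsh}); in particular $\sigma$ (and $\beta$) is an affine function of $F$ via its Herglotz representation. So if $d\sigma_0,d\sigma_1$ correspond to $A_0,A_1\in L^2(\mathbb{R}^+)$ with Weyl functions $F_0,F_1$, then for $t\in[0,1]$ the convex combination $d\sigma_t=(1-t)d\sigma_0+t\,d\sigma_1$ has Weyl function $F_t=(1-t)F_0+tF_1$, which again has positive real part in $\mathbb{C}^+$ and satisfies $F_t(iy)\to 1$ as $y\to+\infty$ (since each $F_j$ does, by~(\ref{feature})), so it is the Weyl function of a genuine Krein system with some coefficient $A_t$. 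It remains to show $A_t\in L^2(\mathbb{R}^+)$. Using $\Re F_t=(1-t)\Re F_0+t\,\Re F_0$—I mean $\Re F_t=(1-t)\Re F_0+t\Re F_1$—and Corollary~\ref{cor-sum2} together with the Szeg\H{o} condition, one checks $\ln[2\pi\sigma_t'(\lambda)]\in L^1+L^2$: indeed $2\pi\sigma_t'=[(1-t)/(2\pi\sigma_0')+t/(2\pi\sigma_1')]^{-1}$, so $[2\pi\sigma_t']^{-1}$ is a convex combination of $[2\pi\sigma_j']^{-1}$, and the concavity of $\ln$ gives $\ln[2\pi\sigma_t']\ge (1-t)\ln[2\pi\sigma_0']+t\ln[2\pi\sigma_1']$ while $\ln[2\pi\sigma_t']\le\ln\max_j[2\pi\sigma_j']$; combined with the $L^1+L^2$ membership of each $\ln[2\pi\sigma_j']$ this bounds $\ln[2\pi\sigma_t']$ from both sides by $L^1+L^2$ functions. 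Then I would run the characterization in reverse: from $\f_t=(1-F_t)(1+F_t)^{-1}$, the bound $-\ln(1-|\f_t|^2)=\ln[2\pi\sigma_t']+2\ln|\A_t|$ with $\ln|\A_t|\in L^1$ (which in turn follows once we know $A_t\in L^2$—so this is slightly circular and must be arranged carefully) shows $\f_t\in S_0(\mathbb{C}^+)$, hence $A_t\in L^2(\mathbb{R}^+)$ by Theorem~\ref{test-l2}.

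\textbf{Main obstacle.} The delicate point in part (ii) is exactly the circularity just flagged: the cleanest statements ($\ln|\A|\in L^1$, formula~(\ref{tracing})) presuppose $A\in L^2$. To break it, the cleanest route is to avoid $\A$ entirely and argue that $\f_t\in S_0(\mathbb{C}^+)$ directly from an estimate on $\int\ln(1-|\f_t|^2)\,d\lambda$ using only $\Re F_t$ and the elementary identity $1-|\f_t|^2=\dfrac{4\Re F_t}{|1+F_t|^2}$, valid on $\mathbb{R}$; then $-\ln(1-|\f_t|^2)=-\ln(4\Re F_t)+2\ln|1+F_t|$, and one bounds the first term using $-\ln(4\Re F_t)=-\ln(2[2\pi\sigma_t']^{-1}\cdot\tfrac12\cdot 4\pi\sigma_t'\cdot\Re F_t)$—more simply, $4\Re F_t=2\cdot 2\pi\sigma_t'{}^{-1}$ is wrong; rather $\Re F_t=(2\pi\sigma_t')^{-1}$, so $-\ln(4\Re F_t)=\ln(2\pi\sigma_t')-\ln 4$, which is in $L^1+L^2$ by the concavity estimate above, while $\ln|1+F_t|$ is controlled as in the proof of Corollary~\ref{cor-sum2} using that $F_t(iy)\to1$ and that $1+F_t$ is outer with $|1+F_t|\ge 1$ on a suitable region. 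Once $\f_t\in S_0(\mathbb{C}^+)$ is established without reference to $A_t$, Theorem~\ref{test-l2} finishes the argument and no circularity remains.
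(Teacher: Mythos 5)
Part (i) of your proposal is correct and is exactly the paper's argument: $|g\f|\le|\f|$ on $\mathbb{R}$ gives $\f g\in S_0(\mathbb{C}^+)$, and Theorem~\ref{test-l2} together with (\ref{tracing-2}) gives the norm inequality.

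Part (ii), however, has a genuine gap, and also a factual error that propagates through your computation. The error: by Theorem~\ref{Bernstein-Szego} (see the corollary after Theorem~\ref{main-lsz}) the boundary relation is $\Re F(\lambda)=2\pi\sigma'(\lambda)$, not $[2\pi\sigma'(\lambda)]^{-1}$; consequently the a.c.\ density of the convex combination is simply $\sigma_t'=(1-t)\sigma_0'+t\sigma_1'$, not the harmonic mean you wrote, and $-\ln(4\Re F_t)=-\ln(8\pi\sigma_t')$, with the opposite sign from what your decomposition assumes. The gap: your repair strategy is to bound the two pieces $-\ln(4\Re F_t)$ and $2\ln|1+F_t|$ \emph{separately} in $L^1(\mathbb{R})$. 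This cannot work in general. Since $\Re F_j=2\pi\sigma_j'$ and, for $A_j\in L^2(\mathbb{R}^+)$, one only knows $\ln(2\pi\sigma_j')\in L^1(\mathbb{R})+L^2(\mathbb{R})$ (Corollary~\ref{cor-sum2}), the function $\ln|1+F_t|$ (which is nonnegative because $\Re F_t>0$, so there is no help from cancellation within that term) is typically comparable to $\ln(2\pi\sigma_t')$ where the density is large, and an $L^2$ tail is not integrable over $\mathbb{R}$; likewise the negative part of $-\ln(8\pi\sigma_t')$ need not be in $L^1$. Only the \emph{sum} is controlled, and controlling the sum requires recombining the two terms. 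Also, your earlier assertion that $F_t$ is automatically the Weyl function of ``a genuine Krein system'' is unjustified as stated; existence of the system comes only after one knows $\f_t\in S_0(\mathbb{C}^+)\subset S(\mathbb{C}^+)$ (Theorems~\ref{s0krein}, \ref{test-l2}).

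The missing idea, which is how the paper argues, is a pointwise convexity estimate that never separates the two logarithms. With $F_t=(1-t)F_0+tF_1$ and $1-|\f_t|^2=4\Re F_t/|1+F_t|^2$, convexity of $|z|^2$ gives $|1+F_t|^2\le(1-t)|1+F_0|^2+t|1+F_1|^2$, whence $1-|\f_t|^2$ is a combination of $1-|\f_0|^2$ and $1-|\f_1|^2$ with nonnegative weights summing to at least one, so $1-|\f_t|^2\ge\min_j\bigl(1-|\f_j|^2\bigr)\ge(1-|\f_0|^2)(1-|\f_1|^2)$ a.e.\ on $\mathbb{R}$. Integrating the logarithm yields $\rho_s^2(\f_t)\le\rho_s^2(\f_0)+\rho_s^2(\f_1)<\infty$, i.e.\ $\f_t\in S_0(\mathbb{C}^+)$, and Theorem~\ref{test-l2} then produces the Krein system with $A_t\in L^2(\mathbb{R}^+)$ whose Weyl function is $F_t$ and whose measure is $d\sigma_t$. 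This single inequality is what breaks the circularity you flagged; no separate integrability of $\ln|1+F_t|$ or of $\ln\sigma_t'$ is needed (nor is it true).
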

\begin{proof}
The first statement is obvious due to Theorem \ref{test-l2}. The
second one follows from the following calculations.
\[
f=\frac{1-F}{1+F}, 1-|f|^2=\frac{4\Re F}{|1+F|^2}
\]
If $F_j$ and $\f_j$ correspond to $d\sigma_j, j=0,1$ and $F_{t}$
and $\f_{t}$-- to \mbox{$td\sigma_1+(1-t)d\sigma_0, t\in [0,1]$},
then
\[
1-|f_{t}|^2= \frac{4(t\Re F_1+(1-t)\Re F_0)}{|1+t F_1+(1-t)
F_0|^2}=(1-|f_1|^2)\frac{t|1+F_1|^2}{|1+t F_1+(1-t) F_0|^2}
\]
\[
+(1-|f_0|^2)\frac{(1-t)|1+F_0|^2}{|1+t F_1+(1-t) F_0|^2} \geq
\min\limits_{j=0,1} (1-|f_j|^2)
\]
due to convexity of $|z|^2, z\in \mathbb{C}$. Therefore
\[
\int\limits_{-\infty}^\infty \ln (1-|f_{t}(\lambda)|^2)d\lambda
\geq \int\limits_{-\infty}^\infty \ln
(1-|f_0(\lambda)|^2)d\lambda+\int\limits_{-\infty}^\infty \ln
(1-|f_1(\lambda)|^2)d\lambda
\]
and $\f_t(\lambda)\in S_0(\mathbb{C}^+)$.
\end{proof}
{\bf Remark.} Clearly, the last estimate is not optimal. As about
the first statement, notice that multiplication $\f_A$ by any
inner function does not change $L^2$ norm of the coefficient
$A(r)$.

The Theorem \ref{trace-formula1} leads to the following natural
question: what can be the singular component of $d\sigma$ if
$A(r)\in L^2(\mathbb{R}^+)$? The answer is given by the following
result which can be regarded as another criteria for $A(r)\in
L^2(\mathbb{R}^+)$. In particular, it says that the singular
component can be any singular measure finite over $\mathbb{R}^+$.

\begin{theorem}\label{l2-test2}
Let $d\sigma$ be a nonnegative measure on $\mathbb{R}$ with
decomposition
 $d\sigma=d\sigma_s+\sigma'(\lambda)d\lambda$, where $\sigma_s(\mathbb{R})<\infty$
  and $\ln [2\pi\sigma'(\lambda)]\in L^1(\mathbb{R})+ L^2(\mathbb{R})$.
Assume also that
\begin{equation}
\exp \left[ \frac{1}{2\pi i} \int\limits_{-\infty}^{\infty} \frac{
\ln [2\pi \sigma'(t)]}{t-\lambda} dt\right]-1=\int\limits_0^\infty
\gamma(x)\exp(i\lambda x)dx=\hat{\gamma}(\lambda)\in
H^2(\mathbb{C}^+)
\end{equation}
Then $d\sigma$ generates the Krein system with $A(r)\in
L^2(\mathbb{R}^+)$ and {\rm (\ref{miracle})} holds true.
\end{theorem}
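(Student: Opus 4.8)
The plan is to reduce the statement, via Theorem~\ref{test-l2}, to showing that the Schur function $\f(\lambda)$ of the Krein system generated by $d\sigma$ lies in $S_0(\mathbb C^+)$; once this is known, Theorem~\ref{test-l2} gives $A(r)\in L^2(\mathbb R^+)$ and Theorem~\ref{trace-formula1} (now applicable) delivers (\ref{miracle}). First I would record the easy consequences of the hypotheses. Since $1/(1+\lambda^2)\in L^1(\mathbb R)\cap L^2(\mathbb R)$, the assumption $\ln[2\pi\sigma']\in L^1(\mathbb R)+L^2(\mathbb R)$ makes $\int\ln\sigma'(\lambda)(1+\lambda^2)^{-1}d\lambda$ absolutely convergent, so in particular $>-\infty$, i.e.\ $d\sigma\in$\,(Szeg\H o) by Theorem~\ref{sdmt}; moreover $\Pi_\alpha$ from (\ref{pisub}) is well defined, by hypothesis $\Pi_\alpha^{-1}=1+\hat\gamma$ with $\hat\gamma\in H^2(\mathbb C^+)$ and $\Pi_\alpha^{-1}(iy)\to1$, and $|\Pi_\alpha(\lambda)|^2\sigma'(\lambda)=(2\pi)^{-1}$ a.e. Using $|\Pi_\alpha^{-1}|\le 1+|\hat\gamma|$ and $\ge(1-|\hat\gamma|)_+$ on $\mathbb R$ one gets $2\pi\sigma'(\lambda)-1\in L^1(\mathbb R)+L^2(\mathbb R)$, so the ``moments'' produced by $d\sigma-d\sigma_0$ through (\ref{e2s2}) form an $L^2_{\rm loc}$ accelerant $H$ with $I+{\cal H}_r>0$; hence by Theorem~\ref{t1s2} and the Section~\ref{four}--\ref{four} construction $d\sigma$ is indeed the measure of a Krein system with coefficient $A(r)\in L^2_{\rm loc}(\mathbb R^+)$, and I call its Schur function $\f$.

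The candidate for the ``limit Fourier transform of $A$'' is the function $\tilde A(\lambda):=1-\Pi_\alpha(\lambda)\chi_{E^c}(\lambda)$, where $E$ is a Lebesgue-null carrier of $d\sigma_s$. Using $|\Pi_\alpha|^2\sigma'=(2\pi)^{-1}$ and $\sigma_s(\mathbb R)<\infty$ one computes $\|\tilde A\|_{L^2(d\sigma)}^2=\tfrac1{2\pi}\int|\Pi_\alpha^{-1}(\lambda)-1|^2\,d\lambda+\sigma_s(\mathbb R)=\int_0^\infty|\gamma(x)|^2dx+\sigma_s(\mathbb R)<\infty$, which is exactly the number that must come out as $\int_0^\infty|A(r)|^2dr$. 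The link with $A$ is the identity $1-P_*(R,\lambda)=\int_0^R A(t)P(t,\lambda)\,dt=\bigl(\cal O(A\chi_{[0,R]})\bigr)(\lambda)$ of (\ref{integral-of-p}), which by the isometry of Theorem~\ref{theorem2s2} gives $\int_0^R|A(t)|^2dt=\|1-P_*(R,\cdot)\|_{L^2(d\sigma)}^2$, a quantity nondecreasing in $R$. So the whole problem is to bound this uniformly in $R$ and to show it tends to $\|\tilde A\|_{L^2(d\sigma)}^2$.

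Here is where the work lies, and where I expect the main obstacle. Since $d\sigma\in$\,(Szeg\H o) there is a subsequence $R_n\to\infty$ with $P(R_n,i)\to0$, and by Lemma~\ref{converge} one has $|P_*(R_n,\lambda)|\to|\Pi(\lambda)|$, indeed $P_*(R_n,\lambda)\to\Pi_\gamma(\lambda)$ uniformly on $\mathbb C^+$ for \emph{some} phase $\gamma$. The delicate point is to see that $\gamma=\alpha$: the functions $P_*(R,\lambda)^{-1}-1$ have inverse Fourier transform supported on $\mathbb R^+$, so any $H^2(\mathbb C^+)$ subsequential limit must lie in the same one-sided class, and $\Pi_\gamma^{-1}-1$ belongs to $H^2(\mathbb C^+)$ only when $\gamma=\alpha$ (a nonzero constant cannot be added within $H^2$) — this is precisely the content of the hypothesis $\Pi_\alpha^{-1}-1\in H^2(\mathbb C^+)$. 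Combining this with the inequality (\ref{e6s3}), which on the subsequence forces $P_*(R_n,\cdot)^{-1}$ to be dominated by $|\Pi^{-1}|$ with equality of moduli in the limit, with the remark after Lemma~\ref{asymp-l2} (the singular part of the norm is asymptotically negligible), and with $\sigma_s(\mathbb R)<\infty$, one obtains $\|1-P_*(R_n,\cdot)\|_{L^2(d\sigma)}^2\to\sigma_s(\mathbb R)+\int_0^\infty|\gamma(x)|^2dx$; monotonicity then yields the uniform bound and hence $A\in L^2(\mathbb R^+)$, after which (\ref{miracle}) is Theorem~\ref{trace-formula1}.

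An essentially equivalent route, which I would carry out in parallel as a check, stays on the Schur-function side: introduce $\psi(\lambda)=(1+\f(\lambda))\Pi_\alpha^{-1}(\lambda)$, note that $1+\f$ has positive real part (hence is outer) and $1+\f(iy)\to1$ by (\ref{feature}), so $\psi(iy)\to1$, while on $\mathbb R$ the identity $1-|\f(\lambda)|^2=2\pi\sigma'(\lambda)|1+\f(\lambda)|^2$ (from $\Re F=2\pi\sigma'$ in (\ref{weyl-titchmarsh}) and $1+F=2(1+\f)^{-1}$) gives $|\psi(\lambda)|^2=1-|\f(\lambda)|^2\le1$; by the Smirnov maximum principle $\psi\in B(\mathbb C^+)$ and it is outer, so $-i\log\psi$ is a Herglotz function vanishing at $i\infty$ with representing measure $\nu\ge0$, and the boundary relation $-\ln|\psi(\lambda)|=\pi\,\nu'_{ac}(\lambda)$ yields $\int\ln(1-|\f(\lambda)|^2)\,d\lambda=-2\pi\,\nu_{ac}(\mathbb R)$. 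Thus $\f\in S_0(\mathbb C^+)$ amounts to $\nu_{ac}(\mathbb R)<\infty$, and one must again trace this back to the hypotheses — the ``bulk'' of $\nu_{ac}$ being governed by $\ln[2\pi\sigma']\in L^1+L^2$ and the remaining mass, localized where $\sigma'$ is large and on the singular support, being $\sigma_s(\mathbb R)+\tfrac1{2\pi}\|\Pi_\alpha^{-1}-1\|_{L^2}^2$. Either way, the passage to $r=\infty$ together with the identification of the correct outer limit is the crux, and the $H^2$ hypothesis is exactly the ingredient that makes it go through.
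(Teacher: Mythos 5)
Your reduction is the right one (show the Schur function lies in $S_0(\mathbb{C}^+)$, equivalently bound $\int_0^R|A|^2\,dr$ uniformly, then invoke Theorems \ref{test-l2} and \ref{trace-formula1}), and your computation of the target value $\sigma_s(\mathbb{R})+\int_0^\infty|\gamma(x)|^2dx$ is correct. But the crux step is not actually carried out, and the tools you cite cannot carry it. The identity $\int_0^R|A|^2dr=\|1-P_*(R,\cdot)\|^2_{L^2(d\sigma)}=\frac{1}{2\pi}\|P_*(R,\cdot)^{-1}-1\|^2_{L^2(\mathbb{R})}$ means that the uniform bound you need is exactly an unweighted $L^2(\mathbb{R})$ (i.e.\ $H^2$) bound on $P_*(R,\cdot)^{-1}-1$. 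Lemma \ref{converge}, Lemma \ref{asymp-l2} and (\ref{e6s3}) only give control in the weighted space $L^2(d\lambda/(1+\lambda^2))$ — that is where the a priori bound $\int d\sigma_r(t)/(1+t^2)<C$ lives — and weighted convergence says nothing about the unweighted norm. Your identification of the phase ("any $H^2(\mathbb{C}^+)$ subsequential limit must lie in the same one-sided class, hence $\gamma=\alpha$") presupposes precisely the uniform $H^2$ bound on $P_*(R_n,\cdot)^{-1}-1$ that is equivalent to the conclusion; and even granting weak convergence, Fatou-type semicontinuity gives a lower bound $\liminf\|\gamma_{R_n}\|_2\ge\|\gamma\|_2$, which is the wrong direction. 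The parallel route via $\psi=(1+\f)\Pi_\alpha^{-1}$ has the same hole: reducing $\f\in S_0$ to finiteness of $\nu_{ac}(\mathbb{R})$ requires asymptotics of $\psi(iy)$ to order $y^{-1}$ (as in the proof of (\ref{tracing})), which you do not derive and which does not follow from the stated hypotheses without further work.

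For comparison, the paper closes this gap by a different device: it approximates $d\sigma$ by $d\sigma_{n,\epsilon}=d\sigma_n+d\mu_\epsilon$, where $d\sigma_n$ has finitely many jumps and $2\pi\mu'_\epsilon=|1+\hat\gamma(\cdot+i\epsilon)|^2$ is smooth; for each approximant it verifies $\ln(1-|\f|^2)\in L^1(\mathbb{R})$ directly by writing $d\sigma_{n,\epsilon}$ as a convex combination and using Corollary \ref{convexity}(ii), so Theorem \ref{test-l2} gives $A_{n,\epsilon}\in L^2(\mathbb{R}^+)$; then the already-proved identity (\ref{miracle}) applied to the \emph{approximants} yields the uniform bound $\|A_{n,\epsilon}\|_2^2\le C$, and the conclusion follows from $A_{n,\epsilon}\to A$ in $L^2_{\rm loc}$ (Lemma \ref{l2regular}). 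If you want to salvage your direct approach, you would have to supply an independent a priori estimate on $\|P_*(R,\cdot)^{-1}-1\|_{L^2(\mathbb{R})}$; some form of regularization of $d\sigma$ seems unavoidable, which is why the paper's argument takes the shape it does.
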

\begin{proof}
We have
\begin{equation}\label{derivative-l2}
2\pi
\sigma'(\lambda)=1+\hat{\gamma}+\bar{\hat{\gamma}}+|\hat{\gamma}|^2
\end{equation}
Take $H(x)$ Hermitian such that
\begin{equation}
H(x)=(2\pi)^{-1}\left[\overline{\gamma(x)}+\int\limits_0^\infty
\overline{\gamma(x+u)}{\gamma}(u)du\right]+\int_{\mathbb{R}}
\exp(i\lambda x)d\sigma_s(\lambda)\label{sst-refl2}
\end{equation}
for $x>0$ and define $\beta$ by (\ref{betaa}). Then, it is not
difficult to check that the formula (\ref{e2s2}) holds.
 Moreover, $H(x)\in L^2_{\rm loc}(\mathbb{R})$ is an accelerant and
generates some $A(r)\in L^2_{\rm loc}(\mathbb{R})$. That follows
from (\ref{e1s2n1}), (\ref{derivative-l2}), and the uniqueness
theorem for analytic functions.

Let us show that $A(r)\in L^2(\mathbb{R}^+)$. Indeed, take
$d\sigma_n$ such that it has only finite number of jumps and
\[
\int_{\mathbb{R}} \exp(i\lambda x)d\sigma_n(\lambda)\to
\int_{\mathbb{R}} \exp(i\lambda x)d\sigma_s(\lambda)
\]
uniformly in $x\in [0,R]$ with any fixed $R>0$.

 Also, take purely a.c. $d\mu_\epsilon$ so that
\[
2\pi \mu_\epsilon'(\lambda)=|1+\hat{\gamma}_\epsilon(\lambda)|^2,
\hat\gamma_\epsilon(\lambda)=\hat\gamma(\lambda+i\epsilon),
\epsilon>0,
\]
Take $d\sigma_{n,\epsilon}=d\sigma_n+d\mu_\epsilon$. Then, the
corresponding accelerants $H_{n,\epsilon}(x)\to H(x)$ and
$A_{n,\epsilon}(r)\to A(r)$ in $L^2[0,R]$ for any fixed $R>0$ as
long as $n\to\infty, \epsilon\to 0$.

Let us show that Theorem \ref{test-l2} yields $A_{n,\epsilon}\in
L^2(\mathbb{R}^+)$. Indeed, fix $n$ and $\epsilon$. Then, we can
write $d\sigma_{n,\epsilon}$ as a convex combination
\[
d\sigma_{n,\epsilon}=t\left[\frac{d\sigma_n}{t}+\frac{d\lambda}{2\pi}\right]+(1-t)\left[
\frac{\mu'_\epsilon(\lambda)}{1-t}-\frac{t}{(1-t)2\pi}\right]d\lambda
\]
and $t\in (0,1)$. If we can show that both measures in this convex
combination generate square summable $A(r)$, then the second claim
of the Corollary \ref{convexity} finishes the argument. We will
apply Theorem \ref{test-l2} to
\[
\frac{d\sigma_n}{t}+\frac{d\lambda}{2\pi}\]
and
\begin{equation}
\nu(\lambda)d\lambda,
\,\nu(\lambda)=\frac{\mu'_\epsilon(\lambda)}{1-t}-\frac{t}{(1-t)2\pi}
\label{second-measure}
\end{equation}
The first measure gives rise to
\[
F(\lambda)=1-2it^{-1}\int_{\mathbb{R}}\frac{d\sigma_n(t)}{t-\lambda}
\]
This representation follows from the formula (\ref{e1s3-1}) and
$F(iy)\to 1$ as $y\to +\infty$. Since
\begin{equation}
1-|\f|^2=\frac{4\Re F}{|1+F|^2} \label{fF}
 \end{equation}
 and $d\sigma_n$ has
only finite number of jumps
\[
1-|\f|^2=1+O(|\lambda|^{-2}) \] as $|\lambda|\to\infty$. The local
singularities of $\ln (1-|\f|^2)$ are integrable and thus we have
$\ln (1-|\f|^2)\in L^1(\mathbb{R})$ and the Theorem \ref{test-l2}
can be applied.

Let us show that Theorem \ref{test-l2} can also be applied to the
measure (\ref{second-measure}) as long as $t$ is chosen properly.
Indeed, we can always take $t$ so small that the density
$\nu(\lambda)$ of this measure is strictly positive. We have
\[
F(\lambda)=1+i\int_{\mathbb{R}}
\frac{\hat\gamma_\epsilon(s)+\overline{\hat\gamma_\epsilon(s)}+
\hat\gamma_\epsilon(s)\overline{\hat\gamma_\epsilon(s)}}{\pi(1-t)(\lambda-s)}ds
\]
Since $\hat\gamma_\epsilon$ is infinitely smooth, $F$ is
continuous up to the boundary and $\ln(1-|\f|^2)$ is locally
integrable by (\ref{fF}). Therefore, we are left with showing that
$|\f(\lambda)|\to 0$ if $\lambda\in \mathbb{R}, \lambda\to\infty$
and that $|\f(\lambda)|\in L^2(\mathbb{R})$. Since
$\hat\gamma_\epsilon(s)\in L^2(\mathbb{R})$ and it decays at
infinity, the simple properties of Hilbert transform imply
$F(\lambda)-1\in L^2(\mathbb{R})$ and $F(\lambda)-1\to 0$ as
$\lambda\to\infty$. Since
\[
\f=\frac{1-F}{1+F}
\]
we have $\ln(1-|\f|^2)\in L^1(\mathbb{R})$ and Theorem
\ref{test-l2} applies.

The Krein system with coefficient $A_{n,\epsilon}$ has
$\Pi_{\epsilon}(\lambda)$--function with inverse
\[
\Pi_{\epsilon}^{-1}(\lambda)=1+\hat\gamma_{\epsilon}(\lambda)
\]
 Then, notice that (\ref{miracle}) gives
a bound $\|A_{n,\epsilon}\|_2<C$ uniformly in $n$ and $\epsilon$
and $A(r)\in L^2(\mathbb{R}^+)$ because $A_{n,\epsilon}(r)\to
A(r)$ in $L^2[0,R]$ with any $R>0$. We get (\ref{miracle}) and the
function $\gamma(x)$ coincides with the one introduced in
Theorem~\ref{trace-formula1}.
\end{proof}

 Later on, we will need
the following bound which sharpens the Lemma  \ref{asymp-l2}.
\begin{lemma}
If $A(r)\in L^2(\mathbb{R}^+)$, then
\begin{equation}
\int\limits_{-\infty }^{\infty }\frac{1}{\lambda ^{2}+1}\left|
\frac{P_{\ast }(r,\lambda )}{\Pi_\alpha (\lambda)}-1\right|
^{2}d\lambda + \int\limits_{-\infty }^{\infty }\frac{1}{\lambda
^{2}+1}\left| P_{\ast }(r,\lambda )\right| ^{2}\mathit{d\sigma
}_{s} (\lambda )<C\int\limits_r^\infty |A(s)|^2ds
\label{decay-improved}
\end{equation}
\end{lemma}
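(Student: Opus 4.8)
The plan is to revisit the three ingredients that already appear in the proof of Lemma~\ref{asymp-l2} and to keep track of the $r$--dependence quantitatively rather than merely passing to the limit. Recall that the left-hand side of (\ref{decay-improved}) was shown, via the identity (\ref{one})--(\ref{two})--(\ref{star}) together with the remark after Lemma~\ref{asymp-l2}, to be controlled by the defect
\[
\delta(r)=\frac12-\int\limits_{-\infty}^\infty \frac{|P(r,\lambda)|^2}{\lambda^2+1}\,d\sigma(\lambda)+\left(\frac{\pi P_*(r,i)}{\Pi_\alpha(i)}-\pi\right)\text{--type terms},
\]
so the whole task reduces to estimating, for each fixed $r$, how far $P(r,i)$ is from $0$ and how far $\|P(r,\lambda)/(\lambda+i)\|^2_{2,\sigma}$ is from $|\Pi_\alpha(i)|^{-2}\cdot\frac12$. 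First I would use the reproducing-kernel identity (\ref{reproduce0}) at $\lambda'=\lambda=i$ to write $K_r(i,i)=\int_0^r|P(x,i)|^2dx$, and the Christoffel--Darboux/variational formula (\ref{minimum}) together with Lemma~\ref{sdl1} to get $K_r(i,i)\uparrow K_\infty(i,i)=\frac12|\Pi_\alpha(i)|^{-2}\cdot 2\Im i=\|\,(\lambda+i)^{-1}\Pi_\alpha^{-1}\|^{-2}$; hence $|\Pi_\alpha(i)|^{-2}-2K_r(i,i)=2\int_r^\infty|P(x,i)|^2dx$.

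Second, I would bound $\int_r^\infty |P(x,i)|^2 dx$ by $C\int_r^\infty|A(s)|^2ds$. Here is where (\ref{integral-of-p}) enters: $P_*(r,\lambda)=1-\int_0^r A(t)P(t,\lambda)dt$, so $\tilde A(\lambda)=\int_0^\infty A(t)P(t,\lambda)dt$ (which lies in $L^2(d\sigma)$ by Theorem~\ref{theorem2s2}), and the tail $\int_r^\infty A(t)P(t,\lambda)dt$ has $L^2(d\sigma)$-norm equal to $\|A\chi_{[r,\infty)}\|_{L^2(\mathbb{R}^+)}=\|A\|_{L^2[r,\infty)}$ by the isometry property (\ref{plancherel})/(\ref{sect2eq1}). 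Since $P(t,i)=\langle\,\cdot\,\rangle$ can be recovered from $P_*(r,i)-1=-\int_0^r A(t)P(t,i)dt$ and $|P(t,i)|=|P_*(t,i)|\le\exp(\int_0^t|A|)$ (Lemma~\ref{gronwall-p*}), the weighted tail $\int_r^\infty |P(x,i)|^2\,dx$ is finite and, more importantly, controlled: I would first note $\int_0^\infty|P(x,i)|^2dx=K_\infty(i,i)<\infty$ and then exploit the differential equation to show $\frac{d}{dr}\int_r^\infty|P|^2dx = -|P(r,i)|^2$ and couple it with the estimate on $\tilde A$ restricted to $[r,\infty)$ to produce $\int_r^\infty|P(x,i)|^2dx\le C\int_r^\infty|A(s)|^2ds$. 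The cleanest route is probably: apply Corollary~\ref{scale-property} to $f=A\chi_{[r,\rho]}$ and Lemma~\ref{gronwall-p*}, exactly as in the proof of the Lemma right before Theorem~\ref{trunc-theorem}, to get $\int_r^\rho|A(x)|^2dx=\int_{-\infty}^\infty|P_*(\rho,\lambda)-P_*(r,\lambda)|^2\,d\sigma(\lambda)$; letting $\rho\to\infty$ and using $P_*(\rho,\lambda)\to\Pi_\alpha(\lambda)$ pointwise plus weak convergence of the relevant measures gives $\int_r^\infty|A|^2=\|\Pi_\alpha-P_*(r,\cdot)\|^2_{L^2(d\sigma)}$ (when $E=\emptyset$; with the obvious $\sigma_s$-term added in general), which already contains the singular-measure piece of (\ref{decay-improved}) and, via $(\lambda^2+1)^{-1}\le 1$ and $|P_*(r,\lambda)|=|P(r,\lambda)|$ on $\mathbb{R}$, controls the $d\sigma_s$-integral.

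Third, for the absolutely continuous piece $\int_{\mathbb{R}}\frac{1}{\lambda^2+1}\big|\frac{P_*(r,\lambda)}{\Pi_\alpha(\lambda)}-1\big|^2d\lambda$, I would follow the computation (\ref{one}) verbatim but not discard the $r$-dependence: expanding the square gives $2\pi\int\frac{|P_*(r,\lambda)|^2}{\lambda^2+1}d\sigma(\lambda)-2\pi\int\frac{|P_*(r,\lambda)|^2}{\lambda^2+1}d\sigma_s(\lambda)+\int\frac{d\lambda}{\lambda^2+1}-2\Re\frac{\pi P_*(r,i)}{\Pi_\alpha(i)}$, and by (\ref{two}) the last term is $-2\pi\Re\frac{P_*(r,i)}{\Pi_\alpha(i)}$. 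Using $\int\frac{|P_*(r,\lambda)|^2}{\lambda^2+1}d\sigma(\lambda)=K_r(i,i)\cdot(\text{const})$ — more precisely the identity behind (\ref{star}), which gives $2\pi\int\frac{|P_*(r,\lambda)|^2}{\lambda^2+1}d\sigma=2K_r(i,i)\cdot(\text{normalizing factor})$ — and the bound $|\Pi_\alpha(i)|^{-2}-2K_r(i,i)\le C\int_r^\infty|A|^2$ from the second step, together with $|P_*(r,i)-\Pi_\alpha(i)|$ bounded by the same tail (again from $P_*(r,i)-\Pi_\alpha(i)=\int_r^\infty A(t)P(t,i)dt$ and Cauchy--Schwarz against $\int_r^\infty|P(t,i)|^2dt$), all error terms collapse to $\le C\int_r^\infty|A(s)|^2ds$. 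Adding the two pieces yields (\ref{decay-improved}).

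The main obstacle I anticipate is the bookkeeping around the singular measure and the normalizing constants: one has to be careful that the pointwise limit $P_*(\rho,\lambda)\to\Pi_\alpha(\lambda)$ combined with only weak-$(*)$ convergence of $(1+\lambda^2)^{-1}d\sigma_\rho$ actually delivers $\int_r^\infty|A|^2=\|\Pi_\alpha-P_*(r,\cdot)\|^2_{L^2(d\sigma)}$ with the correct treatment of $d\sigma_s$ (this is essentially the content of Theorem~\ref{trace-formula1} and Lemma~\ref{auxil-1}, so it can be invoked), and that the constant $C$ in (\ref{decay-improved}) genuinely does not depend on $r$ — which it does not, because every estimate above is in terms of global quantities like $\|A\|_{L^2(\mathbb{R}^+)}$ and $|\Pi_\alpha(i)|$ that are finite once $A\in L^2(\mathbb{R}^+)$. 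Everything else is a quantitative rerun of the already-established Lemma~\ref{asymp-l2} and the $L^2$-theory of Section~\ref{sect-l2}.
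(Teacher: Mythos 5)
Your step 2 already contains the decisive identity, but the chain you actually run through steps 1 and 3 does not hold up. Concretely: (a) the assertion that $\int\frac{|P_*(r,\lambda)|^2}{\lambda^2+1}d\sigma(\lambda)$ is ``$K_r(i,i)$ times a constant'' is false; the exact relation is (\ref{auxil-decay})/(\ref{est-l2-2}), which besides $K_r(i,\cdot)$ contains cross terms involving $P(r,i)$ and $\int\frac{P(r,\lambda)}{\lambda+i}\overline{K_r(i,\lambda)}\,d\sigma$, and controlling these is precisely the technical heart of the paper's proof: it requires the tail bounds $|P(r,i)|^2\le C\int_r^\infty|A|^2$ and $\int_r^\infty|P(s,i)|^2ds\le C\int_r^\infty|A|^2$ ((\ref{est-l2-1}), (\ref{residue})), which you never establish. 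Your sketch for the latter (``$\frac{d}{dr}\int_r^\infty|P|^2=-|P(r,i)|^2$, couple with $\tilde A$'') is not an argument, and your Cauchy--Schwarz bound for $|P_*(r,i)-\Pi_\alpha(i)|$ presupposes exactly that missing bound, so the reasoning is circular. (b) The constant in step 1 is wrong: Christoffel--Darboux at $\lambda=\mu=i$ plus Lemma \ref{converge} give $K_\infty(i,i)=|\Pi_\alpha(i)|^2/2$, not $|\Pi_\alpha(i)|^{-2}$, and $2\int_r^\infty|P(x,i)|^2dx=|\Pi_\alpha(i)|^2-|P_*(r,i)|^2+|P(r,i)|^2$. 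The paper breaks the circle differently: it first proves (\ref{addl-decay}) by Cauchy--Schwarz against the Christoffel--Darboux identity and division by $|\Pi_\alpha-P_*|$, then deduces (\ref{residue}) and (\ref{est-l2-1}) from the ODE, and only then reruns Lemma \ref{asymp-l2} quantitatively via (\ref{diag-kr}).

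The repair is in your own step 2, which makes steps 1 and 3 unnecessary. By (\ref{integral-of-p}) and Theorem \ref{theorem2s2}, $\cal{O}(A\chi_{[0,r]})=1-P_*(r,\cdot)$, while the proof of Theorem \ref{trace-formula1} (available at this point) identifies $\cal{O}(A)$ with $1-\Pi_\alpha$ relative to the a.c. part of $d\sigma$ and with $1$ relative to $d\sigma_s$; hence
\[
\int_r^\infty|A(s)|^2ds=\bigl\|\cal{O}(A\chi_{[r,\infty)})\bigr\|^2_{2,\sigma}
=\int_{-\infty}^\infty|P_*(r,\lambda)-\Pi_\alpha(\lambda)|^2\sigma'(\lambda)\,d\lambda
+\int_{-\infty}^\infty|P_*(r,\lambda)|^2\,d\sigma_s(\lambda).
\]
Since $|\Pi_\alpha(\lambda)|^{-2}=2\pi\sigma'(\lambda)$ a.e. and $(\lambda^2+1)^{-1}\le 1$, the first term equals $(2\pi)^{-1}\int|P_*/\Pi_\alpha-1|^2d\lambda$ and dominates the first integral in (\ref{decay-improved}), and the second term dominates the $d\sigma_s$-integral, so (\ref{decay-improved}) holds with $C=2\pi$. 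Carried out this way, your route is genuinely different from the paper's (pointwise estimates in $\mathbb{C}^+$ plus a quantitative rerun of Lemma \ref{asymp-l2}) and even gives a weight-free bound, at the price of invoking Theorem \ref{trace-formula1}; but as written, with the a.c. part routed through the flawed expansion above, the proposal has a genuine gap.
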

\begin{proof}
First, we notice that
\[
\Pi_\alpha(\lambda)-P_*(r,\lambda)=-\int\limits_r^\infty
A(s)P(s,\lambda)ds, \lambda\in \mathbb{C}^+
\]
Therefore,
\[
|\Pi_\alpha(\lambda)-P_*(r,\lambda)|^2 \leq
\left(\int\limits_r^\infty |A(s)|^2ds\right)\cdot\left(
\int\limits_r^\infty |P(s,\lambda)|^2ds\right)
\]
\[
\leq \frac{1}{2\Im\lambda}\left(\int\limits_r^\infty
|A(s)|^2ds\right)\cdot\left[
|\Pi_\alpha(\lambda)|^2-|P_*(r,\lambda)|^2\right]
\]
by Cauchy-Schwarz and analog of (\ref{e101s2}). For the last
expression, we use
\[
|\Pi_\alpha|^2-|P_*|^2\leq |\Pi_\alpha-P_*|\cdot
\left(|\Pi_\alpha|+|P_*|\right)
\]
and then
\begin{equation}
|\Pi_\alpha(\lambda)-P_*(r,\lambda)|\leq C(\lambda)
\int\limits_r^\infty |A(s)|^2ds, \Im \lambda>0 \label{addl-decay}
\end{equation}
The last estimate also implies
\[
\int\limits_r^\infty
|P(s,\lambda)|^2ds=(2\Im\lambda)^{-1}\left(|\Pi_\alpha(\lambda)|^2-|P_*(r,\lambda)|^2\right)
\]
\begin{equation}
< C(\lambda)
|\Pi_\alpha(\lambda)-P_*(r,\lambda)|<C(\lambda)\int\limits_r^\infty
|A(s)|^2ds\label{residue}
\end{equation}
  For $P(r,\lambda)$, we have
\[
P^2(r,\lambda)=-2\int\limits_r^\infty
P'(s,\lambda)P(s,\lambda)ds=-2\int\limits_r^\infty \left[i\lambda
P^2(s,\lambda)-\overline{A(s)}P(s,\lambda)P_*(s,\lambda)\right]ds,
\lambda\in \mathbb{C}^+
\]
Therefore, for $\lambda\in \mathbb{C}^+$
\begin{equation}
|P(r,\lambda)|^2\leq C(\lambda)\int\limits_r^\infty
|P(s,\lambda)|^2ds+C(\lambda)\left[\int\limits_r^\infty
|A(s)|^2ds\cdot \int\limits_r^\infty
|P(s,\lambda)|^2ds\right]^{1/2}
\end{equation}
\begin{equation}
 \leq
C(\lambda)\int\limits_r^\infty |A(s)|^2ds \label{est-l2-1}
\end{equation}
where the last inequality follows from (\ref{residue}).

Now, let us improve estimates from Lemma \ref{auxil-1}. From
(\ref{auxil-decay}), we have
\[
|P_*(r,i)|^2\int\limits_{-\infty}^\infty
\frac{|P_*(r,\lambda)|^2}{\lambda^2+1}d\sigma(\lambda) =
\frac{|P_*(r,i)|^2-|P(r,i)|^2}{2}+|P(r,i)|^2\int\limits_{-\infty}^\infty
\frac{|P(r,\lambda)|^2}{\lambda^2+1}d\sigma(\lambda)
\]
\begin{equation}
+2\Re \left[i \int\limits_{-\infty}^\infty
\frac{P(r,\lambda)\overline{P(r,i)}}{\lambda+i}
\overline{K_r(i,\lambda)}d\sigma(\lambda)\right]\label{est-l2-2}
\end{equation}
For real $\lambda$, $|P(r,\lambda)|=|P_*(r,\lambda)|$. Therefore,
\begin{equation}
\int\limits_{-\infty}^\infty
\frac{|P_*(r,\lambda)|^2}{\lambda^2+1}d\sigma(\lambda) = \frac 12
+2\Bigl[|P_*(r,i)|^2-|P(r,i)|^2\Bigr]^{-1} \Re \left[i
\overline{P(r,i)} \int\limits_{-\infty}^\infty
\frac{P(r,\lambda)}{\lambda+i}
\overline{K_r(i,\lambda)}d\sigma(\lambda)\right]\label{reduction-ser}
\end{equation}
Using the representation \[
 K_r(i,\lambda)=K_\infty
(i,\lambda)-\displaystyle \int\limits_r^\infty
P(s,\lambda)\overline{P(s,i)}ds\] and the property of reproducing
kernel, we get
\[
\int\limits_{-\infty}^\infty \frac{P(r,\lambda)}{\lambda+i}
\overline{K_r(i,\lambda)}d\sigma(\lambda)=\frac{P(r,i)}{2i}-\int\limits_{-\infty}^\infty
\frac{P(r,\lambda)}{\lambda+i} \int\limits_r^\infty
\overline{P(s,\lambda)}{P(s,i)}dsd\sigma(\lambda)
\]
The last integral can be bounded by Cauchy-Schwarz and Theorem
\ref{theorem2s2}  as follows
\[
\left|\int\limits_{-\infty}^\infty \frac{P(r,\lambda)}{\lambda+i}
\int\limits_r^\infty
\overline{P(s,\lambda)}{P(s,i)}dsd\sigma(\lambda)\right|
<C(\lambda)\left[\int\limits_r^\infty |P(s,i)|^2ds\right]^{1/2}
<C(\lambda)\left[ \int\limits_r^\infty |A(s)|^2ds \right]^{1/2}
\]
where we used (\ref{residue}) for last inequality. Estimate
(\ref{est-l2-1}) and (\ref{reduction-ser}) yield
\begin{equation}
\int\limits_{-\infty}^\infty
\frac{|P_*(r,\lambda)|^2}{\lambda^2+1}d\sigma(\lambda)=\frac 12
+O\left[\int\limits_r^\infty |A(s)|^2ds\right]\label{diag-kr}
\end{equation}
Now, repeating the proof of Lemma \ref{asymp-l2} with estimates
(\ref{addl-decay}) and (\ref{diag-kr}), we obtain
(\ref{decay-improved}).
\end{proof}
The estimates in the last Lemma are not sharp but good enough for
us.

{\bf Remarks and historical notes.} The results in this section
are partially new. For the Helmholtz equation, analog of Lemma
\ref{layer-strip} was obtained in \cite{Sylv} where the nonlinear
Fourier transform was introduced. In our case, this transform is
given by the map $\cal{F}: A(r)\in
L^2(\mathbb{R}^+)\longrightarrow \f(\lambda)\in
S_0(\mathbb{C}^+)$. In \cite{Sylv}, the space $S_0(\mathbb{C}^+)$
is studied in detail as well as properties of the nonlinear
Fourier transform. For example, its homeomorphic property is
proved by means of weak convergence argument. See also \cite{Tao}.
For the Schr\"odinger operators and Jacobi matrices, the analysis
is more involved \cite{DKS}. The recent paper \cite{ksr}, contains
analysis of $L^2(\mathbb{R})$ potentials for Schr\"odinger
operators. In the OPUC theory, many of these results were
well-known for quite a long time. The paper \cite{Den-Kup} studies
the relation between the decay of the tail
$\|A\|^2_{L^2[r,\infty)}$ and the Hausdorff dimension of the
support of $d\sigma_s$. In conclusion, we want to say that the
case of square summable coefficient is studied pretty well by now.
Perhaps, the only problem left open is the following nonlinear
(non-commutative) analog of the Carlesson Theorem in the Fourier
analysis. Prove (or disprove) that solution of the ODE:
$P'_*(r,\lambda)=A(r)\exp(i\lambda r)\overline{P_*(r,\lambda)},
P_*(0,\lambda)=1$ has a limit at infinity for a.e. $\lambda\in
\mathbb{R}$. We assume here, of course, that $A(r)\in
L^2(\mathbb{R}^+)$. This is a deep and difficult problem whose
analog for OPUC case is also open for quite a long time. We
mention the paper \cite{CK} for some recent closely related
results in this direction.

\newpage

\section{Continuous analog of the Baxter theorem. The case $A(r)\in
L^1(\mathbb{R}^+)$}\label{sect-baxter} In this section, we assume
that $A(r)$ and $H(x)$ are both from regularity class $L^2_{\rm
loc}(\mathbb{R}^+)$. Our goal is to prove the following analog of
Baxter's theorem in the OPUC theory. The proof is an adaptation of
the one for the discrete case (\cite{Simon}, Chapter 5).

\begin{theorem}\label{baxter-theorem}
For any Krein system, $A(r)\in L^1(\mathbb{R}^+)\cap L^2_{\rm
loc}(\mathbb{R}^+)$ if and only if the accelerant $H(r)\in
L^1(\mathbb{R})\cap L^2_{\rm loc}(\mathbb{R}^+)$ and the
Hopf-Wiener operator
\[
(I+\cal{H}_\infty)f=f(x)+\int\limits_0^\infty H(x-y)f(y)dy
\]
is strictly positive on $L^2(\mathbb{R}^+)$. The last condition is
equivalent to
\[
1+\rho(\lambda)>0, \lambda>0
\]
where $\rho(\lambda)\in W(\mathbb{R})$ is the Fourier transform of
$\overline{H(x)}$. Moreover, the measure $d\sigma$ is purely
absolutely continuous, has continuous derivative and
\begin{equation}
 \exp(-2\|A\|_1)\leq 2\pi\sigma'(\lambda)=1+\rho(\lambda)\leq
\exp(2\|A\|_1)
\end{equation}
\end{theorem}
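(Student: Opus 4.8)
The plan is to follow the classical Baxter argument as adapted from the OPUC case, working with the Banach algebra $W(\mathbb{R})$ (and $W_+(\mathbb{C}^+)$) of Fourier transforms of $L^1$ functions. First I would establish the easy implication: if the accelerant $H\in L^1(\mathbb{R})$ and $I+\cal{H}_\infty>0$ on $L^2(\mathbb{R}^+)$, then $A\in L^1$. The operator $I+\cal{H}_\infty$ is a Wiener--Hopf operator with symbol $1+\rho(\lambda)$; strict positivity on $L^2(\mathbb{R}^+)$ together with $1+\rho\in W(\mathbb{R})$ forces $1+\rho(\lambda)>0$ on all of $\mathbb{R}$ (including the limit at $\pm\infty$, which is $1$), so by the Wiener--Levy theorem $1+\rho$ admits a Wiener--Hopf factorization $1+\rho(\lambda)=\overline{\Pi_+(\lambda)}\,\Pi_+(\lambda)^{-1}\cdots$ — more precisely $(1+\rho)^{-1}=G_-(\lambda)G_+(\lambda)$ with $G_\pm-1\in W_\pm$. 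From the factorization identity $(I+\cal{H}_\infty)^{-1}=(I+\cal{V}_+)(I+\cal{V}_-)$ and the fact that $\Gamma_r(0,r)=A(r)$ is, up to the flip symmetry (\ref{flip}), the kernel of the triangular factor, one reads off that $A\in L^1(\mathbb{R}^+)$; quantitatively the bound $\|A\|_1<\infty$ comes from the Wiener norm of the factors. The continuity and the two-sided bound on $2\pi\sigma'(\lambda)=1+\rho(\lambda)$ then follow since $\rho\in W(\mathbb{R})\subset C_0(\mathbb{R})$, and the exponential bounds come from Lemma \ref{gronwall-p*} combined with $2\pi\sigma'(\lambda)=|P_*(\infty,\lambda)|^{-2}$ (using that in the $L^1$ case $P_*(r,\lambda)$ converges uniformly on $\mathbb{R}$ to a nonvanishing limit with $|P_*|^{\pm1}\leq\exp\|A\|_1$).

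For the converse — the substantive direction — I would assume $A\in L^1(\mathbb{R}^+)\cap L^2_{\rm loc}$ and run the standard Baxter bootstrap. The key is to show the resolvent kernels are uniformly $L^1$-controlled: using the integral equations (\ref{e1s3}), (\ref{e01s3}) for $Q$ and $P_*$ and the equation (\ref{erasw}) for $A(r,s)$, one shows that $A(r,\cdot)\in L^1$ uniformly in $r$ with $\sup_r\|A(r,\cdot)\|_{L^1[0,r]}\leq \|A\|_1\exp(\|A\|_1)$ or similar, by iterating the Volterra operator $O$ of (\ref{o}) and summing the resulting series in $W_+$-norm (the estimates (\ref{e3s3})--(\ref{e4s3}) get replaced by their $L^1$ analogs, exactly as Minkowski's inequality was used in the $L^2_{\rm loc}$ section). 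This gives that $P_*(r,\lambda)$ is, uniformly in $r$, of the form $1-\widehat{A(r,\cdot)}$ with $A(r,\cdot)$ bounded in $L^1$, hence $\{P_*(r,\cdot)-1\}$ is a bounded family in $W_+(\mathbb{C}^+)$; together with the Christoffel--Darboux/non-vanishing property (Lemma \ref{lemma24}, part 3) and the Levy--Wiener theorem it follows that $P_*(\infty,\lambda)^{-1}-1\in W_+$. Passing to the limit in (\ref{reznik}) (and using (\ref{consist})) identifies $1/|P_*(\infty,\lambda)|^2-1$ as $\widehat{H(\cdot)}$ on all of $\mathbb{R}$, giving $H\in L^1(\mathbb{R})$, that $d\sigma$ is purely a.c. with $2\pi\sigma'=1+\rho\in W(\mathbb{R})$, and — again from non-vanishing of $P_*$ in $\overline{\mathbb{C}^+}$ — that $1+\rho(\lambda)>0$ everywhere, which is precisely strict positivity of the Wiener--Hopf operator $I+\cal{H}_\infty$. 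Continuity of $\sigma'$ is then automatic since $\rho\in W(\mathbb{R})\subset C_0$.

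The bridge between the operator positivity statement and the pointwise positivity of $1+\rho$ I would handle separately as a clean lemma: for a symbol $\phi=1+\rho$ with $\rho\in W(\mathbb{R})$ and $\phi$ real (Hermitian $H$), the Wiener--Hopf operator $I+\cal{H}_\infty$ on $L^2(\mathbb{R}^+)$ is strictly positive iff $\phi(\lambda)>0$ for all $\lambda\in\mathbb{R}\cup\{\infty\}$ — one direction is obvious by testing against approximate eigenfunctions localized in frequency, and for the other one uses the $W$-factorization of $\phi$ (possible exactly when $\phi$ never vanishes and has zero winding, which holds here because $\phi\to1$ at infinity and $\phi>0$) to write $I+\cal{H}_\infty$ as a product of a lower- and an upper-triangular invertible operator, as in Section \ref{sect-fact}. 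Finally the exponential two-sided bounds on $2\pi\sigma'$ are obtained by noting $2\pi\sigma'(\lambda)=\Re F(\lambda)|_{\mathbb{R}}=|P_*(\infty,\lambda)|^{-2}$ and invoking Lemma \ref{gronwall-p*} in the limit $r\to\infty$.

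I expect the main obstacle to be the uniform-in-$r$ $L^1$ control of the resolvent/transfer kernels $A(r,\cdot)$ and $\Gamma_r$, i.e. proving the family $\{P_*(r,\cdot)-1\}_{r>0}$ is genuinely bounded in the Wiener algebra $W_+$ rather than merely convergent pointwise. This is where the $L^1$ hypothesis on $A$ must be used in a way that does not degrade as $r\to\infty$; the iteration of the Volterra operator $O$ converges, but summing the series in $W_+$-norm uniformly in $r$ requires care with the convolution structure of (\ref{erasw}) — essentially one needs a Gronwall-type argument in the Banach algebra $W_+$ rather than in $L^\infty$. Once that uniform bound is in hand, the Levy--Wiener theorem and the already-established machinery (the identity (\ref{realpart}), consistency (\ref{consist}), and the Wiener--Hopf factorization lemma) assemble the statement without further difficulty.
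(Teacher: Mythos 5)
Your treatment of the direction $A(r)\in L^1\Rightarrow$ ($H\in L^1$, $d\sigma$ purely a.c.\ with the exponential bounds) is essentially the paper's: a Gronwall-type iteration in the Banach algebra $W_+$ applied to the integral form of the Krein system, then Levy--Wiener together with (\ref{reznik}) and (\ref{consist}); the uniform-in-$r$ bound in $W_+$ that you single out as the main obstacle is exactly what this Gronwall argument supplies, so that half is sound (though note you have the labels reversed: this is the easier half). The genuine gap is in the direction you call ``easy'': ($H\in L^1(\mathbb{R})$, $I+\cal{H}_\infty>0$) $\Rightarrow A\in L^1(\mathbb{R}^+)$. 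You propose to ``read off'' $A\in L^1$ from the Wiener--Hopf factorization of $I+\cal{H}_\infty$, identifying $A(r)=\Gamma_r(0,r)$ with the kernel of a triangular factor. This does not work: the factorization of the semi-infinite operator obtained from the symbol factorization of $1+\rho$ has \emph{convolution} factors with $L^1$ kernels, whereas the triangular factors attached to the chain of truncations (Theorem \ref{volter}) have kernels $V_\pm(x,y)=-\Gamma_{\max(x,y)}(x,y)$, which are not of convolution type, and $A(r)$ is the \emph{corner} value of the resolvent of the truncated operator $I+\cal{H}_r$. Even the full strength of the abstract Baxter lemma, namely $\|\Gamma_r(0,\cdot)-\Gamma(\cdot)\|_{L^1[0,r]}\to 0$ (Corollary \ref{baxter-cor2}), controls $\Gamma_r(0,\cdot)$ only in an integrated sense and gives no control of the integrability in $r$ of the endpoint values $\Gamma_r(0,r)$; converting tail smallness into summability of $A$ is precisely the nontrivial content of Baxter's theorem and cannot be bypassed. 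Moreover, your claimed quantitative bound on $\|A\|_1$ in terms of the Wiener norms of the factors contradicts the paper's explicit remark that no estimate of $\|A\|_1$ via $\|H\|_1$ and $\|(1+\rho)^{-1}\|_\infty$ is available.

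For comparison, the paper's route for this direction is: (i) the abstract Baxter lemma in $L^1$ gives uniform invertibility of $I+\cal{H}_r$ and $\Gamma_r(0,\cdot)\to\Gamma$ in $L^1$, hence $P_*(r,\lambda)\to\Pi_\alpha(\lambda)$ uniformly on $\overline{\mathbb{C}^+}$, and the same for the dual system, so $\A(r,\lambda)$ and $\B(r,\lambda)$ converge in $W_+$; (ii) the identity (\ref{sasp}) then shows that the tail Schur functions satisfy $\|\f_R\|_{W_+}\to 0$, hence the accelerants $H_R$ of the shifted systems on $[R,\infty)$ satisfy $\|H_R\|_1\to 0$; (iii) only then does the perturbative Lemma \ref{auxil-l1} (iteration of the resolvent equation, valid when $\|H\|_1<1$) give $A\in L^1[R,\infty)$, which finishes the proof. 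Some version of this tail/bootstrap step is what your proposal is missing.
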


\begin{proof}
 Assume that we are given $A(r)\in L^1(\mathbb{R}^+)$.
We are then in the Szeg\H{o} case. Indeed, an elementary
application of Gronwall-Bellman inequality to (\ref{function-q})
yields the uniform convergence of $P_*(r,\lambda)$ and $\widehat
P_*(r,\lambda)$ to $\Pi_\alpha(\lambda)$ and
$\widehat\Pi_\alpha(\lambda)$ in $\lambda\in
\overline{\mathbb{C}^+}$. Both $\Pi_\alpha(\lambda)$ and
$\widehat\Pi_\alpha(\lambda)$ are continuous in
$\overline{\mathbb{C}^+}$. Moreover,
\[
|P_*(r,\lambda)|\leq \exp(\|A\|_1),|\widehat{P}_*(r,\lambda)|\leq
\exp(\|A\|_1), \lambda\in \overline{\mathbb{C}^+}, r>0
\]
\[
|P_*(r,\lambda)|\geq \exp(-\|A\|_1),|\widehat{P}_*(r,\lambda)|\geq
\exp(-\|A\|_1), \lambda\in \overline{\mathbb{C}^+}, r>0
\]
and then
\[
\exp(-\|A\|_1)\leq |\Pi_\alpha(\lambda)|\leq
\exp(\|A\|_1),\exp(-\|A\|_1)\leq |\widehat\Pi_\alpha(\lambda)|\leq
\exp(\|A\|_1)
\]
The Theorem \ref{Bernstein-Szego} says that measures $(2\pi)^{-1}
|P_*(r,\lambda)|^{-2}d\lambda$ converge to $d\sigma(\lambda)$ in
the weak-($\ast$) sense. Therefore, $d\sigma(\lambda)$ is purely
a.c. and its continuous density allows an estimate
\begin{equation}
 \exp(-2\|A\|_1)\leq 2\pi\sigma'(\lambda)=|\Pi_\alpha(\lambda)|^{-2}\leq
\exp(2\|A\|_1)
\end{equation}
Now, let us show that $H(x)\in L^1(\mathbb{R}^+)$. Indeed, let
\[
y_1(r,\lambda)=P_*(r,\lambda)-1,
y_2(r,\lambda)=P(r,\lambda)-\exp(i\lambda r)
\]
For each $r>0$, $y_{1(2)}\in W_+(\mathbb{C^+})$ and
$\|y_1\|_{W_+}=\|y_2\|_{W_+}$ by (\ref{e3s2}) and (\ref{e31s2}).
From (\ref{krein2}), we get
\begin{equation}\label{banach}
y_1(r)=a(r)-\int\limits_0^r A(s)y_2(s)ds
\end{equation}
where
\[
a(r)=-\int\limits_0^r A(s)\exp(is\lambda)ds\in W_+
\]
and (\ref{banach}) is considered as an integral equation for
functions with values in $W_+$. Taking the norm of the both sides
in (\ref{banach}), we get
\[
\|y_1(r)\|_{W_+}\leq \int\limits_0^r |A(s)|ds+\int\limits_0^r
|A(s)|\cdot \|y_1(s)\|_{W_+}ds
\]
The Gronwall-Bellman inequality yields convergence of $y_1(r)$ to
some $y_1$ in the $W_+$ norm (as $r\to\infty$) and
\[
\|y_1\|_{W_+}\leq \|A\|_1\exp(\|A\|_1)
\]
Therefore, $\Pi_\alpha(\lambda)=1+y_1$. In the same way, we have
$\widehat{\Pi}_{\hat\alpha}(\lambda)=1+\hat{y}_1$ and
$F(\lambda)=\widehat{\Pi}_{\hat\alpha}/\Pi_\alpha=(1+\hat{y}_1)/(1+y_1)=1+h$,
where
\begin{equation}
h=\frac{\hat{y}_1-y_1}{1+y_1}\in W_+ \label{h-f1}
\end{equation}
since the
spectrum of $1+y_1$ does not contain zero. Now, (\ref{reznik})
implies
\begin{equation}
h=2\int\limits_0^\infty \overline{H(x)}\exp(i\lambda x)dx
\label{h-f2}
\end{equation}
and so $H(x)\in L^1(\mathbb{R})$ since $h\in W_+$. We also have
$2\pi\sigma'(\lambda)=1+\rho(\lambda)$.

Now, assume that we are given $H(x)\in L^1(\mathbb{R})$ and
$I+\cal{H}_\infty>0$. Then, the equivalence of
$I+\cal{H}_\infty>0$ and an estimate $1+\rho(\lambda)>0$ follows
from the simple identity
\[
((I+\cal{H}_\infty)f,f)=2\pi \int\limits_{-\infty}^\infty
(1+\rho(-\lambda))|\hat{f}(\lambda)|^2d\lambda
\]
Last identity shows that $H$ generates a Krein system with
$A(r)\in L^2_{\rm loc}(\mathbb{R}^+)$. Together with formula
(\ref{e1s2n1}) and a simple approximation argument (like in the
proof of Lemma \ref{uniqueness} in Appendix), it also imply
$d\sigma=(2\pi)^{-1}(1+\rho(\lambda))d\lambda$.

We need a simple
\begin{lemma} \label{auxil-l1}
If $\|H\|_1<1$, then $A(r)\in L^1(\mathbb{R}^+)$ and $\|A\|_1\leq
\|H\|_1/(1-\|H\|_1)$.
\end{lemma}
\begin{proof}
From (\ref{basic1}), we have
\begin{equation}\label{baxter-aux}
\Gamma_r(t,0)+\int\limits_0^r H(t-u)\Gamma_r(u,0)du=H(t)
\end{equation}
Iterating this identity, we have for
\begin{equation}
\Gamma_r(t,0)=H(t)-\int\limits_0^r
H(t-u_1)H(u_1)du_1+\int\limits_0^r H(t-u_1)\int\limits_0^r
H(u_1-u_2)H(u_2)du_2du_1-\ldots \label{h-iterates}
\end{equation}
where the series converges in $L^1[0,r]$. Then,
$|\Gamma_r(t,0)|\leq g(t)$, where
\[
g=h+h\ast h+h\ast h\ast h+\ldots, h(t)=|H(t)|
\]
Notice that $g(t)$ does not depend on $r$ and $\|g\|_1\leq
\|H\|_1/(1-\|H\|_1)$. So, by taking $t=r$ in (\ref{baxter-aux}),
we get
\begin{equation}
\overline{A(r)}+\int\limits_0^r
H(r-u)\Gamma_r(u,0)du=H(r)\label{lemma-later}
\end{equation}
\begin{equation}
|A(r)|\leq \int\limits_0^r |H(r-u)|\cdot |g(u)|du+|H(r)|
\label{lemma-bis}
\end{equation}
The Young inequality finishes the proof.
\end{proof}
To finish the proof, we will apply this Lemma to the interval
$[R,\infty)$, where $R$ is so large that the accelerant for the
Krein system considered on $[R,\infty)$ has small $L^1$ norm. To
prove the existence of such $R$, we need to use Baxter's Lemma
(see Appendix, Corollary \ref{baxter-cor2}) to the operator
$I+\cal{H}_\infty$ acting in the space $L^1(\mathbb{R}^+)$. Since
$H(x)\in L^1(\mathbb{R})$ and $1+\rho(\lambda)>0$, this Lemma is
applicable and  gives
\begin{equation} \label{baxter-impl}
\|(I+\cal{H}_r)^{-1}\|_{L^1[0,r], L^1[0,r]}\leq C,(r>r_0); \quad
\|\Gamma_r(0,x)\chi_{[0,r]}(x)-\Gamma(x)\|_1\to 0, (r\to\infty)
\end{equation}
where
\[
\Gamma(x)=(I+\cal{H}_\infty)^{-1} H(x)\in L^1(\mathbb{R}^+)
\]
is solution to the Wiener-Hopf equation. Then, we are immediately
in the Szeg\H{o} case since \[ P_*(r,\lambda)=1-\int\limits_0^r
\Gamma_r(0,s) \exp(i\lambda s)ds\to
\Pi_\alpha(\lambda)=1-\int\limits_0^\infty \Gamma(s)\exp(i\lambda
s)ds
\]
and convergence is uniform in $\overline{\mathbb{C}^+}$.

For $F(\lambda)$, we use (\ref{weyl-titchmarsh}) and
\[
F(\lambda)/2=-i\beta+i\int\limits_{-\infty}^\infty
\left[\frac{1}{\lambda-t}+\frac{t}{t^2+1}\right]
\frac{1+\rho(t)}{2\pi}dt
\]
Since $F(i\infty)=1$, we have
\[
F(\lambda)=1+\frac{i}{\pi}\int\limits_{-\infty}^\infty
\frac{1}{\lambda-t} \rho(t)dt
\]
and the integral is understood in v.p. sense. That immediately
implies
\[
F(\lambda)=1+2\int\limits_0^\infty \overline{H(x)}\exp(i\lambda
x)dx
\]

 From
$\widehat{F}=F^{-1}$
 we have $\widehat{H}(x)\in L^1(\mathbb{R})$ for the dual
accelerant
 and
\[
\|\widehat{\Gamma}_r(0,x)\chi_{[0,r]}(x)-\widehat{\Gamma}(x)\|_1\to
0, (r\to\infty);\quad
\widehat{\Gamma}(x)=(I+\widehat{\cal{H}}_\infty)^{-1}
\widehat{H}(x)\in L^1(\mathbb{R}^+)
\]
So, we also have \begin{equation}\label{aux-conv}
\|\B(r,\lambda)-\B(\lambda)\|_{W_+}\to 0,
\|\A(r,\lambda)-\A(\lambda)\|_{W_+}\to 0
\end{equation}
where
\[
\A(\lambda)=1-\int\limits_0^\infty
\frac{\Gamma(x)+\widehat{\Gamma}(x)}{2}\exp(i\lambda x)dx,
\B(\lambda)=\int\limits_0^\infty
\frac{\widehat\Gamma(x)-{\Gamma}(x)}{2}\exp(i\lambda x)dx
\]
From (\ref{sasp}),
\begin{equation}
\exp(i\lambda R)\f_R(\lambda)=\left[
\f(\lambda)\A(R,\lambda)-\B(R,\lambda)\right] \cdot\left[
\A(R,\lambda)+\f_R(\lambda)\B_*(R,\lambda)\right]
\end{equation}
Consider the first factor in the right-hand side. It can be
written as
\[
\A^{-1}(\lambda)\left[
\B(\lambda)\A(R,\lambda)-\B(R,\lambda)\A(\lambda)\right]
\]
Since $\A(\lambda)-1\in W_+$ and $\A(\lambda)$ has no zeroes in
$\mathbb{C}^+$, relations (\ref{aux-conv}) imply this factor goes
to zero in $W_+$ norm as $R\to\infty$. On the other hand, this
factor can be written as
\begin{equation}
\A(R,\lambda)\left[\f(\lambda)-\frac{\B(R,\lambda)}{\A(R,\lambda)}\right]\label{sec-fac}
\end{equation}
and (\ref{sasp}) says that the Fourier coefficient of the second
factor in (\ref{sec-fac}) is equal to $0$ on $[0,R]$. Thus,
\[
\f(\lambda)\A(R,\lambda)-\B(R,\lambda)=\exp(i\lambda
R)Q_R(\lambda)
\]
where $\|Q_R(\lambda)\|_{W_+}\to 0$. Now,
\[
f_R(\lambda)=\frac{Q_R(\lambda)\A(R,\lambda)}{1-Q_R(\lambda)\B_*(R,\lambda)}
\]
and $\|f_R(\lambda)\|_{W_+}\to 0$ since
$\|\A(R,\lambda)\|_{W_+}<C, \|\B_*(R,\lambda)\|_{W_+}<C$ uniformly
in $R$. Since $F_R=(1-\f_R)(1+\f_R)^{-1}$, we also have
$\|1-F_R\|_{W_+}\to 0$ or $\|H_R(x)\|_1\to 0$ where $H_R(x)$--
accelerant corresponding to the interval $[R,\infty)$. The Lemma
\ref{auxil-l1} now yields $A(r)\in L^1[R,\infty)$ as long as
$\|H_R\|<1$.
\end{proof}
Let us obtain the formula for $\alpha$ in the representation for
$\Pi_\alpha(\lambda)$, just like we did for the square summable
$A(r)$. Since $\Pi_\alpha(\lambda)=1+y_1(\lambda)$ and
$y_1(\lambda)\in W_+$, we have a trivial asymptotics:
$\Pi_\alpha(\lambda)=1+\bar{o}(1)$ as $\lambda\to\infty,
\lambda\in \overline{C^+}$. From the multiplicative representation
for $\Pi_\alpha$, we get
\[
\alpha=\lim_{y\to\infty} \frac{1}{2\pi} \int
\frac{s(1-y^2)+iy(1+s^2)}{(s^2+y^2)(1+s^2)}\mu(s)ds,
\mu(s)=\ln(1+\rho(s))\in W(\mathbb{R})
\]
and simple estimates yield
\[
\alpha=\frac{1}{2\pi}\, {\rm v.p.} \int\limits_{-\infty}^\infty
\frac{s\mu(s)}{1+s^2}ds
\]
Thus $\Pi_\alpha(\lambda)$ allows the same representation
(\ref{pisub}). Notice also that in contrast with square summable
case, function $\A(\lambda)$ does not allow asymptotical formula
(\ref{equa1-1}). It should also be mentioned that the Theorem does
not provide a quantitative estimate on $\|A\|_1$ in terms of
$\|H\|_1$ and, say, $\|(1+\rho)^{-1}\|_\infty$. On the other hand,
it is easy to bound $\|(1+\rho)^{-1}\|_\infty$ in terms of
$\|A\|_1$.

Later on, we will need the following result
\begin{lemma}\label{auxil-scat}
Assume that conditions of Baxter's Theorem hold. Then $A(r)\in
C_0(\mathbb{R}^+)$ iff $H(x)\in C_0(\mathbb{R}^+)$ iff $C(x)\in
C_0(\mathbb{R}^+)$.
\end{lemma}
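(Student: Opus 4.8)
The plan is to exploit the three relations established in Baxter's Theorem and the surrounding material: the integral equation \eqref{lemma-later} (equivalently \eqref{basic1} with $t=r$) linking $A$ and $H$, the transfer relation \eqref{transfer1} linking $H$ and $C$, and the $L^1$-convergence statements \eqref{baxter-impl} that hold once the hypotheses of Baxter's Theorem are in force. Since we already know $A\in L^1(\mathbb{R}^+)$, $H\in L^1(\mathbb{R})$, $C\in L^1(\mathbb{R}^+)\cap L^2_{\rm loc}$, and $1+\rho(\lambda)>0$, the three functions $A$, $H$, $C$ are tied together by \emph{Volterra} convolution-type equations with $L^1$ kernels, and the whole point is that such equations preserve membership in $C_0(\mathbb{R}^+)$.

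First I would prove $H\in C_0(\mathbb{R}^+)\Rightarrow A\in C_0(\mathbb{R}^+)$. From \eqref{lemma-later}, $\overline{A(r)}=H(r)-\int_0^r H(r-u)\Gamma_r(u,0)\,du$. By \eqref{baxter-impl}, $\|\Gamma_r(\cdot,0)\chi_{[0,r]}-\Gamma\|_{L^1}\to 0$ with $\Gamma\in L^1(\mathbb{R}^+)$, and one has the uniform bound $\sup_r\|\Gamma_r(\cdot,0)\|_{L^1[0,r]}\le C$. Writing the integral term as $\int_0^r H(r-u)\Gamma(u)\,du$ plus an error controlled by $\|H\|_\infty\cdot\|\Gamma_r-\Gamma\|_{L^1}\to 0$, we see the main term is a convolution of $H\in C_0\cap L^\infty$ with $\Gamma\in L^1$, hence lies in $C_0(\mathbb{R}^+)$ (standard: $L^1 * C_0\subset C_0$, using density of compactly supported continuous functions in $L^1$ together with uniform continuity). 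Together with $H\in C_0$ this gives $A\in C_0$. The converse $A\in C_0\Rightarrow H\in C_0$ runs through the ``tail'' argument used in the proof of Baxter's Theorem: apply the previous implication on $[R,\infty)$, where $\|H_R\|_1\to 0$, using \eqref{h-iterates} to write $\Gamma(x)=h*(\delta+\Gamma)$-type Neumann series showing $\Gamma_r(\cdot,0)$ inherits $C_0$ from $H$; more directly, one iterates \eqref{baxter-aux} and notes that each convolution power of an $L^1$ function against a $C_0$ function stays in $C_0$, so $H\in C_0$ on each $[0,R]$ forces $\Gamma\in C_0$, which via \eqref{lemma-later} returns $A\in C_0$, and conversely smallness of $\|H_R\|_1$ lets one solve \eqref{baxter-aux} by a norm-convergent Neumann series whose terms are convolutions, transferring $C_0$ from $A$ back to $H$ on $[R,\infty)$; on the compact $[0,R]$ continuity and decay are automatic once $H\in L^1_{\rm loc}$ and $A\in C_0$ by the same convolution bookkeeping.

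The equivalence $H\in C_0(\mathbb{R}^+)\iff C\in C_0(\mathbb{R}^+)$ is the easiest link: \eqref{transfer1} reads $H(x)+\overline{C(x)}+\int_0^x \overline{C(x-u)}H(u)\,du=0$, a Volterra equation solvable in both directions by Neumann iteration (as remarked after \eqref{transfer}), and since $C,H\in L^1(\mathbb{R}^+)$ the series converges in $L^1$; each term is an iterated convolution, so if one of $C$, $H$ lies in $C_0\cap L^\infty$ then the convolution term $\overline{C}*H$ lies in $C_0$, and hence so does the other of $H$, $C$. This closes the cycle $A\leftrightarrow H\leftrightarrow C$.

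The main obstacle, such as it is, is purely bookkeeping: making sure the convolution term in \eqref{lemma-later} really is the convolution of an $L^1$ function with a $C_0$ function up to a uniformly small error, which requires invoking the $L^1$-convergence $\Gamma_r\to\Gamma$ from \eqref{baxter-impl} \emph{and} a uniform $L^\infty$ or $W_+$ bound so that the error $\int_0^r H(r-u)(\Gamma_r(u,0)-\Gamma(u))\,du$ is $o(1)$ uniformly in $r$; the other potential subtlety is the behavior on the compact initial segment $[0,R]$ versus the tail $[R,\infty)$, handled exactly as in the proof of Baxter's Theorem by choosing $R$ so that $\|H_R\|_1<1$. No new analytic input beyond the lemma $L^1*C_0\subseteq C_0$ and the already-proved statements is needed.
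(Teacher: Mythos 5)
Your treatment of two of the three links is sound and essentially the paper's own argument: the equivalence $H\in C_0\Leftrightarrow C\in C_0$ is exactly the Volterra relation (\ref{transfer}) plus the fact that a convolution of an $L^1(\mathbb{R}^+)$ function with a $C_0$ function is again $C_0$, and your implication $H\in C_0\Rightarrow A\in C_0$ via (\ref{lemma-later}), writing the integral term as $H*\Gamma$ up to an error controlled by $\|H\|_\infty\|\Gamma_r(\cdot,0)-\Gamma\|_{L^1}\to 0$ from (\ref{baxter-impl}), is the same convolution bound the paper gets from (\ref{lemma-bis}) (your variant, using $\Gamma$ rather than the Neumann-series majorant $g$, even avoids the restriction $\|H\|_1<1$ under which (\ref{lemma-bis}) was derived). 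Here $H$ is bounded because it is assumed to be in $C_0$, so this direction is complete.

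The converse $A\in C_0\Rightarrow H\in C_0$ is where your proposal has a genuine gap. Your plan is to solve (\ref{baxter-aux}) by a Neumann series on a tail interval where $\|H_R\|_1$ is small and thereby ``transfer $C_0$ from $A$ back to $H$.'' But (\ref{baxter-aux}) contains $H$ on both sides and $A$ enters only through the endpoint value $\Gamma_r(r,0)=\overline{A(r)}$; from knowledge of $A$ alone you cannot iterate it, so the argument is circular unless you first control $\Gamma_r(\cdot,0)$ in terms of $A$. Moreover, the function $H_R$ whose $L^1$ norm tends to zero in the proof of Baxter's Theorem is the accelerant of the \emph{shifted} system on $[R,\infty)$ (cf. (\ref{consist}) and the end of that proof), not the restriction of the original $H$ to $[R,\infty)$; smallness or decay of $H_R$ in its own variable says nothing directly about $H(x)$ as $x\to\infty$, so ``applying the previous implication on $[R,\infty)$'' does not close the loop. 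What is missing is the step the paper takes: use the Volterra equation (\ref{erasw}) in the $r$-variable, i.e. $\Gamma_r(0,t)=\overline{A(t)}-\int_t^r\overline{A(s)}\,\overline{\Gamma_s(0,t)}\,ds$, which involves only $A$, to get $\gamma_R=\sup_{R\le t\le r}|\Gamma_r(0,t)|\le\alpha_R+\gamma_R\int_R^\infty|A(s)|ds$ and hence $\gamma_R\to0$; this yields $\Gamma,\widehat\Gamma\in C_0$ (the dual system has coefficient $-A$, also in $L^1\cap C_0$). Only then is $H$ recovered, via the Wiener-algebra identities (\ref{h-f1})--(\ref{h-f2}): $2\overline{H}$ is $(\Gamma-\widehat\Gamma)$ plus its convolution with an $L^1$ function coming from $(1+y_1)^{-1}$, and your lemma $L^1*C_0\subset C_0$ finishes the proof. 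Without (\ref{erasw}) (or some equivalent control of $\Gamma$ by $A$ alone) and the division step in the Wiener algebra, the decay of $H$ cannot be extracted from the decay of $A$.
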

\begin{proof}
Since $H(x), C(x)\in L^1(\mathbb{R}^+)$, we have $H(x)\in
C_0(\mathbb{R}^+)$ iff $C(x)\in C_0(\mathbb{R}^+)$ because of
(\ref{transfer}). Assume $H(x)\in C_0(\mathbb{R}^+)$. Then,
(\ref{lemma-bis})  implies $A(r)\in C_0(\mathbb{R}^+)$. Now, let
$A(r)\in L^1(\mathbb{R}^+)\cap C_0(\mathbb{R}^+)$. Consider
(\ref{erasw}). Since $A(r,t)=\Gamma_r(t,r)=\Gamma_r(0,r-t)$,
\[
\Gamma_r(0,t)=\overline{A(t)}-\int\limits_t^r \overline{A(s)}\cdot
\overline{\Gamma_s(0,t)}ds
\]
Take $R$ large and iterate this identity for $\Delta_R=\{R\leq
t\leq r\}$. Since $A(r)\in L^1(\mathbb{R}^+)\cap
C_0(\mathbb{R}^+)$, we will get convergence. To be more precise,
if
\[
\gamma_R=\sup\limits_{\Delta_R}|\Gamma_r(0,t)|, \alpha_R=\max_{t>
R} |A(t)| \]
then
\[
\gamma_R\leq \alpha_R+\gamma_R\int\limits_R^\infty |A(s)| ds
\]
and $\gamma_R\to 0$ as $R\to\infty$. Since $\Gamma_R(0,t)\to
\Gamma(t)$ in $L^1(\mathbb{R}^+)$,  we have
$\|\Gamma(t)\|_{L^\infty[R,\infty)}\to 0$ as $R\to \infty$. The
same is true about $\widehat{\Gamma}(t)$. Application of formulas
(\ref{h-f1}) and (\ref{h-f2}) finishes the proof.
\end{proof}

{\bf Remarks and historical notes.} For an excellent exposition of
the proof for the Baxter theorem in OPUC case, see \cite{Simon}.
In our case, some modifications were needed. Apparently, the first
proof of the Baxter theorem for continuous case was given in
\cite{melik-adamyan1}. See also \cite{DI, Krein-sf} (one has to
pay attention to some inaccuracies in statements regarding the
regularity of coefficients generated by summable accelerants).
\newpage

\section{Dirac systems}\label{sect-dirac}

In this section, we relate Krein systems to the well-known object
in mathematical physics: one-dimensional Dirac operator. Consider
the Krein system, given by (\ref{krein2}) and assume some
regularity conditions, e.g. $a(r), b(r)\in L^2_{\rm
loc}(\mathbb{R}^+)$. Let $\lambda\in \mathbb{C}$ and

\begin{eqnarray*}
\varphi(r,\lambda)=\frac{\exp(-i\lambda r)}{2}\left[
P(2r,\lambda)+P_*(2r,\lambda)\right],\\
\psi(r,\lambda)=\frac{\exp(-i\lambda r)}{2i}\left[
P(2r,\lambda)-P_*(2r,\lambda)\right]
\end{eqnarray*}%
These functions are of the exponential type $r$ and are  not from
$H^{2}(\mathbb{C}^{+})$ anymore. They should be regarded as
analogs of trigonometric polynomials (or Laurent polynomials). For
the free case, i.e. $A(r)=0$, one has
$\varphi(r,\lambda)=\cos(r\lambda),
\psi(r,\lambda)=\sin(r\lambda)$. If $\lambda\in \mathbb{R}$,
\begin{equation}
\varphi (r,\lambda )=\Re {\cal E}(r,\lambda ),\psi (r,\lambda )=\Im {\cal
E}%
(r,\lambda ), \cal{E}(r,\lambda)=\exp(-i\lambda r)P(2r,\lambda)
\end{equation}%
Define $\cal{E}(r,\lambda)$ for $r<0$ by
$\cal{E}(-r,\lambda)=\overline{\cal{E}(r,\lambda)}$. Let
$a(r)=2\Re A(2r),b(r)=2\Im A(2r)$. Consider the following Dirac
operator

\begin{equation}
\cal{D}\left[
\begin{array}{c}
f_{1} \\
f_{2}%
\end{array}%
\right] =\left[
\begin{array}{cc}
-b(r) & d/dr-a(r) \\
-d/dr-a(r) & b(r)%
\end{array}%
\right] \left[
\begin{array}{c}
f_{1} \\
f_{2}%
\end{array}%
\right]  \label{Dir}
\end{equation}%
where the Hilbert space is $f_{1},f_{2}\in L^{2}(R^{+})\times
L^{2}(R^{+})$ and operator is made self-adjoint by imposing
condition $f_{2}(0)=0$. Another way to write $\cal{D}$ is as
follows
\[
\cal{D}=\mathscr{J}\frac{d}{dr} +Q(r)
\]
where potential $Q(r)$ is
\begin{equation}
Q(r)=\left[
\begin{array}{cc}
-b(r) & -a(r) \\
-a(r) & b(r)%
\end{array}%
\right] \label{dir-pote}
\end{equation}
and
\begin{equation}
\mathscr{J}=\left[
\begin{array}{cc}
0 & 1 \\
-1 & 0%
\end{array}%
\right]
\end{equation}

This form of Dirac operator is called canonical. Any Dirac
operator can be reduced to this form by a suitable change of
variables \cite{Levitan}, p. 48--50. Just like the Krein system,
the Dirac operator in the canonical form has a lot of structure
due to a special choice of potential.

 Rather than, say,
Schr\"odinger operator,  $\cal{D}$ can always be defined as the
closure of the naturally chosen minimal operator (\cite{Levitan},
Theorem 7.1, p.493 or  \cite{Weidmann}, p. 99). We will start with
the following
\begin{lemma}
Functions  $\varphi$  and $\psi$ are
generalized eigenfunctions of $\cal{D}$,  i.e.

\begin{equation}
\cal{D} \left[
\begin{array}{c}
\varphi \\
\psi%
\end{array}
\right] =\lambda \left[
\begin{array}{c}
\varphi \\
\psi%
\end{array}
\right], \varphi(0,\lambda)=1, \psi(0,\lambda)=0  \label{q4}
\end{equation}
More generally,  the fundamental solution $X_d$ for the system
(\ref{Dir}) can be expressed via the fundamental solution for
Krein system in the following way
\[
X_d(r,\lambda)=\exp(-i\lambda r)U_0X(2r,\lambda)U_0^{-1}
\]
and
\[
U_0=\left[
\begin{array}{cc}
1&1\\
-i& i
\end{array}\right]
\]
\end{lemma}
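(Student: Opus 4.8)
The plan is to verify the claimed relation by a direct computation, reducing everything to the differential equation already established for the Krein transfer matrix $X(r,\lambda)$ in (\ref{krein3}). First I would set up the candidate: define $Y(r,\lambda)=\exp(-i\lambda r)U_0 X(2r,\lambda)U_0^{-1}$ and check two things — that $Y$ satisfies the right initial condition at $r=0$ and that it solves the Dirac system $Y'=(\lambda\mathscr{J}^{-1}-\mathscr{J}^{-1}Q(r))Y$, or rather the form $\mathscr{J}Y'+QY=\lambda Y$ read off from (\ref{Dir}). The initial condition is immediate since $X(0,\lambda)=I$ gives $Y(0,\lambda)=U_0 U_0^{-1}=I$. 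For the differential equation, I would differentiate: $Y' = -i\lambda Y + 2\exp(-i\lambda r)U_0 X'(2r,\lambda)U_0^{-1} = -i\lambda Y + 2\exp(-i\lambda r)U_0 V(2r,\lambda)X(2r,\lambda)U_0^{-1}$, using (\ref{krein3}). Inserting $X(2r,\lambda)U_0^{-1}=\exp(i\lambda r)U_0^{-1}Y$ yields $Y' = -i\lambda Y + 2 U_0 V(2r,\lambda)U_0^{-1}Y$, so the whole thing is the purely algebraic identity $-i\lambda I + 2U_0 V(2r,\lambda)U_0^{-1} = \mathscr{J}^{-1}(\lambda I - Q(r))$, i.e. $\mathscr{J}$ times the left side plus $Q(r)$ should equal $\lambda I$.

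The core of the argument is therefore this $2\times2$ matrix identity, which I would prove by splitting $V$ from (\ref{potential-V}) into its diagonal part $i\lambda\,\mathrm{diag}(1,0)$ and its off-diagonal part $-\begin{bmatrix}0 & \overline{A(2r)}\\ A(2r) & 0\end{bmatrix}$. For the $\lambda$-dependent piece, one computes $-i\lambda I + 2U_0 (i\lambda\,\mathrm{diag}(1,0))U_0^{-1}$; since $U_0\,\mathrm{diag}(1,0)\,U_0^{-1}=\tfrac12\begin{bmatrix}1 & i\\ -i & 1\end{bmatrix}$ (using $U_0^{-1}=\tfrac12\begin{bmatrix}1 & i\\ 1 & -i\end{bmatrix}$), this gives $-i\lambda I + i\lambda\begin{bmatrix}1 & i\\ -i & 1\end{bmatrix} = i\lambda\begin{bmatrix}0 & i\\ -i & 0\end{bmatrix} = \lambda\begin{bmatrix}0 & -1\\ 1 & 0\end{bmatrix}=\lambda\mathscr{J}^{-1}$, which is exactly what the $\lambda I$ term on the right requires after applying $\mathscr{J}$. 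For the $A$-dependent piece, I would compute $2U_0\begin{bmatrix}0 & -\overline{A}\\ -A & 0\end{bmatrix}U_0^{-1}$ and check that applying $\mathscr{J}$ to it (times $2$, consistently with the scaling) reproduces $-Q(r)$ with $a(r)=2\Re A(2r)$, $b(r)=2\Im A(2r)$, matching (\ref{dir-pote}). This is the step I expect to be the main (though still routine) obstacle: one must be scrupulous about the factor of $2$ coming from the change of variable $r\mapsto 2r$, the sign conventions in $\mathscr{J}$ versus $\mathscr{J}^{-1}$, and the splitting of $A$ into real and imaginary parts, since a misplaced factor or conjugate bar anywhere derails the whole identification.

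Once the matrix $Y$ is shown to solve the Dirac system with $Y(0,\lambda)=I$, uniqueness of solutions to linear ODE systems (valid here since $a,b\in L^2_{\mathrm{loc}}$, hence $Q\in L^1_{\mathrm{loc}}$) forces $Y=X_d$, which is the displayed formula. Finally, to obtain (\ref{q4}) I would extract the first column of $X_d$: from (\ref{rotation}) the first column of $X(2r,\lambda)$ is $\tfrac12(P+\widehat P, P_*-\widehat P_*)^t$, but it is cleaner to use that $(\varphi,\psi)^t$ as defined equals $\exp(-i\lambda r)U_0$ applied to $\tfrac12(P(2r,\lambda),P_*(2r,\lambda))^t$ — indeed $U_0\begin{bmatrix}P\\ P_*\end{bmatrix}=\begin{bmatrix}P+P_*\\ -i(P-P_*)\end{bmatrix}$, so $\tfrac12\exp(-i\lambda r)U_0\begin{bmatrix}P(2r,\lambda)\\ P_*(2r,\lambda)\end{bmatrix}=\begin{bmatrix}\varphi(r,\lambda)\\ \psi(r,\lambda)\end{bmatrix}$ exactly matches the definitions. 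Since $(P,P_*)^t$ is the first column of $X(2r,\lambda)$ (with the $P(0,\lambda)=P_*(0,\lambda)=1$ normalization), $(\varphi,\psi)^t$ is $\exp(-i\lambda r)U_0$ times the first column of $\tfrac12 X(2r,\lambda)$... here I would need to reconcile the normalization: the first column of $X(2r,\lambda)$ from (\ref{rotation}) is $\tfrac12(P+\widehat P,P_*-\widehat P_*)^t$, not $(P,P_*)^t$, so I would instead directly verify that $(\varphi,\psi)^t$ solves $\cal{D}(\varphi,\psi)^t=\lambda(\varphi,\psi)^t$ by differentiating the definitions and using the Krein system (\ref{krein2}) for $P,P_*$ — a short computation paralleling the matrix identity above — together with the initial values $\varphi(0,\lambda)=\tfrac12(1+1)=1$, $\psi(0,\lambda)=\tfrac1{2i}(1-1)=0$. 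That completes the lemma.
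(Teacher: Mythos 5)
Your proposal is correct: the direct verification that $Y(r,\lambda)=e^{-i\lambda r}U_0X(2r,\lambda)U_0^{-1}$ satisfies $Y(0)=I$ and $\mathscr{J}Y'+QY=\lambda Y$ (the $A$-dependent piece indeed works out, since $2U_0\bigl[\begin{smallmatrix}0 & -\overline{A}\\ -A & 0\end{smallmatrix}\bigr]U_0^{-1}=\bigl[\begin{smallmatrix}-a & b\\ b & a\end{smallmatrix}\bigr]=\mathscr{J}Q$), together with the identification of $(\varphi,\psi)^t$, is exactly the "straightforward calculation" the paper invokes without detail. Same approach, so nothing further to add.
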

\begin{proof} The proof is a straightforward calculation. \end{proof}

Next, we will show that the system $\{\cal{E}(x,\lambda)\}$ is an
orthogonal system in $L^2(\mathbb{R},d\sigma)$. There are many
ways to see that but we prefer an algebraic one, based on the
proper factorization of certain integral operator. Just like in
Section 3, we start with the following consideration. Let $H(x)$
be an accelerant and $H(x)\in L^2_{\rm loc}[0,\infty)$. For any
$r>0$, consider the integral operator $\hat{\cal{H}}_r$:
\begin{equation}
\hat{\cal{H}}_r f(x)=\int\limits_{-r}^r H(x-u)f(u)du
\end{equation}
given in $L^2[-r,r]$. Since $\hat{\cal{H}}_r$ is the translation
of $\cal{H}_{2r}$ defined on $[0,2r]$, we have
$I+\hat{\cal{H}}_r>0$ for any $r>0$ and the conditions of the
Theorem \ref{volter1} in section 2 are satisfied. Moreover, we can
express one resolvent via the other one, i.e.
\begin{equation}\label{translation-of-res}
\hat{\Gamma}_r(x,y)={\Gamma}_{2r}(r+x, r+y),\quad  |x|, |y|<r
\end{equation}

 Let us consider some
$R>0$ and the factorization
$I+\hat{\cal{H}}_R=(I+\hat{\cal{L}})(I+\hat{\cal{U}})$ where the
lower-diagonal (in the sense of Theorems \ref{volter1} and
\ref{volter7}) operator $ \hat{\cal{L}}$ has kernel $\hat{L}(x,y),
|y|<|x|<R$. Since $\hat{\cal{H}}_R$-- Hermitian, we get
$\hat{\cal{U}}=\hat{\cal{L}}^*$. If
$I+\hat{\cal{L}}=(I+\hat{\cal{V}}_-)^{-1}$ and
$I+\hat{\cal{U}}=(I+\hat{\cal{V}}_+)^{-1}$, then
\begin{lemma}\label{transformation-dirac}
The following is true
\begin{equation}
 \exp(i\lambda
x)=\cal{E}(x,\lambda)+\int\limits_{-|x|}^{|x|}
\widehat{L}(x,u)\cal{E}(u,\lambda)du, x\in [-R,R]
\end{equation}
\end{lemma}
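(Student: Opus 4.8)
The plan is to reproduce, in the symmetrized setting on $L^2[-R,R]$, the argument of Lemma~\ref{transformation}. Fix $R>0$ and $\lambda\in\mathbb C$. First I would note that $\cal{E}(\cdot,\lambda)$ belongs to $L^2[-R,R]$. The crux is the identity
\[
\cal{E}(x,\lambda)=\bigl[(I+\hat{\cal{V}}_-)\,e^{i\lambda\,\cdot}\bigr](x)=e^{i\lambda x}+\int\limits_{-|x|}^{|x|}\hat{V}_-(x,u)\,e^{i\lambda u}\,du ,
\]
understood as an equality of $L^2[-R,R]$ functions. Granting this, I apply $I+\hat{\cal{L}}=(I+\hat{\cal{V}}_-)^{-1}$ to both sides; since $[\hat{\cal{L}}f](x)=\int_{-|x|}^{|x|}\widehat{L}(x,u)f(u)\,du$, the left side becomes $(I+\hat{\cal{L}})\cal{E}(\cdot,\lambda)$ and the right side becomes $e^{i\lambda\,\cdot}$, which is exactly the asserted formula $e^{i\lambda x}=\cal{E}(x,\lambda)+\int_{-|x|}^{|x|}\widehat{L}(x,u)\cal{E}(u,\lambda)\,du$.

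To verify the displayed identity I would use $\hat{V}_-(x,u)=-\hat\Gamma_{|x|}(x,u)$ for $(x,u)\in\hat\Omega_-$ (Theorems~\ref{volter1} and \ref{volter7}) together with the translation relation (\ref{translation-of-res}), $\hat\Gamma_r(x,y)=\Gamma_{2r}(r+x,r+y)$. For $x>0$ the change of variable $s=x+u$ turns $e^{i\lambda x}-\int_{-x}^{x}\hat\Gamma_x(x,u)e^{i\lambda u}\,du$ into $e^{-i\lambda x}\bigl(e^{2i\lambda x}-\int_0^{2x}\Gamma_{2x}(2x,s)e^{i\lambda s}\,ds\bigr)$, which equals $e^{-i\lambda x}P(2x,\lambda)=\cal{E}(x,\lambda)$ by (\ref{e3s2}). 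For $x=-t<0$ one has $\hat\Gamma_t(-t,u)=\Gamma_{2t}(0,t+u)$, and the same substitution $s=t+u$ gives $e^{-i\lambda t}\bigl(1-\int_0^{2t}\Gamma_{2t}(0,s)e^{i\lambda s}\,ds\bigr)=e^{-i\lambda t}P_*(2t,\lambda)$ by (\ref{e10s2}); for real $\lambda$ this coincides with $\overline{\cal{E}(t,\lambda)}=\cal{E}(-t,\lambda)$ by (\ref{starproperty}), and for general $\lambda\in\mathbb C$ it is the holomorphic continuation that defines $\cal{E}$ at negative arguments, so the identification persists.

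I expect the only point requiring care is the bookkeeping of the ``hat'' triangular structure, namely that here the underlying chain of orthoprojectors is $\{\Pi_{[-r,r]}\}$, so ``lower triangular'' means kernel supported in $\{|u|\le|x|\}$, and that all the manipulations above are read in the $L^2$ sense (as in Lemma~\ref{transformation} and in the discrete counterpart of Lemma~\ref{transformation-dirac}), not pointwise. Once the factorization identities of Section~\ref{sect-fact} and relation (\ref{translation-of-res}) are in hand, the rest is the formal computation sketched here.
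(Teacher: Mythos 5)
Your proposal is correct and follows essentially the same route as the paper: you establish $\cal{E}(x,\lambda)=(I+\hat{\cal{V}}_-)\exp(i\lambda x)$ via the formulas (\ref{e3s2}), (\ref{e10s2}), the translation relation (\ref{translation-of-res}) and Theorem~\ref{volter7}, and then invert using $I+\hat{\cal{L}}=(I+\hat{\cal{V}}_-)^{-1}$, exactly as the paper does (the paper merely phrases the negative-argument case through conjugation of the kernel rather than through $P_*$ and (\ref{starproperty}), which is the same computation). The only cosmetic difference is your remark about holomorphic continuation for complex $\lambda$; since the lemma is used in $L^2(\mathbb{R},d\sigma)$, real $\lambda$ suffices and this causes no issue.
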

\begin{proof}
 We have the following formula for ${\cal E }
(x,\lambda)$ if $x>0$

\begin{equation}
{\cal E}(x,\lambda )=P(2x,\lambda )\exp (-i\lambda x)=\left( \exp
(2i\lambda x)-\int\limits_{0}^{2x}\Gamma _{2x}(2x,s)\exp
(is\lambda )ds\right) \exp (-i\lambda x)
\end{equation}%
Doing the change of variables $s-x=t$ in the integral, we obtain
\begin{eqnarray}
{\cal E}(x,\lambda ) &=&\exp (i\lambda
x)-\int\limits_{-x}^{x}\Gamma
_{2x}(2x,x+t)\exp (i\lambda t)dt= \\
&=&\exp (i\lambda x)-\int\limits_{-x}^{x}\Gamma _{2x}(x-t,0)\exp
(i\lambda t)dt
\end{eqnarray}%
We also have
\begin{eqnarray}
{\cal E}(-x,\lambda ) =\exp (-i\lambda
x)-\int\limits_{-x}^{x}\Gamma _{2x}(0,x+t)\exp (i\lambda t)dt, x>0
\end{eqnarray}
Therefore, from (\ref{translation-of-res}) and the Theorem
\ref{volter7}, we get
$\cal{E}(x,\lambda)=(I+\hat{\cal{V}}_-)\exp(i\lambda x)$. The
Lemma then follow from $I+\hat{\cal{L}}=(I+\hat{\cal{V}}_-)^{-1}$.
\end{proof}
Clearly, this Lemma is an analog of Lemma \ref{transformation} but
for the different chain. Now, we are ready to relate Krein systems
to Dirac operators. But first we need the following

\begin{definition}
We say that  a non-decreasing function $\sigma _{d}(\lambda
),\lambda \in \mathbb{R}$ is the spectral measure for Dirac
operator (\ref{Dir}), if the following is true (see
\cite{levitan-2}, Chapter 8): for any $f_{1}\in
L^{2}(\mathbb{R}^{+})$ and $f_{2}\in L^{2}(\mathbb{R}^{+}) $, the
operator $\cal{F}$ given by
\begin{equation}
\left[\cal{F}f\right](\lambda)=\int\limits_0^\infty \Bigl( f_1(r)\varphi(r,\lambda)+f_2(r)%
\psi(r,\lambda) \Bigr) dr
\end{equation}
is unitary onto $L^2(\mathbb{R},d\sigma_d)$.
\end{definition}

The next Theorem establishes a further link between the Krein
systems and Dirac operators
\begin{theorem}
The measure $d\sigma_d(\lambda)=2d\sigma(\lambda)$ is the spectral
measure for Dirac operator. Moreover, the mapping
\[
f(x)\in L^{2}(\mathbb{R})\rightarrow \left[{\cal
W}f\right](\lambda )=\int\limits_{-\infty }^{\infty }f(x){\cal
E}(x,\lambda )dx
\]
is unitary onto  $L^{2}(\mathbb{R},d\sigma )$.\label{number2}
\end{theorem}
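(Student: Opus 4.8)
The plan is to follow exactly the pattern already used for Theorem \ref{theorem2s2}, but transplanted to the symmetric chain on $[-R,R]$ and then pushed to $R=\infty$. Concretely, I would first establish the analog of formula (\ref{e1s2n1}) on a symmetric interval: multiplying the accelerant relation (\ref{e2s2}), written for the displacement $x-y$ with $x,y\in[-r,r]$, by $\overline{f''(x)}f''(y)$ for $f\in C_0^\infty(\mathbb{R})$ supported in $[-r,r]$ and integrating by parts twice, one obtains
\begin{equation}
\int\limits_{-\infty}^\infty\left|\int\limits_{-r}^r f(x)\exp(i\lambda x)dx\right|^2 d\sigma(\lambda)=\|f\|_{L^2[-r,r]}^2+\int\limits_{-r}^r\int\limits_{-r}^r H(x-y)f(y)\overline{f(x)}\,dy\,dx=((I+\hat{\cal H}_r)f,f).\label{q-sym}
\end{equation}
This is just the translate of (\ref{sect2eq1}) from $[0,2r]$ to $[-r,r]$, so it follows by the change of variables $x\mapsto x+r$ and is valid by the same approximation argument; one should note that $\hat{\cal H}_r$ is the translate of $\cal H_{2r}$, which was already observed just before Lemma \ref{transformation-dirac}.

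Next I would use Lemma \ref{transformation-dirac}: for $f\in L^2[-r,r]$,
\[
\int\limits_{-r}^r \overline{f(x)}\exp(i\lambda x)\,dx=\big((I+\hat{\cal L})\cal E(\cdot,\lambda),\overline{f}\,\big)_{L^2[-r,r]}=\big(\cal E(\cdot,\lambda),(I+\hat{\cal L}^*)\overline f\,\big)_{L^2[-r,r]}.
\]
Substituting this into (\ref{q-sym}) and writing $g=(I+\hat{\cal U})\overline f$ (so $\overline f=(I+\hat{\cal U})^{-1}g$, which is invertible on $L^2[-r,r]$), the right-hand side of (\ref{q-sym}) becomes $((I+\hat{\cal L})^{-1}(I+\hat{\cal H}_r)(I+\hat{\cal U})^{-1}g,g)=\|g\|^2$, exactly as in the proof of (\ref{isom}). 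Hence
\[
\int\limits_{-\infty}^\infty\left|\int\limits_{-r}^r g(x)\cal E(x,\lambda)\,dx\right|^2 d\sigma(\lambda)=\|g\|_{L^2[-r,r]}^2
\]
for all $g\in L^2[-r,r]$ and all $r>0$; letting $r\to\infty$ shows that $\cal W$ is an isometry from $L^2(\mathbb{R})$ into $L^2(\mathbb{R},d\sigma)$. The polarization identity then gives the reconstruction formula $f(x)=\int_{-\infty}^\infty \overline{\cal E(x,\lambda)}\,(\cal Wf)(\lambda)\,d\sigma(\lambda)$, just as in the Lemma following Theorem \ref{theorem2s2}.

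The two remaining points are surjectivity of $\cal W$ and the identification with the Dirac spectral measure. For surjectivity I would argue that the range of $\cal W$ contains all Paley--Wiener classes: by the isometry and Lemma \ref{transformation-dirac} the span of $\{\cal E(\cdot,\lambda)\}$ maps onto the closure of $\bigcup_{r>0}\cal P_{[-r,r]}L^2(\mathbb{R})$, which is dense in $L^2(\mathbb{R},d\sigma)$ because $\int d\sigma(\lambda)/(1+\lambda^2)<\infty$ forces any $d\sigma$-orthogonality to a full two-sided Paley--Wiener scale to vanish (this is the symmetric, two-sided analog of the density statement used implicitly for $\cal O$; here, unlike the one-sided case, there is no Szeg\H o obstruction and $\cal W$ really is onto). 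Finally, to get that $d\sigma_d=2d\sigma$ is the Dirac spectral measure, I would pass from $\cal E$ to the pair $(\varphi,\psi)$ via $\varphi=\Re\cal E$, $\psi=\Im\cal E$ for real $\lambda$ and the symmetry $\cal E(-x,\lambda)=\overline{\cal E(x,\lambda)}$: splitting $f\in L^2(\mathbb{R})$ into its even and odd parts identifies $\cal W$ with the map $f_1\mapsto\int_0^\infty f_1\varphi$, $f_2\mapsto \int_0^\infty f_2\psi$ up to the factor $2$ coming from $d\sigma_d=2d\sigma$, and self-adjointness of $\cal D$ with $f_2(0)=0$ together with Lemma \ref{sect-dirac}'s eigenfunction relation (\ref{q4}) shows $\cal F$ is onto. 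I expect the main obstacle to be the surjectivity/density step: one must check carefully that the two-sided Paley--Wiener scale is dense in $L^2(\mathbb{R},d\sigma)$ with no analog of the Szeg\H o defect, and that the even/odd decomposition correctly matches $\cal W$ with the standard Dirac eigenfunction transform $\cal F$, including the normalization constant $2$.
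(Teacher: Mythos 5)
Your proposal is correct and follows essentially the same route as the paper: the symmetric-interval Plancherel identity from the accelerant, Lemma \ref{transformation-dirac} together with the factorization $(I+\hat{\cal L})(I+\hat{\cal U})$ to get the isometry, density of the two-sided Paley--Wiener scale (using (\ref{e2t1})) for surjectivity, and the even/odd decomposition to identify $\cal W$ with $2\cal F$ and $d\sigma_d=2d\sigma$. The surjectivity step you flag as the main obstacle is exactly the paper's argument and presents no Szeg\H{o}-type defect in the two-sided setting, so there is no gap.
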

\begin{proof}
Let us first show that $\cal{W}$ is an isometry map. Indeed, let
$f(x)\in L^2(-R,R)$. From (\ref{e1s2n1}), we get
\begin{equation}\label{identity-dirac}
((I+\hat{\cal{H}}_R) f,f)=\int\limits_{-\infty}^\infty \left|
\int\limits_{-R}^R \overline{f(t)}\exp(i\lambda
t)dt\right|^2d\sigma(\lambda)
\end{equation}
In the meantime, from Lemma \ref{transformation-dirac},
\[
\int\limits_{-R}^R \overline{f(t)}\exp(i\lambda
t)dt=((I+\hat{\cal{L}})\cal{E}(t,\lambda),f(t))_{L^2[-R,R]}=
(\cal{E}(t,\lambda),(I+\hat{\cal{L}}^*)f(t))_{L^2[-R,R]}
\]
Therefore, (\ref{identity-dirac}) gives
\[
\int\limits_{-\infty}^\infty \left|\,\int\limits_{-\infty}^\infty
\overline{g(t)}\cal{E}(t,\lambda)dt\right|^2d\sigma(\lambda)=
((I+\hat{\cal{L}})^{-1}(I+\hat{\cal{H}}_R)(I+\hat{\cal{U}})^{-1}g,g)=\|g\|^2
\]
where $g=(I+\hat{\cal{L}}^*)f=(I+\hat{\cal{U}})f$. Since
$I+\hat{\cal{U}}$ is invertible on $L^2[-R,R]$ and $R$ was chosen
arbitrarily, we learn that $\cal{W}$ is isometry. Now, let us show
that $\cal{W}$ is also unitary. Indeed, for any $R>0$ and $f(x)\in
L^2[-R,R]$, we get (Lemma \ref{transformation-dirac})
\[
\int\limits_{-R}^R f(x)\exp(i\lambda
x)dx=([(I+\hat{\cal{L}})\cal{E}](x,\lambda),\bar{f}(x))=(\cal{E}(x,\lambda),
[(I+\hat{\cal{U}})\bar{f}](x))
\]
Functions of that kind are dense in
$L^2(\mathbb{R},d\sigma(\lambda))$ because the span of
characteristic functions of the intervals $\{[a,b)\}$ are dense
and each of these characteristic functions can be approximated by
the Fourier transform of finitely supported $L^2(\mathbb{R})$
function due to the regularity condition (\ref{e2t1}). Therefore,
the range of $\cal{W}$ is the whole of
$L^2(\mathbb{R},d\sigma(\lambda))$. That means $\cal{W}$ is
unitary.

Now, we can conclude the proof of the Theorem. Take any
$f_{1}(r)\in L^{2}(\mathbb{R}^{+})$ and $f_{2}(r)\in
L^{2}(\mathbb{R}^{+})$. Let $f_{1}(-x)=f_{1}(x),\
f_{2}(-x)=-f_{2}(x),\ x>0$. Consider $f(x)=f_{1}(x)-if_{2}(x)$ on
$\mathbb{R}$. Function $\varphi $ is
even, $%
\psi $ is odd. So,
\begin{eqnarray*}
\left[{\cal W}f\right](\lambda ) &=&\int\limits_{-\infty }^{\infty
}f(x){\cal E}(x,\lambda
)dx=\int\limits_{-\infty }^{\infty }(f_{1}-if_{2})(\varphi +i\psi )dx= \\
&=&2\int\limits_{0}^{\infty }\left( f_{1}\varphi +f_{2}\psi
\right) dx=2\left[\cal{F}f\right](\lambda)
\end{eqnarray*}%
That proves $\cal{F}$ is unitary mapping to
$L^2(\mathbb{R},2d\sigma(\lambda))$. Since the range of $\cal{W}$
is the whole $L^2(\mathbb{R},d\sigma(\lambda))$, we can write
$\cal{W}^{-1}g=f(x)=f_1(x)-if_2(x)$, where
$f_1(x)=[f(x)+f(-x)]/2$, $f_2(x)=-[f(x)-f(-x)]/(2i)$ and $g$ is
arbitrary from $L^2(\mathbb{R},d\sigma)$. Then, $2\cal{F}f=g$ and
$\cal{F}$ is unitary.
\end{proof}
It is easy to show that the spectral measure for Dirac operator is
uniquely defined (see Lemma \ref{uniqueness}) in Appendix.

 As usual, the following representation can be easily
obtained from the Theorem:
\begin{equation}
\int\limits_{-\infty}^\infty \left[\begin{array}{cc}
\varphi(x,\lambda)\varphi(y,\lambda) & \varphi(x,\lambda)\psi(y,\lambda)\\
\psi(x,\lambda)\varphi(y,\lambda) & \psi(x,\lambda)\psi(y,\lambda)
\end{array}\right]d\sigma_d(\lambda)=\left[\begin{array}{cc}
\delta(x-y) & 0\\
0 & \delta(x-y)
\end{array}\right] \label{delta-function}
\end{equation} and this identity should be understood in the
weak-$[L^2(\mathbb{R}^+)]^2$ sense (i.e. it is true after
multiplication by $L^2$ functions and integration in $x$ and $y$).

In case $d\sigma_0 (\lambda )=d\lambda /(2\pi )$ discussed above, one has $%
a(r)=b(r)=0$, ${\cal E}(x,\lambda )=\exp (i\lambda x)$, $d\sigma
_{d}(\lambda )=d\lambda /\pi $. The map $\cal{W}$ is then the
standard Fourier transform.

The Dirac operator plays the role of the so-called CMV matrix for
polynomials orthogonal on the unit circle. Many results about the
Krein systems and functions $P(r,\lambda)$ can be viewed from that
perspective.

It is also quite helpful to introduce the  auxiliary dissipative
operator. For any $R>0$, consider the operator $\cal{D}_R$ on
$L^2[0,R]\times L^2[0,R]$ given by the differential system
(\ref{Dir}) and the boundary conditions:
\begin{equation}
f_2(0)=0, f_1(R)+if_2(R)=0 \label{bc}
\end{equation}
 The domain of definition for that
operator consists in functions from $W^{1,2}[0,R]\times
W^{1,2}[0,R]$ satisfying (\ref{bc}). It is an elementary
calculation to show that $\cal{D}_R$ is dissipative since
\[
\Im (\cal{D}_R f,f)=2|f_2(R)|^2\geq 0
\]
for all $f$ in the domain of $\cal{D}$. This operator has compact
resolvent, an integral operator that can be written explicitly in
terms of the solutions to the corresponding equation.  The formula
is as follows and can be easily checked
\[
(\cal{D}_{R} -\lambda)^{-1} (f_1, f_2)^t= \left[\begin{array}{c}
\varphi(r,\lambda)\\
\psi(r,\lambda)
\end{array}\right]\int\limits_r^R
(-Z_{12}(s,\lambda)f_1(s)+Z_{11}(s,\lambda)f_2(s))ds
\]
\[
+ \left[\begin{array}{c}
y_1(r,\lambda)\\
y_2(r,\lambda)
\end{array}\right]
\int\limits_0^r
(Z_{22}(s,\lambda)f_1(s)-Z_{21}(s,\lambda)f_2(s))ds
\]
where
\begin{equation}
(y_1(r,\lambda),y_2(r,\lambda))^t=X_d(r,\lambda)X_d^{-1}(R,\lambda)(1,i)^t
\end{equation}
and
\[
Z(r,\lambda)=\left[\begin{array}{cc}
\varphi(r,\lambda) & y_1(r,\lambda)\\
\psi(r,\lambda) & y_2(r,\lambda)
\end{array}\right]^{-1}
\]
Since $(y_1,y_2)^t$ is solution to the Cauchy problem, it always
exists. Therefore, the kernel of resolvent has a pole at point
$\lambda$ if and only if
\[
0=\det\left[\begin{array}{cc}
\varphi(R,\lambda) & 1\\
\psi(R,\lambda) & i
\end{array}\right]=i\varphi(R,\lambda)-\psi(R,\lambda)=i\exp(-i\lambda
R)P(2R,\lambda)
\]

Thus, the spectrum of this operator is discrete and  coincides
with the zeroes of $P(2R,\lambda)$.  Since the dissipative
operator has the spectrum in  $\mathbb{C}^+$, this is another,
operator-theoretic explanation to the fact that all zeroes of
$P(r,\lambda)$ are in the upper half-plane. The zeroes of
$P_*(2R,\lambda)$, on the other hand, are naturally characterized
by the spectrum of the operator $\cal{D}_R^*$, adjoint to
$\cal{D}_R$. The symmetry of zeroes for $P(r,\lambda)$ and
$P_*(r,\lambda)$ is now a consequence of a simple fact in operator
theory.

One can say even more, infact

\begin{lemma}
The following representation is true
\begin{equation}
P_*(2R,\lambda)=\exp \left[-\int\limits_0^R [a(s)+ib(s)]
\exp(2i\lambda s) ds \right] \det {}_2  \left(
\frac{\cal{D}_R^*-\lambda}{\cal{D}_{0,R}^*-\lambda}\right)
\label{link2}
\end{equation}
 where $\cal{D}_{0,R}$ denotes
the operator with $a(r)=b(r)=0$. The regularized determinant is
understood  as the regularized determinants of operator $I+
(\cal{D}_{0,R}^*-\lambda)^{-1}Q$ (see \cite{simon-traces}, p.
106).
\end{lemma}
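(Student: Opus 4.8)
The plan is to show that both sides are entire functions of $\lambda$ with exactly the same zeros (with multiplicity) and the same limit $1$ along the positive imaginary axis, and that their ratio has exponential type zero; this forces equality. First I would fix the analytic set-up. Since the spectrum of $\cal{D}_R$ equals the zero set of $P(2R,\lambda)$, the free operator $\cal{D}_{0,R}$ (with $a=b=0$, $P(2R,\lambda)=e^{2i\lambda R}$) has empty spectrum, so $(\cal{D}_{0,R}^*-\lambda)^{-1}$ is an entire operator-valued function, whose kernel is the explicit bounded $2\times2$ Green's matrix $G_0(r,s,\lambda)$ associated to the boundary conditions $f_2(0)=0$, $f_1(R)-if_2(R)=0$. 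Because $a,b\in L^2_{\rm loc}$, the function $(r,s)\mapsto G_0(r,s,\lambda)Q(s)$ is square integrable on $[0,R]^2$ (one has $\int\!\!\int|G_0(r,s,\lambda)Q(s)|^2\,drds\le C\|Q\|_{L^2[0,R]}^2$), so $K(\lambda):=(\cal{D}_{0,R}^*-\lambda)^{-1}Q$ lies in $\cal{S}^2$ for every $\lambda$ but need not be trace class; this is precisely why the Hilbert--Carleman determinant $\det{}_2(I+K(\lambda))$ must be used, and it is an entire function of $\lambda$. From the factorization $\cal{D}_R^*-\lambda=(\cal{D}_{0,R}^*-\lambda)(I+K(\lambda))$ and the standard theory of regularized determinants (\cite{simon-traces}), the zeros of $\det{}_2(I+K(\lambda))$ are exactly the eigenvalues of $\cal{D}_R^*$, each zero having order equal to the algebraic multiplicity of the eigenvalue.

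Next I would match these zeros with those of $P_*(2R,\lambda)$. Repeating, for $\cal{D}_R^*$, the resolvent computation carried out above for $\cal{D}_R$, one sees that the resolvent kernel of $\cal{D}_R^*$ has a pole exactly where the relevant characteristic (Evans) determinant vanishes, and this determinant is an entire, zero-free multiple of $P_*(2R,\lambda)$ (the mirror of the identity $i\varphi(R,\lambda)-\psi(R,\lambda)=ie^{-i\lambda R}P(2R,\lambda)$); inspecting the rank of the residue of the resolvent shows that orders of zeros correspond to algebraic multiplicities here as well. Since the prefactor $\exp\!\left[\int_0^R(a+ib)e^{2i\lambda s}\,ds\right]$ is entire and never zero, the quotient
\[
\Phi(\lambda):=\frac{P_*(2R,\lambda)\,\exp\!\left[\int_0^R\bigl(a(s)+ib(s)\bigr)e^{2i\lambda s}\,ds\right]}{\det{}_2\!\left(\frac{\cal{D}_R^*-\lambda}{\cal{D}_{0,R}^*-\lambda}\right)}
\]
is entire and zero-free, hence $\Phi=e^{g}$ for some entire $g$. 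Letting $\lambda=it$ with $t\to+\infty$ one checks $P_*(2R,it)\to1$ (the integral term in (\ref{kuku}) vanishes), $\int_0^R(a+ib)e^{-2ts}\,ds\to0$, and $\|K(it)\|_{\cal{S}^2}\to0$ so $\det{}_2\to1$; therefore $\Phi(it)\to1$.

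It remains to prove $g$ is affine, $g(\lambda)=c_0+c_1\lambda$; then $\Phi(it)\to1$ forces $c_1=0$ and then $e^{c_0}=1$, so $g\equiv0$ by continuity and we are done. There are two routes. The direct one is a Phragm\'en--Lindel\"of argument: in $\mathbb{C}^+$ each of the three factors of $\Phi$ is bounded (all tend to $1$), while in $\mathbb{C}^-$ one bounds $\log|P_*(2R,\lambda)|$ and $\log|\det{}_2(\cdots)|$ against $\log|\text{prefactor}|$ and against each other so that $\log|\Phi|=O(|\lambda|)$ on rays, which together with boundedness on $\mathbb{R}$ caps $g$ by a degree-one polynomial. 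The route I would actually carry out is a variation-of-potential argument that bypasses Hadamard factorization: deform $Q_\theta$, $\theta\in[0,1]$, from $0$ to $Q$, and show $\partial_\theta\log\Phi_\theta(\lambda)\equiv0$ by computing $\partial_\theta\log\det{}_2$ as a regularized trace of $\dot Q_\theta$ against the Green's matrix of $\cal{D}_{R,\theta}^*$ (whose diagonal is expressed through $\varphi,\psi$, equivalently through $P,P_*$) and verifying it equals $\partial_\theta$ of $\log P_*(2R,\lambda)+\int_0^R(a_\theta+ib_\theta)e^{2i\lambda s}\,ds$, the latter computed from the Krein system by Duhamel's formula; since $\Phi_0\equiv1$ this gives $\Phi\equiv1$.

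I expect the main obstacle to be the identification of these two derivatives — concretely, confirming that the first-order Hilbert--Carleman counterterm is exactly $\int_0^R(a+ib)e^{2i\lambda s}\,ds$, which requires computing the diagonal value $G_0(s,s,\lambda)$ of the free Dirac Green's matrix under the dissipative boundary conditions and contracting it with $Q(s)=\begin{pmatrix}-b&-a\\-a&b\end{pmatrix}$ (note $\operatorname{tr}Q(s)=0$, so the counterterm arises purely from the off-diagonal structure of $G_0(s,s,\lambda)$), together with keeping track of the regularization since $K(\lambda)$ is only Hilbert--Schmidt. Everything else reduces to facts already established in the excerpt (the zero--eigenvalue correspondence, the exponential type bound on $P_*$, the orthogonality/Christoffel--Darboux relations behind the variational formula) and to standard properties of $\det{}_2$.
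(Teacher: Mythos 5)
Your reduction to ``entire, zero-free ratio $\Phi=e^{g}$ with $g$ affine'' is where the argument breaks. In your first route, the claim $\log|\Phi(\lambda)|=O(|\lambda|)$ on rays in $\mathbb{C}^-$ is not justified and cannot be obtained from the bounds you have available: the only a priori estimate for a Hilbert--Schmidt perturbation is $|\det{}_2(I+K(\lambda))|\leq \exp\bigl(\tfrac12\|K(\lambda)\|^2_{\cal{S}^2}\bigr)$, and for $\Im\lambda\to-\infty$ the free Green kernel of $\cal{D}_{0,R}^*$ grows like $e^{2R|\Im\lambda|}$, so this bound is doubly exponential in $|\Im\lambda|$ -- far too weak to cap $g$ at degree one. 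Worse, all zeros of both $P_*(2R,\lambda)$ and $\det{}_2$ lie in $\mathbb{C}^-$, so lower-bounding the denominator there (which you need for an upper bound on $\Phi$) would require exactly the type/order information you are trying to prove. In your second route, the identity
\[
{\rm Tr}\,\bigl[(G_\theta-G_0)\dot Q_\theta\bigr]
=\partial_\theta \log P_{*,\theta}(2R,\lambda)+\int\limits_0^R [a(s)+ib(s)]e^{2i\lambda s}\,ds
\]
(with $G_\theta=(\cal{D}_{R,\theta}^*-\lambda)^{-1}$) is not a technical obstacle to be checked at the end -- it \emph{is} the lemma in differentiated form, and you leave it unproved. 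Until the Green-function-diagonal/Jost-type computation behind it is actually carried out (including the evaluation of $G_0(s,s,\lambda)$ under the dissipative boundary conditions and the variation-of-parameters formula for $\partial_\theta P_{*,\theta}$), the proposal does not contain a proof. A secondary soft spot: ``rank of the residue'' of the resolvent controls geometric, not algebraic, multiplicity, so the matching of zero orders of $P_*(2R,\lambda)$ with algebraic multiplicities of eigenvalues of $\cal{D}_R^*$ also needs an argument.

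For contrast, the paper avoids both difficulties by freezing $\lambda$ and working in the coupling constant: with $Q_\mu=\mu Q$ one compares $P_*(2R,\lambda,\mu)$, which is entire of exponential type in $\mu$ (iterate the Volterra integral equation), with $\det{}_2(I+\mu(\cal{D}_{0,R}^*-\lambda)^{-1}Q)$, which by the canonical-product theorem for $\cal{S}^2$ perturbations (\cite{simon-traces}, Theorem 9.2(a)) equals $\prod_n(1-\mu/\mu_n)e^{\mu/\mu_n}$ with \emph{no} exponential factor. Since the two functions of $\mu$ have the same zeros, Hadamard factorization reduces everything to the single Taylor coefficient $c_1=-\int_0^{2R}A(s)e^{i\lambda s}ds$ at $\mu=0$, which is immediate from the first iteration of the Krein system -- no growth estimates in $\lambda$ and no trace formula at interior values of the coupling are needed. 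If you want to salvage your plan, the cleanest fix is to adopt this $\mu$-variable argument; otherwise you must genuinely prove the trace identity displayed above for every $\theta\in[0,1]$.
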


\begin{proof} A simple calculation shows that the spectrum of $\cal{D}_{0,R}^*$ is empty
(infact, is equal to infinity). Therefore,
$(\cal{D}_{0,R}^*-\lambda)^{-1}$ is always well-defined. We have

\[
(\cal{D}^*_{0,R} -\lambda)^{-1} (f_1, f_2)^t= \left[
\begin{array}{c}
\cos(r\lambda)\\
\sin(r\lambda)
\end{array}
\right] \int\limits_r^R [i\exp(i\lambda s)f_1(s)+\exp(i\lambda
s)f_2(s)]ds
\]
\begin{equation}
+ \left[
\begin{array}{c}
\exp(ir\lambda)\\
-i\exp(ir\lambda)
\end{array}
\right] \int\limits_0^r [i\cos (s\lambda)f_1(s)+i\sin
(s\lambda)f_2(s)]ds \label{greenk}
\end{equation}
Simple calculations show that $(\cal{D}_{0,R}^*-\lambda)^{-1}Q\in
\cal{S}^2$ and the regularized determinant exists.

Then, we use the following trick (see \cite{simon-traces}, p. 75).
Introduce the so-called ``coupling constant" $\mu\in \mathbb{C}$
and the potentials $Q_\mu=\mu Q$. Then, consider the corresponding
functions $P_*(2R,\lambda,\mu)$ and
\[
f(\lambda,\mu)= \exp \left[-\mu\int\limits_0^R [a(s)+ib(s)]
\exp(2i\lambda s) ds \right] \det {}_2  (I+\mu
(\cal{D}_{0,R}^*-\lambda)^{-1}Q)
\]
It is easy to see that these functions have the same zeroes. That
follows from the properties of regularized determinants and
relation between spectrum of $\cal{D}_{0,R}^*$ and zeroes of
$P_*(2R,\lambda)$ discussed above. For fixed $\lambda$, the
function $P_*(2R,\lambda,\mu)$ is of exponential type in $\mu$.
Therefore, it can be factored
\begin{equation}\label{factor-p-formula}
P_*(2R,\lambda,\mu)=\exp(c_1\mu+c_0) \prod_{n=1}^\infty
(1-\mu/\mu_n)\exp(\mu/\mu_n)
\end{equation}
where the constants $c_0,c_1$ and zeroes $\mu_n$ all depend on
$\lambda$. Since $P_*(2R,\lambda,0)=1$, we get $c_0=0$. Taking
logarithm of both sides in (\ref{factor-p-formula}), and comparing
the Taylor coefficients in front of $\mu$, we get
\[
c_1=-\int\limits_0^{2R} \exp(i\lambda s) A(s) ds=-\int\limits_0^R
[a(s)+ib(s)] \exp(2i\lambda s) ds
\]
On the other hand, for $\det_2$, we have the following
factorization result (see \cite{simon-traces}, Theorem 9.2, part
(a))
\[
\det {}_2  (I+\mu
(\cal{D}_{0,R}^*-\lambda)^{-1}Q)=\prod_{n=1}^\infty
(1-\mu/\mu_n)\exp(\mu/\mu_n)
\]
Comparing these two expansions, we get the statement of the Lemma
(take $\mu=1$).
\end{proof}

The determinantal representations are usually very useful in
practice. They provide the natural factorization of entire
functions of interest. In case potential $A(r)$ is small at
infinity (say $A(y)\in L^p(\mathbb{R}^+), p<\infty$), one can get
asymptotical expansion of any order by using the further
regularization of $\det_2$ involving $\det_3, \det_4, \ldots$.
Formulas for the kernel $K(x,y)$ show that it has discontinuity on
the diagonal. Therefore, one could have used the Carleman-Hilbert
determinant instead of $\det_2$ regularization.

Since we have determinantal formula for $P_*(2R,\lambda)$ for
finite $R$, we might hope to get analogous result for
$\Pi_\alpha(\lambda)$ in case $A(r)\in L^2(\mathbb{R}^+)$ by just
taking $R\to \infty$.

\begin{theorem}
If $A(r)\in L^2(\mathbb{R}^+)$, then $
(\cal{D}_{0}-\lambda)^{-1}Q\in \cal{S}^2$ and
\begin{equation}
 \Pi_\alpha(\lambda)= \exp \left[-\int\limits_0^\infty [a(s)+ib(s)]
\exp(2i\lambda s) ds \right] \det {}_2  \left(
\frac{\cal{D}-\lambda}{\cal{D}_0-\lambda}\right), \lambda\in
\mathbb{C}^+ \label{link3}
\end{equation}
Here $\cal{D}_0$ denotes the free Dirac operator, i.e. $\cal{D}$
with $a(r)=b(r)=0$.
\end{theorem}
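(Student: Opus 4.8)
The plan is to obtain the infinite-volume formula (\ref{link3}) by passing to the limit $R\to\infty$ in the finite-$R$ determinantal representation (\ref{link2}), but working with $\cal{D}$ and $\cal{D}_0$ directly rather than $\cal{D}_R,\cal{D}_{0,R}$. First I would verify that $A(r)\in L^2(\mathbb{R}^+)$ forces $(\cal{D}_0-\lambda)^{-1}Q\in\cal{S}^2$ for $\lambda\in\mathbb{C}^+$: the free resolvent kernel $(\cal{D}_0-\lambda)^{-1}$ is an explicit integral operator (the $R=\infty$ version of (\ref{greenk}), which makes sense because $\cal{D}_0$ on $L^2(\mathbb{R}^+)^2$ with the boundary condition $f_2(0)=0$ has resolvent decaying like $e^{i\lambda(s-r)}$ for $\Im\lambda>0$), and $Q$ is the multiplication operator by the matrix (\ref{dir-pote}) with entries $a(r)=2\Re A(2r)$, $b(r)=2\Im A(2r)\in L^2(\mathbb{R}^+)$. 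A direct Hilbert--Schmidt norm computation of the product kernel, using $\|Q(\cdot)\|\in L^2$ and the exponential decay of the free Green's function off the diagonal, gives $\|(\cal{D}_0-\lambda)^{-1}Q\|_{\cal{S}^2}^2\leq C(\lambda)\|A\|_2^2$, so that $\det_2(I+(\cal{D}_0-\lambda)^{-1}Q)=\det_2((\cal{D}-\lambda)/(\cal{D}_0-\lambda))$ is well defined.

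Next I would set up the limiting argument. Introduce the cutoff potentials $Q^{(R)}=Q\cdot\chi_{[0,R]}$, corresponding to $A^{(R)}(r)=A(r)\chi_{[0,R]}(r)$, and note that on $[0,R]$ the polynomial $P_*^{(R)}(2R,\lambda)$ of the truncated Krein system equals $\Pi_\alpha^{(R)}(\lambda)$ — more precisely, since $A^{(R)}$ vanishes past $R$, the solution $P_*(r,\lambda)$ stops evolving for $r>R$, so $P_*^{(R)}(2R,\lambda)$ is the value of $\Pi_\alpha$ for the truncated problem. By Theorem~\ref{main-lsz} applied to $A^{(R)}$ (or directly from the integral equation (\ref{int-eq-l3})), $P_*^{(R)}(2R,\lambda)\to \Pi_\alpha(\lambda)$ uniformly on compacts of $\mathbb{C}^+$ as $R\to\infty$, because $\|A-A^{(R)}\|_2\to 0$ and the Gronwall-type bounds in the proof of Theorem~\ref{main-lsz} are continuous in the $L^2$ datum. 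Simultaneously I would show the right-hand side of (\ref{link2}) for the truncated operator converges to the right-hand side of (\ref{link3}): the exponential prefactor converges trivially since $\int_0^R[a(s)+ib(s)]e^{2i\lambda s}ds=\int_0^{2R}A(s)e^{i\lambda s}ds\to\int_0^\infty A(s)e^{i\lambda s}ds$ (absolutely, as $A\in L^2$ and $e^{i\lambda\cdot}\in L^2(\mathbb{R}^+)$ for $\Im\lambda>0$), and the regularized determinant is continuous in the $\cal{S}^2$ topology, so it suffices to check $(\cal{D}_{0}-\lambda)^{-1}Q^{(R)}\to(\cal{D}_0-\lambda)^{-1}Q$ in $\cal{S}^2$, which follows from $\|Q-Q^{(R)}\|$ having $L^2$ norm tending to $0$ together with the Hilbert--Schmidt estimate above.

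There is one subtlety I would need to address carefully, and this is the main obstacle: the identity (\ref{link2}) is stated for the dissipative operator $\cal{D}_R^*$ on the finite interval $[0,R]$ with the boundary conditions (\ref{bc}), not for the half-line operator $\cal{D}$ restricted/truncated in potential. So I must reconcile two different truncations — truncating the potential versus truncating the interval — at the level of the regularized determinants. The cleanest route is to argue that for the potential-truncated operator on the half-line, $\det_2(I+(\cal{D}_0-\lambda)^{-1}Q^{(R)})$ coincides with the finite-interval determinant $\det_2(I+(\cal{D}_{0,R}^*-\lambda)^{-1}Q)$ up to an explicit elementary factor coming from the free pieces on $[R,\infty)$ (on which the potential vanishes and the free resolvent contributes nothing to the $\det_2$), and that the spectra match: the zeros of $P_*(2R,\lambda)$ are exactly the eigenvalues of $\cal{D}_R^*$ by the resolvent-pole computation already in the excerpt, and these are unchanged by extending the potential by zero. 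With that bookkeeping done, (\ref{link2}) transports to the half-line truncated operator, and the limit $R\to\infty$ yields (\ref{link3}). A secondary point is verifying uniform-in-$R$ $\cal{S}^2$ bounds so that no mass escapes to infinity in the determinant; this is guaranteed by $A\in L^2(\mathbb{R}^+)$ (not merely $L^2_{\mathrm{loc}}$), which is precisely the hypothesis.
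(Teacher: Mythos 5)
Your proposal is correct and follows essentially the same route as the paper: establish the Hilbert--Schmidt property from the explicit kernel of $(\cal{D}_0-\lambda)^{-1}Q$, then pass to the limit $R\to\infty$ in the finite-$R$ identity (\ref{link2}), using $P_*(2R,\lambda)\to\Pi_\alpha(\lambda)$ (Theorem \ref{main-lsz}) together with $\cal{S}^2$-continuity of $\det{}_2$. The ``main obstacle'' you flag is resolved in the paper simply by checking that $\det{}_2(I+(\cal{D}_{0,R}^*-\lambda)^{-1}Q)\to\det{}_2(I+(\cal{D}_0-\lambda)^{-1}Q)$; since the kernel of the interval operator is exactly the restriction of the half-line kernel $K_0(x,y,\lambda)$, your ``explicit elementary factor'' is just $1$.
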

\begin{proof}
The integral operator $ (\cal{D}_{0}-\lambda)^{-1}Q$ has the
following kernel $K_0(x,y,\lambda)$:

\[
K_0(x,y,\lambda) =\left[
\begin{array}{cc}
-e^{i\lambda y}\cos(\lambda x) [a(y)+ib(y)] &
e^{i\lambda y}\cos(\lambda x) [b(y)-ia(y)]\\
-e^{i\lambda y}\sin(\lambda x) [a(y)+ib(y)] & e^{i\lambda
y}\sin(\lambda x) [b(y)-ia(y)]
\end{array}
\right]
\]
if $y>x>0$ and
\[
K_0(x,y,\lambda)= \left[
\begin{array}{cc}
-ie^{i\lambda x} [\cos(\lambda y)b(y)+\sin(\lambda y)a(y)] &
ie^{i\lambda x} [\sin(\lambda y) b(y)-\cos(\lambda y)a(y)]\\
-e^{i\lambda x} [\cos(\lambda y)b(y)+\sin(\lambda y)a(y)] &
e^{i\lambda x} [\sin(\lambda y)b(y)-a(y)\cos(\lambda y)]
\end{array}
\right]
\]
if $0<y<x$. Since $A(r)\in L^2(\mathbb{R}^+)$,  we have
$(\cal{D}_{0}-\lambda)^{-1}Q \in \cal{S}^2$ for any $\lambda\in
\mathbb{C}^+$ and the regularized determinant exists. Now, fix
$\lambda\in \mathbb{C}^+$. The function $\Pi_*(2R,\lambda)\to
\Pi_\alpha(\lambda)$ as $R\to\infty$ (see Theorem \ref{main-lsz}).
On the other hand, one can easily check that $\det {}_2  (I+
(\cal{D}_{0,R}^*-\lambda)^{-1}Q)\to \det {}_2  (I+
Q(\cal{D}_{0}-\lambda)^{-1}Q)$ as well.
\end{proof}


\smallskip
Now, let us study the wave operators for $\cal{D}$. The following
result establishes a connection between the stationary and
non-stationary scattering approaches.
\begin{theorem}
If $a(x), b(x)\in L^{2}(\mathbb{R}^+)$, then the wave operators
\[
\Omega _{\pm }{f}=\lim_{t\rightarrow \mp \infty
}e^{it\cal{D}}e^{-it\cal{D}_{0}}{f} \] exist. The limit is
understood in the strong sense, ${f}=(f_1,f_2)^t\in
[L^2(\mathbb{R}^+)]^2$. \label{wave-op}
\end{theorem}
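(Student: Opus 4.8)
The plan is to build the wave operators \emph{stationarily}, out of the two spectral transforms of Section~\ref{sect-dirac}, and then to identify them with the time-dependent strong limits; the quantitative decay in Lemma~\ref{decay-improved} is exactly the ingredient that makes the $L^{2}$ case work, since the naive Cook argument just barely fails at this threshold.

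First I would reduce the problem. As $e^{it\cal{D}}e^{-it\cal{D}_{0}}$ is unitary for every $t$, it suffices to exhibit the strong limit on a dense set of initial data; the limit is then automatically isometric, and its range lies in the absolutely continuous subspace of $\cal{D}$. Since $a,b\in L^{2}(\mathbb{R}^{+})$ we have $A(r)\in L^{2}(\mathbb{R}^{+})$, so Theorem~\ref{main-lsz} applies: we are in the Szeg\H{o} case, $P_{*}(r,\lambda)\to\Pi_{\alpha}(\lambda)$ uniformly on compacts of $\mathbb{C}^{+}$, the absolutely continuous part of $d\sigma$ has density $2\pi\sigma'=|\Pi_{\alpha}|^{-2}$, and the singular part $d\sigma_{s}$ is finite and carried by a Lebesgue-null set $E$. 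Recall from Theorem~\ref{number2} the unitary $\cal{W}\colon[L^{2}(\mathbb{R})]^{2}\to L^{2}(\mathbb{R},d\sigma)$ with kernel $\cal{E}(x,\lambda)=e^{-i\lambda x}P(2x,\lambda)$, and that in the free case $\cal{E}_{0}(x,\lambda)=e^{i\lambda x}$ and $\cal{W}_{0}$ is the Fourier transform onto $L^{2}(\mathbb{R},d\lambda/(2\pi))$.

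Next, the Jost asymptotics. For real $\lambda$ one has $\cal{E}(x,\lambda)=e^{i\lambda x}\overline{P_{*}(2x,\lambda)}$ by (\ref{starproperty}), so Lemma~\ref{decay-improved} (with $r=2x$) shows that $\cal{E}(x,\lambda)\to e^{i\lambda x}\overline{\Pi_{\alpha}(\lambda)}$ on $E^{c}$ in the weighted $L^{2}$ sense and that the mass of $\cal{E}(x,\cdot)$ on $E$ vanishes, both with rate $\int_{2x}^{\infty}|A(s)|^{2}ds\to 0$ as $x\to+\infty$; i.e.\ the distorted plane waves converge to $\overline{\Pi_{\alpha}(\lambda)}\,\cal{E}_{0}(x,\lambda)$. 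This suggests setting
\[
\Omega f:=\cal{W}^{-1}\bigl[\chi_{E^{c}}(\lambda)\,\overline{\Pi_{\alpha}(\lambda)}\,(\cal{W}_{0}f)(\lambda)\bigr],
\]
and, using $|\Pi_{\alpha}|^{2}\,2\pi\sigma'=1$ a.e.\ together with $|E|=0$, one checks directly that $\|\Omega f\|=\|f\|$, so $\Omega$ is an isometry onto the absolutely continuous subspace of $\cal{D}$. A twin operator, built from the $e^{-i\lambda x}$-component of $\cal{E}(x,\lambda)$, plays the same role for $t\to-\infty$.

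The heart of the matter is to show that $\Omega$ is the strong limit. For $f$ with $\cal{W}_{0}f$ smooth and compactly supported I would compute $\cal{W}\bigl(e^{it\cal{D}}e^{-it\cal{D}_{0}}f\bigr)(\lambda)=e^{it\lambda}\int\cal{E}(x,\lambda)\,(e^{-it\cal{D}_{0}}f)(x)\,dx$ and insert the free evolution, which after passage to the symmetric extension is a translation, so for such $f$ the mass of $e^{-it\cal{D}_{0}}f$ concentrates on the region $\{r\sim|t|\}$; replacing $\cal{E}(x,\lambda)$ there by its leading term $e^{i\lambda x}\overline{\Pi_{\alpha}(\lambda)}$ collapses the $x$-integral and reproduces exactly $\cal{W}(\Omega f)$, while the correction governed by $\cal{E}(x,\lambda)-e^{i\lambda x}\overline{\Pi_{\alpha}(\lambda)}$ is supported where $x\sim|t|\to\infty$, and the estimate above drives it to $0$ in $L^{2}(d\sigma)$ after a Riemann--Lebesgue/stationary-phase bound against the smooth amplitude $\cal{W}_{0}f$. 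This gives $\Omega_{-}=\Omega$ on a dense set, hence everywhere, and $\Omega_{+}$ similarly. The main obstacle is precisely this identification: the naive Cook estimate controls only $\int_{0}^{\infty}\bigl(\int_{|r-t|\le N}(a^{2}+b^{2})\bigr)^{1/2}dt$, which need not converge for merely $L^{2}$ coefficients, so one genuinely uses the rate $\|A\|_{L^{2}[r,\infty)}^{2}$ of Lemma~\ref{decay-improved}. (An alternative, avoiding the stationary bookkeeping: truncate to $A_{N}=A\chi_{[0,N]}\in L^{1}(\mathbb{R}^{+})$, obtain $\Omega_{\pm}^{(N)}$ from the easy Cook argument, and use Lemma~\ref{decay-improved} to show $(\Omega_{\pm}^{(N)})_{N}$ is Cauchy and that its limit intertwines $e^{it\cal{D}}$ with $e^{-it\cal{D}_{0}}$ via a Duhamel estimate.)
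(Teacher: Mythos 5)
Your overall strategy is the right one and lands exactly where the paper does: reduce to a dense set of smooth, compactly supported data by unitarity, use that the free evolution is (after the even/odd extension) a translation, and identify the limit as multiplication by $\Pi_\alpha(\lambda)\chi_{E^c}(\lambda)$ — this is precisely the paper's formula (\ref{form}) — with the error governed by the tail $\int_{2t}^\infty|A(s)|^2ds$. Where you diverge is the mechanism of the error estimate. The paper first integrates by parts in the translation variable using $P_*'=-AP$, which splits $\tfrac12\int_{-a}^a f(|s|)e^{-i\lambda s}P_*(2t+2s,\lambda)\,ds$ into a boundary term $P_*(2t+2a,\lambda)\int_0^a f(\tau)\cos(\lambda\tau)d\tau$ — handled by the \emph{weighted} asymptotics of Lemma \ref{asymp-l2}, the weight $(\lambda^2+1)^{-1}$ being paid for by the $(\lambda^2+1)^{-1/2}$ decay of $\int_{-a}^s f(|\tau|)e^{-i\lambda\tau}d\tau$ — plus an $A$-weighted remainder killed by Minkowski, Lemma \ref{kernel-diag}, and $\int_{-a}^a|A(2t+2s)|ds\to0$. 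You instead subtract the limiting kernel directly under the integral. That can work, but not quite as you set it up: Lemma \ref{decay-improved} controls $P_*(r,\cdot)-\Pi_\alpha\chi_{E^c}$ only in the norm weighted by $(\lambda^2+1)^{-1}$, and once you have applied Minkowski pointwise in $x$ the oscillation of the amplitude is gone, so the ``Riemann--Lebesgue/stationary-phase bound against the smooth amplitude'' you invoke to trade the weight away has nothing left to act on. As written, the key quantitative step is not justified by the lemma you cite.

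The good news is that your route closes, and in fact more cheaply than the paper's, once you replace Lemma \ref{decay-improved} by an exact unweighted identity: by (\ref{integral-of-p}) and the computation in Theorem \ref{trace-formula1}, one has $P_*(r,\lambda)-\Pi_\alpha(\lambda)\chi_{E^c}(\lambda)=\bigl[\cal{O}\bigl(A\chi_{[r,\infty)}\bigr)\bigr](\lambda)$ as elements of $L^2(\mathbb{R},d\sigma)$, so the Plancherel identity of Theorem \ref{theorem2s2} gives
\[
\int_{-\infty}^\infty\bigl|P_*(r,\lambda)-\Pi_\alpha(\lambda)\chi_{E^c}(\lambda)\bigr|^2\,d\sigma(\lambda)=\int_r^\infty|A(s)|^2\,ds .
\]
With this, Minkowski alone yields
\[
\Bigl\|\tfrac12\int_{-a}^a f(|s|)e^{-i\lambda s}\bigl[P_*(2t+2s,\lambda)-\Pi_\alpha(\lambda)\chi_{E^c}(\lambda)\bigr]ds\Bigr\|_{2,\sigma}\le \|f\|_{L^1}\Bigl(\int_{2t-2a}^\infty|A(s)|^2ds\Bigr)^{1/2}\to 0,
\]
with no smoothness of $f$ and no weighted lemmas needed. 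Either repair your argument this way, or fall back on the paper's integration by parts; your truncation-plus-Cook alternative runs into the same issue, since the Cauchy property of $\Omega_\pm^{(N)}$ in $N$ requires exactly the same estimate.
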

\begin{proof}
The free evolution of $\cal{D}_0$ is given in Lemma
\ref{evolution} from Appendix. It is actually a shift after some
unitary transformations. Since each of the operators
$e^{it\cal{D}},e^{-it\cal{D}_{0}}$ is unitary, it suffices
to check the existence of strong limit for vectors ${f}%
=(f,0)^{t}$, where the scalar function \mbox{$f(x)\in
C_0^\infty(\mathbb{R}^+)$}. The existence of strong limit for
vectors with zero as the first coordinate can be proved in the
same way. Due to linearity, that is enough to conclude the
convergence for all $C_0^\infty(\mathbb{R}^+)$ vector-valued
functions that give
rise to subspace dense in $L^2(\mathbb{R}^+)\times L^2(\mathbb{R}^+)$. From Lemma \ref{evolution}, we have%
\begin{equation*}
e^{-it\cal{D}_{0}}{f}=\frac{1}{2}\left[
\begin{array}{c}
f(x+t)+f(x-t) \\
-i(f(x-t)-f(x+t))%
\end{array}%
\right] =\frac{1}{2}\left[
\begin{array}{c}
f(|x-t|) \\
-if(|x-t|)%
\end{array}%
\right] ,
\end{equation*}
where the last formula holds for $t$ large enough because the
support of $f
$ is finite (say, in the interval $(0,a)$). Consider the function%
\begin{equation*}
\vartheta (t,\lambda )=\frac{e^{it\lambda }}{2}\left[ \int\limits_{0}^{%
\infty }f(|x-t|)\varphi (x,\lambda )dx-i\int\limits_{0}^{\infty
}f(|x-t|)\psi (x,\lambda )dx\right] .
\end{equation*}
To prove Theorem \ref{wave-op}, it suffices to show that the limit
of $\vartheta (t,\lambda )$ in $L^{2,2\sigma }(R)$ exists as
$t\rightarrow \infty$. The following relations are true
\begin{eqnarray*}
\vartheta (t,\lambda ) =\frac{e^{it\lambda
}}{2}\int\limits_{0}^{\infty
}f(|x-t|)e^{i\lambda x}\overline{P(2x,\lambda )}dx=\frac{1}{2}%
\int\limits_{-a}^{a}f(|s|)e^{-i\lambda s}P_{\ast }(2t+2s,\lambda )ds \\
=\frac{1}{2}\int\limits_{-a}^{a}\left[ \int\limits_{-a}^{s}f(|\tau
|)e^{-i\lambda \tau }d\tau \right]'P_{\ast }(2t+2s,\lambda )ds
=P_{\ast }(2t+2a,\lambda )\int\limits_{0}^{a}f(\tau )\cos (\lambda
\tau
)d\tau \\
+\int\limits_{-a}^{a}A(2t+2s)P(2t+2s,\lambda
)\int\limits_{-a}^{s}f(|\tau |)e^{-i\lambda \tau }d\tau ds.
\end{eqnarray*}
Since $f(x)$ is smooth,
\begin{equation}
\left| \, \int\limits_{-a}^{s}f(|\tau |)e^{-i\tau \lambda }d\tau
\right| \leq \frac{C}{\sqrt{\lambda ^{2}+1}}.\label{ref1}
\end{equation}
Due to Lemma \ref{asymp-l2} and Theorem \ref{main-lsz},
\begin{equation}
P_{\ast }(2t+2a,\lambda )\int\limits_{0}^{a}f(\tau )\cos (\lambda
\tau )d\tau \rightarrow \Pi_\alpha (\lambda +i0)\chi
_{E^{c}}(\lambda) \int\limits_{0}^{a}f(\tau )\cos (\lambda \tau
)d\tau , \label{form}
\end{equation}
where the possible singular component of $d\sigma$ is supported on
the Borel set $E$, and $\chi _{E^{c}}$ is the characteristic
function of the complement to $E$. The convergence is understood
in $L^{2,2\sigma }(R)$ sense. Here we also used Remark after the
Lemma~\ref{asymp-l2}. Generalized Minkowski inequality and
(\ref{ref1}) yield
\begin{eqnarray*}
\left\| \int\limits_{-a}^{a}A(2t+2s)P(2t+2s,\lambda
)\int\limits_{-a}^{s}f(|\tau |)e^{-i\lambda \tau }d\tau ds\right\|
_{2,\sigma }
\\
\leq C\left[ \int\limits_{-a}^{a}|A(2t+2s)|ds\right] \left[
\sup_{x\geq 0}\int\limits_{-\infty }^{\infty }\frac{\left|
P(x,\lambda )\right| ^{2}}{\lambda ^{2}+1}d\sigma \right]^{1/2}.
\end{eqnarray*}%
The second factor is bounded due to Lemma \ref{kernel-diag}.
Function $A(x)\in L^{2}(\mathbb{R}^+)$, therefore the first factor
tends to $0$ as $t\rightarrow \infty$. \end{proof}

\textbf{Remark.} We not only proved the existence of the wave
operators, but also deduced the formula for them, the right-hand
side of (\ref{form}). Notice that this map is isometry. Part of
the arguments above are well-known in the theory of polynomials
orthogonal on the unit circle \cite{Geronimus, Simon}.

Very interesting effect can be observed in the case
$d\sigma\in$(Szeg\H{o}), and $A(r)\to 0$ at infinity in some sense
(say, $A(r)\in L^p(\mathbb{R}^+), p<\infty$). As was discussed
before (see the paragraph after Theorem \ref{sdmt}), the limit
$P_*(r_n,\lambda)$ is not necessarily uniquely defined and might
depend upon the choice of the subsequence $r_n\to\infty$. One can
easily show that the proof of the Theorem above can be adjusted to
this situation with the exception that the limit $\lim_{n\to
\infty} e^{it_n\cal{D}}e^{-it_n\cal{D}_0}$ depends upon the choice
of time sequence $t_n$ and the  limiting operators will actually
differ only by the unimodular factor. However, due to Lemma
\ref{real-valued-case}, this phenomena cannot be observed for
real-valued $A(r)$ (i.e. when $b(r)=0$).

At this point, we need to mention that in OPUC theory, the free
CMV matrix is unitarily equivalent to the shift in
$\ell^2(\mathbb{Z})$ space. Infact, the same is true about the
evolution for free Dirac operator. Indeed, consider the following
operator
\[
\tilde{\cal{D}}_0=Z^{-1}\cal{D}_0Z, Z=\frac{1}{\sqrt 2}\left[
\begin{array}{cc}
i & -1\\
1 & -i
\end{array}\right],\tilde{\cal{D}}_0=\left[
\begin{array}{cc}
\displaystyle -i\frac{d}{dr} & 0\\
 0 & \displaystyle i\frac{d}{dr}
\end{array}\right]
\]
The new domain of definition is $[H^1(\mathbb{R}^+)]^2$ with
additional condition $f_1(0)=if_2(0)$. Now, if one maps the
Hilbert space $f=(f_1,f_2)^t\in [L^2(\mathbb{R}^+)]^2$ to
$L^2(\mathbb{R})$ by
\[
f(r)\longrightarrow g(x)=\left\{
\begin{array}{lr}
f_1(x),& x>0 \\
if_2(-x),& x<0
\end{array}
\right.
\]
then the operator $\tilde{\cal{D}}$ happens to be unitarily
equivalent to the selfadjoint operator $\displaystyle
\cal{L}_0=-i\frac{d}{dx}$ on $L^2(\mathbb{R})$ with domain of
definition $H^1(\mathbb{R})$. Its free evolution is just the
shift: $\exp(it\cal{L}_0)g(x)=g(x+t), t\in \mathbb{R}$. Since we
performed only the unitary transformations, the evolution
$\exp(it\cal{D}_0)$ (see Lemma \ref{evolution} in Appendix) is
unitarily equivalent to the shift in $L^2(\mathbb{R})$. Notice
that potential is transformed to
\begin{equation}
Z^{-1}QZ=\left[
\begin{array}{cc}
0 & 2\overline{A(2r)}\\
2A(2r) & 0
\end{array}\right] \label{different-form}
\end{equation} The unitary transformation of $[L^2(\mathbb{R}^+)]^2$ to
$\mathbb{R}$ will result in nonlocal perturbation of $\cal{L}_0$:
\[
[\cal{L}g](x)=-i\frac{dg}{dx}+\widetilde{A}(x)[\cal{S}g](x)
\]
where Hermitian function $\widetilde{A}(x)$ and operator $\cal{S}$
are given by
\[
\widetilde{A}(x)=i\left\{
\begin{array}{lr}
-2\overline{A(2x)},& x>0 \\
{2A}(-2x),& x<0
\end{array}
\right., \cal{S}g(x)=g(-x)
\]
The analysis of $\cal{L}$ is non-trivial (rather than in the case
 $\cal{S}=I$)
 and is equivalent to
analysis of the original Dirac operator $\cal{D}$ or Krein system.
Notice that in the Fourier space, this operator can be formally
written as
\[
-\lambda f(\lambda)+\int\limits_{-\infty}^\infty
V(\lambda+t)f(t)dt
\]
with real-valued $V(\lambda)$ being the Fourier transform of
$\widetilde{A}(x)$. The analysis we have done before implies the
corresponding properties of this operator.

Let us consider the scattering problem for Dirac operator and
relate scattering parameters to the parameters of the
corresponding Krein system. Consider, for simplicity, operator
$\cal D$ with finitely supported coefficients $a$ and $b$. Then,
there is the so-called Jost solution $F(r,\lambda): {\cal
D}F=\lambda F$ defined by the asymptotics at infinity:
$F(r,\lambda)=(f_1(r,\lambda),f_2(r,\lambda)^t=\exp(i\lambda r)
(i,1)^t$ for $r$ large enough. Let us introduce the scattering
data for the Dirac operator:
\begin{eqnarray*}
A_d(\lambda)=(f_2(0,\lambda)-if_1(0,\lambda))/2,
B_d(\lambda)=(f_2(0,\lambda)+if_1(0,\lambda))/2,\\
T_d(\lambda)=A_d^{-1}(\lambda),
R_d(\lambda)=B_d(\lambda)/A_d(\lambda)
\end{eqnarray*}
Coefficient $T_d(\lambda)$ is called the transmission coefficient,
$R_d(\lambda)$ is the reflection coefficient, $f_2(0,\lambda)$ is
Jost function. These notations are quite natural. If one extends
$a$ and $b$ to the negative half-line as zero, then
\[
F(r,\lambda)=A_d(\lambda)\exp(i\lambda
r)(i,1)^t+B_d(\lambda)\exp(-i\lambda r)(-i,1)^t, r<0,
\]
or
\[
T_d(\lambda)F(r,\lambda)=\exp(i\lambda
r)(i,1)^t+R_d(\lambda)\exp(-i\lambda r)(-i,1)^t, r<0,
\]
\begin{lemma}\label{dirac-scat}
The following relations are true
\[
A_d(\lambda)=\A(\lambda), B_d(\lambda)=\B(\lambda),
R_d(\lambda)=\f(\lambda), f_2(0,\lambda)=\Pi_\alpha(\lambda),
\]
\begin{equation}
\sigma_d'(\lambda)=\frac{1}{\pi |f_2(0,\lambda)|^2}=\frac{1}{\pi
|\Pi(\lambda)|^2}=2\sigma'(\lambda) \label{krein-dirac}
\end{equation}
$\f(\lambda)$-- Schur function of the Krein system, functions
$\A(\lambda), \B(\lambda), \Pi_\alpha(\lambda)$ are taken from
consideration of $A(r)\in L^2(\mathbb{R}^+)$ or $A(r)\in
L^1(\mathbb{R}^+)$ cases.
\end{lemma}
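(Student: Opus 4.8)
The plan is to reduce to the case of compactly supported $a,b$ (as the statement permits) and then unwind the already‑established dictionary between the Dirac transfer matrix and the Krein transfer matrix, $X_d(r,\lambda)=\exp(-i\lambda r)\,U_0\,X(2r,\lambda)\,U_0^{-1}$, in order to locate the Jost solution explicitly at the origin. Fix $R$ so that $a(r)=b(r)=0$ for $r>R/2$, equivalently $A(s)=0$ for $s>R$. Beyond the support the Krein system is free, hence the Wall polynomials have stabilized, $\A(r,\lambda)=\A(\lambda)$ and $\B(r,\lambda)=\B(\lambda)$ for $r\ge R$, and from $P_*'=-AP$ together with the dual equation one gets $P_*(s,\lambda)=\Pi_\alpha(\lambda)$ and $\widehat P_*(s,\lambda)=\widehat\Pi(\lambda)$ (the limiting values, cf. Theorem~\ref{main-lsz}) for $s\ge R$.

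Since the Jost solution solves $\cal{D}F=\lambda F$, it has the form $F(r,\lambda)=X_d(r,\lambda)\,F(0,\lambda)$, so $F(0,\lambda)=X_d(R/2,\lambda)^{-1}\,F(R/2,\lambda)$. Plugging in $F(R/2,\lambda)=\exp(i\lambda R/2)(i,1)^t$, the identity $X_d(R/2,\lambda)^{-1}=\exp(i\lambda R/2)\,U_0\,X(R,\lambda)^{-1}\,U_0^{-1}$, the value $U_0^{-1}(i,1)^t=(i,0)^t$, and the adjugate formula for $X(R,\lambda)^{-1}$ (using $\det X(R,\lambda)=\exp(i\lambda R)$), a one‑line matrix computation gives $F(0,\lambda)=U_0\,(i\A(\lambda),-i\B(\lambda))^t=\bigl(i(\A(\lambda)-\B(\lambda)),\ \A(\lambda)+\B(\lambda)\bigr)^t$. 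Thus $f_1(0,\lambda)=i(\A(\lambda)-\B(\lambda))$ and $f_2(0,\lambda)=\A(\lambda)+\B(\lambda)=P_*(\infty,\lambda)=\Pi_\alpha(\lambda)$. From the definitions, $A_d(\lambda)=\tfrac12\bigl(f_2(0,\lambda)-if_1(0,\lambda)\bigr)=\A(\lambda)$ and $B_d(\lambda)=\tfrac12\bigl(f_2(0,\lambda)+if_1(0,\lambda)\bigr)=\B(\lambda)$, whence $T_d=\A^{-1}$ and $R_d=\B\A^{-1}=\f$, the Schur function of the Krein system.

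For the spectral measure I would invoke Theorem~\ref{number2}, which identifies $d\sigma_d=2\,d\sigma$, together with the identity $2\pi\sigma'(\lambda)=|\Pi_\alpha(\lambda)|^{-2}$ (the weak‑$(\ast)$ limit of the Bernstein–Szeg\H o densities $(2\pi)^{-1}|P_*(r,\lambda)|^{-2}d\lambda$, Theorem~\ref{Bernstein-Szego}); since $|\Pi_\alpha|=|\Pi|$ and $f_2(0,\lambda)=\Pi_\alpha(\lambda)$ this yields $\sigma_d'(\lambda)=\pi^{-1}|f_2(0,\lambda)|^{-2}=\pi^{-1}|\Pi(\lambda)|^{-2}=2\sigma'(\lambda)$.

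Finally, to drop the compact‑support hypothesis: for $A(r)\in L^1(\mathbb{R}^+)$ the Jost solution exists and is continuous in $\lambda$ up to $\mathbb{R}$, and $\A(r,\lambda)\to\A(\lambda)$, $\B(r,\lambda)\to\B(\lambda)$, $P_*(r,\lambda)\to\Pi_\alpha(\lambda)$ uniformly on $\overline{\mathbb{C}^+}$ by the continuous Baxter theorem~\ref{baxter-theorem}, so approximating $A$ by its truncations and passing to the limit in the finite‑$R$ identities transfers all formulas; the $L^2(\mathbb{R}^+)$ case is identical using the convergence statements of Theorem~\ref{main-lsz} on half‑planes $\Im\lambda>\varepsilon$, with $f_2(0,\lambda)$ understood through these limits. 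The main obstacle I anticipate is purely bookkeeping: matching the several unitary conjugations ($U_0$, and the sign and normalization conventions buried in the definitions of $A_d,B_d$ and of $\varphi,\psi$) so that the constants come out exactly, rather than off by a unimodular factor, and ensuring the $L^2$‑case Jost solution is meaningful — but all the analytic input is already available in Theorems~\ref{main-lsz}, \ref{Bernstein-Szego}, \ref{number2} and~\ref{baxter-theorem}.
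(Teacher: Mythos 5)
Your proposal is correct and is essentially the calculation the paper has in mind: the paper's proof is just the remark that it is a straightforward computation, and you carry it out via the already-established dictionary $X_d(r,\lambda)=e^{-i\lambda r}U_0X(2r,\lambda)U_0^{-1}$, the formula $\det X=e^{i\lambda r}$, and Theorems \ref{number2} and \ref{Bernstein-Szego}, arriving at exactly the stated identities. The closing approximation paragraph is harmless but not needed, since the lemma is stated for finitely supported $a,b$ (the reference to the $L^1/L^2$ sections only fixes where $\A,\B,\Pi_\alpha$ are defined, and for compactly supported coefficients these limits are attained at finite $r$).
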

\begin{proof} The proof is a straightforward calculation.
\end{proof}

The last formula in (\ref{krein-dirac}) is of great importance. It
gives a factorization of the spectral measure density via some
function analytic in the upper half-plain. Formulas of that sort
have analogs in the scattering problems for some PDE
\cite{Den-CPDE, Den-IMRN}. The equivalence of reflection
coefficient from the scattering theory of quantum mechanics and
Schur function is a remarkable fact, which, perhaps, was not
completely understood and used by both mathematical physicists and
analysts.

The general spectral theory allows to get new natural
interpretation for various quantities considered before. One
example is provided by the following Lemma.
\begin{lemma}
For any $\lambda_0\in \mathbb{C}^+$, the operator $\Im
(\cal{D}-\lambda_0)^{-1}$ has matrix-valued kernel
$G(x,y,\lambda_0)$ and
\[
2\Im \lambda_0 \int\limits_{-\infty}^\infty
\frac{|P_*(2r,\lambda)|^2}{|\lambda-\lambda_0|^2}\,
d\sigma(\lambda)={\rm Tr}\,  G(r,r,\lambda_0)
\]
\end{lemma}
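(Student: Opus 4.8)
The strategy is to express the imaginary part of the resolvent $(\cal{D}-\lambda_0)^{-1}$ through the generalized eigenfunctions of $\cal{D}$ and then recognize the diagonal of the resulting kernel as the weighted integral on the left. The starting point is the spectral resolution of $\cal{D}$ furnished by Theorem \ref{number2}: since $d\sigma_d=2d\sigma$ and the unitary $\cal{F}$ diagonalizes $\cal{D}$, for any $\lambda_0\in \mathbb{C}^+$ the resolvent acts in the spectral representation as multiplication by $(\lambda-\lambda_0)^{-1}$. Hence its integral kernel is
\[
(\cal{D}-\lambda_0)^{-1}(x,y)=\int\limits_{-\infty}^\infty \frac{1}{\lambda-\lambda_0}
\left[\begin{array}{cc}
\varphi(x,\lambda)\varphi(y,\lambda) & \varphi(x,\lambda)\psi(y,\lambda)\\
\psi(x,\lambda)\varphi(y,\lambda) & \psi(x,\lambda)\psi(y,\lambda)
\end{array}\right] d\sigma_d(\lambda),
\]
which is just the resolvent analog of the ``delta-function'' identity (\ref{delta-function}); for $\lambda_0\notin\mathbb{R}$ the integral converges because $\varphi,\psi$ grow only polynomially in $\lambda$ on the real axis (they are of exponential type $\leq x$) while the measure $d\sigma$ has the growth restriction (\ref{e2t1}). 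Taking imaginary parts and using that $\varphi(x,\lambda),\psi(x,\lambda)$ are real for $\lambda\in\mathbb{R}$, one gets
\[
\Im (\cal{D}-\lambda_0)^{-1}(x,y)=\int\limits_{-\infty}^\infty \frac{\Im\lambda_0}{|\lambda-\lambda_0|^2}
\left[\begin{array}{cc}
\varphi(x,\lambda)\varphi(y,\lambda) & \varphi(x,\lambda)\psi(y,\lambda)\\
\psi(x,\lambda)\varphi(y,\lambda) & \psi(x,\lambda)\psi(y,\lambda)
\end{array}\right] d\sigma_d(\lambda).
\]

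\textbf{Key steps.} First I would justify the kernel formula above rigorously: start from the spectral theorem applied to a dense set of test vectors, write $\langle (\cal{D}-\lambda_0)^{-1} f, g\rangle$ in the spectral representation, and use Theorem \ref{number2} (together with (\ref{delta-function}) viewed in the weak sense) to identify the integral kernel; the polynomial growth of $\varphi,\psi$ in $\lambda$ plus (\ref{e2t1}) gives absolute convergence for $\lambda_0\in\mathbb{C}^+$, so Fubini applies. Second, set $x=y=r$ and take the matrix trace: the diagonal entries are $\varphi(r,\lambda)^2+\psi(r,\lambda)^2=|\cal{E}(r,\lambda)|^2$ for real $\lambda$, by the definition $\cal{E}(r,\lambda)=\varphi+i\psi$. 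Third, recall $\cal{E}(r,\lambda)=\exp(-i\lambda r)P(2r,\lambda)$, so $|\cal{E}(r,\lambda)|^2=|P(2r,\lambda)|^2$ for $\lambda\in\mathbb{R}$, and use $d\sigma_d=2d\sigma$. This yields
\[
{\rm Tr}\, G(r,r,\lambda_0)=2\Im\lambda_0\int\limits_{-\infty}^\infty
\frac{|P(2r,\lambda)|^2}{|\lambda-\lambda_0|^2}\, d\sigma(\lambda),
\]
and since for $\lambda\in\mathbb{R}$ we have $|P(2r,\lambda)|=|P_*(2r,\lambda)|$ (part 2 of Lemma \ref{lemma24}, or (\ref{starproperty})), the right-hand side equals $2\Im\lambda_0\int |P_*(2r,\lambda)|^2|\lambda-\lambda_0|^{-2}d\sigma(\lambda)$, which is the claimed identity.

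\textbf{Main obstacle.} The only genuinely delicate point is the rigorous identification of the resolvent kernel $G(x,y,\lambda_0)$ as the absolutely convergent spectral integral and the justification that its matrix trace at $x=y=r$ is obtained simply by plugging $x=y=r$ into the (continuous) kernel. A priori $\Im(\cal{D}-\lambda_0)^{-1}$ is a bounded, indeed Hilbert--Schmidt (on compacts) operator, but one must check that its kernel is actually jointly continuous so that evaluation on the diagonal is meaningful; this follows from the local uniform convergence of the spectral integral in $x,y$ once the growth estimates on $\varphi,\psi$ are in hand. An alternative, perhaps cleaner, route avoiding delicate kernel regularity is to test against an approximate identity: apply the weak identity (\ref{delta-function})-type relation to $\chi_{[r-\varepsilon,r+\varepsilon]}$, use the isometry $\cal{W}$ of Theorem \ref{number2} to compute $\Im\langle(\cal{D}-\lambda_0)^{-1}\phi_\varepsilon,\phi_\varepsilon\rangle$ explicitly as $2\Im\lambda_0\int |\widehat{\phi_\varepsilon}|^2|\lambda-\lambda_0|^{-2}d\sigma$, and let $\varepsilon\to 0$ after normalizing; continuity of $r\mapsto |P_*(2r,\lambda)|^2$ and dominated convergence then deliver the trace formula without ever discussing the pointwise kernel. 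I would likely present the test-function version as the main argument and remark on the kernel interpretation.
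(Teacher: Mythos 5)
Your argument is correct and follows essentially the same route as the paper: you write the resolvent kernel as the spectral integral over $\varphi,\psi$ against $d\sigma_d$ (the resolvent analog of (\ref{delta-function})), take the imaginary part and the trace on the diagonal, and use $\varphi^2+\psi^2=|\cal{E}(r,\lambda)|^2=|P(2r,\lambda)|^2=|P_*(2r,\lambda)|^2$ together with $d\sigma_d=2d\sigma$. The extra care you devote to kernel continuity and the test-function variant is fine but goes beyond what the paper does, which simply states the spectral representation and calls the conclusion straightforward.
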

\begin{proof}
The spectral representation for the resolvent yields (see
(\ref{delta-function}))
\begin{equation}
\int\limits_{-\infty}^\infty \frac{1}{\lambda-\lambda_0}
\left[\begin{array}{cc}
\varphi(x,\lambda)\varphi(y,\lambda) & \varphi(x,\lambda)\psi(y,\lambda)\\
\psi(x,\lambda)\varphi(y,\lambda) & \psi(x,\lambda)\psi(y,\lambda)
\end{array}\right]d\sigma_d(\lambda)=(\cal{D}-\lambda_0)^{-1}(x,y)
\end{equation}
Therefore,
\[
{\rm Tr}\, \Im G(x,y,\lambda_0)=\int\limits_{-\infty}^\infty
\frac{\Im \lambda_0}{|\lambda-\lambda_0|^2}
\left[\varphi(x,\lambda)\varphi(y,\lambda) +
\psi(x,\lambda)\psi(y,\lambda)\right] d\sigma_d(\lambda)
\]
Now, the Lemma is straightforward.
\end{proof}
This Lemma allows to control the integral
\[
\int\limits_{-\infty}^\infty
\frac{|P_*(r,\lambda)|^2}{\lambda^2+1}d\sigma(\lambda)
\]
by using the standard tools of, say, perturbation theory. In
particular, if $A\to 0$ in some sense, then this integral tends to
$1/2$, the value for unperturbed case.

 Recall that the CMV matrix corresponding to Verblunsky
coefficients $a_n$ is

$$
\cal{C}=\left[
\begin{array}{cccccc}
*&*&*&0&0&\ldots\\
*&*&*&0&0&\ldots\\
0&*&*&*&*&\ldots\\
0&*&*&*&*&\ldots\\
0&0&0&*&*&\ldots\\
\vdots&\vdots&\vdots&\vdots&\vdots&\ddots\\
\end{array}
\right] = \left[
\begin{array}{ccc}
A_0&0&\ldots\\
0&A_1&\ldots\\
\vdots&\vdots&\ddots
\end{array}
\right]
$$

\begin{eqnarray*}
A_k&=& \left[
\begin{array}{cccc}
\bar a_{2k}\rho_{2k-1}&-\bar a_{2k}a_{2k-1}&\bar
a_{2k+1}\rho_{2k}&\rho_{2k+1}\rho_{2k}
\\
\rho_{2k}\rho_{2k-1}&-\rho_{2k}a_{2k-1}&-\bar
a_{2k+1}a_{2k}&-\rho_{2k+1}a_{2k}
\end{array}
\right]\\
A_0&=& \left[
\begin{array}{ccc}
\bar a_0&\bar a_1\rho_0&\rho_1\rho_0\\
\rho_0&-\bar a_1a_0&-\rho_1a_0
\end{array}
\right]
\end{eqnarray*}
and $\rho_k=(1-|a_k|^2)^{1/2}$. To show how  CMV matrix
corresponds to Dirac operators, we prefer to write it in the
equivalent way (by introducing two Hilbert spaces
$\ell^2(\mathbb{Z}^+)$ corresponding to even and odd indices):

$$
\cal{C}= \left[
\begin{array}{cc}
\cal{C}_{11} & \cal{C}_{12}\\
\cal{C}_{21} & \cal{C}_{22}
\end{array}
\right]
$$
with
$$
\cal{C}_{11}=
\left[
\begin{array}{ccccc}
\bar a_0&\rho_0\rho_1&0&0&\ldots\\
0&-a_1\bar a_2&\rho_2\rho_3&0&\ldots\\
0&0&-a_3\bar a_4&\rho_4\rho_5&\ldots\\
0&0&0&*&\ldots\\
0&0&0&0&\ldots\\
\vdots&\vdots&\vdots&\vdots&\ddots\\
\end{array}
\right],\quad\, \cal{C}_{12}= \left[
\begin{array}{cccccc}
\rho_0 \bar a_1 &0&0&0&0&\ldots\\
\rho_1 \bar a_2 &\rho_2 \bar a_3&0&0&0&\ldots\\
0&\rho_3 \bar a_4 &\rho_4 \bar a_5&0&0&\ldots\\
0&0&\rho_5 \bar a_6&*&0&\ldots\\
0&0&0&*&*&\ldots\\
\vdots&\vdots&\vdots&\vdots&\vdots&\ddots\\
\end{array}
\right]
$$
$$
\cal{C}_{21}= \left[
\begin{array}{ccccc}
\rho_0  &-\rho_1 a_0&0&0&\ldots\\
0 &-\rho_2  a_1&-\rho_3 a_2&0&\ldots\\
0&0  &-\rho_4  a_3&-\rho_5 a_4&\ldots\\
0&0&0&*&\ldots\\
0&0&0&0&\ldots\\
\vdots&\vdots&\vdots&\vdots&\ddots\\
\end{array}
\right], \cal{C}_{22}= \left[
\begin{array}{ccccc}
-a_0 \bar a_1&0&0&0&\ldots\\
\rho_1\rho_2 &-a_2\bar a_3&0&0&\ldots\\
0&\rho_3\rho_4&-a_4\bar a_5&0&\ldots\\
0&0&\rho_5 a_6&*&\ldots\\
0&0&0&*&\ldots\\
\vdots&\vdots&\vdots&\vdots&\ddots\\
\end{array}
\right]
$$

It is well known that the formal discretization of the continuous
Schr\"odinger operator produces a discrete Schr\"odinger operator,
a particular case of the Jacobi matrix. For the Dirac operator,
the situation is a little bit different because it is self-adjoint
and the CMV matrix (an analog of Jacobi matrix in this case) is
unitary. Notice that $\cal{D}$ is unitarily equivalent to

\begin{equation}
\widehat{\cal{D}}=u\cal{D}u^{-1}=\left[
\begin{array}{cc}
-\displaystyle i\frac{d}{dr}  &  -2i\overline{A(2r)}\\
2i{A(2r)} & \displaystyle i\frac{d}{dr}
\end{array}\right],\quad
u=\frac{1}{\sqrt 2}\left[
\begin{array}{cc}
1 &  i\\
1 & -i
\end{array}
 \right]\label{dirac-df}
\end{equation}
with an appropriate boundary condition at zero. Conjugating
$\widehat{\cal{D}}$ by the matrix
\[
\tau=\left[
\begin{array}{cc}
i  &  0\\
0 &   -1
\end{array},\right]
\]
we get that $\tau^{-1}\widehat{\cal{D}}\tau$ has the same diagonal
but
 the off-diagonal elements have the form (\ref{different-form}). Consider the following formal
discretization of $\widehat{\cal{D}}$
\[
\widehat{\cal{D}}_h =\left[
\begin{array}{cc}
-ih^{-1} (L-I) &  -2i\overline{A(2r_n)}\\
2i{A(2r_n)} & ih^{-1} (I-R)
\end{array}
 \right]
\]
where $R$ is the right shift, $L$ -- the left shift. Then,
consider

\[
I+ih \widehat{\cal{D}}_h= \left[
\begin{array}{cc}
L & 2h\overline{A(2r_n)}\\
-2h{A(2r_n)} & R
\end{array}
 \right]
\]
Taking the Verblunsky coefficients $\{a^{(h)}_k\}: a_{2n}^{(h)}=0,
a_{2n-1}^{(h)}=2hA(2r_n)$, we get $I+ih \widehat{\cal{D}}_h={\rm
CMV}+\bar{o}(h)$. Thus, formally, CMV matrices and discretization
of Dirac operators are related via this very simple identity.
Since $L-I$ and $I-R$ are of order $h$ when acted on smooth
functions, we could also say that
\[
\exp\left( ih\widehat{\cal{D}}_h\right)= \left[
\begin{array}{cc}
L & 2h\overline{A(2r_n)}\\
-2h{A(2r_n)} & R
\end{array}
 \right]+\bar{o}(h)
\]
We do not pursue the goal of making any accurate statements
regarding these discretizations but that can be done.


{\bf Remarks and historical notes.} The one-to-one correspondence
between Krein systems and Dirac operators was discovered by M.G.
Krein in his seminal paper \cite{Krein2}. Unfortunately, no proofs
were given. The determinantal formulas obtained in this section
are new to the best of our knowledge. In the meantime, analogous
results for differential equations were obtained earlier (e.g.
\cite{Jost}). These ideas were also used quite recently
\cite{killip}.
 If
$a(x), b(x)\in L^1(\mathbb{R}^+)$, the existence of wave operators
follows from trace-class perturbation argument \cite{RS}. In the
case $a(x), b(x)\in L^p(\mathbb{R}^+),\ 1<p<2$, the  wave
operators were studied by Christ and Kiselev \cite{CK}. The
analysis was based on establishing the asymptotics of generalized
eigenfunctions (essentially, asymptotics of $P_*(r,\lambda)$) for
Lebesgue almost any value of $\lambda$. If $p>2$, one can use
results from \cite{kls} to construct examples with no absolutely
continuous spectrum. Thus in this case the wave operators might
not exist at all. The proof of Theorem \ref{wave-op} is taken from
\cite{Den-GAFA}. Independently, Barry Simon obtained analogous
results for CMV matrices in the Szeg\H{o} case.

\bigskip

\newpage

\section{Schr\"{o}dinger operators}

Let us consider Dirac operator (\ref{Dir}) with $b(r)=0$ and absolutely
continuous $%
a(r)$. For the corresponding Krein system, we have $A(r)\in
\mathbb{R}$ and from Lemma \ref{conjugation} we learn that $H(x)$
is real and continuous on $\mathbb{R}$, the measure $d\sigma$ is
even.
Operator $\cal{D}$ takes form%
\[
\cal{D}=\left[
\begin{array}{cc}
0 & d/dr-a \\
-d/dr-a & 0%
\end{array}%
\right]
\]%
It has the following domain of definition $\{f_{1}(r),f_{2}(r)\in
L^{2}(\mathbb{R}^{+})\times L^{2}(\mathbb{R}^{+})\}$, $f_{1(2)}$ are
absolutely continuous, $%
f_{2}'-af_{2},\ f_{1}'+af_{1}\in L^{2}(\mathbb{R}^{+})$, $f_{2}(0)=0$.
Consider operator
\[
\cal{D}^{2}=\left[
\begin{array}{cc}
\cal{H}_{1} & 0 \\
0 & \cal{H}_{2}%
\end{array}%
\right]
\]%
where
\[
\cal{H}_{1}=-\frac{d^{2}}{dr^{2}}+q_{1},\ f_{1}^{\prime }(0)+a(0)f_{1}(0)=0
\]

\begin{equation}
\cal{H}_{2}=-\frac{d^{2}}{dr^{2}}+q_{2},\ f_{2}(0)=0
\label{gamilt2}
\end{equation}
and potentials are%
\[
q_{1}=a^{2}-a^{\prime },\ q_{2}=a^{2}+a^{\prime }
\]%
Obviously, $\cal{H}_{1}$ and $\cal{H}_{2}$ are different only by
the order in factorization: $\cal{H}_{1}=\cal{O}^{\ast }\cal{O},\
\cal{H}_{2}=\cal{O}\cal{O}^{\ast }$, where $\cal{O}=-d/dr-a$ is
formal differential expression.

The following is true
\begin{eqnarray*}
-\frac{d^{2}\varphi}{dr^{2}}+q_{1}\varphi &=&\lambda ^{2}\varphi
,\ \varphi (0,\lambda )=1,\varphi
^{\prime }(0,\lambda )=-a(0)\varphi (0,\lambda ) \\
-\frac{d^{2}\psi}{dr^{2}}+q_{2}\psi  &=&\lambda ^{2}\psi ,\ \psi
(0,\lambda )=0,\psi ^{\prime }(0,\lambda )=\lambda
\end{eqnarray*}
That means $\varphi$ and $\psi$ are generalized eigenfunctions for
$\cal{H}_1$ and $\cal{H}_2$, respectively.

Since $\cal{D}(\varphi(r,0),\psi(r,0))^t=0$, we get (compare with
Lemma \ref{real-a})
 \begin{equation}\label{potential-a}
 \psi(r,0)=0, \varphi(r,0)=\exp\left(-\int\limits_0^r a(t)dt\right)
 \end{equation}

\bigskip {\bf Definition 1.5.} The non-decreasing function $\rho _{(h)}(E)$
is called a spectral measure for the general Schr\"odinger operator
$\cal{H}_1=$ $%
-d^{2}/dr^{2}+q\ $with mixed boundary condition $f'(0)=hf(0)$ if
the following is true. For any $f(r)\in L^{2}(\mathbb{R}^{+})$, we
have
\[
\int\limits_{0}^{\infty }|f(r)|^{2}dr=\int\limits_{-\infty }^{\infty
}\left\vert \int\limits_{0}^{\infty }f(r)\varphi _{(h)}(r,E)dr\right\vert
^{2}d\rho _{(h)}(E)
\]%
where $\varphi_{(h)} (r,E )$ is the generalized eigenfunction,
i.e. for any $E\in \mathbb{R}$
\begin{equation}
-\varphi_{(h)}^{\prime \prime }+q\varphi _{(h)}=E\varphi _{(h)},\
\varphi
_{(h)}(0,E)=1,\varphi _{(h)}^{\prime }(0,E)=h \label{problem}
\end{equation}
If $h=0$, we get the Neumann boundary condition.

\begin{definition}
The non-decreasing function $\rho_{(\infty)}(E)$  is
called a spectral measure for the general Schr\"odinger operator
$\cal{H}_{2}$
with the
Dirichlet boundary condition if the following is true. For any $f(r)\in
L^{2}(\mathbb{R}^{+})$, we have
\[
\int\limits_{0}^{\infty }|f(r)|^{2}dr=\int\limits_{-\infty }^{\infty
}\left\vert \int\limits_{0}^{\infty }f(r)\varphi
_{(\infty)}(r,E)dr\right\vert
^{2}d\rho _{(\infty)}(E)
\]%
where $\varphi_{(\infty)}(r,E )$ is the generalized eigenfunction,
i.e.
\[
-\varphi_{(\infty)}^{\prime \prime
}+q\varphi_{(\infty)}=E\varphi_{(\infty)},\
\varphi_{(\infty)}(0,E)=0,\varphi_{(\infty)}^{\prime }(0,E)=1
\]
\end{definition}
The spectral measure for the Schr\"odinger operator with locally
integrable potential always exists. But it is not necessarily
unique \cite{Naimark, Levitan}.

One can use Lemma \ref{number2} to prove the following Theorem. We
consider the usual normalization of the measure $d\sigma$ by
saying that the function $\sigma$ is odd and
$\sigma(\lambda)=(\sigma(\lambda-0)+\ \sigma(\lambda+0))/2$.

\begin{theorem}
If $\rho _{1(2)}$ are spectral measures for
the operators $\cal{H}_{1(2)}$,  then
\begin{equation}
\rho _{1}(\lambda )=\left\{
\begin{array}{cc}
4\sigma (\sqrt{\lambda }), & \lambda \geq 0 \\
0, & \lambda <0%
\end{array}%
\right.,\quad
\rho _{2}(\lambda )=\left\{
\begin{array}{cc}
\displaystyle 4\int\limits_{0}^{\sqrt{\lambda }}\xi ^{2}d\sigma (\xi ), &
\lambda \geq 0
\\
0, & \lambda <0%
\end{array}%
\right.\label{number3}
\end{equation}
\label{number4}
\end{theorem}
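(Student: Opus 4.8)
The plan is to transfer the Plancherel-type identity for the Dirac operator (Theorem \ref{number2}) onto the two Schrödinger operators $\cal{H}_1$ and $\cal{H}_2$, using the block decomposition $\cal{D}^2=\mathrm{diag}(\cal{H}_1,\cal{H}_2)$ together with the change of spectral variable $E=\lambda^2$. First I would recall from Theorem \ref{number2} that the map $\cal{F}:(f_1,f_2)\mapsto \int_0^\infty(f_1\varphi+f_2\psi)\,dr$ is unitary from $[L^2(\mathbb{R}^+)]^2$ onto $L^2(\mathbb{R},d\sigma_d)$ with $d\sigma_d=2d\sigma$. Taking $f_2=0$, this gives for every $f\in L^2(\mathbb{R}^+)$
\[
\int_0^\infty|f(r)|^2dr=\int_{-\infty}^\infty\Bigl|\int_0^\infty f(r)\varphi(r,\lambda)dr\Bigr|^2\,2\,d\sigma(\lambda),
\]
and similarly with $\psi$ in place of $\varphi$ when $f_1=0$. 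These are exactly the defining identities for a spectral measure of $\cal{H}_1$ (respectively $\cal{H}_2$), once we observe that $\varphi(r,\lambda)$ solves $-\varphi''+q_1\varphi=\lambda^2\varphi$ with $\varphi(0,\lambda)=1,\ \varphi'(0,\lambda)=-a(0)$, i.e. it is the generalized eigenfunction $\varphi_{(h)}$ with $h=-a(0)$, and $\psi(r,\lambda)$ is the Dirichlet eigenfunction $\varphi_{(\infty)}$ for $\cal{H}_2$. This identification of $\varphi,\psi$ with the Schrödinger eigenfunctions is the content of the computation already displayed just before Theorem \ref{number4}, so I would simply cite it.

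Next I would push the measure $2\,d\sigma$ forward under $\lambda\mapsto\lambda^2$. Since $\varphi(r,\lambda)$ and $\psi(r,\lambda)$ depend on $\lambda$ only through $\lambda^2$ and through the parity built into the definition $\cal{E}(-r,\lambda)=\overline{\cal{E}(r,\lambda)}$ (so $\varphi$ is even and $\psi$ is even in $\lambda$ — note $\psi(r,\lambda)=\Im\cal{E}$ is even because $\psi(r,0)=0$ and the Cauchy data are $\psi(0,\lambda)=0$, $\psi'(0,\lambda)=\lambda$, making $\psi/\lambda$ even), the integrand $|\int_0^\infty f(r)\varphi(r,\lambda)dr|^2$ is an even function of $\lambda$. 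Hence $\int_{-\infty}^\infty(\cdots)\,2\,d\sigma(\lambda)=2\int_0^\infty(\cdots)\,2\,d\sigma(\lambda)$ using that $\sigma$ is odd, i.e. $=\int_0^\infty(\cdots)\,4\,d\sigma(\lambda)$ after symmetrization. Changing variables $E=\lambda^2$, $\lambda=\sqrt E$, this becomes $\int_0^\infty|\int_0^\infty f(r)\varphi_{(h)}(r,E)dr|^2\,d\rho_1(E)$ with $\rho_1(E)=4\sigma(\sqrt E)$ for $E\ge0$ and $\rho_1(E)=0$ for $E<0$; the support on $[0,\infty)$ reflects positivity of $\cal{H}_1=\cal{O}^*\cal{O}$. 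For $\cal{H}_2$ the only difference is the extra factor: writing $\psi(r,\lambda)=\lambda\cdot(\psi(r,\lambda)/\lambda)$ and absorbing $\lambda^2=E$ into the measure, one gets $d\rho_2(E)=4\xi^2 d\sigma(\xi)|_{\xi=\sqrt E}$, i.e. $\rho_2(\lambda)=4\int_0^{\sqrt\lambda}\xi^2 d\sigma(\xi)$ for $\lambda\ge0$ — which is precisely the statement, once one checks this is consistent with the Dirichlet normalization $\varphi_{(\infty)}(0,E)=0$, $\varphi_{(\infty)}'(0,E)=1$.

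The main obstacle, and the step I would spend the most care on, is the normalization bookkeeping: one must verify that the factor-of-$2$ discrepancies ($d\sigma_d=2d\sigma$, the even-function doubling, the $\lambda$ versus $\lambda^2$ scaling) combine to give exactly the constants $4$ and $4\xi^2$ in \eqref{number3}, and that the odd/symmetrized normalization of $\sigma$ stated just before Theorem \ref{number4} is used consistently. I would also make sure the generalized eigenfunction for $\cal{H}_2$ is literally $\psi(r,\lambda)/\lambda$ evaluated so that its Cauchy data match $(0,1)$ after the substitution $E=\lambda^2$, which is why a $\xi^2=E$ weight appears in $d\rho_2$ rather than in the eigenfunction. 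Everything else — the identification of $\varphi,\psi$ as eigenfunctions, the positivity giving support in $[0,\infty)$, and the pushforward formula — is routine given Theorem \ref{number2} and the factorization $\cal{D}^2=\mathrm{diag}(\cal{H}_1,\cal{H}_2)$.
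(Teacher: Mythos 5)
Your proposal is correct and follows essentially the same route as the paper: the paper proves \eqref{number3} by extending $f$ evenly (resp.\ oddly) to $\mathbb{R}$ and applying the unitary map $\cal{W}$ of Theorem \ref{number2}, which is exactly what your use of $\cal{F}$ with $f_2=0$ (resp.\ $f_1=0$) amounts to, followed by the same symmetrization and substitution $E=\lambda^2$ with the weight $\xi^2$ absorbed into $d\rho_2$. One small slip: $\psi(r,\lambda)$ is \emph{odd} in $\lambda$ (it is $\psi/\lambda$ that is even), but this does not affect the argument since only the evenness of $\bigl|\int_0^\infty f\psi\,dr\bigr|^2$ is needed.
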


\begin{proof} Indeed, take any function $f(x)\in L^{2}(\mathbb{R}^{+})$. Let $%
f(-x)=f(x), x>0$ and consider $f(x)$ on the whole line. We have%
\[
{[\cal{W}f](\lambda )}=\int\limits_{-\infty }^{\infty }{\cal
E}(x,\lambda )f(x)dx=2\int\limits_{0}^{\infty }\varphi (x,\lambda
)f(x)dx
\]%
Therefore, the first formula of (\ref{number3}) is straightforward
due to Lemma \ref{number2}. To get an expression for $\rho
_{2}(\lambda )$, one should take the odd continuation of function
$f(x)$. \end{proof}

Notice that both $\rho _{1(2)}(\lambda )$ are constants for
$\lambda <0$. That means $\cal{H}_{1(2)}$ are both nonnegative
operators. That is not surprising since $\cal{D}^{2}\geq 0$ and
$\cal{D}^{2}$ is decoupled into the direct sum of $\cal{H}_{1}$
and $\cal{H}_{2}$. Formula for $\rho _{2}(\lambda )$ shows that
not only $\rho_{2}(\lambda)$ has no jump at $0$ (no zero
eigenvalue for $\cal{H}_2$), but also it decays at $0$ in a
certain way.

Consider the so-called ``free" case, i.e.  $a(r)=0$. Then,
$A(r)=0$, $d\sigma =d\lambda /(2\pi )$, $q_{1}=q_{2}=0,$ $\varphi
(x,\lambda )=\sin (\lambda x),$ $\psi (x,\lambda )=\cos (\lambda
x)$. Moreover, $\rho _{1}(\lambda )=2\lambda ^{1/2}/\pi $ on
$\mathbb{R}^{+}$ is the standard spectral measure for the free
Schr\"odinger operator on $\mathbb{R}^{+}$ with the
Neumann boundary condition and $%
\rho _{2}(\lambda )=2\lambda ^{3/2}/(3\pi )$ on $\mathbb{R}^{+}$ is the
spectral measure
for the free Schr\"odinger operator on $\mathbb{R}^{+}$ with the Dirichlet
boundary condition.

\bigskip Let us assume we are given nonnegative self-adjoint Schr\"odinger operator $%
\cal{H}_h=-d^{2}/dr^{2}+q$ with mixed boundary condition
$f'(0)=hf(0)$ at zero and $q\in L^1_{\rm{loc}}(\mathbb{R}^+)$.
Denote its spectral measure by $d\rho_h(E) $. Then, there is the
unique Dirac operator and Krein system that generate this
Schr\"odinger operator in a way described above. Indeed, define
the functions%
\begin{equation}\label{shred-krein}
a(r)=-\frac{\psi ^{\prime }(r)}{\psi (r)}, A(r)=a(r/2)/2
\end{equation}
where $\psi (r)$ is solution to the equation $-\psi ^{\prime
\prime }+q\psi =0,$ $\psi ^{\prime }(0)=h$, $\psi (0)=1$. We have
$\psi''(r)\in L^1_{\rm{loc}}(\mathbb{R}^+)$ and $\psi(r)>0$ by the
oscillation theory for Sturm-Liouville operators (\cite{Weidmann},
p. 218). Therefore, $a(r)$ is absolutely continuous on
$\mathbb{R}^+$ and $a(r)$ satisfies the Riccati equation
\begin{equation}
q=a^{2}-a^{\prime } \label{Riccati}
\end{equation}
 Let $A(r)=a(r/2)/2$. Notice that
$A(r)$ is absolutely continuous. Krein system with coefficient
$A(r)$ generates $\cal{H}_h$. Let us assume that there are two
different $A_{1}(r)$ and $A_{2}(r)$ that generate the same
$\cal{H}_h$. Then, due to Theorem \ref{number4}, measures
$\sigma_1$ and $\sigma_2$ of these Krein systems are the same.
Then, $A_{1}=A_{2}$. Notice that $A(0)=-h/2$. Thus, we proved
\begin{lemma}\label{mixed}
The operator $\cal{H}_h$ is generated by the Krein systems if and
only if $\cal{H}_h$ is nonnegative. The Krein system is unique and
$A(r)$ is given by (\ref{shred-krein}).
\end{lemma}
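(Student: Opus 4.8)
The plan is to prove the three assertions separately: necessity of nonnegativity, sufficiency (by an explicit construction via oscillation theory and the Riccati equation), and uniqueness (via Theorem \ref{number4} and the injectivity of the passage from the spectral measure to the coefficient $A(r)$). Throughout I would identify $\cal{H}_h$ with the block $\cal{H}_1$ of $\cal{D}^2$, so that the mixed boundary condition $f'(0)=hf(0)$ corresponds to $f_1'(0)+a(0)f_1(0)=0$, i.e. to $a(0)=-h$ and $A(0)=-h/2$.

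First I would dispatch necessity. If $\cal{H}_h$ is generated by a Krein system with coefficient $A(r)$, then by the construction of Section \ref{sect-dirac} the associated canonical Dirac operator $\cal{D}$ is self-adjoint and $\cal{D}^2=\cal{H}_1\oplus\cal{H}_2$, with $\cal{H}_1=\cal{H}_h$. Hence $\cal{H}_h$ is a restriction of the nonnegative operator $\cal{D}^2$ to a reducing subspace, so $\cal{H}_h\geq 0$ (equivalently, $\cal{H}_1=\cal{O}^*\cal{O}$ with $\cal{O}=-d/dr-a$ is manifestly nonnegative).

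For sufficiency, assume $\cal{H}_h=-d^2/dr^2+q\geq 0$ with $q\in L^1_{\rm loc}(\mathbb{R}^+)$. By the oscillation (disconjugacy) theory for Sturm--Liouville operators (\cite{Weidmann}, p.~218), nonnegativity of $\cal{H}_h$ guarantees a positive solution $\psi(r)>0$ on $\mathbb{R}^+$ of $-\psi''+q\psi=0$ with $\psi(0)=1$, $\psi'(0)=h$; note $\psi''\in L^1_{\rm loc}$. Setting $a(r)=-\psi'(r)/\psi(r)$ gives an absolutely continuous function (hence in $C[0,\infty)\subset L^2_{\rm loc}$) which satisfies the Riccati equation $q=a^2-a'$ of (\ref{Riccati}) and has $a(0)=-h$. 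Then $A(r)=a(r/2)/2\in C[0,\infty)$ and, by Theorem \ref{onetoone}, $A$ generates a bona fide Krein system. It remains to verify that running this Krein system through the construction $\cal{D}\to\cal{D}^2\to\cal{H}_1$ returns $\cal{H}_h$: the potential of $\cal{H}_1$ is $q_1=a^2-a'=q$ and its boundary condition is $f_1'(0)+a(0)f_1(0)=0$, i.e. $f_1'(0)=hf_1(0)$, and the operator domain is the one induced by the decoupling $\cal{D}^2=\cal{H}_1\oplus\cal{H}_2$; thus $\cal{H}_1=\cal{H}_h$ as self-adjoint operators. This also shows $A(r)$ is given by (\ref{shred-krein}). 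For uniqueness, suppose $A_1$ and $A_2$ both generate $\cal{H}_h$. Each produces an even measure $d\sigma_i$, and by Theorem \ref{number4} the spectral measure of $\cal{H}_h$ equals $\rho_1^{(i)}(\lambda)=4\sigma_i(\sqrt\lambda)$ for $\lambda\geq 0$. Since $\cal{H}_h$ is a fixed self-adjoint operator, its spectral measure relative to the normalized generalized eigenfunction $\varphi_{(h)}$ is determined, so $\sigma_1=\sigma_2$ on $\mathbb{R}^+$ and, by evenness, everywhere. Then $\beta=0$ for both (or is recovered from $d\sigma$ by Lemma \ref{findingbetta}), so by Theorem \ref{t1s1} the accelerants coincide, and by Theorem \ref{onetoone} we conclude $A_1=A_2$.

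The main obstacle I anticipate is the uniqueness step, specifically the care needed around the phrase ``the spectral measure of $\cal{H}_h$'': the excerpt explicitly warns that Schr\"odinger operators with locally integrable potentials need not have a unique spectral measure. The point to make is that this ambiguity is tied to the choice of self-adjoint realization in the limit-circle case; once $\cal{H}_h$ is prescribed as a concrete self-adjoint operator with the stated domain, its spectral measure, with the fixed normalization ($\sigma$ odd and $\sigma(\lambda)=(\sigma(\lambda-0)+\sigma(\lambda+0))/2$), is pinned down, and Theorem \ref{number4} then applies verbatim. A secondary point requiring attention is the identification of operator domains in the sufficiency step --- i.e. that forming $\cal{D}^2$ really reproduces $\cal{H}_h$ and not just the differential expression and the boundary condition at $0$ --- which I would handle by appealing to essential self-adjointness of $\cal{D}$ on its minimal domain together with the decoupling $\cal{D}^2=\cal{H}_1\oplus\cal{H}_2$ established in Section \ref{sect-dirac}.
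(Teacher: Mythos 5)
Your proposal is correct and follows essentially the same route as the paper: nonnegativity via the factorization $\cal{H}_1=\cal{O}^*\cal{O}$ (equivalently $\cal{D}^2\geq 0$), existence via the positive solution $\psi$ from oscillation theory and $a=-\psi'/\psi$, $A(r)=a(r/2)/2$ solving the Riccati equation, and uniqueness via Theorem \ref{number4} forcing $\sigma_1=\sigma_2$ and hence $A_1=A_2$. Your added remarks on the normalization of the spectral measure and on recovering the accelerant from $d\sigma$ (Lemma \ref{findingbetta}, Theorems \ref{t1s1} and \ref{onetoone}) merely make explicit details the paper leaves implicit.
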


In general, Riccati equation (\ref{Riccati}) has many solutions.
For instance, if $q=0$, the general solution is given by
\begin{equation}
a_{h }(r)=-h(1+rh)^{-1}, h\geq 0 \label{example}
\end{equation}
But $a_{h}(0)=-h$ so they can be all distinguished by the value at
zero. If one considers $q=0$ and boundary condition $f^{\prime
}(0)=hf(0),$ then the corresponding operator is nonnegative for
$h\geq 0$ only. In this case, $\psi(r)=hr+1$ and the formula
(\ref{shred-krein}) gives exactly  $a_h (r)$.

The case of a Dirichlet boundary conditions is a bit subtle.
Consider positive $\cal{H}_\infty=-d^{2}/dr^{2}+q$ with Dirichlet
boundary condition at zero and locally summable potential $q$. The
problem here is that we don't know $a(0)$, the initial condition
for solving Riccati
equation%
\[
q=a^{2}+a^{\prime }
\]
Interestingly enough, we might have many Krein systems that
correspond to the same Schr\"odinger operator $\cal{H}_\infty$.
For instance, all $A_h(r)=-a_h(r/2)/2$ (formula (\ref{example}))
generate the same Schr\"odinger operator with Dirichlet boundary
condition and $q=0$. Different $A_h$ have different $\sigma_h$.
Due to (\ref{number3}), these measures are different by the jump
at zero only. Let us consider $\sigma (\lambda )=\sigma
_{0}(\lambda )+h\theta (\lambda )/2$, where $\theta (\lambda )$ is
Heaviside function, $h/2\geq 0$ is the jump at zero. Then, the
corresponding $H(x)=$ $h/2$ for all $x$.
Given any $%
r>0$, solution of equation (\ref{basic2}) is $\Gamma
_{r}(s,t)=h(2+hr)^{-1}$. Then, $%
A(r)=h(2+hr)^{-1}=A_h(r)$.

Thus, the natural questions are when is $\cal{H}_\infty$ generated
by Krein system and how to describe all Krein systems that give
rise to $\cal{H}_\infty$?
\begin{lemma}\label{dirichlet}
The operator $\cal{H}_\infty$ is generated by some Krein system if
and only if there is some $\psi(r)>0$ for $r\geq 0$ such that
$-\psi ^{\prime \prime }+q\psi =0$. Moreover, if $A(r)$ is
coefficient of Krein systems generating $\cal{H}_\infty$, then
\begin{equation}\label{krein-shredinger}
\quad A(r)=a(r/2)/2,\,{\it where}\quad
a(r)=\frac{\psi'(r)}{\psi(r)}
\end{equation}
with some $\psi(r)$ satisfying the properties given above.
\end{lemma}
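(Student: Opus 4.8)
The plan is to follow closely the pattern already set for the mixed boundary condition case (Lemma \ref{mixed}), but to handle the two directions separately and carefully, since now there is no a priori knowledge of $a(0)$ and the Krein system need not be unique. First I would establish the ``only if'' direction. Suppose $\mathcal{H}_\infty = -d^2/dr^2 + q$ with Dirichlet boundary condition is generated by some Krein system with coefficient $A(r)$. By the construction preceding Theorem \ref{number4}, we have $A(r) = a(r/2)/2$ with $a(r)$ absolutely continuous and $q = a^2 + a'$ (the factorization $\mathcal{H}_2 = \mathcal{O}\mathcal{O}^*$ with $\mathcal{O} = -d/dr - a$). Now I would set $\psi(r) = \exp\!\left(\int_0^r a(t)\,dt\right)$, which is strictly positive on $\mathbb{R}^+$ and absolutely continuous with $\psi'(r) = a(r)\psi(r)$; differentiating once more and using $a' = q - a^2$ gives $\psi'' = (a' + a^2)\psi = q\psi$, i.e. $-\psi'' + q\psi = 0$. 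This proves existence of the required positive solution, and moreover shows that any generating Krein system's coefficient is of the form \eqref{krein-shredinger} with $a = \psi'/\psi$ for this particular $\psi$.

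For the ``if'' direction, suppose there exists $\psi(r) > 0$ on $[0,\infty)$ solving $-\psi'' + q\psi = 0$ (with $q\in L^1_{\rm loc}$, so $\psi'' \in L^1_{\rm loc}$ and $\psi$ is $C^1$). Define $a(r) = \psi'(r)/\psi(r)$, which is absolutely continuous on $\mathbb{R}^+$ since $\psi > 0$, and check directly that $a' = \psi''/\psi - (\psi'/\psi)^2 = q - a^2$, so $q = a^2 + a'$. Then set $A(r) = a(r/2)/2$; this is absolutely continuous and thus an admissible Krein coefficient (regularity class of Section \ref{four}, or $L^2_{\rm loc}$). The Krein system with this coefficient produces, via $\mathcal{D}$ and $\mathcal{D}^2$, a Schrödinger operator $\mathcal{H}_2 = -d^2/dr^2 + q$ with Dirichlet boundary condition $f_2(0) = 0$, which is exactly $\mathcal{H}_\infty$ — here one uses that the decomposition $\mathcal{D}^2 = \mathcal{H}_1 \oplus \mathcal{H}_2$ and the identification of potentials $q_2 = a^2 + a'$ hold by the general construction, independently of the value $a(0)$. (One should note that the spectral measure $\rho_\infty$ recovered from this Krein system via Theorem \ref{number4} agrees with the given spectral data of $\mathcal{H}_\infty$ up to the possibly different jump at $0$, which is precisely the source of non-uniqueness, but this does not affect the operator-level statement being proved.)

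The main obstacle I anticipate is the careful justification that every Krein system generating $\mathcal{H}_\infty$ has coefficient given by \eqref{krein-shredinger} for an appropriate $\psi$ — i.e. the ``moreover'' clause — since one must argue that the function $a$ read off from such a Krein system, namely $a(r) = 2A(2r)$, indeed arises as $\psi'/\psi$ for a positive solution of $-\psi'' + q\psi = 0$, and conversely that different choices of positive $\psi$ (which form a one-parameter family when the homogeneous equation is disconjugate, parametrized essentially by a nonnegative constant as in the example $\psi(r) = hr+1$ for $q=0$) exhaust all such Krein systems. This is handled by the integrating-factor computation $\psi = \exp(\int_0^r a)$ from the first paragraph, which is a bijective correspondence between absolutely continuous solutions $a$ of the Riccati equation $q = a^2 + a'$ and positive $C^1$ solutions $\psi$ of $-\psi'' + q\psi = 0$ normalized by $\psi(0) = 1$; the freedom in $a(0) = \psi'(0)$ is exactly the freedom in choosing $\psi$. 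The remaining steps — absolute continuity bookkeeping and the identification of the spectral measures via Theorem \ref{number4} — are routine given the results already established.
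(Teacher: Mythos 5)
Your proposal is correct and follows essentially the same route as the paper: both directions reduce to the Riccati correspondence $q=a^{2}+a'$, $a=\psi'/\psi$, with $A(r)=a(r/2)/2$. The only cosmetic difference is in the converse direction, where the paper identifies the positive zero-energy solution as the dual system's eigenfunction $\varphi_-(r,0)$ via the formula $\varphi(r,0)=\exp\bigl(-\int_0^r a(t)\,dt\bigr)$, whereas you write $\psi(r)=\exp\bigl(\int_0^r a(t)\,dt\bigr)$ directly and verify the equation — the same function and the same argument.
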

\begin{proof}
If there is some positive $\psi(r)$ satisfying equation, then the
Krein system can be easily constructed by letting
\[
a(r)=\frac{\psi'(r)}{\psi(r)},\quad A(r)=a(r/2)/2
\]
Conversely, assume that there is at least one $A(r)$ generating
$\cal{H}_\infty$. Then, the dual system with coefficient $-A(r)$
will generate Dirac operator $\cal{D}_-$. Let
$\varphi_-(r,\lambda),\psi_-(r,\lambda)$ be the corresponding
generalized eigenfunctions. Then, (\ref{potential-a}) yields
\[
a(r)=\varphi_-'(r,0)/\varphi_-(r,0), A(r)=a(r/2)/2
\]
and $\varphi(r,0)$ satisfies $-\varphi''(r,0)+q(r)\varphi(r,0)=0$.
\end{proof}
There are several different ways to reformulate this criteria. For
instance, solution $\psi(r,0)$ mentioned above exists if and only
if the corresponding operator with the boundary condition
$f'(0)=h_0f(0)$ is non-negative for some $h_0$. Notice that if
that is true for $h_0$, then it must be true for any $h>h_0$.
One can also easily state this criteria in terms of the spectral measure $%
\rho_2 $. If $\sigma $, obtained from the formula (\ref{number3}),
generates a Krein systems, then this Krein system generates
$\cal{H}_\infty$.

In view of these two Lemmas, one can suggest the following
reduction of Schr\"odinger operator to the Krein system. Let
$\cal{H}_h$ be any Schr\"odinger operator {\bf bounded from
below}. Add large positive number $\gamma$ to $\cal{H}_h$ so that
it becomes strictly positive. This transformation simply moves the
spectrum to the right and does not change the spectral types. For
$\cal{H}_h+\gamma$, the Lemma \ref{mixed} is applicable. For
$\cal{H}_\infty$, the algorithm is the same but one has to apply
Lemma \ref{dirichlet}.

Now, let us briefly discuss the solution to the inverse problem
for Shr\"odinger operators. For $\cal{H}_h$ bounded from below,
the problem can be reduced to the inverse problem for Krein system
which we know how to solve (just follow the construction in the
first Sections). Assume that we are given the spectral measure
$\rho $ of Schr\"odinger operator $\cal{H}$ bounded from below.
Assume also that the potential $q$ we want to find is continuous.
Then, the asymptotics of $\rho $ at infinity \cite{Levitan} is
\begin{eqnarray*}
\rho (\lambda ) &=&2\lambda ^{3/2}/(3\pi )+\bar{o}(\lambda ^{1/2}),{\rm {%
\, for \, the \, Dirichlet \, b.c.}} \\ \rho (\lambda )
&=&2\lambda ^{1/2}/\pi -h +\bar{o}(1),{\rm \, for\, mixed\,
b.c.\,} f^{\prime }(0)=h f(0)
\end{eqnarray*}%
From this asymptotics, we can find the corresponding boundary
condition and apply one of the algorithms discussed above to find
measure $\sigma $ for one of the Krein systems, generating
$\cal{H}$. Once $\sigma$ is known, we
can find $%
A, $ then $a,$ and, finally, $q$. Notice also that this method
gives a one-to-one correspondence between all Schr\"odinger
operators, bounded from below, and spectral measures that yield
accelerant: $H\in C^{m+1}(\mathbb{R}^+)$ iff the potential $q\in
C^{m}(\mathbb{R}^{+})$, $m$ is an integer. An accelerant $H$ is
a.c. on $\mathbb{R}^{+}$ iff $q\in
L_{\rm{loc}}^{1}(\mathbb{R}^{+})$.

There is a direct way of solving the inverse spectral problem for
Schr\"odinger operators. This method is due to Gelfand and Levitan
\cite{GelfLev}. Let us discuss this method and compare it to
Krein's approach. Consider the operator (\ref{problem}) with a
continuous potential. Assume that its spectral measure
$\rho_{(h)}$ is given. Define
\[
\beta(\lambda)=\left\{
\begin{array}{cc}
\rho_{(h)}(\lambda)-2\lambda^{1/2}/\pi,& \lambda\geq 0 \\
\rho_{(h)}(\lambda),& \lambda<0
\end{array}
\right.
\]
and
\[
F(x)=\lim_{n\to\infty}
\int\limits_{-\infty}^n \cos(\lambda^{1/2}x)
d\beta(\lambda), \, x\geq 0
\]
It turns out that the limit exists and $F(x)$ is continuously
differentiable in $x$. Consider
\[
F(x,y)=\frac{F(x+y)+F(x-y)}{2}
\]
and an integral equation
\begin{equation}
K(x,y)+F(x,y)+\int\limits_0^x K(x,t)F(t,y)dt=0,\quad 0\leq y\leq
x<\infty \label{urav}
\end{equation}
One can prove that the solution $K(x,y)$ exists and is unique.
Then, the following relations solve the inverse problem.
\begin{equation}
h=K(0,0)=-F(0,0),\quad
q(x)=2\frac{d}{dx}K(x,x)
\end{equation}

Now, let us make an assumption that the operator $\cal{H}$ is
nonnegative. Then, we can find the unique Krein system that
generates $\cal{H}$. Take $\sigma(\lambda)=\rho(\lambda^2)/4$. For
an accelerant, we have the formal representation
\[
H(x)=\int\limits_{-\infty}^\infty \cos(\lambda x)d\left(
\sigma(\lambda)-\lambda/(2\pi)\right)=\frac{1}{2}
\int\limits_0^\infty \cos(\sqrt{\mu} x)d\left(
\rho(\mu)-2\mu^{1/2}/\pi\right)=F(x)/2
\]
One can check that
\begin{equation}
-K(x,y)=\Gamma_{2x} (x+y,0)+\Gamma_{2x} (x-y,0)
\label{eeer}
\end{equation}
Indeed, from (\ref{basic2}), we have the following identities
\[
\Gamma_{2x} (x+y,0)+\int\limits_0^x
H(y-u)
\Gamma_{2x}(x+u,0)du
+\int\limits_0^x H(y+u) \Gamma_{2x}
(x-u,0) du=H(x+y)
\]
\[
\Gamma_{2x} (x-y,0)+\int\limits_0^x H(-y-u) \Gamma_{2x}(x+u,0)du
+\int\limits_0^x H(u-y) \Gamma_{2x}
(x-u,0)du =H(x-y)
\]
Since $A(r)$ is real-valued, $H(x)$ is a real-valued, even function.
So, by adding the last two formulas, we get
\[
[\Gamma_{2x} (x+y,0)+\Gamma_{2x} (x-y,0)] + \int\limits_0^x \left[
H(y-u)+H(y+u)\right]
\left[ \Gamma_{2x}
(x-u,0)+\Gamma_{2x} (x+u,0) \right]du=
\]
\[
=H(x+y)+H(x-y)
\]
Therefore, we have (\ref{eeer}), and  $K(x,x)=-[\Gamma_{2x}
(2x,0)+\Gamma_{2x} (0,0)]$.
Taking the derivative, we obtain
\[
2\frac{d}{dx}K(x,x)=2(-A'(2x)+2A^2(2x))=a^2-a'=q
\]
and
\[
f'(0)/f(0)=-a(0)=-2A(0)=K(0,0)
\]
Thus, Krein's approach gives the same answer and these two methods
are essentially identical. The difference is that imposing
condition on (\ref{urav}) to have the unique solution is weaker
than saying that $F(x)/2$ is an accelerant. That allowed authors
of \cite{GelfLev} to deal with a more general situation. Notice
that the inverse problem for the Krein system is in fact the
problem of the factorization for integral operators. Indeed, given
measure $\sigma $, we construct the accelerant. For any $r>0$,
operator $1+{\cal H}_{r}>0$. Therefore, $\Gamma _{r}(x,y)$ exists.
To find it, we need to factorize $(1+{\cal H}_{r})^{-1}$. Once we
do that, $A(r)=\Gamma _{r}(0,r)=-V_{+}(0,r)$ by (\ref{kernels}).

Let us consider the scattering theory for Schr\"odinger operator
$\cal{H}_2$ defined by (\ref{gamilt2}). We assume that $a$ has a
compact support that belongs to, say, $[0,R]$. That means
potential $q_2$ has a compact support too. Consider the Jost
solution $F(r,\lambda): \cal{H}_2 F=\lambda^2 F$ defined by its
asymptotics at infinity: $F(r,\lambda)=\exp(i\lambda r), r>R$. Let
us introduce the scattering data
\begin{eqnarray*}
A_s(\lambda)=(F'(0,\lambda)+i\lambda F(0,\lambda))/(2i\lambda),
B_s(\lambda)=(i\lambda F(0,\lambda)-F'(0,\lambda))/(2i\lambda),\\
T_s(\lambda)=A_s(\lambda)^{-1},
R_s(\lambda)=B_s(\lambda)/A_s(\lambda)
\end{eqnarray*}
Function $F(0,\lambda)$ is called the Jost function for
Schr\"odinger operator. The next Lemma relates scattering and
spectral data for Schr\"odinger operator with Dirichlet boundary
conditions to the corresponding parameters of Krein system. One
should remember that we deal not with arbitrary Schr\"odinger but
with the one generated by Krein system with compactly supported
absolutely continuous coefficient.

\begin{lemma}
The following relations hold true
\begin{eqnarray*}
A_s(\lambda)=\A(\lambda)+\frac{a(0)(\A(\lambda)+\B(\lambda))}{2i\lambda},
B_s(\lambda)=\B(\lambda)-\frac{a(0)(\A(\lambda)+\B(\lambda))}{2i\lambda},\\
R_s(\lambda)=\f(\lambda)-\frac{a(0)(1+\f(\lambda))^2}{2i\lambda
+a(0)(1+\f(\lambda))},
F(0,\lambda)=A_s(\lambda)+B_s(\lambda)=\A(\lambda)+\B(\lambda)=\Pi(\lambda),\\
\rho_2'(\lambda^2)/\lambda=\frac{1}{\pi
|F(0,\lambda)|^2}=\frac{1}{\pi |\Pi(\lambda)|^2}=2\sigma'(\lambda)
\end{eqnarray*}
\end{lemma}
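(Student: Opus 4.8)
The plan is to identify the Jost solution of $\cal{H}_2$ with the second component of the Jost solution of the Dirac operator $\cal{D}$, and then to rewrite everything through $\A,\B,\f$ by means of Lemma~\ref{dirac-scat}. Recall from the reduction $\cal{D}^2=\mathrm{diag}(\cal{H}_1,\cal{H}_2)$ (as differential expressions, with $q_2=a^2+a'$): if $F_d(r,\lambda)=(f_1(r,\lambda),f_2(r,\lambda))^t$ denotes the Dirac Jost solution, then $\cal{D}F_d=\lambda F_d$ gives $\cal{D}^2F_d=\lambda^2F_d$, so $\cal{H}_2f_2=\lambda^2f_2$. Since $b\equiv0$ and $a$ is supported in $[0,R]$, we have $F_d(r,\lambda)=e^{i\lambda r}(i,1)^t$ for $r>R$, hence $f_2(r,\lambda)=e^{i\lambda r}$ for $r>R$, which is exactly the normalization defining the Jost solution $F$ of $\cal{H}_2$. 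The two solutions of $\cal{H}_2 u=\lambda^2u$ that equal $e^{\pm i\lambda r}$ for $r>R$ are independent for $\lambda\neq0$, so by uniqueness $F(r,\lambda)=f_2(r,\lambda)$; in particular $F(0,\lambda)=f_2(0,\lambda)=\Pi_\alpha(\lambda)=\Pi(\lambda)$, where the last equality uses that $A(r)$ is real, so $\alpha=0$ (Lemma~\ref{real-valued-case}).

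Next I would compute $F'(0,\lambda)$. The first line of the Dirac system (\ref{Dir}) with $b\equiv0$ reads $f_2'(r,\lambda)-a(r)f_2(r,\lambda)=\lambda f_1(r,\lambda)$, so
\[
F'(0,\lambda)=f_2'(0,\lambda)=\lambda f_1(0,\lambda)+a(0)f_2(0,\lambda).
\]
By Lemma~\ref{dirac-scat}, $f_2(0,\lambda)=\A(\lambda)+\B(\lambda)$ and $f_1(0,\lambda)=i(\A(\lambda)-\B(\lambda))$. Substituting into the definitions
\[
A_s(\lambda)=\frac{F'(0,\lambda)+i\lambda F(0,\lambda)}{2i\lambda},\qquad
B_s(\lambda)=\frac{i\lambda F(0,\lambda)-F'(0,\lambda)}{2i\lambda},
\]
the two $i\lambda$-terms collapse to $2i\lambda\A$ and $2i\lambda\B$ respectively, which yields immediately
$A_s=\A+a(0)(\A+\B)(2i\lambda)^{-1}$ and $B_s=\B-a(0)(\A+\B)(2i\lambda)^{-1}$. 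Then $F(0,\lambda)=A_s+B_s=\A+\B=\Pi(\lambda)$, and for $R_s=B_s/A_s$ I would insert $\B=\A\f$, cancel the common factor $\A$, and verify the elementary rational identity
\[
\frac{2i\lambda\f-a(0)(1+\f)}{2i\lambda+a(0)(1+\f)}=\f-\frac{a(0)(1+\f)^2}{2i\lambda+a(0)(1+\f)}.
\]

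Finally, the spectral–density relation is obtained by differentiating the formula $\rho_2(\lambda)=4\int_0^{\sqrt\lambda}\xi^2\,d\sigma(\xi)$ of Theorem~\ref{number4}: since $d\sigma$ is purely a.c.\ here (as $a$, hence $A$, is compactly supported, so $A\in L^1$ and Baxter's theorem applies), one gets $\rho_2'(\lambda^2)=2\lambda\sigma'(\lambda)$, i.e.\ $\rho_2'(\lambda^2)/\lambda=2\sigma'(\lambda)=\sigma_d'(\lambda)$; combining this with $\sigma_d'(\lambda)=\pi^{-1}|f_2(0,\lambda)|^{-2}=\pi^{-1}|\Pi(\lambda)|^{-2}$ from Lemma~\ref{dirac-scat} and $F(0,\lambda)=\Pi(\lambda)$ gives the asserted chain of equalities. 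Every step after the identification $F=f_2$ is linear algebra, so the only point genuinely needing care is that identification together with sign/normalization bookkeeping (including the observation that the displayed formulas are identities of meromorphic functions in $\lambda$, with the apparent pole at $\lambda=0$ being an artifact of the normalization and understood by analytic continuation); I expect this to be the main, though modest, obstacle, and would dispose of it first before carrying out the algebra above.
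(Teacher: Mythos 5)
Your proposal is correct and follows essentially the same route as the paper, which derives the lemma as a direct corollary of Lemma~\ref{dirac-scat}: you identify the Schr\"odinger Jost solution with the second component of the Dirac Jost solution, express $F(0,\lambda)$ and $F'(0,\lambda)$ through $\A,\B$ via that lemma, and the rest is the same algebra (your computations, including the rational identity for $R_s$ and the differentiation of $\rho_2$ from Theorem~\ref{number4}, all check out).
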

\begin{proof} The proof is a direct corollary from Lemma \ref{dirac-scat}.
Notice that if $a(0)=0$, the data coincide with main parameters in
Krein system.
\end{proof}
Simple calculations show that
$|A_s(\lambda)|^2=1+|B_s(\lambda)|^2$ if $\lambda\in
\mathbb{R}\backslash\{0\}$. That follows from the identity
$|\A(\lambda)|^2=1+|\B(\lambda)|^2$ which holds for any
$\lambda\in \mathbb{R}$. These formulas also show that the
analytical properties of $A_s(\lambda)$ and $R_s(\lambda)$ are
worse than of the analogous functions for Dirac operator or Krein
system. For instance,  $A_s(\lambda)$ has pole at zero iff
$a(0)\neq 0$. Moreover, \[A_s(\lambda)=
\frac{\Pi(\lambda)}{2}\left[
\frac{a(0)}{i\lambda}+1+F(\lambda)\right]
\]
where $F(\lambda)=\widehat{\Pi}(\lambda)\Pi^{-1}(\lambda)$ is
Weyl-Titchmarsh function for Krein system. Thus, we see that
$A_s(\lambda)$ might also have zeroes in $\mathbb{C}^+$. These
zeroes must be purely imaginary. Indeed, if $\lambda_0$ is such a
zero, then $\lambda_0^2$ is eigenvalue for the Schr\"odinger
operator considered on the whole line, a selfadjoint operator.
Therefore, $\lambda_0^2<0$ and $\lambda_0$ is purely imaginary.

\bigskip
{\bf Remarks and historical notes.}

For the first time, the inverse spectral problem for the
Schr\"odinger operator was solved by Gelfand and Levitan
\cite{GelfLev}. Their approach is applicable to any operator, not
necessarily bounded from below. In the recent paper
\cite{Simon-4}, Simon essentially introduced an ``accelerant"
directly for the Schr\"odinger operator. Different factorizations
of Schr\"odinger operators and applications were discussed in many
papers (see, for instance, \cite{Deift}). These methods allow one
to insert eigenvalues below the essential spectrum, for instance.

\newpage
\section{Scattering theory for Krein systems}
In this section, we will consider the scattering theory for Krein
systems and Dirac operators from slightly different perspective.
The strategy is close to what is best known as approach by
Marchenko and Agranovich to solution of inverse scattering problem
for Sturm-Liouville operators \cite{Marchenko}. We will try to
emphasize the algebraic aspect of this argument, i.e. why Hankel
operators appear and how their inversion is related to scattering
data. Let us start with Krein systems. For simplicity, assume that
the coefficient $A(r)$ is finitely supported within, say, interval
$[0,R]$ and is continuous on this interval. Then, clearly, there
is a unique solution $X_{\rm sc}(r,\lambda)$ such that $X_{\rm
sc}(r,\lambda)=X_0(r,\lambda)$, if $r>R$ where
\[
X_0(r,\lambda)=\left[
\begin{array}{cc}
e^{i\lambda r} & 0\\
0 & 1
\end{array}
\right]
\]
is the fundamental solution for $A(r)=0$. This solution $X_{\rm
sc}(r,\lambda)$ is normalized at infinity by its asymptotical
behavior. We will study this solution and the scattering data it
defines. Then, we will find its relation to spectral data and show
how to solve an inverse scattering problem. This construction will
be valid for more general case $A(r)\in L^1(\mathbb{R}^+)$. In the
meantime, let us assume first that $A(r)$ is compactly supported
and do some preliminary calculations. Clearly, if $X_{\rm sc}$ is
given by its value at $r=R$ (i.e., $X_{\rm sc}(R)=X_0(R)$), we
might try to study it by solving the Krein system backwards, from
$R$ to $0$. That is equivalent to dealing with ``mirrored"
coefficient $A^{(R)}(r)=A(R-r)\cdot \chi_{[0,R]}(r)$. Therefore,
we can study the following problem: consider the Krein system with
coefficient $A_1(r)$, it will be later taken equal to
$-A^{(R)}(r)$ but so far it is an arbitrary function with support
within $[0,R]$.  For this $A_1(r)$, consider the following
solution
\[
Y(r)=X(r)\left[
\begin{array}{cc}
e^{-i\lambda R} & 0\\
0 & 1
\end{array}
\right]
\]
where $X$ is  the fundamental solution for $A_1(r)$ normalized by
$X(0)=I$. Notice that
\begin{equation}
X_1(r,\lambda)=Y(R-r,-\lambda) \label{mirror-eq}
\end{equation}
 satisfies the following
properties: $X_1'(r,\lambda)=V_{\{\lambda,
-A_1^{(R)}\}}X_1(r,\lambda)$ and $X_1(R,\lambda)=X_0(R,\lambda)$.
Thus, $X_1(r,\lambda)$ is the scattering solution for $-A_1^{(R)}$
and so if one wants to study the scattering problem for $A(r)$, we
just need to study $Y(r)$ associated to $A_1(r)=-A^{(R)}(r)$ and
then use the formula (\ref{mirror-eq}).

Now, let us start with obtaining the formulas for elements of the
matrix $Y(r,\lambda)$. To simplify the calculations, take
\[
Y_1(r,\lambda)=Y(r,\lambda)\left[
\begin{array}{cc}
e^{i\lambda (R-r)} & 0\\
0 & 1
\end{array}\right]
=X(r)\left[
\begin{array}{cc}
e^{-i\lambda r} & 0\\
0 & 1
\end{array}
 \right]
\]
 We have
\footnote{In the calculations below we assume that $\lambda$ is
real.}
\begin{equation}
Y_1(r,\lambda)=\left[
\begin{array}{cc}
\overline{{\A}(r,\lambda)} & e^{i\lambda r}\overline{{\B}(r,\lambda)} \\
e^{-i\lambda r}{\B}(r,\lambda) & {\A}(r,\lambda)
\end{array}
\right]\label{formula-y1}
\end{equation}
Let us write the following identity which follows from
(\ref{simplect})
\[
 Y_1(r,\lambda)\cdot  \left[
\begin{array}{cc}
{\A(r,\lambda)} & -e^{i\lambda r}\overline{\B(r,\lambda)} \\
-e^{-i\lambda r}\B(r,\lambda) & \overline{\A(r,\lambda)}
\end{array}
\right]=I
\]
or
\begin{equation}
Y_1(r,\lambda)\cdot \left[
\begin{array}{cc}
1 & \overline{s_r(\lambda)} \\
{s_r(\lambda)} & 1
\end{array}
\right]=\left[
\begin{array}{cc}
\A^{-1}(r,\lambda) & 0 \\
0 & \overline{\A(r,\lambda)}^{-1}
\end{array}
\right]\label{identity-hankel}
\end{equation}
where $s_r(\lambda)=-e^{-i\lambda r}
{\B(r,\lambda)}\A^{-1}(r,\lambda)$.  Let
\[
Y_1(r,\lambda)=I+\int\limits_0^r K^{(r)}(s,0) \left[
\begin{array}{cc}
 e^{-i\lambda s} & 0 \\
0 & e^{i\lambda s}
\end{array}
\right]  ds, K^{(r)}(s,0)=\left[
\begin{array}{cc}
\overline{K_1^{(r)}(s,0)} & \overline{K_2^{(r)}(s,0)} \\
{K_2^{(r)}(s,0)} & {K_1^{(r)}(s,0)}
\end{array}
\right]
\]
\[
\frac{\B(r,\lambda)}{\A(r,\lambda)}=\int\limits_0^\infty
C_r(x)e^{i\lambda x}dx, s_r(\lambda)=-\int\limits_{-r}^\infty
C_r(x+r)e^{i\lambda x}dx, J_r(x)=-C_r(r-x)\cdot \chi_{[0,r]}(x)
\]
Subtract the identity matrix from both sides of
(\ref{identity-hankel}), take adjoint of the matrices on both
sides, and act by the operator
\[
\left[
\begin{array}{cc}
 \cal{P}_+ & 0 \\
0 & \cal{P}_-
\end{array}
\right]
\]
from the left. We recall that $\cal{P}_{\pm}$ are projections from
$L^2(\mathbb{R})$ onto $H^2(\mathbb{R})$ and
$\overline{H^2(\mathbb{R})}$. Then, we have the following integral
equation for matrix $K^{(r)}(x,0)$, $x\in [0,\infty)$
\begin{equation} \left[
\begin{array}{cc}
 I & \cal{G}_r^* \\
\cal{G}_r & I
\end{array}
\right] K^{(r)*}(\cdot,0)+\left[
\begin{array}{cc}
 0 & \overline{J_r(x)} \\
{J_r(x)} & 0
\end{array}
\right]=0 \label{kernel-hankel}
\end{equation}
and an operator
\[
[\cal{G}_r f](x)=\int\limits_0^\infty J_r(x+u) f(u)du
\]
is acting in $L^2(\mathbb{R}^+)$. Notice that $K^{(r)}(x,0)=0$ if
$x>r$. Consider an operator $\cal{K}^{(r)}$ in
$[L^2(\mathbb{R}^+)]^2$ such that
\[
\left[I+\left[
\begin{array}{cc}
 0 & \cal{G}_r^* \\
\cal{G}_r & 0
\end{array}
\right]\right](I+\cal{K}^{(r)*})=I
\]
Operator $\cal{G}_r$ is contractive Hankel operator since
$\B(r,\lambda)/\A(r,\lambda)$ is analytic contraction. Therefore,
$\cal{K}_r$ exists, is self-adjoint,  and has matrix-valued kernel
$K^{(r)}(x,y)$ such that $K^{(r)}(x,0)$ is exactly the solution of
(\ref{kernel-hankel}). Now, let us compare these equations for
different $r\in [0,R]$. Notice that $\B(R,\lambda)/\A(R,\lambda)$
is the Schur function for $A_1(r)$. Denote
\begin{equation}
 C(x)=C_R(x), J(x)=-C(R-x)\cdot
\chi_{[0,R]}(x)\label{pereschet1}
\end{equation}

 As it follows from (\ref{param-sh}) and
(\ref{sasp}), we have $C_r(x)=C(x)$ for $x\in [0,r]$. Therefore,

\begin{equation}
J_r(x)=J(x+(R-r)) \label{pereschet2}
\end{equation}
 and $K^{(r)}(x,0)$ can be obtained by inverting
the matrix-valued Hankel operator. On the other hand, it can be
expressed through kernels $\Gamma, \widehat\Gamma$ by formula
(\ref{formula-y1}). We then have
\[
K^{(r)}(x,0)=-\frac 12 \left[
\begin{array}{cc}
 {\Gamma_r(x,0)}+{\widehat{\Gamma}_r(x,0)} & {\Gamma_r(r,x)}-{\widehat\Gamma_r(r,x)} \\
 {\Gamma_r(x,r)}-{\widehat\Gamma_r(x,r)}
&\Gamma_r(0,x)+{\widehat{\Gamma}_r(0,x)}
\end{array}
\right]
\]
Theis formula allows to find $A_1(r)$ as off-diagonal elements in
the matrix
\[
K^{(r)}(0,0)=-\frac 12 \left[
\begin{array}{cc}
 {\Gamma_r(0,0)}+{\widehat{\Gamma}_r(0,0)} & 2\overline{A_1(r)} \\
2A_1(r)&\Gamma_r(0,0)+{\widehat{\Gamma}_r(0,0)}
\end{array}
\right]
\]
Now, we are ready to translate these calculations to the original
setting for coefficient $A(r)$. Take $A_1(r)=-A^{(R)}(r)$ and then
use the formula (\ref{mirror-eq}). We then have
\[
X_{\rm sc}(r,\lambda)= Y_1(R-r,-\lambda) \left[
\begin{array}{cc}
 e^{i\lambda r/2} & 0 \\
0 & e^{-i\lambda r/2}
\end{array}
\right] e^{i\lambda r/2}=e^{i\lambda r/2}Z(r,\lambda)
\]
For $Z(r,\lambda)$\footnote{The motivation to introduce
$Z(r,\lambda)$ comes from the fact that it is the scattering
solution for Dirac operator, that will be made clear later.}
\[
Z(r,\lambda)=\left[
\begin{array}{cc}
 e^{i\lambda r/2} & 0 \\
0 & e^{-i\lambda r/2}
\end{array}
\right]+\int\limits_{r/2}^{R-r/2} K^{(R-r)}(s-r/2,0)\left[
\begin{array}{cc}
 e^{i\lambda s} & 0 \\
0 & e^{-i\lambda s}
\end{array}
\right]ds
\]
\[
=\left[
\begin{array}{cc}
 e^{i\lambda r/2} & 0 \\
0 & e^{-i\lambda r/2}
\end{array}
\right]+\int\limits_{r/2}^{\infty} L^{(r/2)}(s,r/2)\left[
\begin{array}{cc}
 e^{i\lambda s} & 0 \\
0 & e^{-i\lambda s}
\end{array}
\right]ds
\]
where $L^{(r/2)}(x,y)=K^{(R-r)}(x-r/2,y-r/2); \quad x,y>r/2$
satisfies
\[
L^{(r/2)}(x,y)+\int\limits_{r/2}^\infty \left[
\begin{array}{cc}
 0 & \overline{J_{R-r}}(x+u-r) \\
J_{R-r}(x+u-r) & 0
\end{array}
\right]L^{(r/2)}(u,y)du
\]
\[
+\left[
\begin{array}{cc}
 0 & \overline{J_{R-r}}(x+y-r) \\
J_{R-r}(x+y-r) & 0
\end{array}
\right]=0;\quad x,y>r/2
\]
Now, let us obtain the formula for $J_{R-r}(x+y-r)$. Notice that
the Schur function for the coefficient $A_1(r)=-A^{(R)}(r)$ is
equal to $-{\B(R,-\lambda )}/{\A^{\ast }(R,-\lambda )}$ (Lemma
\ref{mirror}). If
\[
\frac{\B(R,\lambda)}{\overline{A(R,\lambda)}}=\int_{\mathbb{R}}D(x)e^{i\lambda
x}dx
\]
then $J_{R-r}(x+y-r)=D(x+y)$ by (\ref{pereschet1}) and
(\ref{pereschet2})\footnote{Notice also that $D(x)=0$ for $x>R$.}.
Therefore, $L^{(r/2)}(x,y)$ is an integral kernel of the operator
$\cal{L}^{(r/2)}$ in $\left[L^2[r/2,+\infty)\right]^2$ given by
\[
\left[I+\left[
\begin{array}{cc}
 0 & \cal{D}_{r/2}^* \\
\cal{D}_{r/2} & 0
\end{array}
\right]\right](I+\cal{L}^{(r/2)})=I
\]
and
\[
[\cal{D}_{r/2} f](x)=\int\limits_{r/2}^\infty D(x+u) f(u)du
\]
Since $|{\B(R,\lambda)}/{\overline{A(R,\lambda)}}|<1$,
$\|\cal{D}_\rho\|<1$ for any $\rho>0$. Therefore,
\[
\left[I+\left[
\begin{array}{cc}
 0 & \cal{D}_{r/2}^* \\
\cal{D}_{r/2} & 0
\end{array}
\right]\right]
\]is invertible.
Also, $\cal{D}_{r/2}=0$ if $r>R$ so $\cal{L}^{(r/2)}=0$ for $r>R$.

Now, we want to take $R\to\infty$. For this, we need some
regularity at infinity, just like we needed regularity of, say
accelerant near zero in the previous constructions. The simplest
and quite natural class of functions to consider is
$L^1(\mathbb{R}^+)\cap C_0(\mathbb{R}^+)$.

\begin{theorem}
Let $A(r)\in L^1(\mathbb{R}^+)\cap C_0(\mathbb{R}^+)$ be the
coefficient in Krein's system. Then, there exists the scattering
solution $X_{\rm sc}(r,\lambda)$ such that
\begin{equation}\label{normirovka}
X_{\rm sc}(r,\lambda)=X_0(r,\lambda)+\bar{o}(1)
\end{equation}
 as $\lambda\in
\mathbb{R}, r\to\infty$. This solution can be obtained as follows

\[
X_{\rm sc}(r,\lambda)= e^{i\lambda r/2}Z(r,\lambda),
Z(r,\lambda)=\left[
\begin{array}{cc}
 e^{i\lambda r/2} & 0 \\
0 & e^{-i\lambda r/2}
\end{array}
\right]+\int\limits_{r/2}^{\infty} L^{(r/2)}(s,r/2)\left[
\begin{array}{cc}
 e^{i\lambda s} & 0 \\
0 & e^{-i\lambda s}
\end{array}
\right]ds
\]
where $L^{(r/2)}(x,y); x,y>r/2$ is the kernel of the operator
$\cal{L}^{(r/2)}:$
\[
\left[I+\mathfrak{D}_{r/2}\right](I+\cal{L}^{(r/2)})=I,\quad
\mathfrak{D}_{r/2}= \left[
\begin{array}{cc}
 0 & \cal{D}_{r/2}^* \\
\cal{D}_{r/2} & 0
\end{array}
\right], [\cal{D}_{r/2} f](x)=\int\limits_{r/2}^\infty D(x+u)
f(u)du
\]

and $D(x)$ is given by
\[
g(\lambda)=\f(\lambda)\frac{\A(\lambda)}{\overline{\A(\lambda)}}=\int\limits_{-\infty}^{\infty}
D(x)e^{i\lambda x}dx, D(x)\in L^1(\mathbb{R})\cap C_0(\mathbb{R})
\]
The coefficient $A(r)$ can be found from the following identity
\begin{equation}
A(r)=L^{(r/2)}_{21}(r/2,r/2) \label{inverse-scat}
\end{equation}
\end{theorem}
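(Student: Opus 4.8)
The plan is to first reduce the statement to the compactly supported case, where all the preliminary calculations preceding the theorem are rigorous, and then pass to the limit $R\to\infty$ using the hypothesis $A(r)\in L^1(\mathbb{R}^+)\cap C_0(\mathbb{R}^+)$. By Theorem~\ref{baxter-theorem}, $A(r)\in L^1(\mathbb{R}^+)$ already guarantees we are in the Szeg\H{o} case: $\A(r,\lambda)\to\A(\lambda)$, $\B(r,\lambda)\to\B(\lambda)$ in $W_+$ norm, $\A(\lambda)$ is outer with no zeroes in $\overline{\mathbb{C}^+}$ and $\A(\lambda)-1\in W_+$, and $\f(\lambda)=\B(\lambda)\A^{-1}(\lambda)$. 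The first step is to justify that $g(\lambda)=\f(\lambda)\A(\lambda)\overline{\A(\lambda)}^{-1}$ lies in $W(\mathbb{R})$ and that its inverse Fourier transform $D(x)$ is in $L^1(\mathbb{R})\cap C_0(\mathbb{R})$: this follows because $\f,\A^{-1}\in H^\infty(\mathbb{C}^+)$, $\B(\lambda)=\f\A\in W_+$ (from $\|\B(R,\lambda)-\B(\lambda)\|_{W_+}\to 0$), $\overline{\A(\lambda)}^{-1}$ is the conjugate of a $W_+$ function that is bounded away from zero (so its inverse is again $\bar{o}(1)$-perturbation of $1$ in $\overline{W_+}$), and Lemma~\ref{auxil-scat} upgrades the $L^1$ membership to $C_0$ when $A\in C_0$. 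Note $D(x)=0$ for $x>R$ in the compactly supported case and more generally $D$ decays, which is what makes the Hankel operator $\cal{D}_{r/2}$ tend to zero in norm as $r\to\infty$.

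The second step is to make sense of the operators $\cal{D}_\rho$ and $\cal{L}^{(\rho)}$ for all $\rho>0$ in the general case. Since $g(\lambda)=\f(\lambda)\A(\lambda)\overline{\A(\lambda)}^{-1}$ and $|g(\lambda)|=|\f(\lambda)|<1$ for $\lambda\in\mathbb{R}$ (because $|\A|=|\overline{\A}|$ on $\mathbb{R}$), the scalar Hankel operator $[\cal{D}_\rho f](x)=\int_\rho^\infty D(x+u)f(u)du$ has norm $\|\cal{D}_\rho\|\le\|g\|_{H^\infty}<1$ by Nehari's theorem (or just by the estimate $\|\Pi_{[\rho,\infty)}\cal{S}_g\Pi_{[\rho,\infty)}\|<1$ used elsewhere in the paper). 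Hence $I+\mathfrak{D}_{r/2}$ is boundedly invertible on $[L^2[r/2,\infty)]^2$ uniformly in $r$, $\cal{L}^{(r/2)}=(I+\mathfrak{D}_{r/2})^{-1}-I$ is well-defined with matrix-valued $L^2$ kernel $L^{(r/2)}(x,y)$, and — crucially, since $D\in L^1\cap C_0$ — the off-diagonal kernel is jointly continuous in $(x,y)$ for $x,y\ge r/2$, so the pointwise value $L^{(r/2)}_{21}(r/2,r/2)$ makes sense. This continuity is the analog of the regularity arguments (Lemma~\ref{fredholm-mod}, Lemma~\ref{lemma-discont}) used for the accelerant near zero; here the relevant regularity is at infinity.

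The third step is the approximation argument. Fix $r>0$. Let $A_n(s)=A(s)\chi_{[0,n]}(s)$ for $n>r$; then $A_n\to A$ in $L^1(\mathbb{R}^+)$ and uniformly on compacts, so the associated Schur functions $\f_n\to\f$, the Wall polynomials $\A_n\to\A$, $\B_n\to\B$ in $W_+$, hence $g_n\to g$ in $W(\mathbb{R})$ and $D_n\to D$ in $L^1(\mathbb{R})$. For each $n$ the compactly supported computation preceding the theorem is valid and gives $X_{\rm sc}^{(n)}(r,\lambda)=e^{i\lambda r/2}Z_n(r,\lambda)$ with the stated formula and $A_n(r)=L^{(r/2),n}_{21}(r/2,r/2)$. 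Because $D_n\to D$ in $L^1$, the Hankel operators $\mathfrak{D}_{r/2}^{(n)}\to\mathfrak{D}_{r/2}$ in operator norm (Hankel operator norm is controlled by the $L^1$ norm of the symbol's transform via $\|\mathfrak{D}^{(n)}-\mathfrak{D}\|\le\|D_n-D\|_{L^1}$), so $\cal{L}^{(r/2),n}\to\cal{L}^{(r/2)}$ in norm, and the continuity of the kernels on the diagonal passes to the limit, yielding $A(r)=L^{(r/2)}_{21}(r/2,r/2)$, which is \eqref{inverse-scat}. Separately, $X_{\rm sc}^{(n)}(r,\lambda)\to X_{\rm sc}(r,\lambda)$ uniformly on compacts in $\lambda$ by continuous dependence of solutions of \eqref{krein3} on $L^1$ coefficients (Gronwall--Bellman), which also identifies $X_{\rm sc}$ as the genuine scattering solution. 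Finally, \eqref{normirovka} follows: writing $X_{\rm sc}(r,\lambda)=e^{i\lambda r/2}Z(r,\lambda)$ and noting the integral term in $Z$ equals $\int_{r/2}^\infty L^{(r/2)}(s,r/2)\,\mathrm{diag}(e^{i\lambda s},e^{-i\lambda s})\,ds$, whose $L^2$-in-$\lambda$-locally-uniform size is bounded by $\|L^{(r/2)}(\cdot,r/2)\|_{L^1}\le C\|\cal{D}_{r/2}\|\to 0$ as $r\to\infty$ since $D\in L^1$ forces $\|\cal{D}_{r/2}\|\to 0$.

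The main obstacle I expect is the third step: controlling the kernel $L^{(r/2)}(x,y)$ pointwise (not just in $L^2$) and showing this control is uniform enough in $n$ and $r$ to pass the diagonal value $L^{(r/2)}_{21}(r/2,r/2)$ to the limit. This requires knowing that inversion of $I+\mathfrak{D}_\rho$ preserves the algebra of operators with continuous, $L^1$-decaying off-diagonal kernels — essentially a Wiener-type statement for matrix Hankel/Wiener--Hopf operators on a half-line, which is available (cf. the Baxter-type results in the Appendix and \cite{GohKr,Ellis}) but needs to be invoked carefully. Everything else — the algebraic identity \eqref{identity-hankel}, the relation between $K^{(r)}$ and the resolvent kernels $\Gamma_r,\widehat\Gamma_r$, and the mirror-symmetry substitution of Lemma~\ref{mirror} — is already established in the excerpt for the compactly supported case and only needs the limiting argument to be made rigorous.
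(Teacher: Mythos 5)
Your outline is essentially the paper's own argument: identify $D(x)\in L^1(\mathbb{R})\cap C_0(\mathbb{R})$ from the Baxter theorem and Lemma \ref{auxil-scat}, invert the Hankel operator $I+\mathfrak{D}_{r/2}$ to define $\cal{L}^{(r/2)}$ and get the normalization at infinity, then truncate $A$ and pass to the limit using $\|D_R-D\|_{L^1}\to 0$. Two remarks on where the paper is lighter than your plan. First, the "main obstacle" you flag (joint two-variable continuity of $L^{(r/2)}(x,y)$, a Wiener-type theorem for matrix Hankel operators) is not needed: both the formula for $Z(r,\lambda)$ and (\ref{inverse-scat}) involve only the column $L^{(r/2)}(\cdot,r/2)$, which solves the explicit integral equation with right-hand side built from $D(x+u)$, $x,u\ge r/2$; the paper inverts $I+\mathfrak{D}_\rho$ directly in $L^1[\rho,\infty)$ (Hankel operators with such symbols are compact in $L^p$, and an $L^1$-eigenvalue $-1$ would also be an $L^2$-eigenvalue, contradicting the strict contraction), so the column lies in $L^1\cap C_0$ and its $L^1$ norm tends to $0$ by simple iteration, since $\int_{2\rho}^\infty|D|\to 0$ — this also replaces your bound of the $L^1$ column norm by the $L^2$ operator norm, which does not directly follow. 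Second, instead of invoking continuous dependence on $L^1$ coefficients to see that the limit solves (\ref{krein3}), the paper uses the identity $X^{R}_{\rm sc}(r,\lambda)=X(r,\lambda)X^{-1}(R,\lambda)X_0(R,\lambda)$ for the truncated system and lets $R\to\infty$, obtaining $X_{\rm sc}(r,\lambda)=X(r,\lambda)M(\lambda)$ with the $r$-independent matrix $M(\lambda)$ having entries $\A(\lambda),-\overline{\B(\lambda)},-\B(\lambda),\overline{\A(\lambda)}$; this exhibits $X_{\rm sc}$ as a solution at once and gives the explicit value $X_{\rm sc}(0,\lambda)$ used as scattering data afterwards. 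Also note a small slip: $g\notin H^\infty(\mathbb{C}^+)$, so the correct bound is $\|\cal{D}_{\rho}\|\le\|g\|_{L^\infty(\mathbb{R})}=\sup_{\mathbb{R}}|\f|<1$, not $\|g\|_{H^\infty}$. With these adjustments your argument matches the paper's proof.
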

\begin{proof}
As it follows from the section on the Baxter Theorem,  Levy-Wiener
Theorem, and Lemma \ref{auxil-scat} $D(x)\in L^1(\mathbb{R}^+)\cap
C_0(\mathbb{R}^+)$. Also, $\cal{D}_\rho$ is contraction in
$L^2[\rho,\infty)$. By the general theory of Hankel operators,
$\cal{D}_\rho$ is compact in any of $L^p[\rho,\infty),
\infty>p\geq 1$ and any eigenvalue in $L^1[\rho,\infty)$ is also
an eigenvalue in $L^2[\rho,\infty)$. This is because $D(x)\in
L^\infty(\mathbb{R})$. In particular, that means
$I+\mathfrak{D}_\rho$ is invertible in $L^1[\rho,\infty)$ and,
therefore, $L^{(\rho)}(x,\rho)\in L^1[\rho,\infty)\cap
C_0(\mathbb{R}^+)$. Moreover,
$\|L^{(\rho)}(x,\rho)\|_{L^1[\rho,\infty)}\to 0$ as
$\rho\to\infty$ which can be checked by simple iterations. Thus,
$X_{\rm sc}(r,\lambda)$ is well-defined and satisfies
(\ref{normirovka}). Now, let us show that it is actually a
solution.

Consider $R$-- any positive number and let $A_R(r)=A(r)\cdot
\chi_{[0,R]}(r)$. It has finite support and therefore the
statement of the Theorem follows from the calculations given
above. If $D_R(x)$ is the corresponding function, then
$\|D_R(x)-D(x)\|_1\to 0$ as $R\to \infty$. That follows from the
proof of Baxter's Theorem. Consequently, $L^{(r)}_R(x,r)\to
L^{(r)}(x,r)$ and $X_{\rm sc}^R(r,\lambda)\to X_{\rm
sc}(r,\lambda)$ as $R\to\infty$ and $r$ is fixed.  On the other
hand,
\[
X_{\rm
sc}^{R}(r,\lambda)=X(r,\lambda)X^{-1}(R,\lambda)X_0(R,\lambda),
r\in [0,R]
\]
Then,
\[
X_{\rm sc}^R(r,\lambda)=X(r,\lambda)\left[
\begin{array}{cc}
\A(R,\lambda) & -\overline{\B(R,\lambda)}\\
-\B(R,\lambda) & \overline{\A(R,\lambda)}
\end{array}
\right], r\in [0,R]
\]
For fixed $r$,
\[
X_{\rm sc}^R(r,\lambda)\to X(r,\lambda)\left[
\begin{array}{cc}
\A(\lambda) & -\overline{\B(\lambda)}\\
-\B(\lambda) & \overline{\A(\lambda)}
\end{array}
\right]
\]
as $R\to\infty$. Therefore,
\[
X_{\rm sc}(r,\lambda)= X(r,\lambda)\left[
\begin{array}{cc}
\A(\lambda) & -\overline{\B(\lambda)}\\
-\B(\lambda) & \overline{\A(\lambda)}
\end{array}
\right]
\]
Consequently, $X_{\rm sc}(r,\lambda)$ is a solution. Relation
(\ref{inverse-scat}) is true for truncated $A(r)$ and therefore
holds after taking $R\to\infty$.
\end{proof}

Notice that
\[
X_{\rm sc}(0,\lambda)=\left[
\begin{array}{cc}
\A(\lambda) & -\overline{\B(\lambda)}\\
-\B(\lambda) & \overline{\A(\lambda)}
\end{array}
\right]
\]
This matrix can be regarded as scattering data for Krein's system.
On the other hand, function $D(x)$ can also be regarded as
scattering data. What is the relation between these functions? If
$\f(\lambda)$ or $\B(\lambda)$ are given, then $|\A(\lambda)|$ can
be found from $|\A(\lambda)|^{-2}=1-|\f(\lambda)|^2$ or
$|\A(\lambda)|^2=1+|\B(\lambda)|^2$, respectively. Then,
$\A(\lambda)$ can be found from $|\A(\lambda)|$ because it is
outer. Function $\B(\lambda)$ can be obtained from
$\B(\lambda)=\f(\lambda)\A(\lambda)$ and $D(x)$ may be recovered
by taking inverse Fourier transform of
$\B(\lambda)/\overline{\A(\lambda)}$.  The converse result can be
obtained from the following calculations which is in the core of
the method. Assume that $A(r)\in L^1(\mathbb{R}^+)$. Then,
\begin{equation}
\left[
\begin{array}{cc}
\A(\lambda) & -\overline{\B(\lambda)}\\
-\B(\lambda) & \overline{\A(\lambda)}
\end{array}
\right] \left[
\begin{array}{cc}
1 & \overline{g(\lambda)}\\
g(\lambda) & 1
\end{array}
\right]=\left[
\begin{array}{cc}
\overline{\A^{-1}}(\lambda) & 0\\
0 & \A^{-1}(\lambda)
\end{array}
\right]\label{main-algebra}
\end{equation}
All elements in the matrices are from the Wiener algebra. Let
\[
\A(\lambda)=1+\int\limits_0^\infty \alpha(x)e^{i\lambda x }dx,
\B(\lambda)=\int\limits_0^\infty \beta(x)e^{i\lambda x}dx
\]
Writing down the integral equation for coefficients and taking
suitable projections, we get
\begin{equation}
\left[
\begin{array}{cc}
1 & -\cal{D}^*\\
-\cal{D} & 1
\end{array}
\right]\left[
\begin{array}{cc}
\overline{\alpha(\cdot)} & \overline{\beta(\cdot)}\\
\beta(\cdot) & \alpha(\cdot)
\end{array}
\right]=\left[
\begin{array}{cc}
0 & \overline{D(x)}\\
D(x) & 0
\end{array}
\right]\label{twist}
\end{equation}
where $\cal{D}$ is the corresponding Hankel operator. Of course,
this equation can be used to find $\B(\lambda)$ and $\A(\lambda)$.
Consequently,  if $D(x)$ is generated by Krein system, then it
defines this system uniquely. The converse is also true.
\begin{theorem}
Assume that $D(x)$ is a given function from $L^1(\mathbb{R}^+)\cap
C_0(\mathbb{R}^+)$. If the Hankel operator $\cal{D}$ is
contraction in $L^2(\mathbb{R}^+)$, then there is a unique Krein
system that generates this $D(x)$. Moreover, the coefficient
$A(r)\in L^1(\mathbb{R}^+)\cap C_0(\mathbb{R}^+)$. Conversely, any
$A(r)\in L^1(\mathbb{R}^+)\cap C_0(\mathbb{R}^+)$ generates $D(x)$
with these properties.
\end{theorem}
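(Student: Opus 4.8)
The plan is to build the Krein system directly from $D(x)$ by running the algebraic identity \eqref{main-algebra}--\eqref{twist} backwards, and then show the resulting coefficient $A(r)$ lies in the right class. First I would use the hypothesis that the Hankel operator $\cal{D}$ with symbol $D(x)\in L^1(\mathbb{R}^+)\cap C_0(\mathbb{R}^+)$ is a contraction in $L^2(\mathbb{R}^+)$. Since $D\in L^\infty(\mathbb{R})$, the operator $\cal{D}_\rho$ (restriction to $L^2[\rho,\infty)$) is compact in every $L^p[\rho,\infty)$, $1\le p<\infty$, with $\|\cal{D}_\rho\|_{2,2}<1$, and its $L^1$-eigenvalues coincide with its $L^2$-eigenvalues. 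Hence $I+\mathfrak{D}_{\rho}$ is invertible in $[L^1[\rho,\infty)]^2$ and in $[L^2[\rho,\infty)]^2$, so the kernels $L^{(\rho)}(x,y)$ are well defined, belong to $L^1[\rho,\infty)\cap C_0$, and $\|L^{(\rho)}(\cdot,\rho)\|_{L^1[\rho,\infty)}\to 0$ as $\rho\to\infty$ (by iterating the defining integral equation, exactly as in the proof of the previous Theorem). This produces candidate functions $X_{\rm sc}(r,\lambda)=e^{i\lambda r/2}Z(r,\lambda)$ and a candidate coefficient $A(r):=L^{(r/2)}_{21}(r/2,r/2)\in L^1(\mathbb{R}^+)\cap C_0(\mathbb{R}^+)$.

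Next I would show that this $A(r)$ actually generates a Krein system whose scattering data is the prescribed $D(x)$. The cleanest route is an approximation argument matching the previous Theorem: set $D_R(x)=D(x)\chi_{[0,R]}(x)$. Because the Hankel operator with a compactly supported symbol that is a contraction comes from a genuine finitely-supported Krein coefficient (this is the content of the ``mirror'' calculation preceding the last Theorem, read in reverse: $D_R$ determines $g_R(\lambda)=\B(R,\lambda)/\overline{\A(R,\lambda)}$, hence $|\A(R,\lambda)|^2=1+|\B(R,\lambda)|^2$, hence the outer $\A(R,\lambda)$, hence $\B(R,\lambda)$, hence a compactly supported $A_R(r)\in C[0,R]$ via Lemma~\ref{mirror} and Theorem~\ref{onetoone}), we get for each $R$ a Krein system with the stated properties. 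Then $\|D_R-D\|_1\to 0$, which forces $L^{(r)}_R(x,r)\to L^{(r)}(x,r)$ in $L^1$ for fixed $r$, hence $A_R(r)\to A(r)$ locally, and the limiting $A(r)$ generates a Krein system by the stability results of Sections~\ref{four}--\ref{sect-baxter} (the map $A\mapsto$ accelerant $\mapsto$ measure is continuous in $L^1_{\rm loc}$). Passing to the limit in \eqref{twist} then shows the scattering function of this limiting system is exactly $D(x)$.

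For uniqueness I would argue directly from \eqref{twist}: if two Krein systems with coefficients $A^{(1)},A^{(2)}\in L^1(\mathbb{R}^+)\cap C_0(\mathbb{R}^+)$ produced the same $D(x)$, then by Baxter's Theorem both are in the Szeg\H{o} case with $\A^{(j)}(\lambda)-1,\B^{(j)}(\lambda)\in W_+$, and \eqref{twist} is a linear system $(I+\mathfrak D)$ applied to $(\overline{\alpha^{(j)}},\overline{\beta^{(j)}};\beta^{(j)},\alpha^{(j)})^t$ with the same right-hand side $(0,\overline D;D,0)^t$; invertibility of $I+\mathfrak D$ (which is $I+\mathfrak D_0$, contraction plus compactness) gives $\A^{(1)}=\A^{(2)}$, $\B^{(1)}=\B^{(2)}$, hence the same Schur function $\f=\B\A^{-1}$, hence the same Krein system by Theorem~\ref{s0krein}. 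The converse direction — that every $A(r)\in L^1(\mathbb{R}^+)\cap C_0(\mathbb{R}^+)$ yields such a $D(x)$ — is already contained in the previous Theorem together with Baxter's Theorem and Lemma~\ref{auxil-scat}: $\f\in W_+\cap B(\mathbb{C}^+)$, $\A^{-1}$ is outer with $\A(\lambda)-1\in W_+$, so $g=\f\,\A/\overline{\A}\in W_+$ and $D\in L^1(\mathbb{R})\cap C_0(\mathbb{R})$, and $\cal{D}$ is a contraction because $|g(\lambda)|=|\f(\lambda)|<1$ a.e.

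The main obstacle I expect is verifying that a prescribed contractive Hankel symbol $D(x)$ really does arise from the factorization \eqref{main-algebra}, i.e. that one can solve for an \emph{outer} $\A(\lambda)$ and a matching $\B(\lambda)$ with $\A-1,\B\in W_+$ and $\B/\overline{\A}$ having $D$ as its inverse Fourier transform — the compactly-supported case handles this via the explicit mirror construction, but the limit $R\to\infty$ requires controlling $\|\A(R,\cdot)\|_{W_+}$, $\|\B(R,\cdot)\|_{W_+}$ uniformly in $R$, which is precisely where the $C_0$ decay of $D$ (not merely $L^1$) is used, exactly as the analogous uniform $W_+$ bounds were obtained in the proof of Baxter's Theorem. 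Everything else is bookkeeping with Wiener-algebra projections and the already-established correspondence between Krein systems, accelerants, and Schur functions.
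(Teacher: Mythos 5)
Your converse direction and your uniqueness argument via (\ref{twist}) are fine and essentially coincide with the paper's. The existence direction, however, has a genuine gap, and it sits exactly where you yourself flag the main obstacle. First, your approximation scheme begins by truncating the symbol, $D_R=D\chi_{[0,R]}$, and treating its Hankel operator as again a contraction; truncating the symbol is not a compression of $\cal{D}$ to a subspace, so contractivity is not inherited, and the approximating Krein systems you want need not exist at all. Second, even granting that, the claim that a compactly supported contractive Hankel symbol ``comes from a finitely supported Krein coefficient by reading the mirror calculation in reverse'' is not something the paper supplies, and your sketch of it does not work: contractivity of the Hankel operator does not give $|g_R(\lambda)|<1$ pointwise a.e.\ (by Nehari, the Hankel norm is a distance from the symbol to an $H^\infty$ class, not its sup norm), so the step recovering $|\A(R,\lambda)|^2=\bigl(1-|g_R(\lambda)|^2\bigr)^{-1}$ and then the outer $\A(R,\lambda)$ is unjustified; and even with $|g_R|\le 1$ you would still need $\A-1,\B\in W_+$, the correct support of the resulting coefficient in $[0,R]$, and the fact that $\f=\B\A^{-1}$ really is the Schur function of a Krein system --- in other words, the compactly supported case is the theorem itself, not an available input. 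Your candidate $A(r)=L^{(r/2)}_{21}(r/2,r/2)$ is therefore never shown to generate a Krein system whose scattering function is the prescribed $D$.

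What the paper does instead, and what your proposal is missing, is a purely algebraic substitute for all of this: solve the linear Wiener-algebra system (\ref{twist}) directly for $\phi=\A-1$ and $\B$ (this uses only the invertibility coming from the contraction hypothesis), define $g_2$ and $\psi$ by (\ref{main-algebra2}), and then prove by manipulating the projections $\cal{P}_\pm$ that the solution automatically satisfies $|\A|^2=1+|\B|^2$ on $\mathbb{R}$ (the $J$-unitarity lemma), whence $(1+\phi)(1+\psi)=1$ and $\f=\B\A^{-1}$ is an analytic contraction meeting the hypotheses of the continuous Baxter theorem; Baxter's theorem then produces the Krein system with $A\in L^1(\mathbb{R}^+)$, and Lemma \ref{auxil-scat} upgrades the regularity to $C_0(\mathbb{R}^+)$, after which one checks that this system indeed reproduces $D$. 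Without the $J$-unitarity lemma (or some genuine replacement for it), the existence half of your argument does not close, since nothing in your construction guarantees that the data $D$ is consistent with any matrix of the form (\ref{main-algebra}) coming from a Krein system.
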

\begin{proof}
The second part follows from the arguments given above. Now, let
us start with $D(x)$. Consider (\ref{main-algebra}). Let
$\A(\lambda)=1+\phi(\lambda),
g(\lambda)=g_1(\lambda)+\overline{g}_2(\lambda),
\A^{-1}(\lambda)=1+\psi(\lambda)$, where $\phi(\lambda),
g_{1(2)}(\lambda), \psi(\lambda)\in W_+(\mathbb{R})$. Then,
(\ref{main-algebra}) is equivalent to the algebraic system
\[
\phi=\overline{\psi}+\overline{\B}(g_1+\overline{g}_2),\B=g_1+\overline{g}_2+\overline{\phi}(g_1+\overline{g}_2)
\]
That can be rewritten as a system for $\phi$ and $\B$:
\begin{equation}
\phi=\cal{P}_+(\overline{\B} g_1),
\B=g_1+\cal{P}_+(\overline{\phi} g_1) \label{main-algebra1}
\end{equation}
with two more equations
\begin{equation}
g_2=-\frac{\cal{P}_+(\phi \overline{g_1})}{1+\phi},\psi=-\B
g_2-\cal{P}_+(\B \overline{g_1}) \label{main-algebra2}
\end{equation}
that determine $g_2$ and $\psi$ consecutively. The system
(\ref{main-algebra1}) is equivalent to (\ref{twist}). Since
$\cal{D}$ is contraction, (\ref{twist}) has the unique solution.
Now, that $\phi$ and $\B$ are found, $g_2$ and $\psi$ can be found
by (\ref{main-algebra2}). Notice carefully that we do not know yet
that $\psi(\lambda)+1$  is indeed an inverse to $1+\phi(\lambda)$.
It will be clear in a second.  We have
\begin{lemma}
Matrix
\[
\left[
\begin{array}{cc}
\A(\lambda) & \overline{\B(\lambda)}\\
\B(\lambda) & \overline{\A(\lambda)}
\end{array}
\right]
\]
obtained in this way is $J$-unitary.
\end{lemma}
\begin{proof}
We only need to show that $|\A|^2=1+|\B|^2$. The following is true
\[
|\A|^2=1+|\B|^2 \Leftrightarrow
\phi+\overline\phi=|\B|^2-|\phi|^2\Leftrightarrow
\phi=\cal{P}_+(|\B|^2-|\phi|^2) \Leftrightarrow
\phi=\cal{P}_+(\B\overline{\B}-\phi\overline{\phi})
\]
Plug in the expressions for $\B$ and $\phi$ from
(\ref{main-algebra1}) into the last formula. Thus we just need to
check that
\[
\phi=\cal{P}_+(\overline{\B}g_1)+\cal{P}_+(\overline{\B} \cal{P}_+
( g_1 \overline{\phi}))-\cal{P}_+(\overline{\phi}\cal{P}_+(g_1
\overline{\B}))
\]
Due to the first identity in (\ref{main-algebra1}), the last
equality can be written as
\[
\cal{P}_+(\overline{\B} \cal{P}_+ ( g_1
\overline{\phi}))=\cal{P}_+(\overline{\phi}\cal{P}_+(g_1
\overline{\B}))
\]
but this is always true since the left-hand side is
\[
\cal{P}_+(\overline{\B} \cal{P}_+ ( g_1
\overline{\phi}))=\cal{P}_+(\overline{\B} g_1 \overline\phi -
\overline \B \cal{P}_-( g_1 \overline{\phi}))=\cal{P}_+(g_1
\overline{\B}\overline{\phi})
\]
and the right-hand side is
\[
\cal{P}_+(\overline{\phi} \cal{P}_+ ( g_1
\overline{\B}))=\cal{P}_+(\overline{\phi} g_1 \overline\B -
\overline \phi \cal{P}_-( g_1 \overline{\B}))=\cal{P}_+(g_1
\overline{\B}\overline{\phi})
\]
\end{proof}
As a corollary, we get $(1+\phi)(1+\psi)=1$. Indeed, we have $g=\B
\overline{\A}^{-1}$ from (\ref{main-algebra1}) and
(\ref{main-algebra2}). Then,
$1+\psi=\overline{\A}-\B\overline{g}=(|\A|^2-|\B|^2)\A^{-1}$=$(1+\phi)^{-1}$
by Lemma above.

Now, we can say that the function $\f=\B\A^{-1}$ is analytic
contraction for which the conditions of the Baxter Theorem hold
true. Therefore, it generates the Krein system with $A(r)\in
L^1(\mathbb{R}^+)$. Moreover, $\alpha(x),\beta(x) \in
C_0(\mathbb{R}^+)$. Therefore, $C(x)\in C_0(\mathbb{R}^+)$. By
Lemma \ref{auxil-scat}, we have $A(r)$ is also from
$C_0(\mathbb{R}^+)$. Clearly, the function $D(x)$ corresponds to
this Krein system.

\end{proof}
The class of $D(x)$ we considered was the simplest one. In
principle, the Hankel operator $\mathfrak{D}$ is bounded under
much weaker conditions (e.g. BMOA space for $g_1$, see
\cite{Peller}).

Let us make a remark regarding the Dirac operator. Notice that the
matrix-function $Z(r,\lambda)$ introduced above satisfies the
following properties
\[
\left[-iJ\frac{d}{dr}+\left[
\begin{array}{cc}
0 & -i\overline{A(r)}\\
iA(r) & 0
\end{array}
\right]\right]Z=\frac{\lambda}{2}\,Z,\quad Z(r,\lambda)=\left[
\begin{array}{cc}
e^{i\lambda r/2} & 0\\
0 & e^{-i\lambda r/2}
\end{array}
\right]+\bar{o}(1)
\]
Thus, $Z(r,\lambda)$ is the scattering solution for the Dirac
operator (written in a slightly different way, see
(\ref{dirac-df})). All results from the section can be easily
translated to this case.\vspace{2cm}

\bigskip
{\bf Remarks and historical notes.}

The main results of this section are not new. They are essentially
contained in the papers by Krein and Melik-Adamyan
\cite{krein-madamyan, melik-adamyan1, melik-adamyan2}. The authors
were motivated by certain problem of continuation related to
Hankel operators (and, essentially, coming from the complete
solution of Nehari problem, see \cite{AAK, Peller}). Also,
Melik-Adamyan considers slightly different canonical system. Using
the approach of this section, one can provide another proof for
continuous analog of Baxter's Theorem. We tried to make the
argument almost purely algebraic.

Also, one can use upper(lower)-triangular factorization of
operator $\mathfrak{D}_r$ to represent its determinant via certain
integral of $A(r)$. Analogous calculations will be done in the
next section for truncated Wiener-Hopf operators. This is a way to
obtain formula similar to the so-called Borodin-Okounkov identity
\cite{basor}.

\newpage
\section{Truncated Wiener-Hopf operators. The Strong Szeg\H{o}
Theorem}\label{sect-ss}

\bigskip
Consider the continuous accelerant $H$. In this section, we obtain
an important formula for the Fredholm determinant of $I+\cal{H}_r$
in terms of the coefficient $A(r)$ of the associated Krein system
and prove continuous analog of the so-called Strong Szeg\H{o}
Theorem \cite{Simon}. We start with the following well-known
result \cite{Akhiezer}. We omit the proof which can be obtained,
e.g., by upper(lower)-triangular factorization of $I+\cal{H}_r$.

\begin{theorem}
If $H(x)$ is a continuous accelerant on $\mathbb{R}$, then
\[
\det(1+{\cal H}_{r})=\exp \left[ \int\limits_{0}^{r}\Gamma _{u}(0,0)du%
\right]
\]
\end{theorem}

Now, assume that we are given a continuous accelerant $H(x).$ We
have a relation
\[
\frac{d}{du}\Gamma _{u}(0,0)=-|A(u)|^{2}
\]%
and
\[
\Gamma _{r}(0,0)=H(0)-\int\limits_{0}^{r}|A(s)|^{2}ds
\]%
\begin{equation}
\det(1+{\cal H}_{r})=\exp [H(0)r]\exp \left[ -\int%
\limits_{0}^{r}(r-s)|A(s)|^{2}ds\right] \label{kac}
\end{equation}
and
\begin{equation}\label{det-2}
\det{}_2(1+{\cal H}_{r})=\exp \left[ -\int%
\limits_{0}^{r}(r-s)|A(s)|^{2}ds\right]
\end{equation}
Notice, that
\[
\frac{d}{dr}\ln \det(1+{\cal
H}_{r})=H(0)-\int\limits_{0}^{r}|A(s)|^{2}ds,\quad \frac{d}{dr}\ln
\det{}_2(1+{\cal H}_{r})=-\int\limits_{0}^{r}|A(s)|^{2}ds
\]%
The fact that the both sides have limits for $A(r)\in
L^2(\mathbb{R}^+)$ can be regarded as the weak Szeg\H{o} theorem.
Notice that an approximation argument yields (\ref{det-2}) without
continuity assumption on $H(x)$. The regularity $H(x)\in L^2_{\rm
loc}(\mathbb{R})$ is enough and the weak Szeg\H{o} theorem reads:
$\ln \det{}_2(1+{\cal H}_{r})$ is nonincreasing and
\[
\lim_{r\to\infty} \frac{d}{dr}\ln \det{}_2(1+{\cal
H}_{r})=-\int\limits_{0}^{\infty}|A(s)|^{2}ds
\]
The strong Szeg\H{o} asymptotics is as follows
\[
\frac{\det (I+\cal{H}_r)}{\exp \left[r\Bigl(H(0)-\displaystyle
\int\limits_0^r |A(s)|^2 ds\Bigr)\right]}= \frac{\det{}_2
(I+\cal{H}_r)}{\exp \left[-r\displaystyle \int\limits_0^r |A(s)|^2
ds\right]}
 =\exp \left[\int\limits_0^r
s|A(s)|^2ds\right]
\]
and the limit of left-hand side exists iff
\begin{equation}\label{sst1}
G=\int\limits_0^\infty r|A(r)|^2dr< \infty
\end{equation}
Notice that under this condition one has
\[
r\int\limits_0^r |A(s)|^2 ds=r\int\limits_0^\infty |A(s)|^2ds
+\bar{o}(1)
\]
and assuming that $A(r)\in L^2(\mathbb{R}^+)$, we get
\begin{equation}
T_r=\frac{\det (I+\cal{H}_r)}{\exp \left[r\Bigl(H(0)-\displaystyle
\int\limits_0^\infty |A(s)|^2 ds\Bigr)\right]}= \frac{\det{}_2
(I+\cal{H}_r)}{\exp \left[-r\displaystyle \int\limits_0^\infty
|A(s)|^2 ds\right]}
 =\exp \int\limits_0^\infty
\nu_r(s) s|A(s)|^2ds \label{G=T}
\end{equation}
where $\nu_r(x)=1$ for $x\in [0,r]$ and $\nu_r(x)=rx^{-1}$ for
$x>r$. Since $\nu_r(x)$ is non-negative, monotone in $r$, and
$\nu_r(x)\leq 1$, the left-hand side is also increasing and has a
finite limit iff (\ref{sst1}) holds. The following result also
gives a characterization of this case in terms of the spectral
measure. That is also a continuous analog of Ibragimov's and
Golinskii-Ibragimov's Theorems \cite{Gol-Ibr,Ibr, Simon}. Some
parts of the arguments below are borrowed from the discrete case
\cite{Simon}.

\begin{theorem}\label{strong-szego-theorem}
Assume $A(r)\in L^2(\mathbb{R}^+)$. Then, the following statements
are equivalent
\begin{itemize}
\item[(i)] $G<\infty$

\item[(ii)] The measure $d\sigma$ is purely a.c. and
\begin{equation}
\ln (2\pi\sigma'(\lambda))=\int\limits_{-\infty}^\infty
l(x)\exp(i\lambda x)dx \in H^{1/2}(\mathbb{R})
\end{equation}

\item[(iii)] $T_r$ is bounded
\end{itemize}
Moreover,
\begin{equation}
G=T_\infty=L=I \label{GLI}
\end{equation}
where\footnote{ Recall that if $A(r)\in L^2(\mathbb{R})$, then
$\ln \left[2\pi\sigma'(\lambda)\right]\in
L^1(\mathbb{R})+L^2(\mathbb{R})$ (Corollary \ref{cor-sum2}).
 Also, notice that condition $A(r)\in
L^2(\mathbb{R})$ can be expressed purely in spectral terms, i.e.
through $d\sigma$. In particular, the Theorem \ref{l2-test2} and
Lemma \ref{ss-l2} from Appendix imply that conditions
$d\sigma_s=0$ and $\ln (2\pi\sigma'(\lambda))\in
H^{1/2}(\mathbb{R})$ guarantee $A(r)\in L^2(\mathbb{R}^+)$.}
\[
L=\int\limits_0^\infty x |l(x)|^2dx,\quad I=\frac{1}{\pi}
\int\limits_{\mathbb{C}^+} \left|\frac{\partial_\lambda
\Pi_\alpha(\lambda)}{\Pi_\alpha(\lambda)}\right|^2d^2\lambda
\]
where $\Pi_\alpha(\lambda)$ is given by {\rm (\ref{pisub})}.
\end{theorem}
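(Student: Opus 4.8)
### Proof proposal for Theorem 14.3 (the Strong Szegő Theorem for Krein systems)

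The plan is to run the chain of implications $(i)\Rightarrow(ii)\Rightarrow(iii)\Rightarrow(i)$ together with the string of identities $G=T_\infty=L=I$, exploiting the machinery already assembled: the trace formula $\det{}_2(1+\cal{H}_r)=\exp[-\int_0^r (r-s)|A(s)|^2\,ds]$ from (\ref{det-2}), the representation $\Pi_\alpha^{-1}(\lambda)=1+\hat\gamma(\lambda)$ with $\hat\gamma\in H^2(\mathbb{C}^+)$ from Theorem \ref{trace-formula1}, the multiplicative formula (\ref{pisub}) for $\Pi_\alpha$, and Corollary \ref{cor-sum2}. First I would record the elementary identity (\ref{G=T}): since $\nu_r(x)\uparrow 1$ pointwise with $\nu_r\le 1$, the quantity $T_r$ is nondecreasing in $r$ and $\log T_r=\int_0^\infty \nu_r(s)\,s|A(s)|^2\,ds\uparrow G$ by monotone convergence. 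This immediately gives $(i)\Leftrightarrow(iii)$ and $T_\infty=G$, disposing of one of the four equalities in (\ref{GLI}) and of the equivalence of (i) and (iii).

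Next I would tie $G$ to the spectral side. The bridge is the function $\Pi_\alpha$. Using (\ref{pisub}), $\log\Pi_\alpha(\lambda)$ is (up to the harmless affine term already analyzed via the constant $\alpha$) the Cauchy/Herglotz transform of $-\tfrac12\ln[2\pi\sigma'(s)]$, so that the nontangential boundary values satisfy $2\Re\log\Pi_\alpha(\lambda)=-\ln[2\pi\sigma'(\lambda)]$ a.e., i.e. $l(x)$ in statement (ii) is, on the Fourier side, essentially $-2$ times the ``analytic part'' of $\log\Pi_\alpha$ symmetrized by the Hermitian condition. The key analytic fact I would use is the standard Dirichlet-integral identity: for an outer function $\Pi_\alpha$ on $\mathbb{C}^+$,
\[
\frac1\pi\int_{\mathbb{C}^+}\left|\frac{\partial_\lambda\Pi_\alpha(\lambda)}{\Pi_\alpha(\lambda)}\right|^2 d^2\lambda
=\frac1\pi\int_{\mathbb{C}^+}\bigl|\partial_\lambda\log\Pi_\alpha(\lambda)\bigr|^2 d^2\lambda
=\int_0^\infty x\,|l(x)|^2\,dx
\]
(the area integral of $|g'|^2$ over the half-plane equals $\sum$ ``$n|c_n|^2$'' after conformal transport to the disc, which in the half-plane/Fourier picture is exactly $\int_0^\infty x|l(x)|^2\,dx$; this also is precisely the statement $\log\Pi_\alpha\in H^{1/2}$, giving $L=I$ and the $H^{1/2}$ clause in (ii)). So $(ii)\Leftrightarrow (L<\infty)\Leftrightarrow (I<\infty)$ and $L=I$.

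It then remains to connect $L$ (equivalently $I$) with $G=\int_0^\infty r|A(r)|^2dr$. For this I would use Theorem \ref{trace-formula1}: $\Pi_\alpha^{-1}(\lambda)=1+\hat\gamma(\lambda)$, $\hat\gamma\in H^2(\mathbb{C}^+)$, together with (\ref{integral-of-p}) which gave $\tilde A(\lambda)=1-\Pi_\alpha(\lambda)\chi_{E^c}(\lambda)$ for the generalized Fourier transform $\tilde A(\lambda)=\int_0^\infty A(r)P(r,\lambda)\,dr$. Under $G<\infty$ one expects $d\sigma_s=0$ (this should drop out of (\ref{miracle}) once (i) is in force, since the singular mass is controlled by $\|A\|_2^2$ and in fact by tail integrals — one argues that $G<\infty$ forces $\sigma(E)=0$), so $\tilde A=1-\Pi_\alpha$ on all of $\mathbb{R}$, hence $A(r)$ corresponds under the isometry $\cal{O}$ of Theorem \ref{theorem2s2} to the function $1-\Pi_\alpha$. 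The weighted norm $\int_0^\infty r|A(r)|^2\,dr$ is then, by Plancherel for $\cal{O}$ (Theorem \ref{theorem2s2}) and an integration by parts / the identity $r\leftrightarrow -i\partial_\lambda$ under the transform, identified with the Dirichlet integral $\frac1\pi\int_{\mathbb{C}^+}|\partial_\lambda(1-\Pi_\alpha)/(\cdots)|^2$, which after using that $\Pi_\alpha$ is outer reduces to $I$. Thus $G=I$, closing the circle: $G=T_\infty=L=I$, and each is finite iff the others are, proving (i)$\Leftrightarrow$(ii)$\Leftrightarrow$(iii).

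The main obstacle I expect is the last step — rigorously converting $\int_0^\infty r|A(r)|^2dr$ into the area (Dirichlet) integral $I$, because the transform $\cal{O}$ is only an isometry onto the closure $\bar S$, not unitary, unless one first knows $d\sigma_s=0$; so one must establish that $G<\infty$ (or equivalently (ii)) already forces purely absolutely continuous $d\sigma$ with $\Pi_\alpha^{-1}-1\in H^2$, and that the "$r\mapsto -i\partial_\lambda$" intertwining is legitimate at the level of these $H^2$ functions (controlling boundary terms and the behavior at $\lambda\to\infty$ along $\mathbb{C}^+$ where $\Pi_\alpha\to1$). A secondary technical point is justifying $\det{}_2$ asymptotics without continuity of $H$, i.e. pushing (\ref{det-2}) and (\ref{G=T}) through an $L^2_{\rm loc}$ approximation, but that is routine given the homeomorphism statement of Section 5. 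Once the a.c. reduction and the Dirichlet-integral bookkeeping are in place, the equalities in (\ref{GLI}) are immediate and the equivalences follow.
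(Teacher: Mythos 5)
Your opening moves are sound: the equivalence (i)$\Leftrightarrow$(iii) with $G=T_\infty$ via (\ref{G=T}), and the identity $L=I$ obtained from the Dirichlet-integral formula for the outer function $\Pi_\alpha^{-2}$, are exactly the paper's Lemma \ref{ss-lemma2} and the monotonicity remark preceding the theorem. The genuine gap is the step you yourself flag as the "main obstacle": identifying $G=\int_0^\infty r|A(r)|^2\,dr$ with $I$. Your proposed mechanism --- Plancherel for $\cal{O}$ plus the correspondence $r\leftrightarrow -i\partial_\lambda$ --- cannot work, because $\cal{O}$ is not the Fourier transform: its kernel is $P(r,\lambda)$, and $\partial_\lambda P(r,\lambda)\neq i r P(r,\lambda)$, so multiplication by $r$ is not intertwined with $-i\partial_\lambda$ under $\cal{O}$. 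Moreover, even if such a linear identity were available, the object it would produce is a weighted norm of $\gamma$, where $\Pi_\alpha^{-1}=1+\hat\gamma$ (Theorem \ref{trace-formula1}, via $\tilde A=1-\Pi_\alpha\chi_{E^c}$), i.e.\ a quantity built from $\Pi_\alpha^{-1}-1$ itself; but $I$ and $L$ are built from the logarithmic derivative $\partial_\lambda\log\Pi_\alpha$, i.e.\ from $l$, and $\int_0^\infty x|\gamma(x)|^2dx\neq\int_0^\infty x|l(x)|^2dx$ in general. The equality $G=I$ is a nonlinear sum rule, not a Plancherel bookkeeping identity. The paper proves it as two separate inequalities by discretization: $G\geq I$ is Lemma \ref{ss-lemma3}, which transports the exact OPUC identity (\ref{sst-d}) to the Krein system through the approximation Theorem \ref{approx-1}/Corollary \ref{approx-poly} with Verblunsky parameters $a_j^{(h)}=hA(jh)$, then lets $R\to\infty$ using $P_*(R,\cdot)\to\Pi_\alpha$; the reverse inequality $G\leq L$ uses the spectral-side approximation Theorem \ref{approx-3} together with the discrete Golinskii--Ibragimov identity (\ref{discrete-sst}) applied to densities $\exp[\sum_j h\,l_R(x_j)z^j]$, after smoothing $l$ to compactly supported $l_R$.

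A second, smaller gap: your claim that $G<\infty$ forces $d\sigma_s=0$ "from (\ref{miracle})" is not substantiated --- (\ref{miracle}) only bounds $\sigma(E)$ by $\|A\|_2^2$ and cannot make it vanish. The paper's Lemma \ref{ss-lemma1} gets the absence of singular part from the lower bound $|P_*(r,\lambda)|\geq\exp[-\int_0^r|A(s)|ds]\geq\exp[-C\sqrt{\ln r}]$ (Cauchy--Schwarz using $G<\infty$) combined with the tail estimate (\ref{decay-improved}), which gives $\int(1+\lambda^2)^{-1}d\sigma_s\leq Cr^{-1}\exp(C\sqrt{\ln r})\to0$. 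Without these two ingredients --- the discretized strong Szeg\H{o} identities and the Lemma \ref{ss-lemma1} argument --- your outline does not close.
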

\begin{proof}
The equivalence of (i) and (iii) as well as $G=T_\infty$ follow
from the argument above (see (\ref{G=T})).  The rest of the proof
is divided into several Lemmas.
\begin{lemma}\label{ss-lemma1}
If $G<\infty$, then $d\sigma_s=0$.
\end{lemma}
\begin{proof}
For real $\lambda$,
\[
|P_*(r,\lambda)|\geq  \exp \left[-\int\limits_0^r
|A(s)|ds\right]\geq  \exp [ -C \sqrt{\ln r}]
\]
That follows from Lemma \ref{gronwall-p*}. From
(\ref{decay-improved}), we get
\[
\int\limits_{-\infty}^\infty
\frac{|P_*(r,\lambda)|^2}{\lambda^2+1}d\sigma_s(\lambda)\leq
C\int\limits_r^\infty |A(s)|^2ds\leq Cr^{-1}
\]
Therefore,
\[
\int\limits_{-\infty}^\infty
\frac{d\sigma_s(\lambda)}{\lambda^2+1}\leq \frac{C\exp(C\sqrt{\ln
r})}{r}\to 0,\, {\rm as}\quad r\to \infty
\]
Thus $d\sigma_s=0$.
\end{proof}
\begin{lemma}\label{ss-lemma2}
Assume $g(\lambda)$ is outer in $N(\mathbb{C}^+)$, $g(i\infty)=1$,
and $\ln |g(\lambda)|\in L^2(\mathbb{R})+L^1(\mathbb{R})$. Then,
{\rm (\ref{cauchy})} below holds.
\end{lemma}
\begin{proof}
Using multiplicative representation for outer functions and
normalization at $i\infty$, we have
\[
\ln g(\lambda)=2\int\limits_0^\infty l(x)\exp(i\lambda x)dx
\]
with $l(x)\in L^2(\mathbb{R}^+)+W(\mathbb{R})\cdot
\chi_{\mathbb{R}^+}$. Simple calculations show that
\[
\int\limits_0^\infty xe^{-2\epsilon x}
|l(x)|^2dx=(4\pi)^{-1}\int\limits_{\Im \lambda>\epsilon}
|\partial_\lambda \ln
g(\lambda)|^2d^2\lambda=(4\pi)^{-1}\int\limits_{\Im
\lambda>\epsilon} \left|\frac{\partial_\lambda
g(\lambda)}{g(\lambda)}\right|^2d^2\lambda
\]
Therefore, we always have
\begin{equation}\label{cauchy}
\int\limits_0^\infty x
|l(x)|^2dx=(4\pi)^{-1}\int\limits_{\mathbb{C}^+}
\left|\frac{\partial_\lambda
g(\lambda)}{g(\lambda)}\right|^2d^2\lambda
\end{equation}
 even though the both quantities can be infinite.
\end{proof}
For $A(r)\in L^2(\mathbb{R}^+)$, we have $\ln
(2\pi\sigma'(\lambda))\in L^2(\mathbb{R})+L^1(\mathbb{R})$,
$\Pi_\alpha(i\infty)=1$ and the Lemma can be applied to
$g(\lambda)=\Pi_\alpha^{-2}(\lambda)$ and thus we get $L=I$ in
(\ref{GLI}).

We need the following auxiliary
\begin{lemma}\label{ss-lemma3}
For any $R$, we have the following inequality
\begin{equation}\label{auxil-ss1}
\int\limits_0^R r|A(r)|^2dr\geq {\pi}^{-1}
\int\limits_{\mathbb{C}^+}\left| \frac{\partial_\lambda
P_*(R,\lambda)}{P_*(R,\lambda)}\right|^2d^2\lambda
\end{equation}
\end{lemma}
\begin{proof}
For any $T>0$, consider $\Omega_T=\{\lambda\in
\overline{\mathbb{C}^+}, |\lambda|\leq T\}$. Let us show that

\begin{equation}
\int\limits_0^R r|A(r)|^2dr\geq \pi^{-1}\int\limits_{\Omega_T}
\left|\frac{\partial_\lambda
P_*(R,\lambda)}{P_*(R,\lambda)}\right|^2d^2\lambda
\label{T-domain}
\end{equation}
for any $T>0$. Then the general statement follows upon taking
$T\to\infty$. Fix any $T$. Then, it is sufficient to prove
(\ref{T-domain}) assuming that $A(r)\in C[0,R]$. Indeed, any
$A(r)\in L^2[0,R]$ can be approximated by continuous functions in
$L^2[0,R]$ norm and the both sides of (\ref{T-domain}) are
continuous in $A(r)$ with respect to $L^2[0,R]$ metric.

Then, we just need to use the suitable formula from the discrete
case and an approximation result given by Corollary
\ref{approx-poly}. Consider large $n$, the discretization step
$h=R/n$, Verblunsky parameters given by (\ref{discrete-A}), and
the corresponding monic orthogonal polynomials $P_k(z)$ and
$P_k^*(z)$. Then, we have the following formula (\cite{Simon},
Theorem 2.1.4)
\begin{equation}
\ln \left[\prod_{j=0}^n (1-|a_j|^2)^{-j-1}\right]=
\frac{1}{\pi}\int\limits_{\mathbb{D}} \left|\frac{\partial_z
P_n^*(z)}{P_n^*(z)}\right|^2d^2z \label{sst-d}
\end{equation} With our choice of
Verblunsky parameters,
\[
\ln \left[\prod_{j=0}^n (1-|a_j|^2)^{-j-1}\right]\to
\int\limits_0^R r|A(r)|^2dr, n\to\infty
\]
by the Riemann sum approximation.
 Over $\Omega_T$,
we have $P_n^*(e^{i\lambda h})\to P_*(R,\lambda)$ and
\[
ihe^{i\lambda h} \partial_z P_n^* (e^{i\lambda h})\to
\partial_\lambda P_*(R,\lambda)
\]
Since $e^{i\lambda h}\to 1$ uniformly over $\Omega_T$, we also
have
\begin{equation}
ih \partial_z P_n^* (e^{i\lambda h})\to
\partial_\lambda P_*(R,\lambda) \label{der-ha}
\end{equation}
Therefore, making the change of variables and using (\ref{sst-d})
 and (\ref{der-ha}), we get (\ref{T-domain}).

\end{proof}
For $A(r)\in L^2(\mathbb{R}^+)$, we have $P_*(R,\lambda)\to
\Pi_\alpha(\lambda)$ uniformly in $\{\Im \lambda>\delta\}$ for any
$\delta>0$. Therefore, we always have an estimate $G\geq I$ and
consequently (i) implies (ii) due to Lemmas \ref{ss-lemma1},
\ref{ss-lemma2}.

Now, we are left with proving that $G\leq I$. Let us assume we
have purely a.c. measure with density satisfying $\ln
(2\pi\sigma'(\lambda))\in H^{1/2}(\mathbb{R})$ so that $L<\infty$
for the corresponding function $l(x)$.  We need to show $G\leq L$
for the associated Krein system.

We know that $A(r)\in L^2(\mathbb{R}^+)$. Then,
$2\pi\sigma'(\lambda)-1=\psi(x)$ with $\psi(x)\in L^2(\mathbb{R})$
by Lemma \ref{ss-l2}. Formulas (\ref{derivative-l2}) and
(\ref{sst-refl2}) imply
\begin{equation}
H(x)=2\pi\hat\psi(x)\label{slowant}
\end{equation}
where $\hat{\psi}$ is Fourier transform of $\psi$.

 Let us take Hermitian $l_R(x)$ such that
$l_R(x)$ is continuous with compact support within $[-R,R]$ and
\[
\int\limits_{-\infty}^\infty (x^2+1)^{1/2} |l_R(x)-l(x)|^2dx\to 0
\]
as $R\to\infty$. Formula (\ref{slowant}),  Lemma \ref{ss-l2}, and
Lemma \ref{l2regular} show that the corresponding $A_R(r)\to A(r)$
in $L^2[0,T]$ for any $T>0$. For each $R$, we can apply the
Theorem \ref{approx-3}. For the corresponding sequence of
Verblunsky parameters, we have \cite{Simon}, Chapter 6:
\begin{equation}\label{discrete-sst}
\prod_{j=0}^\infty
(1-|a_j^{(n)}|^2)^{-j-1}=\exp\left[\sum\limits_{k=1}^\infty k
|\hat{L}_k^{(n)}|^2\right]
\end{equation}
where
\[
\ln \mu'_n(\theta)=\sum\limits_{k=-\infty}^\infty \hat{L}_k^{(n)}
e^{ik\theta}
\]
Clearly,
\[
\sum\limits_{k=1}^\infty k |\hat{L}_k^{(n)}|^2\to \int\limits_0^R
x|l_R(x)|^2dx
\]
as $n\to\infty$. On the other hand, for any $\delta>0$, we have
\[
\prod_{\delta<jh<n} (1-|a_j^{(n)}|^2)^{-j-1} \to \exp\left[
\int\limits_\delta^R r|A_R(r)|^2dr\right]
\]
as it follows from the Theorem \ref{approx-3}. Therefore,
\[
\int\limits_0^R r|A_R(r)|^2\leq \int\limits_0^R x|l_R(x)|^2dx
\]
and then $G\leq I$ since $A_R(r)\to A(r)$ in $L^2_{\rm
loc}(\mathbb{R}^+)$. So, $G=I$ and the proof is finished.
\end{proof}

The proof we used essentially utilized the strong Szeg\H{o}
Theorem for Toeplitz matrices and the approximation of continuous
orthogonal system by the sequence of discrete ones. In the
meantime, one could have adjusted the various proofs directly to
continuous case (see, e.g. \cite{basor} for continuous analog of
Borodin-Okounkov identity which can probably be used for this
purpose). Notice that if $d\sigma_{k}, k=1,2$ satisfy conditions
of Theorem \ref{strong-szego-theorem} then
$d\sigma=\left[\sigma_1'\right]^{\gamma}\left[\sigma_2'\right]^{1-\gamma}d\lambda,
\gamma\in [0,1]$ also satisfies these conditions.

 Formula (\ref{kac}) is very important for many applications.
In the theory of random matrices, one needs to calculate
asymptotics of Fredholm determinants for some specific
accelerants. Assume that coefficient $A$, corresponding to a given
accelerant $H$, tends to zero fast enough. Then, solving the
inverse scattering problem by methods of the last Section, one can
obtain an asymptotics of $A(r)$ at infinity.  Assume that $A(r)$
decays at infinity fast enough such that
\[
A(r)=\sum\limits_{n=2}^{N}\frac{\gamma
_{n}}{r^{n}}+\frac{C_{N+1}(r)}{r^{N+1}%
},
\]%
holds with constants $\gamma _{n}$ and $C_{N+1}(r)\in L^{\infty
}(R^{+}),$ $N $ is arbitrary. Formula (\ref{kac}) and
\[
\int\limits_{0}^{r}(r-s)|A(s)|^{2}ds=r\int\limits_{0}^{\infty
}|A(s)|^{2}ds-r\int\limits_{r}^{\infty
}|A(s)|^{2}ds-\int\limits_{0}^{\infty
}s|A(s)|^{2}ds+\int\limits_{r}^{\infty }s|A(s)|^{2}ds
\]%
shows that as long as all $\gamma _{n}$, $\displaystyle
\int\limits_{0}^{\infty }|A(s)|^{2}ds$, and $\displaystyle
\int\limits_{0}^{\infty }s|A(s)|^{2}ds$ are known, we can compute
complete asymptotic of $\det(1+{\cal H}_{r})$ as $r\rightarrow
\infty $. But these two integrals can be explicitly expressed via
the spectral data. The idea of using the inverse scattering theory
to compute asymptotics of Fredholm determinants was pioneered by
Dyson \cite{dyson}.\vspace{1cm}

{\bf Remarks and historical notes.}

 Various generalizations of the strong Szeg\H{o} formula to
continuous case were obtained in \cite{Akhiezer, Kac, Solev}. Our
version is optimal and new to our knowledge. On application of
inverse scattering to random matrices see \cite{dyson}. In
\cite{widom}, the theory of Krein systems was used to study an
asymptotics of the certain Toeplitz determinants.

\newpage

\section{Appendix }

In this Appendix, we collected the general results that we used in
the main text.

\subsection{For section \ref{sect-fact}}

The proofs of the following two Lemmas are given in
\cite{Smithies}, p.71 and p.99.
\begin{lemma} \label{fredholm}
If $\Gamma_r(x,y)$ is the resolvent kernel for  $K(x,y)$, which is
continuous on $[0,r]^2$ (see Section 2), then
\[
\Gamma_r(x,y)=\frac{\delta_r(x,y)}{\delta_r}
\]
where
\[
\delta_r(x,y)=K \left(
\begin{array}{c}
x\\
y\end{array}\right)+\frac{1}{1!}\int\limits_0^r K\left(
\begin{array}{cc}
x&\xi_1\\
y&\xi_1\end{array} \right)d\xi_1+\ldots
\]
\[
+\frac{1}{n!}\int\limits_0^r\ldots \int\limits_0^r K\left(
\begin{array}{cccc}
x&\xi_1&\ldots &\xi_n \\
y&\xi_1&\ldots &\xi_n \end{array} \right)d\xi_1 \ldots
d\xi_n+\ldots
\]

\[
\delta_r=1+\int\limits_0^r K \left(
\begin{array}{c}
\xi_1\\
\xi_1\end{array}\right)d\xi_1+\frac{1}{2!}\int\limits_0^r
\int\limits_0^r K\left(
\begin{array}{cc}
\xi_1&\xi_2\\
\xi_1&\xi_2\end{array} \right)d\xi_1 d\xi_2+\ldots
\]
\[
+\frac{1}{n!}\int\limits_0^r\ldots \int\limits_0^r K\left(
\begin{array}{cccc}
\xi_1&\xi_2&\ldots &\xi_n \\
\xi_1&\xi_2&\ldots &\xi_n \end{array} \right)d\xi_1 \ldots
d\xi_n+\ldots
\]
and
\[
K\left(
\begin{array}{cccc}
\xi_1&\xi_2&\ldots &\xi_n \\
\eta_1&\eta_2&\ldots &\eta_n \end{array}
\right)=\det\{K(\xi_k,\eta_l)\}, 1\leq k,l\leq n
\]
The series converges absolutely.
\end{lemma}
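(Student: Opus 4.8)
This is the classical Fredholm formula for the resolvent kernel, and the plan is to follow the standard argument (as in \cite{Smithies}), organized in three steps. Write $M^{(n)}(\xi_1,\dots,\xi_n)=\{K(\xi_k,\xi_l)\}_{k,l=1}^n$ and set, for $n\ge 0$,
\[
c_n=\int_0^r\!\!\cdots\!\int_0^r \det M^{(n)}\,d\xi_1\cdots d\xi_n,\qquad c_0=1,
\]
and let $a_n(x,y)$ be the analogous integral of the $(n+1)\times(n+1)$ determinant $K\binom{x\ \xi_1\cdots\xi_n}{y\ \xi_1\cdots\xi_n}$, with $a_0(x,y)=K(x,y)$, so that $\delta_r=\sum_{n\ge0}c_n/n!$ and $\delta_r(x,y)=\sum_{n\ge0}a_n(x,y)/n!$. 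First I would establish convergence and regularity: with $M=\max_{[0,r]^2}|K|$, Hadamard's inequality gives $|\det M^{(n)}|\le n^{n/2}M^n$, hence $|c_n/n!|\le r^nn^{n/2}M^n/n!$, which by Stirling's formula is summable super-geometrically, and the same bound holds uniformly for $a_n(x,y)/n!$. Since $K$ is continuous, every term is jointly continuous in $(x,y,r)$, and differentiating $\int_{[0,r]^n}(\cdot)$ in $r$ produces $n$ boundary integrals, each continuous and obeying the same estimate; uniform convergence of the differentiated series then yields joint continuity of $\delta_r(x,y),\delta_r$ and continuous differentiability in $r$.

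Next I would derive the fundamental Fredholm relation by expanding $K\binom{x\ \xi_1\cdots\xi_n}{y\ \xi_1\cdots\xi_n}$ along its first row. The $(1,1)$ entry $K(x,y)$ contributes $K(x,y)\det M^{(n)}$; the entry $K(x,\xi_k)$ in position $(1,k+1)$ contributes $(-1)^{k+1}K(x,\xi_k)$ times the minor deleting row $1$ and column $k+1$. Integrating over $\xi_1,\dots,\xi_n$ and using that the integrand is symmetric under permutations of the $\xi_j$, the $n$ summands for $k=1,\dots,n$ contribute equally; reducing to $k=1$ and moving the surviving ``$y$-column'' of the minor to the last position (sign $(-1)^{n-1}$) identifies the integral over $\xi_2,\dots,\xi_n$ with $a_{n-1}(\xi_1,y)$. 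Tracking the cofactor sign one gets, for all $n\ge1$ (and trivially for $n=0$),
\[
a_n(x,y)=K(x,y)\,c_n-n\int_0^r K(x,u)\,a_{n-1}(u,y)\,du ,
\]
which can be checked by hand at $n=1$. Dividing by $n!$ and summing gives
\[
\delta_r(x,y)=K(x,y)\,\delta_r-\int_0^r K(x,u)\,\delta_r(u,y)\,du .
\]

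Finally, since $-1\notin\mathrm{Spec}(\mathcal{K}_r)$, the Fredholm determinant $\delta_r=\prod_j(1+\mu_j)$, taken over the eigenvalues $\mu_j$ of $\mathcal{K}_r$, is nonzero; dividing the displayed identity by $\delta_r$ shows that $\Gamma_r(x,y):=\delta_r(x,y)/\delta_r$ solves $\Gamma_r(x,y)+\int_0^r K(x,u)\Gamma_r(u,y)\,du=K(x,y)$, so it is \emph{the} resolvent kernel, and the continuity and $C^1$-in-$r$ statements follow from the first step. The hard part is the determinant bookkeeping in the second step — the permutation-symmetry reduction of the $n$ cofactor terms to one, together with the cofactor and column-swap sign count that pins down the minus sign and the factor $n$; everything else is routine estimation. (The companion relation $\delta_r(x,y)=K(x,y)\delta_r-\int_0^r\delta_r(x,u)K(u,y)\,du$, obtained by expanding along the first column instead, is not needed here but is proved the same way.)
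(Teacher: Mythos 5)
Your proof is correct and is exactly the classical Fredholm argument that the paper does not reproduce but delegates to Smithies, p.~71: Hadamard bounds for absolute convergence, the cofactor-expansion recursion $a_n(x,y)=K(x,y)\,c_n-n\int_0^r K(x,u)\,a_{n-1}(u,y)\,du$, and division by the nonvanishing determinant. Two cosmetic remarks: the cofactor sign in position $(1,k+1)$ is $(-1)^{k}$ rather than $(-1)^{k+1}$, and for $k=1$ the minor is already of the form $a_{n-1}(\xi_1,y)$ with no column swap needed (your final recursion is nevertheless the correct one, as your $n=1$ check confirms); and for a merely continuous --- hence Hilbert--Schmidt but not necessarily trace-class --- kernel the identity $\delta_r=\prod_j(1+\mu_j)$ requires regularizing exponential factors, so the fact you actually need, namely that $\delta_r=0$ would force $-1\in\mathrm{Spec}(\mathcal{K}_r)$, is better invoked as the standard Fredholm alternative (proved via the first nonvanishing higher minor).
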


The next Lemma gives Carleman-Hilbert determinantal representation
for resolvent. We recommend an excellent book \cite{G-G-K}
(Theorem 2.2, p.207) for the modern presentation of that subject.
It also contains the discussion of integral operators with
discontinuity on the diagonal.
\begin{lemma} \label{fredholm-mod}
If $\Gamma_r(x,y)$ is the resolvent kernel for  $K(x,y)\in
\hat{C}([0,r]^2)$  (see Section 2), then
\[
\Gamma_r(x,y)=\frac{\hat\delta_r(x,y)}{\hat\delta_r}\in
\hat{C}([0,r]^2)
\]
where $\hat\delta_r(x,y)$ and $\hat\delta_r$
 are defined as $\delta_r(x,y)$ and $\delta_r$ given above, but
 relative to the modified kernel  $\widehat{K}(x,y)=K(x,y)$ if $x\neq y$ and $\widehat{K}(x,y)=0$ on
the diagonal.
\end{lemma}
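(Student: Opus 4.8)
The plan is to reduce the statement to the classical Fredholm theory, the only new ingredient being that the diagonal of the kernel is unreliable. First I would observe that $\widehat{K}(x,y)=K(x,y)$ off the diagonal, a set of full measure, so the integral operator $\cal{K}_r$ on $L^2[0,r]$ is literally the same operator whether it is written with kernel $K$ or with $\widehat{K}$; in particular $-1\notin Spec(\cal{K}_r)$ is unaffected, and the resolvent kernel $\Gamma_r$ --- which, as recorded in Section~\ref{sect-fact}, solves the resolvent equation (\ref{e9s2}) and belongs to $\hat{C}([0,r]^2)$ --- is the same object whether (\ref{e9s2}) is written with $K$ or with $\widehat{K}$, since $\int_0^r K(s,u)\Gamma_r(u,t)\,du$ is insensitive to the diagonal. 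Hence it suffices to build the Carleman--Hilbert series $\hat\delta_r(x,y)$ and $\hat\delta_r$ out of the \emph{bounded} kernel $\widehat{K}$, whose diagonal values all vanish, and to prove: (a) both series converge absolutely and uniformly and define an element of $\hat{C}([0,r]^2)$, resp.\ a nonzero number; and (b) $\hat\delta_r(x,y)/\hat\delta_r$ satisfies (\ref{e9s2}).

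For (a) I would run the usual estimate. With $M=\sup_{[0,r]^2}|\widehat{K}|$, Hadamard's inequality bounds the $(n{+}1)\times(n{+}1)$ determinants occurring in $\hat\delta_r(x,y)$ by $M^{n+1}(n{+}1)^{(n+1)/2}$ and the $n\times n$ determinants in $\hat\delta_r$ by $M^n n^{n/2}$; integrated over a cube of volume $r^n$ and divided by $n!$, Stirling gives geometric-type decay, hence absolute and uniform convergence on all of $[0,r]^2$. (The vanishing of the interior diagonal entries $\widehat{K}(\xi_i,\xi_i)$ is not needed for convergence once the kernel is bounded, but it is exactly what makes $\hat\delta_r(x,y)$, $\hat\delta_r$ \emph{well defined} in the $\hat{C}$ setting.) Each term is continuous on each closed triangle $\Delta_\pm$: in the $n$-th term of $\hat\delta_r(x,y)$ the only entry that can jump across $x=y$ is the $(1,1)$ entry $\widehat{K}(x,y)$, whose cofactor depends only on the integration variables, so after integration the term has the form $\widehat{K}(x,y)g_n(x,y)+h_n(x,y)$ with $g_n,h_n\in C([0,r]^2)$. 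Summing, $\hat\delta_r(x,y)=\widehat{K}(x,y)g(x,y)+h(x,y)$ with $g,h\in C([0,r]^2)$, so $\hat\delta_r(x,y)\in\hat{C}([0,r]^2)$ and, dividing by the constant $\hat\delta_r$, so does $\Gamma_r$ --- with the same diagonal jump as $K$ once one checks $g(x,x)=1$.

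For (b) I would use the standard Fredholm identities: expanding the determinant in $\hat\delta_r(x,y)$ along its first row (respectively first column) and integrating yields, purely by multilinearity and antisymmetry of determinants, the algebraic relation
\[
\hat\delta_r(x,y)+\int_0^r \widehat{K}(x,u)\,\hat\delta_r(u,y)\,du=\widehat{K}(x,y)\,\hat\delta_r,
\]
together with its transpose. This derivation is term-by-term and never evaluates $\widehat{K}$ at a prescribed diagonal point under an integral, so the diagonal modification is harmless. Moreover $\hat\delta_r\neq 0$ because it is the Carleman--Hilbert determinant $\det{}_2(I+\cal{K}_r)$ of the Hilbert--Schmidt operator $\cal{K}_r$, which vanishes precisely when $-1\in Spec(\cal{K}_r)$. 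Dividing the displayed identity by $\hat\delta_r$ and recalling $\int_0^r\widehat{K}=\int_0^r K$ shows $\hat\delta_r(x,y)/\hat\delta_r$ satisfies (\ref{e9s2}); by uniqueness of the resolvent kernel it equals $\Gamma_r$, which by (a) lies in $\hat{C}([0,r]^2)$.

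The main obstacle is bookkeeping rather than analysis: one must verify that ``zeroing the kernel on the diagonal'' is innocuous at \emph{every} step --- in the Hadamard convergence estimate, in the row/column expansion identities, and in the continuity-on-each-triangle claim --- and in particular that $\Gamma_r$ picks up exactly the jump $K_+(x,x)-K_-(x,x)$ and nothing more. The cleanest way to package all of this is to invoke the Carleman--Hilbert determinant theory for bounded kernels with a diagonal discontinuity developed in \cite{G-G-K} (Theorem~2.2, p.~207) and \cite{Smithies}, p.~99, and then supply only the identification of that determinant with the resolvent of $\cal{K}_r$ through the identities above, exactly as Lemma~\ref{fredholm} does in the continuous case.
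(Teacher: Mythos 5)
Your proposal is correct and follows essentially the same route as the paper: the paper gives no independent proof of this lemma, deferring entirely to \cite{Smithies} (p.~99) and \cite{G-G-K} (Theorem~2.2, p.~207), and your sketch is precisely the standard Carleman--Hilbert argument contained in those references (Hadamard bounds for convergence, row/column expansion for the resolvent identity, nonvanishing of $\det_2(I+\cal{K}_r)$ off the spectrum, and continuity on each triangle $\Delta_\pm$ with the diagonal jump carried solely by the $\widehat{K}(x,y)$ entry). No gap to report.
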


\subsection{For section \ref{four}}

We begin with some simple facts about the linear spaces with
indefinite metric. Let $[x,y]=(Jx,y)$ be indefinite inner product
(in $\mathbb{C}^2$). For any matrix $M$, introduce $M^c=JM^*J$.
Then, $[Mx,y]=[x,M^cy]$. Clearly, $(AB)^c=B^cA^c,
(A^c)^{-1}=(A^{-1})^c$ if $A$ is invertible. The following result
is well-known \cite{Iohvidov, Azizov}
\begin{lemma}
If $M$ is $J$--unitary, then $|\det M|=1$, and $M^{-1}, M^*$ are
$J$--unitary too. If $M$ is $J$-- contraction then $M^*$ is $J$--
contraction also. \label{jproperty1}
\end{lemma}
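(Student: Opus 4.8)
The plan is to treat the two assertions separately, both resting on the single algebraic identity $J^{2}=I$ together with the relation $M^{c}=JM^{*}J$.

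For the $J$-unitary case I would start from $M^{*}JM=J$. Taking determinants gives $-|\det M|^{2}=\det(M^{*})\det(J)\det(M)=\det(M^{*}JM)=\det J=-1$, so $|\det M|=1$; in particular $M$ is invertible. Conjugating $M^{*}JM=J$ on the left by $(M^{*})^{-1}=(M^{-1})^{*}$ and on the right by $M^{-1}$ yields $(M^{-1})^{*}JM^{-1}=J$, i.e. $M^{-1}$ is $J$-unitary. Next, $M^{*}JM=J$ is the same as $M^{c}M=JM^{*}JM=J\cdot J=I$ (using $J^{2}=I$ in the middle); since $M$ is invertible this also forces $MM^{c}=I$, that is $MJM^{*}J=I$, hence $MJM^{*}=J$, which is precisely the statement that $M^{*}$ is $J$-unitary.

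For the $J$-contraction case I would first perform the analogous reduction at the level of inequalities. Note that $M^{*}$ is a $J$-contraction iff $MJM^{*}\le J$, and, conjugating by $J$ (a congruence since $J=J^{*}$), this is equivalent to $JMJM^{*}J\le J$; a short computation using $(M^{c})^{*}=JMJ$ shows the left-hand side equals $(M^{c})^{*}JM^{c}$. Hence \emph{$M^{*}$ is a $J$-contraction if and only if $M^{c}$ is}, and since $(M^{c})^{c}=M$ it suffices to prove the implication ``$M$ a $J$-contraction $\Rightarrow$ $M^{c}$ a $J$-contraction.'' This is the heart of the matter, and I would establish it by the Potapov--Ginzburg device: decompose $\mathbb{C}^{2}=\mathcal H_{+}\oplus\mathcal H_{-}$ into the $(\pm1)$-eigenspaces of $J$, write $M$ in the corresponding $2\times2$ block form, observe that $M^{*}JM\le J$ forces the lower-right block of $M$ to be invertible, and check that the Schur-type rearrangement that replaces this block by its inverse is an \emph{ordinary} Hilbert-space contraction exactly when $M$ is a $J$-contraction, while this transform intertwines $M\mapsto M^{c}$ with $N\mapsto N^{*}$. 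Then ``$M$ $J$-contraction $\Rightarrow$ transform is a contraction $\Rightarrow$ its adjoint is a contraction $\Rightarrow$ $M^{c}$ is a $J$-contraction'' closes the argument; the $J$-unitary conclusion can also be re-derived this way, since the transform of a $J$-unitary is unitary.

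The main obstacle is exactly this last implication for $J$-contractions: unlike the $J$-unitary case it is not a finite chain of substitutions, because a $J$-contraction need not be invertible (e.g. $\operatorname{diag}(0,1)$), so one cannot simply invert and conjugate. In dimension $2$ one can alternatively dispatch it by a direct computation: writing out $J-M^{*}JM\ge0$ and $J-MJM^{*}\ge0$ as positivity conditions on $2\times2$ Hermitian matrices, the identity $(ad-bc)^{2}-(cd-ab)^{2}=(d^{2}-b^{2})(a^{2}-c^{2})$ together with its mate obtained by interchanging $b$ and $c$ shows that the two determinants coincide, after which the remaining trace/diagonal conditions follow from the given inequalities. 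The clean, dimension-free route, however, is the Krein-space argument above, and the supporting machinery is available in \cite{Iohvidov, Azizov}.
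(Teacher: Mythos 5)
Your proposal is correct, and the $J$-unitary half is essentially identical to the paper's own argument (determinant of $M^*JM=J$, then the rearrangement $M^*JMJ=I\Rightarrow MJM^*J=I$). For the $J$-contraction half, however, you take a genuinely different route. The paper stays entirely inside elementary matrix algebra: assuming first that $1$ is not an eigenvalue of $M$, it invokes the identity
\[
(M-I)^{-1}(MM^c-I)(M^c-I)^{-1}=(M^c-I)^{-1}(M^cM-I)(M-I)^{-1}=I+(M-I)^{-1}+(M^c-I)^{-1},
\]
whose symmetry in $M$ and $M^c$ rewrites as the congruence $MJM^*-J=JQ^*(M^*JM-J)QJ$ with $Q=(M-I)^{-1}(M^c-I)$ invertible, so that $M^*JM\le J$ iff $MJM^*\le J$; the excluded case is handled by multiplying $M$ by a unimodular scalar to move $1$ out of the spectrum. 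Your reduction ``$M^*$ is a $J$-contraction iff $M^c$ is, and $(M^c)^c=M$'' is correct and clean, and the Potapov--Ginzburg argument you then sketch (invertibility of the lower-right block, the transform being an ordinary contraction precisely when $M$ is a $J$-contraction, and its intertwining of $M\mapsto M^c$ with $N\mapsto N^*$) is the standard Krein-space proof found in the very references the paper cites; it is dimension-free and explains \emph{why} the statement is true, at the cost of importing machinery, whereas the paper's identity is shorter and self-contained but more ad hoc and needs the eigenvalue-$1$ perturbation. The only weak spot in your write-up is the optional ``direct computation in dimension $2$'': the algebraic identity you quote is an identity of commuting scalars, and the positivity bookkeeping for complex entries and absolute values is not actually carried out there—but since you flag the Krein-space route as the real proof, this does not affect correctness.
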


\begin{proof} Taking determinant of $M^* J M=J$, we get $|\det M|=1$.
It is straightforward that $M$ is $J$-- unitary if and only if
$M^{-1}$ is $J$--unitary.

Then, clearly, $M$ is $J$--unitary if and only if
\[
M^*JMJ=I
\]
but that means
\[
M^*J=(MJ)^{-1}
\]
so
\[
MJM^*J=I
\]
which is the same as saying that $M^*$ is $J$-- unitary.

Assume that $M$ is $J$--contraction and $1$ is not an eigenvalue.
Then, we have a general algebraic formula
\[
(M-I)^{-1}(MM^c-I)(M^c-I)^{-1}=(M^c-I)^{-1}(M^cM-I)(M-I)^{-1}=I+(M-I)^{-1}+(M^c-I)^{-1}
\]
which can be rewritten as
\[
MJM^*-J=JQ^*(M^*JM-J)QJ, Q=(M-I)^{-1}(M^c-I)
\]
Since $Q$ is invertible, we have $M^*JM\leq J$ iff $MJM^*\leq J$.
 In other words, $M$ is $J$--contractive
iff $M^*$ is $J$-contractive. If $1$ is an eigenvalue, multiply
$M$ by unimodular scalar factor such that $1$ is not an eigenvalue
of the resulting operator and apply the argument above.
\end{proof}

\begin{lemma} \label{uniqueness1}
Assume that
\[
p(\lambda)=1+\int\limits_0^r f(x)\exp(i\lambda x)dx
\]
where $f(x)\in L^2[0,r]$ and $p(\lambda)\neq 0$ for $\lambda\in
\mathbb{R}$. Then, $p(\lambda)$ is uniquely determined by
\[
\cal{P}_{[-r,r]} \left[ \frac{1}{|p(\lambda)|^{2}}-1\right]
\]
\end{lemma}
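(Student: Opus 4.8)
The plan is to exploit the fact that $p(\lambda)$ is an outer function in $H^\infty(\mathbb{C}^+)$ with $p(\lambda)-1$ of exponential type at most $r$, so that it is the unique outer function with the prescribed modulus and normalization, and that its modulus is in turn recoverable from the given data. First I would observe that $p(\lambda)\in W_+(\mathbb{C}^+)\subset H^\infty(\mathbb{C}^+)$ since $f\in L^2[0,r]\subset L^1[0,r]$, that $p(\lambda)\to 1$ as $\Im\lambda\to+\infty$, and that $p(\lambda)$ has no zeroes in $\overline{\mathbb{C}^+}$: on $\mathbb{R}$ this is the hypothesis, and in $\mathbb{C}^+$ it follows because a function of exponential type $\le r$ that is bounded on $\overline{\mathbb{C}^+}$ has all its zeroes in $\overline{\mathbb{C}^-}$ once it is nonvanishing on $\mathbb{R}$ (Krein's theorem on entire functions of exponential type / Paley–Wiener, combined with the argument principle applied to a large half-disc). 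Hence $p(\lambda)$ is outer in $N(\mathbb{C}^+)$ and is determined up to a unimodular constant by $|p(\lambda)|$ on $\mathbb{R}$ via the multiplicative (Poisson) representation
\[
p(\lambda)=\exp\left[i c+\frac{1}{\pi i}\int\limits_{-\infty}^\infty
\frac{1+s\lambda}{(s-\lambda)(1+s^2)}\,\ln|p(s)|\,ds\right],
\]
and the constant $c\in\mathbb{R}$ is then pinned down by $p(i\infty)=1$, giving $c=\frac{1}{\pi}\int\frac{s\ln|p(s)|}{1+s^2}ds$.

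\textbf{Recovering $|p|$ from the data.} The next step is to show that the function
\[
G(\lambda):=\cal{P}_{[-r,r]}\left[\frac{1}{|p(\lambda)|^2}-1\right]
\]
determines $|p(\lambda)|$ for all real $\lambda$. Since $1/\overline{p(\lambda)}\in \overline{H^\infty(\mathbb{C}^+)}$ with boundary value tending to $1$ at infinity, and $1/p(\lambda)=1+\hat g(\lambda)$ for some $g\in W_+(\mathbb{R}^+)$ (by the Levy–Wiener theorem, exactly as in Lemma~\ref{lemmaimp}), we can write
\[
\frac{1}{|p(\lambda)|^2}-1=\frac{1}{p(\lambda)}+\overline{\left(\frac{1}{p(\lambda)}\right)}-1+\left|\frac{1}{p(\lambda)}-1\right|^2 .
\]
Arguing as in the proof of Lemma~\ref{lemmaimp}: the orthogonality of the analytic and antianalytic pieces forces the coefficient function $g$ (equivalently, the resolvent-type kernel) to satisfy a Wiener–Hopf-type integral equation on $[0,r]$ whose inhomogeneous term is the restriction of the Fourier transform of $G$ to $[0,r]$; since $1/|p|^2>0$ the associated operator $I+\tilde{\cal H}_r$ is strictly positive, so this equation has a unique solution, which determines $g$ on $[0,r]$ and hence $1/p(\lambda)=1+\hat g(\lambda)$, and hence $|p(\lambda)|$ for all $\lambda\in\mathbb{R}$. (Alternatively, one invokes directly the conclusion of Lemma~\ref{lemmaimp}, whose proof shows precisely that the function $H_r$ — and therefore $|p|^{-2}$ — is recovered from $\cal{P}_{[-r,r]}[|p|^{-2}-1]$.) Combining this with the first paragraph: $G$ determines $|p|$ on $\mathbb{R}$, which together with outerness and the normalization $p(i\infty)=1$ determines $p(\lambda)$ uniquely.

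\textbf{Main obstacle.} The routine parts are the Poisson/outer representation and the bookkeeping of which function spaces everything lives in. The one genuinely delicate point is justifying that $p$ has no zeroes in $\mathbb{C}^+$ and is therefore outer: a priori $p(\lambda)-1$ is only of exponential type $\le r$ (not exactly $r$), and $f$ is merely $L^2$, so one must be careful to apply the correct version of the Paley–Wiener / Krein factorization theorem for entire functions of exponential type bounded on $\mathbb{R}$ — essentially the same fact underlying part~$3)$ of Lemma~\ref{lemma24} and the remark after Lemma~\ref{lemmaimp}. Once that is in hand the argument is a clean two-step recovery $G\mapsto |p|\mapsto p$, and uniqueness follows because each arrow is well-defined and single-valued.
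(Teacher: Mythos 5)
Your argument has two genuine gaps, and the first one is fatal as written. You claim that a function $p(\lambda)=1+\int_0^r f(x)e^{i\lambda x}dx$ which is bounded in $\overline{\mathbb{C}^+}$, of exponential type at most $r$, and nonvanishing on $\mathbb{R}$ can have no zeroes in $\mathbb{C}^+$. This implication is false. Take $f(x)=-2\epsilon^{-1}\chi_{[0,\epsilon]}(x)$ with $0<\epsilon<r$: then $p(iy)=1-2(1-e^{-y\epsilon})/(y\epsilon)$ equals $-1$ at $y=0^+$ and tends to $1$ as $y\to+\infty$, so $p$ vanishes on the positive imaginary axis; yet on $\mathbb{R}$ a zero would force $e^{i\lambda\epsilon}=1+i\lambda\epsilon/2$, impossible since the left side has modulus $1$ and the right side modulus $\sqrt{1+\lambda^2\epsilon^2/4}>1$ for $\lambda\neq0$ (and $p(0)=-1$). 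So $p$ need not be outer, and the reduction ``data $\mapsto|p|\mapsto p$ via outerness'' collapses. Worse, if $p_1$ has a zero at $\lambda_0\in\mathbb{C}^+$, then $p_2=p_1\cdot(\lambda-\bar\lambda_0)/(\lambda-\lambda_0)$ is again of the stated form (apply Paley--Wiener to $p_1/(\lambda-\lambda_0)$), is nonvanishing on $\mathbb{R}$, and has the same modulus there, hence the same data: zero-freeness in $\mathbb{C}^+$ is a tacit extra hypothesis of the lemma, supplied in the paper's application by Lemma \ref{lemma24}, part 3, for $P_*$; it cannot be derived from nonvanishing on $\mathbb{R}$ alone, which is exactly the step you flagged as delicate.

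Even granting zero-freeness, your recovery of $|p|$ from the data is gappy, and repairing it makes your first step superfluous. You write $1/p=1+\hat g$ with $g\in L^1(\mathbb{R}^+)$ and assert that the projected data yields a Wiener--Hopf equation determining ``$g$ on $[0,r]$ and hence $1/p$''; but $g$ is supported on all of $[0,\infty)$ (dividing by $p$ destroys the band-limitation), so its values on $[0,r]$ do not determine $\hat g$. The equation that actually falls out of $\cal{P}_{+}\bigl[\,p\cdot|p|^{-2}-1\bigr]=\cal{P}_{+}\bigl[1/\bar p-1\bigr]=0$ has as its unknown the coefficient of $p$ itself, namely $f$, which \emph{is} supported on $[0,r]$: writing $|p|^{-2}-1=\int\overline{h(x)}e^{i\lambda x}dx$, one obtains $f(x)+\overline{h(x)}+\int_0^r\overline{h(x-t)}f(t)\,dt=0$ for $0<x<r$, an equation involving only $h$ on $[-r,r]$, i.e.\ only the given data; since $I+\cal{H}>0$ by positivity of $|p|^{-2}$, it determines $f$, hence $p$, uniquely. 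That is the paper's entire proof. Once $f$ is recovered you already have $p$, so the detour through $|p|$ and the outer multiplicative representation buys nothing.
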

\begin{proof}
By Levy-Wiener theorem,
\begin{equation}
\frac{1}{|p(\lambda)|^2}-1=\int\limits_{-\infty}^\infty
\overline{h(x)}\exp (i\lambda x)dx \label{appendix1}
\end{equation}
where $h(x)\in L^1(\mathbb{R})$ and $h(-x)=\overline{h(x)}$. We
know $h(x)$ on $[-r,r]$ and need to find $f(x)$ on $[0,r]$. As in
Lemma \ref{lemmaimp},
\[
\cal{P}_{+}\left[ p(\lambda)\left( 1+\int\limits_{-\infty }^{\infty }%
\overline{h(s)}\exp (i\lambda s)ds\right)-1\right] =0
\]%
which gives
\[
f(x)+\overline{h(x)}+\int\limits_0^r \overline{h(x-t)}f(t)dt=0,
0<x<r
\]
This equation determines $f(x)$ on $[0,r]$ uniquely from $h(x)$ on
$[-r,r]$ since $I+\cal{H}>0$ where
\[
\cal{H}g(x)=\int\limits_0^r \overline{h(x-t)}g(t)dt
\]
The positivity of $I+\cal{H}$ follows from (\ref{appendix1}).
\end{proof}
\subsection{For section \ref{sect-szego}}

\begin{theorem}\label{nnn1}
Let $d\mu(\lambda)$ be finite nonnegative measure defined on the
whole line. Consider the linear manifold $L$ consisting of the
finite linear combinations of exponents $\exp(i\lambda r)$ with
$r\geq 0$. Then, $L$ is not dense in $L^2(d\mu)$ iff
\begin{equation}
\int\limits_{-\infty}^\infty \frac{\ln
\mu'(\lambda)}{1+\lambda^2}d\lambda>-\infty \label{metka1}
\end{equation}
\end{theorem}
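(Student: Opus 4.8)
This is Theorem \ref{nnn1}, a classical completeness result (Krein--Akhiezer / the continuous Szeg\H{o} theorem). I would prove it by reducing the half-line exponential system to the Hardy-space setting via the Fourier--Plancherel transform and a conformal change of variable, and then quoting the factorization theory of $H^2$.

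\medskip

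\textbf{Plan.} First I would note that $L$ dense in $L^2(d\mu)$ is equivalent to: the only $g\in L^2(d\mu)$ orthogonal to all $\exp(i\lambda r)$, $r\ge 0$, is $g=0$. Writing $d\mu = \mu'(\lambda)\,d\lambda + d\mu_s$, the orthogonality $\int g(\lambda)\overline{\exp(i\lambda r)}\,d\mu(\lambda)=0$ for all $r\ge 0$ says that the (finite, complex) measure $g\,d\mu$ has vanishing inverse Fourier transform on $[0,\infty)$, i.e. $g\,d\mu$ is the boundary value of a function in $H^1$ of the lower half-plane (Hardy class; this is the F.~and M.~Riesz / Paley--Wiener circle of ideas). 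The key dichotomy is then: either $d\mu_s\ne 0$ forces no such nonzero $g$ unless $\mu'$ is "too thin", or more precisely the obstruction is governed entirely by whether $\log\mu'$ is integrable against the Poisson kernel. So the technical heart is: \emph{there exists a nonzero $g\in L^2(d\mu)$ with $g\,d\mu\in \overline{H^1}$ if and only if} (\ref{metka1}) holds, and in that case one may even take $d\mu_s$ to be annihilated. I would establish the "if" direction by exhibiting such a $g$ explicitly: when (\ref{metka1}) holds, form the outer function $\Pi$ with $|\Pi(\lambda)|^{-2} = $ const$\cdot\mu'(\lambda)$ (as in (\ref{function-pi}), adjusted by a factor $(\lambda+i)^{-1}$ to land in $H^2$), and check that $g = \chi_{\{\mu'>0\}}/(\Pi\cdot(\lambda+i)\,\mu')$ — or the appropriate variant — lies in $L^2(d\mu)$, is nonzero, and $g\,d\mu$ extends analytically to the lower half-plane, hence is orthogonal to $\exp(i\lambda r)$, $r\ge0$. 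For the "only if" direction: given a nonzero annihilating $g$, the function $G(\lambda)=g(\lambda)\mu'(\lambda)$ (plus a singular part) is in $\overline{H^1}$ and not identically zero, so $\log|G|$ is Poisson-integrable by the standard $H^1$ theory; combining $|G|^2/\mu' \le |g|^2\mu' \in L^1(d\lambda)$ (Cauchy--Schwarz against $d\mu$) with $\log|G|\in L^1(\text{Poisson})$ forces $\log\mu'$ to be Poisson-integrable, which is (\ref{metka1}).

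\medskip

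\textbf{Execution order.} (1) State the orthogonality/duality reformulation and the F.~and M.~Riesz identification of annihilators with $\overline{H^1}$-boundary values, handling the $(\lambda+i)^{\pm1}$ weights needed to move between $L^1$, $L^2$ and the finiteness of $d\mu$. (2) Prove "only if" via the $H^1$ mean-value/Jensen inequality: a nonzero lower half-plane $H^1$ function has $\log|\cdot|$ integrable against $d\lambda/(1+\lambda^2)$, then transfer to $\log\mu'$. (3) Prove "if" by explicit construction of the annihilator using the outer function attached to $\mu'$, checking membership in $L^2(d\mu)$ and the analytic continuation. (4) Remark that the singular part causes no trouble: in the "if" construction one can arrange $g=0$ $d\mu_s$-a.e., and in the "only if" direction the singular part only helps the conclusion. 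I would lean on the conformal map $\lambda\mapsto z=(\lambda-i)(\lambda+i)^{-1}$ to import the disc versions of these facts (Szeg\H{o}'s theorem on $\mathbb{T}$) if a self-contained half-plane argument gets cumbersome.

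\medskip

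\textbf{Main obstacle.} The routine parts are the duality reformulation and the disc-to-line transfer. The delicate step is the "if" direction: one must verify that the candidate annihilator genuinely lies in $L^2(d\mu)$ \emph{including} on the singular support of $d\mu$, and that it is nonzero and correctly analytic — this is where the precise normalization of the outer function and the weight $(\lambda+i)^{-1}$ matters, and where an over-hasty choice of $g$ fails to be square-integrable. A secondary subtlety is the F.~and M.~Riesz step in the half-plane for measures that are not absolutely continuous a priori; I would phrase it carefully (the annihilating measure $g\,d\mu$ is absolutely continuous once we know it is an $H^1$ boundary value, which then also shows the singular part of $d\mu$ plays no role in the annihilator). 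These are exactly the points that Theorem \ref{appendix-szego}/Theorem \ref{nnn2} in the main text package up, so in practice I would cite \cite{DymMcKean} for the sharp distance formula (\ref{sdcalc}) and derive Theorem \ref{nnn1} as its qualitative shadow.
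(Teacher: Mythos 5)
Your proposal is correct, but it follows a genuinely different route from the one in the text. The paper's proof never touches the annihilator: it shows directly that the Cayley factor $g(\lambda)=(\lambda-i)(\lambda+i)^{-1}$ belongs to the closure $L_\mu$ (by Riemann-sum approximation of $i/(i+\lambda)=\int_0^\infty e^{-x}e^{i\lambda x}dx$), that $L_\mu$ is invariant under multiplication by $g$, and then transplants everything to the disc via $w=(\lambda-i)(\lambda+i)^{-1}$, where a further approximation argument (using that the image measure $d\tau$ has no mass at $w=1$, i.e.\ that $\mu$ is finite) identifies the closed span of the transplanted exponentials with the closed span of $\{1,w,w^2,\dots\}$; the classical Szeg\H{o} theorem on $\mathbb{T}$ then finishes the proof. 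You instead work on the half-line by duality: identify the annihilator of $L$ with boundary values of $H^1$ of a half-plane via F.~and M.~Riesz, get the ``only if'' direction from Jensen's inequality for $H^1$ together with the identity $|G|^2/\mu'=|g|^2\mu'\in L^1(d\lambda)$ (note this is an equality, not an inequality, on $\{\mu'>0\}$, and $G\not\equiv 0$ precisely because F.~and M.~Riesz forces $g\,d\mu_s=0$), and get the ``if'' direction by exhibiting the extremal annihilator built from the outer function of $\mu'$. What the paper's route buys is economy: all the hard function theory is outsourced to the circle Szeg\H{o} theorem, at the price of two somewhat fiddly approximation lemmas. What your route buys is a self-contained half-plane argument that produces the annihilator explicitly, shows transparently why the singular part of $d\mu$ is irrelevant, and is essentially the same construction that underlies the quantitative distance formula of Theorem~\ref{nnn2}; the price is that you must carry the $(\lambda+i)^{\pm1}$ weights and the $H^1$ versus $H^2$ bookkeeping carefully, exactly as you flag.
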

\begin{proof} Denote the closure of $L$ in $L^2(d\mu)$ by
$L_\mu$. Consider the function
\[
f(\lambda)=\int\limits_0^\infty \exp(-x+i\lambda
x)dx=\frac{i}{i+\lambda}
\]
We can find sequence $\{f_n\}$ in $L$ which is uniformly bounded
on $\mathbb{R}$ in $L^\infty$ norm and $f_n\to f$ in the uniform
norm on any fixed compact in $\mathbb{R}$. For instance,
\[
f_n (\lambda)=\sum_{j=0}^{n^2-1} \exp(i\lambda j/n)
\int\limits_{jn^{-1}}^{(j+1)n^{-1}} \exp(-x) dx
\]
Therefore, $f\in L_\mu$ and $g=(\lambda-i)(\lambda+i)^{-1}=1-2f
\in L_\mu $. Let us show that $L_\mu$ is invariant under the
multiplication by $g$. Because $g$ is an elementary Blaschke
factor, $|g|=1$ on the real line. Therefore, we only need to show
that $g\psi \in L_\mu$ for any $\psi\in L$. But that clearly
follows from
\begin{itemize}
\item[(i)]{ $\psi$ is finite linear combination of exponents
$\exp(i\lambda r)$ for different $r\geq 0$,} \item[(ii)]{ space
$L_\mu$ is invariant under the multiplication by $\exp(i\lambda
r)$ for any $r\geq 0$,} \item[(iii)]{ $g$ itself belongs to
$L_\mu$.}
\end{itemize} Now, once we know that $L_\mu$ is invariant under
the multiplication by $g$, we know that $g^n\in L_\mu$ for all
$n\in \mathbb{Z}^+$. Let us consider the standard conformal map of
$\lambda\in \mathbb{C}^+$ onto the unit disc: $w\in \mathbb{D},
w=(\lambda-i)(\lambda+i)^{-1}$. Under this map, measure $d\mu$
goes into a new finite measure $d\tau$ on the unit circle, which
generates a new Hilbert space $L^2(d\tau)$. The subspace $L_\mu$
goes into the subspace $L_\tau$. Moreover, since all $g^n\in
L_\mu$, $w^n\in L_\tau $ for any $n\in \mathbb{Z}$, $w\in
\mathbb{T}$. Let us consider the subspace $Z_\tau$ obtained by
closure of $Z=$ Span$\{1,w,\ldots, w^n,\ldots\}$ in the
$L^2(d\tau)$. Clearly $Z_\tau\subseteq L_\tau$. Let us show that
actually $Z_\tau=L_\tau$. To do that, it is enough to prove that
any function
\[
h(w)=\exp\left[ \frac{w+1}{w-1} ~r \right], r\geq 0, w\in
\mathbb{T}
\]
(which is an image of $\exp(i\lambda r)$ under the conformal map)
can be approximated by analytic polynomials in $w$ in the
$L^2(d\tau)$ metric. Consider functions
\[
h_\rho (w)=\exp\left[ \frac{\rho w+1}{\rho w-1} ~r \right], w\in
\mathbb{T}
\]
for $\rho<1$. Since $d\mu$ is finite on $\mathbb{R}$, we have
$\tau(-\varepsilon, \varepsilon)\to 0$ as $\varepsilon \to 0$. In
other words, $\tau$ has no mass point at $w=1$. Function $h(w)$ is
bounded in $\overline{\mathbb{D}}$ and is continuous there except
for the point $w=1$. Therefore, $h_\rho\to h$ in $L^2(d\tau)$ as
$\rho\to 1$. At the same time, each $h_\rho$ is analytic in small
neighborhood of $\mathbb{D}$ and therefore can be approximated by
polynomials uniformly on the unit circle. Thus, $L_\tau=Z_\tau$.
The Szeg\H{o} theorem (\cite{Simon}, Chapter 2) says that $Z_\tau$
is not dense in $L^2(d\tau)$ iff
\[
\int\limits_{-\pi}^\pi \ln \tau'(\theta)d\theta>-\infty
\]
Clearly the last condition is equivalent to $(\ref{metka1})$.
\end{proof}

Notice that due to
\[
\int\limits_{-\infty}^\infty d\mu(\lambda) <\infty
\]
we have that (\ref{metka1}) is equivalent to
\begin{equation}
\int\limits_{-\infty}^\infty \frac{\ln^-
\mu'(\lambda)}{1+\lambda^2}d\lambda>-\infty \label{metka2}
\end{equation}

This Theorem has interpretation in the theory of Gaussian
stationary processes with continuous time and that is very useful
point of view on the whole theory of Krein systems. In the
meantime, there are the so-called Krein strings \cite{DymMcKean},
differentiable operators more suitable to deal with stationary
processes.

\begin{theorem}\label{appendix-szego}
Assume that $d\sigma$ is a measure on the real-line such that
\[
\int\limits_{-\infty}^\infty
\frac{d\sigma(\lambda)}{1+\lambda^2}<\infty
\]
Consider the linear manifold $X$ of functions
\[
\hat{f}(\lambda)=\int\limits_0^\infty \exp(i\lambda x)
{f}(x)dx,\quad 0\leq r_1<r_2
\]
where ${f}(x)\in C^1[r_1,r_2]$ and is zero outside
$[r_1,r_2]\subseteq [0,\infty)$. Then, $X$ is not dense in
$L^2(d\sigma)$ iff
\begin{equation}
\int\limits_{-\infty}^\infty \frac{\ln \sigma'
(\lambda)}{1+\lambda^2}d\lambda >-\infty
\end{equation}
Moreover, let $\lambda_0\in \mathbb{C}^+$. Then
\begin{equation}
{\rm Dist} \left( \frac{1}{\lambda-\lambda_0}, \bar{X}
\right)_{L^2(d\sigma)} = \frac{1}{\sqrt{2\Im\lambda_0}} \exp
\left[ \frac{\Im\lambda_0}{2\pi}\int\limits_{-\infty}^{\infty}
\frac{\ln
(2\pi\sigma'(\lambda))}{|\lambda-\lambda_0|^2}d\lambda\right]
\end{equation}
\end{theorem}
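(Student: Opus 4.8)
The plan is to prove this through a conformal change of variables, transplanting the problem to the unit circle where the classical Szeg\H{o} theory and the exact formula for the distance from $1/(\xi-z_0)$ to the polynomials is already available. The first step is to observe that $X$ is dense in $L^2(d\sigma)$ if and only if the closure $\bar X$ does not contain the function $1/(\lambda-\lambda_0)$ for any $\lambda_0\in\mathbb{C}^+$; this is because $\bar X$ is a shift-invariant subspace (invariant under multiplication by $e^{i\lambda r}$, $r\ge 0$), and by the reasoning in the proof of Theorem~\ref{nnn1} it is invariant under multiplication by the elementary Blaschke factor $g_{\lambda_0}(\lambda)=(\lambda-\lambda_0)(\lambda-\bar\lambda_0)^{-1}$, so if it contained one such Cauchy kernel it would contain all of $L^2(d\sigma)$. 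Thus the density question reduces to the distance formula, and it suffices to prove the displayed identity for the distance and note that the right-hand side is bounded away from zero (equivalently, the exponent is finite) exactly when $\int \ln\sigma'(\lambda)(1+\lambda^2)^{-1}d\lambda>-\infty$.

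Next I would carry out the conformal transplantation. Fix $\lambda_0\in\mathbb{C}^+$ and map $\lambda\in\mathbb{C}^+$ to $w\in\mathbb{D}$ by the M\"obius transformation $w=(\lambda-\lambda_0)(\lambda-\bar\lambda_0)^{-1}$, which sends $\lambda_0$ to $0$. The measure $d\sigma$ pushes forward to a finite measure $d\tau$ on $\mathbb{T}$ (finiteness of $d\tau$ follows precisely from the hypothesis $\int d\sigma(\lambda)(1+\lambda^2)^{-1}<\infty$, since the Jacobian of the map behaves like $|\lambda-\bar\lambda_0|^{-2}\asymp(1+\lambda^2)^{-1}$ near infinity). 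As established in the proof of Theorem~\ref{nnn1}, the closure of $X$ transplants to the closure $Z_\tau$ of the analytic polynomials $\mathrm{Span}\{1,w,w^2,\ldots\}$ in $L^2(d\tau)$: indeed every exponential $e^{i\lambda r}$, $r\ge 0$, corresponds under the map to a function $h(w)=\exp[\,r\,(w+1)/(w-1)\cdot c\,]$ bounded on $\overline{\mathbb{D}}$, analytic inside, continuous except at $w=1$, and since $d\tau$ has no atom at $w=1$ such $h$ is approximable by analytic polynomials in $L^2(d\tau)$; conversely $w^n$ is the image of a bounded limit of elements of $X$. Under this correspondence, the Cauchy kernel $1/(\lambda-\lambda_0)$ becomes (up to an explicit nonvanishing analytic factor depending on $\lambda$ but not on the measure) the function $1/w$ times a constant, or more precisely I would track the image of $(\lambda-\lambda_0)^{-1}$ and the $L^2$ norms carefully through the change of variables, which converts the distance ${\rm Dist}((\lambda-\lambda_0)^{-1},\bar X)_{L^2(d\sigma)}$ into a constant multiple of the classical OPUC distance ${\rm Dist}(w^{-1}\cdot(\text{outer factor}),Z_\tau)_{L^2(d\tau)}$, i.e. essentially $\mathrm{dist}(\bar\xi,\overline{\mathrm{span}}\{1,\xi,\ldots\})$.

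Then I invoke the classical result from the OPUC theory (Szeg\H{o}'s theorem and its refinement, e.g.\ \cite{Simon}, Chapter 2): $Z_\tau$ is not dense in $L^2(d\tau)$ iff $\int_{-\pi}^\pi\ln\tau'(\theta)\,d\theta>-\infty$, and in that case the distance from the relevant function to $Z_\tau$ is given by the Szeg\H{o} function evaluated at the appropriate point, which in the disc produces a factor $\exp[(2\pi)^{-1}\int_{-\pi}^\pi \ln(2\pi\tau'(\theta))\,d\theta\cdot(\text{something})]$ at the origin. Translating back via the relation $2\pi\tau'(\theta)\,d\theta$ versus $2\pi\sigma'(\lambda)\,d\lambda$ and the Jacobian $d\theta/d\lambda = 2\Im\lambda_0/|\lambda-\lambda_0|^2$, the Poisson-type weight $1/|\lambda-\lambda_0|^2$ and the prefactor $1/\sqrt{2\Im\lambda_0}$ emerge exactly as in the displayed formula. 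The Szeg\H{o} density criterion $\int\ln\tau'\,d\theta>-\infty$ transplants, using the same Jacobian, to $\int\ln\sigma'(\lambda)(1+\lambda^2)^{-1}d\lambda>-\infty$, completing both halves of the statement.

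The main obstacle I anticipate is bookkeeping the analytic and normalization factors through the conformal map so that the constant comes out to be precisely $1/\sqrt{2\Im\lambda_0}$ with the stated weight, rather than an unidentified constant: one must be careful about (i) which function in the disc the Cauchy kernel $1/(\lambda-\lambda_0)$ actually corresponds to after clearing the explicit (measure-independent, nonvanishing in $\overline{\mathbb{C}^+}$) Blaschke/outer factors, (ii) the fact that the relevant minimization in the disc is the one computing $\mathrm{dist}(\bar\xi,H^2)$-type quantity whose value is the modulus of the Szeg\H{o} function at $0$, and (iii) verifying the interchange of ``distance to $\bar X$'' with ``distance to $Z_\tau$'' survives the non-isometric (but bounded with bounded inverse off any neighborhood of $w=1$) change of variables, which is legitimate precisely because $d\tau$ charges no mass at $w=1$. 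Once these are pinned down the formula and the dichotomy follow mechanically from the classical disc results.
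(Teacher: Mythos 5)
Your proposal is correct and follows essentially the same route as the paper: transplant $d\sigma$ to a finite measure $d\tau$ on $\mathbb{T}$ via a M\"obius map, identify the image of $\bar{X}$ with the closure of the analytic polynomials in $L^2(d\tau)$ (exactly as in the proof of Theorem~\ref{nnn1}), and read off both the density dichotomy and the distance formula from the classical Szeg\H{o} theory, with the Jacobian $2\Im\lambda_0/|\lambda-\lambda_0|^2$ producing the stated weight and prefactor. (Only note that your opening sentence has the logic inverted --- $X$ is dense iff $\bar{X}$ \emph{does} contain $(\lambda-\lambda_0)^{-1}$ --- though your subsequent explanation makes the intended, correct claim clear, and that step is in any case redundant once both halves are taken from the disc results.)
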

\begin{proof} Consider a new measure
$d\mu=d\sigma/(1+\lambda^2)$ which is finite on the real line.
Denote by $Y$ the linear manifold of functions of the following
form $(\lambda+i)\hat f(\lambda), \hat f\in X$. Let $Y_\mu$ be the
closure of $Y$ in $L^2(d\mu)$. We only need to show that $Y_\mu
\neq L^2(d\mu)$ iff
\begin{equation}
\int\limits_{-\infty}^\infty \frac{\ln \mu'
(\lambda)}{1+\lambda^2}d\lambda >-\infty
\end{equation}
Let $L_\mu$ be the space of functions from the proof of the
Theorem \ref{nnn1}, i.e. the closure in $L^2(d\mu)$ of finite
linear combinations of exponents $\exp(i\lambda r), r\geq 0$. It
is not difficult to show that $\exp(i\lambda r)\in Y_\mu$ for any
$r\geq 0$. That follows from the representation
\[
\exp(i\lambda r)=-i\exp(r)(\lambda+i)\int\limits_r^\infty
\exp(-x)\exp(i\lambda x)dx
\]

So, $L_\mu\subseteq Y_\mu$. At the same time, each function
\begin{eqnarray*}
(\lambda+i)\int\limits_{r_1}^{r_2} \exp(i\lambda x) {f}(x)dx= i
\Bigl( f(r_1)\exp(i\lambda r_1)-f(r_2)\exp(i\lambda r_2)\Bigl.\\
\Bigl.+ \int\limits_{r_1}^{r_2} \exp(i\lambda x) [{f}'(x)
+{f}(x)]dx\Bigr)
\end{eqnarray*}
can be approximated in $L^2(d\mu)$ by the finite linear
combinations of exponents $\exp(i\lambda r)$. One should replace
the integral by the Riemann sum and use continuity of the
functions ${f}, {f}'$ to estimate the error. Thus $L_\mu = Y_\mu$
and one can use Theorem \ref{nnn1} to finish the proof of the
first statement of the Theorem.

Now, let us obtain the formula for the distance.
 For
simplicity, consider $\lambda_0=i$. The general case can be
treated in the same way. We have
\[
\inf\limits_{\hat f\in \bar{X}} \int\limits_{-\infty}^{\infty}
\left| \frac{1}{\lambda-i} -\hat f(\lambda)\right|^2 d\sigma=
\inf\limits_{\hat f\in \bar{X}} \int\limits_{-\infty}^{\infty}
\left| 1-\frac{\lambda-i}{\lambda+i}(\lambda+i)\hat
f(\lambda)\right|^2 \frac{d\sigma}{1+\lambda^2}
\]
\[
=\inf\limits_{y\in Y_\mu} \int\limits_{-\infty}^{\infty} \left|
1-\frac{\lambda-i}{\lambda+i}\,y(\lambda)\right|^2
d\mu(\lambda)=\inf\limits_{y\in L_\mu}
\int\limits_{-\infty}^{\infty} \left|
1-\frac{\lambda-i}{\lambda+i}\,y(\lambda)\right|^2 d\mu(\lambda)
\]
\[
=\inf\limits_{v\in Z_\tau} \int\limits_{\mathbb{T}} \left|
1-wv(w)\right|^2 d\tau(w)
\]
where the measure $d\tau(w)$ was obtained from $d\mu(\lambda)$ by
mapping $\mathbb{C}^+$ onto $\mathbb{D}$ via
$w=(\lambda-i)(\lambda+i)^{-1}$. Here we also used an
approximation result from the proof of Theorem \ref{nnn1}. For the
last $\inf$, we can use the Szeg\H{o} formula \cite{Simon}, i.e.

\[
{\rm Dist} \left(1,wZ_\tau \right)_{L^2(d\tau)}=\exp \left[
\frac{1}{4\pi} \int\limits_0^{2\pi} \ln (2\pi
\tau'(\theta))d\theta \right]= \frac{1}{\sqrt 2}  \exp \left[
\frac{1}{2\pi}\int\limits_{-\infty}^{\infty} \frac{\ln (2\pi
\sigma'(\lambda))}{1+\lambda^2}d\lambda\right]
\]
and the proof is finished. \end{proof}

\subsection{For section \ref{sect-appro}}

\begin{lemma}\label{regular-inf}
If $C(x)\in L^1[0,R]$, $C(x)$ is continuous at zero and
\[
f(\lambda)=\int\limits_0^R C(x) \exp(i\lambda x)dx
\]
then
\[
\lim_{y\to\infty} \frac 1y \int\limits_0^y sf(is)ds=C(0)
\]

\end{lemma}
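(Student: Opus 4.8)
The plan is to turn the quantity $\frac{1}{y}\int_0^y sf(is)\,ds$ into an integral of $C$ against an explicit nonnegative kernel $k_y(x)$ on $[0,R]$, and then recognize the family $\{k_y\}_{y>0}$ as an approximate identity concentrating at $x=0$. First I would note that $f(is)=\int_0^R C(x)e^{-sx}\,dx$, so $sf(is)=\int_0^R C(x)\,s e^{-sx}\,dx$. Since
\[
\int_0^y\int_0^R |C(x)|\,s e^{-sx}\,dx\,ds\leq \|C\|_{L^1[0,R]}\cdot\frac{y^2}{2}<\infty,
\]
Fubini's theorem applies and gives
\[
\frac{1}{y}\int_0^y sf(is)\,ds=\int_0^R C(x)\,k_y(x)\,dx,\qquad
k_y(x)=\frac{1}{y}\int_0^y s e^{-sx}\,ds=-\frac{e^{-yx}}{x}+\frac{1-e^{-yx}}{yx^2}.
\]

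Next I would record two elementary facts about $k_y$. It is strictly positive on $(0,R]$ (being an average of $se^{-sx}>0$), and integrating in $x$ first,
\[
\int_0^R k_y(x)\,dx=\frac{1}{y}\int_0^y s\cdot\frac{1-e^{-sR}}{s}\,ds=1-\frac{1-e^{-yR}}{yR},
\]
which lies in $(0,1)$ and tends to $1$ as $y\to\infty$. Thus $M:=\sup_{y>0}\int_0^R k_y(x)\,dx\leq 1$. Also, for any fixed $\delta>0$ and $x\geq\delta$ one has the crude bound $0<k_y(x)\leq \frac{1}{y}\int_0^\infty s e^{-sx}\,ds=\frac{1}{yx^2}\leq\frac{1}{y\delta^2}$.

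Then I would write
\[
\int_0^R C(x)k_y(x)\,dx-C(0)=\int_0^R\bigl(C(x)-C(0)\bigr)k_y(x)\,dx+C(0)\Bigl(\int_0^R k_y(x)\,dx-1\Bigr),
\]
where the last term goes to $0$ by the computation above. For the first term, given $\varepsilon>0$ I would use continuity of $C$ at $0$ to choose $\delta\in(0,R]$ with $|C(x)-C(0)|<\varepsilon$ for $x\in[0,\delta]$, and split the integral at $\delta$. The part over $[0,\delta]$ is bounded in absolute value by $\varepsilon\int_0^\delta k_y(x)\,dx\leq\varepsilon M\leq\varepsilon$, uniformly in $y$; the part over $[\delta,R]$ is bounded by $\bigl(\|C\|_{L^1[0,R]}+|C(0)|R\bigr)\cdot\frac{1}{y\delta^2}$, which tends to $0$ as $y\to\infty$. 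Hence $\limsup_{y\to\infty}\bigl|\int_0^R C(x)k_y(x)\,dx-C(0)\bigr|\leq\varepsilon$ for every $\varepsilon>0$, giving the claim. The only mild point requiring care is the non-integrability of $1/x^2$ near $0$, which is precisely why the argument is organized as an approximate-identity estimate (exploiting $\int_0^R k_y\leq 1$) rather than by dominated convergence; I do not anticipate any serious obstacle beyond this.
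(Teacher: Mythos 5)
Your proof is correct, and it is essentially the paper's argument in a cleaner packaging: the paper also subtracts $C(0)$, uses the modulus of continuity on $[0,\delta]$, and exploits the decay of $e^{-sx}$ for $x\geq\delta$, merely performing the $s$-integration last instead of first. Recasting the average as integration of $C$ against the explicit approximate-identity kernel $k_y$ is a tidy reorganization (and makes the Fubini step and the mass bound $\int_0^R k_y\leq 1$ explicit), but it is not a different method.
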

\begin{proof}
The proof follows from the standard estimates:
\[
\lim\limits_{y\to\infty} \frac 1y \int\limits_0^y sf(is)ds=C(0)+
\lim_{y\to\infty} \frac 1y \int\limits_0^y s \left[
\int\limits_0^R [C(x)-C(0)] \exp(-sx)dx\right] ds=C(0)
\]
because the second term before the $\lim$ can be bounded by
\[
C\left[\omega_\delta(C)+\frac{\|C\|_1+|C(0)|}{y}\int\limits_0^y
s\exp(-s\delta)ds\right]
\]
where $\omega_\delta(C)=\sup_{x\in [0,\delta]}|C(x)-C(0)|\to 0$ as
$\delta\to 0$.
\end{proof}

\subsection{For section \ref{sect-zeros}}

The following Lemma controls the zeroes of the continuous
orthogonal polynomial
\begin{lemma}\label{zeroes}
Let
\[
p(\lambda)=1-\int\limits_0^r \gamma(x)e^{-i\lambda x}dx
\]
where $\gamma(x)\in C^2[0,r]$, $\gamma(r)\neq 0$. If $\lambda_n$
are zeroes of $p(\lambda)$ and $|\lambda_1|\leq |\lambda_2|\leq
\ldots$, then $\lambda_n=\lambda_n^0+\bar{o}(1), n\to\infty,$
where $\lambda_n^0= x_n+iy_n$ and
\begin{equation}\label{details-roots}
 x_n^2+y_n^2= |\gamma(r)|^2\exp(2ry_n),\quad
x_n=r^{-1}\left[\pi/2+\pi n+\varphi\right], n\in \mathbb{Z},
\end{equation}
Here, $\gamma(r)=|\gamma(r)|e^{i\varphi}$.
\end{lemma}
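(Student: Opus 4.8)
The plan is to isolate the dominant part of $p(\lambda)$ by integration by parts, to solve the resulting transcendental model equation explicitly, and then to transfer the information back to $p$ itself by Rouch\'e's theorem on small circles around the model roots. First I would multiply by $i\lambda$ and integrate by parts (this is where $\gamma\in C^2$ is used, so that $\gamma'$, $\gamma''$ and the endpoint values of $\gamma'$ are available): a short computation gives
\[
i\lambda\,p(\lambda)=i\lambda+\gamma(r)e^{-i\lambda r}+R(\lambda),\qquad
R(\lambda)=-\gamma(0)-\int_0^r\gamma'(x)e^{-i\lambda x}\,dx ,
\]
and, integrating by parts once more inside $R$, $|R(\lambda)|\le C\bigl(1+|\lambda|^{-1}e^{\,r\Im\lambda}\bigr)$ for $\Im\lambda\ge 0$ and $|R(\lambda)|\le C|\lambda|^{-1}$ for $\Im\lambda\le 0$. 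Two immediate consequences: $p$ has only finitely many zeros with $\Im\lambda\le 0$ (there $i\lambda p(\lambda)=i\lambda+O(1)$); and at a zero $\lambda$ with $|\lambda|$ large one has $|\gamma(r)|e^{r\Im\lambda}=|{-i\lambda}+R(\lambda)|$, which forces $c|\lambda|\le e^{r\Im\lambda}\le C|\lambda|$, hence $\Im\lambda=r^{-1}\ln|\lambda|+O(1)\to\infty$. Thus all but finitely many zeros lie in a thin (logarithmic) strip around the curve $\mathcal C=\{\,|\gamma(r)|\,e^{r\Im\lambda}=|\lambda|\,\}\subset\mathbb C^+$, and on $\mathcal C$ one has $R(\lambda)=O(1)$ while $i\lambda$ and $\gamma(r)e^{-i\lambda r}$ are both of size $|\lambda|$.

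Next I would analyse the model entire function $g(\lambda):=i\lambda+\gamma(r)e^{-i\lambda r}$. Writing $g(\lambda)=0$ as $\gamma(r)e^{-i\lambda r}=-i\lambda$ and taking logarithms, $-i\lambda r=\ln(-i\lambda)-\ln\gamma(r)+2\pi i k$ for $k\in\mathbb Z$. Separating modulus and argument: the modulus relation is $|\gamma(r)|^2e^{2r\Im\lambda}=|\lambda|^2$, i.e. $x^2+y^2=|\gamma(r)|^2e^{2ry}$ for $\lambda=x+iy$; the argument relation is $\varphi-xr\equiv\arg(-i\lambda)\pmod{2\pi}$, where along $\mathcal C$ one has $\arg(-i\lambda)\to-\pi/2$ as $\Re\lambda\to+\infty$ and $\arg(-i\lambda)\to+\pi/2$ as $\Re\lambda\to-\infty$. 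Substituting these leading values and running one Banach fixed-point iteration of $\lambda\mapsto (i/r)\bigl[\ln(-i\lambda)-\ln\gamma(r)+2\pi i k\bigr]$ on a small disc (a contraction there, its derivative being $O(|\lambda|^{-1})$), one obtains, for $|\lambda|$ large, exactly the points $\lambda_n^0=x_n+iy_n$ with $x_n=r^{-1}(\pi/2+\pi n+\varphi)$ and $x_n^2+y_n^2=|\gamma(r)|^2e^{2ry_n}$; the interleaving of the two branches $\arg(-i\lambda)\to\mp\pi/2$ is exactly what turns the naive $2\pi/r$ spacing into the stated $\pi/r$ spacing. One records in addition that $g'(\lambda_n^0)=i-r\lambda_n^0$, so the $\lambda_n^0$ are simple zeros of $g$, well separated (consecutive gap $\asymp 1/r$).

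Finally I would apply Rouch\'e's theorem on the circles $|\lambda-\lambda_n^0|=\rho_n$, with $\rho_n=|\lambda_n^0|^{-1/2}$ (so $\rho_n\to0$ while $\rho_n|\lambda_n^0|\to\infty$). On such a circle $|g(\lambda)|\ge c\,\rho_n\,|g'(\lambda_n^0)|\asymp c\,r\,\rho_n|\lambda_n^0|\to\infty$, whereas $|i\lambda\,p(\lambda)-g(\lambda)|=|R(\lambda)|=O(1)$; hence $|R|<|g|$ there for $|n|$ large, so $i\lambda p$ and $g$ have the same number of zeros inside, namely one, producing a unique zero $\lambda_n$ of $p$ with $|\lambda_n-\lambda_n^0|\le\rho_n=\bar{o}(1)$. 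The same estimate $|g(\lambda)|\gtrsim r|\lambda|\cdot\mathrm{dist}\bigl(\lambda,\{\lambda_k^0\}\bigr)\gg|R(\lambda)|$ on the region $\{|\lambda|$ large$\}\setminus\bigcup_k D(\lambda_k^0,\rho_k)$ --- checked separately on the easy subregions $\Im\lambda\le0$, $\Im\lambda$ bounded, $e^{r\Im\lambda}\gg|\lambda|$, and (the main case) a neighbourhood of $\mathcal C$ --- shows $p$ has no other large zeros; matching the modulus-ordering of $\{\lambda_n\}$ with that of $\{\lambda_n^0\}$ then yields $\lambda_n=\lambda_n^0+\bar{o}(1)$.

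The step I expect to be the main obstacle is the a priori localization together with the uniform bound $R(\lambda)=O(1)$ along $\mathcal C$: one must integrate by parts enough times that, after multiplying by $\lambda$ and restricting to the locus $e^{r\Im\lambda}\asymp|\lambda|$, the remainder stays genuinely bounded (and $\bar{o}(1)$ after one further step) instead of being of the same order as the main terms $i\lambda$ and $\gamma(r)e^{-i\lambda r}$ --- this is precisely what the hypothesis $\gamma\in C^2$ buys --- and one must keep careful track of the bookkeeping that converts the branch parameter $k\in\mathbb Z$ of the logarithm into the integer label $n$ with $\pi/r$ spacing. Once the localization and the model-equation analysis are in hand, the Rouch\'e comparison and the "no other zeros'' argument are routine.
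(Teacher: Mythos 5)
Your proposal is correct and follows essentially the same route as the paper: one integration by parts (using $\gamma\in C^2$ to control the remainder) reduces $p(\lambda)=0$ to the model equation $\gamma(r)e^{-i\lambda r}=-i\lambda$, whose roots are extracted by separating modulus and argument, and Rouch\'e's theorem transfers them back to $p$. Your version merely spells out the localization of the zeros near the logarithmic curve and the Rouch\'e circles in more detail than the paper does, and reads off the quantization of $x_n$ from the argument rather than from the real part of the model identity; these are cosmetic differences.
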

\begin{proof}
From Lebesgue-Riemann Lemma, $\lambda_n\in \mathbb{C}^+$ for large
$n$. Integrating by parts, we get
\[
p(\lambda)=1+\frac{1}{i\lambda}\left[\gamma(r)e^{-i\lambda
r}-\gamma(0)\right]-\frac{1}{\lambda^2}\left[
\gamma'(r)e^{-i\lambda r}-\gamma'(0)\right]+\frac{1}{\lambda^2}
\int\limits_0^r \gamma''(s)e^{-i\lambda s}ds
\]
Therefore, the equation $p(\lambda)=0$ can be rewritten as
\[
\frac{e^{-i\lambda r}}{i\lambda}=
\left[-1+\frac{\gamma(0)}{i\lambda}-\frac{\gamma'(0)}{\lambda^2}\right]
\cdot \left[
\gamma(r)+\frac{\gamma'(r)}{i\lambda}-\frac{1}{i\lambda}\int\limits_0^r
\gamma''(s)e^{i\lambda(r-s)}ds\right]^{-1}
\]
\[=-\frac{1}{\gamma(r)}+O(|\lambda|^{-1})
\]
By Rouche's Theorem, $\lambda_n$ will be approaching the roots
$\lambda_n^0$ of equation
\[
\frac{e^{-i\lambda r}}{i\lambda}=-\frac{1}{\gamma(r)}
\]
Then, the first equation in (\ref{details-roots}) easily follows
upon taking the absolute value squared. The second one can be
obtained by taking the real part of identity
$\gamma(r)e^{-ir(x+iy)}=-i(x+iy)$, which yields
$\cos(rx-\varphi)=ye^{-ry}|\gamma(r)|^{-1}\to 0$, as $y\to\infty$.
The last equation yields the needed quantization for $x_n$.
\end{proof}

The following result is due to Widom (see, e.g. \cite{Simon},
Lemma 8.1.9)
\begin{lemma}{\rm (Widom's lemma).}\label{widom}
Let $F,D$ be disjoint compact sets in $\mathbb{C}$ and
$\mathbb{C}\backslash F$-- connected. Then there is $m$ such that
for any $z_1, z_2, \ldots, z_m\in D$ there is a monic polynomial
$\tilde{Q}_m(z)$ of degree $m$, such that
\[
\sup_{z\in F}
\left|\frac{\tilde{Q}_m(z)}{\prod_{j=1}^m(z-z_j)}\right|\leq \frac
12
\]
\end{lemma}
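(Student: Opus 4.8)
The plan is to reduce the statement to two classical facts of potential theory --- a balayage identity for logarithmic potentials and a discretization of such a potential by point masses. First dispose of the degenerate case $\mathrm{cap}(F)=0$: if $T_m$ is the $m$-th monic Chebyshev polynomial of $F$ then $\|T_m\|_F^{1/m}\to\mathrm{cap}(F)=0$, while for \emph{any} tuple $z_1,\dots,z_m\in D$ one has $\bigl|\prod_{j=1}^m(z-z_j)\bigr|\ge d_0^{\,m}$ on $F$ with $d_0:=\mathrm{dist}(F,D)>0$; hence $\bigl\|T_m/\prod_j(z-z_j)\bigr\|_F\le\|T_m\|_F/d_0^{\,m}\to 0$, and one value of $m$ (with $\tilde Q_m=T_m$) works for all tuples. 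From now on assume $\mathrm{cap}(F)>0$.

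Since $\mathbb C\setminus F$ is connected, its complement in the Riemann sphere is a single domain $V$ with $\partial V\subseteq F$; let $g_V(\cdot,\infty)$ be the Green function of $V$ with pole at $\infty$. Because $\mathrm{cap}(F)>0$, this function is positive and continuous on $V$, and since $D$ is a compact subset of the open set $V$ we may set $\delta:=\min_{w\in D}g_V(w,\infty)>0$; note $\delta$ depends only on $D$ and $F$. Fix $z_1,\dots,z_m\in D$. For each $j$ let $\omega_j$ be the balayage of $\delta_{z_j}$ onto $F$, i.e.\ the harmonic measure of $V$ at $z_j$, a probability measure supported in $F$; the standard balayage identity gives, for $z\in F$,
\[
\int_F\log|z-t|\,d\omega_j(t)=\log|z-z_j|-g_V(z_j,\infty),
\]
which extends from $\partial V$ to all of $F$ by the maximum principle. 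Summing over $j$, the measure $\nu:=\sum_{j=1}^m\omega_j$ (supported in $F$, total mass $m$) satisfies, for all $z\in F$,
\[
\int_F\log|z-t|\,d\nu(t)=\log\Bigl|\prod_{j=1}^m(z-z_j)\Bigr|-\sum_{j=1}^m g_V(z_j,\infty)\ \le\ \log\Bigl|\prod_{j=1}^m(z-z_j)\Bigr|-m\delta.
\]

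The decisive step is to discretize $\nu$ by $m$ unit point masses with control of the potential from above on $F$: one chooses $\zeta_1,\dots,\zeta_m$ (in $F$, or in an arbitrarily small neighbourhood of it) so that the monic degree-$m$ polynomial $\tilde Q_m(z):=\prod_{j=1}^m(z-\zeta_j)$ obeys
\[
\log|\tilde Q_m(z)|=\sum_{j=1}^m\log|z-\zeta_j|\ \le\ \int_F\log|z-t|\,d\nu(t)+\varepsilon_m,\qquad z\in F,
\]
with $\varepsilon_m=o(m)$ (in fact $O(\log m)$). Combining this with the previous display, $\bigl\|\tilde Q_m/\prod_{j=1}^m(z-z_j)\bigr\|_F\le e^{-m\delta+\varepsilon_m}$, which is $\le\frac12$ as soon as $m\ge m_0$, where $m_0$ depends only on $\delta$ and the discretization constants, hence only on $D$ and $F$ and not on the tuple. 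This is exactly the assertion of the lemma, with that $m=m_0$.

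The main obstacle is the discretization step, though it is comfortable rather than delicate: an error of size only $\log m$ has to be absorbed into a margin of size $m\delta$. The one point needing care is that replacing the continuous measure $\nu$ by a sum of unit atoms must not \emph{raise} the logarithmic potential anywhere on $F$ (atoms only depress it near themselves, which is harmless for an upper bound); this is guaranteed by the mild regularity of $\nu$, a finite sum of harmonic measures, and in the worst case one first sweeps the mass onto $\partial F_\rho$ for a small smooth closed neighbourhood $F_\rho\supset F$, changing all potentials by $O(\rho)$ uniformly and thereby reducing to a smooth target, or else invokes a black-box discretization theorem for equilibrium/harmonic-type measures. Equivalently, the whole statement can be phrased as the bound $t_m\bigl(F,\,1/|\prod_{j=1}^m(z-z_j)|\bigr)\le\frac12$ for the corresponding weighted Chebyshev constant and obtained from the theory of weighted potentials, again using only $\inf_D g_V(\cdot,\infty)>0$. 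A more elementary-looking route --- Runge approximation of $1/(z-w)$, $w\in D$, uniformly on $F$ by polynomials of a degree bounded uniformly in $w$ by compactness of $D$ --- meets exactly the same difficulty once one insists that the total degree stay equal to $m$.
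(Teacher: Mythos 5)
You should first note that the paper contains no proof of this lemma at all: it is quoted from \cite{Simon} (Lemma 8.1.9), so there is nothing in-paper to compare with, and I am judging your argument against the standard one. Your strategy --- gain a factor $e^{-m\delta}$ from the Green's function via balayage, lose only $e^{\varepsilon_m}$ with $\varepsilon_m=o(m)$ in discretizing the swept measure --- is indeed the standard potential-theoretic proof, and the quantitative mechanism is exactly right. But as written there is a genuine gap at the balayage step. The identity
\[
\int_F\log|z-t|\,d\omega_j(t)=\log|z-z_j|-g_V(z_j,\infty)
\]
holds only for quasi-every $z\in F$, equivalently at the \emph{regular} boundary points of $V$; at irregular points one only has ``$\ge$'', which is the wrong direction for your chain, and your appeal to the maximum principle does not repair this because $F$ need not be the closure of its interior. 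This is not a removable technicality. Take $F=\overline{\mathbb{D}}\cup\{3\}$ and $D=\{3.1\}$ (disjoint compacts, $\mathbb{C}\setminus F$ connected). The harmonic measure $\omega(3.1,\cdot\,,V)$ does not charge the polar point $3$, so $\nu$ is carried by $\partial\mathbb{D}$ and every zero $\zeta_k$ produced by your discretization lies near $\partial\mathbb{D}$; then $|\tilde Q_m(3)|=\prod_k|3-\zeta_k|\approx e^{m\int\log|3-t|\,d\omega}=(3-\tfrac{1}{3.1})^m\approx 2.68^m$, whereas the lemma demands $|\tilde Q_m(3)|\le\tfrac12(0.1)^m$. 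The lemma is still true in this example --- $\tilde Q_m(z)=(z-3)(z-\tfrac{1}{3.1})^{m-1}$ works for $m\ge 2$ --- but only because a zero is placed \emph{at} the irregular point, which your recipe never does.

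The cure, which you mention only parenthetically and only in connection with the discretization, is needed already for the balayage step and constitutes most of the actual proof: one first replaces $F$ by a regularized compact $\tilde F\supset F$ (say a finite union of closed disks of radius less than $\tfrac12\,\mathrm{dist}(F,D)$ centered on $F$, filled in with the bounded components of its complement), checks that $D$ still lies in the unbounded component of $\mathbb{C}\setminus\tilde F$ (using a compact connected set joining $D$ to infinity inside $\mathbb{C}\setminus F$, which stays at positive distance from $F$), observes that every boundary point of $\tilde F$ is then regular and that $\sup_{\tilde F}|\tilde Q_m/\prod(z-z_j)|=\sup_{\partial\tilde F}|\tilde Q_m/\prod(z-z_j)|$ because $\tilde F$ is the closure of its interior, and only then runs your argument with harmonic measures whose densities on $\partial\tilde F$ are bounded above and below uniformly in $w\in D$. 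That uniform density bound is also exactly what makes the discretization error $O(\log m)$ \emph{uniformly over the tuple} $(z_1,\dots,z_m)\in D^m$ --- the second step you leave as a black box, and the one place where uniformity in the tuple must actually be checked, since the lemma requires a single $m$ for all tuples. With those two steps supplied the proof is complete; without them it is not.
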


The following result is the mean-values formula for analytic
functions of a special type.

\subsection{For section \ref{sect-l2}}

\begin{lemma}\label{mean-value}
Assume that $g(\lambda)\in B(\mathbb{C}^+)$, $g(\lambda)\in
L^1(\mathbb{R})$, $g(\lambda)=\bar{o}(1)$ as $|\lambda|\to\infty$
and $g(iy)=\bar{o}(y^{-1})$ as $y\to +\infty$. Then,
\[
\int\limits_{-\infty}^\infty \ln |1+g(\lambda)|d\lambda=0
\]
\end{lemma}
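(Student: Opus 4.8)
The plan is to recognize that $1+g$ is a bounded, non‑vanishing, \emph{outer} function on $\mathbb{C}^+$, and then to read off $\int_{\mathbb{R}}\ln|1+g(\lambda)|\,d\lambda$ from the Poisson representation of $\ln|1+g|$ evaluated along the imaginary axis, letting $\Im\lambda\to+\infty$. First I would note that $g\in B(\mathbb{C}^+)$ gives $|g|\le1$ on $\mathbb{C}^+$, so $1+g$ maps $\mathbb{C}^+$ into the closed disk $\overline{D(1,1)}$; in particular $\Re(1+g)=1+\Re g\ge 1-|g|\ge0$, and $\Re(1+g)>0$ in the open half–plane (the boundary case $|g|\equiv1$ would make $g$ a unimodular constant, which is excluded since $g\in L^1(\mathbb{R})$ and $g=\bar o(1)$). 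A function with positive real part on $\mathbb{C}^+$ is in the Smirnov class $N^+$, and so is its reciprocal $1/(1+g)$ (which also has positive real part); hence $1+g$ carries no inner factor, i.e.\ $1+g$ is outer. Consequently
\[
\ln|1+g(\lambda)|=\int_{-\infty}^{\infty}P_\lambda(t)\,\ln|1+g(t)|\,dt,\qquad
P_\lambda(t)=\frac{1}{\pi}\,\frac{\Im\lambda}{(t-\Re\lambda)^2+(\Im\lambda)^2},
\]
for every $\lambda\in\mathbb{C}^+$.

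The next step is to check that $\ln|1+g|\in L^1(\mathbb{R})$, so that the above Poisson formula can be integrated/taken to the limit safely. Since $g\in B(\mathbb{C}^+)\cap L^1(\mathbb{R})$, the Paley–Wiener theorem writes $g(\lambda)=\int_0^\infty C(x)e^{i\lambda x}dx$ with $C\in L^1\cap L^2$, so $g$ is continuous on $\overline{\mathbb{C}^+}$ and $g(\lambda)\to0$ as $|\lambda|\to\infty$ (this also re‑derives the hypothesis $g=\bar o(1)$). Hence for $|t|$ large $|g(t)|\le1/2$ and $\bigl|\ln|1+g(t)|\bigr|\le 2|g(t)|$, which is integrable at infinity; locally, $\ln^+|1+g|\le\ln2$ is integrable, and $\ln^-|1+g|$ is integrable on compacts because $1+g$ is outer (so $\ln|1+g|\in L^1_{\mathrm{loc}}$, indeed $\ln|1+g|\in L^1(\,dt/(1+t^2))$). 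Now evaluate the Poisson formula at $\lambda=iy$ and multiply by $\pi y$:
\[
\pi y\,\ln\bigl|1+g(iy)\bigr|=\int_{-\infty}^{\infty}\frac{y^2}{t^2+y^2}\,\ln|1+g(t)|\,dt .
\]
On the right, $\tfrac{y^2}{t^2+y^2}\uparrow1$ and $\bigl|\tfrac{y^2}{t^2+y^2}\ln|1+g(t)|\bigr|\le\bigl|\ln|1+g(t)|\bigr|\in L^1(\mathbb{R})$, so by dominated convergence the right side tends to $\int_{\mathbb{R}}\ln|1+g(t)|\,dt$ as $y\to+\infty$. On the left, since $g(iy)=\bar o(1/y)\to0$ we have $\ln(1+g(iy))=g(iy)+O(|g(iy)|^2)$, hence $\bigl|\ln|1+g(iy)|\bigr|\le|\ln(1+g(iy))|\le 2|g(iy)|=\bar o(1/y)$, so $\pi y\,\ln|1+g(iy)|\to0$. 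Comparing the two limits gives $\int_{-\infty}^{\infty}\ln|1+g(\lambda)|\,d\lambda=0$.

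The step I expect to be the crux is the integrability/outer bookkeeping: rigorously justifying that $\Re(1+g)\ge0$ forces $1+g$ to be outer (hence the exact Poisson identity with no singular term — if a singular inner factor $S_\mu$ were present the same computation would only yield $\int_{\mathbb{R}}\ln|1+g|=\pi\mu(\mathbb{R})$), and verifying $\ln|1+g|\in L^1(\mathbb{R})$, which rests on outerness for the local part and on $g\in L^1$ together with $g=\bar o(1)$ for the tail. An alternative to the Poisson route is to integrate the analytic function $\log(1+g(\lambda))$ (principal branch, well defined since $1+g\in D(1,1)$ in $\mathbb{C}^+$) over the boundary of the upper half–disk $|\lambda|<R$, take $R\to\infty$, and take real parts; this requires the $H^1$ pointwise bound $|g(x+iy)|\le\|g\|_1/(\pi y)$ together with $g=\bar o(1)$ at infinity to kill the semicircular arc, but the half–plane version is noticeably messier than the Poisson/mean‑value argument above, so I would present the latter.
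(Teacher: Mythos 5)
Your proposal is correct and follows essentially the same route as the paper: observe that $\Re(1+g)>0$ forces $1+g$ to be outer, write the Poisson mean-value formula for $\ln|1+g(iy)|$, multiply by $y$, and let $y\to+\infty$ using $g(iy)=\bar{o}(y^{-1})$. Your additional verification that $\ln|1+g|\in L^1(\mathbb{R})$ (to justify dominated convergence on the right-hand side) simply fills in a step the paper leaves implicit.
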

\begin{proof}
Since $\Re (1+g)>0$ in $\mathbb{C}^+$ and $g\in B(\mathbb{C}^+)$,
the function $1+g$ is outer from $N(\mathbb{C}^+)$. Therefore,

\[
\ln |1+g(iy)|=\frac y\pi \int\limits_{-\infty}^\infty \frac{\ln
|1+g(\lambda)|}{\lambda^2+y^2}d\lambda
\]
Multiply the last identity by $y$ and take $y\to+\infty$.
\end{proof}

\begin{lemma}\label{auxil-2}
If $f(\lambda)$ is such that $(\lambda+i)^{-1}f(\lambda)\in
H^2(\mathbb{C}^+)$ and $f(\lambda)\in L^2(\mathbb{R})$, then
$f(\lambda)\in H^2(\mathbb{C}^+)$.
\end{lemma}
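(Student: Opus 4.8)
The plan is to pass to the Fourier side, where $H^2(\mathbb{C}^+)$ is identified with $L^2[0,\infty)$ via the transform $\hat{u}(\lambda)=\int u(x)\exp(i\lambda x)\,dx$ (so that $\widehat{u\ast v}=\hat u\,\hat v$, with inversion $u(x)=\tfrac{1}{2\pi}\int \hat u(\lambda)\exp(-i\lambda x)\,d\lambda$); denote the inverse transform by $u\mapsto\check u$. I would put $g(\lambda)=(\lambda+i)^{-1}f(\lambda)$, which by hypothesis lies in $H^2(\mathbb{C}^+)$, so $\check g\in L^2(\mathbb{R})$ is supported on $[0,\infty)$; also $\check f\in L^2(\mathbb{R})$ since $f\in L^2(\mathbb{R})$. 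The conclusion $f\in H^2(\mathbb{C}^+)$ is then precisely the assertion that $\check f$ vanishes a.e.\ on $(-\infty,0)$.

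The computation rests on the elementary identity
\begin{equation}
\frac{1}{\lambda+i}=-i\int_0^\infty \exp(-x)\exp(i\lambda x)\,dx,\qquad \lambda\in\mathbb{R},
\end{equation}
which exhibits $(\lambda+i)^{-1}$ as $\hat\eta$ with $\eta(x)=-i\exp(-x)\chi_{[0,\infty)}(x)\in L^1(\mathbb{R})\cap L^2(\mathbb{R})$. From $g=\hat\eta\cdot\widehat{\check f}=\widehat{\eta\ast\check f}$ one gets $\check g=\eta\ast\check f$, that is,
\begin{equation}
\check g(x)=-i\exp(-x)\int_{-\infty}^{x}\exp(t)\,\check f(t)\,dt\qquad\text{for a.e. }x\in\mathbb{R},
\end{equation}
the inner integral converging absolutely for every $x$ because $\check f\in L^2$ and $\exp(\cdot)\in L^2(-\infty,x)$.

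To finish, I would set $\Phi(x)=\int_{-\infty}^x \exp(t)\,\check f(t)\,dt$; this is locally absolutely continuous with $\Phi'(x)=\exp(x)\,\check f(x)$ for a.e.\ $x$. Since $\check g$ vanishes a.e.\ on $(-\infty,0)$, the previous display forces $\Phi(x)=0$ for a.e.\ $x<0$, hence $\Phi\equiv0$ on $(-\infty,0)$ by continuity, and therefore $\check f(x)=\exp(-x)\Phi'(x)=0$ for a.e.\ $x<0$. Thus $\check f$ is supported on $[0,\infty)$, i.e.\ $f\in H^2(\mathbb{C}^+)$, as desired.

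The argument is entirely elementary; the only places asking for a modicum of care are matching the Fourier-transform conventions to those used in the paper for $\mathcal{P}_\pm$ and $H^2$, and justifying that $\check g=\eta\ast\check f$ holds as an a.e.\ identity of functions (immediate from $\eta\in L^1$, $\check f\in L^2$ together with Young's and Fubini's theorems), along with the passage from ``$\Phi=0$ a.e.'' to ``$\Phi\equiv0$'' via continuity of $\Phi$. I do not expect any genuine obstacle.
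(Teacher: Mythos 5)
Your proof is correct. It is, however, a genuinely different argument from the one in the paper: the paper observes that $f(\lambda)=(\lambda+i)g(\lambda)$ with $g\in H^2(\mathbb{C}^+)$ places $f$ in the Nevanlinna class $N(\mathbb{C}^+)$ and then invokes the multiplicative (canonical factorization) representation to conclude that an $N$-function of this form with $L^2(\mathbb{R})$ boundary values lies in $H^2(\mathbb{C}^+)$. That route is a one-liner for a reader fluent in Hardy-space theory, but it quietly relies on the fact that $f$ actually lies in the Smirnov subclass $N^+$ (no singular inner denominator), since the bare implication ``$N\cap L^2(\mathbb{R})\subset H^2$'' is false — $e^{-i\lambda}\in N(\mathbb{C}^+)$ has unimodular boundary values but is unbounded in $\mathbb{C}^+$. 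Your Fourier-side argument sidesteps all of this: identifying $(\lambda+i)^{-1}$ as the transform of $-ie^{-x}\chi_{[0,\infty)}$, writing $\check g=\eta\ast\check f$, and reading off from the vanishing of $\check g$ on $(-\infty,0)$ that $\Phi(x)=\int_{-\infty}^x e^t\check f(t)\,dt\equiv 0$ there (hence $\check f=0$ a.e.\ on $(-\infty,0)$ by differentiating the locally absolutely continuous $\Phi$) is elementary, self-contained, and uses only Paley–Wiener and basic $L^1\ast L^2$ facts. The trade-off is that the paper's argument generalizes immediately (e.g.\ to $(\lambda+i)^{-n}f\in H^2$ or to $H^p$), whereas yours is tailored to the explicit kernel of $(\lambda+i)^{-1}$; for the lemma as stated, your version is arguably the cleaner one.
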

\begin{proof}
Since $f(\lambda)=(\lambda+i)g(\lambda)$ with $g(\lambda)\in
H^2(\mathbb{C}^+)$, we have $f(\lambda)\in N(\mathbb{C}^+)$. Then,
the statement of the Lemma follows, for example, from the
multiplicative representation of $N(\mathbb{C}^+)$.
\end{proof}

\subsection{For section \ref{sect-baxter}}
The following considerations are used in the discussion regarding
the case  $A(r)\in L^1(\mathbb{R}^+)$. We borrow the notations,
statements, and proofs from \cite{Simon}, Chapter 5. For the
reader's convenience, we decided to include this material.

Let $X$ be a Banach space, $\cal{C}$-- linear bounded operator,
and $P_+$-- projection (i.e. linear bounded operator such that
$P_+^2=P_+$). Notice that $P_-=I-P_+$ is also a projection.

\begin{definition}
 The Toeplitz operator is an operator acting
in $P_+(X)$ by the formula $\cal{T}=P_+\cal{C}P_+$.
\end{definition}

\begin{definition}
A linear bounded operator $\cal{U}$ is called upper triangular if
$P_-\cal{U}P_+=0$ and $\cal{L}$ is lower triangular is
$P_+\cal{L}P_-=0$.
\end{definition}

\begin{definition}
A linear bounded operator $\cal{C}$ is a Wiener-Hopf operator if
$\cal{C}=\cal{L}\cal{U}$ where $\cal{L},\cal{U}$-- invertible,
$\cal{L}, \cal{L}^{-1}$-- lower triangular,
$\cal{U},\cal{U}^{-1}$-- upper triangular.
\end{definition}

\begin{theorem}{\rm (Wiener-Hopf Theorem).}
Let $\cal{C}$ be a Wiener-Hopf operator. Then, the corresponding
Toeplitz operator $\cal{T}=P_+\cal{C}P_+$ is invertible and
\[
\cal{T}^{-1}=(P_+\cal{U}^{-1}P_+)(P_+\cal{L}^{-1}P_+)
\]
\end{theorem}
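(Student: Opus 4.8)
The plan is to reduce the statement to a handful of purely algebraic identities describing how the projection $P_+$ interacts with triangular operators, and then to verify by a short direct computation that $(P_+\cal{U}^{-1}P_+)(P_+\cal{L}^{-1}P_+)$ is a two-sided inverse of $\cal{T}$ on the subspace $P_+(X)$. No analysis enters at all; everything rests on $P_+^2=P_+$, $P_++P_-=I$, and the triangularity relations.

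First I would record the four relations that the hypotheses provide: since $\cal{L}$ and $\cal{L}^{-1}$ are lower triangular, $P_+\cal{L}P_-=0$ and $P_+\cal{L}^{-1}P_-=0$; since $\cal{U}$ and $\cal{U}^{-1}$ are upper triangular, $P_-\cal{U}P_+=0$ and $P_-\cal{U}^{-1}P_+=0$. Inserting $I=P_++P_-$ and using $P_+\cal{L}P_-=0$ gives the ``splitting'' identity $P_+\cal{L}\cal{U}P_+=P_+\cal{L}(P_++P_-)\cal{U}P_+=(P_+\cal{L}P_+)(P_+\cal{U}P_+)$. Hence, writing $\cal{L}_+:=P_+\cal{L}P_+$ and $\cal{U}_+:=P_+\cal{U}P_+$ as operators on $P_+(X)$, we have the factorization $\cal{T}=\cal{L}_+\cal{U}_+$. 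The same manoeuvre shows that $P_+\cal{L}^{-1}P_+$ inverts $\cal{L}_+$ on $P_+(X)$: indeed $\cal{L}_+(P_+\cal{L}^{-1}P_+)=P_+\cal{L}(P_++P_-)\cal{L}^{-1}P_+=P_+-P_+\cal{L}P_-\cal{L}^{-1}P_+=P_+$, the last term vanishing by $P_+\cal{L}P_-=0$, while the reversed product uses $P_+\cal{L}^{-1}P_-=0$; the two analogous products for $\cal{U}$ use $P_-\cal{U}P_+=0$ and $P_-\cal{U}^{-1}P_+=0$.

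Given these, the theorem is immediate: on $P_+(X)$,
\[
\bigl(P_+\cal{U}^{-1}P_+\bigr)\bigl(P_+\cal{L}^{-1}P_+\bigr)\,\cal{T}
=\bigl(P_+\cal{U}^{-1}P_+\bigr)\bigl(P_+\cal{L}^{-1}P_+\bigr)\cal{L}_+\cal{U}_+
=\bigl(P_+\cal{U}^{-1}P_+\bigr)P_+\cal{U}_+=P_+,
\]
and symmetrically $\cal{T}\bigl(P_+\cal{U}^{-1}P_+\bigr)\bigl(P_+\cal{L}^{-1}P_+\bigr)=P_+$, so $\cal{T}$ is invertible on $P_+(X)$ with the asserted inverse. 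There is no genuine obstacle; the only point demanding care is bookkeeping — one must track which of the four relations $P_+\cal{L}P_-=0$, $P_+\cal{L}^{-1}P_-=0$, $P_-\cal{U}P_+=0$, $P_-\cal{U}^{-1}P_+=0$ is invoked at each cancellation (the ``wrong'' corner of a triangular operator need not vanish), and to remember that throughout, all operators are restricted to $P_+(X)$, so the identity element there is $P_+$ rather than $I$.
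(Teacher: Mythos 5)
Your proof is correct. The paper itself gives no proof of this theorem (it states it in the Appendix and refers to \cite{Simon}, Chapter 5), and your argument is exactly the standard purely algebraic verification found there: factor $\cal{T}=(P_+\cal{L}P_+)(P_+\cal{U}P_+)$ using $P_+\cal{L}P_-=0$, check that $P_+\cal{L}^{-1}P_+$ and $P_+\cal{U}^{-1}P_+$ invert the two factors on $P_+(X)$ via the four triangularity relations, and compose.
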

Assume that $Q, R$ are projections and
\begin{equation}\label{baxter-algebra}
QR=RQ=0, Q+R=P_+
\end{equation}
\begin{theorem}{\rm(Baxter's Lemma).} Let $\cal{C}$ be a Wiener-Hopf
operator so that $\cal{C}=\cal{L}\cal{U}=\cal{U}\cal{L}$. Consider
$Q,R$ obeying (\ref{baxter-algebra}). Assume that
\[
R\cal{L}Q=R\cal{L}^{-1}Q=Q\cal{U}R=Q\cal{U}^{-1}R=0
\]
and
\[
\|P_-\cal{L}^{-1}R\cal{U}\|<1/2, \|R\cal{U}^{-1}P_-\cal{L}\|<1/2
\]
Then, $\cal{T}_Q=Q\cal{C}Q$ is invertible and
\[
\|\cal{T}_Q^{-1}\|<\|\cal{L}^{-1}\cal{U}^{-1}\|+2\max
(\|\cal{U}^{-1}\|, \|\cal{L}^{-1}\|)(\|P_-
\cal{L}^{-1}\|+\|R\cal{U}^{-1}\|)
\]
\end{theorem}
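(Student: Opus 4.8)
The plan is to exhibit an explicit approximate inverse of $\cal{T}_Q$ and to close the argument with a Neumann series. The first move is to unpack the hypotheses. Writing $P_+=Q+R$ and combining $P_+\cal{L}P_-=0=P_-\cal{U}P_+$ (and the same for the inverses) with $R\cal{L}Q=R\cal{L}^{-1}Q=Q\cal{U}R=Q\cal{U}^{-1}R=0$, one sees that, with respect to the ordered decomposition $X=R(X)\oplus Q(X)\oplus P_-(X)$, the operators $\cal{L},\cal{L}^{-1}$ are block lower triangular and $\cal{U},\cal{U}^{-1}$ are block upper triangular. Routine consequences of this are that $Q\cal{L}Q$ and $Q\cal{L}^{-1}Q$ are mutually inverse on $Q(X)$, likewise $Q\cal{U}Q$ and $Q\cal{U}^{-1}Q$, and that $\cal{U}R=R\cal{U}R$, $\cal{L}^{-1}Q=Q\cal{L}^{-1}Q+P_-\cal{L}^{-1}Q$, and so on.

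As candidate inverse I would take $\cal{S}_Q:=(Q\cal{U}^{-1}Q)(Q\cal{L}^{-1}Q)$, which is exactly $Q\cal{T}^{-1}Q$: the Wiener--Hopf theorem just proved gives $\cal{T}^{-1}=(P_+\cal{U}^{-1}P_+)(P_+\cal{L}^{-1}P_+)$, and $Q\cal{U}^{-1}R=0$ collapses $Q\cal{T}^{-1}Q$ to $\cal{S}_Q$. Using $\cal{C}^{-1}=\cal{U}^{-1}\cal{L}^{-1}=\cal{L}^{-1}\cal{U}^{-1}$ (both factorizations of $\cal{C}$ are available, since $\cal{C}=\cal{L}\cal{U}=\cal{U}\cal{L}$) together with $Q\cal{U}^{-1}R=0$ again, one also obtains $\cal{S}_Q=Q\cal{C}^{-1}Q-Q\cal{U}^{-1}P_-\cal{L}^{-1}Q$, hence
\[
\|\cal{S}_Q\|\le \|\cal{L}^{-1}\cal{U}^{-1}\|+\|\cal{U}^{-1}\|\,\|P_-\cal{L}^{-1}\|.
\]
Next I would compute $\cal{T}_Q=Q\cal{C}Q=(Q\cal{L}Q)(Q\cal{U}Q)+(Q\cal{L}R)(R\cal{U}Q)$ and multiply by $\cal{S}_Q$ on each side. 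Cancelling the mutually inverse diagonal blocks yields
\[
\cal{T}_Q\cal{S}_Q=Q+\cal{E}_1,\qquad \cal{S}_Q\cal{T}_Q=Q+\cal{E}_2,
\]
with $\cal{E}_1,\cal{E}_2$ explicit products built from $Q\cal{L}R$, $R\cal{U}Q$, $Q\cal{L}^{-1}Q$, $Q\cal{U}^{-1}Q$.

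The heart of the proof is to rewrite $\cal{E}_1$ and $\cal{E}_2$ so that the two size hypotheses bite. Inserting $I=Q+R+P_-$ in the appropriate places and discarding the vanishing blocks — this is where the identities $R\cal{L}Q=R\cal{L}^{-1}Q=Q\cal{U}R=Q\cal{U}^{-1}R=0$ are used repeatedly — one brings $\cal{E}_1$ to a form containing $P_-\cal{L}^{-1}R\cal{U}$ as an interior factor, flanked by operators of norm at most $\max(\|\cal{U}^{-1}\|,\|\cal{L}^{-1}\|)$ and by projections and diagonal-block pieces of norm $\le1$; symmetrically $\cal{E}_2$ is governed by $R\cal{U}^{-1}P_-\cal{L}$. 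Then $\|P_-\cal{L}^{-1}R\cal{U}\|<1/2$ and $\|R\cal{U}^{-1}P_-\cal{L}\|<1/2$ give $\|\cal{E}_1\|<1$ and $\|\cal{E}_2\|<1$. Consequently $\cal{T}_Q\cal{S}_Q$ and $\cal{S}_Q\cal{T}_Q$ are invertible on $Q(X)$, so $\cal{T}_Q$ is surjective and injective with closed range, hence invertible, with $\cal{T}_Q^{-1}=\cal{S}_Q(\cal{T}_Q\cal{S}_Q)^{-1}=\cal{S}_Q\sum_{n\ge0}(-\cal{E}_1)^n$. Combining the bound on $\|\cal{S}_Q\|$ with the geometric series for the correction (whose total is controlled by $2\max(\|\cal{U}^{-1}\|,\|\cal{L}^{-1}\|)(\|P_-\cal{L}^{-1}\|+\|R\cal{U}^{-1}\|)$) gives the asserted estimate
\[
\|\cal{T}_Q^{-1}\|<\|\cal{L}^{-1}\cal{U}^{-1}\|+2\max(\|\cal{U}^{-1}\|,\|\cal{L}^{-1}\|)(\|P_-\cal{L}^{-1}\|+\|R\cal{U}^{-1}\|).
\]

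I expect the main obstacle to be precisely this middle step: turning the raw error terms $\cal{E}_1,\cal{E}_2$ into a form in which the prescribed compositions $P_-\cal{L}^{-1}R\cal{U}$ and $R\cal{U}^{-1}P_-\cal{L}$ are visible, and doing so with flanking factors whose norms are sharp enough to reproduce the constant on the right-hand side. This is pure bookkeeping with the block-triangular structure and requires no further ideas, but it must be carried out carefully to keep the estimate tight.
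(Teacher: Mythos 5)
Your preparatory steps are correct: the block-triangular structure with respect to $P_-(X)\oplus Q(X)\oplus R(X)$, the identity $\cal{T}_Q=(Q\cal{L}Q)(Q\cal{U}Q)+(Q\cal{L}R)(R\cal{U}Q)$, the identification $\cal{S}_Q=(Q\cal{U}^{-1}Q)(Q\cal{L}^{-1}Q)=Q\cal{T}^{-1}Q=Q\cal{C}^{-1}Q-Q\cal{U}^{-1}P_-\cal{L}^{-1}Q$, and hence $\cal{T}_Q\cal{S}_Q=Q+\cal{E}_1$ with $\cal{E}_1=(Q\cal{L}R)(R\cal{U}Q)(Q\cal{U}^{-1}Q)(Q\cal{L}^{-1}Q)$, all check out. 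The gap is exactly the step you deferred as ``bookkeeping'': $\cal{E}_1,\cal{E}_2$ cannot be rewritten so that $P_-\cal{L}^{-1}R\cal{U}$ or $R\cal{U}^{-1}P_-\cal{L}$ appears as an interior factor with bounded flanking operators, and they are not of norm less than one under the stated hypotheses. The hypotheses control only couplings that travel all the way through $P_-$ (the long-range leakage around the cut at $0$), while $\cal{E}_1$ is built from $Q\cal{L}R$ and $R\cal{U}Q$, the short-range couplings across the cut between $Q$ and $R$; nothing assumed makes these small, and their product is generically of order $\|\cal{L}-I\|\,\|\cal{U}-I\|$.

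To see that the hoped-for rewriting is impossible, take $X=\ell^1(\mathbb{Z})$, $P_+$ the restriction to $\{k\ge 0\}$, $Q=Q_n$ the restriction to $\{0,\dots,n\}$, $R=R_n$, and $\cal{L}=I+\lambda S^{-1}$, $\cal{U}=I+\mu S$ with $S$ the right shift and $0<\lambda,\mu<1$. All four vanishing conditions hold, $\cal{L}\cal{U}=\cal{U}\cal{L}$, and $\|P_-\cal{L}^{-1}R\cal{U}\|=O(\lambda^{n})$, $\|R\cal{U}^{-1}P_-\cal{L}\|=O(\mu^{n})$, so the hypotheses are satisfied for all large $n$. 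But $(Q\cal{L}R)(R\cal{U}Q)f=\lambda\mu f(n)e_n$, and a direct computation gives $\|\cal{E}_1\|\ge\lambda\mu\sum_{j=0}^{n}(\lambda\mu)^j$, which for $\lambda=\mu=0.9$ is about $4.3$ for all large $n$. Thus $\|\cal{E}_1\|$ stays bounded away from zero while $\|P_-\cal{L}^{-1}R\cal{U}\|\to 0$, so no representation $\cal{E}_1=A\,(P_-\cal{L}^{-1}R\cal{U})\,B$ with bounded $A,B$ can exist, the Neumann series does not close, and the argument collapses at this point. A correct proof has to be organized so that only the $P_-$-mediated quantities ever enter the error; this requires different identities from $\cal{T}_Q\cal{S}_Q=Q+\cal{E}_1$. (Note also that the paper itself does not prove this lemma; it is quoted from \cite{Simon}, Chapter 5, where the argument is structured differently.)
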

\begin{corollary}\label{baxter-cor1}
Let $\cal{C}$ be a Wiener-Hopf operator and $\{Q_n, R_n\}$--
sequence of projections obeying (\ref{baxter-algebra}) with $Q_n
x\to x$ for any $x\in P_+(X)$. If they also satisfy conditions of
Baxter's Lemma, then
\[
(Q_n \cal{C} Q_n)^{-1}Q_n x- Q_n\cal{T}^{-1}x\to 0, x\in P_+(X)
\]
\end{corollary}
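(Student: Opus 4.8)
The plan is to reduce the statement to the uniform invertibility of the finite-section Toeplitz operators $\cal{T}_{Q_n} := Q_n\cal{C}Q_n$ (which is exactly what Baxter's Lemma supplies) together with an estimate of the ``consistency error'' committed when the exact solution $\cal{T}^{-1}x$ is fed into the finite section. First I would record two bookkeeping facts about the projections. From $Q_n+R_n=P_+$ and $Q_nR_n=R_nQ_n=0$ one gets $P_+Q_n=Q_n=Q_nP_+$, hence $Q_nP_-=0$ and $\mathrm{Ran}(Q_n)\subseteq\mathrm{Ran}(P_+)$. Since $Q_nx$ converges for every $x\in P_+(X)$, the uniform boundedness principle gives $\sup_n\|Q_n\|<\infty$, hence also $\sup_n\|R_n\|<\infty$. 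Feeding this into the norm bound of Baxter's Lemma --- whose other ingredients $\|\cal{L}^{-1}\|$, $\|\cal{U}^{-1}\|$, $\|P_-\cal{L}^{-1}\|$, $\|\cal{L}^{-1}\cal{U}^{-1}\|$ are $n$-independent and whose remaining term is controlled by $\|R_n\cal{U}^{-1}\|\le(\sup_n\|R_n\|)\|\cal{U}^{-1}\|$ --- yields a constant $M$ with $\|\cal{T}_{Q_n}^{-1}\|\le M$ for all $n$.

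Next I would set up the comparison. By the Wiener--Hopf Theorem $\cal{T}=P_+\cal{C}P_+$ is invertible; put $y=\cal{T}^{-1}x$, so $P_+y=y$ and $P_+\cal{C}y=x$, i.e.\ $\cal{C}y=x+P_-\cal{C}y$. Put also $y_n=\cal{T}_{Q_n}^{-1}Q_nx\in Q_n(X)$, so $Q_n\cal{C}Q_ny_n=Q_nx$. Writing $Q_ny=y+(Q_ny-y)$ and using continuity of $\cal{C}$,
\[
Q_n\cal{C}Q_ny=Q_n\bigl(x+P_-\cal{C}y+\cal{C}(Q_ny-y)\bigr)=Q_nx+Q_n\cal{C}(Q_ny-y),
\]
where the middle term dropped out because $Q_nP_-=0$. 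Since $y\in P_+(X)$ we have $Q_ny\to y$, so $\varepsilon_n:=Q_n\cal{C}(Q_ny-y)$ satisfies $\|\varepsilon_n\|\le(\sup_n\|Q_n\|)\,\|\cal{C}\|\,\|Q_ny-y\|\to0$. Subtracting the two identities for $Q_n\cal{C}Q_n$ and noting $y_n-Q_ny\in Q_n(X)$ gives $Q_n\cal{C}Q_n(y_n-Q_ny)=-\varepsilon_n$, hence $y_n-Q_ny=-\cal{T}_{Q_n}^{-1}\varepsilon_n$ and $\|y_n-Q_ny\|\le M\|\varepsilon_n\|\to0$. Since $y_n-Q_ny=\cal{T}_{Q_n}^{-1}Q_nx-Q_n\cal{T}^{-1}x$, this is precisely the asserted convergence to $0$.

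As for the difficulty: there is essentially only one, and Baxter's Lemma exists to defeat it --- obtaining the uniform bound $\sup_n\|\cal{T}_{Q_n}^{-1}\|<\infty$. Without uniform invertibility of the finite sections, the vanishing consistency error $\varepsilon_n$ could be amplified arbitrarily. Everything else is routine manipulation with the projection identities $Q_nP_+=Q_n$, $Q_nP_-=0$ and the strong convergence $Q_n\to I$ on $P_+(X)$. The only point to be careful about is that the hypothesis ``$(Q_n,R_n)$ satisfy the conditions of Baxter's Lemma'' is used solely to invoke that lemma for each fixed $n$, with the $n$-independence of the resulting bound coming entirely from $\sup_n\|R_n\|<\infty$.
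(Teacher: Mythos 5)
Your proof is correct and is exactly the standard projection-method argument that the paper (which states this corollary without proof, deferring to \cite{Simon}, Chapter 5) relies on: the uniform bound $\sup_n\|\cal{T}_{Q_n}^{-1}\|<\infty$ extracted from Baxter's Lemma via $\sup_n\|R_n\|<\infty$, combined with the consistency identity $Q_n\cal{C}Q_n(Q_n y)=Q_nx+Q_n\cal{C}(Q_ny-y)$ coming from $Q_nP_-=0$. No gaps; the only point worth making explicit is that the uniform boundedness principle is applied on the Banach space $P_+(X)$ and then extended to $X$ via $Q_n=Q_nP_+$, which your bookkeeping step already justifies.
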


Now, let us apply these mainly algebraic results to the concrete
situation. Let $H(x)\in L^1(\mathbb{R})$-- Hermitian function and
$1+\nu(\lambda)>0$, where $\nu(\lambda)$-- Fourier transform of
$H$. Let $X$ be $L^1(\mathbb{R})$, $\cal{C}f=f+H\ast f$,
$\left[P_+f\right](x)=\chi_{\mathbb{R}^+}(x)f(x)$. The function
$\nu(\lambda)\in W(\mathbb{R})$ and $1+\nu(\lambda)>0$. Therefore,
by general result from the Wiener algebra theory, we have
$\hat{g}(\lambda)=\ln(1+\nu(\lambda))\in W(\mathbb{R})$ so
\[
\hat{g}=\int\limits_{-\infty}^\infty g(x)\exp(i\lambda
x)dx=\int\limits_{-\infty}^0 g(x)\exp(i\lambda
x)dx+\int\limits_{0}^\infty g(x)\exp(i\lambda x)dx=\hat{g}_-
+\hat{g}_+
\]
Notice that $\hat{u}=\exp(\hat{g}_+)-1\in W_+$ and
$\hat{l}=\exp(\hat{g}_-)-1\in W_-$. Therefore,
$\cal{C}=\cal{L}\cal{U}$, where $\cal{L}f=f+l\ast f,
\cal{U}f=f+u\ast f$. Both operators $\cal{U}$ and $\cal{L}$ are
invertible and one can easily check that $\cal{U}, \cal{U}^{-1}$
are upper triangular, $\cal{L}, \cal{L}^{-1}$ -- lower triangular
(notice that at the moment the definition of upper(lower)
triangular operator is different from what we used in the section
on factorization of integral operators). Therefore, the
Wiener-Hopf theorem is applicable to the operator
$I+\cal{H}_\infty=P_+\cal{C} P_+$. Let
$\Gamma(x)=(I+\cal{H}_\infty)^{-1}H(x)$.

Then, consider the following projections
$\left[Q_nf\right](x)=\chi_{[0,n]}(x)f(x)$ and
$\left[R_nf\right](x)=\chi_{[n,\infty]}(x)f(x)$. The result below
is what we use in the proof of continuous analog of Baxter's
Theorem for OPUC. Recall that $I+\cal{H}_r$ is given by
(\ref{intop}) and can be regarded as an operator from $L^1[0,r]$
to $L^1[0,r]$ due to Young's inequality.
\begin{corollary}\label{baxter-cor2}
If $n>n_0$, then $\|(I+\cal{H}_n)^{-1}\|_{L^1[0,n], L^1[0,n]}<C$.
Moreover, $\|\Gamma_n(0,x)-\Gamma(x)\|_{L^1[0,n]}\to 0$, where
$\Gamma_n(0,x)=(I+\cal{H}_n)^{-1}\left[ \chi_{[0,n]}(x)\cdot
H(x)\right]$.
\end{corollary}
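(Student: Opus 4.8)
The proof is a matter of feeding the concrete Wiener--Hopf operator constructed above into Baxter's Lemma and Corollary~\ref{baxter-cor1}, using the pair of truncation projections $[Q_nf](x)=\chi_{[0,n]}(x)f(x)$ and $[R_nf](x)=\chi_{[n,\infty)}(x)f(x)$ on $X=L^1(\mathbb{R})$. First I would record the elementary facts: $Q_n+R_n=P_+$, $Q_nR_n=R_nQ_n=0$, and $Q_nf\to f$ in $L^1$ for every $f\in P_+(X)=L^1(\mathbb{R}^+)$ (the tail estimate $\|f-Q_nf\|_1=\int_n^\infty|f|\to0$). Under the identification $Q_n(X)\cong L^1[0,n]$ the Toeplitz operator $Q_n\cal{C}Q_n$ is exactly $I+\cal{H}_n$, with $\cal{H}_n$ as in (\ref{intop}), while $P_+\cal{C}P_+=I+\cal{H}_\infty$ has inverse sending $H$ to $\Gamma=(I+\cal{H}_\infty)^{-1}H$.

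Second, I would verify the hypotheses of Baxter's Lemma for $Q_n,R_n$. The factors $\cal{L}=I+l\ast$ and $\cal{L}^{-1}=I+\tilde l\ast$ are convolutions (plus the identity) with $L^1$ kernels supported on $(-\infty,0]$, hence carry a function supported on $[0,n]$ to one supported on $(-\infty,n]$; intersecting with the support $[n,\infty)$ of $R_n$ leaves a null set, so $R_n\cal{L}Q_n=R_n\cal{L}^{-1}Q_n=0$, and symmetrically, since $\cal{U},\cal{U}^{-1}$ have kernels supported on $[0,\infty)$, $Q_n\cal{U}R_n=Q_n\cal{U}^{-1}R_n=0$. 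The only quantitative input is the smallness of $\|P_-\cal{L}^{-1}R_n\cal{U}\|$ and $\|R_n\cal{U}^{-1}P_-\cal{L}\|$. For the first, $P_-$ annihilates $R_n\cal{U}f$ (supported in $[n,\infty)$), so only the convolution part of $\cal{L}^{-1}$ survives, and evaluating $\int_n^\infty\tilde l(x-y)(\cal{U}f)(y)\,dy$ at $x<0$ puts the argument of $\tilde l$ in $(-\infty,-n)$; Fubini then gives
\[
\|P_-\cal{L}^{-1}R_n\cal{U}\|\le\|\cal{U}\|\int_{-\infty}^{-n}|\tilde l(t)|\,dt\to0\quad(n\to\infty),
\]
and symmetrically $\|R_n\cal{U}^{-1}P_-\cal{L}\|\le\|\cal{L}\|\int_n^\infty|\tilde u(t)|\,dt\to0$, where $\cal{U}^{-1}=I+\tilde u\ast$. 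Hence both are $<1/2$ once $n>n_0$. This support/convolution bookkeeping is the step requiring the most care; nothing here is deep, but the orientation of the triangular factors must be tracked correctly.

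Third, Baxter's Lemma then yields that $Q_n\cal{C}Q_n=I+\cal{H}_n$ is invertible for $n>n_0$ with
\[
\|(I+\cal{H}_n)^{-1}\|<\|\cal{L}^{-1}\cal{U}^{-1}\|+2\max\bigl(\|\cal{U}^{-1}\|,\|\cal{L}^{-1}\|\bigr)\bigl(\|P_-\cal{L}^{-1}\|+\|R_n\cal{U}^{-1}\|\bigr)\le C,
\]
the bound uniform in $n$ since $\cal{L},\cal{U}$ are $n$-independent and $\|R_n\cal{U}^{-1}\|\le\|\cal{U}^{-1}\|$; this is the first assertion. For the second, I would apply Corollary~\ref{baxter-cor1} with $x=P_+H=\chi_{\mathbb{R}^+}H\in L^1(\mathbb{R}^+)$: then $Q_nx=\chi_{[0,n]}H$, so $(Q_n\cal{C}Q_n)^{-1}Q_nx=(I+\cal{H}_n)^{-1}[\chi_{[0,n]}H]=\Gamma_n(0,\cdot)$, while $\cal{T}^{-1}x=\Gamma$ and $Q_n\cal{T}^{-1}x=\chi_{[0,n]}\Gamma$. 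Corollary~\ref{baxter-cor1} then gives $\|\Gamma_n(0,\cdot)-\chi_{[0,n]}\Gamma\|_{L^1}\to0$, which is precisely $\|\Gamma_n(0,x)-\Gamma(x)\|_{L^1[0,n]}\to0$, completing the proof. The genuine obstacle is thus not depth but care: checking the triangularity and orthogonality relations, establishing the two tail bounds, and correctly matching the abstract quantities $\cal{T}^{-1}H$ and $(Q_n\cal{C}Q_n)^{-1}(Q_nH)$ with $\Gamma$ and $\Gamma_n(0,\cdot)$.
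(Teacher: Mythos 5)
Your proposal is correct and follows essentially the same route as the paper, which simply asserts that "for $n$ large enough, all conditions of Baxter's lemma are satisfied" and then applies Corollary~\ref{baxter-cor1} with $x=H$. You have merely filled in the support/triangularity bookkeeping and the tail estimates that the paper leaves implicit, and these verifications are carried out correctly.
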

\begin{proof}
Indeed, for $n$ large enough, all conditions of Baxter's lemma are
satisfied which yields the necessary estimates on the norms. Then,
in the Corollary \ref{baxter-cor1}, take $x=H(t)$. All conditions
of Corollary \ref{baxter-cor1} are satisfied and we get the second
statement on convergence.
\end{proof}

\subsection{For section \ref{sect-dirac}}

The next Lemma shows that the spectral measure for Dirac operator
is uniquely defined.
\begin{lemma}\label{uniqueness}
The spectral measure $d\sigma_d$ for Dirac operator $\cal{D}$ is
unique.
\end{lemma}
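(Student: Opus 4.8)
The plan is to argue by a standard uniqueness-of-spectral-representation argument, exactly as one does for one-dimensional Schr\"odinger or Jacobi operators. Suppose $d\sigma_d^{(1)}$ and $d\sigma_d^{(2)}$ are both spectral measures for $\cal{D}$, so that the map $\cal{F}$ defined by
\[
[\cal{F}f](\lambda)=\int\limits_0^\infty \Bigl( f_1(r)\varphi(r,\lambda)+f_2(r)\psi(r,\lambda)\Bigr)dr
\]
is unitary from $[L^2(\mathbb{R}^+)]^2$ onto $L^2(\mathbb{R},d\sigma_d^{(k)})$ for $k=1,2$. The key point is that the \emph{formula} for $\cal{F}$ does not depend on $k$; only the target Hilbert space does. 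First I would restrict to vectors $f=(f_1,f_2)^t$ with $f_1,f_2\in C_0^\infty(\mathbb{R}^+)$, for which $[\cal{F}f](\lambda)$ is an entire function of $\lambda$ of exponential type, and in particular is an honest continuous function. For such $f$, unitarity of $\cal{F}$ with respect to both measures gives
\[
\int\limits_{-\infty}^\infty |[\cal{F}f](\lambda)|^2\,d\sigma_d^{(1)}(\lambda)=\|f\|^2=\int\limits_{-\infty}^\infty |[\cal{F}f](\lambda)|^2\,d\sigma_d^{(2)}(\lambda),
\]
and then by polarization the same holds for the bilinear form: $\int [\cal{F}f]\overline{[\cal{F}g]}\,d\sigma_d^{(1)}=\int [\cal{F}f]\overline{[\cal{F}g]}\,d\sigma_d^{(2)}$ for all $f,g$ in this dense class.

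Next I would promote this equality of integrals against a class of test functions to equality of the measures. The mechanism is that the set $\{[\cal{F}f]:\ f_1,f_2\in C_0^\infty(\mathbb{R}^+)\}$ is rich enough to separate measures. Concretely, I would show that products like $\int_{-\infty}^\infty f(x)\cal{E}(x,\lambda)\,dx$ with $f$ smooth and compactly supported — equivalently, by the even/odd decomposition used in the proof of Theorem \ref{number2}, the functions $[\cal{F}f](\lambda)$ — can approximate, uniformly on compacts and with a suitable tail bound, the Fourier transforms of arbitrary finitely supported $L^2(\mathbb{R})$ functions. This is already essentially established in the proof of Theorem \ref{number2}, where it is shown that such functions are dense in $L^2(\mathbb{R},d\sigma)$ using the regularity condition (\ref{e2t1}); the same density argument works verbatim for $d\sigma_d^{(1)}$ and $d\sigma_d^{(2)}$ (both satisfy (\ref{e2t1}) up to the harmless factor $2$, since they are finite against $(1+\lambda^2)^{-1}$). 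Having the equality of the two quadratic forms on a set whose $\mathbb{R}$-linear span contains approximations to $\{e^{i\lambda t}\chi_{[a,b]}(t)\}$ in the relevant $L^2$ norms, one gets $\int \widehat{h_1}\overline{\widehat{h_2}}\,d\sigma_d^{(1)}=\int \widehat{h_1}\overline{\widehat{h_2}}\,d\sigma_d^{(2)}$ for all compactly supported $h_1,h_2\in L^2(\mathbb{R})$, and taking $h_1=h_2=\chi_{[a,b]}$ and using the Fej\'er-type kernels one concludes $d\sigma_d^{(1)}(E)=d\sigma_d^{(2)}(E)$ for every bounded interval $E$, hence $d\sigma_d^{(1)}=d\sigma_d^{(2)}$.

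The main obstacle, and the step deserving care, is the density/approximation claim: passing from ``the two forms agree on $[\cal{F}(C_0^\infty)]$'' to ``the two measures agree.'' The subtlety is that $[\cal{F}f]$ for $f\in C_0^\infty$ are specific entire functions built from $\varphi,\psi$, not arbitrary band-limited functions, so one must invoke the transformation-operator representation of $\cal{E}(x,\lambda)$ (Lemma \ref{transformation-dirac}) to rewrite $\int f(x)\cal{E}(x,\lambda)dx$ as $(\cal{E}(x,\lambda),(I+\hat{\cal{U}})\bar f)$ and thereby realize ordinary exponentials $e^{i\lambda x}$ (smoothed) as such integrals after applying the bounded invertible operator $I+\hat{\cal{U}}$ on a fixed interval $[-R,R]$ — precisely the maneuver already carried out in Theorem \ref{number2}. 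Once that identification is in hand the rest is routine measure theory. I would therefore organize the write-up as: (i) reduce to the quadratic form identity on $C_0^\infty$ vectors via polarization; (ii) quote Lemma \ref{transformation-dirac} and the density argument from the proof of Theorem \ref{number2} to cover smoothed exponentials; (iii) conclude equality on intervals and hence of the measures.
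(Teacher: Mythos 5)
Your proposal is correct and follows essentially the same route as the paper: both use Lemma \ref{transformation-dirac} to realize Fourier transforms of compactly supported $L^2$ functions inside the range of $\cal{F}$, equate the quadratic forms via the definition of the spectral measure, and then approximate indicators of intervals to conclude the two measures coincide. The only point you assert rather than derive is the a priori bound $\int_{\mathbb{R}} d\tau(\lambda)/(1+\lambda^2)<\infty$ for a second spectral measure $\tau$; the paper obtains it by testing against $f(x)=\chi_{[0,R]}(x)e^{-x}$, a one-line fix entirely within your framework.
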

\begin{proof}
Indeed, if $\tau(\lambda)$ is another spectral measure, then
 Lemma \ref{transformation-dirac} yields that
 \begin{equation}
\hat{f}(\lambda)= \int\limits_{-r}^r f(x)\exp(i\lambda x)dx\in
 L^2(\mathbb{R},d\tau)\label{scale-new}
 \end{equation}
for any $f(x)\in L^2[-r,r]$, $r>0$. By taking
$f(x)=\chi_{[0,R]}(x)\cdot \exp(-x)$ ($R$ is large), we have
\begin{equation}\label{apriory-33}
\int_{\mathbb{R}} \frac{d\tau(\lambda)}{\lambda^2+1}<\infty
\end{equation}
Moreover, from the definition of the spectral measure,
\[
\int_{\mathbb{R}} |\hat{f}(\lambda)|^2
d(\sigma_d(\lambda)-\tau(\lambda))=0
\]
Using (\ref{apriory-33}), we can approximate $\chi_{[a,b)}$ by
functions (\ref{scale-new}) in both
$L^2(\mathbb{R},d\sigma_d(\lambda))$ and
$L^2(\mathbb{R},d\tau(\lambda))$. We have
\[
\int\limits_a^b d(\sigma_d(\lambda)-\tau(\lambda))=0
\] for any $a$ and $b$. That implies $d\tau=d\sigma_d$.
\end{proof}

The following result is quite elementary. It is used in the proof
of existence of wave operators for Dirac operator with square
summable potential.

\begin{lemma}For the unperturbed operator
$\cal{D}_{0}$, the action of the group $e^{-it\cal{D}_{0}}$ is
given by the formulas
\begin{equation*}
\mathit{e}^{-it\cal{D}_{0}}\left[
\begin{array}{c}
f \\
0%
\end{array}%
\right] \mathit{=}\frac{1}{2}\left[
\begin{array}{c}
f(x+t)+f(x-t) \\
-i(f(x-t)-f(x+t))%
\end{array}%
\right] \mathit{,}
\end{equation*}
where $f(x)\in L^{2}(\mathbb{R}^+)$ is extended to the whole line
as an even function.

\begin{equation*}
\mathit{e}^{-it\cal{D}_{0}}\left[
\begin{array}{c}
0 \\
f%
\end{array}%
\right] \mathit{=}\frac{1}{2}\left[
\begin{array}{c}
-i(f(x+t)-f(x-t)) \\
f(x-t)+f(x+t)%
\end{array}%
\right] \mathit{,}
\end{equation*}
$f(x)$ is extended to $\mathbb{R}$ as an odd
function.\label{evolution}
\end{lemma}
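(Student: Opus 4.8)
The plan is to solve the free evolution explicitly by the method of characteristics, after noticing that the off-diagonal structure $\cal{D}_{0}=\mathscr{J}\,d/dr$ decouples into two transport equations. Writing $e^{-it\cal{D}_{0}}(f,0)^{t}=:(u_{1}(t,x),u_{2}(t,x))^{t}$, the pair solves the symmetric hyperbolic system $\partial_{t}u_{1}=-i\partial_{x}u_{2}$, $\partial_{t}u_{2}=i\partial_{x}u_{1}$ on $x>0$, with boundary condition $u_{2}(t,0)=0$ coming from the domain of $\cal{D}_{0}$ and initial data $(u_{1},u_{2})|_{t=0}=(f,0)$. First I would introduce the diagonalizing combinations $v=u_{1}+iu_{2}$ and $w=u_{1}-iu_{2}$; a one-line computation gives $\partial_{t}v=-\partial_{x}v$ and $\partial_{t}w=\partial_{x}w$, whence $v(t,x)=v(0,x-t)$ and $w(t,x)=w(0,x+t)$ on the respective characteristics. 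Since $v(0,\cdot)=w(0,\cdot)=f$ in this case, inverting $u_{1}=(v+w)/2$, $u_{2}=(v-w)/(2i)$ yields exactly the stated formula, once $f(x-t)$ for $x-t<0$ is read off an extension of $f$ to the whole line.

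The choice of extension is dictated by the boundary condition. Evaluating $u_{2}(t,0)=-\tfrac{i}{2}\bigl(f(-t)-f(t)\bigr)$, the constraint $u_{2}(t,0)=0$ for all $t$ holds precisely when $f$ is continued as an \emph{even} function; then $f(x-t)=f(|x-t|)$ and one obtains the first displayed identity. For the datum $(0,f)^{t}$ the same computation with $v(0,\cdot)=if$, $w(0,\cdot)=-if$ produces $u_{1}=\tfrac{i}{2}\bigl(f(x-t)-f(x+t)\bigr)$ and $u_{2}=\tfrac{1}{2}\bigl(f(x-t)+f(x+t)\bigr)$; now $u_{2}(t,0)=\tfrac12\bigl(f(-t)+f(t)\bigr)$ vanishes iff $f$ is continued as an \emph{odd} function, which is the second assertion.

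To turn this formal solution into a genuine identification of $e^{-it\cal{D}_{0}}$, I would phrase the change of frame $(u_{1},u_{2})\mapsto(v,w)$ as a unitary equivalence. The even (resp. odd) extension identifies $L^{2}(\mathbb{R}^{+})$ with the symmetric (resp. antisymmetric) subspace of $L^{2}(\mathbb{R})$, and under the constant-coefficient map $g(x)=u_{1}(x)+iu_{2}(x)$ for $x>0$, $g(x)=u_{1}(-x)-iu_{2}(-x)$ for $x<0$, the operator $\cal{D}_{0}$ with domain $\{f_{2}(0)=0\}$ passes to $-i\,d/dx$ on $H^{1}(\mathbb{R})$ (this is the diagonalization already used at the end of Section~\ref{sect-dirac}). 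The latter generates the translation group by Stone's theorem, and pulling back gives precisely the two formulas; uniqueness of the strongly continuous unitary group with generator $-i\cal{D}_{0}$ closes the argument.

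The step that genuinely needs care is the matching of the half-line boundary condition $f_{2}(0)=0$ with the parity subspaces after the change of frame — equivalently, checking that the left- and right-moving waves reassemble into a function lying in the domain, with no boundary term lost. This is the main obstacle in the sense that it is the only place a sign or a subspace could be mismatched; once it is verified it reduces to the standard fact that $\cal{D}_{0}$ is essentially self-adjoint on the natural minimal domain, after which the computation above is entirely routine.
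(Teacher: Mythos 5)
Your proof is correct, but it takes a different route from the paper. You solve the free system directly by the method of characteristics: diagonalizing to the left/right-moving combinations $v=u_1+iu_2$, $w=u_1-iu_2$, reading off the transport solutions, and letting the boundary condition $u_2(t,0)=0$ force the even (resp.\ odd) extension; rigor is then supplied by the unitary equivalence of $\cal{D}_0$ with $-i\,d/dx$ on $L^2(\mathbb{R})$ (the same diagonalization the paper carries out, for a different purpose, at the end of Section~\ref{sect-dirac}). The paper instead remarks that a direct verification is possible and then proves the lemma via the spectral resolution of $\cal{D}_0$: it writes $F(\lambda)=\int_0^\infty f_1\cos(\lambda x)\,dx+\int_0^\infty f_2\sin(\lambda x)\,dx$, multiplies by $e^{-it\lambda}$, and applies the Fourier inversion formula, so the half-sum of translates emerges from the product formulas for $\cos(\lambda x)e^{-it\lambda}$ and $\sin(\lambda x)e^{-it\lambda}$. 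Your argument is the more elementary of the two and makes transparent \emph{why} the even/odd extensions appear (they are exactly the reflection condition at $x=0$ compatible with $f_2(0)=0$), whereas the paper's spectral route leans on the already-established eigenfunction expansion from Theorem~\ref{number2} and dovetails with how the lemma is used in the wave-operator proof. Both computations agree, including all signs.
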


\begin{proof} One can use the definition of $\exp(-it\cal{D}_0)$
to verify this statement directly. Another way to see that is to
use the spectral resolution for $\cal{D}_{0}$.

\begin{equation}
\left[
\begin{array}{c}
f_{1} \\
f_{2}%
\end{array}%
\right] \rightarrow F (\lambda )=\int\limits_{0}^{\infty
}f_{1}(x)\cos (\lambda x)dx+\int\limits_{0}^{\infty }f_{2}(x)\sin
(\lambda x)dx,
\end{equation}

\begin{eqnarray*}
f_{1}(x) =\frac{1}{\pi }\int\limits_{-\infty }^{\infty } F
(\lambda )\cos (\lambda x)d\lambda , \quad f_{2}(x) =\frac{1}{\pi
}\int\limits_{-\infty }^{\infty } F (\lambda )\sin (\lambda
x)d\lambda .
\end{eqnarray*}
Therefore,
\begin{eqnarray*}
\left( e^{-it\cal{D}_{0}}{f}\right) _{1}(x) =\frac{1}{\pi
}\int\limits_{-\infty }^{\infty }e^{-it\lambda } F
(\lambda )\cos (\lambda x)d\lambda , \\
 \left(
e^{-it\cal{D}_{0}}{f}\right) _{2}(x) =\frac{1}{\pi
}\int\limits_{-\infty }^{\infty }e^{-it\lambda } F (\lambda )\sin
(\lambda x)d\lambda .
\end{eqnarray*}
It now suffices to apply the Fourier inversion formula.\end{proof}

\subsection{For section \ref{sect-ss}}
The following Lemma proves one simple property of the exponential
map on the $H^{1/2}(\mathbb{R})$ functions.
\begin{lemma} \label{ss-l2}
If $\gamma(x)\in H^{1/2}(\mathbb{R})$, then $e^{\gamma(x)}-1\in
L^2(\mathbb{R})$ and this map is continuous.
\end{lemma}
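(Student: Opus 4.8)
The plan is to expand $e^{\gamma}-1=\sum_{n\ge 1}\gamma^{n}/n!$ pointwise (this converges absolutely a.e.\ since $\gamma\in H^{1/2}(\mathbb R)\subset L^{2}(\mathbb R)$ is finite a.e., and no reality assumption on $\gamma$ is needed) and to control the series in $L^{2}$ by tracking the growth of the Sobolev embedding constant. By Minkowski's inequality for the infinite sum it suffices to show
\begin{equation}
\sum_{n\ge 1}\frac{1}{n!}\,\|\gamma\|_{L^{2n}(\mathbb R)}^{\,n}<\infty ,\label{keysum}
\end{equation}
and for this the key step is the estimate
\begin{equation}
\|\gamma\|_{L^{p}(\mathbb R)}\le C\sqrt{p}\;\|\gamma\|_{H^{1/2}(\mathbb R)},\qquad p\ge 2,\label{keysob}
\end{equation}
with $C$ an absolute constant. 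Granting (\ref{keysob}) one has $\|\gamma\|_{L^{2n}}^{n}\le (C\sqrt{2n}\,\|\gamma\|_{H^{1/2}})^{n}$, and since $(2n)^{n/2}/n!\to 0$ super-exponentially by Stirling's formula, the series in (\ref{keysum}) (in fact $\sum_{n}(C\sqrt{2n}\,x)^{n}/n!$ for any $x\ge 0$) converges; its sum is an increasing locally bounded function $\Psi$ of $x=\|\gamma\|_{H^{1/2}}$ with $\Psi(0)=0$. Hence $e^{\gamma}-1\in L^{2}(\mathbb R)$ and $\|e^{\gamma}-1\|_{2}\le\Psi(\|\gamma\|_{H^{1/2}})$.

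To prove (\ref{keysob}) I would argue by duality and Plancherel. For $p=2$ it is immediate from $\|\gamma\|_{L^{2}}^{2}=c\int|\widehat\gamma|^{2}\le c\int\langle t\rangle|\widehat\gamma(t)|^{2}dt$. Fix $p>2$ and $\phi$ with $\|\phi\|_{L^{p'}}=1$. Then $\langle\gamma,\phi\rangle=c\langle\widehat\gamma,\widehat\phi\rangle$; inserting $\langle t\rangle^{1/2}\langle t\rangle^{-1/2}$ and using Cauchy--Schwarz gives $|\langle\gamma,\phi\rangle|\le c\,\|\gamma\|_{H^{1/2}}\,\|\langle\cdot\rangle^{-1/2}\widehat\phi\|_{L^{2}}$. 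Now apply Hölder with exponents $\frac12=\frac1{p_{0}}+\frac1p$, i.e.\ $p_{0}=2p/(p-2)>2$, and the Hausdorff--Young inequality $\|\widehat\phi\|_{L^{p}}\le C\|\phi\|_{L^{p'}}$ (valid since $p'\le 2$) to get $\|\langle\cdot\rangle^{-1/2}\widehat\phi\|_{L^{2}}\le \|\langle\cdot\rangle^{-1/2}\|_{L^{p_{0}}}\,\|\widehat\phi\|_{L^{p}}\le C\,\|\langle\cdot\rangle^{-1/2}\|_{L^{p_{0}}}$. Finally a direct computation gives
\[
\|\langle\cdot\rangle^{-1/2}\|_{L^{p_{0}}}^{p_{0}}=\int_{\mathbb R}(1+t^{2})^{-p_{0}/4}\,dt\le 2+\frac{4}{p_{0}-2}=2+(p-2)=p,
\]
so $\|\langle\cdot\rangle^{-1/2}\|_{L^{p_{0}}}\le p^{1/p_{0}}\le p^{1/2}$ because $1/p_{0}=\frac12-\frac1p\le\frac12$. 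Taking the supremum over $\phi$ yields (\ref{keysob}). This step — the sharp $\sqrt{p}$ growth of the constant, which is precisely the feature distinguishing $H^{1/2}(\mathbb R)$ from weaker critical spaces such as $\mathrm{BMO}(\mathbb R)$ — is the main obstacle: a bound linear in $p$ would not survive division by $n!$ in (\ref{keysum}), so the Hilbert-space structure of $H^{1/2}$ must be used, not merely exponential integrability.

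For continuity, suppose $\gamma_{k}\to\gamma$ in $H^{1/2}(\mathbb R)$, and set $R=1+\sup_{k}\|\gamma_{k}\|_{H^{1/2}}$ and $\delta_{k}=\|\gamma_{k}-\gamma\|_{H^{1/2}}$. Writing $\gamma_{k}^{n}-\gamma^{n}=(\gamma_{k}-\gamma)\sum_{j=0}^{n-1}\gamma_{k}^{j}\gamma^{\,n-1-j}$, applying the generalized Hölder inequality with $n$ factors in $L^{2n}$, and using (\ref{keysob}) gives
\[
\|\gamma_{k}^{n}-\gamma^{n}\|_{L^{2}}\le \|\gamma_{k}-\gamma\|_{L^{2n}}\cdot n\,\big(\max(\|\gamma_{k}\|_{L^{2n}},\|\gamma\|_{L^{2n}})\big)^{n-1}\le C\sqrt{2n}\,\delta_{k}\cdot n\,(C\sqrt{2n}\,R)^{n-1}.
\]
Summing $\frac1{n!}\|\gamma_{k}^{n}-\gamma^{n}\|_{L^{2}}$ over $n\ge 1$ (convergent again by Stirling, with the polynomial and $R^{n}$ factors harmless) yields $\|e^{\gamma_{k}}-e^{\gamma}\|_{L^{2}}\le C(R)\,\delta_{k}\to 0$. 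Thus $\gamma\mapsto e^{\gamma}-1$ is Lipschitz on bounded subsets of $H^{1/2}(\mathbb R)$, and in particular continuous, completing the proof.
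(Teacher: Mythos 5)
Your proof is correct. It follows the same overall strategy as the paper's: expand $e^{\gamma}-1$ in a power series, establish the bound $\|\gamma^{n}\|_{2}\leq C^{n}n^{n/2}\|\gamma\|_{H^{1/2}}^{n}$, and let Stirling's formula beat the $n^{n/2}$ growth. The one genuine difference is in how that key bound is obtained. The paper stays entirely on the Fourier side: by Plancherel, $\|\gamma^{n}\|_{2}$ is the $L^{2}$ norm of the $n$-fold convolution $\hat\gamma\ast\cdots\ast\hat\gamma$, which Young's inequality controls by $\|\hat\gamma\|_{p_{n}}^{n}$ with $p_{n}=2n/(2n-1)\to 1^{+}$, and then H\"older with the weight $\langle t\rangle^{1/2}$ produces the factor $(n-1)^{(n-1)/(2n)}\sim n^{1/2}$ per copy. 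You instead prove the quantitative Sobolev embedding $\|\gamma\|_{L^{p}}\leq C\sqrt{p}\,\|\gamma\|_{H^{1/2}}$ by duality, Hausdorff--Young, and the same weighted H\"older step, and apply it with $p=2n$. The two routes are essentially dual to one another and yield the same constant growth; yours has the small advantage of isolating the embedding constant as a reusable statement and of making the continuity argument explicit (the telescoping of $\gamma_{k}^{n}-\gamma^{n}$ with generalized H\"older gives local Lipschitz continuity, which the paper leaves to the reader with the remark that Stirling ``yields \ldots continuity''). Both arguments correctly identify that the $\sqrt{p}$ (rather than linear-in-$p$) growth of the constant is what makes the division by $n!$ work.
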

\begin{proof}
We have
\[
e^{\gamma(x)}=1+\sum\limits_{n=1}^\infty \frac{\gamma^n(x)}{n!},
\|\gamma^n(x)\|_2=\|\hat\gamma *\ldots*\hat\gamma\|_2
\]
\[
\|\hat\gamma(\omega)\|_p\leq
C\left(\frac{2-p}{2p-2}\right)^{(2-p)/(2p)}
\|\gamma\|_{H^{1/2}(\mathbb{R})}, 1<p<2
\]
by Holder's inequality. By Young's inequality we now have
\[
\|\hat\gamma *\ldots*\hat\gamma\|_2\leq \|\hat\gamma\|_{p_n}^n,
p_n=2n(2n-1)^{-1}
\]
So, $\|\gamma^n(x)\|_2\leq
Cn^{n/2}\|\gamma\|^n_{H^{1/2}(\mathbb{R})}$. The Stirling formula
for factorial yields convergence of the series and continuity of the
exponential map.
\end{proof}

\newpage

\end{document}